\documentclass{amsart}

\usepackage{mathtools,amsthm}
\usepackage[letterpaper, margin=1.3in]{geometry}
\usepackage[utf8]{inputenc}
\usepackage[T2A,T1]{fontenc}
\usepackage[pagebackref]{hyperref}
\usepackage{enumitem}
\usepackage{mathrsfs}
\usepackage{tikz-cd}
\usepackage{amssymb}
\usepackage{xspace}
\usepackage{nicefrac}
\usepackage{multicol}
\usepackage{enumitem}
\usepackage{xstring}
\usepackage{etoolbox}
\usepackage{aliascnt}
\usepackage[nameinlink]{cleveref}
\usepackage{setspace}
\usepackage{stmaryrd}
\SetSymbolFont{stmry}{bold}{U}{stmry}{m}{n}
\usepackage{version}
\usepackage{environ}
\usepackage[titletoc]{appendix}
\usepackage{xparse}
\usepackage{pstricks}
\usepackage{amsfonts}
\usepackage{mathdots}

\hypersetup{colorlinks={true},linkcolor={blue},urlcolor=blue}
\usepackage{pdflscape}

\newtheorem{theorem}{Theorem}[subsection]
\newtheorem*{theorem*}{Theorem}
\newtheorem{lemma}[theorem]{Lemma}

\newtheorem{problem*}{Problem}
\newtheorem{lemma*}{Lemma}
\newtheorem{corollary}[theorem]{Corollary}
\newtheorem{proposition}[theorem]{Proposition}

\newtheorem{conjecture}[theorem]{Conjecture}
\newtheorem*{conjecture*}{Conjecture}

\newtheorem{innercustomgeneric}{\customgenericname}
\providecommand{\customgenericname}{}
\newcommand{\newcustomtheorem}[2]{%
  \newenvironment{#1}[1]
  {%
   \renewcommand\customgenericname{#2}%
   \renewcommand\theinnercustomgeneric{\kern-0.3em ##1}%
   \innercustomgeneric
  }
  {\endinnercustomgeneric}
}

\newcustomtheorem{specialtheorem}{}
\crefname{specialtheorem}{}{}

\theoremstyle{definition}
\newtheorem{definition}[theorem]{Definition}
\newtheorem{notation}[theorem]{Notation}
\newtheorem{example}[theorem]{Example}

\theoremstyle{remark}
\newtheorem{remark}[theorem]{Remark}
\newtheorem*{remark*}{Remark}

\crefname{theorem}{Theorem}{Theorems}
\crefname{lemma}{Lemma}{Lemmas}
\crefname{corollary}{Corollary}{Corrolaries}
\crefname{proposition}{Proposition}{Propositions}
\crefname{definition}{Definition}{Definitions}
\crefname{remark}{Remark}{Remarks}

\crefname{appsec}{Appendix}{Appendices}

\usepackage{tikz}
\usetikzlibrary{chains,shapes.symbols}

\tikzset{commutative diagrams/.cd,
mysymbol/.style={start anchor=center,end anchor=center,draw=none}
}
\tikzset{zshift/.style={xshift={-0.3*#1},yshift={-1.2*#1}}}
\newcommand\Symbol[2]{\arrow[mysymbol]{#1}[description]{#2}}

\DeclareMathOperator{\Spec}{Spec}
\DeclareMathOperator{\Spf}{Spf}
\DeclareMathOperator{\Lie}{Lie}
\let\Im\relax
\DeclareMathOperator{\Im}{Im}
\newcommand{\pDiv}[2]{\mathrm{pDiv}_{#1}^{#2}}
\newcommand{\X}{\mathbb{X}}
\newcommand{\N}[2]{\mathcal{N}_{#1}^{#2}}
\newcommand{\E}{\mathbb{E}}
\newcommand{\ZZ}{\mathcal{Z}}
\newcommand{\FJ}{\mathrm{FJ}}
\newcommand{\Int}{\mathrm{Int}}
\newcommand{\xdashrightarrow}[1]{\overset{#1}{\dashrightarrow}}
\renewcommand{\red}{\mathrm{red}}
\newcommand{\cplx}{\mathbf{c}}
\newcommand{\rflx}{\mathrm{rflx}}
\newcommand{\Sch}[1]{\mathrm{Sch}_{/#1}}

\newcommand{\sch}[1]{\mathrm{Sch}'_{/#1}}
\newcommand{\Ab}[1]{\mathrm{Ab}_{\O_F}^{#1}}
\newcommand{\UAb}[1]{\mathrm{Ab}_{\O_F/\O_{F^+}}^{#1}}
\newcommand{\op}{\mathrm{op}}
\newcommand{\IAb}[1]{{}^0\mathrm{Ab}_{\O_F}^{#1}}
\newcommand{\IUAb}[1]{{}^0\mathrm{Ab}_{\O_F/\O_{F^+}}^{#1}}
\newcommand{\Sh}{\mathrm{Sh}}
\newcommand{\Set}{\mathrm{Set}}
\let\Mod\relax
\DeclareMathOperator{\Mod}{\mathrm{Mod}}
\newcommand{\RR}{\mathrm{R}}
\newcommand{\unr}{\mathrm{unr}}
\newcommand{\sing}{\mathrm{sing}}
\newcommand{\lr}{\mathrm{lr}}

\let\bigstar\relax
\DeclareMathOperator*{\bigstar}{\raisebox{-5.5pt}{\scalebox{3.45}{$\star$}}}
\DeclareMathOperator{\im}{im}

\DeclareMathOperator{\coker}{coker}

\newcommand{\V}{\mathscr{V}}
\newcommand{\Z}{\mathbb{Z}}
\newcommand{\Q}{\mathbb{Q}}
\newcommand{\R}{\mathbb{R}}
\renewcommand{\C}{\mathbb{C}}
\newcommand{\F}{\mathbb{F}}
\newcommand{\p}{\mathfrak{p}}
\newcommand{\q}{\mathfrak{q}}
\newcommand{\m}{\mathfrak{m}}
\renewcommand{\O}{\mathcal{O}}
\newcommand{\n}{\mathfrak{n}}
\renewcommand{\L}{\mathbb{L}}
\newcommand{\Hom}{\mathrm{Hom}}
\newcommand{\rightiso}{\xrightarrow{\sim}}
\newcommand{\iso}{\simeq}
\newcommand{\xrightiso}[1]{\xrightarrow[\raisebox{1pt}{$\sim$}]{#1}}
\newcommand{\Typ}{\mathrm{Typ}}
\newcommand{\typ}{\mathrm{typ}}
\newcommand{\defeq}{\vcentcolon=}
\renewcommand{\d}[1]{\mathop{d#1}}
\newcommand{\et}{\text{\'et}}
\newcommand{\qbinom}[3]{\genfrac{[}{]}{0pt}{}{#1}{#2}_{#3}}
\newcommand{\blank}{{-}}

\newcommand{\PSch}{\mathrm{PSch}'}
\newcommand{\A}{\mathbb{A}}

\usepackage{bm}

\usepackage[bbgreekl]{mathbbol}
\DeclareMathSymbol\bbDelta \mathord{bbold}{"01}
\newcommand{\bbnabla}{\raisebox{\depth}{\scalebox{1}[-1]{$\bbDelta$}}}
\DeclareSymbolFontAlphabet{\mathbbm}{bbold}
\DeclareSymbolFontAlphabet{\mathbb}{AMSb}%
\DeclareFontFamily{OT1}{pzc}{}
\DeclareFontShape{OT1}{pzc}{m}{it}{<-> s * [1.10] pzcmi7t}{}
\DeclareMathAlphabet{\mathpzc}{OT1}{pzc}{m}{it}

\DeclareFontFamily{U}{matha}{\hyphenchar\font45}
\DeclareFontShape{U}{matha}{m}{n}{
      <5> <6> <7> <8> <9> <10> gen * matha
      <10.95> matha10 <12> <14.4> <17.28> <20.74> <24.88> matha12
      }{}
\DeclareSymbolFont{matha}{U}{matha}{m}{n}
\DeclareMathSymbol{\obot}{2}{matha}{"6B}

\begin{document}

\title{First explicit reciprocity law for unitary Friedberg--Jacquet periods}
\author{Murilo Corato-Zanarella}

\begin{abstract}
Consider a unitary group $G(\mathbb{A}_{F^+})=U_{2r}(\mathbb{A}_{F^+})$ over a CM extension $F/F^+$ with $G(\mathbb{A}_\infty)$ compact. In this article, we study the Beilinson--Bloch--Kato conjecture for motives associated to irreducible cuspidal automorphic representations $\pi$ of $G(\mathbb{A}_{F^+}).$ We prove that if $\pi$ is distinguished by the unitary Friedberg--Jacquet period, then the Bloch--Kato Selmer group (with coefficients in a favorable field) of the motive of $\Pi=\mathrm{BC}(\pi)$ vanishes.
\end{abstract}
\maketitle
\setcounter{tocdepth}{2}
\let\oldtocsection=\tocsection
\let\oldtocsubsection=\tocsubsection
\let\oldtocsubsubsection=\tocsubsubsection
\renewcommand{\tocsection}[2]{\hspace{0em}\oldtocsection{#1}{#2}}
\renewcommand{\tocsubsection}[2]{\hspace{1em}\oldtocsubsection{#1}{#2}}
\renewcommand{\tocsubsubsection}[2]{\hspace{2em}\oldtocsubsubsection{#1}{#2}}
\tableofcontents
\section{Introduction}
\subsection{Main result}
Let $F/F^+$ be a CM extension of number fields and $r\ge1.$ We study the Beilinson--Bloch--Kato conjecture for motives associated to irreducible cuspidal automorphic representations $\pi$ of $G(\mathbb{A}_{F^+})=U_{2r}(\mathbb{A}_{F^+})$ which are distinguished by a subgroup of the form $H(\mathbb{A}_{F^+})=U_r(\mathbb{A}_{F^+})\times U_r(\mathbb{A}_{F^+}).$ Here, all unitary groups are compact at the real places.

The automorphic periods $\mathcal{P}_H(f)=\int_{[H]}f(h)\d{h}$ for $f\in\pi$ are a variant of the linear periods studied in \cite{FJ}, and are called the \emph{unitary Friedberg--Jacquet periods}. It is expected that such periods are closely related to the central values $L(\Pi,\frac{1}{2})$ where $\Pi=\mathrm{BC}(\pi)$ is the base change of $\pi$ to $\mathrm{GL}_{2r}(\mathbb{A}_F).$ More precisely:
\begin{conjecture}[Xiao--Zhang]\label{Xiao-Zhang}
    Let $\pi$ be tempered cuspidal automorphic representation. Then $\mathcal{P}_H\rvert_\pi\neq0$ if and only if
    \begin{enumerate}
        \item $\Pi$ is symplectic, i.e. the exterior square $L$-function $L(\Pi,\bigwedge^2,s)$ has a pole at $s=1,$
        \item $\pi$ is locally distinguished by $H,$ i.e. $\Hom_{H(\A_{F^+}^\infty)}(\pi,\C)\neq0,$
        \item the standard $L$-function $L(\Pi,s)$ is nonvanishing at $s=\frac{1}{2}.$
    \end{enumerate}
\end{conjecture}

There has been a lot of recent progress on this conjecture such as in Leslie \cite{Leslie1,Leslie2}, Pollack--Wan--Zydor \cite{PWZ}, Chen--Gan \cite{Chen-Gan} and Leslie--Xiao--Zhang \cite{LXZ}.

In light of the Beilinson--Bloch--Kato conjecture, this lead us to the expectation that
\begin{equation*}
    \mathcal{P}_H\rvert_\pi\neq0\implies H^1_f(F,\rho_{\Pi,\lambda}(r))=0
\end{equation*}
where $H^1_f(F,\rho_{\Pi,\lambda}(r))$ denotes a Block--Kato Selmer group for the Galois representation $\rho_{\Pi,\lambda}$ attached to $\Pi$ (\Cref{GaloisExistence}). In this article, we verify some cases of this expectation.
\begin{theorem}[{\Cref{ThmA}}]\label{ThmAIntro}
    Suppose $F^+\neq\Q.$ Let $\pi$ be an irreducible cuspidal automorphic representation of $G(\mathbb{A}_{F^+})$ of weight $(0,\ldots,0)$ such that its base change $\Pi=\mathrm{BC}(\pi)$ is an irreducible cuspidal automorphic representation of $\mathrm{GL}_{2r}(\mathbb{A}_F).$ Let $E\subseteq\C$ be a strong coefficient field of $\Pi$ (\Cref{StrongCoefDef}). If $\mathcal{P}_H\rvert_\pi\neq0,$ then for all admissible primes $\lambda$ of $E$ with respect to $\Pi,$ the Bloch--Kato Selmer group $H^1_f(F,\rho_{\Pi,\lambda}(r))$ vanishes.
\end{theorem}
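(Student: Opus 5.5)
The plan is to follow the Bertolini--Darmon / Liu--Tian--Xiao--Zhang--Zhu strategy of bipartite Euler systems, with the role of the Gan--Gross--Prasad diagonal cycles played by the unitary Friedberg--Jacquet period. Write $\rho=\rho_{\Pi,\lambda}(r)$. Because $\rho$ is conjugate self-dual of the appropriate sign, Poitou--Tate duality applies; by \Cref{GaloisExistence} and admissibility of $\lambda$, the residual representation $\bar\rho$ is absolutely irreducible with large image. Consequently, to show $H^1_f(F,\rho)=0$ it suffices to bound $H^1_f(F,\bar\rho_n)$ for the reductions $\bar\rho_n=\rho/\lambda^n$ uniformly in $n$. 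Concretely, I will show that every class $s\in H^1_f(F,\bar\rho_n)$ is killed by $\lambda^{C}$ for a constant $C$ independent of $n$.

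\textbf{Step 1: level-raising primes and the first reciprocity law.} For each $n$, I use Chebotarev together with the large image hypothesis to choose a prime $\mathfrak p$ of $F^+$, inert in $F$, that is ``$n$-level-raising'' for $\Pi$. This means the Satake parameters of $\Pi_\mathfrak p$ contain the pair $\{q^{1/2},q^{-1/2}\}$ modulo $\lambda^n$, so that $\bar\rho_n$ restricted to the decomposition group at $\mathfrak p$ has a one-dimensional unramified quotient or sub, with $\mathfrak p$-adic singular cohomology free of rank one. Next, I consider the definite unitary group $G'$ that agrees with $G$ away from $\mathfrak p$ and is non-quasisplit at $\mathfrak p$. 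I expect to have, or to prove at this point, a geometric input relating the two groups. This is the semistable reduction of the corresponding $U(2r-1,1)$ Shimura variety at $\mathfrak p$, or more precisely a Rapoport--Zink or Deligne--Lusztig-type description of the supersingular locus of the relevant unitary Shimura variety $X$ attached to a group that is indefinite at one archimedean place. This is where the hypothesis $F^+\neq\Q$ is used, since it allows switching between definite and indefinite forms.

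With this in hand, I define a class $\kappa\in H^1(F,\bar\rho_n)$ as the Abel--Jacobi image of a Friedberg--Jacquet cycle, namely the special cycle given by the embedded sub-Shimura variety for $U(r-1,1)\times U(r)$ inside $X$, projected to the $\Pi$-isotypic part. The \emph{first explicit reciprocity law} is the statement that the singular residue $\partial_\mathfrak p\kappa$, viewed in the rank-one $\mathcal O/\lambda^n$-module $H^1_{\mathrm{sing}}(F_\mathfrak p,\bar\rho_n)$, equals up to a unit the period $\mathcal P_{H'}(f')$ of a level-raised form $f'$ on $G'$ modulo $\lambda^n$. I expect to prove this by computing the vanishing cycles on the special fiber. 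The supersingular strata are indexed by $G'$ double cosets, and the special cycle meets them in loci indexed by $H'$ double cosets. The residue is therefore given by an intersection number, which becomes a finite sum over $[H']$, i.e. a period.

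\textbf{Step 2: nonvanishing and the Kolyvagin-style argument.} The hypothesis $\mathcal P_H\rvert_\pi\neq0$ provides a form $f$ with $\mathcal P_H(f)$ a $\lambda$-adic unit after normalization; $E$ being a strong coefficient field makes integral normalization possible. By the choice of $\mathfrak p$ and Ihara-type level raising (a Ribet-style argument for the definite groups), the period of $f'$ on $G'$ can be arranged to be $\equiv \mathcal P_H(f)$ up to units, hence nonzero mod $\lambda$. It follows that $\partial_\mathfrak p\kappa$ generates $H^1_{\mathrm{sing}}$. Now let $s\in H^1_f(F,\bar\rho_n)$. Using Chebotarev once more, I choose $\mathfrak p$ so that, in addition, $\mathrm{loc}_\mathfrak p(s)$ has maximal order in the rank-one finite part. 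Local Tate duality pairs $H^1_{\mathrm{sing}}$ with $H^1_{\mathrm{fin}}$ at $\mathfrak p$. Away from $\mathfrak p$, both $s$ and $\kappa$ are unramified or crystalline at $\lambda$, which follows from the geometric origin of $\kappa$ and admissibility. Global reciprocity therefore gives $\langle \partial_\mathfrak p\kappa,\mathrm{loc}_\mathfrak p s\rangle=0$. Since $\partial_\mathfrak p\kappa$ is a generator, this forces $\mathrm{loc}_\mathfrak p s=0$, and hence $s=0$ up to a bounded power of $\lambda$ coming from the image hypotheses.

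\textbf{Main obstacle.} I expect the first explicit reciprocity law to be the hard step. It requires a precise description of the special fiber at an inert $\mathfrak p$ and of the Friedberg--Jacquet cycle inside it, together with a Tate-conjecture-type computation showing that the relevant cohomology of the basic locus is exactly the space of automorphic forms on $G'$ in the $\Pi$-part. It also requires control of the monodromy to identify $H^1_{\mathrm{sing}}$. My first attempt would be to work with the $\mathfrak p$-adic uniformization of the basic locus by Bruhat--Tits strata for $G'$. Within that framework, I would compute the intersection of the cycle with these strata by hand, and use the admissibility conditions to guarantee that the weight spectral sequence degenerates after localizing at the maximal ideal attached to $\Pi$.
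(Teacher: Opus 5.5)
Your overall architecture is the one the paper uses: arithmetic level raising at an inert level-raising prime $\mathfrak{p}$, a first reciprocity law, a Chebotarev choice of $\mathfrak{p}$ adapted to a Selmer class, and global duality. However, the reciprocity law as you state it fails for $r>1$, and this is exactly the new difficulty the paper has to overcome.

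For the naive cycle from $U(r-1,1)\times U(r)\hookrightarrow U(2r-1,1)$, the arithmetic level raising map does not send $\partial_{\mathfrak{p}}\mathrm{AJ}$ to the period. It sends it to $\mathcal{P}^{\mathrm{FJ}}(\phi^\circ\otimes\phi^{\mathfrak{p}})$ for a specific non-basic test function $\phi^\circ$ on $H^\circ(F^+_{\mathfrak{p}})\backslash G^\circ(F^+_{\mathfrak{p}})/K^\circ_{\mathfrak{p}}$. The local intersection numbers are not simply $1$ on each $H$-orbit. They are weighted sums of $c(k)=\prod_{i=1}^k(1-q^{2i})$, arising from derived intersections of Kudla--Rapoport cycles with the balloon and ground strata, combined through $(q+1)T^{\circ\bullet}$ and $I^\circ$ (\Cref{RZnabla}, \Cref{phiDef}). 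To recover $\int_{[H^\circ]}f$ you need $\phi^\circ$ to be a Hecke multiple of $\mathbbm{1}[H^\circ(F^+_{\mathfrak{p}})K^\circ_{\mathfrak{p}}]$ with $\lambda$-unit eigenvalue. A single cycle cannot achieve this, because that space of functions is free of rank $2^{r-1}$ over the spherical Hecke algebra. So your heuristic ``the cycle meets the strata in $H'$-orbits, hence the residue is a period'' skips the essential point.

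The missing ideas are:
\begin{enumerate}
\item[(a)] Use the whole family of $\mathfrak{p}$-Hecke translates $Z^{\mathrm{FJ}}(\phi_e\otimes\phi^{\mathfrak{p}})$, indexed by the type $e$ of the lattice $\mathrm{span}(\underline{x}\cdot m_p)\subseteq V_{2,\mathfrak{p}}$.
\item[(b)] The moduli-theoretic integral models of these cycles are not equidimensional. So replace Zariski closures by derived ($K$-theoretic) integral models, and prove that any $K$-theoretic extension computes $\partial_{\mathfrak{p}}\mathrm{AJ}$ (\Cref{KtheoryAJ}). The residues then become Rapoport--Zink intersection numbers, computable via Li--Zhang and the Li--Rapoport--Zhang comparison of almost self-dual and self-dual spaces.
\item[(c)] Carry out the local harmonic analysis of \Cref{ConjProof1,ConjProof2}. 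It shows that a $\mathbb{T}^\circ$-combination of the resulting $\phi_e^\circ$ equals $T'^\circ I^\circ\,\mathbbm{1}[H^\circ(F^+_{\mathfrak{p}})K^\circ_{\mathfrak{p}}]$. Here $\phi_\Pi(T'^\circ)$ and $\phi_\Pi(I^\circ)$ are $\lambda$-units by level-raising specialness and intertwining genericity (\Cref{P4,P5}).
\end{enumerate}
``Computing the intersection with the strata by hand'' supplies none of (a)--(c).

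There are also secondary problems.
\begin{itemize}
\item There is no definite $G'$ that agrees with the definite group of $\pi$ away from $\mathfrak{p}$ and is non-quasi-split at $\mathfrak{p}$. In even rank the local invariants obey a product formula, so changing $\mathfrak{p}$ forces changing an archimedean place. That produces precisely the indefinite nearby space $V$, with an almost self-dual lattice at $\mathfrak{p}$.
\item The arithmetic level raising identifies $H^1_{\mathrm{sing}}$ of $\mathrm{Sh}(V,K)$ modulo $\mathfrak{n}$ directly with $\mathcal{O}_\lambda[\mathrm{Sh}(V^\circ,K^\circ)]/\mathfrak{n}$. This is the \emph{original} definite group at hyperspecial level, so your Ihara/Ribet step moving the period to some $f'$ on $G'$ is neither needed nor meaningful.
\item The period of an integral eigenform need not be a $\lambda$-unit, since rescaling destroys integrality. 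You must track the exponent $m_{per}$ as the paper does.
\item Your $\kappa$ need not be unramified at places above $\Sigma^+_{\min}$. The paper controls those terms by working with the reduction of a saturated class in $H^1_f(F,R)$, whose localizations there are killed by a fixed $\lambda^{m_\Sigma}$, rather than with arbitrary torsion Selmer classes.
\item $F^+\neq\mathbb{Q}$ is used for \Cref{LR0} and for properness, not to pass between definite and indefinite forms.
\end{itemize}
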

\begin{remark}\label{IntroductionRemark}
We make some remarks about the above assumptions.
\begin{enumerate}[leftmargin=*]
    \item The assumption $F^+\neq\Q$ can be lifted once \cite[Hypothesis 3.2.10]{LTXZZ} is known for $N\ge4$ and $F^+=\Q.$
    \item The notion of admissible primes is given in \Cref{admissibledef}, and consists of a long list of assumptions:
    \begin{enumerate}
        \item[{\hyperref[L1]{(L1-2)}}] Are elementary, and exclude only finitely many primes $\lambda.$
        \item[\ref{L3}] Is expected to hold for all but finitely many primes $\lambda.$
        \item[\ref{L4}] Is a big image assumption on the residual Galois representation attached to $\Pi.$ It is expected to hold for  all but finitely many primes $\lambda$ as long as $\Pi$ is not a transfer from a smaller group.
        \item[\ref{L5}] Is a technical assumption only used in an $R=T$ argument in \cite{LTXZZ2}, and it is expected to hold for all but finitely many primes.
        \item[\ref{L6}] Is a technical assumption to ensure the vanishing of certain Hecke localized cohomology of unitary Shimura varieties off middle degree.
    \end{enumerate}
    In short, as long as $\Pi$ is not a transfer from a smaller subgroup, we expect all but finitely many primes to be admissible. 
    \item See \Cref{admissibleDiscussion} for a discussion of when the above conditions are known to hold for all but finitely many primes. In particular, all but finitely many primes are admissible in the following two situations:
    \begin{enumerate}[label=(\alph*)]
        \item if $F^+\neq\Q,$ $E=\Q$ and there is an elliptic curve $A$ over $F^+$ such that $\mathrm{End}(A_{\overline{F}})=\Z$ and $\rho_{\Pi,\ell}\iso\mathrm{Sym}^{2r-1}H^1_\et(A_{\overline{F}},\Q_\ell)\rvert_{\Gamma_F}$;
        \item if $F^+\neq\Q$ and there are two nonarchimedean places $\p$ and $w$ of $F$ such that $\p$ is very special inert (\Cref{DefSpecialInert}), $\Pi_\p$ is Steinberg and $\Pi_w$ is supercuspidal.
    \end{enumerate}
\end{enumerate}
\end{remark}

Together with the cases of \Cref{Xiao-Zhang} verified by Leslie--Xiao--Zhang \cite[Theorem 1.1, Remark 1.1, Remark 1.2]{LXZ}, we obtain the following special cases of the Beilinson--Bloch--Kato conjecture.
\begin{theorem}\label{ThmB}
    Let $\pi$ be an irreducible cuspidal automorphic representation of $\mathrm{GL}_{2r,F^+}$ of symplectic type. We assume the following:
    \begin{enumerate}
        \item $F/F^+$ is everywhere unramified.
        \item $F/F^+$ splits over every finite place $v$ of $F^+$ such that $p\le \max\{e(v),2\}$ where $e(v)$ denotes the absolute ramification index of $v.$
        \item There exists a split non-archimedean place $v$ such that $\pi_{v}$ is supercuspidal and distinguished by $\mathrm{GL}_{r,F^+}\times\mathrm{GL}_{r,F^+}.$
        \item $\pi$ is unramified at each non-split non-archimedean place of $F^+.$
        \item For each archimedean place $v$ of $F^+,$ $\pi_v$ is a unitary representation with trivial infinitesimal character.
    \end{enumerate}
    Denote $\Pi=\mathrm{BC}(\pi)$ the base change of $\pi,$ which is an irreducible cuspidal automorphic representation of $\mathrm{GL}_{2r,F},$ and $E\subseteq\C$ a strong coefficient field (\Cref{StrongCoefDef}) of $\Pi.$ If $L(\Pi,\frac{1}{2})\neq0,$ then for all admissible primes $\lambda$ of $E$ we have that the Bloch--Kato Selmer group $H^1_f(F,\rho_{\Pi,\lambda}(r))$ vanishes.
\end{theorem}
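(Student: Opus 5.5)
The plan is to deduce \Cref{ThmB} from \Cref{ThmAIntro} by producing a unitary representation $\pi'$ of $G(\mathbb{A}_{F^+})=U_{2r}(\mathbb{A}_{F^+})$, compact at infinity, with $\mathrm{BC}(\pi')=\Pi$ and $\mathcal{P}_H\rvert_{\pi'}\neq0$. The existence of such a $\pi'$ is exactly what the results of Leslie--Xiao--Zhang \cite[Theorem 1.1, Remark 1.1, Remark 1.2]{LXZ} provide under hypotheses (1)--(5). Once $\pi'$ is in hand, \Cref{ThmAIntro} gives the vanishing of $H^1_f(F,\rho_{\Pi,\lambda}(r))$ for all admissible $\lambda$. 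Since $\rho_{\Pi,\lambda}$ and the admissibility of $\lambda$ depend only on $\Pi$ and $E$, it does not matter that we pass from $\pi$ to $\pi'$.

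The steps, in order, are as follows.

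First, I check that $\Pi=\mathrm{BC}(\pi)$ is cuspidal. The place $v$ in (3) is split and $\pi_v$ is supercuspidal, so $\Pi_w\iso\pi_v$ is supercuspidal at any place $w$ above $v$. Hence $\Pi$ cannot be an isobaric sum, and in particular it is cuspidal. Moreover $\Pi$ is conjugate self-dual, since it is a base change from $F^+$. Because $\pi$ is of symplectic type, $L(\Pi,\bigwedge^2,s)$ has a pole at $s=1$.

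Second, I invoke \cite{LXZ}. Their hypotheses are that $F/F^+$ is unramified, that there are splitting conditions at small primes, a split supercuspidal place, unramifiedness at the nonsplit places, and a condition at the archimedean places. These are matched by (1)--(5). Their theorem is the "if" direction of \Cref{Xiao-Zhang}: $L(\Pi,\frac12)\neq0$ and $\Pi$ symplectic imply that there is a unitary group $G$ (compact at infinity, with the relevant Hermitian space chosen so that local distinction holds) and a $\pi'$ on $G$ with $\mathrm{BC}(\pi')=\Pi$ and $\mathcal{P}_H\rvert_{\pi'}\neq0$.

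Third, I verify the remaining hypotheses of \Cref{ThmAIntro}:
\begin{itemize}
\item Weight $(0,\ldots,0)$ follows from the trivial infinitesimal character in (5), transported through base change at the archimedean places.
\item Cuspidality of $\mathrm{BC}(\pi')=\Pi$ was checked in the first step.
\item The condition $F^+\neq\Q$ holds: if $F^+=\Q$, then (1) would force $F/\Q$ to be an unramified quadratic extension, which is impossible by Minkowski.
\end{itemize}

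The main obstacle is the second step: carefully matching the precise statement in \cite{LXZ}, including the remarks cited, with the normalizations used here. This means checking that their unitary group is the definite one of this paper, compact at every real place. It also means checking that their period is the same $\mathcal{P}_H$ for $H=U_r\times U_r$, and that their archimedean condition corresponds to weight $(0,\ldots,0)$. I would attempt this matching directly from their Remarks 1.1--1.2, which treat the definite case and the choice of Hermitian space.
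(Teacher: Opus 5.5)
Your proposal is correct and follows the paper's argument. The proof there likewise uses \cite[Theorem 1.1, Remarks 1.1 and 1.2]{LXZ} to obtain Hermitian spaces $V_1,V_2$ (totally positive definite by hypothesis (5)) and an $H$-distinguished $\pi'$ on $U(V_1\oplus V_2)$ with $\mathrm{BC}(\pi')=\Pi$ and trivial archimedean components, then applies \Cref{ThmA} to $\pi'$. Your Minkowski remark usefully makes explicit that $F^+\neq\Q$, hence that \ref{LR0} holds, which the paper leaves implicit. One small slip: being a base change only gives $\Pi^{\cplx}\iso\Pi$, and conjugate self-duality also needs $\pi\iso\pi^\vee$, which holds because $\pi$ is of symplectic type.
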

\begin{remark}
    Recently, Peng \cite{Peng} has obtained results towards the Bloch--Kato conjecture for conjugate self-dual motives which include that of \Cref{ThmB}, by relying on the Gan--Gross--Prassad case studied in \cite{LTXZZ}. However, we note that our (proof of) \Cref{ThmAIntro} has significance beyond this application: this is one of two ingredients (the other being a ``second reciprocity law'') to obtain divisibilities towards an Iwasawa main conjecture in the unitary Friedberg--Jacquet setting, similarly to \cite{Bertolini-Darmon} and \cite{LTX}. We hope to study this in the future.
\end{remark}

\subsection{Strategy and new obstacles}
We attack this problem via level-raising congruences, following the methods first introduced in \cite{Bertolini-Darmon} in the case of Heegner points. Such methods have seen a resurgence in recent years, having been used to prove striking results towards the Beilinson--Bloch--Kato conjecture \cite{LiuTwisted,LiuTriple,Liu-Tian,Haining1,Haining2,LTXZZ,Sweeting}.

There are, however, some new interesting obstacles that we have to face in the unitary Friedberg--Jacquet setting. To discuss these, we first recall the basic prototype for the argument. One establishes a congruence modulo $\p$ (up to certain Hecke operators) between the period $\mathcal{P}_H$---which lives in a Shimura set $\mathrm{Sh}_K$---and (the singular part of the localization of) the cohomology class arising from a special cycle $Z_H$ in arithmetic middle dimension on a ``nearby-at-$\p$'' Shimura variety $\mathrm{Sh}_{K^{\text{nearby}}}^{\text{nearby}}$ of dimension $2r-1.$ This comparison happens via an \emph{arithmetic level-raising} map, whose computation involves analyzing the reduction modulo $\p$ of the cycle $Z_H.$

There are two (related) new obstacles to carrying this plan out in the present setting:
\begin{enumerate}[leftmargin=*]
\item For $r>1,$ the ``naive'' guess for the cycle $Z_H,$ namely the cycle induced by an embedding of Shimura data of the form
\begin{equation*}
    U(r-1,1)\times U(r,0)\hookrightarrow U(2r-1,1),
\end{equation*}
does not map to $\mathcal{P}_H$ under the arithmetic level raising. The discrepancy between $\mathcal{P}_H$ and the image of $Z_H$ happens only at $\p,$ and we are led to study the following local question: the cycle $Z_H$ determines a function in $C_c^\infty(H(F^+_\p)\backslash G(F^+_\p)/K_\p),$ and we are trying to see if this is a multiple of the basic function under the local Hecke algebra $C_c^\infty(H(F^+_\p)\backslash G(F^+_\p)/K_\p).$ This discrepancy can now be explained by the fact that $C_c^\infty(H(F^+_\p)\backslash G(F^+_\p)/K_\p),$ is a free $C_c^\infty(H(F^+_\p)\backslash G(F^+_\p)/K_\p)$-module of rank $2^{r-1}.$ In comparison, in most previous works on bipartite Euler systems, the corresponding space of spherical functions is free of rank $1$ over the Hecke algebra.\footnote{Notably, the setting of \cite{Sweeting} also lacks this local multiplicity one.} In order to bypass this problem, we need to consider a larger supply of special cycles, which will be certain Hecke translates (at $\p$) of the ``naive'' cycle.
\item It is hard to understand the reduction of $Z_H$ modulo $\p,$ since its natural moduli-theoretic integral model is not equidimensional. Furthermore, the situation becomes even worse when considering Hecke translates of $Z_H$ as above. To overcome this, we instead consider certain \emph{derived integral models} ${}^{\mathbb{L}}\mathcal{Z}$ of such special cycles, and we explain how one can still compute the arithmetic level raising map with such derived integral models.
\end{enumerate}
More precisely, for a suitable space $\mathrm{Param}$ parametrizing the derived integral models ${}^{\mathbb{L}}\mathcal{Z},$ we will establish (a version with integral coefficients of) the commutative diagram
\begin{equation*}
    \begin{tikzcd}
        &\mathrm{Param}=\bigotimes'_v\mathrm{Param}_v\arrow[dr,"\phi"]\arrow[dl]&\\
        C_c^\infty(X^{\text{nearby}}(\A_{F^+,f}))^{K^{\text{nearby}}}\arrow[d,"\Theta_{X^{\text{nearby}}}"]&&C_c^\infty(X(\A_{F^+,f}))^K\arrow[d,"\Theta_X"]\\
        \mathrm{CH}^r(\mathrm{Sh}^{\text{nearby}}_{K^{\text{nearby}}})\arrow[rr,dashed,"\text{arithmetic level raising}"]&&\C[\mathrm{Sh}_K]
    \end{tikzcd}
\end{equation*}
where: i) $X$ is the spherical variety $X=H\backslash G$ and $\Theta_X\colon C_c^\infty(X(\A_{F^+,f}))\to\mathrm{Distr}(G(\A_{F^+,f}),\C)$ is the usual theta map, ii) $X^{\text{nearby}}$ is similarly defined in terms of the nearby spaces, and $\Theta_{X^{\text{nearby}}}$ is an arithmetic theta map, iii) the bottommost arrow is a shorthand for the arithmetic level raising, which is not a map as indicated but rather a certain congruence, which we defer to the body of the article for its definition. Then the strategy is to compute the image of $\phi,$ namely to prove that it contains a suitable Hecke-multiple of the basic function.

The first two rows in the above diagram are isomorphisms away from $\p,$ and the map
\begin{equation*}
    \phi_\p\colon\mathrm{Param}_\p\to C_c^\infty(X(F_\p^+))^{K_\p}
\end{equation*}
has a surprising interpretation in terms of local harmonic analysis, which is crucial for the computation of its image. The space $C_c^\infty(X(F_\p^+))^{K_\p}$ has both a relative Cartan decomposition and a relative Satake transform
\begin{equation*}
    \C[\Lambda_X^+]\xrightarrow[\sim]{\text{Cartan}}C_c^\infty(X(F_\p^+))^{K_\p}\xrightarrow[\sim]{\text{Satake}}\C[\Lambda_X]^{W_X}
\end{equation*}
where the composition $(\text{Satake}\circ\text{Cartan})^{-1}\colon \C[\Lambda_X]^{W_X}\rightiso\C[\Lambda_X^+]$ can be read off from the geometry of the variety $X,$ via the so-called ``inverse Satake transform'' of Sakellaridis \cite{Sakellaridis}. As for $\mathrm{Param}_\p,$ it is naturally of the form $\mathrm{Param}_\p=\C[\Lambda_X^{++}]\subset \C[\Lambda_X^+]$ for a certain subset $\Lambda_X^{++}\subset\Lambda_X^+,$ in such a way that $\phi$ naturally extends to an isomorphism
\begin{equation*}
    \phi_\p\colon \C[\Lambda_X^+]\rightiso C_c^\infty(X(F_\p^+))^{K_\p}.
\end{equation*}
The crucial observation is that this isomorphism behaves like a \emph{different} ``relative Cartan decomposition'', in the sense that the composition $(\text{Satake}\circ\phi_\p)^{-1}\colon\C[\Lambda_X]^{W_X}\rightiso\C[\Lambda_X^+]$ has formally the same shape as an inverse Satake transform.

\subsection{Outline of this article}
The next $4$ sections of this article are independent of each other.
\begin{itemize}
    \item In \Cref{WeightChapter}, we extend the results of \cite[Section 2]{LiuTriple} about the computations of ``arithmetic level raising maps'' to allow for the consideration of derived integral models. We expect that this may also be useful for constructing bipartite Euler systems in settings other than the present article.
    \item In \Cref{RZChapter}, we compute certain derived intersection numbers in Rapoport--Zink spaces. These show up through the $p$-adic uniformization of the unitary Shimura varieties we consider later. Such intersection numbers are the core of the computation of the arithmetic level raising for our special cycles.
    \item In \Cref{ShimuraChapter}, we introduce the integral models of (RSZ variants) of unitary Shimura varieties, and construct new (derived) integral models of special cycles. We study the generic fiber and the $p$-adic uniformization of such special cycles.
    \item In \Cref{ComputationChapter}, we study the local harmonic analysis related to the arithmetic level raising. This analysis is what ultimately will imply that the automorphic period $\mathcal{P}_H$ is (up to an appropriate Hecke translation) the image of a special cycle under the arithmetic level raising map.
\end{itemize}
In \Cref{ReciprocityChapter}, we use all the above ingredients together with the arithmetic level raising of \cite{LTXZZ} to establish the first reciprocity law. We refer to \Cref{Figure1,Figure2} for a schematic view of the proof of the reciprocity law. Finally, in \Cref{ProofChapter} we use such reciprocity law to perform the Euler system argument and prove our main theorem.

\Cref{KAppendix} recalls some definitions about $K$-theory and proves a lemma used in \Cref{WeightChapter}.

\subsection{Conventions and notations}
\begin{itemize}[leftmargin=*]
    \item We denote $(a\mapsto a^\cplx)\in\mathrm{Aut}(\C/\Q)$ the complex conjugation.
    \item For a field $K,$ we denote $\Gamma_K\defeq\mathrm{Gal}(K^{\mathrm{sep}}/K)$ the absolute Galois group of $K.$
    \item For a number field $F,$ we denote by $\A_F\defeq\bigotimes'_vF_v$ the ad\`eles of $F.$ For a finite set of places $\Sigma$ of $F,$ we denote $\A_F^S\defeq\bigotimes'_{v\not\in S}F_v$ the ad\`eles away from $S.$ For the case $F=\Q,$ we denote $\A\defeq\A_\Q$ and $\A^S\defeq\A_\Q^S.$ We also use the shorthand $\A_f^S\defeq\A^{S\cup\{\infty\}}.$
    \item For a rational prime $\ell,$ we say a $\Z_\ell$-ring $L$ is an \emph{$\ell$-adic coefficient ring} if it is finite over $\Z_\ell$ or over $\Q_\ell.$
    \item For $\lambda$ a nonarchimedean place of a number field $E,$ an $\O_{E_\lambda}$-module $M$ and $x\in M,$ we denote
    \begin{equation*}
        \exp_\lambda(x,\ M)\defeq\min\{d\in\Z_{\ge0}\cup\{\infty\}\colon\lambda^dx=0\}.
    \end{equation*}
    \item For a henselian discrete valuation field $K$ with inertia subgroup $I_K\subseteq\Gamma_K$ and an abelian group $M$ with continuous $\Gamma_K$-action, we denote the \emph{unramified cohomology}
    \begin{equation*}
        H^1_\unr(K,M)\defeq\ker(H^1(K,M)\to H^1(I_K,M)).
    \end{equation*}
    We also denote the \emph{singular cohomology} the quotient by the unramified cohomology
    \begin{equation*}
        H^1_\sing(K,M)\defeq H^1(K,M)/H^1_\unr(K,M),
    \end{equation*}
    with quotient map $\partial\colon  H^1(K,M)\twoheadrightarrow H^1_{\sing}(K,M)$
    \item For a (formal) scheme $S,$ we denote by $\Sch{S}$ the category of $S$-schemes. When $S$ is Noetherian, we denote by $\sch{S}$ the full subcategory of locally Noetherian $S$-schemes, and we also denote $\PSch{S}$ the category of presheaves on $\sch{S}.$
    \item For a noetherian scheme $X,$ we denote $\mathrm{Z}^i(X)$ the abelian group of algebraic cycles of codimension $i$ in $X.$ If $\ell$ is a rational prime which is invertible in $X$ and $\Lambda\in\{\Z_\ell,\Q_\ell,\Z/\ell^\nu\Z\text{ for }\nu\ge0\},$ we have an absolute cycle class map\footnote{See for example \cite[Section 3.3.1]{LiuTwisted} for a definition.} $\mathrm{cl}_{X,\ell}\colon \mathrm{Z}^i(X)\to H^{2i}_\et(X,\Lambda(i)).$
    \item If $X$ is finite type over a field $k,$ we denote by $\mathrm{CH}^i(X)$ the Chow group with respect to rational equivalence. For a rational prime $\ell\neq\mathrm{char}(k),$ the cycle class map factors via a map $\mathrm{cl}_{X,\ell}\colon\mathrm{CH}^i(X)\to H^{2i}_\et(X,\Z_\ell(i)).$
    \item For a locally noetherian (formal) scheme $X,$ and $Y\subseteq X$ a closed (formal) subscheme, we denote $K_0^Y(X)$ the $K_0$-theory of $X$ supported on $Y,$ and denote $K_0(X)=K_0^X(X).$ This is equipped with a descending filtration $F^kK_0^Y(X)$ by the codimension of the support, and we denote $\mathrm{Gr}^kK_0^Y(X)$ its associated graded. When $X$ is regular of finite type over a field $k,$ this is equipped with a map $[\O_{\blank}]\colon \mathrm{CH}^k(X)\to\mathrm{Gr}^kK_0(X).$
    \item If $W\subseteq X$ is a closed subscheme of a regular locally noetherian (formal) scheme and we have a proper map $\pi\colon W\to \Spf A$ for a complete discrete valuation ring $A,$ then we consider the degree map $\chi(X,\blank)\colon K_0^W(X)\to\Z$ given by $\chi(X,\mathscr{E})=\sum_{i}\mathrm{length}_{\O_{\Spf A}}R^i\pi\mathscr{E}.$
\end{itemize}

\subsection*{Acknowledgments}
I would like to thank my PhD advisor Wei Zhang for suggesting this problem and for his encouragement and advice. I am grateful to Ashay Burungale, Kazim B\"uy\"ukboduk, Olivier Fouquet, Qiao He, Shilin Lai, Yifeng Liu, Andreas Mihatsch, Yiannis Sakellaridis, Marco Sangiovanni Vincentelli, Chris Skinner, Matteo Tamiozzo, Zhiyu Zhang, and especially Naomi Sweeting for helpful conversations. This work was supported in part by the NSF grant DMS--1901642, and in part by the NSF grant DMS--1440140, while the author was in residence at the Simons Laufer Mathematical Sciences Institute in Berkeley, California, during the Spring Semester of 2023.

\section{Weight spectral sequence and potential map}\label{WeightChapter}
Let $K$ be a henselian discrete valuation field with separable closure $\overline{K},$ ring of integers $\O_K$ and residue field $\kappa.$ We consider $\Lambda$ an $\ell$-adic coefficient ring for some rational prime $\ell\neq\mathrm{char}(\kappa).$

We recall that $\Gamma_K$ denotes $\mathrm{Gal}(\overline{K}/K),$ and we denote by $I_K\subseteq\Gamma_K$ its inertia subgroup. We consider the $\ell$-adic quotient $t_\ell\colon I_K\to\Z_\ell(1),$ defined by the property that $T(\varpi^{1/\ell^m})=(\varpi^{1/\ell^m})^{t_\ell(T)}$ for every uniformizer $\varpi$ of $K,$ $m\ge1$ and $T\in I_K.$

\subsection{Semistable schemes and weight spectral sequence}
\begin{definition}[{\cite[Definition 2.1]{LiuTriple}}]
    Let $\mathcal{X}\to\Spec\O_K$ be a scheme locally of finite presentation of pure relative dimension $n.$ We say that $\mathcal{X}$ is \emph{(strictly) semistable} if it is Zariski locally \'etale over
    \begin{equation*}
        \Spec\O_K[t_1,\ldots,t_n]/(t_1\cdots t_s-\varpi)
    \end{equation*}
    for some varying integers $0\le s\le n$ and $\varpi$ a uniformizer of $K.$
\end{definition}

When $\mathcal{X}/\O_K$ is semistable, its special fiber $\mathcal{X}_\kappa$ is a normal crossings divisor of $\mathcal{X}.$ If $\{X_1,\ldots,X_m\}$ denote its irreducible components, we consider
\begin{equation*}
    X_I\defeq \bigcap_{i\in I} X_i,\quad\text{for }I\subseteq\{1,\ldots,n\}\text{ nontrivial}
\end{equation*}
which are smooth schemes over $\kappa,$ of codimension $\# I-1.$ We denote
\begin{equation*}
    X_\kappa^{(p)}\defeq\bigsqcup_{\substack{I\subseteq\{1,\ldots,m\}\\\# I=p+1}}X_I,\quad\text{for }p\in\Z_{\ge0},
\end{equation*}
which is pure of codimension $p$ in $\mathcal{X}_\kappa,$ and we consider $a_p\colon X^{(p)}_\kappa\hookrightarrow\mathcal{X}_\kappa$ the natural inclusion.

For $I\subseteq J\subseteq\{1,\ldots,m\},$ consider the inclusion $i_{I,J}\colon X_J\hookrightarrow X_I.$ These induce pullback maps
\begin{equation*}
    \delta_p^*\colon H_\et^q(X^{(p)}_{\overline{\kappa}},\Lambda(j))\to H_\et^q(X^{(p+1)}_{\overline{\kappa}},\Lambda(j))
\end{equation*}
and Gysin maps
\begin{equation*}
    \delta_{p,*}\colon H_\et^q(X^{(p)}_{\overline{\kappa}},\Lambda(j))\to H_\et^{q+2}(X^{(p-1)}_{\overline{\kappa}},\Lambda(j+1))
\end{equation*}
defined as a certain alternating sum of the maps $i_{I,J}^*$ and $i_{I,J,*}.$ Namely, if $J=\{j_0,\ldots,j_k\}$ with $j_0<\cdots<j_k$ and $I = J\setminus\{j_l\},$ then we consider the maps $i_{I,J}^*$ and $i_{I,J,*}$ weighted by $(-1)^l.$

If we are given a commutative monoid $\mathbb{T}$ with a monoidal functor $\mathbb{T}\to\mathrm{\acute{E}tCor}(\mathcal{X})$ (see \cite[Definition 2.11]{LiuTriple} for the definition of the target), then $\mathbb{T}$ acts on the cohomology groups $H^*_\et(\mathcal{X}_{\overline{K}},\Lambda)$ and $H^*_\et(X^{(p)}_{\overline{\kappa}},\Lambda).$ In particular, given a maximal ideal $\m\subseteq\Lambda[\mathbb{T}]$ we will consider the localized cohomology groups $H^*_\et(\mathcal{X}_{\overline{K}},\Lambda)_\m$ and $H^*_\et(X^{(p)}_{\overline{\kappa}},\Lambda)_\m.$

\newcommand{\pss}{$\m$-semistable}
\begin{definition}\label{pssDef}
    Given a maximal ideal $\m\subseteq\Lambda[\mathbb{T}],$ We say $\mathcal{X}\to\Spec\O_K$ is \emph{\pss{}} if i) the following specialization maps are isomorphisms
    \begin{equation*}
        H_{\et,c}^*(\mathcal{X}_{\overline{K}},\Lambda)_\m\to H_{\et,c}^*(\mathcal{X}_{\overline{\kappa}},R\Psi\Lambda)_\m,\qquad H_\et^*(\mathcal{X}_{\overline{K}},\Lambda)_\m\to H_\et^*(\mathcal{X}_{\overline{\kappa}},R\Psi\Lambda)_\m,
    \end{equation*}
    where $R\Psi\Lambda\defeq\bar{i}^*R\bar{j}_*\Lambda$ for $\mathcal{X}_{\overline{{\kappa}}}\xhookrightarrow{\bar{i}}\mathcal{X}_{\O_K^{\mathrm{ur}}}\xhookleftarrow{\bar{j}}\mathcal{X}_{\overline{K}}$ is the nearby cycles, and ii) for each $p\in\Z_{\ge0}$ and $i\in\Z_{\ge0},$ the canonical maps $H^i_{\et,c}(X^{(p)}_{\overline{\kappa}},\Lambda)_\m\to H^i_\et(X^{(p)}_{\overline{\kappa}},\Lambda)_\m$ are isomorphisms.
\end{definition}
\begin{remark}
    This is automatic when $\mathcal{X}\to\Spec\O_K$ is proper.
\end{remark}

When $\mathcal{X}$ is \pss{}, we have the localized weight spectral sequence
\begin{equation*}
    E_{1,\m}^{p,q}=\bigoplus_{i\ge\max(0,-p)}H^{q-2i}_\et(X^{(p+2i)}_{\overline{\kappa}},\Lambda(-i))_\m\Longrightarrow H_\et^{p+q}(\mathcal{X}_{\overline{K}},\Lambda)_\m,
\end{equation*}
with differentials $d_{1,\m}^{p,q}\colon E_{1,\m}^{p,q}\to E_{1,\m}^{p+1,q}$ given by $\sum_{i\ge\max(0,-p)}(\delta_{p+2i,*}+\delta_{p+2i}^*).$

\subsection{Potential map}\label{Weight-Potential}
We recall the main results from \cite[Section 2]{LiuTriple}. Fix $\mathcal{X}$ a \pss{} scheme over $\O_K,$ for a maximal ideal $\m\subseteq\Lambda[\mathbb{T}]$ as above.
\begin{definition}\label{ABDef}
    We consider the following $\Lambda$-submodules of $H^{2j}_\et(X^{(0)}_{\overline{\kappa}},\Lambda(j))$
    \begin{equation*}
        B^j(\mathcal{X},\Lambda)\defeq\ker\left(H^{2j}_\et(X^{(0)}_{\overline{\kappa}},\Lambda(j))\xrightarrow{\delta_0^*}H^{2j}_\et(X^{(1)}_{\overline{\kappa}},\Lambda(j))\right)
    \end{equation*}
    and noticing that $B^j(\mathcal{X},\Lambda)\subseteq \ker d_1^{0,2j}(j),$
    \begin{equation*}
        B^j(\mathcal{X},\Lambda)^0\defeq\ker\left(B^j(\mathcal{X},\Lambda)'\to E_2^{0,2j}(j)\right).
    \end{equation*}
    Dually, we consider the following $\Lambda$-quotients of $H^{2(n-j)}_\et(X^{(0)}_{\overline{\kappa}},\Lambda(j))$
    \begin{equation*}
        B_j(\mathcal{X},\Lambda)\defeq\coker\left(H^{2(n-j-1)}_\et(X^{(1)}_{\overline{\kappa}},\Lambda(n-j-1))\to H^{2(n-j)}_\et(X^{(0)}_{\overline{\kappa}},\Lambda(n-j))\right)
    \end{equation*}
    and
    \begin{equation*}
        B_j(\mathcal{X},\Lambda)_0\defeq\coker\left(E_2^{0,2(n-j)}(n-j)\to B_j(\mathcal{X},\Lambda)\right).
    \end{equation*}
    We also consider $A^j(\mathcal{X},\Lambda)^0=(B^j(\mathcal{X},\Lambda)^0)^{G_\kappa}$ and $A_j(\mathcal{X},\Lambda)_0=(B_j(\mathcal{X},\Lambda)_0)_{G_\kappa}$
\end{definition}
\begin{lemma}[{\cite[Lemma 2.4, Definition 2.5]{LiuTriple}}]\label{LiuFactor}
The composite map
\begin{equation*}
    H^{2r-2}_\et(X^{(0)}_{\overline{\kappa}},\Lambda(r-1))\xrightarrow{\delta_0^*}H^{2r-2}_\et(X^{(1)}_{\overline{\kappa}},\Lambda(r-1))\xrightarrow{\delta_{1,*}}H^{2r}(X^{(0)}_{\overline{\kappa}},\Lambda(r))
\end{equation*}
factors through $B_{n+1-r}(\mathcal{X},\Lambda)_0\to B^r(\mathcal{X},\Lambda)^0.$ We define the \emph{potential map} $\Delta^r\colon A_{n+1-r}(\mathcal{X},\Lambda)_0\to A^r(\mathcal{X},\Lambda)^0$ to be such map, that is, the map that fits in the following commutative diagram.
\begin{equation*}
    \begin{tikzcd}
        H^{2r-2}_\et(X^{(0)}_{\overline{\kappa}},\Lambda(r-1))\arrow[r,"\delta_0^*"]\arrow[d,twoheadrightarrow]&H^{2r-2}_\et(X^{(1)}_{\overline{\kappa}},\Lambda(r-1))\arrow[r,"\delta_{1,*}"]&H^{2r}(X^{(0)}_{\overline{\kappa}},\Lambda(r))\\
        A_{n+1-r}(\mathcal{X},\Lambda)_0\arrow[rr,"\Delta^r"]&&A^r(\mathcal{X},\Lambda)^0\arrow[u,hook]
    \end{tikzcd}
\end{equation*}
\end{lemma}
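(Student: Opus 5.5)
The plan is to get every needed vanishing from the single identity $d_1\circ d_1=0$ in the (localized) weight spectral sequence, read off on individual components. We never need to know more about $\delta^*$ and $\delta_*$ than the fact that $E_1^{\bullet,q}$ is a complex. First, twisting by $(r)$, we would write out the components of $d_1^{-1,2r}$ and $d_1^{0,2r}$ on the summands
\begin{equation*}
    H^{2r-2}_\et(X^{(1)}_{\overline{\kappa}},\Lambda(r-1))\subseteq E_1^{-1,2r}(r),\qquad H^{2r}_\et(X^{(0)}_{\overline{\kappa}},\Lambda(r))\oplus H^{2r-2}_\et(X^{(2)}_{\overline{\kappa}},\Lambda(r-1))\subseteq E_1^{0,2r}(r).
\end{equation*}
Comparing components of $d_1\circ d_1=0$ should give three relations:
\begin{itemize}
    \item $\delta_{p+1}^*\delta_p^*=0$;
    \item $\delta_{p,*}\delta_{p+1,*}=0$;
    \item $\delta_0^*\delta_{1,*}+\delta_{2,*}\delta_1^*=0$ on $H^{\bullet}_\et(X^{(1)}_{\overline{\kappa}})$.
\end{itemize}
The first two are also elementary from the alternating-sign conventions, since each double composite is a signed sum over pairs $I\subset J\subset K$ in which every term appears twice with opposite signs. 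The third is the mixed relation, and we expect checking its sign carefully to be the main obstacle. If the conventions differ, it could read with a $+$ instead of a $-$, but only the vanishing consequences matter below.

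\emph{Target.} Let $x\in H^{2r-2}_\et(X^{(0)}_{\overline{\kappa}},\Lambda(r-1))$. By the mixed relation and $\delta_1^*\delta_0^*=0$,
\begin{equation*}
    \delta_0^*\delta_{1,*}\delta_0^*x=-\delta_{2,*}\delta_1^*\delta_0^*x=0,
\end{equation*}
so $\delta_{1,*}\delta_0^*x\in B^r(\mathcal{X},\Lambda)$. Moreover $\delta_0^*x$, viewed in $E_1^{-1,2r}(r)$, has image under $d_1$ equal to $(\delta_{1,*}\delta_0^*x,\ \delta_1^*\delta_0^*x)=(\delta_{1,*}\delta_0^*x,0)$. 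Hence $\delta_{1,*}\delta_0^*x$ is a coboundary and maps to $0$ in $E_2^{0,2r}(r)$, i.e. it lies in $B^r(\mathcal{X},\Lambda)^0$.

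\emph{Source.} We must show that $\delta_{1,*}\delta_0^*$ kills two things.
\begin{itemize}
    \item The image of $\delta_{1,*}\colon H^{2r-4}_\et(X^{(1)}_{\overline{\kappa}},\Lambda(r-2))\to H^{2r-2}_\et(X^{(0)}_{\overline{\kappa}},\Lambda(r-1))$. For such $y$, the mixed relation gives $\delta_{1,*}\delta_0^*\delta_{1,*}y=-\delta_{1,*}\delta_{2,*}\delta_1^*y$, which vanishes because $\delta_{1,*}\delta_{2,*}=0$.
    \item The image of $E_2^{0,2r-2}(r-1)$. Since $E_2$ is a quotient of $\ker d_1^{0,2r-2}$, it suffices to treat a cocycle $z=(z_0,z_2,\dots)$ with $z_0\in H^{2r-2}_\et(X^{(0)}_{\overline{\kappa}})$ and $z_2\in H^{2r-4}_\et(X^{(2)}_{\overline{\kappa}})(-1)$. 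The map to $B_{n+1-r}$ is induced by $z\mapsto z_0$. The $H^{2r-2}(X^{(1)})$-component of $d_1z=0$ says $\delta_0^*z_0+\delta_{2,*}z_2=0$. Therefore $\delta_{1,*}\delta_0^*z_0=-\delta_{1,*}\delta_{2,*}z_2=0$.
\end{itemize}
This shows that the composite factors as $B_{n+1-r}(\mathcal{X},\Lambda)_0\to B^r(\mathcal{X},\Lambda)^0$.

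\emph{Galois descent.} All maps above are $G_\kappa$-equivariant (and $\mathbb{T}$-equivariant, so everything is compatible with localizing at $\m$). A $G_\kappa$-equivariant map $M\to N$ induces $M_{G_\kappa}\to N^{G_\kappa}$ only after composing with a norm, so we would proceed as follows. We read the diagram as saying that the map $B_{n+1-r,0}\to B^{r,0}$ restricted to the relevant pieces induces $\Delta^r\colon A_{n+1-r}(\mathcal{X},\Lambda)_0\to A^r(\mathcal{X},\Lambda)^0$, and we check that it is equivariant and compatible with the displayed surjection and injection. Equivalently, we interpret the diagram with $\overline{\kappa}$ replaced by $\kappa$-rational classes, exactly as in \cite[Lemma 2.4]{LiuTriple}. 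Commutativity of the diagram is then immediate from the construction.
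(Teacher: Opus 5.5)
The paper does not prove this lemma; it only cites \cite[Lemma 2.4, Definition 2.5]{LiuTriple}. The substantive part of your argument is correct and is the standard one. With the paper's sign conventions, $d_1\circ d_1=0$ does give $\delta_1^*\delta_0^*=0$, $\delta_{1,*}\delta_{2,*}=0$ and $\delta_0^*\delta_{1,*}+\delta_{2,*}\delta_1^*=0$ on $H^\bullet_\et(X^{(1)}_{\overline{\kappa}})$; the last relation is where $\O_{\mathcal X}(X_1+\cdots+X_m)\simeq\O_{\mathcal X}$ enters, as in the proof of \Cref{KtheoryAJ}. Your three checks then go through: $\delta_{1,*}\delta_0^*x$ lies in $B^r(\mathcal X,\Lambda)$ and equals $d_1^{-1,2r}(\delta_0^*x)$; the composite kills $\mathrm{im}\,\delta_{1,*}$; and it kills the image of $E_2^{0,2r-2}(r-1)$ via the cocycle identity $\delta_0^*z_0+\delta_{2,*}z_2=0$. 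Together these show that $\delta_{1,*}\delta_0^*$ factors as a $G_\kappa$-equivariant map $B_{n+1-r}(\mathcal X,\Lambda)_0\to B^r(\mathcal X,\Lambda)^0$.

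The gap is your last paragraph. You rightly note that an equivariant map does not induce a map from $G_\kappa$-coinvariants to $G_\kappa$-invariants, but neither proposed remedy fixes this. Checking equivariance adds nothing, and passing to $\kappa$-rational classes destroys the surjectivity of the left vertical arrow. A map $\Delta^r$ making the square commute exists if and only if the image of $\delta_{1,*}\delta_0^*$ consists of $G_\kappa$-fixed classes; this condition also makes it kill $(g-1)H^{2r-2}_\et(X^{(0)}_{\overline{\kappa}},\Lambda(r-1))$. It is not a formal consequence of what you proved, and it can fail. For a strictly semistable curve ($n=r=1$) the composite is, up to sign, the Laplacian of the dual graph over $\overline{\kappa}$. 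If Frobenius acts nontrivially on that graph (e.g.\ non-split multiplicative reduction of type $I_4$, where it reflects the polygon), then the image of the class of a moved vertex is not Frobenius-fixed. So ``commutativity is immediate from the construction'' is unjustified, and the passage to the $A$'s needs an extra input.

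There are two ways to supply it. You can assume $G_\kappa$ acts trivially on $B^r(\mathcal X,\Lambda)^0$; then $A^r(\mathcal X,\Lambda)^0=B^r(\mathcal X,\Lambda)^0$, your $B$-level map automatically factors through coinvariants, and the square commutes literally. This is the situation in the paper's application, by \Cref{Cohomologymodp}(3). Alternatively, you can define $\Delta^r$ on coinvariants and identify $(B^r(\mathcal X,\Lambda)^0)_{G_\kappa}$ with invariants via (N3) of \Cref{nicecoef}. In that case the square commutes only after this identification, not as drawn.
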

As explained in \cite[Construction 2.13]{LiuTriple}, $\mathbb{T}$ also acts on 
$A^j(\mathcal{X},\Lambda)^0$ and $A_j(\mathcal{X},\Lambda)_0,$ and so we also consider the localized potential map
\begin{equation*}
    \Delta^r_\m\colon A_{n+1-r}(\mathcal{X},\Lambda)_{0,\m}\to A^r(\mathcal{X},\Lambda)^0_\m.
\end{equation*}

\begin{definition}[{\cite[Definition 2.7]{LiuTriple}}]\label{nicecoef}
    We say that $\Lambda$ is a \emph{very nice coefficient} for the spectral sequence $E_{\bullet,\m}^{\bullet,\bullet}(r)$ if
    \begin{enumerate}[label=(N\arabic*)]
        \item\label{N1} $E_{\bullet,\m}^{\bullet,\bullet}$ degenerates on the second page;
        \item\label{N2} if $E_{2,\m}^{p,2r-1-p}(r-1)$ has a non-trivial subquotient on which $\Gamma_{\kappa}$ acts trivially, then $p=1.$
        \item[(N3)]\label{N3} for every subquotient $M$ of $H^{2r}(X_{\overline{\kappa}}^{(0)},\Lambda(r))\oplus H^{2r-2}(X_{\overline{\kappa}}^{(0)},\Lambda(r-1)),$ we have that the canonical map $M^{\Gamma_{\kappa}}\to M_{\Gamma_{\kappa}}$ is an isomorphism.
    \end{enumerate}
\end{definition}
\begin{theorem}[{\cite[Theorem 2.9]{LiuTriple}}]\label{LiuCohomology}
    Suppose $\Lambda$ is a very nice coefficient for the spectral sequence $E_{\bullet,\m}^{\bullet,\bullet}(r).$ Then we have a canonical identification
    \begin{equation*}
        \eta^r_\m\colon \coker\Delta^r_\m\rightiso H^1_{\sing}(K,H_\et^{2r-1}(\mathcal{X}_{\overline{K}},\Lambda)_\m).
    \end{equation*}
\end{theorem}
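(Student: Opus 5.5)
The plan is to follow Liu's argument, reading off $H^1_\sing$ from the monodromy filtration of the localized weight spectral sequence. Write $M\defeq H^{2r-1}_\et(\mathcal{X}_{\overline{K}},\Lambda(r))_\m$. Since $\ell\neq\mathrm{char}(\kappa)$, the tame quotient $t_\ell$ controls the inertia action, and inflation--restriction gives
\begin{equation*}
    H^1_\sing(K,M)\iso H^1(I_K,M)^{\Gamma_\kappa}\iso \bigl(M(-1)_{I_K}\bigr)^{\Gamma_\kappa},
\end{equation*}
using that $I_K$ acts through $\Z_\ell(1)$ unipotently. By the $\m$-semistability hypothesis (\Cref{pssDef}), $M$ is computed by nearby cycles, so the localized weight spectral sequence $E_{1,\m}^{p,q}(r)$ applies.

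By \ref{N1}, $M$ has a filtration with graded pieces $E_{2,\m}^{p,2r-1-p}(r)$. The monodromy operator $N$ is induced on $E_1$ by the identity maps $E_1^{p,q}\supset H^{q-2i}(X^{(p+2i)})(-i)\to E_1^{p+2,q-2}(1)$. Hence $M(-1)_{I_K}=\coker(N\colon M\to M(-1))$. I would then use \ref{N2}, which forces $\Gamma_\kappa$-trivial subquotients to occur only at $p=1$ after twisting by $(-1)$. This should show that taking $\Gamma_\kappa$-invariants of $\coker N$ only sees the piece coming from
\begin{equation*}
    E_{2,\m}^{1,2r-2}(r)\supseteq \text{image of } H^{2r-2}(X^{(1)}_{\overline{\kappa}},\Lambda(r-1))_\m.
\end{equation*}
More precisely, it sees the cokernel of $N$ from $E_2^{-1,2r}(r)$, which involves $H^{2r-2}(X^{(0)})(r-1)$ and lands modulo the image of $\delta_0^*$. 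One thereby gets a description of $\coker N$ in weight-zero terms: the $\delta_{1,*}$-kernel in $H^{2r-2}(X^{(1)})$ modulo $\delta_0^*$ of $H^{2r-2}(X^{(0)})$. By the usual duality of $\delta^*$ and $\delta_*$, this is identified with
\begin{equation*}
    \ker\bigl(B^r\to E_2\bigr)\big/\operatorname{im}(B_{n+1-r}\to\cdots),
\end{equation*}
which is exactly the cokernel of the composite $\delta_{1,*}\circ\delta_0^*$ restricted to $B_{n+1-r,0}\to B^{r,0}$.

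Finally I would pass to $\Gamma_\kappa$-invariants and coinvariants via \ref{N3}. This hypothesis makes $(\cdot)^{\Gamma_\kappa}$ and $(\cdot)_{\Gamma_\kappa}$ agree on the relevant subquotients, so taking invariants is exact enough to yield $\coker(\Delta^r_\m\colon A_{n+1-r,0,\m}\to A^{r,0}_\m)\iso H^1_\sing(K,M)$. Checking that the resulting map $\eta^r_\m$ is canonical and $\mathbb{T}$-equivariant is routine once \Cref{LiuFactor} is in hand.

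The main obstacle is the middle step. There one must carefully identify $\coker N$ on the abutment with the cokernel of $\delta_{1,*}\delta_0^*$ on the $E_2$ level, controlling extensions between the graded pieces. This is where \ref{N2} is essential, killing the $\Gamma_\kappa$-invariants of the other graded pieces. Alternatively, the statement can simply be cited from \cite[Theorem 2.9]{LiuTriple}, whose hypotheses match \Cref{nicecoef} verbatim.
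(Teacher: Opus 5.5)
The paper gives no proof of this statement. It is quoted verbatim from \cite[Theorem 2.9]{LiuTriple}, with (N1)--(N3) copied from Liu's definition, so your closing remark is exactly what the paper does. The sketch you present as your main plan, however, goes wrong at its key step.

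The object ``the $\delta_{1,*}$-kernel in $H^{2r-2}(X^{(1)})$ modulo $\delta_0^*$ of $H^{2r-2}(X^{(0)})$'' is not well defined. The reason is that $\delta_{1,*}\circ\delta_0^*$ is precisely the potential map, which is nonzero in general, so $\im\delta_0^*\not\subseteq\ker\delta_{1,*}$. Also, the passage to $B^r(\mathcal{X},\Lambda)^0$ is not a duality statement. What actually happens on $E_1$ is the following:
\begin{itemize}
\item $N\colon E_1^{-1,2r}(r)\to E_1^{1,2r-2}(r-1)$ is an isomorphism, since both sides are $\bigoplus_{i\ge1}H^{2r-2i}(X^{(2i-1)}_{\overline{\kappa}},\Lambda(r-i))$.
\item $N$ on $E_1^{0,2r}(r)$ is onto, with kernel the summand $H^{2r}(X^{(0)}_{\overline{\kappa}},\Lambda(r))$.
\item $N$ on $E_1^{-2,2r}(r)$ is injective, with cokernel the summand $H^{2r-2}(X^{(0)}_{\overline{\kappa}},\Lambda(r-1))$ of $E_1^{0,2r-2}(r-1)$.
\end{itemize}
Using $d_1N=\pm Nd_1$, this gives
\begin{equation*}
    \coker\bigl(N\colon E_2^{-1,2r}(r)\to E_2^{1,2r-2}(r-1)\bigr)\iso\ker d_1^{1,2r-2}\big/\bigl(N\ker d_1^{-1,2r}+\delta_0^*H^{2r-2}(X^{(0)}_{\overline{\kappa}},\Lambda(r-1))\bigr).
\end{equation*}
So you must quotient by $\ker\delta_{1,*}$, not restrict to it. In the two-component case the object is $H^{2r-2}(X^{(1)}_{\overline{\kappa}},\Lambda(r-1))$ modulo $\ker\delta_{1,*}+\im\delta_0^*$. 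Next, $d_1^{-1,2r}\circ N^{-1}$ has relevant component $\delta_{1,*}$, because $\delta_1^*\delta_0^*=0$. It maps this quotient isomorphically onto $B^r(\mathcal{X},\Lambda)^0/\im(\delta_{1,*}\delta_0^*)$. Only then does (N3) turn the $\Gamma_\kappa$-invariants into $\coker\Delta^r_\m$.

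The reduction from the abutment to $E_2$ is also not achieved by (N2) as you claim. Let $F^\bullet$ be the abutment filtration and $N\colon M\to M(-1)$. For finite $\kappa$, (N2) kills $H^0$ and $H^1$ of $\Gamma_\kappa$ on everything built from $E_2^{p,2r-1-p}(r-1)$ with $p\neq1$. For $H^1$ this holds because $A/(\mathrm{Frob}-1)A$ is itself a trivial subquotient. This gives $(\coker N)^{\Gamma_\kappa}\iso(\mathrm{gr}^1_F\coker N)^{\Gamma_\kappa}$.

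However, $\mathrm{gr}^1_F\coker N$ equals the $E_2$-level cokernel above only if $NM\cap F^1M(-1)\subseteq NF^{-1}M+F^2M(-1)$. This is a strictness property of the integral monodromy. The defect comes from classes in $M/F^{-1}M$, whose graded pieces $E_2^{p,2r-1-p}(r)$ with $p\le-2$ are not constrained by (N2). For integral or torsion $\Lambda$ you cannot fall back on weights.

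Similarly, your first step $H^1_\sing(K,M)\iso H^1(I_K,M)^{\Gamma_\kappa}$ needs $H^2(\kappa,M^{I_K})=0$. This holds for finite $\kappa$, but it is not among the stated hypotheses.

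So as a citation your proposal coincides with the paper. As a self-contained argument, the middle step must be replaced by the computation above, and the strictness issue still has to be handled.
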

For every $r\in\Z_{\ge0},$ we have the localized cycle class map
\begin{equation*}
    \mathrm{cl}_\m\colon \mathrm{CH}^r(\mathcal{X}_K)_\m\to H^{2r}_\et(\mathcal{X}_K,\Lambda(r))_\m.
\end{equation*}
We denote by $\mathrm{CH}^r(\mathcal{X}_K,\Lambda)^0_\m$ the submodule of cohomologically trivial cycles, namely the kernel of the composition
\begin{equation*}
    \mathrm{CH}^r(\mathcal{X}_K,\Lambda)_\m\xrightarrow{\mathrm{cl}_\m}H^{2r}_\et(\mathcal{X}_K,\Lambda(r))_\m\to H^{2r}_\et(\mathcal{X}_{\overline{K}},\Lambda(r))_\m.
\end{equation*}
From the Grothendieck spectral sequence, we thus obtain the following edge map, which we call the (localized) Abel--Jacobi map
\begin{equation*}
    \mathrm{AJ}_\m\colon \mathrm{CH}^r(\mathcal{X}_K,\Lambda)^0_\m\to H^1(K,H^{2r-1}_\et(\mathcal{X}_{\overline{K}},\Lambda(r))_\m).
\end{equation*}

\begin{definition}\label{ZariskiClosure}
    Let $Z\in \mathrm{Z}^r(\mathcal{X}_K)$ be an algebraic cycle of codimension $r.$ Denote by $i\colon\overline{\mathrm{supp}(Z)}\subseteq\mathcal{X}$ the Zariski closure of the support of $Z.$ We denote $\mathcal{Z}\in \mathrm{Z}^r(\mathcal{X})$ be the unique cycle on $\mathcal{X}$ of codimension $r$ supported on $\overline{\mathrm{supp}(Z)}$ whose restriction to $\mathcal{X}_K$ is $Z.$
\end{definition}

\begin{theorem}[{\cite[Theorem 2.18]{LiuTriple}}]\label{LiuAbelJacobi}
    Suppose that $\Lambda$ is a very nice coefficient for $E^{\bullet,\bullet}_{\m,\bullet}(r).$ For $Z\in \mathrm{Z}^r(\mathcal{X}_K)$ belonging to $\mathrm{CH}^r(\mathcal{X}_K,\Lambda)^0_\m,$ we have
    \begin{equation*}
        \eta^r_\m(\widetilde{Z})=\partial(\mathrm{AJ}_\m(Z))
    \end{equation*}
    where $\widetilde{Z}$ is the image of $\mathrm{cl}_\m(\mathcal{Z})$ (\Cref{ZariskiClosure}) under
    \begin{equation*}
        H^{2r}_\et(\mathcal{X},\Lambda(r))_\m\to H^{2r}_\et(\mathcal{X}_\kappa,\Lambda(r))_\m\to H^{2r}_\et(\mathcal{X}_{\overline{\kappa}},\Lambda(r))_\m^{\Gamma_{\kappa}}\to H^{2r}_\et(X^{(0)}_{\overline{\kappa}},\Lambda(r))_\m^{\Gamma_{\kappa}},
    \end{equation*}
    which lies in $B^r(\mathcal{X}_\kappa,\Lambda)_\m^0$ by \cite[Lemma 2.17]{LiuTriple}.
\end{theorem}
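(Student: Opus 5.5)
The plan is to compute both sides inside the cohomology of $\mathcal{X}_{\O_K^{\mathrm{ur}}}$, using the integral class $\mathrm{cl}_\m(\mathcal{Z})$ as the common source. On the generic side, $Z$ is cohomologically trivial over $\overline{K}$. Since inertia has $\ell$-cohomological dimension $1$ (through $t_\ell$), the Hochschild--Serre spectral sequence for $I_K$ shows that the class of $Z$ in $H^{2r}_\et(\mathcal{X}_{K^{\mathrm{ur}}},\Lambda(r))_\m$ lies in
\begin{equation*}
H^1(I_K,H^{2r-1}_\et(\mathcal{X}_{\overline{K}},\Lambda(r))_\m)=H^{2r-1}_\et(\mathcal{X}_{\overline{K}},\Lambda(r-1))_{\m,I_K}.
\end{equation*}
By functoriality of the edge maps, this element is precisely the image of $\mathrm{AJ}_\m(Z)$ under restriction to inertia, which is $\partial(\mathrm{AJ}_\m(Z))$.

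Next I would rewrite $H^{2r}_\et(\mathcal{X}_{\O_K^{\mathrm{ur}}},\Lambda(r))$ as $H^{2r}_\et(\mathcal{X}_{\overline{\kappa}},\bar i^*R\bar j_*\Lambda(r))$, using proper base change and henselianity together with \pss{}ness after localization. There is an exact triangle
\begin{equation*}
\bar i^*R\bar j_*\Lambda\to R\Psi\Lambda\xrightarrow{N}R\Psi\Lambda(-1)\to,
\end{equation*}
where $N$ is the monodromy operator attached to $t_\ell$. Taking cohomology gives a short exact sequence
\begin{equation*}
0\to H^{2r-1}(R\Psi)_{I_K}\to H^{2r}(\bar i^*R\bar j_*)\to H^{2r}(R\Psi)^{I_K}\to 0.
\end{equation*}
Cohomological triviality places the image of $\mathrm{cl}_\m(\mathcal{Z})$ in the left-hand term, and this element agrees with $\partial\mathrm{AJ}_\m(Z)$ because the Hochschild--Serre filtration for $I_K$ is compatible with this triangle.

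On the special-fibre side, I would use Rapoport--Zink's description of $R^q\bar i^*R\bar j_*\Lambda$ for a semistable $\mathcal{X}$: $R^0=\Lambda$, and $R^1$ is the cokernel of $\Lambda(-1)\to a_{0*}\Lambda(-1)$. This gives a spectral sequence computing $H^{2r}(\bar i^*R\bar j_*)$, whose $E_1$-terms are exactly the pieces $H^{*}(X^{(p)}_{\overline{\kappa}})$ with $p\ge0$ of the weight spectral sequence $E_{1,\m}$. Under this description, the map to $H^{2r}(X^{(0)}_{\overline{\kappa}},\Lambda(r))$ is the restriction of $\mathrm{cl}(\mathcal{Z})$, so it yields $\widetilde{Z}$. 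I would then unwind Liu's construction of $\eta^r_\m$ from \Cref{LiuCohomology}. That construction identifies $B^r(\mathcal{X},\Lambda)^0_\m$ modulo the image of the potential map $\Delta^r_\m$ with the graded piece of $H^{2r-1}(R\Psi)_{I_K}$ singled out by \ref{N2}, namely the weight-$1$ column $p=1$. It does so via the connecting map of the same monodromy triangle restricted to the $E_2$-terms. Comparing the two descriptions, the element $\widetilde{Z}\in B^r$ maps to the class of $\mathrm{cl}(\mathcal{Z})$ in $H^{2r-1}(R\Psi)_{I_K}$.

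The final step passes from $I_K$ to $\Gamma_K$ and from $B$ to $A$. Here I would use \ref{N3} to identify $\Gamma_\kappa$-invariants with coinvariants, and \ref{N1} for degeneration, so that the graded identifications lift to genuine ones. I expect the main obstacle to be the comparison in the previous paragraph. It requires checking that the connecting map of the monodromy triangle, computed on the weight spectral sequence via $\delta_{0}^*$ and $\delta_{1,*}$, agrees up to the correct sign with Liu's $\eta^r_\m$. It also requires showing that the indeterminacy from lifting $\widetilde{Z}$ is exactly $\operatorname{im}\Delta^r_\m$. My first attempt would be to check this at the level of complexes, using Saito's bicomplex model $K^{\bullet,\bullet}$ for $R\Psi$ with $N$ given by the shift. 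In that model both the boundary map and $\Delta^r$ are explicit, and the lift of $\widetilde{Z}$ to $K^{0,2r}$ can be tracked by hand.
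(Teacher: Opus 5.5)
The paper gives no proof of its own here; the statement is quoted from \cite[Theorem 2.18]{LiuTriple}. Your framework is the natural one: realize $\partial\,\mathrm{AJ}_\m(Z)$ as the boundary in the triangle $\bar i^*R\bar j_*\Lambda\to R\Psi\Lambda\xrightarrow{N}R\Psi\Lambda(-1)$, with $\bar j$ the inclusion of $\mathcal{X}_{K^{\mathrm{ur}}}$ (not the $\bar j$ of \Cref{pssDef}, for which $\bar i^*R\bar j_*\Lambda=R\Psi\Lambda$), and read it off the weight spectral sequence. Your first two steps are fine.

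However, the proposal stops where the proof has to start. The sentence ``comparing the two descriptions, $\widetilde{Z}$ maps to the class of $\mathrm{cl}(\mathcal{Z})$'' is the theorem itself, and you defer it. The description you plan to use for it is also wrong. Since $X_{\overline\kappa}$ is a strict normal crossings divisor in the regular scheme $\mathcal{X}_{\O_K^{\mathrm{ur}}}$, Kummer theory and purity give $R^q\bar i^*R\bar j_*\Lambda\simeq a_{q-1,*}\Lambda(-q)$ for $q\ge1$, so $R^1\simeq a_{0*}\Lambda(-1)$. The cokernel of $\Lambda(-1)\to a_{0*}\Lambda(-1)$ is $R^1\Psi\Lambda$, not $R^1\bar i^*R\bar j_*\Lambda$. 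Consequently the terms computing $H^*(\bar i^*R\bar j_*\Lambda)$ are the Mayer--Vietoris terms $H^*(X^{(p)}_{\overline\kappa},\Lambda)$ together with $H^*(X^{(m)}_{\overline\kappa},\Lambda(-m-1))$ for $m\ge0$. On $E_1$ these are the kernel and cokernel of $N$, not ``the $p\ge0$ part of $E_{1,\m}$''. Conflating the two ends of the triangle erases exactly what produces the boundary.

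The missing idea is why the boundary depends only on $\widetilde{Z}$. The class $\mathrm{cl}(\mathcal{Z})|_{\mathcal{X}_{\overline\kappa}}$ also has Mayer--Vietoris components in $H^{2r-p}(X^{(p)}_{\overline\kappa})$ for $p\ge1$, which $\widetilde{Z}$ forgets, and the boundary a priori sees them. The argument should run as follows.

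\emph{Setup.} Take a filtered model of $R\Psi$ (Rapoport--Zink or Saito) whose weight filtration $F^\bullet$ induces $E_1$. The image $c$ of $\mathrm{cl}(\mathcal{Z})$ lies in $F^0$, because it comes from $\Lambda_X$, i.e.\ the $i=0$ summands, which $N$ kills. After localization $c$ is exact, by cohomological triviality and $\m$-semistability.

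\emph{Role of (N1).} This is needed at the start, not at the end. $E_2$-degeneration means every coboundary lying in $F^0$ is $d$ of an element of $F^{-1}$, so $c=db$ with $b\in F^{-1}$. The class $y$ of $b$ in $E_1^{-1,2r}(r)$ satisfies $d_1^{-1,2r}y=\widetilde{Z}$; this is the kind of lift that enters $\eta^r_\m$.

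\emph{The boundary.} The triangle gives $\partial\,\mathrm{AJ}_\m(Z)=\pm[Nb]$, with $Nb\in F^1$ and $\mathrm{Gr}^1(Nb)=Ny$. The rest of $b$ lies in $F^0$, so whatever the higher Mayer--Vietoris components of $c$ force into $b$ only affects $F^2$. Without (N1), $b$ could reach $F^{\le-2}$, and the weight-one part of $[Nb]$ would not be determined by $\widetilde{Z}$.

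\emph{Role of (N2) and (N3).} By (N2), Frobenius minus one is bijective on every graded piece with $p\neq1$. Hence $(\coker N)^{\Gamma_\kappa}\simeq(F^1/F^2)^{\Gamma_\kappa}$ on $H^{2r-1}(r-1)$, and only $Ny$ matters. A different choice of $b$ changes $[Nb]$ by $N$ of a class in $H^{2r-1}$, which dies in coinvariants. Condition (N3) then passes from $B$ to $A$.

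Two further corrections. First, the indeterminacy you must control is this one, not $\im\Delta^r_\m$; that is the source-side indeterminacy, already built into $\eta^r_\m$ by \Cref{LiuCohomology}. Second, $\mathbb{T}$ acts only on cohomology, so run the argument with the $\m$-localized long exact sequences of the filtered pieces rather than with cochains.
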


\subsection{Description in terms of K-theory}\label{Weight-Ktheory}
We continue assuming that $\mathcal{X}$ is \pss{} for a maximal ideal $\m\subseteq\Lambda[\mathbb{T}]$ as above.
\begin{definition}\label{extensiondef}
    Consider $Z\in \mathrm{CH}^r(\mathcal{X}_K)\otimes\Lambda.$ We say that ${}^{\mathbb{L}}\mathcal{Z}\in \mathrm{Gr}^rK_0^{\mathcal{X}_\kappa\cup\mathrm{supp}(Z)}(\mathcal{X})\otimes\Lambda$ is an \emph{extension} of $Z$ if ${}^{\mathbb{L}}\mathcal{Z}\rvert_{\mathcal{X}_K}\in \mathrm{Gr}^rK_0(\mathcal{X}_K)\otimes\Lambda$ coincides with $\O_Z.$
\end{definition}

We have the following extension of \Cref{LiuAbelJacobi}.
\begin{theorem}\label{KtheoryAJ}
    Suppose $\Lambda$ is a very nice coefficient for $E^{\bullet,\bullet}_{\m,\bullet}(r).$ For $Z\in\mathrm{CH}^r(\mathcal{X}_K,\Lambda)^0_\m$ and ${}^{\mathbb{L}}\mathcal{Z}\in \mathrm{Gr}^rK_0^{\mathcal{X}_\kappa\cup\mathrm{supp}(Z)}(\mathcal{X})\otimes\Lambda$ an extension of $Z,$ we define $\widetilde{{}^{\mathbb{L}}\mathcal{Z}}$ to be the image of ${}^{\mathbb{L}}\mathcal{Z}$ under
    \begin{equation*}
        \mathrm{Gr}^rK_0^{\mathcal{X}_\kappa\cup\mathrm{supp}(Z)}(\mathcal{X})\otimes\Lambda\to \mathrm{Gr}^rK_0(X^{(0)}_{\kappa})\otimes\Lambda\xrightarrow{\mathrm{cl}}H^{2r}_\et(X^{(0)}_{\overline{\kappa}},\Lambda(r))^{\Gamma_{\kappa}}.
    \end{equation*}
    Then $\widetilde{{}^{\mathbb{L}}\mathcal{Z}}$ lies in $B^r(\mathcal{X}_\kappa,\Lambda)_\m^0,$ and we have
    \begin{equation*}
        \eta^r_\m(\widetilde{{}^{\mathbb{L}}\mathcal{Z}})=\partial(\mathrm{AJ}_\m(Z)).
    \end{equation*}
\end{theorem}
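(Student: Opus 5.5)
The strategy is to reduce the statement to \Cref{LiuAbelJacobi}. That theorem already handles the particular extension given by the Zariski closure $\mathcal{Z}$ of \Cref{ZariskiClosure}. So it suffices to show that $\widetilde{{}^{\mathbb{L}}\mathcal{Z}}$ depends only on $Z$, modulo something killed after localization and passage to $\coker\Delta^r_\m$. In fact I would aim to show it is independent of the extension on the nose in $H^{2r}_\et(X^{(0)}_{\overline{\kappa}},\Lambda(r))^{\Gamma_\kappa}$.

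First, I would check that the structure sheaf class $[\O_{\mathcal{Z}}]$ of the closure (with multiplicities) defines an extension in $\mathrm{Gr}^rK_0^{\mathcal{X}_\kappa\cup\mathrm{supp}(Z)}(\mathcal{X})\otimes\Lambda$. This holds because $\mathcal{X}$ is regular, so coherent sheaves are perfect and $K_0$ with supports equals $G_0$ of the support. The closure has codimension $r$ because it is flat over $\O_K$.

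Second, I would verify that its image $\widetilde{[\O_{\mathcal{Z}}]}$ agrees with the $\widetilde{Z}$ of \Cref{LiuAbelJacobi}, namely the restriction of $\mathrm{cl}(\mathcal{Z})$ to $X^{(0)}_{\overline\kappa}$. This is compatibility of the cycle class map with the Riemann--Roch/Gysin pullback along the regular embeddings $X_i\hookrightarrow\mathcal{X}$. Concretely, the pullback $\mathrm{Gr}^rK_0^{W}(\mathcal{X})\to\mathrm{Gr}^rK_0(X_i)$ is derived tensor with $\O_{X_i}$, and on graded pieces the cycle class map (à la Fulton/SGA6, with denominators absorbed since we use $\mathrm{Gr}$ and topological filtration on a regular scheme) commutes with refined Gysin pullback. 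This is presumably where the K-theory lemma of \Cref{KAppendix} is invoked.

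Third, given two extensions ${}^{\mathbb{L}}\mathcal{Z}_1$ and ${}^{\mathbb{L}}\mathcal{Z}_2$, their difference restricts to $0$ on $\mathcal{X}_K$. By the localization sequence $K_0^{\mathcal{X}_\kappa}(\mathcal{X})\to K_0^{\mathcal{X}_\kappa\cup\mathrm{supp}Z}(\mathcal{X})\to K_0^{\mathrm{supp}Z}(\mathcal{X}_K)$, compatible with the codimension filtration up to the usual graded issues, the difference comes from $\mathrm{Gr}^rK_0^{\mathcal{X}_\kappa}(\mathcal{X})$. Using dévissage, $K_0^{\mathcal{X}_\kappa}(\mathcal{X})=G_0(\mathcal{X}_\kappa)$, which is generated by classes pushed forward from the components $X_j$. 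Thus the difference is a sum of $i_{j*}\alpha_j$ with $\alpha_j\in\mathrm{Gr}^{r-1}K_0(X_j)$. Its pullback to $X_i$ is computed by the excess intersection formula: for $i\ne j$ it gives the Gysin map from $X_{\{i,j\}}$ of the restriction of $\alpha_j$, and for $i=j$ it gives $\alpha_i\cdot c_1(N_{X_i})$. Since $\O(\mathcal{X}_\kappa)=\O(\sum_j X_j)$ is trivial, $c_1(N_{X_i})=-\sum_{j\ne i}[X_{ij}]$. Assembling the terms, the image of the difference in $H^{2r}(X^{(0)}_{\overline\kappa},\Lambda(r))$ is exactly $\delta_{1,*}\delta_0^*(\mathrm{cl}(\alpha))$, up to sign, with $\alpha=(\alpha_j)\in H^{2r-2}(X^{(0)}_{\overline\kappa},\Lambda(r-1))^{\Gamma_\kappa}$. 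By \Cref{LiuFactor} this lies in the image of $\Delta^r$ (using (N3) to pass from $\Gamma_\kappa$-invariants to coinvariants), so it is zero in $\coker\Delta^r_\m$.

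Together with the second step, this shows that $\widetilde{{}^{\mathbb{L}}\mathcal{Z}}$ and $\widetilde{Z}$ have the same class in $\coker\Delta^r_\m$. It also shows that $\widetilde{{}^{\mathbb{L}}\mathcal{Z}}$ lies in $B^r(\mathcal{X}_\kappa,\Lambda)^0_\m$, since $\widetilde{Z}$ does and the image of $\delta_{1,*}\delta_0^*$ lies in $B^r(\mathcal{X}_\kappa,\Lambda)^0$ by \Cref{LiuFactor}. The formula $\eta^r_\m(\widetilde{{}^{\mathbb{L}}\mathcal{Z}})=\partial(\mathrm{AJ}_\m(Z))$ then follows from \Cref{LiuAbelJacobi}. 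The main obstacle I expect is the third step, specifically the bookkeeping with the codimension filtration. The difference must actually lie in $F^r$ of $K_0$ with support on $\mathcal{X}_\kappa$ (codimension $r-1$ inside $\mathcal{X}_\kappa$), and the excess intersection computation must be carried out compatibly with the cycle class map on $\mathrm{Gr}$, where $\Lambda$ may not invert $(r-1)!$. I would first try to handle this by working with $G$-theory of $\mathcal{X}_\kappa$ filtered by dimension of support, where the graded pieces receive surjections from cycles.
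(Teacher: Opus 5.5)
Your proposal is correct and follows essentially the same route as the paper: reduce to \Cref{LiuAbelJacobi} via the Zariski closure, write the difference ${}^{\mathbb{L}}\mathcal{Z}-\O_{\mathcal{Z}}$ as $\iota_*(Y)$ with $Y\in\mathrm{Gr}^{r-1}K_0(X^{(0)}_\kappa)$, and identify its pullback to $X^{(0)}_\kappa$ with $-\delta_{1,*}\delta_0^*(Y)$ (transversality for $i\neq j$, and $\O_{\mathcal{X}}(X_i)\rvert_{X_i}\iso\O_{\mathcal{X}}(-\sum_{k\neq i}X_k)\rvert_{X_i}$ for $i=j$), so that it factors through $\Delta^r$ by \Cref{LiuFactor} and dies under $\eta^r_\m$. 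The filtration bookkeeping you flag as the main obstacle is exactly what the paper's appendix supplies in \Cref{KLemma}, namely the exact sequence $\mathrm{Gr}^{r-1}K_0(X^{(0)}_\kappa)\to\mathrm{Gr}^rK_0(\mathcal{X})\to\mathrm{Gr}^rK_0(\mathcal{X}_K)\to0$, proved via $G$-theory filtered by dimension of support as you suggest; this is where the appendix enters, not in your second step, and the paper carries out the self-intersection comparison in $\mathrm{Gr}K_0$ before applying $\mathrm{cl}$.
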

\begin{proof}
    First note that if $\mathcal{Z}$ is as in \Cref{ZariskiClosure}, we have that $\widetilde{\O_{\mathcal{Z}}}$ is simply $\widetilde{Z}.$ By \cref{KLemma}, we have an exact sequence
    \begin{equation*}
        \mathrm{Gr}^{r-1}K_0(X_\kappa^{(0)})\xrightarrow{\iota_*} \mathrm{Gr}^rK_0(\mathcal{X})\to \mathrm{Gr}^rK_0(\mathcal{X}_K)\to0,
    \end{equation*}
    and as ${}^{\mathbb{L}}\mathcal{Z}$ is an extension of $Z,$ this implies that we must have that ${}^{\mathbb{L}}\mathcal{Z}-\O_{\mathcal{Z}}=\iota_*(Y)$ for some $Y\in \mathrm{Gr}^{r-1}K_0(X_\kappa^{(0)}).$ Hence, by \Cref{LiuAbelJacobi}, it suffices check that the image of the composition
    \begin{equation*}
        \gamma\colon \mathrm{Gr}^{r-1}K_0(X_\kappa^{(0)})\otimes\Lambda\to\mathrm{Gr}^rK_0(\mathcal{X})\otimes\Lambda\to \mathrm{Gr}^rK_0(X^{(0)}_\kappa)\otimes\Lambda\xrightarrow{\mathrm{cl}}H^{2r}_\et(X^{(0)}_{\overline{\kappa}},\Lambda(r))_\m^{\Gamma_{\kappa}}
    \end{equation*}
    is contained in $B^r(\mathcal{X}_\kappa,\Lambda(r))_\m$ and that $\eta^r_\m\circ\gamma=0.$

    Consider the following diagram.
    \begin{equation*}
        \begin{tikzcd}
            \mathrm{Gr}^{r-1}K_0(X_\kappa^{(0)})\arrow[dd,"\cdot(-1)","\sim"']\arrow[r]\arrow[rrrd, bend left, "\gamma"]&[-15pt]\mathrm{Gr}^rK_0(\mathcal{X})\arrow[dr]&[-40pt]&[-15pt]\\
            &&\mathrm{Gr}^rK_0(X^{(0)}_\kappa)\arrow[r,"\mathrm{cl}"]&H^{2r}_\et(X^{(0)}_{\overline{\kappa}},\Lambda(r))\\
            \mathrm{Gr}^{r-1}K_0(X^{(0)}_\kappa)\arrow[r,"\delta_0^*"]\arrow[rrru, bend right, "\gamma_0"]&\mathrm{Gr}^{r-1}K_0(X^{(1)}_\kappa)\arrow[ur,"\delta_{1,*}"]&&
        \end{tikzcd}
    \end{equation*}
    By \Cref{LiuFactor}, the map $\gamma_0$ factors through the potential map $\Delta^r.$ In particular, its image is contained in $B^r(\mathcal{X}_\kappa,\Lambda(r))_\m$ and, by \Cref{LiuCohomology}, it composes to $0$ under $\eta_\m^r.$ Thus, we will finish the proof of the theorem by checking the commutativity of the diagram.
    
    Denote by $\gamma_{i,j}\colon \mathrm{Gr}^{r-1}K_0(X_i)\to \mathrm{Gr}^rK_0(X_j)$ the components of $\gamma,$ and by $\beta_{i,j}\colon \mathrm{Gr}^{r-1}K_0(X_i)\to \mathrm{Gr}^rK_0(X_j)$ the components of $-\gamma_0.$ By the sign convention on $\delta_0^*$ and $\delta_{1,*},$ we have that $\beta_{i,j}$ is
    \begin{equation*}
        \beta_{i,j}\colon\begin{cases}
        \mathrm{Gr}^{r-1}K_0(X_i)\xrightarrow{\iota_{i,j}^*}\mathrm{Gr}^{r-1}K_0(X_i\cap X_j)\xrightarrow{\iota_{j,i,*}}\mathrm{Gr}^rK_0(X_j)&\text{if }i\neq j,\\
        \mathrm{Gr}^{r-1}K_0(X_i)\xrightarrow{\bigoplus_{k\neq i}\iota_{i,k}^*}\bigoplus_{k\neq i}\mathrm{Gr}^{r-1}K_0(X_i\cap X_k)\xrightarrow{-\sum_{k\neq i}\iota_{i,k,*}}\mathrm{Gr}^rK_0(X_i)&\text{if }i=j.
        \end{cases}
    \end{equation*}
    For $i\neq j,$ we have that $\beta_{i,j}=\gamma_{i,j}$ since $X_i$ and $X_j$ intersect transversely. For $i=j,$ note that
    \begin{equation*}
        \O_{\mathcal{X}}(X_1+\cdots+X_m)=\O_{\mathcal{X}}(\mathcal{X}_\kappa)=s^*(\O_{\Spec\O_K}(\kappa))\iso s^*\O_{\Spec\O_K}=\O_{\mathcal{X}}
    \end{equation*}
    where
    $s\colon\mathcal{X}\to \Spec\O_K$ is the structure morphism. This implies that
    \begin{equation*}
        \O_{\mathcal{X}}(X_i)\rvert_{X_i}\iso\O_{\mathcal{X}}(-\sum_{k\neq i}X_k)\rvert_{X_i},
    \end{equation*}
    and thus that $\beta_{i,i}=\gamma_{i,i}.$
\end{proof}

\section{Unitary Rapoport--Zink spaces and special cycles}\label{RZChapter}
Let $p$ be an odd prime, $F^+$ a finite extension of $\Q_p$ with uniformizer $\varpi,$ and $F/F^+$ an unramified quadratic extension of $F^+.$ We denote $(a\mapsto a^\cplx)\in\mathrm{Gal}(F/F^+)$ the nontrivial automorphism.

We denote by $q$ the cardinality of the residue field of $F^+,$ by $\breve{F}$ the completion of the maximal unramified extension of $F,$ with residue field $\kappa\defeq\overline{\F}_p.$

\subsection{Unitary Rapoport--Zink spaces}\label{RZ-RZ}
Let $N\ge1$ be an integer.
\begin{definition}\label{pDivDef}
Fix $0\le t\le N$ and $r,s\in\Z_{\ge0}$ with $N=r+s.$ For a $\Spf\O_{\breve{F}}$-scheme $S,$ a \emph{Hermitian $\O_F$-module of signature $(r,s)$ and type $t$ over $S$} is a triple $(X,\iota,\lambda)$ where
\begin{itemize}
    \item $X$ is a formal $p$-divisible $\O_{F^+}$-module over $S$ of dimension $N$ and relative height $2N.$ We assume that $X$ is strict, i.e. the action of $\O_{F^+}$ on $\Lie X$ coincides with the structure map $\O_{F^+}\to\O_S.$
    \item $\iota\colon\O_F\to\mathrm{End}(X)$ is an action of $\O_F$ extending the action of $\O_{F^+}$ and satisfying the \emph{Kottwitz signature condition}: for $a\in\O_F,$ the characteristic polynomial of $\iota(a)$ on $\Lie X$ is $(T-a)^r(T-a^\cplx)^s\in\O_S[T],$
    \item $\lambda\colon X\to X^\vee$ a polarization on $X$ such that $\ker\lambda\subseteq X[\iota(\varpi)]$ has order $q^{2t}$ and such that for all $a\in\O_F,$ the following diagram commutes.
    \begin{equation*}
        \begin{tikzcd}
        X\arrow{r}{\lambda}\arrow{d}{\iota(a)}&X^\vee\arrow{d}{\iota(a^\cplx)^\vee}\\
        X\arrow{r}{\lambda}&X^\vee
        \end{tikzcd}
    \end{equation*}
\end{itemize}
We denote $\pDiv{F/F^+}{[t],(r,s)}(S)$ the category of such objects, where a morphism $(X_1,\iota_1,\lambda_1)\to (X_2,\iota_2,\lambda_2)$ is a $\O_F$-linear quasi-isogeny $\varphi\colon X_1\dashrightarrow X_2$ such that $\varphi^*(\lambda_2)=\lambda_1.$
\end{definition}

\begin{proposition}
    For $0\le t\le N$ and $r+s=N,$ the category $\pDiv{F/F^+}{[t],(r,s)}(\kappa)$ is nontrivial, and all its objects are quasi-isogenous.
\end{proposition}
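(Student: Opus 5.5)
We will pass to relative (covariant) Dieudonné theory and reduce both claims to statements about Hermitian spaces over $F^+$ and isocrystals. Since $F/F^+$ is unramified, the relative Dieudonné module $M$ of $X$ over $\O_{\breve{F}^+}$ (relative to $\O_{F^+}$, which is legitimate since $X$ is strict) splits under $\O_F\otimes_{\O_{F^+}}\O_{\breve F}\iso\O_{\breve F}\times\O_{\breve F}$ as $M=M_0\oplus M_1$. Each $M_i$ is free of rank $N$ over $\O_{\breve F}$. The relative Frobenius $F$ and Verschiebung $V$ swap the two pieces. The signature condition becomes $[M_1:VM_0]=r$ and $[M_0:VM_1]=s$ (up to normalisation of which index is which). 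The polarization gives a perfect-up-to-$\varpi$ pairing $M_0\times M_1\to\O_{\breve F}$, with $M_1^\vee\supseteq M_0$ of colength $t$. Equivalently, $\tau=\varpi V^{-1}F$ restricted to $N_0=M_0[1/p]$ is a $\sigma^2$-semilinear bijection. We define $C=N_0^{\tau}$, an $F$-vector space of dimension $N$ with a nondegenerate Hermitian form induced from the polarization. Since $F/F^+$ is an unramified quadratic extension, the Dieudonné–Manin classification applies: the isocrystal of $X$ is isoclinic of slope $1/2$ because of the polarization and the rank/dimension constraints.

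\textbf{Existence.} We will write down an explicit object. Take the basic isoclinic isocrystal of slope $1/2$ and height $2N$ with the $\O_F$-action and a Hermitian structure. Then choose lattices $M_0,M_1$ satisfying the index conditions: $r$ and $s$ for Lie, and $t$ for the polarization. Concretely, we take $X=\mathbb{E}^{r}\times\overline{\mathbb{E}}^{s}$, up to adjusting, where $\mathbb{E}$ is the canonical lift of signature $(1,0)$ (a Lubin–Tate $\O_F$-module). This gives types $0$. To reach type $t$, we replace $t$ factors by an $\O_F$-stable isogenous quotient, for instance by $\mathbb{E}[\varpi]$-related subgroups inside $\mathbb{E}\times\overline{\mathbb{E}}$-blocks, or more simply by modifying the lattice $M_0$ to $M_0'$ with $M_0\subset M_0'\subset\varpi^{-1}M_0$ of length $t$. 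We then check with Dieudonné lattice conditions that the signature is preserved; this needs $t\le N$, and is compatible with any $(r,s)$ once the pairing is rescaled. Nontriviality then follows by Dieudonné theory over the perfect field $\kappa$.

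\textbf{Quasi-isogeny.} For two objects, the isocrystals are isomorphic as $\O_F$-isocrystals, since both are isoclinic of slope $1/2$. A quasi-isogeny respecting polarizations exists iff the associated Hermitian spaces $C$ and $C'$ are isometric, possibly after scaling by $F^{+\times}$. Hermitian spaces over the local field $F$ of dimension $N$ are classified by the discriminant in $F^{+\times}/\mathrm{N}F^\times\iso\Z/2$, given by the parity of the valuation since $F/F^+$ is unramified. The main obstacle is to show that this discriminant is determined by $(N,t)$. We will handle this by lattice theory. The lattice $\Lambda=M_0^{\tau}$ is not $\tau$-stable in general, but the Deligne–Lusztig/Vollaard–Wedhorn type argument shows there is a $\tau$-stable vertex lattice $L\subset C$ with $L^\vee/L$ of length $t'\equiv t\pmod 2$. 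Then $\mathrm{val}(\mathrm{disc}\,C)\equiv t'\equiv t$. In practice we compute $\mathrm{disc}$ directly as $\det$ of the pairing on $M_0\times M_1$: it has valuation $t$, and the valuation of a determinant is unchanged mod $2$ by $\sigma$-twisting. Hence the parity of $t$ fixes the isometry class of $C$, so any two objects are isometric and therefore quasi-isogenous.
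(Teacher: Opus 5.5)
\textbf{The gap: isoclinicity cannot be derived from the axioms.} Your step ``the isocrystal of $X$ is isoclinic of slope $1/2$ because of the polarization and the rank/dimension constraints'' fails. The polarization only makes the slopes symmetric under $\lambda\mapsto 1-\lambda$, and formality only excludes slopes $0$ and $1$. Isoclinicity does hold for signatures $(N,0)$ and $(0,N)$, where $F^2M_0=\varpi M_0$, but not in general. Without it, the quasi-isogeny claim is actually false.

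Here is a counterexample with $F^+=\Q_p$, $N=4$, $(r,s)=(3,1)$, $t=0$; this is precisely the signature type $(N-1,1)$ used later in the paper. Let $Y$ be the formal group over $\kappa$ of height $4$ and dimension $1$. Its endomorphism ring is the maximal order in the division algebra of invariant $1/4$, so it contains $\Z_{p^4}\supset\O_F$. Let $\O_F$ act on the line $\Lie Y$ through the first embedding, so $Y$ has signature $(1,0)$. Then $(Y^\vee,\iota_Y(\,\cdot^\cplx)^\vee)$ has signature $(2,1)$. Hence $X=Y\times Y^\vee$, with the suitably signed swap as principal polarization, is an object of $\pDiv{F/F^+}{[0],(3,1)}(\kappa)$. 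Its slopes are $1/4$ and $3/4$, so it is not quasi-isogenous to the supersingular object built from three signature-$(1,0)$ and one signature-$(0,1)$ Lubin--Tate factors.

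So supersingularity must be \emph{assumed}, not derived. It is the setting of the references the paper cites (Vollaard works with supersingular unitary Dieudonn\'e modules). It also holds for the framing objects the paper actually uses in $p$-adic uniformization, which come from supersingular points. Your proof should state this hypothesis and use it exactly where you currently claim isoclinicity.

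\textbf{Corrections to the rest of your outline.} Once supersingularity is imposed, the remainder is essentially the Dieudonn\'e-theoretic argument behind the paper's citation, but several details need fixing.

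(i) With $FV=VF=\varpi$, your $\tau=\varpi V^{-1}F$ equals $F^2$. This has slope $1$ on $N_0$ and no nonzero fixed vectors. The correct operator is $\tau=V^{-1}F=\varpi^{-1}F^2$. Supersingularity is exactly what makes it slope $0$, so that $C=N_0^\tau$ satisfies $C\otimes_F\breve{F}=N_0$.

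(ii) The hermitian form on $C$ is $\delta\langle x,Fy\rangle$ with $\delta\in\O_F^\times$ and $\delta^\cplx=-\delta$; it is not the pairing $M_0\times M_1$ itself. Its Gram determinant in an $\O_{\breve F}$-basis of $M_0$ therefore has valuation $t+[M_1:FM_0]$, i.e.\ $t$ plus $r$ or $s$ depending on normalization, not $t$. For example, with $N=1$ and $t=0$, the signatures $(1,0)$ and $(0,1)$ give non-isometric $C$. This parity is still constant on the category, since a change of basis by $g$ multiplies the determinant by $\det g\cdot\sigma(\det g)$, so your conclusion survives.

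(iii) No scaling by $F^{+\times}$ is allowed, because morphisms must satisfy $\varphi^*\lambda_2=\lambda_1$ exactly. An isometry $\phi\colon C\rightiso C'$ does give such a $\varphi$: extend it $\breve F$-linearly to $N_0$, and to $N_1=FN_0$ via $Fx\mapsto F\phi(x)$.

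(iv) For existence, modifying $M_0$ alone generally destroys $F$- and $V$-stability. Instead, take $r$ copies of the signature-$(1,0)$ and $s$ copies of the signature-$(0,1)$ Lubin--Tate object, and multiply the product polarization by $\varpi$ on $t$ of the factors. The kernel is a product of $t$ copies of $\E[\varpi]$, of order $q^{2t}$ and contained in $X[\varpi]$.
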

\begin{proof}
    This follows by relative Dieudonn\'e theory, see \cite[Lemma 1.14, Proposition 1.15]{Vollaard} and \cite[Section 5]{RSZ-AT}.
\end{proof}
\begin{remark}
    The uniqueness up to quasi-isogeny fails if $N$ is odd and $F/F^+$ is ramified, see \cite[Section 7]{RSZ-AT}.
\end{remark}

\begin{definition}
    Fix an object $(\X^{[t]}_N,\iota_{\X^{[t]}_N},\lambda_{\X^{[t]}_N})\in\pDiv{F/F^+}{[t],(r,s)}(\kappa)$ as the \emph{framing object}. The \emph{unitary Rapoport--Zink space of signature $(r,s)$ and type $t$} is the functor
    \begin{equation*}
        \N{F/F^+}{[t],(r,s)}\to\Spf\O_{\breve{F}}
    \end{equation*}
    for which $\N{F/F^+}{[t],(r,s)}(S)$ is the set of isomorphism classes of tuples $(X,\iota,\lambda,\rho)$ where
    \begin{itemize}
        \item $(X,\iota,\lambda)\in\pDiv{F/F^+}{[t],(r,s)}(S)$ is a Hermitian $\O_F$-module of signature $(r,s)$ and type $t$ over $S,$
        \item if we let $\overline{S}\defeq S_{\kappa},$ then $\rho\colon X\times_S\overline{S}\dashrightarrow\X^{[t]}_N\times_{\kappa}\overline{S}$ is a \emph{framing}, i.e. a morphism in the category $\pDiv{F/F^+}{[t],(r,s)}(\overline{S})$ (which is necessarily of height $0$).
    \end{itemize}
\end{definition}
\begin{theorem}[{\cite[Theorem 2.16]{Rapoport-Zink}, \cite[Proposition 2.17]{Mihatsch}}]
    $\N{F/F^+}{[t],(r,s)}$ is representable by a separated formal scheme locally formally of finite type over $\Spf \O_{\breve{F}}$ of relative dimension $rs.$
\end{theorem}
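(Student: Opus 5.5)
The plan follows the Rapoport--Zink strategy, supplemented by Mihatsch's deformation-theoretic dimension count; the statement then has two parts, representability and the relative dimension $rs$.

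\emph{Representability.} First I will rigidify the problem. Since $\rho$ is a quasi-isogeny of height $0$, there is an integer $m$ (locally constant on $S$) such that $\varpi^m\rho$ and $\varpi^m\rho^{-1}$ are honest isogenies. This gives an exhaustion of $\N{F/F^+}{[t],(r,s)}$ by closed subfunctors $\N{}{}_{\le m}$. Next I will apply the general representability theorem \cite[Theorem 2.16]{Rapoport-Zink} for RZ data of PEL type. Concretely, the functor parametrizing $p$-divisible groups with a quasi-isogeny to $\X^{[t]}_N$ is representable, by rigidity of quasi-isogenies (Drinfeld) together with Grothendieck--Messing/Dieudonn\'e theory for the lifts. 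Imposing the extra structure gives closed conditions. The $\O_F$-action is closed because it is transported via $\rho$, and extending an endomorphism over a nilpotent thickening is closed. Compatibility of the polarization is closed, since $\rho^*\lambda_{\X}=\lambda$ is a condition on quasi-isogenies. The Kottwitz condition is closed. The condition that $\ker\lambda\subseteq X[\iota(\varpi)]$ with order $q^{2t}$ is open and closed. Separatedness and being locally formally of finite type follow as in \cite{Rapoport-Zink}, because each $\N{}{}_{\le m}$ has reduced locus a projective scheme; here the local finiteness of the index set of lattices in the Dieudonn\'e space bounds the components.

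\emph{Dimension.} Here I will use Grothendieck--Messing theory. For a point $x$ over $\kappa$ with Hermitian Dieudonn\'e module $M=M_0\oplus M_1$ (the decomposition by the two embeddings of $\O_F$), deformations correspond to lifts of the Hodge filtration $\mathrm{Fil}\subset M\otimes\O_S$ that are $\O_F$-stable, satisfy the signature condition, and are isotropic for the pairing induced by $\lambda$. Self-duality is the issue: because $\ker\lambda$ is of type $t$, the pairing identifies $M_0$ with the dual of $M_1$ only up to a cokernel of length $t$. In the self-dual case $t=0$ the isotropy condition determines $\mathrm{Fil}_1$ from $\mathrm{Fil}_0$, so the deformation space is the Grassmannian $\mathrm{Gr}(r,N)$, which is formally smooth of dimension $rs$. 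For general $t$ the local model (in the sense of Pappas--Rapoport) is flat of relative dimension $rs$; this is the content of \cite[Proposition 2.17]{Mihatsch}, which I would invoke, or verify from the local model via the Görtz flatness argument for unramified unitary parahorics.

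I expect the main obstacle to be this flatness and pure dimensionality for $t\neq0$, because the naive tangent space computation gives only an upper bound there. The plan is to identify the completed local rings with those of the local model $M^{\mathrm{loc}}$ via the local model diagram, and then to cite Görtz's theorem that $M^{\mathrm{loc}}$ is flat with reduced special fiber of dimension $rs$.
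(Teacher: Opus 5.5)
Your outline is correct and follows the same route as the paper, which gives no proof of its own and simply cites Rapoport--Zink (Theorem 2.16) for representability and Mihatsch (Proposition 2.17) for the relative setting and the dimension; your local-model/G\"ortz flatness argument for $0<t<N$ is the standard way to justify the ``relative dimension $rs$'' claim there. Three points to tighten: (i) the objects are \emph{strict} formal $\O_{F^+}$-modules with $X^\vee$ the relative dual, so strictness must be added to your list of closed conditions; (ii) the Hodge-filtration argument must be run in relative Dieudonn\'e/Grothendieck--Messing theory rather than directly via Rapoport--Zink's absolute PEL setting, which is why Mihatsch is cited alongside them; (iii) $\ker\lambda\subseteq X[\iota(\varpi)]$ is closed but not open (it says $\varpi\lambda^{-1}$ is an isogeny), though closedness is all representability needs.
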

\begin{definition}
    Given a framing object $(\X^{[t]}_N,\iota_{\X^{[t]}_N},\lambda_{\X^{[t]}_N})\in\pDiv{F/F^+}{[t],(r,s)}(\kappa),$ we consider the algebraic group $J^{[t]}$ over $\O_{F^+}$ given by $\mathrm{Aut}(\X^{[t]}_N,\iota_{\X^{[t]}_N},\lambda_{\X^{[t]}_N}).$ This is such that $J^{[t]}(F^+)$ acts on $\N{F/F^+}{[t],(r,s)}.$
\end{definition}

\begin{theorem}[Canonical lifting, \cite{Gross}]
We have $\N{F/F^+}{[0],(1,0)}\iso\Spf \O_{\breve{F}}$ and $\N{F/F^+}{[0],(0,1)}\iso\Spf \O_{\breve{F}}.$
\end{theorem}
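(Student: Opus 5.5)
The plan is to reduce both statements to the theory of formal $\O_F$-modules of height $1$, that is, to Lubin--Tate theory for the unramified quadratic extension $F$, in the form established by Gross. We treat signature $(1,0)$; the case $(0,1)$ follows by precomposing $\iota$ with $a\mapsto a^\cplx$, which swaps the two signature conditions. In that case $N=1$, $t=0$, so the polarization $\lambda$ is principal. The object $X$ has dimension $1$ and $\O_{F^+}$-height $2$, and the Kottwitz condition says that $\O_F$ acts on $\Lie X$ through the structure map $\O_F\to\O_{\breve{F}}\to\O_S$. So $X$ is a strict formal $\O_F$-module of $\O_F$-height $1$ over $S$. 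The framing object over $\kappa$ is then the unique such module up to isomorphism, namely the Lubin--Tate module of $F$ reduced to $\kappa$. Its $\O_F$-linear endomorphism ring is $\O_F$ and its quasi-endomorphisms form $F$.

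The first step is to show that $\lambda$ carries no additional moduli. Over $\kappa$, the space of quasi-isogenies $X\dashrightarrow X^\vee$ satisfying the Rosati compatibility with $a\mapsto a^\cplx$ is a one-dimensional $F^+$-space, and principal polarizations form a torsor under $\O_{F^+}^\times$. For a deformation, a polarization lifts if and only if it lifts as a homomorphism, because symmetry is a closed and in fact automatic condition. Since $\mathrm{End}_{\O_F}$ is preserved under deformation for height-one modules (rigidity), a principal $\lambda$ on the special fiber extends uniquely to every deformation of $(X,\iota)$.

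The second step is to identify the framings. Quasi-isogenies of height $0$ of the framing object that respect $\lambda$ form $\{u\in F^\times : u u^\cplx=1,\ v(u)=0\}$, which is exactly $J^{[0]}(F^+)$. Such a $u$ is an automorphism and can be absorbed into an isomorphism of tuples. Hence $\N{F/F^+}{[0],(1,0)}(S)$ is the set of isomorphism classes of deformations of $(X,\iota)$ over $S$, i.e. liftings of the special fiber to $S$ as strict formal $\O_F$-modules, where $\rho$ is forced to be an isomorphism.

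The third step is the main point: Lubin--Tate theory for $\O_F$ shows that the universal deformation space of a height-one strict formal $\O_F$-module over $\kappa$ is $\Spf\O_{\breve{F}}$. Concretely, there is no nontrivial deformation, because the deformation space has dimension equal to height minus one, which is $0$. This is Gross's canonical lifting. One checks this either by Grothendieck--Messing, using that the Hodge filtration is forced by the signature condition, or by citing Lubin--Tate directly. Combining it with the first two steps gives $\N{F/F^+}{[0],(1,0)}\iso\Spf\O_{\breve{F}}$.

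The main obstacle is the first step: showing that the principal polarization and the framing introduce neither extra components nor extra automorphisms. The subtle part is verifying that the relevant group $J^{[0]}(F^+)$ is compact, so that no $\Z$-family of components appears. This is where the fact that a height-$0$ quasi-isogeny is forced to be an automorphism is used.
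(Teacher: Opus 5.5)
Your proposal is correct and is the standard argument behind the paper's citation of Gross, since the paper gives no proof of its own: the signature condition makes $X$ a strict formal $\O_F$-module of height one, Lubin--Tate theory gives $\Spf\O_{\breve{F}}$ as its deformation space, and the principal polarization and framing add nothing because $J^{[0]}(F^+)=U(1)(F^+)=J^{[0]}(\O_{F^+})$. Two points you use only implicitly should be stated: lifting $\lambda$ requires checking that $(X^\vee,\iota(\cdot^{\cplx})^\vee)$ again has signature $(1,0)$, so that it is a deformation of the same object; and the claim that $\rho$ is an isomorphism over an arbitrary $S$, not just over $\kappa$ or Artin local rings, is best obtained from the representability theorem, since a single $\kappa$-point forces $\mathcal{N}^{\mathrm{red}}=\Spec\kappa$ and hence $\mathcal{N}=\Spf A$ with $A$ pro-representing your deformation functor.
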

More generally, we have the following consequence of the theories of Serre--Tate and Grothendieck--Messing (see \cite[Proposition 3.4.8]{LTXZZ}).
\begin{theorem}\label{CanonicalLifting+}
    If either $r=0$ or $s=0,$ we have
    \begin{equation*}
        \N{F/F^+}{[t],(r,s)}=J^{[t]}(F^+)/J^{[t]}(\O_{F^+})\times\Spf\O_{\breve{F}}
    \end{equation*}
    where the action of $J^{[t]}(F^+)$ is on the left component, and the framing object corresponds to the identity coset.
\end{theorem}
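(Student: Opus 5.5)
The plan is to show that every point of $\N{F/F^+}{[t],(r,s)}$, in the case $s=0$ (the case $r=0$ being symmetric after swapping the roles of $a$ and $a^\cplx$), is a deformation-theoretically rigid object. The first step is the tangent-space computation. Grothendieck--Messing deformations of $(X,\iota,\lambda)$ along a square-zero thickening are governed by lifts of the Hodge filtration $\omega_{X^\vee}\subseteq D(X)$. Compatibility with $\iota$ splits $D(X)=D_0\oplus D_1$ according to the two embeddings $\O_F\to\O_{\breve F}$. The signature condition $(N,0)$ forces $\Lie X=D_0/\omega_0$ with $\omega_0=0$ and $\omega_1=D_1$. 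Hence the filtration is determined in each graded piece and has a unique lift, so the deformation functor of any point over a perfect field of characteristic $p$ is pro-represented by $\Spf W$, with $W$ the appropriate Witt ring $\O_{\breve F}$. The polarization condition imposes no obstruction, since its type-$t$ kernel condition is closed and deformations of $\lambda$ are automatic once $X$ lifts, because $\omega$ is forced to be isotropic.

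Combining this with Serre--Tate, which lets me identify deformations of the $p$-divisible group with deformations of the triple, I get that $\N{F/F^+}{[t],(N,0)}$ is formally étale over $\Spf\O_{\breve F}$. By the representability theorem, it is a disjoint union of copies of $\Spf\O_{\breve F}$ indexed by its $\kappa$-points. So it remains to identify $\N{}{}(\kappa)$ with $J^{[t]}(F^+)/J^{[t]}(\O_{F^+})$.

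For this identification, $J^{[t]}(F^+)$ acts on $\kappa$-points by $g\cdot(X,\rho)=(X,g\circ\rho)$, and the stabilizer of the framing point $(\X,\mathrm{id})$ is the set of quasi-isogenies that are actual automorphisms, i.e.\ $J^{[t]}(\O_{F^+})$. The key remaining step is transitivity: given $(X,\iota,\lambda,\rho)$ over $\kappa$, I need an isomorphism $X\iso\X^{[t]}_N$ compatible with $\iota$ and $\lambda$; then $\rho$ composed with its inverse lies in $J^{[t]}(F^+)$. I expect this to be the main obstacle. I would handle it with relative Dieudonné theory. With signature $(N,0)$, the Dieudonné module has $F$ mapping $M_0$ isomorphically onto $M_1$ and $V$ mapping $M_0$ isomorphically onto $M_1$. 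Hence $\tau=V^{-1}F$ (suitably normalized by $\varpi$) is a $\sigma^2$-linear bijection of $M_0$, and $M_0^{\tau}$ is an $\O_F$-lattice in the $\tau$-invariants of the isocrystal. The polarization of type $t$ translates into a hermitian lattice $L=M_0^\tau$ with $L^\vee/L\iso(\O_F/\varpi)^t$, or with $\varpi L^\vee\subseteq L$ depending on normalization.

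The problem therefore reduces to the fact that all such vertex lattices of type $t$ in a fixed hermitian space over the unramified extension $F/F^+$ are conjugate under the unitary group. This holds by the standard classification of hermitian $\O_F$-lattices: they are diagonalizable with unit or $\varpi$ entries, and the hermitian space is determined by its discriminant, with norms from $F$ surjecting onto units. Undoing Dieudonné theory gives the desired isomorphism, and also confirms that $J^{[t]}$ is the unitary group of $L$, so that $J^{[t]}(\O_{F^+})$ is the stabilizer. Equivariance of the resulting decomposition is immediate from the construction.
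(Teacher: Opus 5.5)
Your proposal is correct and follows essentially the route the paper indicates. The paper gives no argument of its own: it cites the statement as a consequence of Serre--Tate and Grothendieck--Messing (\cite[Proposition 3.4.8]{LTXZZ}), i.e.\ rigidity of deformations in signature $(N,0)$ or $(0,N)$, which makes the space a disjoint union of copies of $\Spf\O_{\breve F}$. Your relative Dieudonn\'e argument then supplies the identification of those components with $J^{[t]}(F^+)/J^{[t]}(\O_{F^+})$, which the citation leaves implicit: you reduce transitivity on $\kappa$-points to the fact that all type-$t$ vertex lattices over the unramified extension $F/F^+$ are isometric, using that $\mathrm{Nm}\colon\O_F^\times\to\O_{F^+}^\times$ is surjective.
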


\subsection{Kudla--Rapoport cycles}
Let $(\E,\iota_{\E},\lambda_{\E})\in\pDiv{F/F^+}{[0],(0,1)}(\kappa),$ and denote by $\mathcal{E}$ the canonical lifting of $\E,$ which is the universal object for the Rapoport--Zink spaces with framing objects $\mathbb{E}.$

\begin{definition}
    The spaces of \emph{special quasi-homomorphisms} is the $F$-vector space
    \begin{equation*}
        \mathbb{V}^{[t]}_N\defeq\mathrm{Hom}_{\O_F}(\E,\X^{[t]}_N)\otimes\Q
    \end{equation*}
    equipped with the Hermitian form $\langle\cdot,\cdot\rangle_{\mathbb{V}^{[t]}_N}$ such that the following diagram commutes
    \begin{equation*}
        \begin{tikzcd}
        \E\arrow{r}{x}\arrow{d}{\iota_{\E}(\langle x,y\rangle_{\mathbb{V}^{[t]}_N})}&\X^{[t]}_N\arrow{r}{\lambda_{\X^{[t]}_N}}&(\X^{[t]}_N)^\vee\arrow{d}{y^\vee}\\
        \E\arrow{rr}{\lambda_{\E}}&&\E^\vee
        \end{tikzcd}
    \end{equation*}
\end{definition}
\begin{remark}[{\cite[Remark 2.1.0.3]{Zhiyu}}]\label{J=U(V)Remark}
    By relative Dieudonn\'e theory, we have that the base change $J^{[t]}\otimes_{\O_{F^+}}F^+$ is isomorphic to the unitary group $U(\mathbb{V}^{[t]}_N),$ and that the isomorphism class of $\mathbb{V}^{[t]}_N$ is determined by the fact that it does not contain vertex lattices of type $t$ as in \Cref{DefVertexLattice}. In particular, $\mathbb{V}^{[0]}$ resp. $\mathbb{V}^{[1]}$ is non-split resp. split.
\end{remark}

\begin{definition}
    For a $\O_F$-submodule $L\subseteq\mathbb{V},$ we define the \emph{Kudla--Rapoport cycle} $\ZZ_-(L)\to\N{F/F^+}{[t],(r,s)}$ to be the closed formal subscheme of $\N{F/F^+}{[t],(r,s)}$ representing the following functor: $\ZZ_-(L)(S)\subseteq\N{F/F^+}{[t],(r,s)}(S)$ is the subset of $(X,\iota,\lambda,\rho)$ such that for any $x\in L,$ the composition
    \begin{equation*}
        \mathcal{E}_S\times_S\overline{S}\xdashrightarrow{\rho_{\mathcal{E}}}\E\times_{\kappa}\overline{S}\xdashrightarrow{x}\X^{[t]}_N\times_{\kappa}\overline{S}\xdashrightarrow{\rho^{-1}}X\times_S\overline{S}
    \end{equation*}
    extends to a homomorphism $\mathcal{E}_S\to X.$

    Similarly, we define the \emph{Kudla--Rapoport cycle} $\ZZ_+(L)\to\N{F/F^+}{[t],(r,s)}$ to be the closed formal subscheme of $\N{F/F^+}{[t],(r,s)}$ representing the following functor: $\ZZ_+(L)(S)\subseteq\N{F/F^+}{[t],(r,s)}(S)$ is the subset of $(X,\iota,\lambda,\rho)$ such that for any $x\in L,$ the composition
    \begin{equation*}
        \mathcal{E}_S\times_S\overline{S}\xdashrightarrow{\rho_{\mathcal{E}}}\E\times_{\kappa}\overline{S}\xdashrightarrow{x}\X^{[t]}_N\times_{\kappa}\overline{S}\xdashrightarrow{\rho^{-1}}X\times_S\overline{S}\xrightarrow{\lambda_X}X^\vee\times_S\overline{S}
    \end{equation*}
    extends to a homomorphism $\mathcal{E}_S\to X^\vee.$
\end{definition}

\begin{example}\label{n0KR}
In the case $r=0$ or $s=0,$ consider $\Lambda\subseteq\mathbb{V}^{[t]}_N$ to be the lattice
\begin{equation*}
    \Hom_{\O_F}(\mathbb{E},\mathbb{X}^{[t]}_N)\subseteq \mathbb{V}^{[t]}_N.
\end{equation*}
Then under \Cref{CanonicalLifting+} and \Cref{J=U(V)Remark}, $\ZZ_-(L)$ resp. $\ZZ_+(L)$ corresponds to the subset of $g\in J^{[t]}(F^+)/J^{[t]}/(\O_{F^+})$ such that $L\subseteq g\Lambda$ resp. $L\subseteq g\Lambda^\vee.$
\end{example}

\subsection{Case of signature \texorpdfstring{$(N-1,1)$}{(N-1,1)}}\label{RZ-BT}
From now on, we assume that $(r,s)=(N-1,1).$
\begin{proposition}[{\cite[Proposition 5.9]{Cho}}\footnote{We note that the cycles $\ZZ_-(L)$ resp. $\ZZ_+(L)$ are denoted $\ZZ(L)$ resp. $\mathcal{Y}(L)$ in \cite{Cho}.}]
    When $L\subseteq\mathbb{V}^{[t]}_N$ is a $\O_F$-lattice of rank $1,$ both $\ZZ_-(L)$ and $\ZZ_+(L)$ are Cartier divisors on $\N{F/F^+,\O_{\breve{F}}}{[t],(N-1,1)}.$
\end{proposition}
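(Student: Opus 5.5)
The plan is to follow the strategy of Kudla--Rapoport (and Terstiege, Howard), adapted to arbitrary type $t$ as in Cho. Write $L=\O_Fx$. For $\ZZ_-(L)$, the condition is that a single quasi-homomorphism $x\colon\mathcal{E}\dashrightarrow X$ lifts to a homomorphism. Grothendieck--Messing theory shows that this is cut out, locally, by one equation. Concretely, suppose $S'\to S$ is a square-zero thickening with trivial divided powers and the point over $S$ lies in $\ZZ_-(L)$. Then $x$ induces a map of Dieudonn\'e crystals $\mathbb{D}(\mathcal{E})_{S'}\to\mathbb{D}(X)_{S'}$. It lifts to $S'$ if and only if the image of the Hodge filtration $\mathrm{Fil}(\mathcal{E})_{S'}$ lands in the lift of $\mathrm{Fil}(X)$ corresponding to the point over $S'$.

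Because $\mathcal{E}$ has signature $(0,1)$, only one isotypic component of $\mathbb{D}(\mathcal{E})$ contributes to the Hodge filtration, and that component is a line. Because $X$ has signature $(N-1,1)$, the relevant quotient $\mathbb{D}(X)/\mathrm{Fil}$ on that isotypic piece is also a line. The obstruction to lifting is therefore the vanishing of a single section of a line bundle. This shows that $\ZZ_-(L)$ is locally cut out by at most one equation in the formal scheme $\N{F/F^+,\O_{\breve F}}{[t],(N-1,1)}$. The argument for $\ZZ_+(L)$ is identical, applied to $\lambda_X\circ x\colon\mathcal{E}\dashrightarrow X^\vee$. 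Here we use that $X^\vee$ also has signature $(N-1,1)$ up to swapping the eigenspaces, with $\mathcal{E}$ replaced accordingly. Equivalently, the conditions for $\ZZ_-$ and $\ZZ_+$ are interchanged by the duality of the Dieudonn\'e lattices $M\subseteq M^\vee$.

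To pass from ``locally one equation'' to ``Cartier divisor'' we need that equation to be a non-zero-divisor, i.e.\ that $\ZZ_\pm(L)$ contains no irreducible component of the regular formal scheme $\N{}{}$. Regularity of $\N{}{}$ for $t\neq 0,N$ (Cho, via local models of the relevant parahoric) means each local ring is a domain. So it suffices that the equation is nonzero at each point, which amounts to showing that $\ZZ_\pm(L)$ does not contain a whole formal neighbourhood of any point. I would argue this two ways.

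First, the generic fiber: over the rigid generic fiber, a quasi-homomorphism $x\neq0$ lifts only on a proper analytic subset, namely the locus where $x$ is compatible with the Hodge filtration. This locus has codimension one because the period map is étale onto an open of a projective space or flag variety, and the condition is a hyperplane condition on the period.

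Second, the reduced locus: via the Bruhat--Tits stratification of $\N{}{}_{\red}$ (Vollaard--Wedhorn, Cho), each stratum is a Deligne--Lusztig variety attached to a vertex lattice. The set of points of $\ZZ_\pm(L)$ is a union of those strata whose vertex lattices contain $x$ (respectively have dual containing $x$). This set never contains a neighbourhood in the full formal scheme, since the deformation direction transverse to the special fiber destroys the lift.

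The main obstacle is this last non-zero-divisor step at points of the special fiber where $\N{}{}$ is not formally smooth, i.e.\ the singular points of the local model when $0<t<N$. There the Grothendieck--Messing computation must be done on an explicit local-model chart. I would write the local model of the $\mathrm{GU}$-parahoric concretely, following Cho, and check that the obstruction section is a nonzero element of the complete local ring. To do so, I would exhibit an explicit one-parameter deformation, for instance toward the canonical-lifting direction, along which $x$ fails to lift.
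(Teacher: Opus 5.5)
Your strategy is the right one. It is essentially the Kudla--Rapoport argument that the cited result of Cho adapts; the paper itself gives no proof beyond the citation. Grothendieck--Messing, applied to the square-zero thickening $A/\mathfrak{m}I\to A/I$ together with Nakayama, shows that the ideal $I$ of $\ZZ_\pm(L)$ is locally principal. Regularity of $\mathcal{N}=\N{F/F^+,\O_{\breve F}}{[t],(N-1,1)}$ means its local rings are domains, so the Cartier property reduces to the local equation being nonzero at every point. As written, however, one of your two non-vanishing arguments is wrong and the other is missing an input.

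\emph{Drop the Bruhat--Tits argument.} Non-vanishing cannot be detected on $\mathcal{N}_\red$. Take $t=1$ and a self-dual $\Lambda^\circ\ni x$. Then $\V(\Lambda^\circ)=\ZZ_+(\Lambda^\circ)^\red\subseteq\ZZ_+(x)$, and the balloons $\V(\Lambda^\circ)\iso\mathbb{P}^{N-1}$ are irreducible components of the special fibre. Near a point of $\V(\Lambda^\circ)$ lying on no other stratum, $\ZZ_+(x)^\red$ therefore contains an open neighbourhood in $\mathcal{N}_\red$. The local equation there is nonzero but divisible by $\varpi$, which is exactly why $\ZZ_\pm$ fail to be relative divisors for $0<t<N$. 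Your clause that ``the deformation direction transverse to the special fiber destroys the lift'' restates what must be proved; it is not an argument for it.

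\emph{The generic-fibre argument is the one that works, but it needs flatness.} You must add that $\mathcal{N}$ is flat over $\Spf\O_{\breve F}$. This comes from flatness of the unramified unitary local model (G\"ortz), the same local-model input that gives the regularity you use. Flatness guarantees that for every point $\xi$ the tube $]\xi[$, i.e.\ the generic fibre of $\Spf\widehat{\O}_{\mathcal{N},\xi}$, is nonempty of dimension $N-1$. Now suppose the equation vanished at $\xi$. Then $]\xi[\subseteq\ZZ_\pm(L)^{\mathrm{rig}}$. The \'etale period morphism would send $]\xi[$ into the locus of filtrations containing $x$ applied to the fixed line $\mathrm{Fil}(\mathcal{E})$. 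Since $x\neq0$ rationally, that locus is a hyperplane of dimension $N-2$, a contradiction. For $\ZZ_+$, note that $\lambda_X\circ x$ lifts as a quasi-homomorphism if and only if $x$ does, so the same hyperplane works. This argument is uniform in $\xi$, including at the non-smooth points. So the ``main obstacle'' you describe, with explicit local-model charts, does not arise.

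\emph{State the signature normalisation explicitly.} The ``one equation'' count requires the isotypic piece on which $\mathrm{Fil}(\mathcal{E})\neq0$ to be the piece on which $\Lie X$ has rank one. Read literally, \Cref{pDivDef} with $\mathcal{E}$ of signature $(0,1)$ gives the opposite pairing, hence $N-1$ conditions. As a check, for $N=1$ and $t=0$ one would get $\ZZ_-(x)=\mathcal{N}$ for every integral $x$. So fix the normalisation rather than asserting that the quotient is a line. Likewise, for $\ZZ_+$ there is no swap of eigenspaces: with the action $\iota(a^\cplx)^\vee$, $X^\vee$ has the same signature as $X$, and a swap would again produce $N-1$ conditions. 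With these repairs the proof is complete.
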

\begin{remark}
    When $t=0$ resp. $t=N,$ we have that $\ZZ_-(L)=\ZZ_+(L)$ resp. $\ZZ_-(\varpi L)=\ZZ_+(L)$ are in fact relative Cartier divisors, but this may not be true for other $t.$
\end{remark}

\begin{definition}\label{KRDef}
    Given $\underline{x}=(x_1,\ldots,x_m)\in(\mathbb{V}^{[t]}_N)^m$ and $\pm\in\{+,-\},$ we consider the \emph{derived Kudla--Rapoport cycles} ${}^\L\ZZ_\pm(\underline{x})$ to be the image of $\O_{\ZZ_\pm(x_1)}\otimes^\L\cdots\otimes^\L\O_{\ZZ_\pm(x_m)}$ in $\mathrm{Gr}^mK_0^{\ZZ_\pm(\underline{x})}(\N{F/F^+}{[t],(N-1,1)}).$
\end{definition}

\subsubsection{Bruhat--Tits stratification}
\begin{definition}\label{DefVertexLattice}
    For a Hermitian $F/F^+$-space $V,$ we say a $\O_F$-lattice $\Lambda\subseteq V$ is a \emph{vertex lattice} if $\varpi\Lambda^\vee\subseteq\Lambda\subseteq\Lambda^\vee.$ We say $\Lambda$ has \emph{type} $t(\Lambda)$ where $\dim_{\kappa}\Lambda^\vee/\Lambda=t(\Lambda).$ We denote $\mathrm{Vert}^t(V)$ the set of vertex lattices of type $t$ in $V.$
\end{definition}

\begin{definition}\label{BTDef}
    For a vertex lattice $\Lambda\subseteq\mathbb{V}^{[t]}_N,$ its corresponding \emph{closed Bruhat--Tits strata} is
    \begin{equation*}
        \V(\Lambda)\defeq\begin{cases}
            \ZZ_-(\Lambda)^\red&\text{if }t(\Lambda)>t,\\
            \ZZ_+(\Lambda^\vee)^\red&\text{if }t(\Lambda)<t.
        \end{cases}
    \end{equation*}
\end{definition}
\begin{remark}\label{KRReduced}
    In the case $t=0,$ $\ZZ(\Lambda)$ is already reduced for a vertex lattice $\Lambda,$ as shown in \cite[Theorem B]{LiZhu} or \cite[Theorems 9.4, 10.1]{RTZ}.
\end{remark}
\begin{theorem}[{\cite[Theorem 1.1]{Cho}, \cite[Theorem 2.5.3.5]{Zhiyu}}]
    For a vertex lattice $\Lambda\subseteq\mathbb{V}^{[t]}_N,$ we have that $\V(\Lambda)$ is projective, smooth and geometrically irreducible, and we have
    \begin{equation*}
        \dim\V(\Lambda)=\begin{cases}
            t+\frac{t(\Lambda)-t-1}{2}&\text{if }t(\Lambda)>t,\\
            N-t+\frac{t-1-t(\Lambda)}{2}&\text{if }t(\Lambda)<t.
        \end{cases}
    \end{equation*}
    In the cases $t(\Lambda)=t-1$ resp. $t(\Lambda)=t+1,$ we have that $\V(\Lambda)=\mathbb{P}^{N-t}_{\O_{\breve{F}}}$ resp. $\V(\Lambda)=\mathbb{P}^t_{\O_{\breve{F}}}.$ Moreover, we have
    \begin{equation*}
        (\N{F/F^+}{[t],(N-1,1)})^\red=\bigcup_{\Lambda\subseteq\mathbb{V}_N^{[t]}}\V(\Lambda).
    \end{equation*}
\end{theorem}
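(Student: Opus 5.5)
The plan follows the strategy of Vollaard--Wedhorn, adapted to general type $t$ as in Cho and Zhiyu Zhang: first describe $\kappa$-points via relative Dieudonn\'e theory, then upgrade to a description of the reduced strata as Deligne--Lusztig type varieties.

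\emph{Step 1: linear-algebra description of $\kappa$-points.} Let $N_\E$ and $N_\X$ be the relative rational Dieudonn\'e modules of $\E$ and $\X^{[t]}_N$ over $\breve{F}$. The $\O_F$-action splits $N_\X=N_0\oplus N_1$. Set $\tau\defeq\varpi^{-1}F^2$ (or the appropriate normalization twisted by the $\E$-part). Then $C\defeq N_0^{\tau=1}$ is an $N$-dimensional Hermitian $F$-space, and one identifies $C\iso\mathbb{V}^{[t]}_N$ by sending $x\in\Hom(\E,\X)$ to the image of a generator of $N_\E$. A point of $\N{F/F^+}{[t],(N-1,1)}(\kappa)$ then amounts to an $\O_{\breve F}$-lattice $M\subseteq N_0\otimes_F\breve F$ satisfying
\begin{equation*}
    \varpi M^\vee\subseteq M\subseteq M^\vee,\qquad \dim M^\vee/M=t,
\end{equation*}
together with the signature conditions, which say that $M$ and $\tau(M)$ are in relative position of index one.

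For such an $M$, I would set $\Lambda(M)\defeq M+\tau M+\cdots+\tau^{d}M$ with $d$ minimal such that $\Lambda(M)$ is $\tau$-stable (respectively its dual construction when $t(\Lambda)<t$). The key lemma is that $\Lambda(M)^{\tau}$ is a vertex lattice in $C$, with type greater than $t$ in the first case and less than $t$ in the second. Unwinding the definitions of $\ZZ_\pm$ on points, as in \Cref{n0KR}, shows that $M\in\V(\Lambda)(\kappa)$ if and only if $M\subseteq\Lambda\otimes\O_{\breve F}$ (resp.\ $\Lambda^\vee\otimes\O_{\breve F}\subseteq M^\vee$). Since $\Lambda(M)$ always exists, this gives the covering $(\N{}{})^\red=\bigcup_\Lambda\V(\Lambda)$ on $\kappa$-points, and hence as reduced schemes.

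\emph{Step 2: identify each stratum with a Deligne--Lusztig variety.} Fix a vertex lattice $\Lambda$ with $t(\Lambda)>t$. Then $\Omega\defeq\Lambda^\vee/\Lambda$ is a $t(\Lambda)$-dimensional Hermitian space over $\F_{q^2}$, and the points of $\V(\Lambda)$ correspond to subspaces $U\subseteq\Omega\otimes\kappa$ of a fixed dimension. These are subject to an isotropy condition (coming from $\varpi M^\vee\subseteq M$ and the type $t$) and a Frobenius incidence condition $\dim(U+\Phi U)\le\dim U+1$ (coming from the signature). This is a generalized Deligne--Lusztig variety $Y_\Lambda$ for $U(\Omega)$, and the case $t(\Lambda)<t$ is handled dually.

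I would then promote this bijection to an isomorphism of $\kappa$-schemes $\V(\Lambda)\iso Y_\Lambda$. To do so, I would construct a morphism to a Grassmannian over an arbitrary reduced base using displays or Zink's windows, and check it is an isomorphism on points and on tangent spaces, the latter by Grothendieck--Messing deformation theory.

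\emph{Step 3: geometry of $Y_\Lambda$.} Following Vollaard--Wedhorn and Cho, I would stratify $Y_\Lambda$ by $\dim(U\cap\Phi U\cap\cdots)$ and show it is the closure of a single Coxeter-type Deligne--Lusztig variety. This gives projectivity and geometric irreducibility, and the dimension count reduces to counting the length of the Coxeter element, which should produce the stated formulas. Smoothness follows from a direct tangent space computation. When $t(\Lambda)=t+1$ the incidence condition is vacuous and $Y_\Lambda$ is all hyperplanes (or lines) in a $(t+1)$-dimensional space, namely $\mathbb{P}^t$; symmetrically one gets $\mathbb{P}^{N-t}$ when $t(\Lambda)=t-1$.

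I expect Step 2 to be the main obstacle. The comparison of $\V(\Lambda)$, defined as a reduced locus of $\ZZ_\pm$, with $Y_\Lambda$ as schemes requires controlling families over non-perfect reduced bases, where Dieudonn\'e theory alone is insufficient. I would first attempt it via relative displays on perfect bases, and then use that both sides are reduced and of finite type, so that agreement on perfections together with a tangent space bound suffices.
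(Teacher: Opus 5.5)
The paper gives no proof of this statement; it cites Cho and Zhiyu Zhang, whose arguments follow the Vollaard--Wedhorn template you outline, so your overall route is the standard one. There is, however, a genuine gap in Step 1, and Step 1 is exactly where type $t\neq 0$ departs from Vollaard--Wedhorn. A point of $\N{F/F^+}{[t],(N-1,1)}(\kappa)$ is not a single lattice of type $t$ for the $\sigma$-hermitian form on $N_0$. Its Dieudonn\'e lattice is a pair $M_0\oplus M_1$ of eigencomponents, and the polarization only gives $\varpi M_0^\vee\subseteq M_1\subseteq M_0^\vee$ with $M_1$ of colength $t$. For $0<t<N$ neither component determines the other. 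Concretely, normalize so that $\dim M_1/VM_0=1$:
\begin{itemize}
\item over a $\tau$-stable $M_0$, the component $M_1$ can be any line in the $(t+1)$-dimensional space $M_0^\vee/VM_0$;
\item over a $\tau$-stable $M_1$, the component $M_0$ moves in a $\mathbb{P}^{N-t}$.
\end{itemize}
These fibres are precisely the strata with $t(\Lambda)=t\pm1$. Already for $t=0$ the Vollaard--Wedhorn lattice $A=M_0$ is not self-dual; it satisfies $\varpi A^\sharp\subseteq A\subseteq A^\sharp$ with colengths $N-1$ and $1$. Also, the index-one relation $\dim(M_0+\tau M_0)/M_0\le 1$ is a consequence of the signature condition (via $VM_0+FM_0\subseteq M_1$), not a reformulation of it.

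As a result, the two families of strata are conditions on different components. With your covariant identification $x\mapsto x(e)$, $\ZZ_-(\Lambda)$ is the locus $\Lambda\otimes\O_{\breve F}\subseteq M_1$, while $\ZZ_+(\Lambda^\vee)$ bounds $M_0$ from above. Your ``$M\subseteq\Lambda\otimes\O_{\breve F}$'' has the inclusion backwards, and it also contradicts your own parametrization of $\V(\Lambda)$ by subspaces of $\Lambda^\vee/\Lambda$ in Step 2.

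The covering statement therefore says that every point satisfies one of two differently shaped conditions, and both cases genuinely occur. For $t=1$, generic points of the balloons $\V(\Lambda^\circ)$ lie on no stratum of type $>1$, and generic points of the ground strata lie on no balloon. So ``$\Lambda(M)$ always exists'' does not prove the covering. For each point you must produce either a $\tau$-stable vertex lattice of type $<t$ containing $M_0$, or one of type $>t$ contained in $M_1$, together with an argument deciding which case occurs. This step is genuinely new compared to Vollaard--Wedhorn and has to be supplied.

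Separately, the fallback at the end of Step 2 does not work. An isomorphism of perfections plus a tangent-space bound on $\V(\Lambda)$ does not give $\V(\Lambda)\iso Y_\Lambda$: a relative Frobenius is bijective with isomorphic perfections, and the tangent bound only yields smoothness. You need an honest morphism defined on $\V(\Lambda)$ over arbitrary bases. One source is the isogenies to the constant groups attached to $\Lambda$ and $\Lambda^\vee$, which exist over $\ZZ_\pm$, together with their effect on Lie algebras. You then need a Grothendieck--Messing check that this morphism is a closed immersion. Your first formulation of Step 2 is fine in this respect.
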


\subsubsection{Inductive structure}
\begin{proposition}[{\cite[Proposition 5.10]{Cho}}]\label{RZInd}\
\begin{enumerate}
    \item Let $x\in\mathbb{V}^{[t]}_N$ with $\mathrm{val}(\langle x,x\rangle_{\mathbb{V}^{[t]}_N})=0.$ Then there is an isomorphism $\Phi\colon \ZZ_-(x)\rightiso\N{F/F^+}{[t],(N-2,1)}.$ Moreover, if we identify $\mathbb{V}^{[t]}_N=\mathbb{V}^{[t]}_{N-1}\obot Fx,$ and if $L\subseteq\mathbb{V}^{[t]}_N$ is a subspace of the form $L=L^\flat\oplus 0,$ then
    \begin{equation*}
        \Phi(\ZZ_-(x)\cap\ZZ_?(L^\flat))=\ZZ_?(L)\quad\text{for }?\in\{+,-\}.
    \end{equation*}
    \item Let $y\in\mathbb{V}^{[t]}_N$ with $\mathrm{val}(\langle y,y\rangle_{\mathbb{V}^{[t]}_N})=-1.$ Then there is an isomorphism $\Psi\colon \ZZ_+(y)\rightiso\N{F/F^+}{[t-1],(N-2,1)}.$\footnote{In particular, $\ZZ_+(y)=\emptyset$ in the case $t=0.$} Moreover, if we identify $\mathbb{V}^{[t]}_N=\mathbb{V}^{[t-1]}_{N-1}\obot Fy,$ and if $L\subseteq\mathbb{V}^{[t]}_N$ is a subspace of the form $L=L^\flat\oplus 0,$ then
    \begin{equation*}
        \Psi(\ZZ_+(y)\cap\ZZ_?(L^\flat))=\ZZ_?(L)\quad\text{for }?\in\{+,-\}.
    \end{equation*}
\end{enumerate}
\end{proposition}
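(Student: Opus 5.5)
The strategy is to build the isomorphisms directly from the moduli problem, splitting off a copy of the canonical lifting $\mathcal{E}$ (or of its dual) from the universal $p$-divisible group. The lattice-compatibility statements then follow by functoriality.

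For (1), let $x$ have unit norm. Over a point $(X,\iota,\lambda,\rho)$ of $\ZZ_-(x)(S)$, we obtain a lift $\tilde x\colon\mathcal{E}_S\to X$. The composite
\begin{equation*}
    \tilde x^\vee\circ\lambda\circ\tilde x\colon \mathcal{E}_S\to\mathcal{E}_S^\vee
\end{equation*}
equals $\lambda_{\mathcal{E}}\circ\iota(\langle x,x\rangle)$; this identity holds on $\overline{S}$ and extends by rigidity of quasi-homomorphisms. Since $\langle x,x\rangle$ is a unit, the endomorphism
\begin{equation*}
    e\defeq\tilde x\circ\iota(\langle x,x\rangle)^{-1}\circ\lambda_{\mathcal{E}}^{-1}\circ\tilde x^\vee\circ\lambda
\end{equation*}
is an $\O_F$-linear idempotent of $X$. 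It is self-adjoint for the Rosati involution, so it yields an orthogonal decomposition $X=\mathcal{E}_S\times X^\flat$ with $\lambda=\lambda_{\mathcal{E}}\iota(\langle x,x\rangle)\times\lambda^\flat$. The kernel of $\lambda^\flat$ coincides with $\ker\lambda$ and so has order $q^{2t}$. Because $\mathcal{E}$ has signature $(0,1)$, the signature condition on $\Lie X=\Lie\mathcal{E}\oplus\Lie X^\flat$ forces $X^\flat$ to have signature $(N-2,1)$. The framing $\rho$ decomposes accordingly, relative to a decomposition $\X^{[t]}_N=\E\times\X^{[t]}_{N-1}$ of the framing object induced by $x$. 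Setting $\Phi(X)=(X^\flat,\iota^\flat,\lambda^\flat,\rho^\flat)$ gives the map. Its inverse is $X^\flat\mapsto\mathcal{E}_S\times X^\flat$, with the product polarization and the natural embedding of $\mathcal{E}_S$ as the lift of $x$. One checks this is a natural bijection on $S$-points; the fact that it lands in $\ZZ_-(x)$ is tautological. This also gives the identification $\mathbb{V}^{[t]}_N=\mathbb{V}^{[t]}_{N-1}\obot Fx$, via
\begin{equation*}
    \Hom(\E,\E\times\X^{[t]}_{N-1})=\Hom(\E,\E)\oplus\Hom(\E,\X^{[t]}_{N-1}),
\end{equation*}
where $\Hom(\E,\E)\otimes\Q=F$ is spanned by $x$.

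For the compatibility in (1), take $y\in L^\flat\subseteq\mathbb{V}^{[t]}_{N-1}$. The quasi-homomorphism $y$ into $X=\mathcal{E}\times X^\flat$ has zero component in $\mathcal{E}$. Hence it lifts to $\mathcal{E}_S\to X$ if and only if its $X^\flat$-component lifts. The same holds after composing with $\lambda$, since the polarization is a product. This proves the statement for both $?=\pm$.

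For (2), let $\mathrm{val}\langle y,y\rangle=-1$. Now the lift is taken to $X^\vee$: we get $\tilde y\colon\mathcal{E}_S\to X^\vee$ with $\lambda\circ(\text{quasi-}y)=\tilde y$. Dualizing and using $\lambda$, we produce $\tilde y^\vee\colon X\to\mathcal{E}_S^\vee\cong\mathcal{E}_S$, and the composite $\mathcal{E}_S\to\mathcal{E}_S$ is $\iota(\varpi\langle y,y\rangle)$ up to units, which is a unit. This again yields a splitting $X=\mathcal{E}_S\times X^\flat$. The difference from (1) is that the polarization on the $\mathcal{E}$-factor is now $\varpi$ times a principal one, so one $q^2$-factor of $\ker\lambda$ lies in the $\mathcal{E}$-part. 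Consequently $X^\flat$ has type $t-1$, and the signature is again $(N-2,1)$. When $t=0$, $\ker\lambda$ is trivial and cannot contain this $\mathcal{E}[\varpi]$, so $\ZZ_+(y)=\emptyset$. The compatibility with $L^\flat$ is proved exactly as in (1).

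The main obstacle is verifying that the splitting is defined integrally over arbitrary $S$ rather than only up to isogeny. This amounts to checking that the relevant composites $\mathcal{E}\to\mathcal{E}$ are invertible in $\O_F$. One must also check that the product decomposition respects the type condition on $\ker\lambda$ in case (2). I would verify both points on relative Dieudonné modules over $\kappa$ and then deform using Grothendieck--Messing, exactly as in \cite[Proposition 5.10]{Cho}.
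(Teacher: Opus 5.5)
\textbf{Overall.} Your approach is the standard argument for statements of this kind: lift the special homomorphism, build an integral Rosati-self-adjoint idempotent, and split off a copy of the canonical lift $\mathcal{E}_S$ (cf.\ Kudla--Rapoport's treatment of unit-length vectors). The paper gives no proof of its own, only the citation to Cho, and your route is fine. It does need three corrections.

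\textbf{Signature count.} As written it is wrong. Removing a factor of signature $(0,1)$ from $X$ of signature $(N-1,1)$ leaves $(N-1,0)$, not $(N-2,1)$. For $N\ge 3$ this would also make $\ZZ_-(x)$ zero-dimensional rather than a divisor. For the statement to hold, the split-off factor must have signature $(1,0)$ in the normalization where $X$ has signature $(N-1,1)$. Equivalently, in Kudla--Rapoport's and Cho's normalization, $X$ has signature $(1,N-1)$ and $\overline{\mathbb{Y}}$ has signature $(0,1)$. This is also what the global moduli problem gives at $\tau_\infty$. So the $(0,1)$ attached to $\E$ in the text is a normalization slip, and you should flag it rather than propagate it.

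\textbf{The map $X^\vee\to X$ in (2).} Make explicit that you use $\varpi\lambda^{-1}$, which is an honest homomorphism precisely because $\ker\lambda\subseteq X[\iota(\varpi)]$. The $\mathcal{E}$-factor is then embedded by $\varpi\lambda^{-1}\circ\tilde y$, i.e.\ by the quasi-homomorphism $\varpi y$. Its induced polarization is $\lambda_{\mathcal{E}}\circ\iota(\varpi^2\langle y,y\rangle)$, which has valuation one, and $X^\flat=\ker(\tilde y^\vee)$.

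\textbf{The ``main obstacle'' is not one.} The norm identity holds over $S$ by rigidity, and $\langle x,x\rangle$, resp.\ $\varpi\langle y,y\rangle$, is a unit. Hence your idempotents are already integral endomorphisms over every $S$, and no Dieudonn\'e or Grothendieck--Messing argument is needed for the splitting.

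What does need an argument is the splitting of the framing object, which is a priori only available up to quasi-isogeny. One way to do it:
\begin{enumerate}
\item Take any Hermitian module $\X^\flat$ over $\kappa$ of signature $(N-2,1)$ and type $t$ (resp.\ $t-1$).
\item Give $\E\times\X^\flat$ the polarization $\lambda_{\E}\iota(\langle x,x\rangle)\times\lambda^\flat$ (resp.\ $\lambda_{\E}\iota(\varpi^2\langle y,y\rangle)\times\lambda^\flat$). It is then quasi-isogenous to $\X^{[t]}_N$ by uniqueness up to quasi-isogeny.
\item Compose with an element of $U(\mathbb{V}^{[t]}_N)$, which exists by Witt's theorem, so that the $\E$-factor maps to $x$ (resp.\ $\varpi y$).
\end{enumerate}
This also justifies the identification $\mathbb{V}^{[t]}_N=\mathbb{V}_{N-1}\obot Fx$ (resp.\ $Fy$) that you use in the compatibility step.
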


\begin{corollary}\label{RZindBT}
    In the setting of the above theorem, we also have
    \begin{enumerate}
        \item If $\Lambda=\Lambda^\flat\obot\O_Fx$ is a vertex lattice with $t(\Lambda)>t,$ then
        \begin{equation*}
            \Phi(\V(\Lambda))=\V(\Lambda^\flat).
        \end{equation*}
        \item If $\Lambda=\Lambda^\flat\obot\varpi\O_F y$ is a vertex lattice with $t(\Lambda)=1+t(\Lambda^\flat)<t,$ then
        \begin{equation*}
            \Psi(\V(\Lambda))=\V(\Lambda^\flat).
        \end{equation*}
    \end{enumerate}
\end{corollary}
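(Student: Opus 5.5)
Both parts reduce to \Cref{RZInd} together with \Cref{BTDef}. Since $\Phi$ and $\Psi$ are isomorphisms of formal schemes, they commute with passing to the underlying reduced subscheme. So it is enough to identify the relevant Kudla--Rapoport cycles under these isomorphisms and then check that the type inequalities select the same case of \Cref{BTDef} on both sides.

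For (1), write $\Lambda=\Lambda^\flat\obot\O_Fx$ with $\mathrm{val}\langle x,x\rangle=0$. Then $\Lambda^\vee=(\Lambda^\flat)^\vee\obot\O_Fx$, so $\Lambda^\flat$ is a vertex lattice in $\mathbb{V}^{[t]}_{N-1}$ with $t(\Lambda^\flat)=t(\Lambda)>t$. The ambient type of $\Phi$ is still $t$, so \Cref{BTDef} uses the $\ZZ_-$ case on both sides. Since $x\in\Lambda$, we have $\ZZ_-(\Lambda)\subseteq\ZZ_-(x)$, and I expect $\ZZ_-(\Lambda)=\ZZ_-(x)\cap\ZZ_-(\Lambda^\flat)$ by definition, the conditions imposed by $\Lambda$ being those imposed by its generators. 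Applying \Cref{RZInd}(1) with $L=\Lambda^\flat\oplus0$ gives $\Phi(\ZZ_-(x)\cap\ZZ_-(\Lambda^\flat))=\ZZ_-(\Lambda^\flat)$ in $\N{F/F^+}{[t],(N-2,1)}$. Taking reduced subschemes then gives $\Phi(\V(\Lambda))=\V(\Lambda^\flat)$.

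For (2), write $\Lambda=\Lambda^\flat\obot\varpi\O_Fy$ with $\mathrm{val}\langle y,y\rangle=-1$. Then $(\varpi\O_F y)^\vee=\O_Fy$, so $\Lambda^\vee=(\Lambda^\flat)^\vee\obot\O_Fy$. The summand $\varpi\O_Fy$ contributes $1$ to the type, so $t(\Lambda^\flat)=t(\Lambda)-1<t-1$, where $t-1$ is the ambient type of the target of $\Psi$. Hence both sides fall into the $\ZZ_+$ case of \Cref{BTDef}. As before, I would check that $\ZZ_+(\Lambda^\vee)=\ZZ_+(y)\cap\ZZ_+((\Lambda^\flat)^\vee)$, using $y\in\Lambda^\vee$. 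Then \Cref{RZInd}(2) with $L=(\Lambda^\flat)^\vee\oplus0$ gives $\Psi(\ZZ_+(\Lambda^\vee))=\ZZ_+((\Lambda^\flat)^\vee)$, and taking reduced loci gives $\Psi(\V(\Lambda))=\V(\Lambda^\flat)$.

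I expect the only delicate point to be the identity $\ZZ_\pm(L_1+L_2)=\ZZ_\pm(L_1)\cap\ZZ_\pm(L_2)$ for submodules $L_1,L_2$. It holds because the set of $x$ for which the composite extends to a homomorphism is an $\O_F$-submodule, and intersection of closed formal subschemes represents the conjunction of the two conditions. Everything else is bookkeeping of duals and types, which must be done carefully so that the same case of \Cref{BTDef} applies before and after the isomorphism.
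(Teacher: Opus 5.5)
Your argument is correct and is the deduction the paper intends. The paper states this as an immediate consequence of \Cref{RZInd} and \Cref{BTDef} without writing a proof, and you supply the omitted bookkeeping: the duals $\Lambda^\vee=(\Lambda^\flat)^\vee\obot\O_Fx$ resp.\ $(\Lambda^\flat)^\vee\obot\O_Fy$, the type comparison placing both sides in the same case of \Cref{BTDef}, the identity $\ZZ_\pm(L_1+L_2)=\ZZ_\pm(L_1)\cap\ZZ_\pm(L_2)$, and the compatibility of $\Phi,\Psi$ with reduced loci. You also read \Cref{RZInd} with $L$ and $L^\flat$ in the intended roles: the cycle for $L=L^\flat\oplus 0$ on the larger space maps to the cycle for $L^\flat$ on the smaller space.
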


\subsection{Intersection numbers of KR cycles and BT strata in signature \texorpdfstring{$(N-1,1)$}{(N-1,1)}}\label{RZ-Intersection}
For ease of notation, we will denote $\mathcal{N}_N\defeq\N{F/F^+}{[0],(N-1,1)}$ resp. $\mathcal{N}_N^1\defeq\N{F/F^+}{[1],(N-1,1)}$ following the notation of \cite{Li-Zhang}.

\subsubsection{Self-dual case}
\begin{definition}
Given $\underline{x}=(x_1,\ldots,x_d)\in(\mathbb{V}_N^{[0]})^d$ for some $d\in\Z_{\ge0}$ with $2d+1\le N,$ we define the function $\Int_{\underline{x}}\colon\mathrm{Vert}^{2d+1}(\mathbb{V}^{[0]}_N)\to\Q$ by
\begin{equation*}
    \Int_{\underline{x}}(\Lambda)=\chi(\mathcal{N}_N,{}^\L\ZZ(\underline{x})\otimes^\L\V(\Lambda)).
\end{equation*}
\end{definition}

Such intersection numbers have (essentially) been computed by \cite{Li-Zhang}.
\begin{theorem}[{\cite[Lemma 6.4.6]{Li-Zhang}}]\label{LiZhangComp}
Let $\underline{x}=(x_1,\ldots,x_d)\in(\mathbb{V}_N^{[0]})^d$ and $\Lambda\in\mathrm{Vert}^{2d+1}(\mathbb{V}^{[0]}_N).$ Denote $L=\mathrm{span}_{\O_F}(\underline{x})\subseteq\mathbb{V}^{[0]}.$ Then we have
\begin{equation*}
    \Int_{\underline{x}}(\Lambda)=\left\{\begin{array}{cl}
        c(d-[\Lambda+L:\Lambda]) & \text{if }L\subseteq\Lambda^\vee,\ L\subseteq L^\vee, \\
        0 & \text{otherwise,}
    \end{array}\right.
\end{equation*}
where $c(k)\defeq\prod_{i=1}^k(1-q^{2i}).$
\end{theorem}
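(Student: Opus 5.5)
The plan is to compute $\chi(\mathcal{N}_N,{}^\L\ZZ(\underline{x})\otimes^\L\V(\Lambda))$ by restricting the derived Kudla--Rapoport cycle to the stratum $\V(\Lambda)$, which is a smooth projective variety over $\kappa$. By \Cref{KRReduced} we may take $\V(\Lambda)=\ZZ(\Lambda)$, and by the Bruhat--Tits theorem it is the Deligne--Lusztig variety of dimension $d$ attached to the unitary group of the $\F_{q^2}$-Hermitian space $\Lambda^\vee/\Lambda$ of dimension $2d+1$. The intersection number is then $\deg_{\V(\Lambda)}\prod_{i=1}^d\bigl(\O_{\ZZ(x_i)}\rvert_{\V(\Lambda)}\bigr)$, the degree of a product of $d$ Cartier-divisor classes on a $d$-dimensional variety. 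This uses that each $\ZZ(x_i)$ is a Cartier divisor (\cite[Proposition 5.9]{Cho}), so the derived tensor product becomes an intersection of line bundles restricted to $\V(\Lambda)$.

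Next I will identify each line bundle $\O(\ZZ(x_i))\rvert_{\V(\Lambda)}$. There are two cases.
\begin{enumerate}
\item If $x_i\in\Lambda^\vee$, then $\V(\Lambda)\subseteq\ZZ(x_i)$ exactly when $x_i\in\Lambda$. In that case the restriction is the normal-type bundle, namely the restriction of $\O(\ZZ(x))$ for a "generic" special divisor, which is a fixed negative multiple of the tautological bundle $\O(-1)$ on the Deligne--Lusztig variety.
\item If $x_i\notin\Lambda$, the restriction is the divisor $\V(\Lambda)\cap\ZZ(x_i)$. This is a smaller Deligne--Lusztig variety $\V(\Lambda')$ with $\Lambda'=(\Lambda+\O_Fx_i)$ of type $2d-1$, sitting inside the same stratum.
\end{enumerate}
If $x_i\notin\Lambda^\vee$, the intersection $\ZZ(x_i)\cap\V(\Lambda)$ is empty and the contribution vanishes. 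Similarly, if $L\not\subseteq L^\vee$ the cycle is empty on the reduced locus, which gives the "otherwise" case.

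Inductively, after reducing by the $k=[\Lambda+L:\Lambda]$ directions where $L$ sticks out of $\Lambda$, I am left with a Deligne--Lusztig variety of dimension $d-k$, intersected with $d-k$ copies of the tautological class. For this I use \Cref{RZInd} and \Cref{RZindBT}, or a direct linear-algebra basis change on $L$ together with the fact that ${}^\L\ZZ(\underline{x})$ depends only on $L$, which holds by linear invariance as in \cite{Li-Zhang}. The remaining number is $\deg\bigl(c_1(\O(\ZZ(x)))\rvert_{\V(\Lambda'')}\bigr)^{d-k}$ for the $(d-k)$-dimensional stratum. This top self-intersection is computed by \cite{Li-Zhang} via the Chern classes of the Deligne--Lusztig variety, giving $\prod_{i=1}^{d-k}(1-q^{2i})=c(d-k)$.

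The main obstacle is the middle step: showing that the restriction of $\ZZ(x_i)$ to $\V(\Lambda)$ is exactly $\V(\Lambda')$ with multiplicity one, and controlling it when $x_i\in\Lambda$ so that the excess-intersection line bundle is the expected one. Linear invariance is needed to put $L$ in a normal form relative to $\Lambda$, and \cite[Lemma 6.4.6]{Li-Zhang} packages exactly this. So in practice I would cite that lemma after matching our normalization of $\Int_{\underline{x}}$ with theirs.
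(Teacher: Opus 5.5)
Your proposal is essentially the paper's argument. It peels off any $x_i\in\Lambda^\vee\setminus\Lambda$ by the projection formula, using $\ZZ(x_i)\cap\V(\Lambda)=\ZZ(\Lambda+\O_Fx_i)=\V(\Lambda+\O_Fx_i)$. It gets vanishing when some $x_i\notin\Lambda^\vee$ or $L\not\subseteq L^\vee$. It then concludes with Li--Zhang's $\Lambda$-invariance (\cite[Lemma 6.4.4]{Li-Zhang}, which is exactly your ``fixed bundle for $x\in\Lambda$'') together with \cite[Lemma 6.4.6]{Li-Zhang} for the value $c(\cdot)$.

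The multiplicity-one issue you flag as the main obstacle is in fact automatic. It follows from the moduli description together with \Cref{KRReduced}, since $\V(\Lambda)$ is integral and not contained in the Cartier divisor $\ZZ(x_i)$.

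You should drop the normal-form step. \Cref{RZInd} is irrelevant here, and linear invariance only holds when $\underline{x}$ is a basis, whereas $\underline{x}$ may be dependent (e.g.\ $(x,\ldots,x)$). Neither is needed: adjoin successively any remaining $x_j\notin\Lambda_j$ to get $\Lambda=\Lambda_0\subseteq\Lambda_1\subseteq\cdots$. Then $\Lambda_j^\vee\supseteq(\Lambda+L)^\vee\supseteq L$ keeps every step legitimate, and each step raises the length by exactly one. So the process stops after exactly $[\Lambda+L:\Lambda]$ steps, with all remaining vectors in $\Lambda+L$.
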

\begin{proof}
\cite[Lemma 6.4.6]{Li-Zhang} implies this in the case that $\underline{x}=(x,\ldots,x).$ However, we note that the general case follows easily by induction.

First, note that $\Int_{\underline{x}}(\Lambda)=0$ unless $x_i\in\Lambda^\vee$ and $\mathrm{val}(\langle x_i,x_i\rangle)\ge0$ for all $i,$ by the same proof of \cite[Lemma 6.2.1]{Li-Zhang}. Now note that
\begin{enumerate}
    \item $\Int_{\underline{x}}(\Lambda)$ is $\Lambda$-invariant in each coordinate by \cite[Lemma 6.4.4]{Li-Zhang},
    \item if $x_i\in\Lambda^\vee\setminus\Lambda$ and $\mathrm{val}(\langle x_i,x_i\rangle)\ge0,$ and if we denote $\Lambda'=\Lambda+\O_Fx_i,$ then $\Lambda'$ is a vertex lattice of type $2d-1,$ and we have
    \begin{equation*}
        \Int_{(x_1,\ldots,x_d)}(\Lambda)=\Int_{(x_1,\ldots,\widehat{x_i},\ldots,x_d)}(\Lambda')
    \end{equation*}
    by the projection formula.
\end{enumerate}
These are enough to prove the claim by induction on $[\Lambda+L\colon\Lambda],$ with the base case being \cite[Lemma 6.4.6]{Li-Zhang} for $\underline{x}=(0,\ldots,0).$
\end{proof}

\subsubsection{Almost self-dual case: balloon strata}
Recall that $\mathrm{Vert}^0(\mathbb{V}^{[1]}_N)$ consist of lattices $\Lambda^\circ\subseteq\mathbb{V}^{[1]}_{N}$ which are self-dual $(\Lambda^\circ)^\vee=\Lambda^\circ.$ Recall that $\V(\Lambda^\circ)\iso\mathbb{P}^{N-1},$ and we denote by $\xi_{\Lambda^\circ}$ the first Chern class of the tautological bundle $\O_{\V(\Lambda^\circ)}(1).$

\begin{definition}\label{RZintballoon}
Given $\underline{x}=(x_1,\ldots,x_d)\in(\mathbb{V}^{[1]}_{N})^d,$ we consider the function
\begin{equation*}
    \nabla^\circ_{\underline{x}}\colon\mathrm{Vert}^0(\mathbb{V}^{[1]}_{N})\to\Q
\end{equation*}
given by
\begin{equation*}
    \nabla^\circ_{\underline{x}}(\Lambda^\circ)\defeq\chi(\mathcal{N}_{N}^1,{}^\L\ZZ_-(\underline{x})\otimes^\L\V(\Lambda^\circ)\otimes^\L\xi_{\Lambda^\circ}^{N-d-1}).
\end{equation*}
\end{definition}
This function $\nabla^\circ_{\underline{x}}$ is readily computable from the results of \cite{Sankaran}.
\begin{proposition}\label{balloonInt}
Let $N\ge2,$ and consider $\Lambda^\circ\in\mathrm{Vert}^0(\mathbb{V}^{[1]}_N).$ For $x\in\mathbb{V}^{[1]}_N,$ we have that $\ZZ_-(x)\rvert_{\V(\Lambda^\circ)}$ resp. $\ZZ_+(x)\rvert_{\V(\Lambda^\circ)}$ are nonzero if and only if $x\in\Lambda^\circ,$ and in this case they are divisors of degree $1$ resp. $-q.$
\end{proposition}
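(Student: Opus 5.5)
The plan is to reduce to explicit knowledge of the balloon strata and the Kudla--Rapoport divisors near them, as computed in \cite{Sankaran}, and then read off the degrees from linear algebra over $\F_{q^2}$.

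\emph{Step 1: the reduced stratum.} By the results recalled from \cite{Cho} and \cite{Zhiyu}, $\V(\Lambda^\circ)\iso\mathbb{P}^{N-1}$ is the closed locus $\ZZ_+((\Lambda^\circ)^\vee)^\red=\ZZ_+(\Lambda^\circ)^\red$. Its $\kappa$-points are identified, via relative Dieudonn\'e theory, with hyperplanes (equivalently, lines) in the $N$-dimensional $\F_{q^2}$-space $\Lambda^\circ\otimes\kappa$. Concretely, a point corresponds to a Dieudonn\'e lattice $M$ with $\varpi M^\vee\subset M$ of colength one, sandwiched with respect to $\Lambda^\circ\otimes\O_{\breve F}$.

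\emph{Step 2: support.} Suppose $x\notin\Lambda^\circ$. Since $\Lambda^\circ$ is self-dual, we also have $x\notin(\Lambda^\circ)^\vee$.
\begin{itemize}
\item If some point of $\V(\Lambda^\circ)$ lies on $\ZZ_\pm(x)$, then $x$ lies in the corresponding Dieudonn\'e lattice (or its dual) at every point.
\item Taking the lattice spanned by the $\tau$-invariants, $x$ would lie in $\Lambda^\circ$.
\item By the usual connectivity argument, the whole stratum $\V(\Lambda^\circ)$ is the intersection of all the $\ZZ(\cdot)^\red$ cut out by $\Lambda^\circ$.
\end{itemize}
This forces $\ZZ_\pm(x)\cap\V(\Lambda^\circ)=\emptyset$, so the restriction vanishes. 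Conversely, if $x\in\Lambda^\circ$, we compute the restricted line bundle below and find it has nonzero degree, so the restriction is nonzero.

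\emph{Step 3: degrees.} For $x\in\Lambda^\circ$ the question is purely about line bundles, since $\ZZ_\pm(x)$ is Cartier by \cite[Proposition 5.9]{Cho}. On $\mathbb{P}^{N-1}$ we only need $\deg\O(\ZZ_\pm(x))\rvert_{\V(\Lambda^\circ)}\in\Z$, and this is independent of $x\in\Lambda^\circ$ by flatness/continuity in $x$ (the divisor class depends only on the Hodge bundle data).
\begin{itemize}
\item Following \cite{Sankaran}, $\O(-\ZZ_-(x))$ is identified over $\V(\Lambda^\circ)$ with the line bundle $\mathrm{Hom}(\Lie\mathcal{E},\Lie X/\mathcal{F})$. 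Here $x$ gives a section whose vanishing locus is $\ZZ_-(x)$.
\item On $\mathbb{P}^{N-1}$ this line bundle is $\O(1)$, so $\ZZ_-(x)\rvert_{\V(\Lambda^\circ)}$ has degree $1$.
\item For $\ZZ_+(x)=\ZZ_-(\lambda\circ x)$-type data, the polarization $\lambda$ has kernel of order $q^2$ in $X[\varpi]$. Composing with $\lambda$ introduces a Frobenius twist, so the relevant line bundle becomes $\O(-q)$ (the $q$-th power of the dual of the tautological bundle, twisted by $\sigma$). This gives degree $-q$.
\end{itemize}

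\emph{Expected main obstacle.} The main obstacle is Step 3's identification of the line bundle for $\ZZ_+$, in particular getting the sign and the factor $q$ from the Frobenius twist in the Dieudonn\'e description. I would verify it directly by computing both divisors on a $\mathbb{P}^1\subset\V(\Lambda^\circ)$. I would do this by reducing via \Cref{RZInd} (splitting off unit-norm vectors orthogonal to $x$) to the case $N=2$, and there comparing with the explicit formulas in \cite{Sankaran}.
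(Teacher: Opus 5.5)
Your Steps 2 and 3 do not yet prove the statement, and the gaps sit exactly where its content lies.

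In Step 2, a single point of $\V(\Lambda^\circ)$ lying on $\ZZ_\pm(x)$ puts $x$ in the Dieudonn\'e lattice at that point only, not ``at every point''. For $\ZZ_-$ there is a clean fix. At a point of $\ZZ_+(\Lambda^\circ)\cap\ZZ_-(x)$ and for $y\in\Lambda^\circ$, the dual of (the lift of) $\lambda\circ y$ composed with (the lift of) $x$ is an honest homomorphism $\mathcal{E}\to\mathcal{E}^\vee$. Hence $\langle x,\Lambda^\circ\rangle\subseteq\O_F$, and so $x\in(\Lambda^\circ)^\vee=\Lambda^\circ$. For $\ZZ_+$ the same argument must pass through $\lambda^{-1}$, and it only gives $x\in\varpi^{-1}\Lambda^\circ$. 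Knowing $\V(\Lambda^\circ)=\ZZ_+(\Lambda^\circ)^\red$ bounds the relevant lattice from below, not above. So your Step 2 says nothing for $x\in\varpi^{-1}\Lambda^\circ\setminus\Lambda^\circ$.

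Step 3 has the more serious problem. For every $x\in\Lambda^\circ$ one has $\V(\Lambda^\circ)=\ZZ_+(\Lambda^\circ)^\red\subseteq\ZZ_+(x)$, so $\ZZ_+(x)\rvert_{\V(\Lambda^\circ)}$ is an improper intersection. No section on $\V(\Lambda^\circ)$ has it as zero locus; a line bundle of degree $-q$ has no nonzero sections anyway. The degree $-q$ is a normal-direction quantity that needs first-order deformation theory along the stratum, and the ``Frobenius twist'' remark does not provide it. The same happens for $\ZZ_-(x)$ with $0\neq x\in\varpi\Lambda^\circ$, where again $\V(\Lambda^\circ)\subseteq\ZZ_-(x)$.

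Your section argument does work for $\ZZ_-(x)$ with $x\in\Lambda^\circ\setminus\varpi\Lambda^\circ$: the section of the tautological quotient gives $\O(\ZZ_-(x))\rvert_{\V(\Lambda^\circ)}\iso\O(1)$. Note that it is $\O(\ZZ_-(x))$ here, not $\O(-\ZZ_-(x))$. But your claim of ``independence of $x$ by flatness/continuity'' has no content, since there is no family in $x$, and it is precisely what must be proved.

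The reduction you keep only as a check is the paper's actual proof, and it handles all of this uniformly, so make it the argument. For $N\ge3$, pick $y\in\Lambda^\circ$ with $\langle x,y\rangle=0$ and $\mathrm{val}(\langle y,y\rangle)=0$. To find it, rescale $x$ to be primitive in $\Lambda^\circ$ and choose an anisotropic vector in its orthogonal complement modulo $\varpi$. Then $\Lambda^\circ=\Lambda^\flat\obot\O_Fy$ and $x=(x^\flat,0)$, with $x\in\Lambda^\circ$ if and only if $x^\flat\in\Lambda^\flat$.

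By \Cref{RZInd} and \Cref{RZindBT}, $\Phi\colon\ZZ_-(y)\rightiso\mathcal{N}^1_{N-1}$ carries $\ZZ_\pm(x)\cap\ZZ_-(y)$ to $\ZZ_\pm(x^\flat)$. It also carries $\V(\Lambda^\circ)\cap\ZZ_-(y)$, a hyperplane in $\V(\Lambda^\circ)\iso\mathbb{P}^{N-1}$, to $\V(\Lambda^\flat)$. A line bundle on projective space is determined by its degree on a hyperplane. So induction reduces both the vanishing and the degrees, for arbitrary $x$ (including $x\notin\Lambda^\circ$ and $x\in\varpi\Lambda^\circ$), to $N=2$.

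For $N=2$ the $\ZZ_-$ case is \cite[Lemma 2.11]{Sankaran}. The $\ZZ_+$ case follows from the other case of that lemma only after applying the involution of \cite[Definition 5.3]{Cho}. So ``comparing with the explicit formulas in \cite{Sankaran}'' needs that extra step.
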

\begin{proof}
The case $N=2$ is proven in \cite[Lemma 2.11]{Sankaran}\footnote{Strictly speaking, the second claim here follows from the second case on the cited result only after applying the involution defined in \cite[Definition 5.3]{Cho}.}. We will deduce the general case from this, using the inductive structure of Rapoport--Zink spaces.

If $N\ge3,$ we can find $y\in\Lambda^\circ$ with $\langle x,y\rangle=0$ and $\mathrm{val}(\langle y,y\rangle)=0.$ Then $\Lambda^\circ=\Lambda^\flat\obot\O_Fy$ for $\Lambda^\flat\in\mathrm{Vert}^0(\mathbb{V}^{[1]}_{N-1})$ and $x=(x^\flat,0)$ for some $x^\flat\in\mathbb{V}^{[1]}_{N-1}.$ By \Cref{RZInd} and \Cref{RZindBT} we have the isomorphism $\Phi\colon\ZZ_-(y)\rightiso\mathcal{N}_{N-1}^1$ where $\Phi(\ZZ_?(x)\rvert_{\ZZ_-(y)})=\ZZ_?(x^\flat)$ and $\Phi(\V(\Lambda^\circ))=\V(\Lambda^\flat),$ from which the claim follows by induction.
\end{proof}
\begin{corollary}\label{nablacirc}
If $L\defeq\mathrm{span}_{\O_F}(\underline{x}),$ then we have
\begin{equation*}
    \nabla^\circ_{\underline{x}}(\Lambda^\circ)=\begin{cases}
        1&\text{if }L\subseteq\Lambda^\circ,\\
        0&\text{otherwise.}
    \end{cases}
\end{equation*}
\end{corollary}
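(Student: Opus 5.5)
The plan is to reduce the Euler characteristic to a computation of intersection products on projective space $\V(\Lambda^\circ)\iso\mathbb{P}^{N-1}$, using \Cref{balloonInt}.

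First, by the projection formula for the closed immersion $i\colon\V(\Lambda^\circ)\hookrightarrow\mathcal{N}_N^1$, we have
\begin{equation*}
    \nabla^\circ_{\underline{x}}(\Lambda^\circ)=\chi\Bigl(\V(\Lambda^\circ),\ i^*\O_{\ZZ_-(x_1)}\otimes^\L\cdots\otimes^\L i^*\O_{\ZZ_-(x_d)}\otimes^\L\xi_{\Lambda^\circ}^{N-d-1}\Bigr),
\end{equation*}
where each $i^*\O_{\ZZ_-(x_j)}$ is the derived restriction of the structure sheaf of a Cartier divisor (Cartier by \cite[Proposition 5.9]{Cho}). Because $\ZZ_-(x_j)$ is a Cartier divisor, $\O_{\ZZ_-(x_j)}$ has the two-term resolution $[\O(-\ZZ_-(x_j))\to\O]$. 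Its derived pullback to $\V(\Lambda^\circ)$ therefore has class $1-[\O(-D_j)]$ in $K_0(\V(\Lambda^\circ))$, where $D_j$ is the restricted divisor class. This holds whether or not $\V(\Lambda^\circ)\subseteq\ZZ_-(x_j)$.

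Next we identify these classes. The first Chern class of $\O_{\mathcal{N}}(\ZZ_-(x_j))|_{\V(\Lambda^\circ)}$ is given by \Cref{balloonInt}. If $x_j\in\Lambda^\circ$, the restricted divisor has degree $1$, so it equals $\xi_{\Lambda^\circ}$ in $\mathrm{Pic}(\mathbb{P}^{N-1})=\Z$ and its $\mathrm{Gr}^1K_0$-class is $\xi_{\Lambda^\circ}$. If $x_j\notin\Lambda^\circ$, then $\ZZ_-(x_j)\cap\V(\Lambda^\circ)=\emptyset$ and the derived restriction vanishes.

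We must check that ``nonzero iff $x\in\Lambda^\circ$'' in \Cref{balloonInt} really means the intersection is empty otherwise, and not merely that the line bundle is trivial. It suffices to know that $\O(\ZZ_-(x))|_{\V(\Lambda^\circ)}$ has degree $0$ when $x\notin\Lambda^\circ$. Then the derived class $1-[\O(-D)]$ lies in $F^2$, so its $\mathrm{Gr}^1$-component vanishes, and the whole product is $0$ in top degree. Either reading gives $0$.

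Finally, when $L\subseteq\Lambda^\circ$ (equivalently every $x_j\in\Lambda^\circ$), the class in $\mathrm{Gr}^{N-1}K_0(\mathbb{P}^{N-1})$ is $\xi^d\cdot\xi^{N-d-1}=\xi^{N-1}$, the class of a point. Its Euler characteristic is $1$, since the degree map on $\mathrm{Gr}^{N-1}$ counts points with multiplicity and $\V(\Lambda^\circ)$ lies over the residue field $\kappa=\overline{\F}_p$, so length over $\O_{\breve F}$ equals $\kappa$-dimension.

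The main obstacle I expect is compatibility of the graded $K$-theory formalism: the products must be taken in $\mathrm{Gr}^\bullet K_0$ with supports, and $\chi$ must be computed on the top graded piece, where multiplicativity of the codimension filtration on the regular scheme $\mathbb{P}^{N-1}$ identifies $\mathrm{Gr}^\bullet K_0\otimes\Q$ with $\mathrm{CH}^\bullet\otimes\Q$. On $\mathbb{P}^{N-1}$ this identification is harmless.
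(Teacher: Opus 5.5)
Your proposal is correct and follows the paper's route: the paper states this corollary as an immediate consequence of \Cref{balloonInt}, used exactly as you use it. Each $\ZZ_-(x_i)$ restricts to $\V(\Lambda^\circ)\iso\mathbb{P}^{N-1}$ as the hyperplane class $\xi_{\Lambda^\circ}$ if $x_i\in\Lambda^\circ$ and as zero otherwise, so the total class is $\xi_{\Lambda^\circ}^{d}\cdot\xi_{\Lambda^\circ}^{N-d-1}$, the class of a $\kappa$-point, with Euler characteristic $1$. Your projection-formula reduction and your handling of the two readings of ``nonzero'' make explicit what the paper leaves implicit, as does the tacit assumption you share with the paper that each $x_i\neq0$, so that $\ZZ_-(x_i)$ is a Cartier divisor.
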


\subsubsection{Auxiliary Rapoport--Zink space}\label{SubsectionAuxRZ}
We follow the discussion of \cite[Section 10.2]{Li-Zhang}.

Fix an $\O_F$-linear isogeny $\alpha\colon\X^{[1]}_N\times\E\to\X^{[0]}_{N+1}$ such that $\ker\alpha\subseteq(\X^{[1]}\times\E)[\varpi]$ and $\alpha^*(\lambda_{\X^{[0]}_{N+1}})=\lambda_{\X^{[1]}_N}\times\varpi\lambda_{\E}.$ Let $x_0\in\mathbb{V}_{N+1}^{[0]}$ be the restriction of $\alpha$ to $\E.$ Then $\langle x_0,x_0\rangle_{\mathbb{V}_{N+1}^{[0]}}=\varpi$ and $\alpha$ induces an identification $\mathbb{V}_{N+1}^{[0]}=\mathbb{V}_N^{[1]}\obot Fx_0.$
\begin{definition}
We denote $\tilde{\mathcal{N}}_N^1\hookrightarrow\mathcal{N}_N^1\times_{\Spf\O_{\breve{F}}}\mathcal{N}_{N+1}$ to be the closed formal subscheme consisting of tuples $(Y,\iota_Y,\lambda_Y,\rho_Y,X,\iota_X,\lambda_X,\rho_X)$ such that the composition
\begin{equation*}
    (Y\times\mathcal{E}_S)\times_{\Spec\overline{k}}\overline{S}\xdashrightarrow{\rho_Y\times\rho_{\mathcal{E}}}(\X_N^{[1]}\times\E)\times_{\Spec\overline{k}}\overline{S}\xrightarrow{\alpha}\X_{N+1}^{[0]}\times_{\Spec\overline{k}}\overline{S}\xdashrightarrow{\rho_X^{-1}}X\times_{\Spec\overline{s}}\overline{S}
\end{equation*}
extends to a homomorphism $Y\times\mathcal{E}_S\to X.$
\end{definition}
This gives us a correspondence.
\begin{equation*}
    \begin{tikzcd}
    &\tilde{\mathcal{N}}_N^1\arrow{dl}[swap]{\pi_1}\arrow{dr}{\pi_2}&&\\
    \mathcal{N}_N^1&&\ZZ(x_0)\arrow[r,hook]&\mathcal{N}_{N+1}
    \end{tikzcd}
\end{equation*}
\begin{definition}
    For $x\in\mathbb{V}_N^{[1]}\subseteq\mathbb{V}_{N+1}^{[0]},$ we denote $\ZZ^\flat(x)\defeq\ZZ(x_0)\cap\ZZ(x),$ viewed as a formal scheme on $\ZZ(x_0).$
\end{definition}

The following is a special case of a conjecture of Kudla and Rapoport from \cite{Kudla-Rapoport-note}, see also \cite[Conjecture 10.4.1]{Li-Zhang}, proved by Li--Rapoport--Zhang.
\begin{theorem}[{\cite[Theorem 14.6.2, Proposition 16.4.1]{LRZ}}]\label{KRConj}
Let $\mathcal{N}_N^{1,ss}\subseteq\mathcal{N}_N^1$ denote the (disjoint) union of the $\V(\Lambda^\circ)\iso\mathbb{P}^{N-1}$ over all self-dual lattices $\Lambda^\circ\in\mathrm{Vert}^0(\mathbb{V}_N^{[1]}).$ Let $\ZZ(x_0)^{ss}\subseteq\ZZ(x_0)$ denote the (disjoint) union of all the points of the form $\V(\Lambda^\circ\oplus\O_Fx_0)$ for $\Lambda^\circ\in\mathrm{Vert}^0(\mathbb{V}^{[1]}_{N}).$ Then
\begin{enumerate}[label=(\roman*)]
    \item The formal scheme $\tilde{\mathcal{N}}_N^{1}$ is regular of dimension $N.$
    \item The morphism $\pi_1$ is finite flat of degree $q+1,$ \'etale away from $\mathcal{N}_N^{1,ss},$ and totally ramified along $\mathcal{N}_N^{1,ss}.$
    \item The morphism $\pi_2$ is proper, and is a blow-up\footnote{Blow-up here is meant in the generalized sense, i.e. a blow-up in an ideal sheaf with support in $Z(x_0)^{ss}.$} in the zero-dimensional subscheme $Z(x_0)^{ss}.$
    \item The exceptional divisor $\tilde{\mathcal{N}}^{1,\mathrm{exc}}$ of the blow-up $\pi_2$ is a reduced Cartier divisor and is isomorphic to $\mathcal{N}_N^{1,ss}$ under $\pi_1.$ In particular, if we denote $\mathbb{P}_{\Lambda^\circ}\defeq\pi_2^{-1}(\V(\Lambda^\circ\oplus\O_Fx_0)),$ then $\pi_1\colon \mathbb{P}_{\Lambda^\circ}\rightiso\V(\Lambda^\circ),$ and we have a decomposition
    \begin{equation*}
        \tilde{\mathcal{N}}^{1,\mathrm{exc}}=\bigsqcup_{\Lambda^\circ\in\mathrm{Vert}^0(\mathbb{V}^{[1]}_N)}\mathbb{P}_{\Lambda^\circ}.
    \end{equation*}
    \item For $\Lambda^\circ\in\mathrm{Vert}^0(\mathbb{V}^{[1]}_N),$ the normal bundle $N_{\V(\Lambda^\circ)/\tilde{\mathcal{N}}^1_N}$ is isomorphic to $\O_{\V(\Lambda^\circ)}(-1).$
\end{enumerate}
\end{theorem}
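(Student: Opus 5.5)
Since this theorem is quoted from \cite{LRZ}, the proof I have in mind is an outline of the argument there, rebuilt from earlier results in this paper. Everything will be reduced to statements about $\breve{F}$-points and Dieudonn\'e lattices, combined with a local computation via Grothendieck--Messing theory.

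\textbf{Set-theoretic picture.} First I would describe $\tilde{\mathcal{N}}_N^1(\kappa)$ by relative Dieudonn\'e theory.
\begin{itemize}
\item Given $Y\in\mathcal{N}_N^1(\kappa)$ with Dieudonn\'e lattice $M_Y$, a point over it is a lattice $M_X\supseteq M_Y\oplus M_{\mathcal{E}}$. This $M_X$ must be self-dual, with the correct signature, and contained in the dual $(M_Y\oplus M_{\mathcal{E}})^\vee$. The condition $\alpha^*(\lambda_{\X^{[0]}_{N+1}})=\lambda_{\X^{[1]}_N}\times\varpi\lambda_{\E}$ makes the quotient $(M_Y\oplus M_{\mathcal{E}})^\vee/(M_Y\oplus M_{\mathcal{E}})$ a $2$-dimensional hermitian space over $\F_{q^2}$.
\item The choices of $M_X$ are then the Lagrangians compatible with Frobenius and signature. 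A nondegenerate hermitian plane over $\F_{q^2}$ has $q+1$ isotropic lines, which should give degree $q+1$ for $\pi_1$.
\item These lines should be distinct exactly when $Y\notin\mathcal{N}_N^{1,ss}$. On $\V(\Lambda^\circ)$ the signature condition forces them to collapse to a single point.
\item On the other side, $\pi_2$ is injective away from $\ZZ(x_0)^{ss}$, because $Y$ is recovered from $X$ as the orthogonal complement of the image of $x_0$.
\item At a point $\V(\Lambda^\circ\oplus\O_Fx_0)$, the preimage consists of all $Y$ in $\V(\Lambda^\circ)\iso\mathbb{P}^{N-1}$. This explains (iv) on the level of reduced schemes.
\end{itemize}

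\textbf{Local structure.} Next I would prove (i) and (ii) by a Grothendieck--Messing deformation computation.
\begin{itemize}
\item At a point of $\tilde{\mathcal{N}}_N^1$, lifting the pair $(Y,X)$ together with the homomorphism $Y\times\mathcal{E}\to X$ amounts to choosing Hodge filtrations subject to one incidence condition.
\item Since $\mathcal{N}_N^1$ is known to be regular with semistable-type singularities, I would write the complete local ring of $\tilde{\mathcal{N}}^1_N$ explicitly as an extension of that of $\mathcal{N}_N^1$ by one equation. The expected shape is $T^{q+1}=f$, possibly after tame twisting; it must produce $q+1$ distinct \'etale branches when $f$ is a unit and total ramification along $f=0$.
\item From this presentation: finite flatness of degree $q+1$ is immediate, and regularity follows because the equation has a linear term in a regular parameter. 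Totally ramified along $\mathcal{N}_N^{1,ss}$ means that $f$ cuts out $\mathcal{N}_N^{1,ss}$ there.
\end{itemize}

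\textbf{Blow-up.} For (iii) and (iv), I would use that $\pi_2$ is proper (since $\pi_1$ is finite) and an isomorphism away from $\ZZ(x_0)^{ss}$.
\begin{itemize}
\item The preimage of $\ZZ(x_0)^{ss}$ is the Cartier divisor $\bigsqcup\mathbb{P}_{\Lambda^\circ}$. By the universal property of blowing up, $\pi_2$ therefore factors through the blow-up of some ideal supported on $\ZZ(x_0)^{ss}$.
\item The dimension count says this exceptional divisor, isomorphic to $\mathbb{P}^{N-1}$ over an $N$-dimensional regular target, is the full fiber.
\item Reducedness of the divisor comes from the local equation above.
\end{itemize}

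For (v), I would use the identity $\O(\tilde{\mathcal{N}}^{1,\mathrm{exc}})|_{\mathbb{P}_{\Lambda^\circ}}\iso\O(-1)$, which is standard for the exceptional divisor of a blow-up at a point, and then transport it along $\pi_1\colon\mathbb{P}_{\Lambda^\circ}\rightiso\V(\Lambda^\circ)$.

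\textbf{Main obstacle.} I expect the hardest step to be the explicit local equation at points of $\mathcal{N}_N^{1,ss}$. Concretely, this means checking that the deformation condition there yields exactly one extra equation with a regular linear term, and that the resulting blow-up ideal is the right one. This is where \cite{LRZ} does the real work. I would first attempt it by reducing, via the inductive structure of \Cref{RZInd}, to the case $N=2$, where the equations are explicit from \cite{Sankaran}.
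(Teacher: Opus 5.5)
The paper gives no proof of this statement; it is imported from \cite{LRZ}. So the question is whether your outline could replace that citation, and as written it cannot.

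\textbf{The local structure is postulated, not derived.} Your set-theoretic picture is consistent with the theorem, though the count of $q+1$ isotropic lines is literally valid only on the generic fibre, where $\ker(\lambda_Y\times\varpi\lambda_{\mathcal E})$ is \'etale. Beyond that, everything in (i), (ii) and (iv) rests on the local structure of $\tilde{\mathcal N}^1_N$ along $\pi_1^{-1}(\mathcal N_N^{1,ss})$, and you only postulate it (``the expected shape is $T^{q+1}=f$''). Grothendieck--Messing theory alone does not produce a monogenic presentation of $\O_{\tilde{\mathcal N}^1_N}$ over $\O_{\mathcal N^1_N}$.

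More seriously, the reduction you propose for this computation cannot reach most points. Take $y$ with $\mathrm{val}\langle y,y\rangle=0$. By \Cref{balloonInt}, the divisor $\ZZ_-(y)\iso\mathcal N^1_{N-1}$ meets $\V(\Lambda^\circ)\iso\mathbb P^{N-1}$ in a hyperplane or not at all. That hyperplane depends only on $y$ modulo $\varpi\Lambda^\circ$: on $\V(\Lambda^\circ)\subseteq\ZZ_+(\Lambda^\circ)$, every element of $\varpi\Lambda^\circ$ already lifts to a homomorphism $\mathcal E\to X$. Hence induction along \Cref{RZInd} only controls points of $\V(\Lambda^\circ)$ lying on a finite hyperplane arrangement. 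Openness of the regular and flat loci shrinks the uncontrolled part of each $\V(\Lambda^\circ)$ to finitely many closed points off the arrangement, but does not remove them. The base case $N=2$ has the same defect: from $\mathcal N^1_1$ you only reach finitely many points of each $\mathbb P^1$. What is missing is a computation at an arbitrary point of $\V(\Lambda^\circ)$. Possible routes are an explicit local model for $\tilde{\mathcal N}^1_N$, or treating $\pi_1$ as the moduli of $\O_F$-stable Lagrangian subgroups of order $q^2$ in $\ker(\lambda_Y\times\varpi\lambda_{\mathcal E})$.

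\textbf{The arguments for (iii) and (v) have further gaps.}
\begin{itemize}
\item Properness of $\pi_2$ does not follow from finiteness of $\pi_1$, since the projection $\mathcal N^1_N\times\mathcal N_{N+1}\to\mathcal N_{N+1}$ is not proper. As for $\pi_1$, it comes from boundedness of the isogeny $Y\times\mathcal E\to X$.
\item The universal property of blowing up needs an ideal $I$ with $I\cdot\O_{\tilde{\mathcal N}^1_N}$ invertible. Even then it only factors $\pi_2$ through a blow-up; it does not show $\pi_2$ is one.
\item Injectivity on $\kappa$-points off $\ZZ(x_0)^{ss}$ does not make $\pi_2$ an isomorphism there.
\item Both your (iii) step and your proof of (v) (``standard for the exceptional divisor of a blow-up at a point'') presuppose two things: that $\ZZ(x_0)$ is regular at the points $\V(\Lambda^\circ\oplus\O_Fx_0)$, and that the centre is the reduced point. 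Neither is established, and the statement deliberately claims only a blow-up in some ideal supported there. For such a blow-up with smooth exceptional divisor $\mathbb P^{N-1}$, the normal bundle could a priori be $\O(-d)$ with $d>1$ (think of the cone over a Veronese).
\end{itemize}

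\textbf{A workable order is the reverse.}
\begin{enumerate}
\item From (ii) and (iv), $\pi_1^*\V(\Lambda^\circ)=(q+1)\mathbb P_{\Lambda^\circ}$ as Cartier divisors.
\item In the semistable $\mathcal N^1_N$, $\O(\V(\Lambda^\circ))|_{\V(\Lambda^\circ)}\iso\O(-(q+1))$. Indeed, the other branch of the special fibre cuts $\V(\Lambda^\circ)\iso\mathbb P^{N-1}$ transversally in the Fermat hypersurface of degree $q+1$; this is the local analogue of the description of $M^\dagger_\p$ in \cite{LTXZZ}.
\item Since $\mathrm{Pic}(\mathbb P^{N-1})$ is torsion-free, this forces $N_{\mathbb P_{\Lambda^\circ}/\tilde{\mathcal N}^1_N}\iso\O(-1)$.
\item Hence $\O(-\tilde{\mathcal N}^{1,\mathrm{exc}})$ is $\pi_2$-ample. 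This makes the proper birational map $\pi_2$ projective, and therefore a blow-up of some ideal supported on $\ZZ(x_0)^{ss}$.
\end{enumerate}
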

This conjecture allow us to relate the special divisors in $\mathcal{N}_N^1$ to those on $\mathcal{N}_{N+1}.$ As an example, we have\footnote{We note that the cycles $\ZZ^1_-(L)$ resp. $\ZZ^1_+(L)$ are denoted $\mathcal{Y}(L)$ resp. $\mathcal{Y}'(L)$ in \cite{Li-Zhang}.}
\begin{theorem}[{\cite[Theorem 10.4.3]{Li-Zhang}}]\label{ZCorrespondence}
Let $x\in\mathbb{V}^{[1]}_N.$ Define a locally finite Cartier divisor on $\tilde{\mathcal{N}}_N^1$
\begin{equation*}
    \mathrm{Exp}(x)\defeq\sum_{\substack{\Lambda^\circ\in\mathrm{Vert}^0(\mathbb{V}^{[1]}_N)\\x\in\Lambda^\circ}}\mathbb{P}_{\Lambda^\circ}.
\end{equation*}
Then we have an equality of Cartier divisors on $\tilde{\mathcal{N}}_N^1$
\begin{equation*}
    \pi_1^{-1}(\ZZ_-(x))=\pi_2^{-1}(\ZZ^\flat(x))-\mathrm{Exp}(x).
\end{equation*}
\end{theorem}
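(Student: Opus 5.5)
The plan is to show that the Cartier divisor $D\defeq\pi_2^{-1}(\ZZ^\flat(x))-\pi_1^{-1}(\ZZ_-(x))$ on the regular formal scheme $\tilde{\mathcal{N}}_N^1$ (regular by \Cref{KRConj}(i)) is effective and supported on the exceptional divisor. We then compute its multiplicities along each component $\mathbb{P}_{\Lambda^\circ}$ by an intersection-degree argument. Since $\tilde{\mathcal{N}}_N^1$ is regular, Cartier divisors are Weil divisors, and an equality of Cartier divisors can be checked by comparing multiplicities at every codimension-one point.

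\emph{Step 1: effectivity.} Let $(Y,X)$ be an $S$-point of $\tilde{\mathcal{N}}_N^1$ with $S\to\pi_1^{-1}(\ZZ_-(x))$, so the quasi-homomorphism $x$ extends to $\mathcal{E}_S\to Y$. Composing with the extension $Y\times\mathcal{E}_S\to X$ of $\alpha$ (restricted to the first factor) shows that $x$, viewed in $\mathbb{V}^{[0]}_{N+1}$, extends to $\mathcal{E}_S\to X$. Hence $S$ lies in $\ZZ(x)$, and also in $\ZZ(x_0)$ since $\pi_2$ lands there. This gives a closed immersion $\pi_1^{-1}(\ZZ_-(x))\subseteq\pi_2^{-1}(\ZZ^\flat(x))$ of Cartier divisors, so $D$ is effective.

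\emph{Step 2: support of $D$.} This is where I expect the main obstacle. We must show that $D$ has multiplicity zero at every generic point of a component not contained in $\tilde{\mathcal{N}}^{1,\mathrm{exc}}$. Away from $\mathcal{N}_N^{1,ss}$, \Cref{KRConj}(ii) says that $\pi_1$ is étale, and \Cref{KRConj}(iii) says that $\pi_2$ is an isomorphism onto $\ZZ(x_0)\setminus\ZZ(x_0)^{ss}$. I would first try to prove the converse lifting statement there. Use the isogeny $\beta\colon X\to Y\times\mathcal{E}_S$ with $\beta\circ\alpha=\varpi$, together with the fact that $\langle x,x_0\rangle=0$. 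The aim is to show that a lift $\mathcal{E}_S\to X$ of $x$ maps under $\beta$ into $Y\times\mathcal{E}_S$ with trivial $\mathcal{E}$-component and with $Y$-component divisible by $\varpi$, which recovers $x\colon\mathcal{E}_S\to Y$. I would check this on relative Dieudonné (or Grothendieck--Messing) modules over Artinian thickenings, where the non-superspecial condition should force $\ker\alpha$ to sit in the expected position. If the direct moduli argument is awkward, a fallback is to compare lengths of the two divisors at generic points: pulling back along the étale map $\pi_1$ preserves multiplicities, and the generic multiplicities of $\ZZ(x_0)\cap\ZZ(x)$ on the non-exceptional locus match those of $\ZZ_-(x)$ through the inductive description in \Cref{RZInd}.

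\emph{Step 3: multiplicities.} By Steps 1--2, $D=\sum_{\Lambda^\circ}m_{\Lambda^\circ}\mathbb{P}_{\Lambda^\circ}$ with $m_{\Lambda^\circ}\ge0$. We compute $m_{\Lambda^\circ}$ by restricting everything to $\mathbb{P}_{\Lambda^\circ}\iso\V(\Lambda^\circ)\iso\mathbb{P}^{N-1}$ and taking degrees.
\begin{itemize}
\item The divisor $\pi_2^{-1}(\ZZ^\flat(x))$ is pulled back along $\pi_2$, which contracts $\mathbb{P}_{\Lambda^\circ}$ to a point, so its restriction has degree $0$.
\item By \Cref{balloonInt} and $\pi_1\colon\mathbb{P}_{\Lambda^\circ}\rightiso\V(\Lambda^\circ)$, the divisor $\pi_1^{-1}(\ZZ_-(x))$ restricts to degree $1$ if $x\in\Lambda^\circ$ and degree $0$ otherwise.
\item The components $\mathbb{P}_{\Lambda'}$ are disjoint, so $D\rvert_{\mathbb{P}_{\Lambda^\circ}}=m_{\Lambda^\circ}\,\mathbb{P}_{\Lambda^\circ}\rvert_{\mathbb{P}_{\Lambda^\circ}}$. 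By \Cref{KRConj}(v) this has degree $-m_{\Lambda^\circ}$.
\end{itemize}
Hence $-m_{\Lambda^\circ}=0-\mathbf{1}_{x\in\Lambda^\circ}$, that is, $m_{\Lambda^\circ}=1$ exactly when $x\in\Lambda^\circ$. So $D=\mathrm{Exp}(x)$, which is the claimed equality.
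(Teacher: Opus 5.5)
\noindent\textbf{Verdict.} Your architecture is the natural one, and Steps~1 and~3 are correct. Step~2, however, contains essentially all of the content of the statement, and you have not proved it.

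\noindent\textbf{What is right.} The paper gives no proof of its own here; it cites \cite[Theorem 10.4.3]{Li-Zhang}. It does prove \Cref{VCorrespondence} by the same template: show the difference is supported on $\tilde{\mathcal{N}}^{1,\mathrm{exc}}$, then determine the multiplicities. Step~1 is correct. Step~3 is also correct, provided you read every ``restriction'' as restriction of the line bundle $\O(\blank)$. This matters because some $\mathbb{P}_{\Lambda^\circ}$ lie inside the divisors themselves. For instance, $\mathbb{P}_{\Lambda^\circ}\subseteq\pi_2^{-1}(\ZZ^\flat(x))$ as soon as $x\in\Lambda^\circ$, since $\pi_2$ contracts it to the point $\V(\Lambda^\circ\oplus\O_Fx_0)$, which lies in $\ZZ(x)$. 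With that reading, your degrees $0$, $\mathbf{1}_{x\in\Lambda^\circ}$ and $-m_{\Lambda^\circ}$ are right.

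\noindent\textbf{The gap in Step~2.} As written, Step~2 is a hope (``should force $\ker\alpha$ to sit in the expected position''), and your fallback does not work. \Cref{RZInd} only treats $\ZZ_-(x)$ with $\langle x,x\rangle$ a unit and $\ZZ_+(y)$ with valuation $-1$; it gives no comparison between $\ZZ(x_0)\cap\ZZ(x)$ and $\ZZ_-(x)$. Moreover, off $\tilde{\mathcal{N}}^{1,\mathrm{exc}}$ there are also vertical prime divisors, lying over the ground strata. So a generic-fibre comparison would not suffice. What you need is the functorial equality $\pi_1^{-1}(\ZZ_-(x))=\pi_2^{-1}(\ZZ(x))$ on $U=\tilde{\mathcal{N}}^1_N\setminus\tilde{\mathcal{N}}^{1,\mathrm{exc}}$. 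Your route~(a) does give this, once you add the following lemma.

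\noindent\textbf{Reduction to a lemma.} Over $S\to\tilde{\mathcal{N}}^1_N$, let $\tilde\alpha\colon Y\times\mathcal{E}_S\to X$ be the extension. Then
\[
\beta\defeq\varpi\tilde\alpha^{-1}=(\varpi\lambda_Y^{-1}\times\lambda_{\mathcal{E}}^{-1})\circ\tilde\alpha^\vee\circ\lambda_X
\]
is a homomorphism, because $\ker\lambda_Y\subseteq Y[\varpi]$. If $\tilde x$ lifts $x$ to $X$, then $\beta\tilde x$ lifts $(\varpi x,0)$. Its $\mathcal{E}$-component is an endomorphism of $\mathcal{E}_S$ lifting $0$, so it vanishes by rigidity. (That is the real mechanism here, not $\langle x,x_0\rangle=0$.) Write $\beta\tilde x=(y,0)$. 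Since $\tilde\alpha(y,0)=\varpi\tilde x$, the restriction $y\rvert_{\mathcal{E}_S[\varpi]}$ factors through $\ker\tilde\alpha\cap(Y\times 0)$. Hence $x$ lifts to $Y$ as soon as $\ker\tilde\alpha\to\mathcal{E}_S[\varpi]$ is an isomorphism.

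\noindent\textbf{Proof of the lemma.} Both group schemes have order $q^2$, so the isomorphism can be checked at geometric points, which all have characteristic $p$. There, $M(\ker\tilde\alpha)$ is an isotropic, $F,V$-stable $\O_F\otimes k$-line in $M(\ker\lambda_Y)\oplus M(\mathcal{E}[\varpi])$. Because $\mathcal{E}$ has signature $(0,1)$, $F$ maps one eigenline of $M(\mathcal{E}[\varpi])$ isomorphically onto the other. Consequently, if the line meets $M(\ker\lambda_Y)$, it must be $\ker(F\rvert_{M(\mathcal{E}[\varpi])})$ plus its orthogonal eigenline in $M(\ker\lambda_Y)$. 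This forces $F$ to kill that eigenline of $M(\ker\lambda_Y)$. In that case, solving the $F$-stability condition shows this is the \emph{only} isotropic $F$-stable line. The geometric fibre of $\pi_1$ is then a single point, which by \Cref{KRConj}(ii) happens only over $\mathcal{N}^{1,ss}_N$. With this lemma, Step~2 and hence the whole argument go through.
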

Similarly, we can prove
\begin{theorem}\label{VCorrespondence}
Let $\Lambda\in\mathrm{Vert}^{2d}(\mathbb{V}^{[1]}_N)$ for some $d\ge1.$ We let $\Lambda^\sharp\defeq\Lambda\oplus\O_Fx_0.$ Note that $\Lambda^\sharp\in\mathrm{Vert}^{2d+1}(\mathbb{V}^{[0]}_{N+1}),$ and that $\V(\Lambda^\sharp)\subseteq\ZZ(x_0).$ Define a locally finite cycle on $\tilde{\mathcal{N}}_N^1$ by
\begin{equation*}
    \mathrm{Exp}(\Lambda)\defeq\sum_{\substack{\Lambda^\circ\in\mathrm{Vert}^0(\mathbb{V}^{[1]}_{N})\\\Lambda\subseteq\Lambda^\circ\subseteq\Lambda^\vee}}H_{\mathbb{P}_{\Lambda^\circ}}^{N-d-1}
\end{equation*}
where $H_{\mathbb{P}_{\Lambda^\circ}}^{k}$ denotes a hyperplane of codimension $k$ in $\mathbb{P}_{\Lambda^\circ}.$ Then we have an equality
\begin{equation*}
    \pi_1^*(\O_{\V(\Lambda)})=\pi_2^*(\O_{\V(\Lambda^\sharp)})-\O_{\mathrm{Exp}(\Lambda)}
\end{equation*}
in $Gr^{N-d-1}K_0^{\pi_2^{-1}(\V(\Lambda^\sharp))}(\tilde{\mathcal{N}}^1_N).$
\end{theorem}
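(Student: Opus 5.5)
We deduce the identity from \Cref{ZCorrespondence} and \Cref{KRConj}, mirroring the divisor case. First we express $\V(\Lambda)$ and $\V(\Lambda^\sharp)$ through special cycles. For $\Lambda\in\mathrm{Vert}^{2d}(\mathbb{V}^{[1]}_N)$ with $2d>1$ we have $\V(\Lambda)=\ZZ_-(\Lambda)^\red$. Similarly $\V(\Lambda^\sharp)=\ZZ(\Lambda^\sharp)^\red$, and this equals $\ZZ(\Lambda^\sharp)$ by \Cref{KRReduced}. Moreover $\ZZ(\Lambda^\sharp)=\ZZ(x_0)\cap\ZZ(\Lambda)=\bigcap_{x\in\Lambda}\ZZ^\flat(x)$ inside $\ZZ(x_0)$.

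Rather than work with these reduced loci directly, I would reduce by induction, using the inductive structure (\Cref{RZInd}, \Cref{RZindBT}), to a situation where both sides can be compared. The most direct route seems to be the following. Choose an $\O_F$-basis of a complement, writing $\Lambda^\vee/\Lambda$ as a $2d$-dimensional $\kappa$-space. Then realize $\O_{\V(\Lambda^\sharp)}$ in $\mathrm{Gr}^{N-d}K_0(\mathcal{N}_{N+1})$ as a derived intersection of $N-d$ divisors $\ZZ(y_i)$, intersected with $\ZZ(x_0)$. Concretely, split off unit-norm vectors with $\Phi$ until $\Lambda^\sharp$ becomes minimal of type $2d+1$ in $\mathbb{V}_{2d+1}^{[0]}$, where $\V$ is a point-like Deligne--Lusztig variety. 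Since both sides of the claimed identity are compatible with the isomorphisms $\Phi$ on $\ZZ_-(y)$, we can reduce to $N$ small relative to $d$. We also need compatibility of $\tilde{\mathcal{N}}$, $\pi_1$, $\pi_2$ with $\Phi$; this should hold for $y\perp x_0$ of unit norm, since the moduli description of $\tilde{\mathcal{N}}$ restricts.

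For the core computation we pull back along $\pi_2$ and use \Cref{ZCorrespondence} repeatedly. Writing $\O_{\V(\Lambda^\sharp)}$ as $\bigotimes^\L_i\O_{\ZZ^\flat(y_i)}$ for suitable $y_i\in\Lambda$ with $\ZZ$-cycles cutting out exactly $\V$ (using reducedness), we get
\begin{equation*}
\pi_2^*\O_{\V(\Lambda^\sharp)}=\prod_i\bigl(\pi_1^*[\O_{\ZZ_-(y_i)}]+[\O_{\mathrm{Exp}(y_i)}]\bigr).
\end{equation*}
This is valid in $K$-theory with supports because $\pi_2^*$ is a ring map on $K_0$ and products of divisor classes correspond to derived tensor products. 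Expanding the product, the term $\prod_i\pi_1^*\ZZ_-(y_i)=\pi_1^*(\O_{\V(\Lambda)})$ is the main term. The mixed terms are supported on the exceptional $\mathbb{P}_{\Lambda^\circ}$ with $y_i\in\Lambda^\circ$ for the relevant indices. On such a component we compute them using \Cref{KRConj}(v), namely $\O(\mathbb{P}_{\Lambda^\circ})|_{\mathbb{P}_{\Lambda^\circ}}=\O(-1)$, together with \Cref{balloonInt}, which says that $\ZZ_-(y)|_{\V(\Lambda^\circ)}$ is a hyperplane when $y\in\Lambda^\circ$. Each summand of a mixed term containing $k\ge1$ factors of $\mathrm{Exp}$ restricts on $\mathbb{P}_{\Lambda^\circ}$ to $(-1)^{k-1}H^{N-d-1}$ times signs coming from the self-intersection. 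A binomial-type cancellation then gives total coefficient $1$ exactly when all $y_i\in\Lambda^\circ$, that is, when $\Lambda\subseteq\Lambda^\circ$; the condition $\Lambda^\circ\subseteq\Lambda^\vee$ is then automatic by self-duality. This yields $\O_{\mathrm{Exp}(\Lambda)}$.

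The main obstacle is choosing the $y_i$ so that the derived intersection really equals $\O_{\V(\Lambda)}$ on $\mathcal{N}^1_N$ and $\O_{\V(\Lambda^\sharp)}$ on $\ZZ(x_0)$ in the graded $K$-group, which requires a suitable basis and a vanishing of higher Tor. To handle this, I would first try the induction on $N$ via $\Phi$ to reduce to $N=2d$ or $2d+1$, where $\V(\Lambda)$ is a whole connected piece. In that case one can instead compare degrees directly against the test classes $\xi_{\Lambda^\circ}$ using \Cref{nablacirc} and \Cref{LiZhangComp}.
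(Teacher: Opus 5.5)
Your proposal has a genuine gap in the core computation, and the fallback does not repair it.

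\textbf{The divisor representation fails.} Both $\V(\Lambda)$ and $\V(\Lambda^\sharp)$ have dimension $d$ for every $N$. To write $\O_{\V(\Lambda)}$ as $\prod_i[\O_{\ZZ_-(y_i)}]$ (and $\O_{\V(\Lambda^\sharp)}$ as the derived product of the $\O_{\ZZ^\flat(y_i)}$ on $\ZZ(x_0)$), you need $N-d$ special divisors meeting in exactly that stratum with multiplicity one. This is impossible.
\begin{itemize}
\item $N-d<N$ vectors span a lattice $L'$ of non-full rank. Then $\ZZ_-(L')$ (resp.\ $\ZZ(L'\oplus\O_Fx_0)$) has further $d$-dimensional components besides $\V(\Lambda)$: every type-$2d$ stratum $\V(\Lambda')$ with $\Lambda'\supseteq L'$, and in general horizontal components. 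These cannot cancel, because $\mathrm{Gr}_dG_0$ of the support is free on its $d$-dimensional components.
\item If instead you take a spanning set of $\Lambda$, you get an excess intersection whose derived product lies in the wrong graded piece.
\item The same defect appears in your binomial count. It only detects $\mathrm{span}(y_i)\subseteq\Lambda^\circ$, which is strictly weaker than $\Lambda\subseteq\Lambda^\circ$ unless the $y_i$ span $\Lambda$.
\item After splitting off unit vectors down to $N=2d$, $\V(\Lambda)$ is still $d$-dimensional, not point-like. It is an irreducible component of $(\mathcal{N}^1_{2d})^{\red}$ meeting the balloons $\V(\Lambda^\circ)$ for $\Lambda^\circ\supseteq\Lambda$, not a connected piece.
\item The induction itself would need the non-flat pullback $\pi_2^*$ to be compatible with $\pi_1^{-1}(\ZZ_-(y))\cong\tilde{\mathcal{N}}^1_{N-1}$. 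This is not automatic, since by \Cref{ZCorrespondence} the total transform $\pi_2^{-1}(\ZZ^\flat(y))$ also contains $\mathrm{Exp}(y)$.
\end{itemize}

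\textbf{The missing idea.} You never need to cut out $\V(\Lambda)$ by divisors. The paper argues in two steps.
\begin{itemize}
\item It uses three inputs: the moduli inclusion $\pi_1^{-1}(\V(\Lambda))\subseteq\pi_1^{-1}(\ZZ_-(\Lambda))\subseteq\pi_2^{-1}(\ZZ(\Lambda^\sharp))=\pi_2^{-1}(\V(\Lambda^\sharp))$ (via \Cref{KRReduced}); finite flatness of $\pi_1$, so every component of $\pi_1^{-1}(\V(\Lambda))$ has dimension $d$ and not all lie in the exceptional divisor; and irreducibility of the strict transform $\widetilde{\V(\Lambda^\sharp)}$. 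Together these show that $\pi_1^*\O_{\V(\Lambda)}$ and $\pi_2^*\O_{\V(\Lambda^\sharp)}$ both equal $\O_{\widetilde{\V(\Lambda^\sharp)}}$ modulo classes supported on the exceptional divisor.
\item Since $\mathbb{P}_{\Lambda^\circ}\cong\mathbb{P}^{N-1}$, the difference has the form $\sum\mathrm{mult}(\Lambda,\Lambda^\circ)\,\O_{H^{N-d-1}_{\mathbb{P}_{\Lambda^\circ}}}$. Support forces $\mathrm{mult}(\Lambda,\Lambda^\circ)=0$ unless $\Lambda\subseteq\Lambda^\circ$.
\end{itemize}

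Only at this point do test classes enter, and they use $d$ divisors, not $N-d$. For $\Lambda^\circ\supseteq\Lambda$, pick $L=\mathrm{span}(x_1,\ldots,x_d)$ with $\Lambda+L=\Lambda^\circ$. Then $\ZZ_-(L)\cap\V(\Lambda)=\emptyset$, so the pairing with $\prod_i\pi_1^*\O_{\ZZ_-(x_i)}$ vanishes. Expanding via \Cref{ZCorrespondence}, \eqref{DerIntObservation}, \Cref{LiZhangComp} (which gives $c(0)=1$) and \Cref{KRConj}(v) yields $\mathrm{mult}(\Lambda,\Lambda^\circ)=1$.

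Your closing remark about comparing against test classes points in this direction. But without the strict-transform reduction to a finite set of candidate classes, intersection numbers alone cannot determine the $K$-theory class.
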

\begin{proof}
First we note that from the moduli description, we have
\begin{equation*}
    \pi_1^{-1}(\V(\Lambda))\subseteq\pi_1^{-1}(\ZZ_-(\Lambda))\subseteq\pi_2^{-1}(\ZZ(\Lambda^\sharp))=\pi_2^{-1}(\V(\Lambda^\sharp)).
\end{equation*}
Here, we are using that in the case $t=0$ we have that $\ZZ(\Lambda')$ is already reduced for any vertex lattice $\Lambda'$ by \Cref{KRReduced}.

Denote $\widetilde{\V(\Lambda^\sharp)}$ the strict transform of $\V(\Lambda^\sharp)$ under $\pi_2.$ Since $\V(\Lambda^\sharp)$ is integral, so is $\widetilde{\V(\Lambda^\sharp)}.$ Since $\pi_1$ is finite flat, we have that all irreducible components of $\pi_1^{-1}(\V(\Lambda))$ have dimension $d.$ Since $\widetilde{\V(\Lambda^\sharp)}$ is irreducible and $\pi_1^{-1}(\V(\Lambda))$ is not contained in the exceptional divisor, we conclude that
\begin{equation*}
    \pi_1^*(\O_{\V(\Lambda)})=\O_{\widetilde{\V(\Lambda^\sharp)}}+\mathcal{D}'
\end{equation*}
where $\mathcal{D}'$ is supported on $\tilde{\mathcal{N}}^{1,\mathrm{exc}}_N.$ As we also have
\begin{equation*}
    \pi_2^*(\O_{\V(\Lambda^\sharp)})=\O_{\widetilde{\V(\Lambda^\sharp)}}+\mathcal{D}''
\end{equation*}
for some $\mathcal{D}''$ supported on $\tilde{\mathcal{N}}^{1,\mathrm{exc}}_N,$ we conclude that
\begin{equation*}
    \pi_1^*(\O_{\V(\Lambda)})=\pi_2^*(\O_{\V(\Lambda^\sharp)})-\mathcal{D}
\end{equation*}
for some $\mathcal{D}\in Gr^{N-d-1}K_0^{\pi_2^{-1}(\V(\Lambda^\sharp))}(\tilde{\mathcal{N}}^{1,\mathrm{exc}}_N).$ That is, we may write
\begin{equation*}
    \mathcal{D}=\sum_{\Lambda^\circ\in\mathrm{Vert}^0(\mathbb{V}^{[1]}_N)}\mathrm{mult}(\Lambda,\Lambda^\circ)\O_{H^{N-d-1}_{\mathbb{P}_{\Lambda^\circ}}}
\end{equation*}
for some multiplicities $\mathrm{mult}(\Lambda,\Lambda^\circ).$ Note that since $\mathcal{D}$ is supported on $\pi_2^{-1}(\V(\Lambda^\sharp)),$ the multiplicity $\mathrm{mult}(\Lambda,\Lambda^\circ)$ can only be nonzero if the point $\V(\Lambda^\circ\oplus\O_Fx_0)$ is contained in $\V(\Lambda^\sharp),$ that is, if $\Lambda\subseteq\Lambda^\circ.$

Now suppose $\Lambda\subseteq\Lambda^\circ.$ Choose $L=\mathrm{span}_{\O_F}(x_1,\ldots,x_d)\subseteq\mathbb{V}^{[1]}_N$ such that $\Lambda+L=\Lambda^\circ.$ Then $\ZZ_-(L)\subseteq\mathcal{N}^1_N$ does not intersect $\V(\Lambda),$ as
\begin{equation*}
    \ZZ_-(L)\cap\V(\Lambda)\subseteq\ZZ_-(L)\cap\ZZ_-(\Lambda)=\ZZ_-(L+\Lambda)=\ZZ_-(\Lambda^\circ)=\emptyset.
\end{equation*}
Thus, by the projection formula for the finite flat map $\pi_1,$ we get that
\begin{equation*}
    0=\chi\left(\tilde{\mathcal{N}}^1_N,(\pi_2^*(\O_{\V(\Lambda^\sharp)})-\mathcal{D})\otimes^\L\pi_1^*(\O_{\ZZ_-(x_1)})\otimes^\L\cdots\otimes^\L\pi_1^*(\O_{\ZZ_-(x_d)})\right).
\end{equation*}

Note that if $\mathcal{F}\in F^1K_0(\ZZ(x_0)),$ we have by the projection formula for $\pi_2$ that
\begin{equation}\label{DerIntObservation}
    \pi_{2,*}(\O_{H^k_{\mathbb{P}_{\Lambda^\circ}}}\otimes^\L\pi_2^*(\mathcal{F}))=\pi_{2,*}(\O_{H^k_{\mathbb{P}_{\Lambda^\circ}}})\otimes^\L\mathcal{F}=0
\end{equation}
for any $\Lambda^\circ\in\mathrm{Vert}^0(\mathbb{V}^{[1]}_N)$ and $0\le k<n,$ since $\pi_{2,*}(\O_{H^k_{\mathbb{P}_{\Lambda^\circ}}})$ is supported on a zero-dimensional subscheme of $\ZZ(x_0).$

From this observation and by using \Cref{ZCorrespondence} for each of the terms $\pi_1^*(\O_{\ZZ_-(x_i)})$ and collecting terms, we obtain
\begin{equation*}
\begin{split}
    &\chi\left(\tilde{\mathcal{N}}^1_N,\pi_2^*(\O_{\V(\Lambda^\sharp)})\otimes^\L\pi_2^*(\O_{\ZZ_-(x_1)})\otimes^\L\cdots\otimes^\L\pi_2^*(\O_{\ZZ_-(x_d)})\right)\\
    &\qquad=(-1)^d\chi\left(\tilde{\mathcal{N}}^1_N,\mathcal{D}\otimes^\L\mathrm{Exp}(x_1)\otimes^\L\cdots\otimes^\L\mathrm{Exp}(x_d)\right).
\end{split}
\end{equation*}
By the projection formula for $\pi_2,$ the left hand side is $\mathrm{Int}_{(x_1,\ldots,x_d)}(\Lambda^\sharp),$ which by \Cref{LiZhangComp} is
\begin{equation*}
    \mathrm{Int}_{(x_1,\ldots,x_d)}(\Lambda^\sharp)=c(d-[\Lambda+L\colon\Lambda])=c(d-[\Lambda^\circ\colon\Lambda])=c(0)=1.
\end{equation*}
The right hand side, by \Cref{KRConj}(v), is
\begin{equation*}
    \sum_{\substack{\Lambda'\in\mathrm{Vert}^0(\mathbb{V}^{[1]}_N)\\L\subseteq\Lambda'}}\mathrm{mult}(\Lambda,\Lambda').
\end{equation*}
As $\mathrm{mult}(\Lambda,\Lambda')$ can only be nonzero if $\Lambda\subseteq\Lambda',$ and since $\Lambda^\circ=\Lambda+L,$ this is
\begin{equation*}
    \sum_{\substack{\Lambda'\in\mathrm{Vert}^0(\mathbb{V}^{[1]}_N)\\\Lambda^\circ\subseteq\Lambda'}}\mathrm{mult}(\Lambda,\Lambda')=\mathrm{mult}(\Lambda,\Lambda^\circ).
\end{equation*}

Thus we conclude that
\begin{equation*}
    \mathcal{D}=\sum_{\substack{\Lambda^\circ\in\mathrm{Vert}^0(\mathbb{V}^{[1]}_N)\\\Lambda\subseteq\Lambda^\circ}}\O_{H^{N-d-1}_{\mathbb{P}_{\Lambda^\circ}}}
\end{equation*}
and the claim follows.
\end{proof}

\subsubsection{Almost self-dual case: ground strata}
\begin{definition}\label{RZintground}
Given $\underline{x}=(x_1,\ldots,x_d)\in(\mathbb{V}^{[0]}_{N})^d,$ for some $d\in\Z_{\ge1}$ with $2d\le N,$ we consider the function
\begin{equation*}
    \nabla^\bullet_{\underline{x}}\colon\mathrm{Vert}^{2d}(\mathbb{V}^{[1]}_{N})\to\Q
\end{equation*}
given by
\begin{equation*}
    \nabla^\bullet_{\underline{x}}(\Lambda)\defeq\chi(\mathcal{N}_{N}^1,{}^\L\ZZ_-(\underline{x})\otimes^\L\V(\Lambda)).
\end{equation*}
\end{definition}

Under the notation in \Cref{SubsectionAuxRZ}, we have an identification $\mathbb{V}_{N+1}^{[0]}=\mathbb{V}_N^{[1]}\obot\O_Fx_0.$

\begin{theorem}\label{nablabullet}
Given $\underline{x}=(x_1,\ldots,x_d)\in(\mathbb{V}^{[1]}_{N})^d,$ we consider $\underline{x}^\sharp\defeq(x_0,x_1,\ldots,x_r)\in(\mathbb{V}^{[0]}_{N+1})^{d+1}.$ Similarly, given $\Lambda\in\mathrm{Vert}^{2d}(\mathbb{V}^{[1]}_N),$ we denote $\Lambda^\sharp\defeq\Lambda\oplus\O_Fx,$ for which $\Lambda^\sharp\in\mathrm{Vert}^{2d+1}(\mathbb{V}^{[0]}_{N+1}).$ Then we have
\begin{equation*}
    \nabla_{\underline{x}}^\bullet(\Lambda)=\frac{1}{q+1}\left(\Int_{\underline{x}^\sharp}(\Lambda^\sharp)-\#\left\{\Lambda^\circ\in\mathrm{Vert}^0(\mathbb{V}^{[1]}_N)\colon \Lambda+L\subseteq\Lambda^\circ\right\}\right).
\end{equation*}
where we denote $L=\mathrm{span}_{\O_F}(\underline{x}).$
\end{theorem}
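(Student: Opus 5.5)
The plan is to pull everything back along the finite flat correspondence of \Cref{KRConj} and expand using \Cref{ZCorrespondence} and \Cref{VCorrespondence}. Since $\pi_1$ is finite flat of degree $q+1$, the projection formula gives
\begin{equation*}
    (q+1)\nabla^\bullet_{\underline{x}}(\Lambda)=\chi\Bigl(\tilde{\mathcal{N}}^1_N,\ \pi_1^*(\O_{\V(\Lambda)})\otimes^\L\pi_1^*(\O_{\ZZ_-(x_1)})\otimes^\L\cdots\otimes^\L\pi_1^*(\O_{\ZZ_-(x_d)})\Bigr).
\end{equation*}
I then substitute two identities. By \Cref{VCorrespondence}, $\pi_1^*(\O_{\V(\Lambda)})=\pi_2^*(\O_{\V(\Lambda^\sharp)})-\O_{\mathrm{Exp}(\Lambda)}$. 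By \Cref{ZCorrespondence}, $\pi_1^*(\O_{\ZZ_-(x_i)})=\pi_2^*(\O_{\ZZ^\flat(x_i)})-\O_{\mathrm{Exp}(x_i)}$ in $K_0$. Expanding the product gives $2^{d+1}$ terms.

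Only two of these terms survive.
\begin{enumerate}
    \item Every term that contains the factor $\pi_2^*(\O_{\V(\Lambda^\sharp)})$ together with at least one exceptional factor vanishes. Such an exceptional factor is $\O_{\mathrm{Exp}(\Lambda)}$ or some $\O_{\mathrm{Exp}(x_i)}$, and is supported on $\tilde{\mathcal{N}}^{1,\mathrm{exc}}$. The argument is \eqref{DerIntObservation}: collect all the $\pi_2^*$-factors into $\pi_2^*(\mathcal{F})$ with $\mathcal{F}\in F^1K_0(\ZZ(x_0))$. This works because $\V(\Lambda^\sharp)$ has dimension $d<N$ inside $\ZZ(x_0)$. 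The push-forward of the exceptional part is supported on points, so the projection formula for $\pi_2$ kills the term.
    \item The purely $\pi_2^*$ term is $\chi(\tilde{\mathcal{N}}^1_N,\pi_2^*(\O_{\V(\Lambda^\sharp)}\otimes^\L\O_{\ZZ^\flat(x_1)}\otimes^\L\cdots))$. Since $\pi_2$ is a blow-up, $R\pi_{2,*}\O=\O$, so this equals the corresponding intersection number on $\ZZ(x_0)$. That number should be identified with $\Int_{\underline{x}^\sharp}(\Lambda^\sharp)$, computed on $\mathcal{N}_{N+1}$.
    \item The remaining term is the purely exceptional one. It carries the sign $(-1)^{d+1}$ and equals $\chi(\O_{\mathrm{Exp}(\Lambda)}\otimes^\L\O_{\mathrm{Exp}(x_1)}\otimes^\L\cdots\otimes^\L\O_{\mathrm{Exp}(x_d)})$.
\end{enumerate}

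For the exceptional term I use that the $\mathbb{P}_{\Lambda^\circ}$ are pairwise disjoint, so only the components $\Lambda^\circ$ with $\Lambda\subseteq\Lambda^\circ$ and every $x_i\in\Lambda^\circ$ contribute. Equivalently these are the $\Lambda^\circ$ with $\Lambda+L\subseteq\Lambda^\circ$; the condition $\Lambda^\circ\subseteq\Lambda^\vee$ is automatic for self-dual $\Lambda^\circ\supseteq\Lambda$.

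On each such component, \Cref{KRConj}(v) says the normal bundle of $\mathbb{P}_{\Lambda^\circ}\cong\V(\Lambda^\circ)$ is $\O(-1)$. Hence $\O_{\mathrm{Exp}(x_i)}$ restricts to $\mathbb{P}_{\Lambda^\circ}$ as $-\xi_{\Lambda^\circ}$. The contribution of $\Lambda^\circ$ is therefore $(-1)^d\deg(\xi^{N-d-1}\cdot\xi^d)=(-1)^d$ on $\mathbb{P}^{N-1}$. Combined with the sign $(-1)^{d+1}$, the exceptional term equals $-\#\{\Lambda^\circ\colon\Lambda+L\subseteq\Lambda^\circ\}$. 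Dividing by $q+1$ then yields the stated formula.

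The step I expect to be the main obstacle is the identification in item (2) of the pure $\pi_2^*$-term with $\Int_{\underline{x}^\sharp}(\Lambda^\sharp)$. The quantity $\Int_{\underline{x}^\sharp}(\Lambda^\sharp)$ is a derived intersection on $\mathcal{N}_{N+1}$ involving ${}^\L\ZZ(x_0)$, while $\V(\Lambda^\sharp)$ already lies inside $\ZZ(x_0)$. I would handle this exactly as in the proof of \Cref{VCorrespondence}. That is, I would view ${}^\L\ZZ(x_0)\otimes^\L\O_{\V(\Lambda^\sharp)}$ as a class on $\ZZ(x_0)$ and push it forward via $\pi_2$ using $R\pi_{2,*}\O_{\tilde{\mathcal{N}}^1_N}=\O_{\ZZ(x_0)}$. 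I would also use $\ZZ^\flat(x_i)=\ZZ(x_0)\cap\ZZ(x_i)$ to rewrite the $\ZZ^\flat$ factors on $\ZZ(x_0)$. Some care is needed to match the Cartier-divisor Tor terms from $\ZZ(x_0)$ consistently with how they were handled in the proof of \Cref{VCorrespondence}, where the same left-hand side was identified with $\Int$.
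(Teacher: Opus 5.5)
Your outline is the paper's proof. It uses the projection formula for the degree $q+1$ map $\pi_1$, substitutes \Cref{ZCorrespondence} and \Cref{VCorrespondence}, and removes mixed terms by \eqref{DerIntObservation}. It then applies the projection formula for $\pi_2$ to the pure $\pi_2^*$-term and \Cref{KRConj}(v) to the pure exceptional term, which gives $(-1)^{d+1}(-1)^d=-1$ for each $\Lambda^\circ\supseteq\Lambda+L$, exactly as you compute. Two points need repair.

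\emph{Incomplete case analysis in item (1).} You only discard mixed terms whose $\Lambda$-slot is $\pi_2^*(\O_{\V(\Lambda^\sharp)})$. Consider the terms built from $\O_{\mathrm{Exp}(\Lambda)}$ together with a nonempty set $S$ of factors $\pi_2^*(\O_{\ZZ^\flat(x_i)})$. These are neither of your two surviving terms, so they must vanish too. \eqref{DerIntObservation} kills them in the same way, taking $\mathcal{F}=\bigotimes_{i\in S}\O_{\ZZ^\flat(x_i)}\in F^1K_0(\ZZ(x_0))$.

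\emph{The step you flag in item (2).} Do not insert a factor ${}^\L\ZZ(x_0)$. Because $\V(\Lambda^\sharp)\subseteq\ZZ(x_0)$, the derived restriction of $\O_{\ZZ(x_0)}$ to $\V(\Lambda^\sharp)$ is $1-[\O(-\ZZ(x_0))\rvert_{\V(\Lambda^\sharp)}]$. Moreover, $d+1$ Cartier divisor classes on the $d$-dimensional projective variety $\V(\Lambda^\sharp)$ have Euler characteristic $0$. So the literal intersection of $d+1$ special divisors with $\V(\Lambda^\sharp)$ is zero, which is not the number you need.

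Instead, let $\iota\colon\ZZ(x_0)\hookrightarrow\mathcal{N}_{N+1}$ and use $\O_{\ZZ^\flat(x_i)}=\L\iota^*\O_{\ZZ(x_i)}$ (the intersection is proper). The projection formulas for $\pi_2$ and then for $\iota$ show that the pure $\pi_2^*$-term equals
\begin{equation*}
    \chi\bigl(\ZZ(x_0),\O_{\V(\Lambda^\sharp)}\otimes^\L\L\iota^*\,{}^\L\ZZ(\underline{x})\bigr)=\chi\bigl(\mathcal{N}_{N+1},{}^\L\ZZ(\underline{x})\otimes^\L\O_{\V(\Lambda^\sharp)}\bigr)=\Int_{\underline{x}}(\Lambda^\sharp),
\end{equation*}
with $\underline{x}$ regarded in $\mathbb{V}^{[0]}_{N+1}=\mathbb{V}^{[1]}_N\obot Fx_0$.

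This is the quantity used at the end of the proof of \Cref{VCorrespondence}. It is also the one evaluated via \Cref{LiZhangComp} in \Cref{RZnabla}, with $d$ vectors against a lattice of type $2d+1$, which is the only case where $\Int$ is defined. That is how $\Int_{\underline{x}^\sharp}(\Lambda^\sharp)$ in the statement must be read.
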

\begin{proof}
We start with the projection formula for $\pi_1$:
\begin{equation*}
    \nabla^\bullet_{\underline{x}}(\Lambda)=\frac{1}{\deg(\pi_1)}\chi\left(\tilde{\mathcal{N}}_N^1,\pi_1^*(\O_{\ZZ(x_1)})\otimes^\L\cdots\otimes^\L\pi_1^*(\O_{\ZZ(x_r)})\otimes^\L\pi_1^*(\V(\Lambda))\right).
\end{equation*}
Note that $\deg(\pi_1)=q+1$ by \Cref{KRConj}(ii). By \Cref{ZCorrespondence,VCorrespondence}, and the observation \eqref{DerIntObservation}, this is
\begin{equation*}
    \frac{1}{q+1}(A+B)
\end{equation*}
where
\begin{equation*}
    A\defeq\chi\left(\tilde{\mathcal{N}}^1_N,\pi_2^*(\O_{\ZZ^\flat(x_1)})\otimes^\L\cdots\otimes^\L\pi_2^*(\O_{\ZZ^\flat(x_d)})\otimes^\L\pi_2^*(\O_{\V(\Lambda^\sharp)})\right)
\end{equation*}
and
\begin{equation*}
    B\defeq(-1)^{d+1}\chi\left(\tilde{\mathcal{N}}^1_N,\O_{\mathrm{Exp}(x_1)}\otimes^\L\cdots\otimes^\L\O_{\mathrm{Exp}(x_d)}\otimes^\L\O_{\mathrm{Exp}(\Lambda)}\right).
\end{equation*}

By the projection formula for $\pi_2$ it follows that
\begin{equation*}
    A=\chi\left(\ZZ(x_0),{}^\L\ZZ(L)\otimes^\L\O_{\V(\Lambda^\sharp)}\right)=\chi\left(\mathcal{N}_{N+1},^{\L}\ZZ(L^\sharp)\otimes^\L\O_{\V(\Lambda^\sharp)}\right)=\mathrm{Int}_{\underline{x}^\sharp}(\Lambda^\sharp).
\end{equation*}

By \Cref{KRConj}(v), it follows that
\begin{equation*}
    B=(-1)^{d+1}\sum_{\substack{\Lambda^\circ\in\mathrm{Vert}^0(\mathbb{V}^{[1]}_N)\\x_1,\ldots,x_d\in\Lambda^\circ\\\Lambda\subseteq\Lambda^\circ}}(-1)^d=-\#\left\{\Lambda^\circ\in\mathrm{Vert}^0(\mathbb{V}^{[1]}_N)\colon \Lambda+L\subseteq\Lambda^\circ\right\}.\qedhere
\end{equation*}
\end{proof}

\subsubsection{Almost self-dual case: total intersection}
We assume $N=2r$ is even for this subsection. We denote
\begin{equation*}
    \mathrm{Vert}^\circ(\mathbb{V}^{[1]}_{2r})\defeq\mathrm{Vert}^0(\mathbb{V}^{[1]}_{2r}),\quad \mathrm{Vert}^\bullet(\mathbb{V}^{[1]}_{2r})\defeq\mathrm{Vert}^{2r}(\mathbb{V}^{[1]}_{2r}).
\end{equation*}
We will usually denote their elements by $\Lambda^\circ\in\mathrm{Vert}^\circ(\mathbb{V}^{[1]}_{2r})$ and $\varpi\Lambda^\bullet\in\mathrm{Vert}^\bullet(\mathbb{V}^{[1]}_{2r}),$ that is, such that
\begin{equation*}
    (\Lambda^\circ)^\vee=\Lambda^\circ,\quad(\varpi\Lambda^\bullet)^\vee=\Lambda^\bullet.
\end{equation*}

\begin{definition}
    We denote by
    \begin{equation*}
        T^{\circ\bullet}\colon\mathrm{Map}(\mathrm{Vert}^{\bullet}(\mathbb{V}^{[1]}_{2r}),\Q)\to\mathrm{Map}(\mathrm{Vert}^{\circ}(\mathbb{V}^{[1]}_{2r}),\Q)
    \end{equation*}
    and
    \begin{equation*}
        T^{\bullet\circ}\colon\mathrm{Map}(\mathrm{Vert}^{\circ}(\mathbb{V}^{[1]}_{2r}),\Q)\to\mathrm{Map}(\mathrm{Vert}^{\bullet}(\mathbb{V}^{[1]}_{2r}),\Q)
    \end{equation*}
    the correspondences induced by the subset
    \begin{equation*}
        \{(\Lambda^\circ,\varpi\Lambda^\bullet)\colon \varpi\Lambda^\bullet\subseteq\Lambda^\circ\subseteq\varpi\Lambda^\bullet\}\subseteq\mathrm{Vert}^{\circ}(\mathbb{V}^{[1]}_{2r})\times\mathrm{Vert}^{\bullet}(\mathbb{V}^{[1]}_{2r}).
    \end{equation*}
    We also denote $I^\circ\defeq T^{\circ\bullet}\circ T^{\bullet\circ}\colon \mathrm{Map}(\mathrm{Vert}^{\circ}(\mathbb{V}^{[1]}_{2r}),\Q)\to\mathrm{Map}(\mathrm{Vert}^{\circ}(\mathbb{V}^{[1]}_{2r}),\Q).$
\end{definition}
\begin{definition}\label{RZinttotal}
For $\underline{x}\in(\mathbb{V}^{[1]}_{2r})^r,$ we define the \emph{total intersection} to be the function
\begin{equation*}
    \nabla_{\underline{x}}\colon\mathrm{Vert}^\circ(\mathbb{V}^{[1]}_{2r})\to\Q
\end{equation*}
given by
\begin{equation*}
    \nabla_{\underline{x}}\defeq (q+1)T^{\circ\bullet}(\nabla^\bullet_{\underline{x}})+I^\circ(\nabla^\circ_{\underline{x}}).
\end{equation*}
\end{definition}

\begin{theorem}\label{RZnabla}
Given $\underline{x}=(x_1,\ldots,x_r)\in(\mathbb{V}^{[1]}_{2r})^r,$ denote $L\defeq\mathrm{span}_{\O_F}(\underline{x}).$ Then we have
\begin{equation*}
    \nabla_{\underline{x}}=T^{\circ\bullet}(c_L^\bullet)
\end{equation*}
where $c_L^\bullet\colon\mathrm{Vert}^\bullet(\mathbb{V}^{[1]}_{2r})\to\Z$ is given by
\begin{equation*}
    c_L(\varpi\Lambda^\bullet)=\begin{cases}
        c(r-[\varpi\Lambda^\bullet+L\colon\varpi\Lambda^\bullet])&\text{if }L\subseteq L^\vee,\ L\subseteq\Lambda^\bullet,\\0&\text{otherwise.}
    \end{cases}
\end{equation*}
\end{theorem}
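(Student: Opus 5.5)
The plan is to evaluate both summands of $\nabla_{\underline{x}}=(q+1)T^{\circ\bullet}(\nabla^\bullet_{\underline{x}})+I^\circ(\nabla^\circ_{\underline{x}})$ using the results already proved. The first summand will split into $T^{\circ\bullet}(c^\bullet_L)$ plus a lattice-counting term, and the second summand should cancel that counting term exactly. We work with $N=2r$ and $d=r$ throughout.

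\emph{Step 1: the ground term.} Apply \Cref{nablabullet} with $N=2r$, $d=r$ and $\varpi\Lambda^\bullet\in\mathrm{Vert}^{2r}(\mathbb{V}^{[1]}_{2r})$:
\begin{equation*}
(q+1)\nabla^\bullet_{\underline{x}}(\varpi\Lambda^\bullet)=\Int_{\underline{x}^\sharp}((\varpi\Lambda^\bullet)^\sharp)-\#\{\Lambda'\in\mathrm{Vert}^\circ(\mathbb{V}^{[1]}_{2r})\colon\varpi\Lambda^\bullet+L\subseteq\Lambda'\}.
\end{equation*}
Next evaluate $\Int_{\underline{x}^\sharp}$ with \Cref{LiZhangComp}, where $L^\sharp=L\obot\O_Fx_0$ and $(\varpi\Lambda^\bullet)^\sharp=\varpi\Lambda^\bullet\obot\O_Fx_0$. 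Since $x_0$ is orthogonal to $\mathbb{V}^{[1]}_{2r}$ and $\langle x_0,x_0\rangle=\varpi$, the conditions of \Cref{LiZhangComp} translate as follows.
\begin{itemize}
\item $L^\sharp\subseteq((\varpi\Lambda^\bullet)^\sharp)^\vee$ holds if and only if $L\subseteq\Lambda^\bullet$.
\item $L^\sharp\subseteq(L^\sharp)^\vee$ holds if and only if $L\subseteq L^\vee$.
\item Because $x_0$ already lies in $(\varpi\Lambda^\bullet)^\sharp$, the index $[(\varpi\Lambda^\bullet)^\sharp+L^\sharp:(\varpi\Lambda^\bullet)^\sharp]$ equals $[\varpi\Lambda^\bullet+L:\varpi\Lambda^\bullet]$.
\end{itemize}
This is the delicate bookkeeping point, since $x_0$ contributes nothing beyond cutting down to $\ZZ(x_0)$. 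In the proof of \Cref{nablabullet} the term $A$ is really an intersection on $\ZZ(x_0)$ against the $r$ vectors of $L$, so the exponent in $c(\cdot)$ should be $r-[\varpi\Lambda^\bullet+L:\varpi\Lambda^\bullet]$. I would recheck this against the induction in the proof of \Cref{LiZhangComp}, using that $\Int$ is invariant under translating $x_0$ by elements of the lattice, which lets one replace $x_0$ by $0$. Granting this, $\Int_{\underline{x}^\sharp}((\varpi\Lambda^\bullet)^\sharp)=c^\bullet_L(\varpi\Lambda^\bullet)$.

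\emph{Step 2: the balloon term.} By \Cref{nablacirc}, with exponent $N-d-1=r-1$, the function $\nabla^\circ_{\underline{x}}$ is the indicator of $\{\Lambda'\colon L\subseteq\Lambda'\}$. Hence
\begin{equation*}
I^\circ(\nabla^\circ_{\underline{x}})(\Lambda^\circ)=\sum_{\varpi\Lambda^\bullet\subseteq\Lambda^\circ\subseteq\Lambda^\bullet}\#\{\Lambda'\in\mathrm{Vert}^\circ\colon\varpi\Lambda^\bullet\subseteq\Lambda'\subseteq\Lambda^\bullet,\ L\subseteq\Lambda'\}.
\end{equation*}
For a self-dual $\Lambda'$, dualizing shows $\varpi\Lambda^\bullet\subseteq\Lambda'$ is equivalent to $\Lambda'\subseteq\Lambda^\bullet$. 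So the inner set is exactly $\{\Lambda'\colon\varpi\Lambda^\bullet+L\subseteq\Lambda'\}$, the set counted in Step 1.

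\emph{Step 3: cancellation.} Apply $T^{\circ\bullet}$ to the identity from Step 1 and add the expression from Step 2. The counting terms cancel term by term over each $\varpi\Lambda^\bullet$ adjacent to $\Lambda^\circ$, leaving $\nabla_{\underline{x}}=T^{\circ\bullet}(c^\bullet_L)$.

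The main obstacle is Step 1, namely making sure the application of \Cref{LiZhangComp} to $\underline{x}^\sharp$ gives exactly $c(r-[\varpi\Lambda^\bullet+L:\varpi\Lambda^\bullet])$ with the stated support conditions. The dimension count $d$ versus $d+1$ and the role of $x_0\in\Lambda^\sharp$ need care. Steps 2 and 3 are formal once this is settled.
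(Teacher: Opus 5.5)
Your proposal is correct and is essentially the paper's argument: the paper likewise shows that $(q+1)\nabla^\bullet_{\underline{x}}+T^{\bullet\circ}(\nabla^\circ_{\underline{x}})$ equals the Li--Zhang intersection number on each $\varpi\Lambda^\bullet$, with the lattice count from \Cref{nablabullet} cancelling against $T^{\bullet\circ}$ of the indicator from \Cref{nablacirc} exactly as in your Steps 2--3, and then applies $T^{\circ\bullet}$ using $I^\circ=T^{\circ\bullet}T^{\bullet\circ}$. On the point you flag, the clean justification is that $\V(\Lambda^\sharp)\subseteq\ZZ(x_0)$, so the projection formula for $\ZZ(x_0)\hookrightarrow\mathcal{N}_{2r+1}$ identifies the term $A$ with $\chi(\mathcal{N}_{2r+1},{}^\L\ZZ(x_1,\ldots,x_r)\otimes^\L\O_{\V(\Lambda^\sharp)})=\Int_{(x_1,\ldots,x_r)}(\Lambda^\sharp)$, and \Cref{LiZhangComp} then applies directly with $d=r$ and $N=2r+1$; $x_0$ never enters as a derived divisor (keeping it would produce an excess intersection on the $r$-dimensional $\V(\Lambda^\sharp)$), so translating $x_0$ to $0$ is not needed.
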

\begin{proof}
\Cref{nablacirc} and \Cref{LiZhangComp,nablabullet} imply that
\begin{equation*}
    \left((q+1)\nabla_{\underline{x}}^\bullet+T^{\bullet\circ}(\nabla_{\underline{x}}^\circ)\right)(\Lambda^\bullet)=\Int_{\underline{x}^\sharp}(\Lambda^{\bullet,\sharp}),
\end{equation*}
but note that
\begin{equation*}
    [\varpi(\Lambda^\bullet)^\sharp+\mathrm{span}_{\O_F}(\underline{x}^\sharp)\colon\varpi(\Lambda^\bullet)^\sharp]=[\varpi\Lambda^\bullet+\mathrm{span}_{\O_F}(\underline{x})\colon\varpi\Lambda^\bullet],
\end{equation*}
and thus the claim follows.
\end{proof}

\begin{remark}\label{nablaNotation}
    In particular, the function $\nabla_{\underline{x}}$ only depends on $L=\mathrm{span}_{\O_F}(\underline{x}),$ so we also denote $\nabla_{\underline{x}}=\nabla_L.$
\end{remark}
\begin{remark}
Throughout this whole section, we used the $\ZZ_-$ cycles in the definitions of the intersection numbers $\nabla_{\underline{x}}^?$ for $?\in\{\ ,\circ,\bullet\}.$ We can also compute the intersections by using the $\ZZ_+$ cycles, or a combination of both. To be precise, fix a $\epsilon\in\{+,-\}^d,$ and consider the cycle
\begin{equation*}
    \ZZ_\epsilon(\underline{x})=\ZZ_{\epsilon_1}(x_1)\otimes^\L\dots\otimes^\L\ZZ_{\epsilon_r}(x_d).
\end{equation*}
Using these, we can define $\nabla^\circ_{\underline{x},\epsilon},$ $\nabla^\bullet_{\underline{x},\epsilon}.$ If $(n,d)=(2r,r),$ we can also define $\nabla_{\underline{x},\epsilon}.$ If $p(\epsilon)$ is the number of $+$ signs, then \Cref{nablacirc} easily changes to
\begin{equation*}
    \nabla^\circ_{\underline{x}}(\Lambda^\circ)=\begin{cases}(-q)^{p(\epsilon)}&\text{if }L\subseteq\Lambda^\circ,\\0&\text{otherwise.}\end{cases}
\end{equation*}
A simple modification of the proof of \Cref{LiZhangComp} (that is, \cite[Theorem 10.4.3]{Li-Zhang}) give us
\begin{equation*}
    \pi^{-1}(\ZZ^1_+(x))=\pi_2^{-1}(\ZZ^\flat(x))+q\mathrm{Exp}(x).
\end{equation*}
With that, \Cref{nablabullet} changes to
\begin{equation*}
    \nabla_{\underline{x},\epsilon}^\bullet(\Lambda)=\frac{1}{q+1}\left(\Int_{\underline{x}^\sharp}(\Lambda^\sharp)-(-q)^{p(\epsilon)}\cdot\#\left\{\Lambda^\circ\in\mathrm{Vert}^0(\mathbb{V}^{[1]}_N)\colon \Lambda+L\subseteq\Lambda^\circ\right\}\right).
\end{equation*}
In particular, in the case $(n,d)=(2r,r),$ we see that the total intersection
\begin{equation*}
    \nabla_{\underline{x},\epsilon}=\nabla_{\underline{x}}
\end{equation*}
is independent of the choice of $\epsilon.$ Indeed, the intersection numbers $\nabla_{\underline{x}}$ are precisely what will show up in an application of \Cref{KtheoryAJ}. Hence this invariance is consistent with the fact that different derived integral models can be used in \Cref{KtheoryAJ} to compute the same quantity.
\end{remark}

\section{Unitary Shimura varieties and Friedberg--Jacquet special cycles}\label{ShimuraChapter}

We fix $F/F^+$ a CM extension of number fields, where $F\subseteq\C.$ We denote $\tau_\infty\colon F\hookrightarrow\C$ the place induced by the fixed inclusion $F\subseteq\C,$ and by $\tau_\infty^+\colon F^+\hookrightarrow\R$ its restriction to $F^+.$

For a rational place $w,$ we denote $\Sigma_w$ resp. $\Sigma_w^+$ the set of places of $F$ resp. $F^+$ above $w.$ Note that $\Sigma_\infty$ caries an action of $\mathrm{Aut}(\C/\Q).$

\subsection{Set-up}\label{Shimura-Setup}
\subsubsection{Unitary Shimura varieties}
\begin{definition}\label{neatDef}
    Consider a linear algebraic group\footnote{We fix such an embedding, but the definitions below do not depend on such choice.} $G\hookrightarrow\mathrm{GL}_N$ over a number field $E,$ and $\square$ a finite set of nonarchimedean places of $E.$
    \begin{enumerate}
        \item For a nonarchimedean place $v$ of $E$ and $g_v\in G(E_v),$ we let $\Gamma(g_v)\subseteq\overline{E}_v^\times$ be the subgroup generated by the eigenvalues of $g_v\in\mathrm{GL}_N(E_v),$ whose torsion subgroup $\Gamma(g_v)_{\mathrm{tors}}$ lies in $\overline{\Q}^\times.$ We say $g=(g_v)\in G(\A_E^{\square,\infty})$ is \emph{neat} if $\bigcap_{v\not\in\square}\Gamma(g_v)=\{1\},$ and a subgroup $K\subseteq G(\A_E^{\square,\infty})$ is \emph{neat} if all its elements are neat.
        \item We define a category $\mathfrak{K}(G)^\square$ whose objects are neat open compact subgroups $K\subseteq G(\A_E^{\square,\infty}),$ and a morphism from $K$ to $K'$ is an element $g\in K\backslash G(\A_E^{\square,\infty})\slash K'$ satisfying $g^{-1}Kg\subseteq K'.$
    \end{enumerate}
    When $W$ is a hermitian or skew-hermitian space and $G$ is understood from context as $U(W)$ or $GU(W),$ we will denote the above categories as simply $\mathfrak{K}(W)^\square.$
\end{definition}

\begin{definition}\label{StandardDef}
    Let $V$ be a hermitian space over $F/F^+$ of rank $N.$
    \begin{enumerate}
        \item We say that $V$ is \emph{standard definite} if it has signature $(N,0)$ at every place in $\Sigma_\infty^+.$
        \item We say that $V$ is \emph{standard indefinite} if it has signature $(N-1,1)$ at $\tau_\infty^+$ and $(N,0)$ at other places in $\Sigma_\infty^+.$
    \end{enumerate}
\end{definition}

For a standard indefinite hermitian space $V$ over $F/F^+$ of rank $N,$ we have a functor
\begin{equation*}
    \Sh(V,\blank)\colon \mathfrak{K}(V)\to\Sch{F}
\end{equation*}
of Shimura varieties for the reductive group $\mathrm{Res}_{F^+/\Q}U(V)$ and the Deligne homomorphism
\begin{equation*}
    h\colon \mathrm{Res}_{\C/\R}\mathbb{G}_m\to(\mathrm{Res}_{F^+/\Q}U(V))\otimes_\Q\R=U(V)(F_{\tau_\infty^+}^+)\times\prod_{\substack{\tau^+\in\Sigma_\infty^+\\\tau^+\neq\tau_\infty^+}}U(V)(F_{\tau^+}^+)
\end{equation*}
given by
\begin{equation*}
    h(z)=\left(\begin{pmatrix}
        I_{N-1}&\\&z^\cplx/z
    \end{pmatrix},I_N,\ldots,I_N\right)
\end{equation*}
where we identify $U(V)(F_{\tau_\infty^+}^+)\hookrightarrow\mathrm{GL}_N(\C)$ via $\tau_\infty\colon F\hookrightarrow\C.$ As usual, for $K\in\mathfrak{K}(V),$ we have
\begin{equation*}
    \Sh(V,K)(\C)=U(V)(F^+)\backslash\left(V(\C)_-/\C^\times\times U(V)(\A_{F^+}^\infty)\right)\slash K
\end{equation*}
where $V(\C)_-$ is the space of negative definite lines in $V\otimes_{F,\tau_\infty}\C.$

For a standard definite hermitian space $V$ over $F/F^+$ of rank $n,$ we consider the functor
\begin{equation*}
    \Sh(V,\blank)\colon\mathfrak{K}(V)\to\Set
\end{equation*}
given by
\begin{equation*}
    \Sh(V,K)\defeq U(V)(F^+)\backslash U(V)(\A_{F^+}^\infty)\slash K.
\end{equation*}

\subsubsection{Unitary abelian schemes}
We follow \cite[Sections 3.3, 3.4]{LTXZZ}.

\begin{definition}\label{CMdef}
    A \emph{generalized CM type} of rank $N$ is an element
    \begin{equation*}
        \Psi=\sum_{\tau\in\Sigma_\infty}r_\tau\cdot \tau\in\Z_{\ge0}[\Sigma_\infty]
    \end{equation*}
    satisfying $r_\tau+r_{\tau^\cplx}=N$ for every $\tau\in\Sigma_\infty.$ Its \emph{reflex field} $F_\Psi\subseteq\C$ is the fixed subfield of the stabilizer of $\Psi$ in $\mathrm{Aut}(\C/\Q).$ A \emph{CM type} is a generalized CM type of rank $1,$ and we say that it contains $\tau$ if $r_\tau=1.$
\end{definition}
\begin{definition}
    The reflexive closure $F_\rflx$ of $F$ is the subfield of $\C$ generated by $F$ and $F_\Phi$ for every CM type $\Phi.$ We denote $F_\rflx^+\defeq(F_\rflx)^{\cplx=1}.$
\end{definition}
\begin{remark}
    $F_\rflx/F_\rflx^+$ is a CM extension of number fields, with $F_\rflx$ Galois over $\Q.$ We have $F_\rflx=F$ if either $F$ is Galois over $\Q$ or contains a quadratic imaginary field.
\end{remark}
\begin{definition}\label{DefSpecialInert}
    A prime $\p$ of $F^+$ is \emph{special inert} if it satisfies:
    \begin{enumerate}
        \item $\p$ is inert in $F,$
        \item the underlying rational prime $p$ of $\p$ is odd and unramified in $F,$
        \item $\p$ is of degree over $\Q,$ that is, $F_\p^+=\Q_p.$
    \end{enumerate}
    We say that $\p$ is \emph{very special inert} if moreover there exist a prime $\p'$ of $F^+_{\mathrm{rflx}}$ above $\p$ satisfying $(F^+_{\mathrm{rflx}})_{\p'}=\Q_p.$
\end{definition}
\begin{remark}
    As mentioned in \cite[Remark 3.3.5]{LTXZZ}, the condition (3) should not be necessary throughout the article, and was imposed there only to simplify notations on Dieudonn\'e modules.
\end{remark}

In what follows, we denote by $\mathbb{P}$ a subring of $\Q.$
\begin{definition}\label{UAbDef}
    For $S\in\Sch{\mathbb{P}},$ we consider the following categories.
    \begin{enumerate}
        \item $\Ab{\mathbb{P}}(S)$ whose objects are $(A,\iota)$ where $A$ is an abelian scheme over $S,$ and $\iota\colon \O_F\to\mathrm{End}_S(A)\otimes\mathbb{P}$ is a homomorphism of unital algebras. A morphism $\varphi\colon (A,\iota_A)\to(B,\iota_B)$ consists of a quasi-homomorphism $\varphi\in\mathrm{Hom}_S(A,B)\otimes\Q$ such that i) $\varphi\circ\iota_A=\iota_B\circ\varphi$ and ii) there exist $c\in\mathbb{P}^\times$ such that $c\varphi\in\mathrm{Hom}_S(A,B)$ is a homomorphism. We consider the functor $(\blank)^\vee\colon\Ab{\mathbb{P}}(S)\to\Ab{\mathbb{P}}(S)^\op$ given by $(A,\iota)^\vee=(A^\vee,(\iota\circ\cplx)^\vee).$
        \item $\UAb{\mathbb{P}}(S)$ whose objects are $(A,\iota,\lambda)$ for $(A,\iota)\in\Ab{\mathbb{P}}(S)$ and
        \begin{equation*}
            \lambda\in\mathrm{Hom}_{\Ab{\mathbb{P}}(S)}((A,\iota),(A,\iota)^\vee).
        \end{equation*}
        satisfies $\lambda=\lambda^\vee.$ In particular, $\lambda$ satisfies $\iota(a^\cplx)^\vee\circ\lambda=\lambda\circ\iota(a)$ for all $a\in\O_F.$ A morphism $\varphi\colon (A,\iota_A,\lambda_A)\to(B,\iota_B,\lambda_B)$ is $\varphi\in\mathrm{Hom}_{\Ab{\mathbb{P}}(S)}((A,\iota_A),(B,\iota_B))$ such that there exists $c\in\mathbb{P}^\times$ with $\varphi^\vee\circ\lambda_B\circ\varphi=c\lambda_A.$ For such a morphism $\varphi,$ we denote $c$ by $c(\varphi)$ its \emph{similitude factor}.
        \item $\IAb{\mathbb{P}}(S)$ the wide subcategory\footnote{A \emph{wide subcategory} $\mathcal{C}$ of $\mathcal{D}$ is such that $\mathrm{Obj}(\mathcal{C})=\mathrm{Obj}(\mathcal{D}).$} of $\Ab{\mathbb{P}}(S)$ where morphisms $\varphi\colon(A,\iota_A)\to(B,\iota_B)$ are required to be quasi-isogenies.
        \item $\IUAb{\mathbb{P}}(S)$ the wide subcategory of $\UAb{\mathbb{P}}(S)$ where morphisms $\varphi\colon(A,\iota_A,\lambda_A)\to(B,\iota_B,\lambda_B)$ are required to be quasi-isogenies.
    \end{enumerate}
    We call objects of $\Ab{\mathbb{P}}(S)$ (or $\IAb{\mathbb{P}}(S)$) \emph{$\O_F$-abelian schemes over $S$}, and objects of $\UAb{\mathbb{P}}(S)$ (or $\IUAb{\mathbb{P}}(S)$) \emph{unitary $\O_F$-abelian schemes over $S.$}
\end{definition}
\begin{definition}
    Let $\Psi=\sum_{\tau\in\Sigma_\infty}r_\tau\cdot\tau$ be a generalized CM type of rank $N.$ For $S\in\Sch{\O_{F_\Psi}\otimes\mathbb{P}},$ we say that an $\O_F$-abelian scheme $(A,\iota)$ over $S$ has \emph{signature type $\Psi$} if for every $a\in\O_F,$ the characteristic polynomial of $\iota(a)$ on $\mathrm{Lie}_{A/S}$ is given by
    \begin{equation*}
        \mathrm{charpoly}(\iota(a)\mid\mathrm{Lie}_{A/S})=\prod_{\tau\in\Sigma_\infty}(T-\tau(a))^{r_\tau}\in\O_S[T].
    \end{equation*}
\end{definition}
\begin{definition}
    Let $\Phi$ be a CM type, $p$ be a rational prime and $K$ be an algebraically closed field of characteristic different than $p$ which is a $\O_{F_\Phi}\otimes\mathbb{P}$-ring. For $(A_0,\iota_0,\lambda_0)$ an unitary $\O_F$-abelian scheme of signature type $\Phi$ over $K,$ we define a $\A_F^{\infty,p}=F\otimes_\Q\A_\Q^{\infty,p}$ skew-hermitian space $\hat{V}^p(A_0)$ given by
    \begin{equation*}
        \hat{V}^p(A_0)\defeq H_1^\et(A_0,\A_\Q^{\infty,p})
    \end{equation*}
    equipped with the pairing
    \begin{equation*}
        (x,y)=\langle x,\lambda_{0,*}y\rangle_{A_0}\in\A_F^{\infty,p}
    \end{equation*}
    where $\langle\cdot,\cdot\rangle_{A_0}\colon H^1_\et(A_0,\A_\Q^{\infty,p})\times H^1_\et(A_0^\vee,\A_\Q^{\infty,p})\to\A_F^{\infty,p}$ is the Weil pairing.
\end{definition}
\begin{definition}\label{DefVhat}
    Let $\Phi$ be a CM type, $p$ be a rational prime and $K$ be an algebraically closed field of characteristic different than $p$ which is a $\O_{F_\Phi}\otimes\mathbb{P}$-ring. For $(A_0,\iota_0,\lambda_0),(A,\iota,\lambda)$ two unitary $\O_F$-abelian schemes over $K,$ where $A_0$ has signature type $\Phi,$ we define a $\A_F^{\infty,p}=F\otimes_\Q\A_\Q^{\infty,p}$ hermitian space $\hat{V}^p(A_0,A)$ given by
    \begin{equation*}
        \hat{V}^p(A_0,A)\defeq\mathrm{Hom}_{F\otimes_\Q\A_\Q^{\infty,p}}\left(H_1^\et(A_0,\A_\Q^{\infty,p}),H_1^\et(A,\A_\Q^{\infty,p})\right)
    \end{equation*}
    equipped with the pairing
    \begin{equation*}
        (x,y)\defeq\iota_0^{-1}\left((\lambda_{0,*})^{-1}\circ y^\vee\circ\lambda_*\circ x\right)\in\iota_0^{-1}\mathrm{End}_{F\otimes_\Q\A_\Q^{\infty,p}}\left(H_1^\et(A_0,\A_\Q^{\infty,p})\right)=\A_F^{\infty,p}.
    \end{equation*}
\end{definition}

\subsubsection{CM moduli scheme}
Let $p$ be a rational prime that is unramified in $F.$ We follow \cite[Section 3.5]{LTXZZ}.
\begin{definition}
We consider a subtorus $T_0\subseteq\mathrm{Res}_{\O_F/\Z}(\mathbb{G}_m)\otimes\Z_{(p)}$ such that for every $\Z_{(p)}$ ring $R$ we have
\begin{equation*}
    T_0(R)=\{a\in\O_F\otimes R\colon \mathrm{Nm}_{F/F^+}(a)\in R^\times\}.
\end{equation*}
\end{definition}
\begin{remark}\label{Ker1Remark}
    Let $W_0$ be a skew-hermitian space over $\O_F\otimes\Z_{(p)}.$ Then $GU(W_0)$ is canonically isomorphic to $T_0.$ Moreover, the set of similarity classes of rank $1$ $\O_F\otimes\Z_{(p)}$ skew-hermitian spaces that are locally similar to $W_0$ is canonically isomorphic to
    \begin{equation*}
    \ker^1(T_0)\defeq\ker\left(H^1(\Q,T_0)\to\prod_vH^1(\Q_v,T_0)\right).
    \end{equation*}
\end{remark}
\begin{definition}
Let $\Phi$ be a CM type. A skew-hermitian space $W_0$ over $\O_F\otimes\Z_{(p)}$ of rank $1$ has \emph{type} $\Phi$ if for every $x\in W_0$ and totally imaginary $a\in F^\times$ satisfying $\Im\tau(a)>0$ for all $\tau\in\Phi,$ we have $\langle ax,x\rangle_{W_0}\ge0.$
\end{definition}

\begin{definition}[{\cite[Definition 3.5.4]{LTXZZ}}]
    Let $\Phi$ be a CM type and $W_0$ a skew-hermitian space over $\O_F\otimes\Z_{(p)}$ of rank $1$ and type $\Phi.$ For an open compact $K_0^p\subseteq T_0(\A_\Q^{\infty,p}),$ we define a functor $\mathcal{T}_p^1(W_0,\blank)\colon\mathfrak{K}(W_0)^p\to\PSch{\O_{F_\Phi}\otimes\Z_{(p)}}$ where for a test scheme $S,$ we let $\mathcal{T}_p^1(W_0,K_0^p)(S)$ be the groupoid of triples $(A_0,\iota_0,\lambda_0,\eta_0^p)$ where
    \begin{itemize}
        \item $(A_0,\iota_0,\lambda_0)\in\IUAb{\Z_{(p)}}(S)$ has signature type $\Phi$ and is such that $\lambda_0$ is an isomorphism in $\IUAb{\Z_{(p)}}(S).$
        \item $\eta_0^p$ is a $K_0^p$-level structure, that is, for a chosen geometric point $s$ on every connected component of $S,$ a $\pi_1(s,S)$-invariant $K_0^p$-orbit of similitude
        \begin{equation*}
            \eta_0^p\colon W_0\otimes_\Q\A_\Q^{\infty,p}\to \hat{V}^p(A_{0,s})
        \end{equation*}
        of skew-hermitian spaces over $\A_F^{\infty,p}.$
    \end{itemize}
    An isomorphism between two triples $(A_0,\iota_0,\lambda_0,\eta_0^p)$ and $(A_0',\iota_0',\lambda_0',\eta_0^{p'})$ is an isomorphism $\varphi_0\colon (A_0,\iota_0,\lambda_0)\to(A_0',\iota_0',\lambda_0')$ in $\IUAb{\Z_{(p)}}$ that carries $\eta_0^p$ to $\eta_0^{p'}.$
\end{definition}
\begin{proposition}[{\cite[Lemma 3.4, Remark 3.5(i),(iv)]{RSZ}}]
    If $K_0^p$ is neat, $\mathcal{T}_p^1(W_0,K_0^p)$ is represented by a scheme which is finite and \'etale over $\Spec\O_{F_\Phi}\otimes\Z_{(p)}.$
\end{proposition}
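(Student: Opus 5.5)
The plan is to follow the standard argument for zero-dimensional Shimura varieties attached to tori, in the form of \cite{RSZ}. The steps are: show the moduli problem is rigid and finite étale; then check representability over the generic fibre via CM theory and descend. The statement really concerns the CM moduli problem $\mathcal{T}_p^1(W_0,K_0^p)$ over $\O_{F_\Phi}\otimes\Z_{(p)}$, whose objects $(A_0,\iota_0,\lambda_0)$ have signature type $\Phi$ and rank one.

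\textbf{Step 1: rigidity.} First I would show that the groupoid $\mathcal{T}_p^1(W_0,K_0^p)(S)$ is equivalent to a set when $K_0^p$ is neat. An automorphism $\varphi_0$ of $(A_0,\iota_0,\lambda_0)$ commuting with $\iota_0$ lies in $(\O_F\otimes\Z_{(p)})^\times$, since $A_0$ has CM by $F$ with $\dim A_0=[F^+:\Q]$. Preserving $\lambda_0$ up to a unit similitude forces $\varphi_0\in T_0(\Z_{(p)})$. Fixing the $K_0^p$-orbit of $\eta_0^p$ puts $\varphi_0$ in $K_0^p$. The eigenvalues of $\varphi_0$ are then roots of unity, and neatness (Definition \ref{neatDef}) forces $\varphi_0=1$.

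\textbf{Step 2: smoothness and unramifiedness.} Next I would show the functor is formally étale over $\O_{F_\Phi}\otimes\Z_{(p)}$. Since $p$ is unramified in $F$, the $p$-divisible group $A_0[p^\infty]$ splits according to the primes of $F$ above $p$. The signature condition of type $\Phi$ makes each piece, together with its $\O_F$-action, étale or multiplicative up to the Lubin--Tate-type pieces coming from $\Phi$. Each such piece admits a unique deformation, by Serre--Tate and Grothendieck--Messing; this is the analogue of \Cref{CanonicalLifting+}, i.e.\ the canonical lifting of \cite{Gross}. Deformations of $(A_0,\iota_0,\lambda_0)$ are therefore unique, and the level structure $\eta_0^p$ extends uniquely because it is prime to $p$. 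Hence the functor is formally étale, and in particular relative dimension $0$.

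\textbf{Step 3: representability, finiteness and properness.} Representability of the rigid functor by a scheme locally of finite type would come from the usual embedding into a Siegel moduli space with prime-to-$p$ level and polarization degree prime to $p$; the rigidity from Step 1 lets us pass to a neat Siegel level. Finiteness over the base I would derive from the complex uniformization, which gives finitely many points. Over $\C$, CM theory gives
\begin{equation*}
\mathcal{T}_p^1(W_0,K_0^p)(\C)\simeq\bigsqcup_{\ker^1(T_0)}T_0(\Q)\backslash T_0(\A^{\infty,p})/K_0^p,
\end{equation*}
using \Cref{Ker1Remark}, and this set is finite. Properness follows from the valuative criterion. Every CM abelian variety has potentially good reduction, and the good-reduction extension is unique by Néron models. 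Quasi-finite plus proper then gives finite, and together with formal étaleness from Step 2 we get finite étale.

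The main obstacle I expect is Step 2 at the primes above $p$. One has to verify carefully that the Kottwitz signature condition of type $\Phi$ really pins down the deformations uniquely, using the unramifiedness of $p$ to decompose the Dieudonné module. This is where the hypotheses enter, and I would follow \cite[Lemma 3.4]{RSZ} for it.
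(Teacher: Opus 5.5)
Your outline is correct and is the standard argument behind the result, which the paper does not prove but simply cites (\cite[Lemma 3.4, Remark 3.5]{RSZ}): rigidity from neatness, formal \'etaleness by Grothendieck--Messing, properness from potential good reduction of CM abelian varieties, and finiteness from proper plus quasi-finite. There are two harmless imprecisions: in Step 2 the pieces $A_0[w^\infty]$ are in general neither \'etale, multiplicative nor one-dimensional Lubin--Tate, and the actual reason deformations are unique is that each embedding-eigenspace of their Dieudonn\'e module has rank one (since $p$ is unramified), so the Kottwitz condition forces the Hodge filtration, which is then automatically isotropic for $\lambda_0$; and in Step 3 the complex points are $\bigsqcup_{\ker^1(T_0)}T_0(\Z_{(p)})\backslash T_0(\A_\Q^{\infty,p})/K_0^p$ (compare the stabilizer $T_0(\Z_{(p)})K_0^p$ noted in the paper), not a quotient by $T_0(\Q)$, though this set is still finite.
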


By \Cref{Ker1Remark}, we obtain a map
\begin{equation*}
    w\colon\mathcal{T}_p^1(W_0,K_0^p)(\C)\to\ker^1(T_0)
\end{equation*}
sending $(A_0,\iota_0,\lambda_0,\eta_0^p)$ to its first homology $H_1(A_0(\C),\Z_{(p)}),$ as a skew-hermitian $\O_F\otimes\Z_{(p)}$ space induced by $\lambda_0.$ Note that it is everywhere locally similar to $W_0$: i) it has signature type $\Phi,$ ii) $H_1(A_0(\C),\Z_{(p)})\otimes_\Q\A_\Q^{\infty,p}$ is isomorphic to $\hat{V}^p(A_0)$ by the singular-\'etale comparison isomorphism, and the existence of $\eta_0^p$ says that this is similar to $W_0\otimes_\Q\A_\Q^{\infty,p}.$
\begin{definition}
    For $K_0^p$ a neat level structure, we define $\mathcal{T}_p(W_0,K_0^p)\subseteq\mathcal{T}_p^1(W_0,K_0^p)$ to be the minimal open and closed subscheme of $\mathcal{T}_p^1(W_0,K_0^p)$ containing $w^{-1}(W_0).$ The group $T_0(\A_\Q^{\infty,p})$ acts on $\mathcal{T}_p(W_0,K_0^p)$ via
    \begin{equation*}
        a\cdot(A_0,\iota_0,\lambda_0,\eta_0^p)=(A_0,\iota_0,\lambda_0,a\circ\eta_0^p),
    \end{equation*}
    with stabilizer $T_0(\Z_{(p)})K_0^p.$ In fact, $T_0(\A_\Q^{\infty,p})/T_0(\Z_{(p)})K_0^p$ is the Galois group of the Galois morphism
    \begin{equation*}
        \mathcal{T}_p(W_0,K_0^p)\to\Spec(\O_{F_\Phi}\otimes\Z_{(p)}).
    \end{equation*}
\end{definition}
\begin{definition}
    For $K_0^p$ a neat level structure, denote by $\mathfrak{T}$ the groupoid associated to the group $T_0(\A_\Q^{\infty,p})/T_0(\Z_{(p)})K_0^p.$ As $\mathcal{T}_p(W_0,K_0^p)$ has an action of $\mathfrak{T},$ we will use the same notation for the following functor
    \begin{equation*}
        \mathcal{T}_p(W_0,K_0^p)\colon\mathfrak{T}\to\Sch{\O_{F_\Phi}\otimes\Z_{(p)}}.
    \end{equation*}
\end{definition}

\subsection{Semi-global RSZ Shimura varieties}\label{Shimura-RSZ}

We fix the following data:
\begin{itemize}[leftmargin=*]
    \item A special inert prime $\p$ of $F^+$ (\Cref{DefSpecialInert}) with underlying rational prime $p.$
    \item A CM type $\Phi$ (\Cref{CMdef}) containing $\tau_\infty.$
    \item A skew-hermitian space $W_0$ of rank $1$ over $\O_F\otimes\Z_{(p)}.$
    \item A neat open compact subgroup $K_0^p\subseteq T_0(\A_\Q^{\infty,p})$ (\Cref{neatDef}).
    \item An isomorphism $i_p\colon\overline{\Q_p}\rightiso\C$ that induces the place $\p$ of $F^+.$
    \item A standard indefinite hermitian space $V$ over $F/F^+$ of rank $N\ge1$ (\Cref{StandardDef}).\footnote{Unlike \cite{LTXZZ}, we prefer to have as starting data the indefinite Hermitian space rather than its $\p$-nearby space.} We will denote $G\defeq U(V)$ as an algebraic group over $F^+.$
\end{itemize}

We denote by $\Q_p^\Phi\subseteq\C$ the composition of $i_p(\Q_p),$ $F$ and $F_\Phi$ (\Cref{CMdef}). We denote $\Z_p^\Phi$ its ring of integers, and $\F_p^\Phi$ its residue field. Since $p$ in unramified in $F,$ $\Q_p^\phi/\Q_p$ is unramified.

\begin{notation}\label{SchemesNotation}
    We will denote schemes over $\Z_p^\Phi$ by calligraphic letters (e.g. $\mathcal{X}$), and denote their base changes to $\Q_p^\Phi$ with a subscript of $\eta$ (e.g. $\mathcal{X}^\eta$) and base changes to $\F_p^\Phi$ by roman letters (e.g. $X$). Furthermore, we will denote the base changes to $\overline{\F}_p$ with an overline (e.g. $\overline{X}$).
\end{notation}

We put $\mathcal{T}_\p\defeq\mathcal{T}_p(W_0,K_0^p)\otimes_{\O_{F_\Phi}\otimes\Z_{(p)}}\Z_p^\Phi.$

\begin{definition}[{\cite[Definition 5.2.1]{LTXZZ}}]\label{moduliProblem}
We consider the functor
\begin{equation*}
    \mathcal{M}_\p(V,\blank)\colon\mathfrak{K}(V)^p\times\mathfrak{T}\to\PSch{\Z_p^\Phi}
\end{equation*}
where for a test scheme $S$ we define $\mathcal{M}_\p(V,K^p)(S)$ to be the groupoid of tuples $(A_0,\iota_0,\lambda_0,\eta_0^p;A,\iota,\lambda,\eta^p)$ where
\begin{enumerate}
    \item $(A_0,\iota_0,\lambda_0,\eta_0^p)\in\mathcal{T}_\p(S)$;
    \item $(A,\iota,\lambda)\in\IUAb{\Z_{(p)}}(S)$ is a unitary $\O_F$-abelian scheme of signature $N\Phi-\tau_\infty+\tau_\infty^\cplx$ over $S,$ such that $\ker\lambda[p^\infty]$ is contained in $A[\p]$ of rank $p^2$;
    \item $\eta^p$ is a $K^p$-level structure: for a chosen geometric point $s$ on every connected component of $S,$ a $\pi_1(S,s)$-invariant $K^p$-orbit of isometries
    \begin{equation*}
        \eta^p\colon V\otimes_\Q\A_\Q^{\infty,p}\rightiso \hat{V}^p(A_{0,s},A_s)
    \end{equation*}
    as $\A_F^{\infty,p}$ Hermitian spaces.
\end{enumerate}
An isomorphism between tuples $(A_0,\iota_0,\lambda_0,\eta_0^p;A,\iota,\lambda,\eta^p)$ and $(A_0',\iota_0',\lambda_0',\eta_0^{p'};A',\iota',\lambda',\eta^{p'})$ is a pair of isomorphisms $\varphi_0\colon(A_0,\iota_0,\lambda_0,\eta_0^p)\to(A_0',\iota_0',\lambda_0',\eta_0^{p'})$ in $\mathcal{T}_\p(S)$ and $\varphi\colon(A,\iota,\lambda)\to(A',\iota',\lambda')$ in $\IUAb{\Z_{(p)}}(S)$ that satisfy $c(\varphi_0)=c(\varphi)$ and that carry $\eta^p$ to $\eta^{p'}.$

A morphism $g\in K^p\backslash G(\A_{F^+}^{\infty,p})/K^{p'}$ of $\mathfrak{K}(V)^p$ maps $\mathcal{M}_\p(V,K^p)(S)$ to $\mathcal{M}_\p(V,K^{p'})$ by changing $\eta^p$ to $\eta^p\circ g,$ and a morphism $a$ of $\mathfrak{T}$ acts on $\mathcal{M}_\p(V,K)(S)$ by changing $\eta^p_0$ to $\eta^p_0\circ a.$
\end{definition}

\begin{theorem}[{\cite[Theorem 5.2.5]{LTXZZ}}]
    For every $K^p\in\mathfrak{K}(V)^p,$ $\mathcal{M}_\p(V,K^p)$ is quasi-projective and strictly semistable over $\mathcal{T}_\p$ of relative dimension $N-1.$
\end{theorem}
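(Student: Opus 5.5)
The plan is to reduce the statement to the local structure of the moduli problem at $\p$, via Grothendieck--Messing deformation theory and a local model. The prime-to-$p$ part of the data, namely the level structures $\eta_0^p,\eta^p$, is étale-local and imposes no singularities.

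First I would show that $\mathcal{M}_\p(V,K^p)\to\mathcal{T}_\p$ is representable by a quasi-projective scheme. Since $\mathcal{T}_\p$ is finite étale over $\Z_p^\Phi$, this should follow from standard results on PEL moduli. The fibre over a point of $\mathcal{T}_\p$ parametrizes $(A,\iota,\lambda,\eta^p)$, and after fixing the polarization degree this embeds into a Siegel moduli space with neat level. Neatness of $K^p$, together with the rigidification by $A_0$ coming from the RSZ trick, kills automorphisms. Hence the groupoid is discrete and is represented by a closed subscheme of a quasi-projective scheme, by Mumford's theory.

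Second, I would compute the local structure at $\p$. The $p$-divisible group $A[p^\infty]$ decomposes along the places of $F^+$ above $p$. At places $v\neq\p$, the polarization is principal and the signature there is definite or split. Thus the deformation problem is formally smooth, with no conditions at definite places and smooth Grassmannian conditions otherwise. At $\p$, with $F^+_\p=\Q_p$ and $F_\p/\Q_p$ unramified, we get exactly a Hermitian $\O_{F_\p}$-module of signature $(N-1,1)$ and type $t=1$, in the sense of \Cref{pDivDef}. By the Rapoport--Zink local model theorem, the completed local rings of $\mathcal{M}_\p$ agree with those of the local model for $U(N-1,1)$ at the almost self-dual vertex lattice, up to smooth factors. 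The relative dimension is $N-1=rs$ with $(r,s)=(N-1,1)$.

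The main obstacle is showing that this local model is strictly semistable, i.e. étale locally of the form $\Z_p[t_1,\dots,t_{N}]/(t_1t_2-p)$. I would use the explicit description of the local model as pairs of Lagrangian-type subspaces $\mathcal{F}_0\subset\Lambda_0\otimes R$ and $\mathcal{F}_1\subset\Lambda_1\otimes R$. These are compatible with the maps $\Lambda_0\to\Lambda_1\to p^{-1}\Lambda_0$ and satisfy the Kottwitz condition. Its special fiber has two smooth irreducible components, namely $\mathbb{P}^{N-1}$-like and the blow-up type component, meeting transversally. Explicitly, in coordinates I would check that the only singular condition is a single equation $xy=p$, and that the remaining equations cut out a smooth scheme; this is the computation of Görtz and of Pappas--Rapoport for the almost-$\pi$-modular case, as in \cite[Theorem 5.2.5]{LTXZZ}. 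Strictness, meaning that each component is itself smooth and not only normal crossings étale locally, follows by identifying the components globally with the loci where $\ker\lambda[\p]$ meets $\mathrm{Lie}$ nontrivially, or its dual condition. Each of these is a smooth divisor by the same computation.
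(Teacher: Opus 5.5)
The paper gives no argument of its own here; it simply imports \cite[Theorem 5.2.5]{LTXZZ}. Your outline is the standard proof behind that cited result and is correct as a sketch: PEL representability with neat level gives quasi-projectivity, the deformation problem is formally \'etale at the places $\q\neq\p$, and a Grothendieck--Messing/local-model computation at $\p$ gives \'etale-local equations $xy=p$, with the two components of the special fiber cut out globally by the vanishing of the line-bundle maps induced by $\lambda$ and by the complementary isogeny $\lambda'$ (with $\lambda'\lambda=p$) on the rank-one eigenparts of the Lie algebras. Two things need fixing: invoking \cite[Theorem 5.2.5]{LTXZZ} for the $xy=p$ computation is circular, so you must do it by hand (after splitting $\O_{F_\p}\otimes_{\Z_p}\breve{\Z}_p$ it is just the $\mathrm{GL}_N$ local model for $\mu=(1,0,\dots,0)$ and a two-step lattice chain), and ``almost $\pi$-modular'' is ramified-case terminology that does not apply to this unramified almost self-dual setting.
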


\subsubsection{Complex uniformization}
\begin{definition}
    We define a \emph{complex uniformization datum} $z^\eta=(\Lambda_{z^\eta,\q})_{\q\mid p}$ to be, for every prime $\q$ of $F^+$ above $p,$ a $\O_{F_\q}$-lattice $\Lambda_{z^\eta,\q}$ of $V\otimes_{F^+}F^+_\q.$ We assume that for $\q\neq\p,$ the lattice $\Lambda_{z^\eta,\q}$ is self-dual, and for $\q=\p,$ the lattice $\Lambda_{z^\eta,\p}$ is \emph{almost self-dual}. We denote $K_{z^\eta,\q}=\mathrm{Stab}(\Lambda_{z^\eta,\q})\subseteq G(F^+_\q),$ and denote $K_{z^\eta,p}=\prod_{\q\mid p}K_{z^\eta,\q}\subseteq U(V)(F^+\otimes_\Q\Q_p),$ the stabilizer of $\Lambda_{z^\eta,p}=\prod_{\q\mid p}\Lambda_{z^\eta,\q}.$
\end{definition}
\begin{theorem}\label{ComplexUniformization}
    Given a complex uniformization datum $z^\eta,$ we have a ``moduli interpretation isomorphism''
    \begin{equation*}
        \Theta_{z^\eta}\colon\mathcal{M}_\p^\eta(V,\blank)\rightiso\mathrm{Sh}(V,\blank K_p)\times_{\Spec F}\mathcal{T}_\p^\eta.
    \end{equation*}
\end{theorem}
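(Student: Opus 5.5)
We plan to follow the proof of \cite[Theorem 5.2.5]{LTXZZ} and of \cite[Lemma 3.4]{RSZ}. The approach is to build a map on $\C$-points, and then a morphism of $F$-schemes, from $\mathcal{M}_\p^\eta(V,K^p)$ to $\mathrm{Sh}(V,K^pK_{z^\eta,p})\times\mathcal{T}_\p^\eta$. The map sends a point $(A_0,\iota_0,\lambda_0,\eta_0^p;A,\iota,\lambda,\eta^p)$ to the point of $\mathcal{T}_\p^\eta$ given by the first four entries, together with a point of the Shimura variety built from the homology of $A$ and $A_0$. To get the Shimura variety point we work over $\C$ through $i_p$. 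We form the hermitian $F$-space
\begin{equation*}
    V'\defeq\mathrm{Hom}_{\O_F}\bigl(H_1(A_0(\C),\Q),H_1(A(\C),\Q)\bigr),
\end{equation*}
with the pairing of \Cref{DefVhat} (using $\lambda_0,\lambda$).

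First step: show $V'\iso V$. The signature condition $N\Phi-\tau_\infty+\tau_\infty^\cplx$ together with $A_0$ having type $\Phi$ shows that $V'$ is standard indefinite. Away from $p$, the level structure $\eta^p$ gives $V'\otimes\A^{\infty,p}\iso V\otimes\A^{\infty,p}$. At places $\q\mid p$, the lattice $\mathrm{Hom}(T_p A_0,T_pA)_\q$ is self-dual for $\q\neq\p$. At $\p$ it is almost self-dual, because $\ker\lambda[p^\infty]\subseteq A[\p]$ has rank $p^2$. Hence $V'_\q\iso V_\q$ at all $\q\mid p$, since the local isometry class is determined by the existence of such lattices (discriminant/Hasse invariant). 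The Hasse principle for hermitian spaces then gives a global isometry $j\colon V\rightiso V'$. This determines an element $g^p=j^{-1}\circ\eta^p$ up to $K^p$. At $p$ we choose $g_p\in G(F^+\otimes\Q_p)$ carrying $\Lambda_{z^\eta,p}$ to $j^{-1}\mathrm{Hom}(T_pA_0,T_pA)$, well defined modulo $K_{z^\eta,p}$. The Hodge filtration on $H_1(A(\C),\R)$ at $\tau_\infty$ gives a negative line in $V'\otimes_{F,\tau_\infty}\C$. The resulting point is $[z,(g^p,g_p)]$, which is independent of $j$ modulo $U(V)(F^+)$. If $A_0$ has the "wrong" class in $\ker^1(T_0)$, we twist via \Cref{Ker1Remark}; this is exactly why $\mathcal{T}_\p$ was cut out by $w^{-1}(W_0)$.

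Second step: construct the inverse. Given $[z,g]$ and a point of $\mathcal{T}_\p^\eta(\C)$, we define $A$ as the complex torus attached to the lattice $g(\Lambda_{z^\eta})\otimes H_1(A_0,\Z_{(p)})$ with complex structure from $z$. The lattice is up to $\Z_{(p)}$-isogeny, so we take $\mathrm{Hom}$-tensor with $H_1(A_0)$. Riemann's theorem gives a polarization, and the kernel condition follows from almost self-duality at $\p$. We then check that the two constructions are mutually inverse and compatible with the actions of $\mathfrak{K}(V)^p$ and $\mathfrak{T}$. The remaining point is descent from $\C$ to the reflex field $F$. Both sides are canonical models, so by the theory of canonical models (Shimura reciprocity for the RSZ datum, \cite[Section 3]{RSZ}) the bijection on $\C$-points is induced by an isomorphism over $F$.

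I expect the main obstacle to be this descent and compatibility with the canonical model. Concretely, one must check that the Galois action on CM points matches the reciprocity law for $\mathrm{Res}_{F^+/\Q}U(V)\times T_0$. The cleanest route is to cite \cite[Lemma 3.4, Remark 3.5]{RSZ} and \cite[Theorem 5.2.5/Section 5.2]{LTXZZ}, whose proofs carry over verbatim with the level at $p$ being $K_{z^\eta,p}$.
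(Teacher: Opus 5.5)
Your proposal is correct and follows the same route as the paper. The paper does not reprove the theorem: it cites \cite{LTXZZ} and records exactly your construction on $\C$-points, namely the Hasse principle giving $V_z\iso V$, then $g^p=\eta_{\mathrm{rat}}\circ\rho^{-1}\circ\eta^p$, then $g_\q$ matching the lattices at $\q\mid p$, and the negative line $l_z$, with the inverse and the descent left to \cite{LTXZZ,RSZ} as you do.

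One small correction: no $\ker^1(T_0)$ twist is needed in your first step. The space $V'$ is built from both $A_0$ and $A$, and it is locally isometric to $V$ everywhere (via $\eta^p$, the lattice types at $p$, and the signatures) whatever the class of $A_0$. The condition cutting out $\mathcal{T}_\p$ plays no role here, since $\mathcal{T}_\p^\eta$ appears on both sides of the isomorphism.
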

We record its definition on complex points, following the discussion after \cite[Equation (5.2)]{LTXZZ}. Given $z=(A_0,\iota_0,\lambda_0,\eta_0^p;A,\iota,\lambda,\eta^p)\in\mathcal{M}^\eta_\p(V,K^p)(\C),$ we consider
\begin{equation*}
    V_z=\mathrm{Hom}_F\left(H_1(A(\C),\Q),H_1(A_0(\C),\Q)\right)
\end{equation*}
equipped with a hermitian pairing similar to \Cref{DefVhat}. This becomes a standard indefinite hermitian space of rank $N.$ By the comparison between singular and \'etale homology, we have a canonical isometry
\begin{equation*}
    \rho\colon V_z\otimes_\Q\A_\Q^{\infty,p}\rightiso\hat{V}^p(A_0,A).
\end{equation*}
We claim that $V_z$ and $V$ are isometric. Indeed, they are locally isometric since i) they are both standard indefinite, ii) the isometry $\rho$ together with the level structure $\eta^p$ give an isometry outside $\infty$ and $p,$ iii) by the properties of $\lambda_0$ and $\lambda,$ we have a canonical decomposition
\begin{equation*}
    \mathrm{Hom}_{\O_F\otimes\Z_p}\left(H_1^\et(A_0,\Z_p),H_1^\et(A,\Z_p)\right)=\bigoplus_{\q\mid p}\Lambda_{z,\q}
\end{equation*}
of $\O_F\otimes\Z_p$-modules where $\Lambda_{z,\q}$ is self-dual in $V_z\otimes_FF_\q$ for $\q\neq\p,$ and $\Lambda_{z,\p}$ is almost-self dual in $V_z\otimes_FF_\p.$ Now it follows that $V_z$ and $V$ are isometric by the Hasse principle for hermitian spaces. Choose an isometry $\rho_{\mathrm{rat}}\colon V_z\rightiso V.$ Then
\begin{equation*}
    g^p\defeq \eta_{\mathrm{rat}}\circ\rho^{-1}\circ\eta^p\colon V\otimes_\Q\A_\Q^{\infty,p}\rightiso V\otimes_\Q\A_\Q^{\infty,p}
\end{equation*}
is an element of $G(\A_{F^+}^{\infty,p}).$ For every $\q$ above $p,$ choose $g_\q$ such that $g_\q\Lambda_{z^\eta,\q}=\eta_{\mathrm{rat}}\Lambda_{z,\q}.$ This gives an element $g_z=(g^p,(g_\q)_{\q\mid p})\in U(V)(\A_{F^+}^\infty).$ Finally, we consider
\begin{equation*}
    l_z\defeq\{\alpha\in\Hom_F(H_1^{\mathrm{{dR}}}(A_0/\C),H_1^{\mathrm{dR}}(A/\C))\colon \alpha(\omega_{A_0^\vee,\tau_\infty})\subseteq\omega_{A^\vee,\tau_\infty}\}
\end{equation*}
is a line negative definite line. Here, for an $\O_F$-abelian variety $(B,\iota_B),$ $\omega_{B}$ denotes the dual Lie algebra of $B,$ viewed as a submodule of $H_1^{\mathrm{dR}}(B^\vee/\C),$ and $\omega_{B,\tau_\infty}$ denote the $\tau_\infty$-eigencomponent of the $\O_F$ action by $\iota_B.$ Then the first component of $\Theta_{z^\eta}(x)$ is given by the coset
\begin{equation*}
    U(V)(F^+)\left(\eta_{\mathrm{rat}}(l_z),g_zK^pK_p\right),
\end{equation*}
which is independent of the choice of $\eta_{\mathrm{rat}}.$

\subsubsection{Geometry mod \texorpdfstring{$p$}{p} and basic correspondences}
We collect results of \cite[Sections 5.2-5.4]{LTXZZ} about the geometry of the special fiber of $\mathcal{M}_\p.$

Recall that we denote $M_\p(V,\blank)\defeq\mathcal{M}_\p(V,\blank)\otimes_{\Z_p^\Phi}\F_p^\Phi.$
\begin{theorem}\label{Geometrymodp}
We have a decomposition of $M_\p(V,\blank)$ into closed subschemes
\begin{equation*}
    M_\p(V,\blank)=M^\circ_\p(V,\blank)\cup M^\bullet(V,\blank)
\end{equation*}
where $M^\circ_\p(V,\blank)$ and $M^\bullet_\p(V,\blank)$ are smooth over $T_\p.$ We denote by $M^\dagger_\p(V,\blank)$ their intersection. For each $?\in\{\circ,\bullet\},$ there are correspondences
\begin{equation*}
    S_\p^?(V,\blank)\xleftarrow{\pi^?}B_\p^?(V,\blank)\xrightarrow{i^?}M_\p^?(V,\blank)
\end{equation*}
which satisfy the following.
\begin{enumerate}
    \item $S_\p^\circ(V,\blank)\to T_\p$ and $S_\p^\bullet(V,\blank)\to T_\p$ are represented by finite and \'etale schemes.
    \item $\pi^\circ$ is surjective. For a perfect field $\kappa$ containing $\F_p^\Phi,$ the fibers of $\pi^\circ$ are isomorphic to projective spaces $\mathbb{P}^{N-1}_\kappa.$
    \item $i^\circ$ is an isomorphism.
    \item $\pi^\bullet$ is surjective. For a perfect field $\kappa$ containing $\F_p^\Phi,$ the fibers of $\pi^\bullet$ are geometrically irreducible projective smooth schemes over $\kappa$ of dimension $\lfloor N/2\rfloor.$
    \item $i^\bullet$ is, locally on $B^\bullet_\p(V,\blank),$ a closed immersion.
\end{enumerate}
\end{theorem}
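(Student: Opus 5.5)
This statement is essentially a recollection of \cite[Sections 5.2--5.4]{LTXZZ}. The plan is to build each piece from the moduli description of \Cref{moduliProblem}, working over $T_\p$ and using the Rapoport--Zink local model at $\p$ from \Cref{RZChapter}.

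\emph{The decomposition.} For a point of $M_\p(V,K^p)$, the polarization $\lambda$ has kernel $\ker\lambda[p^\infty]\subseteq A[\p]$ of rank $p^2$. The signature at $\tau_\infty$ is $(N-1,1)$, so the relative Dieudonn\'e module $\mathcal{D}(A[\p^\infty])$ carries two natural maps: $\lambda_*$ and its "dual" via $\lambda^\vee$. Following \cite[Definition 5.2.3]{LTXZZ}, I would let $M^\circ_\p$ be the locus where the Hodge filtration $\omega_{A^\vee,\tau_\infty}$ lies in the kernel of $\lambda_*$. I would let $M^\bullet_\p$ be the locus where the analogous condition holds for the $\tau_\infty^\cplx$-component, i.e. the line is contained in the image of $\lambda$. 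Both are closed, and their union is everything, because of the Kottwitz condition together with $\ker\lambda$ having rank $p^2$.

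\emph{Smoothness and semistability.} Smoothness of $M^\circ_\p$ and $M^\bullet_\p$, and of their intersection, I would obtain from the local model: the completed local rings are those of $\N{F/F^+}{[1],(N-1,1)}$. That space has semistable reduction with exactly two strata types, namely the "balloon" strata $\V(\Lambda^\circ)\cong\mathbb{P}^{N-1}$ and the "ground" strata $\V(\varpi\Lambda^\bullet)$ of dimension $\lfloor N/2\rfloor$, by the Bruhat--Tits description and dimension formula of \Cref{RZChapter}.

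\emph{The basic correspondences.} For $\circ$, I would define $S^\circ_\p$ as the moduli of tuples $(A_0,\ldots;A^\circ,\ldots)$ with $A^\circ$ of signature $N\Phi$ and $\lambda^\circ$ $p$-principal. This is a Shimura set for the $\p$-nearby definite space, so it is finite étale over $T_\p$, giving (1). Next, $B^\circ_\p$ parametrizes isogenies $A^\circ\to A$ of degree $p$ compatible with polarizations. Grothendieck--Messing / Dieudonné theory identifies the fibers of $\pi^\circ$ with the space of choices of a Lagrangian-type line. Following \cite[Section 5.3]{LTXZZ}, this is $\mathbb{P}(\mathcal{D}(A^\circ)_{\tau_\infty})\cong\mathbb{P}^{N-1}$, giving (2). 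To prove (3), I would show that the isogeny is recovered uniquely from a point of $M^\circ_\p$ via the kernel of $\lambda_*$.

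For $\bullet$, I would instead use $A^\bullet$ with $\ker\lambda^\bullet=A^\bullet[\p]$. The fibers of $\pi^\bullet$ are then Deligne--Lusztig-type varieties for a unitary group over $\F_{p^2}$; these are the $\V(\varpi\Lambda^\bullet)$ of \Cref{RZChapter}. Their smoothness, projectivity, irreducibility and dimension $\lfloor N/2\rfloor$ follow from \cite{Vollaard}, \cite{Cho}, giving (4). The map $i^\bullet$ is only locally a closed immersion, because a point of $M^\bullet_\p$ can lie on several ground strata. To prove (5), I would check that $i^\bullet$ is unramified and injective on tangent spaces, and then deduce local closedness.

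\emph{Main obstacle.} I expect the hardest step to be the deformation-theoretic verification that $i^\circ$ is an isomorphism scheme-theoretically, not just bijective on points. The plan is to compare the tangent spaces via Grothendieck--Messing, using that $p$ is unramified and $F_\p^+=\Q_p$, exactly as in \cite[Theorem 5.3.4]{LTXZZ}.
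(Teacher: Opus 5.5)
The paper gives no argument for this statement beyond collecting \cite[Sections 5.2--5.4]{LTXZZ}, so your opening sentence agrees with it. The sketch you build on top of that, however, has a step that would fail: the smoothness paragraph.

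The $p$-adic uniformization (\Cref{padicUni}) only describes the formal completion of $\mathcal{M}_\p$ along the supersingular locus. So the completed local rings of $\mathcal{M}_\p$ agree with those of $\N{F/F^+}{[1],(N-1,1)}$ only at supersingular points. For $N\ge3$ the non-basic points are dense in $M^\bullet_\p$, and they are not seen at all.

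Even at supersingular points, the Bruhat--Tits strata describe the \emph{reduced} locus of the Rapoport--Zink space, not the branches of its special fiber. The balloons $\V(\Lambda^\circ)\iso\mathbb{P}^{N-1}$ are indeed components (of $M^\circ_\p$). But for $N\ge3$ the ground strata have dimension $\lfloor N/2\rfloor<N-1$: they are the image of $B^\bullet_\p$, i.e.\ the supersingular part of the $(N-1)$-dimensional $M^\bullet_\p$. No smoothness of $M^\bullet_\p$ or $M^\dagger_\p$ can be read off from them.

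What is needed instead:
\begin{itemize}
\item Smoothness of $M^\circ_\p$ follows from (2)--(3) once you show $\pi^\circ$ is a projective bundle.
\item For $M^\bullet_\p$ and $M^\dagger_\p$ you need a Grothendieck--Messing computation at an arbitrary point, or equivalently the local model for this parahoric level. Over a square-zero thickening in characteristic $p$, $H_1^{\mathrm{dR}}$ and $\lambda_*$ are rigid, and only the Hodge filtrations move, subject to the defining incidence condition.
\end{itemize}

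The decomposition step also needs repair. For signature $N\Phi-\tau_\infty+\tau_\infty^\cplx$, the lines are $\omega_{A^\vee,\tau_\infty}$, $\omega_{A,\tau_\infty}$ and $\ker\lambda_{*,\tau}$ (for both $\tau$). The objects $\omega_{A^\vee,\tau_\infty^\cplx}$, $\omega_{A,\tau_\infty^\cplx}$ and $\im\lambda_{*,\tau}$ are hyperplanes. Under the de Rham pairing of $H_1^{\mathrm{dR}}(A)_\tau$ with $H_1^{\mathrm{dR}}(A^\vee)_{\tau^\cplx}$, one has $\omega_{A^\vee}^\perp=\omega_A$ and $(\ker\lambda_*)^\perp=\im\lambda_*$. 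Hence a ``$\tau_\infty^\cplx$-analogue'' of the form $\omega_{A,\tau_\infty^\cplx}\subseteq\im\lambda_*$ is just your $M^\circ_\p$ condition in dual form.

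A complementary condition that works is $\ker\lambda_{*,\tau_\infty^\cplx}\subseteq\omega_{A^\vee,\tau_\infty^\cplx}$; dually, the line $\omega_{A,\tau_\infty}$ lies in $\im\lambda_{*,\tau_\infty}$. The covering then comes from an isotropy argument, not from ``Kottwitz plus rank $p^2$'':
\begin{itemize}
\item The pairing $(x,y)\mapsto\langle x,\lambda_*y\rangle$ between the $\tau_\infty$- and $\tau_\infty^\cplx$-parts of $H_1^{\mathrm{dR}}(A)$ has the lines $\ker\lambda_*$ as radicals.
\item This pairing makes $\omega_{A^\vee,\tau_\infty}\perp\omega_{A^\vee,\tau_\infty^\cplx}$.
\item So if the line $\omega_{A^\vee,\tau_\infty}$ is not the radical, the hyperplane $\omega_{A^\vee,\tau_\infty^\cplx}$ must equal its orthogonal, which contains the radical.
\end{itemize}

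Two smaller points. First, compatibility of polarizations forces the isogeny in $B^\circ_\p$ to go $A\to A^\circ$, of degree $p$, with $\lambda=\alpha^\vee\circ\lambda^\circ\circ\alpha$. For $N\ge3$ there is no polarization-compatible $\O_F$-linear isogeny $A^\circ\to A$ of degree $p$. Second, in (5), unramifiedness alone does not give a Zariski-local closed immersion. You also need injectivity on geometric points along each fiber of $\pi^\bullet$, together with properness.
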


Moreover, there are uniformizations of $S^\circ_\p(V,\blank)$ and $S^\bullet_\p(V,\blank)$ as follows.
\begin{definition}
    A \emph{basic uniformization datum} is $z^b=(V^\circ,i,(\Lambda^\circ_\q)_{\q\mid p},\Lambda^\bullet_\p)$ where
    \begin{itemize}
        \item $V^\circ$ is a standard definite hermitian space of rank $N$;
        \item $i\colon V\otimes_\Q\A_f^p\rightiso V^\circ\otimes_\Q\A_f^p$ is an isometry;
        \item for each place $\q$ of $F^+$ above $p,$ $\Lambda_\q\subseteq V^\circ\otimes_{F^+}F^+_\q$ is a self-dual lattice;
        \item $\Lambda^\bullet_\p\subseteq V^\circ\otimes_{F^+}F^+_\p$ is a lattice such that $\Lambda^\circ_\p\subseteq\Lambda^\bullet_\p$ and for a uniformizer $\varpi$ of $\O_{F_\p}$, we have that $\varpi\Lambda^\bullet_\p\subseteq(\Lambda^\bullet_\p)^\vee\subseteq\Lambda^\bullet$ with $\mathrm{length}(\Lambda^\bullet_\p/(\Lambda^\bullet_\p)^\vee)=2\lfloor\frac{N}{2}\rfloor.$
    \end{itemize}
    Given such data, we denote $K_{z^b,\q}^\circ\defeq\mathrm{Stab}(\Lambda^\circ_\q)$ and $K_{z^b,\p}^\bullet\defeq\mathrm{Stab}(\Lambda^\bullet_\p),$ as well as $K_{z^b,p}^\circ=\prod_{\q\mid p}K_\q^\circ$ and $K_{z^b,p}^\bullet=K_\p^\bullet\times\prod_{\q\neq\p}K_\q^\circ.$ For ease of notation, we denote $G^\circ\defeq U(V^\circ).$
\end{definition}
\begin{proposition}[{\cite[Constructions 5.3.6, 5.4.6]{LTXZZ}}]
    Given a basic uniformization datum $z^b,$ there are uniformization maps
    \begin{equation*}
        \Theta_{z^b}\colon S^\circ_\p(V,\blank)(\overline{\F}_p)\rightiso \Sh(V^\circ,i_*(\blank)K_{z^b,p}^\circ)\times T_\p(\overline{\F}_p)
    \end{equation*}
    and
    \begin{equation*}
        \Theta_{z^b}\colon S^\bullet_\p(V,\blank)(\overline{\F}_p)\rightiso \Sh(V^\circ,i_*(\blank)K_{z^b,p}^\bullet)\times T_\p(\overline{\F}_p).
    \end{equation*}
\end{proposition}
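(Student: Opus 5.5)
The two maps are not constructed from scratch: they come from $p$-adic uniformization of the basic locus, as in \cite[Constructions 5.3.6, 5.4.6]{LTXZZ}.

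\textbf{Step 1: a fixed supersingular point.} Fix a point $x_0=(A_0^\star,\dots;A^\star,\dots)$ of $S^?_\p(V,K^p)(\overline{\F}_p)$, for $?\in\{\circ,\bullet\}$. Its underlying unitary $\O_F$-abelian scheme is supersingular at $\p$ and ordinary-type (étale or multiplicative) at the other $\q\mid p$.

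\textbf{Step 2: the space $V^\circ$.} Set
\begin{equation*}
V^\circ\defeq\mathrm{Hom}_{\O_F}(A_0^\star,A^\star)\otimes\Q,
\end{equation*}
with the hermitian form of \Cref{DefVhat}. Then:
\begin{itemize}
\item Away from $p$, the étale realization together with $\eta^p$ gives the isometry $i$.
\item At the places $\q\neq\p$ above $p$, the Dieudonn\'e module produces the self-dual lattices $\Lambda^\circ_\q$.
\item At $\p$, the relative Dieudonn\'e module identifies $V^\circ\otimes F^+_\p$ with $\mathbb{V}^{[1]}_N$. For the $\circ$ case this yields a self-dual lattice $\Lambda^\circ_\p$; for the $\bullet$ case, the lattice $\Lambda^\bullet_\p$ of the required type $2\lfloor N/2\rfloor$.
\item At infinity, Tate's theorem together with the signature computation shows that $V^\circ$ is totally positive definite, so it is standard definite.
\end{itemize}
By the Hasse principle, the resulting datum $z^b$ is determined up to isometry. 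Conversely, any basic uniformization datum arises this way, because $\mathrm{Hom}$ spaces of supersingular objects are constant under isogeny.

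\textbf{Step 3: the bijection.} The map $\Theta_{z^b}$ sends a point $y=(A_0,\dots;A,\dots)$ to the following data. Since every point of $S^?_\p$ is quasi-isogenous to $x_0$ (Serre--Tate and Dieudonn\'e theory), choose a prime-to-$p$-similitude-compatible quasi-isogeny $\phi\colon A\dashrightarrow A^\star$ that respects $\lambda$ up to the similitude of $A_0\to A_0^\star$.

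\begin{itemize}
\item Transporting $\eta^p$ through $\phi$ and $i$ gives $g^p\in G^\circ(\A^{\infty,p}_{F^+})$.
\item At $p$, the Dieudonn\'e lattice of $A$ inside that of $A^\star$ is a lattice of the same type as $\Lambda^?_\p$, hence equals $g_p\Lambda^?_\p$ for some $g_p$, well defined modulo $K^?_{z^b,p}$.
\item Changing $\phi$ by a self-quasi-isogeny of $x_0$, i.e.\ by an element of $G^\circ(F^+)$, gives the left quotient.
\item The $A_0$ component gives the $T_\p(\overline{\F}_p)$ factor.
\end{itemize}

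Injectivity follows from Tate's theorem over finite fields together with Dieudonn\'e theory, which says that quasi-isogenies preserving the Dieudonn\'e lattices are isomorphisms. Surjectivity uses the correspondence between lattices and isogenies: given $(g^p,g_p)$, take the isogeny of $A^\star$ cut out by $g_p\Lambda^?_\p$ and by the prime-to-$p$ level.

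\textbf{Expected main obstacle.} The delicate point is at $\p$. One has to check that the points of $S^\circ_\p$ resp.\ $S^\bullet_\p$ (the moduli spaces of \cite[Sections 5.3, 5.4]{LTXZZ}, with polarizations of prescribed kernel) correspond exactly to lattices of type $0$ resp.\ $2\lfloor N/2\rfloor$ in $\mathbb{V}^{[1]}_N$. This requires the relative Dieudonn\'e module computation with the Kottwitz signature $(N,0)$ resp.\ $(0,N)$, as in \Cref{CanonicalLifting+}. The statement is precisely \cite[Constructions 5.3.6, 5.4.6]{LTXZZ}, and I would invoke that result for this lattice-type analysis rather than redo it.
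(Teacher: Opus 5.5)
Your proposal agrees with the paper, which gives no argument of its own and simply cites \cite[Constructions 5.3.6, 5.4.6]{LTXZZ}. Your outline (form $\mathrm{Hom}_{\O_F}(A_0,A)\otimes\Q$, identify it with $V^\circ$ via the Hasse principle, read off $g^p$ from $\eta^p$ and $g_p$ from the Dieudonn\'e lattices, and quotient by $G^\circ(F^+)$) is the standard construction behind that reference.

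Some side remarks in your sketch are inaccurate, though they do not affect the cited result:
\begin{itemize}
\item At $\q\neq\p$ the $p$-divisible groups need not be \'etale or multiplicative; for $\q$ inert in $F$ they are supersingular.
\item $S^\circ_\p$ and $S^\bullet_\p$ both have signature type $N\Phi$. They differ only in the kernel of the polarization at $\p$, not in signature.
\item Fixing a base point presupposes $S^?_\p(V,K^p)(\overline{\F}_p)\neq\emptyset$, which your sketch does not justify.
\item The quasi-isogeny to your base point comes from Tate's isogeny theorem plus the Hasse principle, not from Serre--Tate theory.
\end{itemize}
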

\begin{remark}\label{CanonicalBasicUniformization}
    In fact, we have more canonical uniformizations
    \begin{equation*}
        \Theta_{V^\circ,i}\colon S^\circ_\p(V,\blank)(\overline{\F}_p)\rightiso G^\circ(F^+)\backslash\left( \mathrm{Vert}^\circ(V^\circ\otimes_\Q\Q_p)\times G^\circ(\A_{F^+}^{p,\infty})/i_*(K^p)\right)\times T_\p(\overline{\F}_p)
    \end{equation*}
    and
    \begin{equation*}
        \Theta_{V^\circ,i}\colon S^\bullet_\p(V,\blank)(\overline{\F}_p)\rightiso G^\circ(F^+)\backslash \left(\mathrm{Vert}^\bullet(V^\circ\otimes_\Q\Q_p)\times G^\circ(\A_{F^+}^{p,\infty})/i_*(K^p)\right)\times T_\p(\overline{\F}_p)
    \end{equation*}
    where we define (\Cref{DefVertexLattice})
    \begin{equation*}
        \mathrm{Vert}^\circ(V^\circ\otimes_\Q\Q_p)\defeq\prod_{\q\mid p}\mathrm{Vert}^0(V^\circ\otimes_{F^+}F^+_\q)
    \end{equation*}
    and
    \begin{equation*}
        \mathrm{Vert}^\bullet(V^\circ\otimes_\Q\Q_p)\defeq\prod_{\substack{\q\mid p\\\q\neq\p}}\mathrm{Vert}^0(V^\circ\otimes_{F^+}F^+_\q)\times \mathrm{Vert}^{2\lfloor\frac{N}{2}\rfloor}(V\otimes_{F^+}F^+_\p).
    \end{equation*}
    The choice of lattices $\Lambda_\q^\circ$ for $\q\mid p$ and $\Lambda^\bullet_\p$ then identifies
    \begin{equation*}
        \mathrm{Vert}^0(V^\circ\otimes_{F^+}F^+_\q)=G^\circ(F^+_\q)/K^\circ_{z^b,\q},\quad \mathrm{Vert}^{2\lfloor\frac{N}{2}\rfloor}(V\otimes_{F^+}F^+_\p)=G^\circ(F^+_\p)/K^\bullet_{z^b,\p},
    \end{equation*}
    giving the uniformizations $\Theta_{z^b}.$
\end{remark}

\subsubsection{\texorpdfstring{$p$}{p}-adic uniformization}
\begin{definition}
    The \emph{supersingular locus} $M_\p^{ss}(V,\blank)\subseteq M_\p(V,\blank)$ is the closed locus where the abelian variety $A$ is supersingular. We denote by
    \begin{equation*}
        \mathcal{M}_\p^{\wedge,ss}(V,\blank)\colon\mathfrak{K}(V)^p\times\mathfrak{T}\to\PSch{\breve{\Z}_p}
    \end{equation*}
    the formal completion of $\mathcal{M}_\p(V,\blank)$ along the supersingular locus, base changed to $\Spf\breve{\Z}_p.$
\end{definition}

\begin{definition}
    We define a \emph{$p$-adic uniformization datum} for $\mathcal{M}_\p(V,K^p)$ to be a choice of point $\mathbbm{z}\in M_\p^{ss}(V,K^p)(\overline{\F}_p).$ Write $\mathbbm{z}=(\mathbb{A}_0,\bbiota_0,\bblambda_0,\bbeta_0^p,\mathbb{A},\bbiota,\bblambda,\bbeta^p).$ Then we consider (see \Cref{pDivDef})
    \begin{equation*}
        (\mathbb{X}_0,\iota_{\mathbb{X}_0},\lambda_{\mathbb{X}_0})\defeq(\mathbb{A}_0[\p^\infty],\bbiota_0,\bblambda_0)\in\pDiv{F_\p/F^+_\p}{[0],(0,1)}(\overline{\F}_p)
    \end{equation*}
    and
    \begin{equation*}
        (\mathbb{X},\iota_{\mathbb{X}},\lambda_{\mathbb{X}})\defeq(\mathbb{A}[\p^\infty],\bbiota,\bblambda)\in\pDiv{F_\p/F^+_\p}{[1],(N-1,1)}(\overline{\F}_p),\quad (\mathbb{A}[\q^\infty],\bbiota,\bblambda)\in\pDiv{F_\q/F^+_\p}{[0],(N,0)}(\overline{\F}_p)\text{ for }\q\neq\p.
    \end{equation*}
    We denote
    \begin{equation*}
        \mathbb{V}\defeq\mathrm{Hom}_{\Ab{\Z_{(p)}}(\overline{F}_p)}((\mathbb{A}_0,\bbiota_0),(\mathbb{A},\bbiota))\otimes\Q
    \end{equation*}
    which is a standard definite hermitian $F/F^+$ space equipped with the pairing
    \begin{equation*}
        (x,y)\defeq\bbiota_0^{-1}(\bblambda_{0,*}\circ y^\vee\circ\bblambda_*\circ x)\in\bbiota_0^{-1}(\mathrm{End}_{\O_F}(\mathbb{A}_0)\otimes\Q)=F.
    \end{equation*}
    For ease of notation, we denote $\mathbb{G}\defeq U(\mathbb{V}).$
\end{definition}

\begin{remark}
    Note that for $p$-adic uniformization datum $\mathbbm{z}\in M_\p^{ss}(V,K^p)(\overline{\F}_p),$ the spaces $V$ and $\mathbb{V}$ are nearby at $\p.$ Indeed, $\bbeta^p\colon V\otimes_\Q\A_\Q^{\infty,p}\rightiso \mathbb{V}\otimes_\Q\A_\Q^{\infty,p}$ is a $K^p$-orbit of isometries, $\mathbb{V}$ is standard definite by the positivity of the Rosati involution, and for each $\q\neq\p$ above $p,$ the lattices
    \begin{equation*}
        \mathrm{Hom}_{\O_F}((\mathbb{A}_0[\q^\infty],\bbiota_0),(\mathbb{A}[\q^\infty],\bbiota))\subseteq \mathrm{Hom}_{\O_F}((\mathbb{A}_0[\q^\infty],\bbiota_0),(\mathbb{A}[\q^\infty],\bbiota))\otimes\Q=\mathbb{V}\otimes_{F^+}F^+_\q
    \end{equation*}
    are self-dual by the properties of $\bblambda_0$ and $\bblambda.$
\end{remark}

\begin{theorem}[{\cite[Theorem 4.3]{Cho}}]\label{padicUni}
    Given a $p$-adic uniformization datum $\mathbbm{z}\in M_\p^{ss}(V,K^p)(\overline{\F}_p),$ we have a $p$-adic uniformization isomorphism
    \begin{equation*}
        \Theta_{\mathbbm{z}}\colon\mathcal{M}_\p^{\wedge,ss}(V,K^p)\rightiso \mathbb{G}(F^+)\backslash\left(\mathcal{N}\times \mathbb{G}(\A_{F^+}^{p,\infty})/\bbeta^p_*(K^p)\right)\times\mathcal{T}_{\p,\breve{\Z}_p}
    \end{equation*}
    where $\mathcal{N}=\prod_{\q\mid p}\mathcal{N}_\q$ for
    \begin{equation*}
        \mathcal{N}_\p=\N{F/F^+}{[1],(N-1,1)}\quad\text{and}\quad \mathcal{N}_\q=\N{F/F^+}{[0],(N,0)}\text{ for }\q\neq\p,
    \end{equation*}
    the Rapoport--Zink spaces with framing objects $(\mathbb{A}[\q^\infty],\iota,\lambda).$ Note that $\mathbb{G}(F^+_\q)$ acts on $\mathcal{N}_\q$ by \Cref{J=U(V)Remark}.
\end{theorem}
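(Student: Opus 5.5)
This is Rapoport--Zink uniformization (\cite[Theorem 6.30]{Rapoport-Zink}), adapted to the RSZ moduli problem of \Cref{moduliProblem}. The plan is to build the map on $S$-points for $S\in\sch{\Spf\breve{\Z}_p}$ on which $p$ is locally nilpotent and which map to the supersingular locus, and then check that it is an isomorphism of formal schemes.

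First I separate off the CM factor. The framing $\mathbbm{z}$ fixes $\mathbb{A}_0$. By \Cref{CanonicalLifting+}, deformations of $\mathbb{A}_0$ with its $\O_F$-action and polarization are rigid, so the $(A_0,\ldots)$-component of any $S$-point is pulled back from $\mathcal{T}_{\p,\breve{\Z}_p}$. This produces the factor $\times\mathcal{T}_{\p,\breve{\Z}_p}$, and on a connected $S$ I may fix a lift of $\mathbb{A}_0$.

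Next I attach lattice-theoretic data to an $S$-point $(A,\iota,\lambda,\eta^p)$. Since $S$ lies over the supersingular locus, the fiber $A_{\overline{S}}$ is $\O_F$-quasi-isogenous to $\mathbb{A}_{\overline{S}}$. I choose such a quasi-isogeny $\rho$ compatible with polarizations up to $\Z_{(p)}^\times$-similitude; the similitude is matched against that of $A_0$ via $c(\varphi_0)=c(\varphi)$.

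1. Restricting $\rho$ to $\p$-divisible groups gives, for each $\q\mid p$, a point of $\mathcal{N}_\q$. The type $[1]$ at $\p$ comes from $\ker\lambda[p^\infty]\subseteq A[\p]$ of rank $p^2$. The signature $(N-1,1)$ at $\p$ and $(N,0)$ at $\q\neq\p$ come from the signature type $N\Phi-\tau_\infty+\tau_\infty^\cplx$, using that $\p$ induces $\tau_\infty$ via $i_p$.
2. Away from $p$, transporting $\eta^p$ through $\rho$ and $\bbeta^p$ gives a $K^p$-orbit, hence an element of $\mathbb{G}(\A_{F^+}^{p,\infty})/\bbeta^p_*(K^p)$.

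Changing $\rho$ by an element of $\mathrm{Aut}^0(\mathbb{A}_0,\mathbb{A})$ changes these data by the diagonal action of $\mathbb{G}(F^+)$. Here I use that the self-quasi-isogenies of $\mathbb{A}$ preserving $\iota$ and $\lambda$ are exactly $U(\mathbb{V})(F^+)$, acting on $\mathbb{V}$ by post-composition and locally through \Cref{J=U(V)Remark}. So the $S$-point gives a well-defined point of the double quotient. Independence of choices, and the fact that the action is free because $K^p$ is neat, follow as in \cite[Theorem 6.30]{Rapoport-Zink}.

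For the inverse, I start from $(x_\q)_\q\in\mathcal{N}(S)$ and $g\in\mathbb{G}(\A_{F^+}^{p,\infty})$. The $\p$-divisible groups $X_\q$ of the $x_\q$ come with quasi-isogenies to $\mathbb{A}[\q^\infty]_{\overline{S}}$. I modify $\mathbb{A}_{\overline{S}}$ by the corresponding finite quasi-isogeny at $p$, lift it to $S$ by Serre--Tate using the deformation $X_\q$, and put level structure $\bbeta^p\circ g$ away from $p$. The polarization and kernel conditions then translate exactly into the conditions defining $\mathcal{N}_\q$ in \Cref{pDivDef}. This gives a $\mathbb{G}(F^+)$-invariant map back, and the two constructions are mutually inverse.

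To see the map is an isomorphism of formal schemes, not just a bijection on points, I use the same Serre--Tate/Grothendieck--Messing argument on nilpotent thickenings: both sides have the same deformation theory, governed by the $\p$-divisible groups.

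The main obstacle is surjectivity onto the supersingular locus. Every supersingular point must be quasi-isogenous to $\mathbb{A}$ in a way compatible with the whole RSZ datum (polarization similitude, the $A_0$ factor, and level structure), not merely with the $\p$-divisible group. I would handle this via the uniqueness of the isogeny class of the framing object (the Proposition following \Cref{pDivDef}) at each $\q$, combined with a Tate-theorem/Honda--Tate argument as in \cite[Section 5]{LTXZZ}. This reduces the obstruction to the vanishing of $\ker^1$ for the unitary group, which holds by the Hasse principle because $N$-dimensional hermitian spaces are determined locally, as in the comparison $V_z\simeq V$ after \Cref{ComplexUniformization}.
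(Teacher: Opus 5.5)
Your outline is the standard Rapoport--Zink argument, and that is how \cite[Theorem 4.3]{Cho}, which the paper simply cites, is proved. The isogeny-class step you single out (Honda--Tate plus the Hasse principle for hermitian spaces) is part of that argument. What you skip is precisely the one point the paper adds to the citation.

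The construction you describe lands in \emph{absolute} Rapoport--Zink spaces. These are deformations of $(A[\q^\infty],\iota,\lambda)$ as $p$-divisible groups with $\O_{F_\q}$-action, subject to the Kottwitz condition given by $N\Phi-\tau_\infty+\tau_\infty^\cplx$ on all embeddings of $F$ above $\q$, over the completed maximal unramified extension of $\Q_p^\Phi$. The spaces $\mathcal{N}_\q$ in the statement are instead the \emph{relative} spaces of \Cref{pDivDef}. Those are built from strict formal $\O_{F^+_\q}$-modules of dimension $N$ and $\O_{F^+_\q}$-height $2N$ over $\Spf\O_{\breve{F}}$.

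Your step~1 (``restricting $\rho$ gives a point of $\mathcal{N}_\q$'', ``the signature $(N,0)$ at $\q\neq\p$ comes from the signature type'') identifies the two without argument. As written it is not correct whenever $F^+_\q\neq\Q_p$. In that case $A[\q^\infty]$ has dimension $N[F^+_\q:\Q_p]$, and $\O_{F^+_\q}$ acts on its Lie algebra through every embedding, so it is not a strict $\O_{F^+_\q}$-module at all. Passing from absolute to relative spaces is \cite[Proposition 3.30]{Cho}. That is where the paper uses that $\Q_p^\Phi/\Q_p$ is unramified, which also makes the bases agree ($\breve{\Z}_p$ on both sides).

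In the present setting you can supply this step by hand.
\begin{itemize}
\item At $\q=\p$ nothing is needed. Since $\p$ is special inert, $F^+_\p=\Q_p$, so strictness is vacuous. The Kottwitz condition for the two embeddings of $F_\p=\Q_{p^2}$ is then literally the signature $(N-1,1)$ condition of \Cref{pDivDef}.
\item At $\q\neq\p$ the signature is banal at every embedding. By Grothendieck--Messing the Hodge filtration is forced, so deformations are unique on both sides. Hence both the absolute and the relative space are $\Spf\breve{\Z}_p$ times the same discrete set of self-dual lattices, matched via Dieudonn\'e theory (compare \Cref{CanonicalLifting+} and \Cref{padicUni(N,0)}).
\end{itemize}
With this comparison inserted, your outline agrees with the proof the paper cites.
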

\begin{proof}
This is given in \cite[Theorem 4.3]{Cho}, where we rely on $\Q_p^\Phi/\Q_p$ being unramified for a comparison between absolute and relative Rapoport--Zink spaces \cite[Proposition 3.30]{Cho}.
\end{proof}
\begin{remark}\label{padicUni(N,0)}
    By \Cref{CanonicalLifting+}\footnote{Strictly speaking, this reference only applies to the case $\q$ is inert on $F,$ but the analogous statements also hold if $\q$ is split in $F$ by Serre--Tate and Grothendieck--Messing.}, we have for $\q\neq\p$ that
    \begin{equation*}
        \mathcal{N}_\q=\mathbb{G}(F^+_\q)/K_{\mathbbm{z},\q}\times\Spf\breve{\Z}_p.
    \end{equation*}
    More canonically, we identify this with
    \begin{equation*}
        \mathcal{N}_\q=\mathrm{Vert}^0(\mathbb{V}\otimes_{F^+}F^+_\q)\times\Spf\breve{\Z}_p.
    \end{equation*}
\end{remark}

We record the definition of the $p$-adic uniformization. For a test scheme $S\to\Spf\breve{\Z}_p$ and $z=(A_0,\iota_0,\lambda_0,\eta_0^p,A,\iota,\lambda,\eta^p)\in\mathcal{M}_\p^{ss}(S),$ we can choose $\O_F$-linear quasi-isogenies
\begin{equation*}
    \tilde{\rho}_0\colon A_0\times_S\overline{S}\dashrightarrow\mathbb{A}_0\times\overline{S},\quad\tilde{\rho}\colon A\times_S\overline{S}\dashrightarrow\mathbb{A}\times\overline{S},
\end{equation*}
that preserve the polarizations (i.e. $c(\tilde{\rho}_0)=c(\tilde{\rho})=1$ as in \Cref{UAbDef}). We define $g^p\bbeta^p_*(K^p)$ to be the orbit of the composition
\begin{equation*}
    \hat{V}^p(\mathbb{A}_0,\mathbb{A})\xrightiso{\bbeta^{p,-1}}V\otimes_\Q\mathbb{A}_f^p\xrightiso{\eta^p}\hat{V}^p(A_0,A)\xrightiso{(\rho_0^{-1},\rho)}\hat{V}^p(\mathbb{A}_0,\mathbb{A}).
\end{equation*}
Finally, for each $\q\mid p$ we have quasi-isogenies
\begin{equation*}
    \rho\colon A[\q^\infty]\times_S\overline{S}\dashrightarrow\mathbb{A}[\q^\infty]\times\overline{S}
\end{equation*}
induced from $\tilde{\rho}.$ Then the first component of $\Theta_{\mathbbm{z}}(z)$ is
\begin{equation*}
    \mathbb{G}(F^+)\left((A[\q^\infty],\iota,\lambda,\rho)_{\q\mid p},g^p\bbeta^p_*K^p\right).
\end{equation*}

We also record how the $p$-adic uniformization interacts with the basic correspondences.
\begin{proposition}\label{padic-basic}
    Given a $p$-adic uniformization datum $\mathbbm{z}$ and $?\in\{\circ,\bullet\}$ we have a commutative diagram (see \Cref{CanonicalBasicUniformization} for the bottommost map)
    \begin{equation*}
        \begin{tikzcd}
            \mathcal{M}_\p^{\wedge,ss}(V,K^p)\arrow[r,"\Theta_{\mathbbm{z}}"]&\mathbb{G}(F^+)\backslash\left(\mathcal{N}\times \mathbb{G}(\A_{F^+}^{p,\infty})/\bbeta^p_*(K^p)\right)\times\mathcal{T}_{\p,\breve{\Z}_p}\\
            \overline{B}_\p^?(V,K^p)\arrow[u,"i^?"]\arrow[r,"\Theta_{\mathbbm{z}}"]\arrow[d,"\pi^?"']&\mathbb{G}(F^+)\backslash\left(\displaystyle{\bigsqcup_{\Lambda^?\in\mathrm{Vert}^?(\mathbb{V}\otimes_\Q\Q_p)}\V(\Lambda^?)}\times \mathbb{G}(\A_{F^+}^{p,\infty})/\bbeta^p_*(K^p)\right)\times\mathcal{T}_{\p,\breve{\Z}_p}\arrow[u]\arrow[d]\\
            \overline{S}_\p^?(V,K^p)\arrow[r,"\Theta_{(\mathbb{V},\bbeta^p)}"]&\mathbb{G}(F^+)\backslash\left(\mathrm{Vert}^?(\mathbb{V}\otimes_\Q\Q_p)\times \mathbb{G}(\A_{F^+}^{p,\infty})/\bbeta^p_*(K^p)\right)\times\mathcal{T}_{\p,\breve{\Z}_p}
        \end{tikzcd}
    \end{equation*}
\end{proposition}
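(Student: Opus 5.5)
The plan is to reduce commutativity to a comparison of moduli descriptions on $\overline{\F}_p$-points, or more generally on $S$-points for $S$ over $\overline{\F}_p$. All the maps involved are defined moduli-theoretically: $\Theta_{\mathbbm{z}}$ by choosing polarization-preserving quasi-isogenies to $(\mathbb{A}_0,\mathbb{A})$; $i^?$ and $\pi^?$ by the constructions of \cite[Sections 5.3, 5.4]{LTXZZ}; and $\Theta_{(\mathbb{V},\bbeta^p)}$ by \Cref{CanonicalBasicUniformization}. I would first recall the moduli interpretation of $B^?_\p(V,K^p)$ from \cite{LTXZZ}. A point of $B^\circ_\p$ (resp. $B^\bullet_\p$) is a point $(A_0,\ldots,A,\ldots)$ of $M_\p$ together with an auxiliary unitary $\O_F$-abelian scheme $A^\circ$ (resp. $A^\bullet$) of signature $N\Phi$ and an isogeny $A\to A^\circ$ (resp. $A^\bullet\to A$) satisfying prescribed conditions on kernels and on $\Lie$. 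Then $\pi^?$ forgets $A$ and keeps $A^?$, while $i^?$ forgets $A^?$.

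For the upper square, I take a point of $\overline{B}^?_\p$ and choose quasi-isogenies $\tilde\rho_0,\tilde\rho$ for $A_0,A$. The auxiliary isogeny $A\to A^\circ$ (or $A^\bullet\to A$) restricted to $\p^\infty$-torsion, composed with $\tilde\rho$, gives a quasi-isogeny from $A^?[\p^\infty]$ to $\mathbb{X}$. I would then apply relative Dieudonné theory, which says that the Dieudonné lattice of $A^?[\p^\infty]$ together with these quasi-isogenies corresponds to a vertex lattice $\Lambda^?$ in $\mathbb{V}\otimes F^+_\p$. This lattice is self-dual when $?=\circ$; when $?=\bullet$ it is of type $2\lfloor N/2\rfloor$, using $\mathbb{V}\otimes F^+_\p\cong\mathbb{V}^{[1]}_N$ via \Cref{J=U(V)Remark}. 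The condition that the isogeny extends is exactly the condition defining $\ZZ_-(\Lambda^?)$ or $\ZZ_+((\Lambda^?)^\vee)$ from \Cref{BTDef}. So the point lands in $\V(\Lambda^?)$ inside $\mathcal{N}_\p$, while the components at $\q\ne\p$ and away from $p$ agree with those of $\Theta_{\mathbbm{z}}$ on $M_\p$ by definition. This produces the middle horizontal map and gives commutativity of the upper square.

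For the lower square, the image of $\pi^?(z)$ under $\Theta_{(\mathbb{V},\bbeta^p)}$ is computed from $A^?$. Since $A^?$ has signature $N\Phi$ it is supersingular, and $\Hom(\mathbb{A}_0,\mathbb{A})\otimes\Q=\mathbb{V}$, so the canonical basic uniformization is computed using the same quasi-isogeny $\tilde\rho$ transported through the auxiliary isogeny. It records the lattice $\Hom_{\O_F}(\mathbb{A}_0[\q^\infty],A^?[\q^\infty])$ for each $\q\mid p$, which by \Cref{n0KR} is the vertex lattice attached to $A^?$, together with the prime-to-$p$ level $g^p\bbeta^p_*K^p$. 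This is the same $\Lambda^?$ and $g^p$ produced in the previous step. The remaining point is independence of choices: changing $\tilde\rho$ changes everything by the diagonal action of an element of $\mathbb{G}(F^+)$, and this is absorbed by the quotient.

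The main obstacle is matching the canonical uniformization of \Cref{CanonicalBasicUniformization} for $S^?_\p$ with the Rapoport--Zink-theoretic description. Concretely, this means verifying that the $p$-divisible group of $A^?$ corresponds to the point $\V(\Lambda^?)$ in $\mathcal{N}_\p$, identified via \Cref{CanonicalLifting+}, and that the normalizations and similitude factors (requiring $c=1$) agree. I would handle this by unwinding \cite[Constructions 5.3.6, 5.4.6]{LTXZZ} and comparing with \cite[Section 5]{Cho}. Both sides are constructed from the same Dieudonné module computation, so after fixing the framing $\mathbb{A}$, commutativity should follow directly from the definitions.
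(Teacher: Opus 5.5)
Your proposal is correct and follows essentially the paper's route. The paper's proof is a single remark: the diagram comes from computing the $p$-adic uniformization of \Cref{padicUni} directly from the moduli definitions of $\overline{B}^?_\p$ and $\overline{S}^?_\p$ in \cite[Definitions 5.3.1, 5.3.2, 5.4.1, 5.4.2]{LTXZZ}, and your outline makes that computation explicit (matching the auxiliary isogeny with the conditions cutting out $\ZZ_\pm$ and hence $\V(\Lambda^?)$, attaching a vertex lattice to the signature-$N\Phi$ object $A^?$ via \Cref{CanonicalLifting+} and \Cref{n0KR}, and absorbing the choice of quasi-isogenies into $\mathbb{G}(F^+)$).
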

\begin{proof}
    This is a straightforward computation of the $p$-adic uniformization from the definitions of $\overline{B}_\p^?$ and $\overline{S}_\p^?$ in \cite[Definitions 5.3.1, 5.3.2, 5.4.1, 5.4.2]{LTXZZ}.
\end{proof}

\subsection{Kudla--Rapoport special cycles}\label{Shimura-Kudla}
Let $n\ge0.$ Our goal for this section is to construct a supply of (derived) special cycles on $\mathcal{M}_\p(V,K^p)$ for which we can explicitly describe their image under $\Theta_{z^\eta}$ and $\Theta_{\mathbbm{z}}.$ These cycles will also involve a choice $\underline{x}\in V^n.$ Given this, we introduce the following notations.

\begin{notation}
    We denote $M=\mathrm{Res}_{\O_{F^+}}^{\O_F}\mathrm{GL}_{n,\O_F}$ as an algebraic group over $\O_{F^+}.$ For $T\in\mathrm{Herm}_{n\times n}(F),$ we denote $U_T\subseteq M_{F^+}$ the unitary group associated to the matrix $T,$ namely $U_T=\{h\in M\colon h^{\cplx,\intercal}\cdot T\cdot h=T\}.$
\end{notation}
Recall that we denoted $G=U(V).$

\begin{definition}
For a hermitian space $W$ over $F/F^+$ and $n\ge1,$ we denote by $\mathfrak{X}_n(W)$ the groupoid of $\underline{x}\in W^n$ with $T(\underline{x})$ totally positive definite, where morphisms are
\begin{equation*}
    \Hom_{\mathfrak{X}_n(W)}(\underline{x},\underline{y})=\{(g,m)\in U(W)(F^+)\times M(F^+)\colon g\underline{x}=\underline{y}\cdot m\}.
\end{equation*}
Note that the composition of morphisms is given by $(g_1,m_1)\circ(g_2,m_2)=(g_1g_2,m_1m_2).$
\end{definition}
The endomorphisms on this groupoid are given by the graph of the following map $\varphi_{\underline{x}}.$
\begin{definition}\label{H1-H2-notation}
Let $W$ be a $F/F^+$-Hermitian space and $\underline{x}\in W^n$ be linearly independent vectors. If ${}_WG$ denotes the unitary group of $W,$ we denote $H_1^{\underline{x}}\subseteq {}_WG$ the subgroup that fixes $\underline{x}$ pointwise. We denote $H^{\underline{x}}\subseteq {}_WG$ the subgroup that fixes the $F$-span of $\underline{x}.$ Note that $H^{\underline{x}}=H_1^{\underline{x}}\times H_2^{\underline{x}}$ where $H_2^{\underline{x}}\subseteq {}_WG$ is the unitary group of the $F$-span of $\underline{x}.$ We denote $\varphi_{\underline{x}}\colon H^{\underline{x}}\twoheadrightarrow H_2^{\underline{x}}\rightiso U_{T(\underline{x})}$ where the isomorphism characterized by
\begin{equation*}
    g\underline{x}=\underline{x}\cdot\varphi_{\underline{x}}(g),\quad\text{for }g\in {}_{\underline{x}}H.
\end{equation*}
\end{definition}
\begin{remark}
    For consistency of notation, we will use the above notation with different fonts as well. Concretely: in \Cref{SubSubSection-padiccycles}, the ambient group will be denoted $\mathbb{G}$ and thus the above subgroups will be denoted $\mathbb{H}^{\underline{\mathbbm{x}}},\mathbb{H}_1^{\underline{\mathbbm{x}}},\mathbb{H}_2^{\underline{\mathbbm{x}}}\subseteq\mathbb{G}.$
\end{remark}

\newcommand{\CYC}[2]{\mathcal{C}\mathpzc{yc}^\eta_{#1,#2}}
\newcommand{\Cyc}[2]{\mathcal{C}\mathpzc{yc}_{#1,#2}}
\newcommand{\cyc}[2]{\mathbb{C}\mathbbm{yc}_{#1,#2}}
\newcommand{\CYCFJ}[2]{\mathcal{C}\mathpzc{yc}^{\mathrm{FJ},\eta}_{#1,#2}}
\newcommand{\CycFJ}[2]{\mathcal{C}\mathpzc{yc}^{\mathrm{FJ}}_{#1,#2}}
\newcommand{\cycFJ}[2]{\mathbb{C}\mathbbm{yc}^{\mathrm{FJ}}_{#1,#2}}

\subsubsection{On the generic fiber}
\begin{definition}
For $K\in\mathfrak{K}(V),$ we consider the functor $\CYC{\blank}{K}\colon\mathfrak{X}_n(V)\to\mathrm{Ab}$ given by
\begin{equation*}
    \CYC{\underline{x}}{K}\defeq\Z\left[H^{\underline{x}}(F^+)H_1^{\underline{x}}(\A_{F^+}^\infty)\backslash G(\A_{F^+}^{\infty})/K\right].
\end{equation*}
Note that the morphisms of $\mathfrak{X}_n(V)$ act by left multiplication (with $M(F^+)$ acting trivially).
\end{definition}

\begin{definition}[Kudla--Rapoport cycles]\label{KRgenfiber}
    There is a natural transformation
    \begin{equation*}
        Z\colon\CYC{\blank}{K}\to \mathrm{CH}^n(\Sh(V,K))
    \end{equation*}
    given as follows. For $\underline{x}\in\mathfrak{X}_n(V),$ we consider the standard indefinite space $V_{\underline{x}}^\perp$ of rank $N-n$ consisting of elements perpendicular to $\underline{x}.$ Then we consider
    \begin{equation*}
        \Sh(V_{\underline{x}}^\perp,H_1(\A_f)^{\underline{x}}\cap gKg^{-1})\to\Sh(V,gKg^{-1})\xrightiso{g}\Sh(V,K)
    \end{equation*}
    and define $Z(g)$ to be this image of the fundamental class of the source.
\end{definition}

\subsubsection{Derived semi-global integral models}
\begin{definition}
For $K^p\in\mathfrak{K}(V)^p,$ we consider the functors $\Cyc{\blank}{K^p}\colon\mathfrak{X}_n(V)\to\mathrm{Ab}$ and $\Cyc{\blank}{K^p}'\colon\mathfrak{X}_n(V)\to\mathrm{Ab}$ given by
\begin{equation*}
\begin{split}
    \Cyc{\underline{x}}{K^p}'&\defeq\Z\left[H^{\underline{x}}(F^+)\backslash\left(M(F^+)\times H_1^{\underline{x}}(\A_{F^+}^{p,\infty})\backslash G(\A_{F^+}^{p,\infty})/K^p\right)\right]\\
    \Cyc{\underline{x}}{K^p}&\defeq\Z\left[H^{\underline{x}}(F^+)\backslash\left(M(F^+\otimes\Q_p)/M(\O_{F^+}\otimes\Z_p)\times H_1^{\underline{x}}(\A_{F^+}^{p,\infty})\backslash G(\A_{F^+}^{p,\infty})/K^p\right)\right]
\end{split}
\end{equation*}
where the action of $H^{\underline{x}}(F^+)$ on the first factor is through $H_2^{\underline{x}}(F^+)$ via $\varphi_{\underline{x}}.$ We have a natural transformation $\Cyc{\blank}{K^p}'\to\Cyc{\blank}{K^p}$ given by pushforward.
\end{definition}
On this subsection, we will define a natural transformation
\begin{equation*}
    {}^\L\ZZ\colon\Cyc{\blank}{K^p}'\to \mathrm{Gr}^nK_0(\mathcal{M}_\p(V,K^p))
\end{equation*}
and will analyze the generic fiber of such cycles in terms of $Z.$
\begin{remark}
    We conjecture that such functor ${}^\L\ZZ$ should descends to $\Cyc{\underline{x}}{K^p},$ although we will not attempt to prove this. This won't be necessary for our purposes, since i) the map $\Cyc{\underline{x}}{K^p}'\to\Cyc{\underline{x}}{K^p}$ is surjective and ii) as we will see, both the complex uniformization and the $p$-adic uniformization of ${}^\L\ZZ$ descends to $\Cyc{\underline{x}}{K^p}.$
\end{remark}

\begin{definition}
Let $K^p\in\mathfrak{K}(V)^p$ and $\underline{x}\in\mathfrak{X}_n(V).$ For $g^p\in G(\A_f^p)/K^p$ and $m_p\in M(\Q_p)/M(\Z_p),$ we consider the special cycle $\ZZ(g^p,m_p,\underline{x})\to\mathcal{M}_\p(V,K^p)$ parametrizing: $(A_0,\iota_0,\lambda_0,\eta^p_0,A,\iota,\lambda,\eta^p)\in\mathcal{M}_\p(V,K^p)$ and $\underline{y}\in\Hom_{\O_F}(A_0^n,A)\otimes\Q$ such that i) $(\eta^p)^{-1}(\underline{y}_*)$ is in the $K^p$-orbit of $(g^p)^{-1}\underline{x}^p,$ ii) $\underline{y}\circ m_p\in\Hom_{\O_F}(A_0^n,A)\otimes\Z_{p}$ and iii) $T(\underline{y})=T(\underline{x}).$
\end{definition}
\begin{remark}\label{RemarkZZDef}
    These cycles are closely related to the cycles in \cite{Kudla-Rapoport}, as, by the next proposition, the variable $m_p$ can be absorbed into $\underline{x}.$ However, we will like to keep track of the variable $m_p,$ as later we will fix $\underline{x}$ and vary $m_p.$
\end{remark}
It's easy to check that these cycles satisfy the invariance properties that we desire of ${}^\L\ZZ.$
\begin{proposition}
We have the following:
\begin{enumerate}
    \item For fixed $m_p$ and $\underline{x},$ the cycle $\ZZ(g^p,m_p,\underline{x})$ only depends on the double coset of $g^p$ in $H_1^{\underline{x}}(\A_{F^+}^{p,\infty})\backslash G(\A_{F^+}^{p,\infty})/K^p.$
    \item If $n\in M(F^+),$ then we have an isomorphism $\ZZ(g^p,n_pm_p,\underline{x})\iso\ZZ(g^p,m_p,\underline{x}\cdot n)$ induced by $\underline{y}\mapsto\underline{y}\circ n.$ Here, $n_p\in M(F^+\otimes\Q_p)$ denotes the image of $n$ under $M(F^+)\to M(F^+\otimes\Q_p).$
    \item If $h\in G(F^+),$ then $\ZZ(g^p,m_p,\underline{x})=\ZZ(h^pg^p,m_p,h\underline{x}).$
\end{enumerate}
\end{proposition}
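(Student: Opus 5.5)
The plan is to check each of the three invariance properties directly on the moduli problem defining $\ZZ(g^p,m_p,\underline{x})$. In each case we write down an explicit bijection of $S$-points, natural in the test scheme $S$, between the two moduli functors. Since both sides are subfunctors of $\mathcal{M}_\p(V,K^p)$ with the extra datum $\underline{y}$, it suffices to check that conditions i)--iii) of the definition correspond under the proposed map. Everything follows from how $H_1^{\underline{x}}$, $M$ and $G(F^+)$ act on the three conditions.

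For (1), let $h\in H_1^{\underline{x}}(\A_{F^+}^{p,\infty})$ and $k\in K^p$; we compare the cycles for $g^p$ and $hg^pk$. Condition i) for $hg^pk$ asks that $(\eta^p)^{-1}(\underline{y}_*)$ lie in the $K^p$-orbit of
\begin{equation*}
    (hg^pk)^{-1}\underline{x}=k^{-1}(g^p)^{-1}h^{-1}\underline{x}=k^{-1}(g^p)^{-1}\underline{x},
\end{equation*}
using $h^{-1}\underline{x}=\underline{x}$. This is the same $K^p$-orbit as $(g^p)^{-1}\underline{x}$, while conditions ii) and iii) do not involve $g^p$. So the two cycles are literally equal as subfunctors, and the identity map works.

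For (2), take $n\in M(F^+)$ and send $\underline{y}\mapsto\underline{y}\circ n$, where $n$ acts on $A_0^n$ as an $\O_F$-linear quasi-endomorphism. Condition i) transforms correctly because $(\underline{y}\circ n)_*=\underline{y}_*\circ n$ and the $K^p$-action commutes with right multiplication by $n$; thus $(\eta^p)^{-1}(\underline{y}_*)$ lies in the orbit of $(g^p)^{-1}\underline{x}$ if and only if $(\eta^p)^{-1}((\underline{y}\circ n)_*)$ lies in the orbit of $(g^p)^{-1}(\underline{x}\cdot n)$. Condition ii) matches since $(\underline{y}\circ n)\circ m_p=\underline{y}\circ(n_pm_p)$. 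For condition iii), $T(\underline{y}\circ n)=n^{\cplx,\intercal}T(\underline{y})n$, so $T(\underline{y})=T(\underline{x})$ holds if and only if $T(\underline{y}\circ n)=T(\underline{x}\cdot n)$. The inverse map is $\underline{y}\mapsto\underline{y}\circ n^{-1}$.

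For (3), take $h\in G(F^+)$. Condition i) for $(h^pg^p,h\underline{x})$ reads
\begin{equation*}
    (h^pg^p)^{-1}h\underline{x}=(g^p)^{-1}(h^p)^{-1}h\underline{x}=(g^p)^{-1}\underline{x}
\end{equation*}
away from $p$, which is the same condition as before. Condition ii) does not involve $h$, and $T(h\underline{x})=T(\underline{x})$ because $h$ is unitary. So again the two subfunctors coincide.

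The only mildly delicate point is in (2): one must confirm that precomposition with $n$ is a legitimate operation on quasi-homomorphisms $A_0^n\dashrightarrow A$ compatible with level structures. This holds because $M(F^+)=\mathrm{GL}_n(F)$ acts through $\iota_0(F)\subseteq\mathrm{End}(A_0)\otimes\Q$. The same compatibility also gives the transformation rule $T(\underline{y}\circ n)=n^{\cplx,\intercal}T(\underline{y})n$ used for condition iii), by the definition of the hermitian pairing in \Cref{DefVhat}.
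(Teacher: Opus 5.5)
Your proposal is correct and follows the same route as the paper: the paper simply states that all three properties follow immediately from the moduli description, noting for (3) that $T(h\underline{x})=T(\underline{x})$. You have written out explicitly the condition-by-condition check (conditions i)--iii)) that the paper leaves implicit.
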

\begin{proof}
This follows immediately from the moduli description, noting for (3) that $T(h\underline{x})=T(\underline{x}).$
\end{proof}

Now we consider derived versions of these cycles. Note that if $m_p=\mathrm{diag}(\lambda_1,\ldots,\lambda_n),$ then $\ZZ(g^p,m_p,\underline{x})$ is an open and closed subscheme of $\ZZ(g^p,\lambda_1,x_1)\times_{\mathcal{M}_\p(V,K^p)}\cdots\times_{\mathcal{M}_\p(V,K^p)}\ZZ(g^p,\lambda_n,x_n).$

\begin{definition}
    Let $K^p\in\mathfrak{K}(V)^p$ and $\underline{x}\in\mathfrak{X}_n(V).$ For $g^p\in G(\A_{F^+}^{p,\infty})/K^p$ and $m\in M(F^+),$ we consider
    \begin{equation*}
        {}^\L\ZZ(g^p,m,\underline{x})\in \mathrm{Gr}^nK_0^{\ZZ(g^p,m_p,\underline{x})}(\mathcal{M}_\p(V,K^p))
    \end{equation*}
    to be the component of
    \begin{equation*}
        \O_{\ZZ(g^p,1,(\underline{x}\cdot m)_1)}\otimes^\L_{\mathcal{M}_\p(V,K^p)}\cdots\otimes^\L_{\mathcal{M}_\p(V,K^p)}\O_{\ZZ(g^p,1,(\underline{x}\cdot m)_n)}
    \end{equation*}
    supported on $\ZZ(g^p,1,\underline{x}\cdot m)\simeq\ZZ(g^p,m_p,\underline{x}).$
\end{definition}

The above proposition thus implies that this extends to a natural transformation
\begin{equation*}
    {}^\L\ZZ\colon\Cyc{\blank}{K^p}'\to \mathrm{Gr}^nK_0(\mathcal{M}_\p(V,K^p)).
\end{equation*}

\begin{definition}
    A complex uniformization datum $z^\eta=(\Lambda_{z^\eta,\q})_{\q\mid p}$ induces a natural transformation
    \begin{equation*}
        z^\eta\colon\Cyc{\blank}{K^p}\to\CYC{\blank}{K^pK_{z^\eta,p}},
    \end{equation*}
    namely the correspondence induced by the $H^{\underline{x}}(F^+)$-invariant subset of
    \begin{equation*}
        \left(M(F^+\otimes\Q_p)/M(\O_{F^+}\otimes\Z_p)\right)\times \left(H_1^{\underline{x}}(F^+\otimes\Q_p)\backslash G(F^+\otimes\Q_p)/K_{z^\eta,p}\right).
    \end{equation*}
    given by
    \begin{equation*}
        \left\{(m_p,g_p)\colon \underline{x}_p\cdot m_p\in(g_p\Lambda_{z^\eta,p})^n\right\}=\left\{((m_\q)_{\q\mid p},(g_\q)_{\q\mid p})\colon \underline{x}_\q\cdot m_\q\in(g_\q\Lambda_{z^\eta,\q})^n\text{ for all }\q\mid p\right\}.
    \end{equation*}
\end{definition}

\begin{proposition}\label{KRGenericFiber}
Given a complex uniformization datum $z^\eta,$ we have a commutative diagram of natural transformations
\begin{equation*}
\begin{tikzcd}
    \Cyc{\blank}{K^p}'\arrow{r}{{}^\L\ZZ}\arrow{d}{z^\eta}&\mathrm{Gr}^nK_0(\mathcal{M}_\p(V,K^p))\arrow{r}&\mathrm{Gr}^nK_0(\mathcal{M}_\p^\eta(V,K^p))\arrow{d}{\Theta_{z^\eta}}[swap]{\sim}\\
    \CYC{\blank}{K^pK_{z^\eta,p}}\arrow{r}{Z}&\mathrm{CH}^n(\Sh(V,K^pK_{z^\eta,p}))\arrow{r}{\O_{(\blank\times\mathcal{T}_\p^\eta)}}&\mathrm{Gr}^nK_0(\Sh(V,K^pK_{z^\eta,p})\times\mathcal{T}_\p^\eta)
\end{tikzcd}
\end{equation*}

\end{proposition}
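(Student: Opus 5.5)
Fix $\underline{x}\in\mathfrak{X}_n(V)$, $g^p\in G(\A_{F^+}^{p,\infty})/K^p$ and $m\in M(F^+)$. By the invariance properties in the preceding proposition, I would replace $\underline{x}$ by $\underline{x}\cdot m$, which reduces everything to the case $m=1$. Commutativity of the diagram then amounts to an identity in $\mathrm{Gr}^nK_0(\Sh(V,K^pK_{z^\eta,p})\times\mathcal{T}_\p^\eta)$ for ${}^\L\ZZ(g^p,1,\underline{x})$. The plan has three steps: compute the generic fiber of the derived cycle, identify the resulting underived cycle with a union of Shimura subvarieties, and match multiplicities with the correspondence defining $z^\eta$.

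\textbf{Step 1: generic fiber of the derived cycle.} On $\mathcal{M}_\p^\eta$ each divisor $\ZZ(g^p,1,x_i)$ is the image of a finite unramified map whose source is smooth. Since $T(\underline{x})$ is positive definite, the $x_i$ are linearly independent. On the complex points these divisors are then unions of sub-Shimura varieties attached to linearly independent vectors, and they meet transversally; I would check this via the deformation theory (the tangent-space computation for Kudla--Rapoport cycles). Consequently the derived tensor product has no higher Tor in the generic fiber. The component supported on $\ZZ(g^p,1,\underline{x})^\eta$ is then just $\O_{\ZZ(g^p,1,\underline{x})^\eta}$, which is smooth of codimension $n$.

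\textbf{Step 2: identification under $\Theta_{z^\eta}$.} Using the complex-points description of $\Theta_{z^\eta}$, a point of $\ZZ(g^p,1,\underline{x})$ gives a tuple $\underline{y}$ of homomorphisms. Transporting $\underline{y}$ via $\eta_{\mathrm{rat}}$ gives a tuple $\underline{y}'\in V^n$ with three properties:
\begin{itemize}
\item $T(\underline{y}')=T(\underline{x})$;
\item $\underline{y}'$ lies away from $p$ in the $K^p$-orbit of $g^{p,-1}\underline{x}$;
\item $\underline{y}'$ is integral at $p$ with respect to $g_z\Lambda_{z^\eta,p}$.
\end{itemize}
The line $\eta_{\mathrm{rat}}(l_z)$ is orthogonal to $\underline{y}'$, because the $y_i$ preserve the Hodge filtration. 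By Witt's theorem, $\underline{y}'=h\underline{x}$ for some $h\in G(F^+)$. Hence the image is the union, over double cosets
\[
(g^p,g_p)\in H_1^{\underline{x}}(\A_{F^+}^{p,\infty})\backslash G(\A_{F^+}^{p,\infty})/K^p \times H_1^{\underline{x}}(F^+_p)\backslash G(F^+_p)/K_{z^\eta,p}
\]
with $\underline{x}_p\in (g_p\Lambda_{z^\eta,p})^n$, of the cycles $Z(g^pg_p)$ from \Cref{KRgenfiber}, each times $\mathcal{T}^\eta_\p$. This is exactly $Z\circ z^\eta$ applied to the class of $(1,g^p)$.

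\textbf{Step 3: multiplicities.} I then need to confirm that this is a disjoint union of open and closed pieces, each appearing with multiplicity one. For this I would compare the $\C$-points of both sides as moduli sets; the description is given, and neatness of $K^p$ ensures the maps $\Sh(V^\perp_{\underline{x}},\cdot)\to\Sh(V,\cdot)$ are generically injective with no stacky multiplicities. Naturality in $\underline{x}$, that is, compatibility with the morphisms $(h,n)$ of $\mathfrak{X}_n(V)$, follows from the invariance properties.

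I expect the main obstacle to be Step 1: justifying transversality and Tor-vanishing in the generic fiber, and checking that $\ZZ(g^p,1,\underline{x})^\eta$ is open and closed in the fiber product. I would handle this by Grothendieck--Messing at a point: the obstruction to deforming each $y_i$ is a linear functional on the tangent space, and these functionals are linearly independent because the $y_i$ are.
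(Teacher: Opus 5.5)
Your proposal is correct and follows essentially the same route as the paper. The paper likewise chooses $\eta_{\mathrm{rat}}$ with $\eta_{\mathrm{rat}}(\underline{y})=\underline{x}$ to identify the complex points of the cycle with $H_1^{\underline{x}}(F^+)\backslash\bigl(V_{\underline{x}}^\perp(\C)_-/\C^\times\times\{g_p\colon\underline{x}_p\cdot m_p\in(g_p\Lambda_{z^\eta,p})^n\}\times H_1^{\underline{x}}(\A_{F^+}^{p,\infty})g^pK^p\bigr)$, deduces that the divisors meet properly on the generic fiber, and invokes \cite[Lemma B.2(i)]{Zhang-AFL} to replace the derived tensor product by the classical cycle; your tangent-space transversality argument and the multiplicity check in Step 3 simply make explicit the reducedness (multiplicity one) that the paper leaves implicit in that citation.
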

\begin{proof}
This follows from simply running through the complex uniformization \Cref{ComplexUniformization}. Given $z=(A_0,\iota_0,\lambda_0,\eta_0^p,A,\iota,\lambda,\eta^p,\underline{y})\in\ZZ(g^p,m,\underline{x})(\C),$ we may always choose $\eta_{nat}\colon V_z\rightiso V$ in such a way that $\eta_{\mathrm{rat}}(\underline{y})=\underline{x}.$ Under all such choices of $\eta_{\mathrm{rat}},$ the points of $\ZZ(g^p,m,\underline{x})(\C)$ correspond to
\begin{equation*}
    H_1^{\underline{x}}(F^+)\backslash\left(V_{\underline{x}}^\perp(\C)_-/\C^\times \times \{g_p\in G(F^+\otimes\Q_p)\colon \underline{x}_p\cdot m_p\in(g_p\Lambda_{z^\eta,p})^n\}\times H_1^{\underline{x}}(\A_{F^+}^{p,\infty})g^pK^p\right).
\end{equation*}
In particular, the cycles $\ZZ(g^p,1,(\underline{x}\cdot m)_i)$ for $1\le i\le n$ intersect properly on the generic fiber, and thus the claim follows from \cite[Lemma B.2(i)]{Zhang-AFL}.
\end{proof}

\subsubsection{\texorpdfstring{$p$}{p}-adic uniformization}\label{SubSubSection-padiccycles}
Fix a $p$-adic uniformization datum $\mathbbm{z}$ with associated nearby space $\mathbb{V}.$ Recall that we denote $\mathbb{G}=U(\mathbb{V}),$ and that $\mathcal{N}$ denotes the Rapoport--Zink space of \Cref{padicUni}.
\begin{definition}
For $\mathbb{K}^p\in\mathfrak{K}(\mathbb{V})^p,$ we consider the functor $\cyc{\blank}{\mathbb{K}^p}\colon\mathfrak{X}_n(\mathbb{V})\to\mathrm{Ab}$ given by
\begin{equation*}
    \cyc{\underline{\mathbbm{x}}}{\mathbb{K}^p}\defeq\Z\left[\mathbb{H}^{\underline{\mathbbm{x}}}(F^+)\backslash\left(M(F^+\otimes\Q_p)/M(\O_{F^+}\otimes\Z_p)\times \mathbb{H}_1^{\underline{\mathbbm{x}}}(\A_{F^+}^{p,\infty})\backslash \mathbb{G}(\A_{F^+}^{p,\infty})/\mathbb{K}^p\right)\right].
\end{equation*}
where the action of $\mathbb{H}^{\underline{\mathbbm{x}}}(F^+)$ factors through $\mathbb{H}_2^{\underline{\mathbbm{x}}}(F^+)$ and is given by $\varphi_{\underline{\mathbbm{x}}}$ on the first factor.
\end{definition}
This is such that we have the Kudla--Rapoport cycles
\begin{equation*}
    {}^\L\ZZ_{+},{}^\L\ZZ_{-}\colon\cyc{\underline{\mathbbm{x}}}{\mathbb{K}^p}\to \mathrm{Gr}^nK_0\left(\mathbb{G}(F^+)\backslash\left(\mathcal{N}\times\mathbb{G}(\A_{F^+}^{p,\infty})/\mathbb{K}^p\right)\right).
\end{equation*}
where the image of $(m,g^p)$ is induced by the image of (see \Cref{KRDef})
\begin{equation*}
    {}^\L\ZZ_\pm(\underline{\mathbbm{x}}_p\cdot m_p)\times \mathbb{H}_1(\A_{F^+}^{p,\infty})g^p\mathbb{K}^p\subseteq\mathcal{N}\times\mathbb{G}(\A_{F^+}^{p,\infty}).
\end{equation*}
Unlike \Cref{RemarkZZDef}, this well is defined as it only depends on $mM(\O_{F^+}\times\Z_p)$ by \cite[Corollary 2.8.2]{Li-Zhang}.

\begin{remark}
Note that if $T(\underline{x})=T(\underline{\mathbbm{x}}),$ then we have an identification
\begin{equation*}
    \Cyc{\underline{x}}{K^p}\rightiso\cyc{\underline{\mathbbm{x}}}{\bbeta^p_*(K^p)}
\end{equation*}
given in cosets representatives by
\begin{equation*}
    (m_p,g^p)\mapsto (m_p,h^p\bbeta^p_*(g^p))
\end{equation*}
where $h^p$ is such that $\bbeta^p(\underline{x})=h^{p,-1}\underline{\mathbbm{x}}.$
\end{remark}

\begin{proposition}\label{KR-padic}
Fix $\underline{x}\in \mathfrak{X}_n(V)$ and $\underline{\mathbbm{x}}\in\mathfrak{X}_n(\mathbb{V})$ with $T(\underline{x})=T(\underline{\mathbbm{x}}).$ Then we have the commutative diagram
\begin{equation*}
\begin{tikzcd}
    \Cyc{\underline{x}}{K^p}'\arrow{d}\arrow{r}{{}^\L\ZZ}&\mathrm{Gr}^nK_0(\mathcal{M}_\p(V,K^p))\arrow{r}{(\blank)\rvert_{\mathcal{M}_\p^{\wedge,ss}(V,K^p)}}&\mathrm{Gr}^nK_0(\mathcal{M}_\p^{\wedge,ss}(V,K^p))\arrow{d}{\Theta_{\mathbbm{z}}}[swap]{\sim}\\
    \cyc{\underline{\mathbbm{x}}}{\bbeta^p_*(K^p)}\arrow{rr}{{}^\L\ZZ_-\times\mathcal{T}_\p}&&\mathrm{Gr}^nK_0\left(\mathbb{G}(F^+)\backslash\left(\mathcal{N}\times \mathbb{G}(\A_{F^+}^{p,\infty})/\bbeta^p_*(K^p)\right)\times\mathcal{T}_\p\right)
\end{tikzcd}
\end{equation*}
\end{proposition}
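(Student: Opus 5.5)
The plan is to run a generator of $\Cyc{\underline{x}}{K^p}'$ through the $p$-adic uniformization $\Theta_{\mathbbm{z}}$ of \Cref{padicUni} and compare the result directly with the cycle ${}^\L\ZZ_-$ on the Rapoport--Zink side. Everything is additive and natural, so it suffices to treat a single generator $(m,g^p)$ with $m\in M(F^+)$ and $g^p\in G(\A_{F^+}^{p,\infty})/K^p$. Replacing $\underline{x}$ by $\underline{x}\cdot m$ is allowed by the invariance proposition (part (2)) and by the definition of ${}^\L\ZZ(g^p,m,\underline{x})$ as the component supported on $\ZZ(g^p,1,\underline{x}\cdot m)$. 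So we may assume $m=1$ and treat each coordinate cycle $\ZZ(g^p,1,x_i)$ separately. Because both sides are derived tensor products of divisors, restricted to the component with the prescribed support, it is enough to prove two things: (a) an equality of the underlying formal subschemes for the full tuple $\underline{x}$, and (b) an identification of each divisor $\ZZ(g^p,1,x_i)^{\wedge,ss}$ with the corresponding union of $\ZZ_-$ divisors. Formal completion along the supersingular locus is flat, so it commutes with $\otimes^\L$, and (a) and (b) together then give the claim.

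\textbf{Moduli comparison.} Let $S\to\Spf\breve{\Z}_p$ be a test scheme and $z\in\mathcal{M}_\p^{\wedge,ss}(S)$, with chosen polarization-preserving quasi-isogenies $\tilde\rho_0,\tilde\rho$. A homomorphism datum $\underline{y}\in\Hom_{\O_F}(A_0^n,A)\otimes\Q$ transports, by rigidity of quasi-homomorphisms over nilpotent thickenings, to $\underline{\mathbbm{y}}\defeq\tilde\rho\circ\underline{y}\circ\tilde\rho_0^{-1}\in\mathbb{V}^n$. Condition (iii), $T(\underline{y})=T(\underline{x})=T(\underline{\mathbbm{x}})$, together with the fact that $\mathbb{V}$ is a hermitian space, gives by Witt's theorem some $\gamma\in\mathbb{G}(F^+)$ with $\gamma\underline{\mathbbm{y}}=\underline{\mathbbm{x}}$. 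Using $\gamma$ to rigidify the $\mathbb{G}(F^+)$-orbit, we translate each condition. Condition (i), on the prime-to-$p$ orbit, becomes the requirement that the $\mathbb{G}(\A^{p,\infty})$-component lie in $\mathbb{H}_1^{\underline{\mathbbm{x}}}(\A^{p,\infty})h^p\bbeta^p_*(g^p)\bbeta^p_*(K^p)$, where $h^p$ is as in the preceding remark. Condition (ii), integrality at $p$, splits over the primes $\q\mid p$. At $\q=\p$ it says exactly that the composite $\mathcal{E}\to X$ extends, which is the defining condition of $\ZZ_-(\underline{\mathbbm{x}}_\p)$; here $\mathbb{A}_0[\p^\infty]$ is the canonical lift $\mathcal{E}$, by \cite{Gross}. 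At $\q\neq\p$, via \Cref{padicUni(N,0)} and \Cref{n0KR}, it says $\underline{\mathbbm{x}}_\q\in\Lambda_\q^n$ for the self-dual lattice indexing the point, which is precisely $\ZZ_-$ on $\mathcal{N}_\q$. The $\mathcal{T}_\p$ factor plays no role and is just carried along. The stabilizer of $\underline{\mathbbm{x}}$ in $\mathbb{G}(F^+)$, up to the $M(F^+)$-twist, is $\mathbb{H}^{\underline{\mathbbm{x}}}(F^+)$. Hence the quotient we obtain is exactly the one defining $\cyc{\underline{\mathbbm{x}}}{\bbeta^p_*(K^p)}$, which proves (a).

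\textbf{Derived structure.} For (b), apply the same comparison with $n=1$ to each $x_i$ separately. Then $\Theta_{\mathbbm{z}}$ carries $\ZZ(g^p,1,x_i)^{\wedge,ss}$ to a locally finite union of translates of the divisors $\ZZ_-(\mathbbm{x}'_i)$, which are Cartier by \cite[Proposition 5.9]{Cho}. Taking the derived tensor product and keeping only the component supported on the locus from (a) recovers ${}^\L\ZZ_-(\underline{\mathbbm{x}}_p)$ on the matching component of the quotient. The result only depends on the coset $mM(\O_{F^+}\otimes\Z_p)$ by \cite[Corollary 2.8.2]{Li-Zhang}, which is what makes the bottom arrow well defined.

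The step I expect to need the most care is the bookkeeping of components. The fiber product of the individual $\ZZ(g^p,1,x_i)$ contains extra components coming from tuples $\underline{\mathbbm{y}}$ with $T(\underline{\mathbbm{y}})\neq T(\underline{\mathbbm{x}})$ and from different prime-to-$p$ cosets. One has to check that the support-restricted components match exactly on the two sides of the uniformization. They should, because $\Theta_{\mathbbm{z}}$ is an isomorphism of formal schemes, and open-and-closed decompositions, like $K_0$ with supports, are transported along it.
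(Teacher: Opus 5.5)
Your proposal is correct and is essentially the paper's argument. You run a point of $\ZZ(g^p,m_p,\underline{x})^{\wedge,ss}$ through $\Theta_{\mathbbm{z}}$, use $T(\underline{y})=T(\underline{\mathbbm{x}})$ to renormalize the quasi-isogenies (your Witt's theorem step) so that $\underline{y}\mapsto\underline{\mathbbm{x}}$, and identify the cycle with $\mathbb{H}_1^{\underline{\mathbbm{x}}}(F^+)\backslash\bigl(\ZZ_-(\underline{\mathbbm{x}}_p\cdot m_p)\times\mathbb{H}_1^{\underline{\mathbbm{x}}}(\A_{F^+}^{p,\infty})h^p\bbeta^p_*(g^p)\bbeta^p_*(K^p)/\bbeta^p_*(K^p)\bigr)$; your derived and component bookkeeping makes explicit what the paper compresses into ``the claim follows.''

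One small imprecision: for a fixed generator the residual group is the pointwise stabilizer $\mathbb{H}_1^{\underline{\mathbbm{x}}}(F^+)$, as in the quotient above. The group $\mathbb{H}^{\underline{\mathbbm{x}}}(F^+)$, with its $M(F^+)$-twist, only enters through the indexing set $\cyc{\underline{\mathbbm{x}}}{\bbeta^p_*(K^p)}$, that is, through the well-definedness of the bottom arrow.
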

\begin{proof}
This follows from simply running through the $p$-adic uniformization \Cref{padicUni}. Given $(A_0,\lambda_0,\eta_0^p,A,\lambda,\eta,\underline{y})\in\ZZ(g^p,m_p,\underline{x})(S)$ with $A$ supersingular, we may choose $\tilde{\rho}_0$ and $\tilde{\rho}$ so that $(\tilde{\rho}_0^{-1},\tilde{\rho})(\underline{y})=\underline{\mathbbm{x}},$ since $\underline{x}$ and $\underline{\mathbbm{x}}$ have the same moment matrix. Under all such choices of $\tilde{\rho}_0$ and $\tilde{\rho},$ our cycle corresponds to
\begin{equation*}
    \ZZ(g^p,m_p,\underline{x})^{\wedge,ss}\iso \mathbb{H}_1^{\underline{\mathbbm{x}}}(F^+)\backslash\left(\ZZ_-(\underline{\mathbbm{x}}_p\cdot m_p)\times \mathbb{H}_1^{\underline{\mathbbm{x}}}(\A_{F^+}^{p,\infty})\cdot g^p\bbeta^p_*(K^p)/\bbeta^p_*(K^p)\right)
\end{equation*}
and the claim follows.
\end{proof}

\subsection{Friedberg--Jacquet special cycles}\label{Shimura-FJ}
We continue to fix a $p$-adic uniformization datum $\mathbbm{z}.$
\begin{definition}
    For $\underline{x}\in\mathfrak{X}_n(V),$ $\underline{\mathbbm{x}}\in\mathfrak{X}_n(\mathbb{V})$ and $K\in\mathfrak{K}(V),$ $K^p\in\mathfrak{K}(V)^p,$ $\mathbb{K}^p\in\mathfrak{K}(\mathbb{V})^p,$ we consider
    \begin{equation*}
    \begin{split}
        \CYCFJ{\underline{x}}{K}&\defeq\Z\left[H^{\underline{x}}(\A_{F^+}^{\infty})\backslash G(\A_{F^+}^{\infty})/K\right],\\
        (\CycFJ{\underline{x}}{K^p})'&\defeq\Z\left[U_{T(\underline{x})}(F^+)\backslash M(F^+)\times H^{\underline{x}}(\A_{F^+}^{p,\infty})\backslash G(\A_{F^+}^{p,\infty})/K^p\right],\\
        \CycFJ{\underline{x}}{K^p}&\defeq\Z\left[U_{T(\underline{x})}(F^+\otimes\Q_p)\backslash M(F^+\otimes\Q_p)/M(\O_{F^+}\otimes\Z_p)\times H^{\underline{x}}(\A_{F^+}^{p,\infty})\backslash G(\A_{F^+}^{p,\infty})/K^p\right],\\
        \cycFJ{\underline{\mathbbm{x}}}{\mathbb{K}^p}&\defeq\Z\left[U_{T(\underline{\mathbbm{x}})}(F^+\otimes\Q_p)\backslash M(F^+\otimes\Q_p)/M(\O_{F^+}\otimes\Z_p)\times \mathbb{H}^{\underline{\mathbbm{x}}}(\A_{F^+}^{p,\infty})\backslash \mathbb{G}(\A_{F^+}^{p,\infty})/\mathbb{K}^p\right].
    \end{split}
    \end{equation*}
    We have pullback maps
    \begin{equation*}
        \CYCFJ{\underline{x}}{K}\to\CYC{\underline{x}}{K},\quad \CycFJ{\underline{x}}{K^p}\to\Cyc{\underline{x}}{K^p},\quad\cycFJ{\underline{\mathbbm{x}}}{\mathbb{K}^p}\to \cyc{\underline{\mathbbm{x}}}{\mathbb{K}^p}
    \end{equation*}
    and pushforward map $(\CycFJ{\underline{x}}{K^p})'\to\CycFJ{\underline{x}}{K^p}.$ We define
    \begin{equation*}
        Z^\FJ\colon\CYCFJ{\underline{x}}{K}\to\CYC{\underline{x}}{K}\xrightarrow{Z}\mathrm{CH}^n(\Sh(V,K)).
    \end{equation*}
\end{definition}
\begin{remark}
    We note that $Z^\FJ$ may also be defined similarly to \Cref{KRgenfiber} via sub-Shimura varieties for the group $H^{\underline{x}}.$ Suppose $K\in\mathfrak{K}(V)$ and $g\in G(\A_f)$ and denote $K_{H^{\underline{x}}}=H^{\underline{x}}(\A_f)\cap gKg^{-1},$ $K_1=H_1^{\underline{x}}(\A_f)\cap gKg^{-1}$ and $K_2=H_2^{\underline{x}}(\A_f)\cap gKg^{-1}.$ Then $K_1\times K_2\subseteq K_{H^{\underline{x}}}$ has finite index. Denote $\Sh_{H^{\underline{x}}}(K_{H^{\underline{x}}})$ the Shimura variety for the group $H.$ Then
    \begin{equation*}
        \begin{tikzcd}
            \Sh(V_1,K_1)\times\Sh(V_2,K_2)\arrow[d]&&\\\
            \Sh_{H^{\underline{x}}}(K_{H^{\underline{x}}})\arrow[r]&\Sh(V,gKg^{-1})\arrow[r,"g","\sim"']&\Sh(V,K)
        \end{tikzcd}
    \end{equation*}
    give us that
    \begin{equation*}
        [K_1\times K_2\colon K_{H^{\underline{x}}}]\cdot Z^\FJ(g)=Z(g)\times \Sh(V_2,K_2)=\sum_{h_2\in H_2(\Q)\backslash H_2(\A_f)/K_2}Z(h_2g)
    \end{equation*}
    Similarly, the fiber of $H^{\underline{x}}(\A_f)gK$ is parametrized by coset representatives $h_2g$ as $h_2$ varies through in
    \begin{equation*}
        H^{\underline{x}}(\Q)H_1^{\underline{x}}(\A_f)\backslash H^{\underline{x}}(\A_f)/gKg^{-1}=H_2^{\underline{x}}(\Q)\backslash H_2^{\underline{x}}(\A_f)/\mathrm{pr}_{H_2^{\underline{x}}}(gKg^{-1}),
    \end{equation*}
    and note that $[K_2\colon\mathrm{pr}_{H_2^{\underline{x}}}(gKg^{-1})]=[K_1\times K_2\colon K_H].$
\end{remark}

\begin{definition}\label{ZZFJDef}
    For $x\in\mathfrak{X}_n(V)$ and $K^p\in\mathfrak{K}(V)^p,$ we consider map a $(\CycFJ{\underline{x}}{K^p})'\to\Cyc{\underline{x}}{K^p}'$ to be any choice such that the following is commutative
    \begin{equation*}
    \begin{tikzcd}
        (\CycFJ{\underline{x}}{K^p})'\arrow{r}\arrow{d}&\Cyc{\underline{x}}{K^p}'\arrow{d}\\
        \CycFJ{\underline{x}}{K^p}\arrow{r}&\Cyc{\underline{x}}{K^p}.
    \end{tikzcd}
    \end{equation*}
    Then we define
    \begin{equation*}
        {}^\L\ZZ^\FJ\colon(\CycFJ{\underline{x}}{K^p})'\to\Cyc{\underline{x}}{K^p}'\xrightarrow{{}^\L\ZZ} \mathrm{Gr}^nK_0(\mathcal{M}_\p(V,K^p)).
    \end{equation*}
\end{definition}
\begin{remark}
    As in \Cref{RemarkZZDef}, ${}^\L\ZZ^{\mathrm{FJ}}$ should descend to $\CycFJ{\underline{x}}{K^p},$ and thus should not depend on the choice of map $(\CycFJ{\underline{x}}{K^p})'\to\Cyc{\underline{x}}{K^p}'$ as above. However, for our purposes, such choice will not matter since the complex uniformization and $p$-adic uniformizations of ${}^\L\ZZ^{\mathrm{FJ}}$ do not depend on such choice.
\end{remark}

\begin{definition}\label{CycFJ-CYCFJDef}
Given a complex uniformization datum $z^\eta,$ consider the natural transformation $z^\eta\colon\CycFJ{\underline{x}}{K^p}\to \CYCFJ{\underline{x}}{K^pK_{z^\eta,p}}$ induced by the map
\begin{equation*}
    \Z[U_{T(\underline{x})}(F^+\otimes\Q_p)\backslash M(F^+\otimes\Q_p)/M(\O_{F^+}\otimes\Z_p)]\to \Z[H^{\underline{x}}(F^+\otimes\Q_p)\backslash G(F^+\otimes\Q_p)/K_{z^\eta,p}]
\end{equation*}
given in coset representatives by
\begin{equation*}
    m\mapsto\left(g_p\mapsto\#\{nM(\O_{F^+}\otimes\Z_p)\in U_{T(\underline{x})}(F^+\otimes\Q_p)mM(\O_{F^+}\otimes\Z_p)\colon \underline{x}\cdot n\subseteq g_p\Lambda_{z^\eta,p}\}\right).
\end{equation*}
In other words, consider the lattice $\Lambda_{\underline{x},p}\defeq\mathrm{span}_{\O_F\otimes\Z_p}(\underline{x}\cdot m)$ in the space $V_{\underline{x},p}\defeq\mathrm{span}(\underline{x})\otimes_\Q\Q_p.$ Then the above right hand side is the number of lattices $\Lambda$ in $V_{\underline{x},p}$ which are isomorphic to $\Lambda_{\underline{x},p}$ and lie inside $g_p\Lambda_{z^\eta,p}\cap V_{\underline{x}}.$
\end{definition}

The following are immediate consequences of the previous definitions.
\begin{proposition}\label{FJcomplex}
For $K^p\in\mathfrak{K}(V)^p,$ we have a commutative diagram of natural transformations
\begin{equation*}
    \begin{tikzcd}
        \CycFJ{\blank}{K^p}\arrow{r}\arrow{d}{z^\eta}&\Cyc{\blank}{K^p}\arrow{d}{z^\eta}\\
        \CYCFJ{\blank}{K^pK_{z^\eta,p}}\arrow{r}&\CYC{\blank}{K^pK_{z^\eta,p}}
    \end{tikzcd}
\end{equation*}
\end{proposition}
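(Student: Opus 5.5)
The plan is to reduce the commutativity to an identity of functions on the double coset spaces, and to check it place by place. Away from $p$ all four maps are the identity on the component $H^{\underline{x}}(\A_{F^+}^{p,\infty})\backslash G(\A_{F^+}^{p,\infty})/K^p$ (resp. $H_1^{\underline{x}}$-cosets), up to the pullback from $H^{\underline{x}}$-cosets to $H_1^{\underline{x}}$-cosets. That pullback sends a coset to the formal sum of the $H_1^{\underline{x}}$-cosets it contains, modulo the global $H^{\underline{x}}(F^+)$-action. So the horizontal pullbacks are compatible with the prime-to-$p$ part of the vertical maps, and the whole statement reduces to the local component at $p$.

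At $p$, the left vertical map sends the class of $m\in U_{T(\underline{x})}(F^+\otimes\Q_p)\backslash M(F^+\otimes\Q_p)/M(\O_{F^+}\otimes\Z_p)$ to the function on $H^{\underline{x}}(F^+\otimes\Q_p)\backslash G(F^+\otimes\Q_p)/K_{z^\eta,p}$ given by
\begin{equation*}
    g_p\mapsto\#\{n\in U_{T(\underline{x})}mM(\O_{F^+}\otimes\Z_p)/M(\O_{F^+}\otimes\Z_p)\colon \underline{x}\cdot n\in(g_p\Lambda_{z^\eta,p})^n\}.
\end{equation*}
The top arrow instead pulls $m$ back to the sum of the classes $nM(\O_{F^+}\otimes\Z_p)$ in the $U_{T(\underline{x})}$-orbit of $m$. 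This is an infinite formal sum, but it becomes finite after the $H^{\underline{x}}(F^+)$-quotient, through $\varphi_{\underline{x}}$. The right vertical map then sends each such $n$ to the indicator of the set $\{g_p\colon\underline{x}\cdot n\in(g_p\Lambda_{z^\eta,p})^n\}$.

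Summing these indicators over the orbit and pulling back from $H_1^{\underline{x}}$-cosets to $H^{\underline{x}}$-cosets gives exactly the counting function above. The one point to verify is that the action of $H^{\underline{x}}$ on $g_p$ and the action of $U_{T(\underline{x})}$ on $n$ are intertwined by $\varphi_{\underline{x}}$. This follows from $h\underline{x}=\underline{x}\cdot\varphi_{\underline{x}}(h)$: the condition $\underline{x}\cdot n\in(hg_p\Lambda)^n$ is equivalent to $\underline{x}\cdot\varphi_{\underline{x}}(h)^{-1}n\in(g_p\Lambda)^n$. Consequently the $U_{T(\underline{x})}$-orbit of $m$ corresponds bijectively to the $H_2^{\underline{x}}$-translates of $g_p$, and the two composites agree.

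The main obstacle, which is mostly bookkeeping, is the finiteness and normalization of these sums, namely that the counting function agrees with the pushforward–pullback composite without extra index factors. I would handle this by using compactness of $U_{T(\underline{x})}(F^+\otimes\Q_p)$, which holds because $T(\underline{x})$ is totally positive definite so its unitary group is anisotropic, and by checking that the stabilizers match on both sides.
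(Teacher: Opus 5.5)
Your approach matches the paper's, which states this proposition as an immediate consequence of the definitions. Unwinding them, both composites send the class of $(m,\,H^{\underline{x}}(\A_{F^+}^{p,\infty})g^pK^p)$ to the function whose value at $(g_p,g'^p)$ is the number of $nM(\O_{F^+}\otimes\Z_p)$ in the $U_{T(\underline{x})}(F^+\otimes\Q_p)$-orbit of $m$ with $\underline{x}\cdot n\in(g_p\Lambda_{z^\eta,p})^n$, times $\mathbbm{1}[H^{\underline{x}}(\A_{F^+}^{p,\infty})g^pK^p](g'^p)$. Your identity $h\underline{x}=\underline{x}\cdot\varphi_{\underline{x}}(h)$ is exactly what makes this count left $H^{\underline{x}}(F^+\otimes\Q_p)$-invariant. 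The claimed bijection between the orbit of $m$ and the $H_2^{\underline{x}}$-translates of $g_p$ is false in general, because the two stabilizers differ. You only need the invariance, so this does no harm.

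The step that fails is your treatment of the ``main obstacle'': $U_{T(\underline{x})}(F^+\otimes\Q_p)$ is not compact in general.
\begin{itemize}
\item Total positivity of $T(\underline{x})$ makes $U_{T(\underline{x})}$ anisotropic over $F^+$ and compact at the archimedean places, but says nothing at $p$.
\item At a place $\q\mid p$ split in $F$ one has $U_{T(\underline{x})}(F^+_\q)\simeq\mathrm{GL}_n(F^+_\q)$, and at an inert $\q$ every hermitian space of rank at least $3$ is isotropic.
\item Compactness would also give finite orbits on $M(F^+\otimes\Q_p)/M(\O_{F^+}\otimes\Z_p)$, contradicting your own correct remark that the orbit sum is infinite.
\end{itemize}

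No local compactness is needed; here is what replaces it.
\begin{itemize}
\item \emph{Finiteness of the count.} The lattices $\mathrm{span}(\underline{x}\cdot n)$ in the orbit all have the same discriminant. So those contained in the fixed lattice $g_p\Lambda_{z^\eta,p}\cap V_{\underline{x},p}$ have a fixed index in it, and there are finitely many.
\item \emph{Finiteness of the fibres of the top pullback.} This follows from finiteness of the global class sets $U_{T(\underline{x})}(F^+)\backslash U_{T(\underline{x})}(\A_{F^+}^\infty)/U$ for compact open $U$. Both finiteness facts really belong to the well-definedness of the maps, not to the commutativity.
\item \emph{Work on the cover.} The quotient by $H^{\underline{x}}(F^+)$ acts diagonally on the $p$- and prime-to-$p$ components, so it does not factor over places. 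Your reduction to the component at $p$ should therefore be done on the cover $M(F^+\otimes\Q_p)/M(\O_{F^+}\otimes\Z_p)\times H_1^{\underline{x}}(\A_{F^+}^{p,\infty})\backslash G(\A_{F^+}^{p,\infty})/K^p$, before dividing by $H^{\underline{x}}(F^+)$.
\item \emph{No index factors.} On the cover, the pulled-back function is $1$ exactly on the $H^{\underline{x}}(F^+)$-stable set of pairs whose first coordinate lies in the orbit of $m$ and whose second lies in $H^{\underline{x}}(\A_{F^+}^{p,\infty})g^pK^p$. The correspondence $z^\eta$ then sums it with coefficient $1$ over all $n$ with $\underline{x}\cdot n\in(g_p\Lambda_{z^\eta,p})^n$.
\item \emph{Stabilizers.} The stabilizers that could introduce multiplicities live in $H^{\underline{x}}(F^+)/H_1^{\underline{x}}(F^+)\simeq U_{T(\underline{x})}(F^+)$. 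They are discrete subgroups meeting compact open subgroups, hence finite, hence trivial by neatness of $K^p$.
\end{itemize}
With these replacements your argument goes through.
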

\begin{proposition}\label{FJpadic}
Let $K^p\in\mathfrak{K}(V)^p.$ Fix a $p$-adic uniformization datum $\mathbbm{z}$ and choose $\underline{x}\in \mathfrak{X}_n(V)$ and $\underline{\mathbbm{x}}\in\mathfrak{X}_n(V^\circ)$ with $T(\underline{x})=T(\underline{\mathbbm{x}}).$ Then we have the commutative diagram
\begin{equation*}
\begin{tikzcd}
    \CycFJ{\underline{x}}{K^p}\arrow[d,"\sim"]\arrow{r}&\Cyc{\underline{x}}{K^p}\arrow[d,"\sim"]\\
    \cycFJ{\underline{\mathbbm{x}}}{\bbeta^p_*(K^p)}\arrow{r}&\cyc{\underline{\mathbbm{x}}}{\bbeta^p_*(K^p)}
\end{tikzcd}
\end{equation*}
\end{proposition}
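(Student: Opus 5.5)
The plan is to check commutativity directly on coset representatives, after making explicit the vertical identification on the Friedberg--Jacquet side. This vertical map is not yet defined. We define it by analogy with the Kudla--Rapoport remark preceding \Cref{KR-padic}. Since $T(\underline{x})=T(\underline{\mathbbm{x}})$ and $\bbeta^p\colon V\otimes_\Q\A_\Q^{\infty,p}\rightiso\mathbb{V}\otimes_\Q\A_\Q^{\infty,p}$ is an isometry, Witt's theorem at each place away from $p$ and $\infty$ produces $h^p\in\mathbb{G}(\A_{F^+}^{p,\infty})$ with $\bbeta^p(\underline{x})=h^{p,-1}\underline{\mathbbm{x}}$. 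Conjugation by $h^p\bbeta^p_*$ then carries $H^{\underline{x}}(\A_{F^+}^{p,\infty})$, $H_1^{\underline{x}}(\A_{F^+}^{p,\infty})$ and $H_2^{\underline{x}}(\A_{F^+}^{p,\infty})$ onto $\mathbb{H}^{\underline{\mathbbm{x}}}(\A_{F^+}^{p,\infty})$, $\mathbb{H}_1^{\underline{\mathbbm{x}}}(\A_{F^+}^{p,\infty})$ and $\mathbb{H}_2^{\underline{\mathbbm{x}}}(\A_{F^+}^{p,\infty})$ respectively. It is also compatible with $\varphi_{\underline{x}}$ and $\varphi_{\underline{\mathbbm{x}}}$, because both groups are identified with $U_{T(\underline{x})}=U_{T(\underline{\mathbbm{x}})}$ through the defining relation $g\underline{x}=\underline{x}\cdot\varphi_{\underline{x}}(g)$.

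The left vertical map is therefore $(m_p,g^p)\mapsto(m_p,h^p\bbeta^p_*(g^p))$. It is the identity on the $p$-component $U_{T}(F^+\otimes\Q_p)\backslash M(F^+\otimes\Q_p)/M(\O_{F^+}\otimes\Z_p)$ and transports cosets away from $p$. We would check three things about it.
\begin{enumerate}
\item It is well defined. Changing $h^p$ by an element of $\mathbb{H}_1^{\underline{\mathbbm{x}}}(\A_{F^+}^{p,\infty})$ does not affect the double coset. Changing $\bbeta^p$ within its $K^p$-orbit is absorbed on the right by $\bbeta^p_*(K^p)$.
\item It is bijective, since the inverse is given by the same construction in the other direction.
\item It agrees with the Kudla--Rapoport identification on the right-hand column.
\end{enumerate}

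Commutativity then reduces to comparing two pullbacks. The horizontal map $\CycFJ{\underline{x}}{K^p}\to\Cyc{\underline{x}}{K^p}$ is pullback along two projections:
\[
H_1^{\underline{x}}(\A_{F^+}^{p,\infty})\backslash G(\A_{F^+}^{p,\infty})/K^p\to H^{\underline{x}}(\A_{F^+}^{p,\infty})\backslash G(\A_{F^+}^{p,\infty})/K^p
\]
away from $p$, and $M(F^+\otimes\Q_p)/M(\O_{F^+}\otimes\Z_p)\to U_T(F^+\otimes\Q_p)\backslash M(F^+\otimes\Q_p)/M(\O_{F^+}\otimes\Z_p)$ at $p$. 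All of this is taken modulo the diagonal action of $H^{\underline{x}}(F^+)$. The bottom row is defined by the identical recipe with $\mathbb{H}$ in place of $H$. The map on the $p$-components is literally the same on both rows, and away from $p$ the conjugation respects both the $H_1$- and $H$-cosets. So on a basis element $(m_p,g^p)$, both paths around the square give the characteristic function of the same preimage set, transported by $h^p\bbeta^p_*$.

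The main obstacle is the quotient by the rational group $H^{\underline{x}}(F^+)$ in $\Cyc{\underline{x}}{K^p}$ on the right. It must correspond to $\mathbb{H}^{\underline{\mathbbm{x}}}(F^+)$ on the right-hand side. The rational groups are \emph{not} conjugate inside $\mathbb{G}$, and $H_1^{\underline{x}}$ is an inner form of $\mathbb{H}_1^{\underline{\mathbbm{x}}}$ that differs at $\p$ and $\infty$. The saving point is that the action of $H^{\underline{x}}(F^+)$ factors through $H_2^{\underline{x}}(F^+)\iso U_T(F^+)\iso\mathbb{H}_2^{\underline{\mathbbm{x}}}(F^+)$. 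This holds because the $H_1$-component is absorbed into $H_1^{\underline{x}}(\A_{F^+}^{p,\infty})$ on the left, and the $H_2$-component acts on $M$ through $\varphi_{\underline{x}}$.

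So we would rewrite both sets as quotients by $U_T(F^+)$ and check that $h^p\bbeta^p_*$ intertwines the $H_2$-actions away from $p$. This last point follows from the compatibility with $\varphi$ noted above. Once the rational quotients are matched this way, the FJ column needs no separate treatment. There the full adelic $H^{\underline{x}}(\A_{F^+}^{p,\infty})$ and $U_T(F^+\otimes\Q_p)$ already absorb the rational quotient, so the remaining verification is formal bookkeeping.
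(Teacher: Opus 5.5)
Your proposal is correct and is essentially the paper's argument: the paper records this proposition as an immediate consequence of the definitions, with the left vertical map given by the same formula $(m_p,g^p)\mapsto(m_p,h^p\bbeta^p_*(g^p))$ as the identification stated just before \Cref{KR-padic}, and commutativity is exactly the comparison of the two pullbacks you describe. Your extra check that the quotient by $H^{\underline{x}}(F^+)$ matches the quotient by $\mathbb{H}^{\underline{\mathbbm{x}}}(F^+)$ through $H_2^{\underline{x}}(F^+)\cong U_{T(\underline{x})}(F^+)\cong\mathbb{H}_2^{\underline{\mathbbm{x}}}(F^+)$, with the $H_1$-components absorbed adelically, supplies the justification the paper leaves implicit for the right-hand vertical bijection; you also correctly read $\mathfrak{X}_n(V^\circ)$ in the statement as $\mathfrak{X}_n(\mathbb{V})$.
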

\section{Computation on spherical functions}\label{ComputationChapter}
\newcommand{\Rel}{\mathrm{Rel}}
\newcommand{\str}{\mathrm{str}}
\newcommand{\inv}{\mathrm{inv}}
Let $F/F^+$ be an unramified quadratic extension of $p$-adic fields, with uniformizer $\varpi$ and where $F^+$ has residue field of size $q.$

For $\epsilon\in\Z^r,$ we denote
\begin{equation*}
    \widetilde{\inv}(\epsilon)\defeq \sum_{1\le i<j\le r}\max(0,\epsilon_i-\epsilon_j),
\end{equation*}
which we also denote $\inv(\epsilon)$ in the case $\epsilon\in\{0,1\}^r.$ We also denote
\begin{equation*}
    \lambda_i(\epsilon)\defeq\#\{j\in\{1,\ldots,r\}\colon \epsilon_j=i\}.
\end{equation*}

In this section, we will study the local harmonic analysis that arises from the arithmetic level raising. As mentioned in the introduction, this will amount to the fact that a basis obtained from arithmetic intersection numbers ($\phi_e^\flat$ in \cref{phiflatDef}) behaves like the basis of a ``Cartan decomposition''. We will spell this out in \Cref{invSat}, but before that we will instead use the results of \cite{CZ,CZ2}. As explained in \cite[Appendix A]{CZ}, these results are tightly related to the inverse Satake transform \cite{Sakellaridis}. While it should be possible to perform the computations of this section in later language, the functions $\phi_e^\flat$ we are dealing with are naturally presented in terms of certain lattice combinatorics, which are closer to the former language.\footnote{We also need to appeal to \cite{CZ2} as the case of interest (for the spherical variety $X=(U(r)\times U(r))\backslash U(2r)$) is not technically covered by the results of \cite{Sakellaridis} since the group $U(r)$ is not split. However, the same methods should apply. Alternatively, it should also be possible to derive the inverse Satake transform from the results of \cite{Hironaka-Komori}.}

\subsection{Straightening relations}
We first recall some notation of \cite{CZ,CZ2}.
\begin{definition}\label{TypDef}
    For $r\ge1,$ we consider $\Typ_r\defeq\Z^r,$ equipped with the partial order where $e\preccurlyeq f$ if $e_i\le f_i$ for all $1\le i\le r.$ For a coefficient ring $R,$ we denote by $R[\Typ_r],$ $R\llbracket\Typ_r],$ and $R[\Typ_r\rrbracket$ the $R$-module of functions $\Typ_r\to R$ whose support is, respectively, finite, bounded above by some $f\in\Typ_r,$ and bounded below by some $f\in\Typ_r.$ For $e\in\Typ_r,$ we denote $\delta_e\in R[\Typ_r]$ the indicator function at $e.$ For $\epsilon\in\Z^r,$ we consider the endomorphism $t(\epsilon)\colon R[\Typ_r]\to R[\Typ_r]$ to be such that $t(\epsilon)(\delta_e)=\delta_{e+\epsilon}.$ We consider the concatenation product $(\blank\star\blank)\colon\Typ_a\times\Typ_b\to\Typ_{a+b}.$
\end{definition}
\begin{definition}
    For $r\ge1,$ we consider $\Typ_r^0\subseteq\Typ_r$ the subset
    \begin{equation*}
        \Typ_r^0\defeq\{e=(e_1,\ldots,e_r)\in\Z^r\colon e_1\ge\cdots\ge e_r\},
    \end{equation*}
    with induced preorder $\preccurlyeq.$ Similarly to \Cref{TypDef}, for a coefficient ring we consider the $R$-modules $R[\Typ_r^0],$ $R\llbracket\Typ_r^0]$ and $R[\Typ_r^0\rrbracket.$ We consider the perfect inner product pairing
    \begin{equation*}
        \langle\blank,\blank\rangle\colon R[\Typ_r^0\rrbracket\times R\llbracket\Typ_r^0]\to R
    \end{equation*}
    which restricts to a nondegenerate pairing
    \begin{equation*}
        \langle\blank,\blank\rangle\colon R[\Typ_r^0]\times R[\Typ_r^0]\to R.
    \end{equation*}
\end{definition}
\begin{proposition}[{\cite[Proposition 4.11]{CZ}}]
    We consider the submodule $\Rel_r\subseteq R[\Typ_r]$ generated under $\star$ by the elements $\Rel(a)\defeq\delta_{(a,a+1)}-\delta_{(a+1,a)}$ for $a\in\Z$ and, for $b>a,$ the elements
    \begin{equation*}
        \Rel(a,b)\defeq\delta_{(a,b)}-\delta_{(a+1,b)}-(-q)^{b-a-1}(\delta_{(b,a)}-\delta_{(b-1,a+1)}). 
    \end{equation*}
    Then we have a natural quotient map $\str\colon R[\Typ_r]/\Rel_r\rightiso R[\Typ_r^0]$ characterized by $\delta_e\mapsto\delta_e$ for $e\in\Typ_r^0.$
\end{proposition}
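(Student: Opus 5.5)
We prove two things: every $\delta_e$ is congruent modulo $\Rel_r$ to an element of $R[\Typ_r^0]$, and $\Rel_r\cap R[\Typ_r^0]=0$. Together these give that $R[\Typ_r^0]\to R[\Typ_r]/\Rel_r$ is an isomorphism, whose inverse is $\str$.

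\emph{Existence of straightening.} Fix the total sum $|e|=\sum e_i$; all relations preserve it. Each relation $\Rel(a)$ or $\Rel(a,b)$, concatenated with fixed outer entries via $\star$, rewrites a $\delta$ with an adjacent ascent $(e_i,e_{i+1})=(a,b)$, $a<b$, in terms of three kinds of terms:
\begin{itemize}[label={}]
\item $\delta_{(b,a)}$, the same pair swapped, which removes the ascent;
\item $\delta_{(b-1,a+1)}$, whose pair has gap $b-a-2$, or which is a descent if $b-a\le 1$;
\item $\delta_{(a+1,b)}$, whose pair has gap $b-a-1$.
\end{itemize}
For $b=a+1$, $\Rel(a)$ alone swaps the pair. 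The plan is to induct on a well-founded measure. Since all entries lie in a bounded range for fixed $|e|$ and fixed multiset bounds, a natural choice is the pair $(\widetilde{\inv}(-e),\text{something})$, or the sum of ascent gaps $\sum_{i<j}\max(0,e_j-e_i)$ ordered lexicographically with $|e|$ fixed. The three replacement terms each strictly decrease this quantity. The main check is that the adjacent-transposition moves decrease $\sum_{i<j}\max(0,e_j-e_i)$ also across non-adjacent pairs. This holds because replacing $(a,b)$ by $(a+1,b)$, $(b-1,a+1)$ or $(b,a)$ at positions $i,i+1$ moves the two entries weakly inward within $[a,b]$ (the first one up only by $1$). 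A short case analysis against a third entry $c$ shows the ascent count against $c$ does not increase. A cleaner variant is to use the bounded-support ordering from \Cref{TypDef}: all terms lie in the finite set of $f$ with $|f|=|e|$ whose entries lie in $[\min e,\max e]$. Hence termination follows from any strictly decreasing potential on this finite set.

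\emph{Uniqueness (the main obstacle).} Showing $\Rel_r\cap R[\Typ_r^0]=0$ requires an invariant: a linear functional, or a family of them, on $R[\Typ_r]$ that kills $\Rel_r$ and separates the $\delta_e$ with $e\in\Typ_r^0$. I would construct it as in \cite{CZ}. Interpret $\delta_e$ as the function on lattices (a local spherical function, or equivalently a polynomial under a Satake-type transform) given by the iterated "$e$-type" count. Verify that the two-variable relations hold for $r=2$ by a direct computation of lattice counts with the sign $(-q)^{b-a-1}$ coming from unitary lattice counting over $\F_{q^2}/\F_q$. They then hold after $\star$ by multiplicativity of the construction. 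Finally, the images of $\delta_e$ for $e\in\Typ_r^0$ are triangular with respect to $\preccurlyeq$ (leading term the Cartan basis element at $e$), hence linearly independent. Alternatively, a rank count works: it suffices over $\Z$, then base change, to compare the dimension of each graded piece (fixed $|e|$ and bounded entries) of the quotient with $\#\Typ_r^0$ in that piece. The spanning argument gives $\le$, and the realization gives $\ge$. The $r=2$ verification of the realization is the step I expect to need real computation.

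Characterizing $\str$ by $\delta_e\mapsto\delta_e$ on $\Typ_r^0$ is then immediate, and naturality in $R$ follows since everything is defined over $\Z$.
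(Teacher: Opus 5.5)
The paper gives no argument here beyond citing \cite[Proposition 4.11]{CZ}. Your architecture is the right one: spanning via a decreasing potential, independence via a realization that kills $\Rel_r$, then reduction to $\Q$ and base change from $\Z$. There are, however, two problems.

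\textbf{The misprint in the relation.} The second term of $\Rel(a,b)$ must be $\delta_{(a+1,b-1)}$, not $\delta_{(a+1,b)}$, and you half-absorbed the misprint. As printed, $\Rel(a,b)$ does not preserve $|e|$, contradicting the remark right after the statement. Worse, for $r=2$ and $b=a+1$ it gives $\Rel(a,a+1)\equiv\delta_{(a+1,a)}-\delta_{(a+1,a+1)}$ modulo $\Rel(a)$. This is a nonzero element of $R[\Typ_2^0]$, so the proposition would be false. The corrected form is the one the paper uses later, e.g.\ in $\str^\natural(\delta_{(-2,1)})=q^2\delta_{(1,-2)}+(1-q^2)\delta_{(0,-1)}$.

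Your spanning argument mixes the two readings. You invoke $|e|$-invariance, but your third bullet and the claim that the moves go ``weakly inward, the first one up only by $1$'' describe $\delta_{(a+1,b)}$. For that move, the ascent $\max(0,a-c)$ against each earlier entry $c\le a$ goes up by one, so your potential can increase, e.g.\ $(0,0,0,2)\mapsto(0,0,1,2)$ takes it from $6$ to $7$.

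With the correct term, every move is either the swap $(a,b)\mapsto(b,a)$ or the transfer $\{a,b\}\mapsto\{a+1,b-1\}$ with $b-a\ge2$. Then $A(e)=\sum_{i<j}\max(0,e_j-e_i)$ strictly drops, by convexity of $x\mapsto\max(0,x-c)$ and $x\mapsto\max(0,c-x)$. Since $A(e)=0$ exactly on $\Typ_r^0$, induction on $A$ gives spanning, with no finiteness argument needed.

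\textbf{Injectivity is not proved; this is the real gap.} The ``iterated $e$-type count'' is not defined for non-dominant $e$. Lattice-counting functions on a rank-$r$ hermitian space also have no evident multiplicativity under $\star$, which is exactly what you need to pass from $r=2$ to the generators $\delta_f\star\Rel\star\delta_g$. In addition, triangularity cannot be with respect to $\preccurlyeq$, which is trivial on a fixed-sum slice; it must use the dominance order.

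A realization that works lives on the Satake side (compare the factor $1+q^{-1}e^{\check\lambda_j-\check\lambda_i}$ in $L_X^{1/2}$). With $t=-q^{-1}$ and $x^e=x_1^{e_1}\cdots x_r^{e_r}$, put
\begin{equation*}
R_e\defeq\sum_{w\in S_r}w\Bigl(x^e\prod_{i<j}\frac{x_i-tx_j}{x_i-x_j}\Bigr),\qquad\Psi(\delta_e)\defeq(-q)^{-\sum_i(r-i)e_i}R_e.
\end{equation*}
\begin{itemize}
\item Sum over cosets of $\langle s_k\rangle$. The outer monomial and all kernel factors other than the $(k,k+1)$ one are $s_k$-invariant.
\item For fixed $u+v$, the twist depends on the entries $(u,v)$ in positions $k,k+1$ only through $(-q)^{-u}$.
\item Hence $\Psi(\delta_f\star\Rel\star\delta_g)=0$ reduces to $r=2$. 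There $\Psi(\Rel(a))=0$ and $\Psi(\Rel(a,b))=0$ become the Hall--Littlewood identities $R_{(a,a+1)}=tR_{(a+1,a)}$ and $R_{(a,b)}+R_{(b-1,a+1)}=t\bigl(R_{(b,a)}+R_{(a+1,b-1)}\bigr)$, which you check by clearing the denominator $x_1-x_2$.
\item For dominant $e$, $R_e=v_e(t)P_e$, where $v_e$ is a product of $t$-factorials, nonzero at $t=-q^{-1}$, and $P_e$ is unitriangular over the monomial symmetric functions in dominance order.
\end{itemize}
Therefore the $\Q$-span of the relations meets $\Q[\Typ_r^0]$ trivially. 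Your passage through $\Z$ and base change then yields the statement for every $R$.
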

\begin{remark}
    Note that $\Rel_r=\bigoplus_{k\in\Z}\Rel_r\cap R[\Typ_r^{\Sigma=k}]$ where $\Typ_r^{\Sigma=k}=\{e=(e_1,\ldots,e_r)\in\Typ_r\colon e_1+\cdots+e_r=k\}.$ In particular, $\str$ also naturally extends to quotient maps $\str\colon R[\Typ_r\rrbracket\twoheadrightarrow R[\Typ_r^0\rrbracket,$ given that any element of of $R[\Typ_r\rrbracket$ has finite support along $\Typ_r^{\Sigma=k}$ for each $k\in\Z.$
\end{remark}
\begin{definition}
    For $r\ge1,$ we consider $\Typ_r^{0,\flat}\subseteq\Typ_r$ the subset
    \begin{equation*}
        \Typ_r^{0,\flat}\defeq\{e=(e_1,\ldots,e_r)\in\Z^r\colon e_1\ge\cdots\ge e_r\ge0\},
    \end{equation*}
    with induced preorder $\preccurlyeq.$ Similarly to \Cref{TypDef}, for a coefficient ring we consider the $R$-modules $R[\Typ_r^{0,\flat}]$ and $R\llbracket\Typ_r^{0,\flat}\rrbracket$ (note that $R\llbracket\Typ_r^{0,\flat}]=R[\Typ_r^{0,\flat}]$ and $R\llbracket\Typ_r^{0,\flat}\rrbracket=R[\Typ_r^{0,\flat}\rrbracket$). We consider the perfect inner product pairing
    \begin{equation*}
        \langle\blank,\blank\rangle\colon R\llbracket\Typ_r^{0,\flat}\rrbracket\times R[\Typ_r^{0,\flat}]\to R
    \end{equation*}
    which restricts to a nondegenerate pairing
    \begin{equation*}
        \langle\blank,\blank\rangle\colon R[\Typ_r^{0,\flat}]\times R[\Typ_r^{0,\flat}]\to R.
    \end{equation*}
\end{definition}
\begin{proposition}[{\cite[Proposition 4.2.7]{CZ2}}]
    We consider the submodule $\Rel_r^\flat\subseteq R[\Typ_r]$ generated under $\star$ by $\Rel_r,$ by $\Rel^\flat(1)\defeq\delta_{(-1)}-\delta_{(1)}$ and, for $m>1,$ by
    \begin{equation*}
        \Rel^\flat(m)\defeq\delta_{(-m)}-\delta_{(-m+2)}-q^{m-1}(\delta_{(m)}-\delta_{(m-2)}). 
    \end{equation*}
    Then we have a natural quotient map $\mathrm{str}^\flat\colon R[\Typ_r]/\Rel_r\rightiso R[\Typ_r^{0,\flat}]$ characterized by $\delta_e\mapsto\delta_e$ for $e\in\Typ_r^0.$
\end{proposition}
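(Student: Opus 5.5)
Write $\Typ_r^{0,\flat}$ for the target. Let $R[\Typ_r]/\Rel_r^\flat$ be the quotient, which is a priori a quotient of $R[\Typ_r^0]$ via $\str$. The claim has two parts. First, the classes of $\delta_e$ with $e\in\Typ_r^{0,\flat}$ span the quotient. Second, they are linearly independent in it. Spanning is a straightening algorithm. Independence needs a model.

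\emph{Spanning.} By the previous proposition, every class is a combination of $\delta_e$ with $e\in\Typ_r^0$. So it suffices to rewrite $\delta_e$ with $e_1\ge\cdots\ge e_r$ and $e_r<0$. Apply $\Rel^\flat(m)$ with $m=-e_r$ in the last slot, via $\delta_{(e_1,\ldots,e_{r-1})}\star\Rel^\flat(m)$. This replaces $\delta_{(\ldots,-m)}$ by terms whose last entry is $-m+2$, $m$, or $m-2$. Re-straighten each with $\str$ and iterate.

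Termination will be tracked by the sum of the negative parts, $\sum_i\max(0,-e_i)$, ordered lexicographically together with the total sum. One must check that the relations in $\Rel_r$ preserve the multiset-type quantity needed: they preserve $\sum e_i$ and only move mass between entries without creating more negative mass. Meanwhile $\Rel^\flat(m)$ strictly decreases the negative mass. The exception is $m=1$, where $\delta_{(-1)}\mapsto\delta_{(1)}$, which also decreases it.

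\emph{Independence.} I would construct a linear map $\Phi\colon R[\Typ_r]\to R[\Typ_r^{0,\flat}]$ killing $\Rel_r^\flat$ and restricting to the identity on $\Typ_r^{0,\flat}$. The natural choice comes from geometry/combinatorics, as in \cite{CZ2}. Realize $\delta_e$ as a lattice-counting or Hecke-type function for the rank-one unitary factors, where the reflection $e_r\mapsto -e_r$ corresponds to duality $\Lambda\mapsto\Lambda^\vee$ in a one-dimensional hermitian space. $\Rel^\flat(m)$ is then the identity between counts of sublattices of valuation $m$ and of the dual side, with the factor $q^{m-1}$.

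Concretely, I would instead verify via the $\star$-module structure. $\Rel_r^\flat$ is generated by $\Rel_r$ together with $\delta_f\star\Rel^\flat(m)\star\delta_g$. Using the symmetry of $\Rel_r$, the relations $\Rel^\flat$ placed in any slot can be moved to the last slot modulo $\Rel_r$.

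So it suffices to show that the rewriting system in $R[\Typ_r^0]$ given by $\delta_{e'\star(-m)}\to$ (straightened RHS) is confluent. This is the diamond lemma. Overlaps occur only between two negative entries, and one must check that straightening both in either order agrees.

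I expect this confluence check to be the main obstacle. It requires an explicit form of $\str(\delta_{(a)}\star\delta_{(b)})$ for the rank-two cases $(a,b)$ with $b<0$. I would first reduce to $r=2$ by the $\star$-locality of all relations, then compute directly using the explicit $\Rel(a,b)$. Failing that, I would instead construct $\Phi$ as a Satake-type transform into $W$-invariants for type $C_r$ (or $BC_r$), where both $\Rel_r$ and $\Rel^\flat$ visibly vanish. Then compare ranks degree by degree via the grading by $|e|$-bounded filtrations.
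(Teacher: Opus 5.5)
\textbf{There is a genuine gap, and one step is false.}

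\emph{The false step.} You claim that, ``using the symmetry of $\Rel_r$'', a relation $\Rel^\flat(m)$ placed in any slot can be moved to the last slot modulo $\Rel_r$. This is false. Under the two-sided reading of ``generated under $\star$'' that you adopt, the statement itself is false. The relations $\Rel(a,b)$ are a $q$-deformed straightening, not a symmetry.

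Take $r=2$ and the element $\Rel^\flat(1)\star\delta_{(2)}=\delta_{(-1,2)}-\delta_{(1,2)}$. Modulo $\Rel_2$ you have $\delta_{(-1,2)}\equiv(1-q^2)\delta_{(1,0)}+q^2\delta_{(2,-1)}$ and $\delta_{(1,2)}\equiv\delta_{(2,1)}$. The last-slot relation $\delta_{(2)}\star\Rel^\flat(1)$ turns $\delta_{(2,-1)}$ into $\delta_{(2,1)}$. So the element is congruent to $(1-q^2)(\delta_{(1,0)}-\delta_{(2,1)})$, which is nonzero in $R[\Typ_2^{0,\flat}]$.

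The module must therefore be $\Rel_r^\flat=\Rel_r+R[\Typ_{r-1}]\star\Rel_1^\flat$, with $\Rel^\flat$ only in the last slot. It plays the role of the last simple reflection, while $\Rel_r$ supplies the others. This is how the paper uses it: in the proof of \Cref{flat-phi-computation} the generators are $\delta_{f'}\star\Rel^\flat(m)$ with $f'\in\Typ_{r-1}$.

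The printed statement has further typos. The quotient is by $\Rel_r^\flat$, and the characterization is for $e\in\Typ_r^{0,\flat}$. The second term of $\Rel(a,b)$ should be $\delta_{(a+1,b-1)}$, as the computation of $\str^\natural(\delta_{(-2,1)})$ in the proof of \Cref{strphi} shows. Your claim that $\Rel_r$ preserves $\sum_ie_i$ needs this corrected form.

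\emph{The missing idea.} With the correct module your spanning argument is fine, but independence is left at ``compute directly''. The rule ``straighten, then apply $\Rel^\flat(-e_r)$ to the last entry'' is deterministic, so there is no confluence to check in the diamond-lemma sense. What must be proved is that the recursively defined $\str^\flat$ also kills $\delta_f\star\Rel^\flat(m)$ when $f\star(-m)$ is unsorted, i.e.\ when $f_{r-1}<-m$. Even after reducing to $r=2$ (which itself needs an induction, since straightening mixes all entries), this is an infinite two-parameter family of identities. Nothing in your proposal handles it.

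The argument the paper relies on comes from \cite{CZ2}; its adaptation is written out in the proof of \Cref{strphi}. It supplies the induction you are missing. The operator $t(-1)+q\,t(1)$ preserves $\Rel_1^\flat$:
$\Rel^\flat(1)\mapsto\Rel^\flat(2)$, $\Rel^\flat(2)\mapsto\Rel^\flat(3)+2q\Rel^\flat(1)$, and $\Rel^\flat(m)\mapsto\Rel^\flat(m+1)+q\Rel^\flat(m-1)$ for $m>2$.
This collapses the family to one explicit rank-two identity, here $\str^\flat(\delta_{(-1,-2)}-\delta_{(-2,1)})=0$. Both terms straighten to $(1-q^3)\delta_{(1,0)}+q^3\delta_{(2,1)}$.

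\emph{Your fallback.} The Satake-type transform could replace this, but only once you name it (a Macdonald-type symmetrization with the correct rank-one factor) and verify that $\Rel_r$ and $\Rel^\flat$ vanish under it. That is the same rank-one and rank-two computation in other clothing. Also, $\Rel^\flat$ changes $\sum_ie_i$ and preserves only its parity. So ``comparing ranks degree by degree'' must be replaced by a triangularity argument in the dominance order.
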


We also consider a new set of straightening relations.
\begin{proposition}\label{strphi}
    We consider the submodule $\Rel_r^\phi\subseteq R[\Typ_r]$ generated under $\star$ by $\Rel_r,$ by $\Rel^\phi(1)\defeq\delta_{(-1)}-q^2\delta_{(1)}$ and, for $m>1,$ by
    \begin{equation*}
        \Rel^\phi(m)\defeq\delta_{(-m)}-q^2\delta_{(-m+2)}-q^{m-1}(q^2\delta_{(m)}-\delta_{(m-2)}). 
    \end{equation*}
    Then we have a natural quotient map $\mathrm{str}^\phi\colon R[\Typ_r]/\Rel_r\rightiso R[\Typ_r^{0,\flat}]$ characterized by $\delta_e\mapsto\delta_e$ if $e\in\Typ_r^{0,\flat}.$
\end{proposition}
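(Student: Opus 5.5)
The plan is to treat $\Rel_r^\phi$ as a deformation of the relations $\Rel_r^\flat$ of \cite[Proposition 4.2.7]{CZ2}, and to transfer the statement from the flat case. Consider the one-parameter family
\begin{equation*}
    \Rel^{t}(1)\defeq\delta_{(-1)}-t\delta_{(1)},\qquad \Rel^{t}(m)\defeq\delta_{(-m)}-t\delta_{(-m+2)}-q^{m-1}(t\delta_{(m)}-\delta_{(m-2)})\quad(m>1),
\end{equation*}
and let $\Rel_r^t$ be generated under $\star$ by $\Rel_r$ and these elements. Then $t=1$ recovers $\Rel^\flat_r$ and $t=q^2$ recovers $\Rel^\phi_r$. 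The statement has two halves: surjectivity of $R[\Typ_r^{0,\flat}]\to R[\Typ_r]/\Rel_r^t$, and $\Rel_r^t\cap R[\Typ_r^{0,\flat}]=0$. I will prove both for any unit $t$, by an argument that uses only the shape of the leading terms.

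First I would check that one cannot simply rescale the basis. A diagonal change $\delta_e\mapsto c_e\delta_e$ carrying $\Rel^\phi(m)$ to a multiple of $\Rel^\flat(m)$ would force $c_{-m}=q^2c_{-m+2}$, $c_{m-2}=q^2c_m$ and $c_{-m}=q^2c_m$ simultaneously. These are incompatible for $m\ge2$, so a genuine argument is needed.

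For surjectivity, define the height of $e\in\Typ_r$ by $\|e\|\defeq\sum_i|e_i|$; this is compatible with $\star$. The straightening relations $\Rel(a)$ and $\Rel(a,b)$ of \cite[Proposition 4.11]{CZ} preserve $\sum_i e_i$ and do not increase $\|e\|$ once the entries are sorted. For $m>0$, the relation $\Rel^t(m)$ rewrites $\delta_{(-m)}$ as
\begin{equation*}
    \delta_{(-m)}=t\delta_{(-m+2)}+q^{m-1}t\delta_{(m)}-q^{m-1}\delta_{(m-2)},
\end{equation*}
in which every term either has the same height with a nonnegative entry, or has strictly smaller height. I would induct on the pair (height, number of negative entries), ordered lexicographically. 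Given $\delta_e$, use $\Rel_r$ (via $\str$) to sort $e$. If the smallest entry $e_r=-m$ is negative, apply $\Rel^t(m)$ in the last slot via $\star$; every resulting term is smaller in this order, and we re-sort. The process terminates at elements of $\Typ_r^{0,\flat}$. The coefficient $t$ plays no role here, since it only needs to be an element of $R$.

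For independence, filter $R[\Typ_r]$ by height and pass to the associated graded. The top-degree part of $\Rel^t(m)$ is $\delta_{(-m)}-q^{m-1}t\,\delta_{(m)}$, which equals the top part of $\Rel^\flat(m)$ after replacing $\delta_{(m)}$ by $t\delta_{(m)}$. The top parts of $\Rel(a)$ and $\Rel(a,b)$ are homogeneous in $\sum_i e_i$. Rescaling each $\delta_e$ by $t^{\#\{i: e_i>0\}}$ should therefore identify the graded module of leading forms of the $\Rel_r^t$ generators with that of the $\Rel_r^\flat$ generators. The step to check here is that $\Rel_r$ is compatible with this rescaling on its leading forms.

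I expect the main obstacle to be showing that $\mathrm{gr}(\Rel_r^t)$ is generated by the leading forms of the generators, i.e.\ a Gröbner-type statement. Granting it, $\mathrm{gr}(\Rel_r^t)\cong\mathrm{gr}(\Rel_r^\flat)$. Then, in each finite piece of fixed height and fixed $\sum_i e_i$, the quotient has the same rank for $t$ and for $t=1$. By \cite[Proposition 4.2.7]{CZ2} that rank equals the number of elements of $\Typ_r^{0,\flat}$ in the piece, so combined with surjectivity we get the isomorphism. To prove the Gröbner statement I would first try to run the proof of \cite[Proposition 4.2.7]{CZ2} verbatim with $t$ in place of $1$, checking that every overlap reduction there stays valid. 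If that fails, the fallback is to construct the inverse map $\mathrm{str}^\phi$ explicitly via the reduction algorithm above, and to verify directly that it kills each $\star$-product of generators. The essential local checks are the rank $2$ overlaps between $\Rel^t(m)$ and $\Rel(a,b)$.
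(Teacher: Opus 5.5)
\textbf{There is a genuine gap: the independence half, $\Rel_r^\phi\cap R[\Typ_r^{0,\flat}]=0$, is not established.} This half is the real content of the proposition.

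\emph{The rescaling does not work.} The map $\delta_e\mapsto t^{\#\{i:e_i>0\}}\delta_e$ does not preserve the leading forms of $\Rel_r$. Use the sum-preserving reading of $\Rel(a,b)$, as you do. For $b\ge2$, all four terms of $\Rel(0,b)=\delta_{(0,b)}-\delta_{(1,b-1)}-(-q)^{b-1}(\delta_{(b,0)}-\delta_{(b-1,1)})$ have height $b$, yet $\delta_{(0,b)}$ and $\delta_{(1,b-1)}$ have different numbers of positive entries.

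No other diagonal rescaling $c$ works either, if you match leading forms generator by generator:
\begin{itemize}
\item $\delta_{(1)}\star\Rel^t(1)$, $\Rel(-1,1)$ and $\delta_{(-1)}\star\Rel^t(1)$ force $c(1,1)=t^2c(-1,-1)$;
\item $\Rel(-2,0)$, $\delta_{(0)}\star\Rel^t(2)$ and $\Rel(0,2)$ force $c(1,1)=c(0,2)=t\,c(-1,-1)$.
\end{itemize}
Together these give $t=t^2$, i.e.\ $t=1$.

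\emph{The Gr\"obner step is not a reduction.} The statement you grant is equivalent in difficulty to injectivity: it says precisely that every overlap between the last-slot relations and $\Rel_r$ resolves. Also, $\Rel^t(m)$ is not homogeneous in $\sum_ie_i$, so only the height grading is available.

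\emph{The surjectivity induction needs a small repair.} The order on (height, number of negative entries) is not monotone under sorting. For example, $\Rel(-2,0)$ gives $\delta_{(-2,0)}\equiv(1+q)\delta_{(-1,-1)}-q\delta_{(0,-2)}$. Then $\Rel^t(1)$ sends $\delta_{(-1,-1)}$ to $t\delta_{(-1,1)}$, which has the same measure as $\delta_{(-2,0)}$. Induct instead on $\sum_{e_i<0}|e_i|=\tfrac12(\|e\|-\sum_ie_i)$. Sorting does not increase it, and each application of $\Rel^t(m)$ in the last slot strictly lowers it.

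\textbf{What is missing is the overlap verification, together with a way to make it finite; this is what the paper supplies.} The paper notes that the shift operator $t(-1)+q\,t(1)$ on the last slot (not your parameter $t$) preserves $\Rel^\phi_1$. It sends
\begin{itemize}
\item $\Rel^\phi(1)\mapsto\Rel^\phi(2)$,
\item $\Rel^\phi(2)\mapsto\Rel^\phi(3)+2q\Rel^\phi(1)$,
\item $\Rel^\phi(m)\mapsto\Rel^\phi(m+1)+q\Rel^\phi(m-1)$ for $m>2$.
\end{itemize}
With this, the proof of \cite[Proposition 4.2.7]{CZ2} for the $\flat$ case carries over. It reduces everything to one rank-$2$ check: that $\delta_{(-2)}\star\Rel^\phi(1)$, which is $\equiv\delta_{(-1,-2)}-q^2\delta_{(-2,1)}$ modulo $\Rel_2$, straightens to $0$. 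Indeed, $\mathrm{str}^\phi(\delta_{(-1,-2)})$ and $q^2\,\mathrm{str}^\phi(\delta_{(-2,1)})$ both equal $q^7\delta_{(2,1)}+q^4(1-q)\delta_{(1,0)}$.

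Your fallback points in this direction, but it neither finds the reduction to a single overlap nor performs the check. These identities remain true with a general $t$ in place of $q^2$, so your deformation viewpoint is harmless. What proves injectivity is the overlap computation, not a comparison of leading terms with the case $t=1$.
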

\begin{proof}
    We first note that $t(-1)+q\cdot t(1)$ preserves $\Rel_r^\phi,$ as $(t(-1)+q\cdot t(1))\Rel^\phi(1)=\Rel^\phi(2),$ $(t(-1)+q\cdot t(1))\Rel^\phi(2)=\Rel^\phi(3)+2q\Rel^\phi(1)$ and for $m>2$ we have $(t(-1)+q\cdot t(1))\Rel^\phi(m)=\Rel^\phi(m+1)+q\Rel^\phi(m-1).$ Given that, the proof of \cite[Proposition 4.2.7]{CZ2} carries through as in the case $(\mathrm{uH})^\flat,$ and we are reduced to check that 
    \begin{equation*}
        \mathrm{str}^\phi\left(\delta_{(-1,-2)}-q^2\delta_{(-2,1)}\right)
    \end{equation*}
    is zero. On the one hand, we have
    \begin{equation*}
        \mathrm{str}^\natural(\delta_{(-2,1)})=q^2\delta_{(1,-2)}+(1-q^2)\delta_{(0,-1)}
    \end{equation*}
    and thus
    \begin{equation*}
        \mathrm{str}^\phi(\delta_{(-2,1)})=\mathrm{str}^\natural\left(q^2(q^3\delta_{(1,2)}+q(q-1)\delta_{(1,0)})+(1-q^2)q^2\delta_{(0,1)}\right)=q^5\delta_{(2,1)}+q^2(1-q)\delta_{(1,0)}.
    \end{equation*}
    On the other hand, we have
    \begin{equation*}
        \mathrm{str}^\phi(\delta_{(-1,-2)})=\mathrm{str}^\flat_{\natural}(\mathrm{q}^3\delta_{(-1,2)}+q(q-1)\delta_{(-1,0)})=\mathrm{str}^\phi(q^3(q^2\delta_{(2,-1)}+(1-q^2)\delta_{(1,0)}))+q^3(q-1)\delta_{(1,0)}
    \end{equation*}
    which is $q^7\delta_{(2,1)}+q^4(1-q).$
\end{proof}

\subsection{Spherical functions for \texorpdfstring{$U(W)\backslash\mathrm{GL}(W)$}{U(W)\textbackslash{}GL(W)}}
Let $W$ be a nondegenerate $F/F^+$ hermitian space of rank $r$ for some $r\ge1,$ with unitary group $U(W)\subseteq\mathrm{GL}(W).$

For a choice of hyperspecial subgroup $K\subseteq\mathrm{GL}(W),$ we consider the Hecke algebra $\mathbb{T}_r\defeq\Z[K\backslash\mathrm{GL}(W)/K]$ with elements $T_{i,r}=\mathbbm{1}[K\varpi^{\mu_i}K]$ where $\mu_i=(\underbrace{1,\ldots,1}_{i},\underbrace{0,\ldots,0}_{r-i})$ for $0\le i\le r.$
\begin{definition}
We denote $\Typ(W)$ the set of isomorphism classes of (full rank) lattices $\Lambda\subseteq W$ up to the action of $U(W).$ Note that we have an identification $\Typ(W)\iso U(W)\backslash\mathrm{GL}(W)/K.$
\end{definition}
\begin{proposition}[{\cite[Proposition 4.2]{CZ}}]\label{Typ}
    We have an injection
    \begin{equation*}
        \typ\colon\Typ(W)\hookrightarrow\Typ_r^0
    \end{equation*}
    given by $\typ(\Lambda)=e=(e_1\ge\cdots\ge e_r)$ where $e$ is the relative position of $\Lambda^\vee$ and $\Lambda.$ In other words, for every $m$ sufficiently large we have
    \begin{equation*}
        \Lambda^\vee/\varpi^m\Lambda\iso\bigoplus_{i=1}^r\O_{F^+}/\varpi^{m+e_i}\O_{F^+}
    \end{equation*}
    as $\O_{F^+}$-modules. Moreover, if we denote $\delta=0$ resp. $\delta=1$ if $W$ is split resp. nonsplit, then the image of $\typ$ is precisely the set of $e=(e_1\ge\cdots\ge e_r)$ such that $\sum_{i=1}^re_i\equiv\delta\mod2.$
\end{proposition}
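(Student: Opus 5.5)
The plan is to reduce everything to the classification of hermitian lattices over the unramified extension $F/F^+$ (Jacobowitz), which is the input behind \cite[Proposition 4.2]{CZ}.

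\textbf{Step 1: well-definedness.} Since $\Lambda\subseteq W$ is a full-rank lattice, $\Lambda^\vee$ is also a full-rank lattice. By elementary divisors for $\O_F$-modules there is a basis $v_1,\ldots,v_r$ of $\Lambda^\vee$ and integers $e_1\ge\cdots\ge e_r$ with $\Lambda=\bigoplus\varpi^{-e_i}\O_Fv_i$. Rescaling conventions so that $e$ is exactly the relative position of $\Lambda^\vee$ and $\Lambda$, the displayed isomorphism $\Lambda^\vee/\varpi^m\Lambda\iso\bigoplus\O/\varpi^{m+e_i}$ holds for $m\gg0$. Because $F/F^+$ is unramified, $\varpi$ is also a uniformizer of $F$, so we may pass freely between $\O_F$- and $\O_{F^+}$-lengths; the tuple $e$ is therefore well defined. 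It is invariant under $U(W)$, since any isometry carries $\Lambda^\vee$ to $(g\Lambda)^\vee$. Hence $\typ$ is well defined on $\Typ(W)$.

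\textbf{Step 2: injectivity.} By Jacobowitz's theorem in the unramified case, every hermitian lattice has an orthogonal Jordan splitting $\Lambda=\bigoplus_k\Lambda_k$, where $\Lambda_k$ is $\varpi^{k}$-modular. Moreover, the isometry class of $\Lambda$ is determined by the ranks of the $\Lambda_k$ alone, because unimodular hermitian lattices over an unramified extension are unique in each rank (the residue-level hermitian forms over $\F_{q^2}/\F_q$ are unique). In such a splitting $\Lambda^\vee=\bigoplus\varpi^{-k}\Lambda_k$, so the relative position of $\Lambda^\vee$ and $\Lambda$ is read off as the multiset of the values $k$, each repeated $\mathrm{rank}\,\Lambda_k$ times (up to the sign convention). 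Thus $e$ determines the Jordan ranks, hence the isometry class of $\Lambda$. Since $W$ is fixed, Witt's theorem then upgrades an abstract isometry $\Lambda\iso\Lambda'$ to an element of $U(W)$ carrying one lattice to the other.

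\textbf{Step 3: image.} For any $e\in\Typ_r^0$, construct the diagonal lattice $\bigoplus\O_Fv_i$ with $\langle v_i,v_i\rangle=\varpi^{e_i}$, again up to the sign convention. Its ambient hermitian space has determinant $\varpi^{\sum e_i}$ modulo norms. Since norms from an unramified extension are exactly the elements of even valuation, this space is split precisely when $\sum e_i$ is even. Hermitian spaces over $F^+$ are classified by rank and determinant, so $W$ contains a lattice of type $e$ if and only if $\sum e_i\equiv\delta\pmod 2$. Conversely, every lattice in $W$ has type satisfying this parity condition by the same determinant computation.

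The main obstacle is Step 2, and specifically invoking Jacobowitz's classification correctly: uniqueness of modular components in the unramified case, and the fact that a lattice isometry extends to $W$. Everything else is bookkeeping of valuations and sign conventions.
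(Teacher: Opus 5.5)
Your proof is correct and is the standard argument behind \cite[Proposition 4.2]{CZ}, which the paper cites without proof. That argument is Jacobowitz's classification of hermitian lattices over the unramified extension by their Jordan type (which is exactly $e$), together with the determinant-parity computation for the image.

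There are two cosmetic fixes. In Step 1 the normalization should be $\Lambda=\bigoplus_i\varpi^{e_i}\O_Fv_i$, with $v_i$ a basis of $\Lambda^\vee$, to get $\Lambda^\vee/\varpi^m\Lambda\iso\bigoplus_i\O_F/\varpi^{m+e_i}$. In Step 2 Witt's theorem is unnecessary, since an isometry between two full-rank lattices in $W$ extends $F$-linearly to an element of $U(W)$.
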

\begin{theorem}[{\cite[Theorem 4.16]{CZ}}]\label{Heckenatural}
    For $0\le i\le r,$ we consider the operators
    \begin{equation*}
        \Delta_{i,r}\defeq\sum_{\substack{\epsilon\in\{0,1\}\\\lambda_1(\epsilon)=i}}q^{\inv(\epsilon)}t(2\epsilon).
    \end{equation*}
    We have that $\Delta_{i,r}$ preserves $\Rel_r.$ Under the injection $\typ\colon\Z[U(W)\backslash\mathrm{GL}(W)/K]\hookrightarrow\Typ_r^0,$ we have that the adjoint of $T_{i,r}$ is $\Delta_{i,r},$ that is, we have
    \begin{equation*}
        \langle T_{i,r}f,g\rangle=\langle f,\mathrm{str}^\natural(\Delta_{i,r}g)\rangle
    \end{equation*}
    for all $f,g\in\Z[\Typ_r^0].$
\end{theorem}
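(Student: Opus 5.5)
This is a direct lattice-counting computation. I would compute $\langle T_{i,r}f,g\rangle$ for $f=\delta_{\typ(\Lambda)}$ and $g=\delta_{\typ(\Lambda')}$. Here $T_{i,r}$ acts on functions on $U(W)\backslash \mathrm{GL}(W)/K$, i.e. on $U(W)$-orbits of lattices. Unwinding $T_{i,r}=\mathbbm{1}[K\varpi^{\mu_i}K]$, the action is: $(T_{i,r}f)(\Lambda')=\sum f(\Lambda'')$, where $\Lambda''$ runs over lattices with $\varpi\Lambda'\subseteq\Lambda''\subseteq\Lambda'$ and $\Lambda'/\Lambda''$ of $\O_F$-length $i$ (or the dual convention, to be fixed by the normalization of $\mu_i$). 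So the adjoint $T_{i,r}^*$ applied to $\delta_{\typ(\Lambda)}$ is the function counting, for each type $e'$, the number of superlattices $\Lambda'\supseteq\Lambda\supseteq\varpi\Lambda'$ with $\Lambda'/\Lambda\simeq\kappa^i$ and $\typ(\Lambda')=e'$. The pairing convention fixes the normalization (the counts are as $\Lambda$ varies in a fixed orbit), and I would check it is the naive one. Since both sides are linear, it suffices to verify the identity $T_{i,r}^*\delta_e=\mathrm{str}^\natural(\Delta_{i,r}\delta_e)$ for every $e\in\Typ_r^0$ in the image of $\typ$.

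First I would show that $\Delta_{i,r}$ preserves $\Rel_r$, so that $\mathrm{str}^\natural\circ\Delta_{i,r}$ is well defined on $\Z[\Typ_r^0]$. Since $\Rel_r$ is generated under $\star$ by $\Rel(a)$ and $\Rel(a,b)$, and $\Delta_{i,r}$ decomposes compatibly with concatenation (a $q$-binomial-type coproduct: $\Delta_{i,a+b}=\sum_{j}q^{(i-j)\cdot\#\{\ldots\}}$-twisted $\Delta_{j,a}\star\Delta_{i-j,b}$, with the cross inversions accounted for), this reduces to the rank-two generators. For those I would check directly that $\Delta_{1,2}$ and $\Delta_{2,2}=t(2,2)$ map $\Rel(a)$ and $\Rel(a,b)$ into $\Rel_2$, which is a finite computation. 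The twist factor from cross inversions is the delicate part, and I would verify it by writing $\Delta_{i,r}$ as the coefficient of $X^i$ in a product of commuting-up-to-$q$ operators.

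The main step is the lattice count. Given $\Lambda$ of type $e$, the superlattices $\Lambda'$ with $\Lambda'/\Lambda\simeq\kappa^i$ and $\Lambda'\subseteq\varpi^{-1}\Lambda$ correspond to $i$-dimensional subspaces $U\subseteq\varpi^{-1}\Lambda/\Lambda\simeq\F_{q^2}^r$. The type of $\Lambda'$ depends on how $U$ sits relative to the filtration of $\varpi^{-1}\Lambda/\Lambda$ induced by the elementary divisors of $\Lambda^\vee/\Lambda$, together with the hermitian form where $e$ has repeated entries. I would decompose the count by the Schubert cell of $U$ with respect to the flag adapted to $e$, indexed by $\epsilon\in\{0,1\}^r$ with $\lambda_1(\epsilon)=i$. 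Each cell has $q^{2\inv(\epsilon)}$-many points, with the precise power to be reconciled with $q^{\inv(\epsilon)}$ via the unramified degree. Generically, $\Lambda'$ then has type $e+2\epsilon$ read in a possibly non-dominant order, shifted by the sign convention coming from $\Lambda'^\vee\subseteq\Lambda^\vee$. The non-dominant terms $e+2\epsilon\notin\Typ_r^0$, arising exactly when $e$ has equal or adjacent entries, do not literally occur as lattice types. Their contribution must instead be redistributed by the isotropic/anisotropic geometry of the hermitian form on the corresponding graded piece.

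The main obstacle is showing that this redistribution is exactly the straightening relations $\Rel(a)$ and $\Rel(a,b)$. I would handle it by induction on $r$ using the concatenation structure: split $\Lambda=\Lambda_1\obot\Lambda_2$ according to blocks of $e$, so the count factors. This reduces the claim to rank two with $e=(a,b)$, $|a-b|\le1$ or in general. There I would count directly the isotropic versus non-isotropic lines in a two-dimensional hermitian $\F_{q^2}$-space, of which there are $q+1$ and $q^2-q$ respectively, and match these against the coefficients $(-q)^{b-a-1}$ in $\Rel(a,b)$.
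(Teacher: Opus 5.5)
The paper gives no argument for this statement; it is quoted from \cite[Theorem 4.16]{CZ}. Judged on its own, your proposal has two fixable slips and one genuine gap.

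\emph{The slips.} First, the direction of the count. With $(T_{i,r}f)(\Lambda)=\sum_{\varpi\Lambda\subseteq\Lambda'\subseteq\Lambda}f(\Lambda')$ (the convention used in the proof of \Cref{DeltaPhi}) and the naive pairing, $(T_{i,r}^*\delta_e)(f)$ counts \emph{sublattices} $\varpi\Lambda_e\subseteq\Lambda'\subseteq\Lambda_e$ of colength $i$ and type $f$. Superlattices have smaller types, while $\Delta_{i,r}$ only raises them.

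Second, the exponent. Your cell sizes $q^{2\mathrm{inv}(\epsilon)}$ are exactly right, and there is nothing to reconcile: the displayed $q^{\mathrm{inv}(\epsilon)}$ is a typo. The proof of \Cref{DeltaPhi} itself uses $\sum_i q^{i^2}\Delta_{i,r}x^i=\sum_\epsilon q^{\lambda_1(\epsilon)^2+2\mathrm{inv}(\epsilon)}x^{\lambda_1(\epsilon)}t(2\epsilon)$. With $q^{\mathrm{inv}(\epsilon)}$ the statement is false. Read the second term of $\mathrm{Rel}(a,b)$ as $\delta_{(a+1,b-1)}$, as homogeneity requires. Then $\Delta_{1,2}\mathrm{Rel}(a)\equiv(q^2-q)(\delta_{(a+3,a)}-\delta_{(a+2,a+1)})\not\equiv 0$, whereas with $q^{2\mathrm{inv}(\epsilon)}$ it vanishes.

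With the correct exponent, your reduction of the first claim to rank two is fine. The cross-block inversions of $\epsilon\in\{0,1\}^r$ equal $\lambda_1(\epsilon_A)\lambda_0(\epsilon_B)$, so the twist depends only on the number of ones in each block.

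\emph{The gap is the main step.} The count does not factor along an orthogonal splitting $\Lambda=\Lambda_1\oplus\Lambda_2$. Subspaces $U\subseteq\bar\Lambda_1\oplus\bar\Lambda_2$ are generally not of the form $U_1\oplus U_2$, and the non-split ones carry the content. For $e=(a+1,a)$ with orthogonal basis $v_1,v_2$, the $q^2-1$ lines $\langle\bar v_1+c\bar v_2\rangle$ with $c\neq0$ all give type $(a+3,a)$. They produce the coefficient $q^2$, where a factorized count gives $1$.

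Nor does the identity hold cell by cell. For $e=(a,a)$, the one-point cell $\{\langle\bar v_1\rangle\}$ contributes $\delta_{(a,a+2)}\equiv(1+q)\delta_{(a+1,a+1)}-q\delta_{(a+2,a)}$, while the actual lattice $\mathcal{O}v_1\oplus\varpi\mathcal{O}v_2$ has type $(a+2,a)$. Only the sum over both cells matches the $q+1$ isotropic and $q^2-q$ anisotropic lines.

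Above all, a block of $m$ equal entries cannot be reduced to rank two. There, $U$ gives type $((a+2)^{i-\rho},(a+1)^{2\rho},a^{m-i-\rho})$ with $\rho=\dim\mathrm{rad}(U)$. One must show that the number of $(m-i)$-dimensional subspaces of a nondegenerate hermitian $\mathbb{F}_{q^2}^m$ with $\rho$-dimensional radical equals the corresponding coefficient of $\mathrm{str}^\natural(\Delta_{i,m}\delta_{(a,\ldots,a)})$. Already for $m=3$, $i=1$, this says there are $q^3+1$ degenerate planes (the $\ell^\perp$ for isotropic points $\ell$ of the hermitian curve) and $q^4-q^3+q^2$ nondegenerate ones. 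These numbers must be matched with the straightening of $q^4\delta_{(a+2,a,a)}+q^2\delta_{(a,a+2,a)}+\delta_{(a,a,a+2)}$. That is a global count, not a rank-two check.

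What is missing is, for instance, two ingredients.
\begin{enumerate}
\item[(i)] Consider the filtration of $\Lambda/\varpi\Lambda$ by the blocks of $e$, largest entries first. One needs that the type of $\Lambda'$ depends only on the associated graded of $U$ with its induced forms, and that the fibres of $U\mapsto\mathrm{gr}\,U$ have $q^{2(\text{cross-block inversions})}$ elements. This reduces everything to single blocks.
\item[(ii)] One needs the single-block identity for every $m$. This requires an induction in which the extension class of $U$, not just its graded pieces, controls the radical.
\end{enumerate}
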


\begin{definition}
    Given $e\in\Typ_r^0,$ we consider the element $\phi_e^\natural\in \Z\llbracket\Typ_r^0]$ given by
    \begin{equation*}
        \phi_e^\natural(f)=\sum_{\substack{\Lambda_e\subseteq \Lambda_f\\\typ(\Lambda_e)=e}}c(\dim(\Lambda_f/(\varpi \Lambda_f+\Lambda_e)))
    \end{equation*}
    where $\Lambda_f\subseteq W$ is any fixed lattice with $\typ(\Lambda_f)=f$ and where $c(k)\defeq\prod_{i=1}^k(1-q^{2i}).$
\end{definition}
\begin{proposition}\label{DeltaPhi}
    Denote $\Delta_{r,\phi}\colon\Z[\Typ_r\rrbracket\to\Z[\Typ_r\rrbracket$ the operator induced by
    \begin{equation*}
        \bigstar_{i=1}^r\left(\frac{t(0)-q^{2(r-i+1)}t(2)}{t(0)-q^{2(r-i)}t(2)}\right)=\frac{t(0)-q^{2r}t(2)}{t(0)-q^{2(r-1)}t(2)}\star\frac{t(0)-q^{2(r-1)}t(2)}{t(0)-q^{2(r-2)}t(2)}\star\cdots\star\frac{t(0)-q^2t(2)}{t(0)-t(2)}
    \end{equation*}
    or, in other words, induced by
    \begin{equation*}
        \sum_{\epsilon\in\Z_{\ge0}^r}(1-q^2)^{\lambda_{\neq0}(\epsilon)}q^{\sum_i2(r-i)\epsilon_i}t(2\epsilon).
    \end{equation*}
    Then $\Delta_{r,\phi}$ preserves $\mathrm{Rel}_r^\natural,$ and for all $e,f\in\Typ_r^0$ we have
    \begin{equation*}
        \phi^\natural_e(f)=\langle\mathrm{str}^\natural(\Delta_{r,\phi}\delta_f),\delta_e\rangle.
    \end{equation*}
\end{proposition}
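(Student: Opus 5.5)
The plan is to write $\phi_e^\natural$ as a sum over intermediate ``types'' of lattices, then identify each piece with a Hecke operator of the form $\Delta_{i,r}$, and finally use \Cref{Heckenatural} to pass to adjoints.

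\textbf{Step 1: rewrite $c(\cdot)$ as a count.} Fix $\Lambda_f$ with $\typ(\Lambda_f)=f$. For a lattice $\Lambda_e\subseteq\Lambda_f$, put $k=\dim\Lambda_f/(\varpi\Lambda_f+\Lambda_e)$. I would interpret $c(k)=\prod_{i=1}^k(1-q^{2i})$ through a Möbius/Cauchy-binomial type inversion over subspaces of $\Lambda_f/\varpi\Lambda_f$. Concretely, I expect
\begin{equation*}
    c(k)=\sum_{j}(-1)^j q^{j(j+1)}\binom{k}{j}_{q^2}.
\end{equation*}
This should turn $\phi_e^\natural(f)$ into a signed, weighted count of chains $\Lambda_e\subseteq\Lambda'\subseteq\Lambda_f$ with $\varpi\Lambda_f\subseteq\Lambda'$. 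Equivalently, I would induct on $r$ by peeling off one elementary divisor at a time. I expect this to match the factorization of $\Delta_{r,\phi}$ as a $\star$-product of the single-variable series
\begin{equation*}
    \frac{t(0)-q^{2(r-i+1)}t(2)}{t(0)-q^{2(r-i)}t(2)}.
\end{equation*}

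\textbf{Step 2: identify the pieces with Hecke operators.} The count $\#\{\Lambda_e\subseteq\Lambda_f:\typ\Lambda_e=e\}$ is the pairing of $\delta_e$ with a Hecke operator $\mathbbm{1}[K\varpi^\mu K]$ applied to $\delta_f$. Such an operator is a polynomial in the $T_{i,r}$. By \Cref{Heckenatural}, its adjoint is the corresponding polynomial in the $\Delta_{i,r}$, followed by $\mathrm{str}^\natural$.

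\textbf{Step 3: assemble the generating function.} Summing over all $\mu$ with the coefficients coming from Step 1, I would check that the resulting operator on $\Z[\Typ_r\rrbracket$ equals the displayed series. The natural tool is the generating function
\begin{equation*}
    \sum_i(-X)^i\Delta_{i,r}=\bigstar_j\bigl(t(0)-q^{2(r-j)}Xt(2)\bigr),
\end{equation*}
with each factor's $q$-power read off from $\inv(\epsilon)$. Inverting such products should produce the geometric series in the denominator. Two points must be checked here. First, $\Delta_{r,\phi}$ preserves $\Rel_r^\natural$: this follows because it is a power series in the $\Delta_{i,r}$, each of which preserves $\Rel_r$. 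Second, the infinite sum is well-defined on $\Z[\Typ_r\rrbracket$, which holds since every term shifts by nonnegative amounts.

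\textbf{Main obstacle.} The main obstacle is Step 3. The difficulty is matching the $q$-powers exactly, namely the exponent $2(r-i)\epsilon_i$ and the factor $(1-q^2)^{\lambda_{\neq0}(\epsilon)}$, with the subspace-counting coefficients from Step 1. My first attempt would be to verify the case $r=1$ directly by lattice counting. I would then use the $\star$-multiplicativity of both sides to reduce to rank one, though this reduction still needs to be checked to hold modulo $\Rel_r$.
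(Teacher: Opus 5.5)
Your overall plan is the paper's. Your Step 1 identity $c(k)=\sum_j(-1)^jq^{j(j+1)}\qbinom{k}{j}{q^2}$ is the $q$-binomial theorem at $x=-q$. Summing over chains $\Lambda_e\subseteq\Lambda'\subseteq\Lambda_f$ with $\varpi\Lambda_f\subseteq\Lambda'$ then gives $\phi^\natural_e=\sum_j(-1)^jq^{j(j+1)}T_{j,r}\,d_e$, where $d_e(f)=\#\{\Lambda_e\subseteq\Lambda_f\colon\typ(\Lambda_e)=e\}$.

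The gap is your treatment of $d_e$. It is not a single Hecke operator applied to $\delta_e$, as Step 2 says, but a sum of $\mathbbm{1}[K\varpi^\mu K]\delta_e$ over all $\mu\ge0$. Noting that each term is a polynomial in the $T_{i,r}$ gives no closed form for this sum, and Step 3 only hopes that ``inverting such products'' produces the denominator.

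The missing idea is to run the same chain computation with weight $(-1)^jq^{j(j-1)}$. This is the M\"obius function of the subspace lattice of $\Lambda_f/(\varpi\Lambda_f+\Lambda_e)$ over $\F_{q^2}$. Indeed $\sum_j(-1)^jq^{j(j-1)}\qbinom{k}{j}{q^2}=\prod_{i=1}^k(1-q^{2i-2})$ vanishes unless $k=0$, and $k=0$ forces $\Lambda_e=\Lambda_f$ by Nakayama. Hence $\delta_e=\sum_j(-1)^jq^{j(j-1)}T_{j,r}\,d_e$, so $d_e$ is the inverse of this operator applied to $\delta_e$. The inverse is a geometric series, since the $j\ge1$ terms raise total degree. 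Therefore $\phi^\natural_e=T_{r,\phi}\delta_e$ for an explicit ratio $T_{r,\phi}$, and by \Cref{Heckenatural} its adjoint is the same ratio in the $\Delta_{j,r}$. Preservation of $\mathrm{Rel}_r$ then follows as you say.

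Your generating function is also wrong as stated. $\mathrm{inv}(\epsilon)$ is not additive over coordinates: for $r=2$, $\mathrm{inv}(1,0)=1$ while $\mathrm{inv}(0,1)=\mathrm{inv}(1,1)=0$. So $\sum_i(-X)^i\Delta_{i,r}$ is not a $\star$-product; it needs a quadratic weight in $i$. Take the coefficient of $t(2\epsilon)$ in $\Delta_{i,r}$ to be $q^{2\mathrm{inv}(\epsilon)}$, as in the paper's computation. Then the identity $\lambda_1(\epsilon)(\lambda_1(\epsilon)-1)+2\mathrm{inv}(\epsilon)=\sum_i2(r-i)\epsilon_i$ gives
\begin{equation*}
\sum_i q^{i(i-1)}(-X)^i\Delta_{i,r}=\bigstar_{j=1}^r\bigl(t(0)-q^{2(r-j)}X\,t(2)\bigr).
\end{equation*}
At $X=q^2$ this is the adjoint of your Step 1 operator, giving the numerator of $\Delta_{r,\phi}$. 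At $X=1$ it is the adjoint of the operator inverted above, giving the denominator. The paper phrases this as $\sum_iq^{i^2}x^i\Delta_{i,r}=\bigstar_i\bigl(t(0)+q^{2(r-i)+1}x\,t(2)\bigr)$ evaluated at $x=-q$ and $x=-q^{-1}$.

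Finally, your fallback of reducing to $r=1$ via ``$\star$-multiplicativity of both sides'' does not work. The lattice count $\phi^\natural_e(f)$ has no such multiplicativity. The $\star$-factorization exists only for the adjoint operator on $\Z[\Typ_r\rrbracket$, after passing through \Cref{Heckenatural}, which is why you need the two Hecke-operator identities above.
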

\begin{proof}
For $e\in\Typ_r^0,$ consider $d_e\in\Z\llbracket\Typ_r^0]$ given by
\begin{equation*}
    d_e(f)=\#\{\Lambda_e\subseteq\Lambda\colon\typ(\Lambda_e)=e\}
\end{equation*}
where $\Lambda_f\subseteq W$ is any fixed lattice with $\typ(\Lambda_f)=f.$

We claim that we have
\begin{equation*}
    \phi_e^\natural=\sum_{i=0}^r T_{i,r}q^{i^2}(-q)^{i}d_e\quad\text{and}\quad
    \delta_e=\sum_{i=0}^r T_{i,r}q^{i^2}(-q)^{-i}d_e.
\end{equation*}
For this, we compute $\sum_{i=0}^rT_{i,r}q^{i^2}x^id_e.$ Given $\Lambda_f$ with $\typ(\Lambda_f)=f,$ we have
\begin{equation*}
    (\sum_{i=0}^rT_{i,r}q^{i^2}x^id_e)(f)=\sum_{\substack{\varpi\Lambda_f\subseteq \Lambda\subseteq\Lambda_f}}q^{\dim(\Lambda_f/\Lambda)^2}x^{\dim(\Lambda_f/\Lambda)}\sum_{\substack{\Lambda_e\subseteq\Lambda\\\typ(\Lambda_e)=e}}1.
\end{equation*}
Changing the order of summation, this is
\begin{equation*}
\begin{split}
    &\sum_{\substack{\Lambda_e\subseteq\Lambda_f\\\typ(\Lambda_e)=e}}\sum_{\varpi\Lambda_f+\Lambda_e\subseteq\Lambda\subseteq\Lambda_f}q^{\dim(\Lambda_f/\Lambda)^2}x^{\dim(\Lambda_f/\Lambda)}\\
    &\qquad=\sum_{\substack{\Lambda_e\subseteq\Lambda_f\\\typ(\Lambda_e)=e}}\sum_{i=0}^{\dim(\Lambda_f/(\varpi\Lambda_f+\Lambda_e))}q^{i^2}x^{i}\qbinom{\dim(\Lambda_f/(\varpi\Lambda_f+\Lambda_e))}{i}{q^2}.
\end{split}
\end{equation*}
Now both claims follows from the $q$-binomial theorem
\begin{equation*}
    \sum_{i=0}^nq^{i^2}x^{i}\qbinom{n}{i}{q^2}=\prod_{i=1}^n(1+q^{2i-1}x).
\end{equation*}

This allow us to write $\phi_e^\natural=T_{r,\phi}\delta_e$ where
\begin{equation*}
    T_{r,\phi}\defeq\left(\sum_{i=0}^rT_{i,r}q^{i^2}(-q)^i\right)\left(1-(\sum_{i=1}^rT_{i,r}q^{i^2}(-q)^{-i})+(\sum_{i=1}^rT_{i,r}q^{i^2}(-q)^{-i})^2-\cdots\right).
\end{equation*}
Hence if $\Delta_{r,\phi}$ denotes the adjoint of $T_{r,\phi},$ by \Cref{Heckenatural}, we have
\begin{equation*}
    \Delta_{r,\phi}=\left(\sum_{i=0}^r\Delta_{i,r}q^{i^2}(-q)^i\right)\left(1-(\sum_{i=1}^r\Delta_{i,r}q^{i^2}(-q)^{-i})+(\sum_{i=1}^r\Delta_{i,r}q^{i^2}(-q)^{-i})^2-\cdots\right).
\end{equation*}

Now if we consider
\begin{equation*}
    \Delta_r(x)\defeq\sum_{k=0}^rq^{i^2}\Delta_{i,r}\cdot x^i
\end{equation*}
then we have
\begin{equation*}
    \Delta_r(x)=\sum_{\epsilon\in\{0,1\}^r}q^{\lambda_{1}(\epsilon)^2+2\mathrm{inv}(\epsilon)}x^{\lambda_{1}(\epsilon)}t(2\epsilon).
\end{equation*}
Noting that
\begin{equation*}
    \lambda_{1}(\epsilon)^2+2\mathrm{inv}(\epsilon)=\sum_{i=1}^r(2(r-i)+1)\epsilon_i.
\end{equation*}
we arrive at
\begin{equation*}
    \Delta_r(x)=(t(0)+q^{2r-1}x\cdot t(2))\star(t(0)+q^{2r-3}x\cdot t(2))\star\cdots\star(t(0)+qx\cdot t(2)).
\end{equation*}
Hence
\begin{equation*}
    \Delta_{r,\phi}=\frac{t(0)-q^{2r}t(2)}{t(0)-q^{2(r-1)}t(2)}\star\frac{t(0)-q^{2(r-1)}t(2)}{t(0)-q^{2(r-2)}t(2)}\star\cdots\star\frac{t(0)-q^2t(2)}{t(0)-t(2)},
\end{equation*}
which we can expand as
\begin{equation*}
\begin{split}
    \Delta_{r,\phi}\ &=\bigstar_{i=1}^{r}\left(t(0)-q^{2(r-i)+2}t(2)\right)\left(t(0)+q^{2(r-i)}t(2)+(q^{2(r-i)}t(2))^2+\cdots\right)\\
    &=\bigstar_{i=1}^r\left(t(0)+q^{2(r-i)}(1-q^2)t(2)+q^{4(r-i)}(1-q^2)t(4)+\cdots\right)\\
    &=\sum_{\epsilon\in\Z_{\ge0}^r}(1-q^2)^{\lambda_{\neq0}(\epsilon)}q^{\sum_i2(r-i)\epsilon_i}t(2\epsilon).\qedhere
\end{split}
\end{equation*}
\end{proof}

\subsection{Spherical functions for \texorpdfstring{$(U(W_1)\times U(W_2))\backslash U(W_1\oplus W_2)$}{(U(W1)xU(W2))\textbackslash{}U(W1+W2)}}
Let $W_1,W_2$ be two nondegenerate $F/F^+$ hermitian space of rank $r$ for some $r\ge1,$ such that $W_1\oplus W_2$ is a split hermitian space of rank $2r.$ Denote $G=U(W_1\oplus W_2)$ and $H=U(W_1)\times U(W_2).$

For a choice of a hyperspecial subgroup $K$ of $U(W_1\oplus W_2),$ we consider the Hecke algebra $\mathbb{T}_r^\flat\defeq\Z[K\backslash G/K],$ with Satake homomorphism $\mathrm{Sat}\colon\mathbb{T}_r^\flat\to\Z[\bm{\mu}_1,\ldots,\bm{\mu}_r]^{\mathrm{sym}}$ as in \cite[Definition 3.2.4]{CZ2}.
\begin{proposition}[{\cite[Proposition 3.1.6]{CZ2}}]\label{TypFJ}
    Let $\Lambda$ be the lattice for which $K=\mathrm{Stab}(\Lambda),$ which satisfies $\Lambda^\vee=\varpi^i\Lambda$ for some $i.$ Then we have an injection
    \begin{equation*}
        \typ_K\colon H\backslash G/K\hookrightarrow\Typ_r^{0,\flat}
    \end{equation*}
    given by $\typ_K(HgK)=e=(e_1\ge\cdots\ge e_r)$ where
    \begin{equation*}
        \typ(g\Lambda\cap W_2)=(e_1-i\ge\cdots\ge e_r-i).
    \end{equation*}
    Moreover, if we denote $\delta=0$ resp. $\delta=1$ if $W_2$ is split resp. nonsplit, then the image of $\typ_K$ is precisely the set of $e=(e_1\ge\cdots\ge e_r\ge 0)$ such that $\sum_{i=1}^re_i\equiv\delta\mod 2.$
\end{proposition}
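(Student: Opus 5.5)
Since $U(W_2)$ is one factor of $H=U(W_1)\times U(W_2)$, the map $HgK\mapsto\typ(g\Lambda\cap W_2)$ is well defined, and the task is to show it is injective with the stated image. The plan is to reduce the $H$-orbits of $g\Lambda$ to lattice data in $W_1$ and $W_2$ and to apply \Cref{Typ} to each factor. Replacing $\Lambda$ by a scalar multiple, we may assume $\Lambda^\vee=\varpi^i\Lambda$ with $i\in\{0,1\}$; each $g\Lambda$ satisfies the same relation. Put $\Lambda_2\defeq g\Lambda\cap W_2$. Let $\Lambda_1'\subseteq W_1$ be the image of $g\Lambda$ under the orthogonal projection to $W_1$, and define $\Lambda_1\defeq g\Lambda\cap W_1$ and $\Lambda_2'$ similarly.

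The first step is to show that $g\Lambda$ is recovered from $\Lambda_2$ up to the action of $H$. Duality gives $(\Lambda_1')^\vee=\varpi^i\Lambda_1$ and $(\Lambda_2')^\vee=\varpi^i\Lambda_2$, viewing $\Lambda_j'$ and $\Lambda_j$ inside $W_j$. Projection induces a gluing isomorphism
\[
\gamma\colon\Lambda_1'/\Lambda_1\rightiso\Lambda_2'/\Lambda_2,
\]
and $g\Lambda$ is the graph of $\gamma$. The condition $(g\Lambda)^\vee=\varpi^i g\Lambda$ says that $\gamma$ is an anti-isometry for the discriminant forms on these torsion modules. So $H$-orbits of such $g\Lambda$ correspond to $H$-orbits of triples $(\Lambda_1,\Lambda_2,\gamma)$. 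Knowing $\Lambda_2$ up to $U(W_2)$ determines $\Lambda_2'=\varpi^{-i}\Lambda_2^\vee$ and the discriminant form on $\Lambda_2'/\Lambda_2$. The pair $(\Lambda_1,\Lambda_1')$ then has the opposite discriminant form, so its type is determined. By \Cref{Typ}, this pins down $\Lambda_1$ up to $U(W_1)$.

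It remains to show that all gluings $\gamma$ are conjugate. The stabilizers $\mathrm{Stab}_{U(W_j)}(\Lambda_j)$ surject onto the unitary automorphism groups of the discriminant forms $\Lambda_j'/\Lambda_j$; this is a standard Hensel-type lifting for unramified hermitian lattices, applied after a Jordan splitting. Granting that, any two anti-isometries differ by an automorphism that lifts to $U(W_1)$, which proves injectivity. I expect this lifting statement to be the main technical obstacle. I would prove it by splitting $\Lambda_j$ into rank-one Jordan components, which is possible because $F/F^+$ is unramified and $p$ is odd, and checking surjectivity on each graded piece.

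For the image, set $f=\typ(\Lambda_2)$. The shift in the statement gives $e_k=f_k+i$, and the conditions $e_r\ge0$ and $\sum_k e_k\equiv\delta$ should follow. The parity condition is \Cref{Typ} applied to $W_2$, once one checks $ri\equiv0\bmod 2$ relative to the convention for $\delta$ (or absorbs it through the choice of scaling). The condition $e_r\ge 0$ comes from $\Lambda_2\subseteq\Lambda_2'=\varpi^{-i}\Lambda_2^\vee$, i.e. $\varpi^i\Lambda_2\subseteq\Lambda_2^\vee$, which gives $f_k\ge -i$.

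For surjectivity, start from any $\Lambda_2$ of admissible type. The quotient $\Lambda_2'/\Lambda_2$ has at most $r$ cyclic summands, so there is room in $W_1$, which has rank $r$. Choose $\Lambda_1\subseteq W_1$ whose discriminant module is isomorphic, with the negated form. Such a $\Lambda_1$ exists by \Cref{Typ} for $W_1$: because $W_1\oplus W_2$ is split, the parity of $W_1$ is forced to match what is needed. Gluing then produces the required $g\Lambda$, and it lies in the $G$-orbit of $\Lambda$ because all lattices with $\Lambda^\vee=\varpi^i\Lambda$ in a split space are $G$-conjugate.
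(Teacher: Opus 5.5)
Your reduction to gluing data is sound. The identities $(\Lambda_j')^\vee=\varpi^i\Lambda_j$, the description of $g\Lambda/(\Lambda_1\oplus\Lambda_2)$ as the graph of an anti-isometry $\gamma$, the forced equality $\typ(\Lambda_1)=\typ(\Lambda_2)$, the bound $e_r\ge0$, and the existence construction are all correct.

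\textbf{The gap.} The step that carries the entire content of injectivity is that $\mathrm{Stab}(\Lambda_1)\times\mathrm{Stab}(\Lambda_2)$ acts transitively on the anti-isometries $\gamma$. You replace it by the stronger claim that $\mathrm{Stab}_{U(W_j)}(\Lambda_j)\to U(\Lambda_j'/\Lambda_j)$ is surjective, call this standard, and propose to check it component by component on a Jordan splitting. As described this cannot work. A componentwise check only reaches the automorphisms that preserve the chosen splitting, whereas the unitary group of a discriminant form $A$ preserves only the characteristic filtrations $\varpi^kA$ and $A[\varpi^k]$. For instance, take $A=\O_F/\varpi^2\cdot a\oplus\O_F/\varpi\cdot b$ with $q(a,a)=\varpi^{-2}$, $q(b,b)=\varpi^{-1}$, $q(a,b)=0$. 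There are isometries of $A$ with $a\mapsto ua+b$, for any $u$ with $\mathrm{N}(u)\equiv1-\varpi\bmod\varpi^2$. Making your route rigorous would need a genuine d\'evissage along these filtrations, and none is supplied.

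\textbf{A direct fix.} You can avoid the lifting lemma altogether.
Rescale the form by $\varpi^i$, so that $M\defeq g\Lambda$ is self-dual and $e=\typ(M_2)$ for $M_2\defeq M\cap W_2$.
If some $e_k>0$, split off a Jordan component $M_2=\O_Fy\oplus M_2^{(1)}$ with $\langle y,y\rangle=\varpi^f$ and $f\ge1$.
Since $\mathrm{pr}_{W_2}(M)=M_2^\vee\ni\varpi^{-f}y$, there is $z\in M$ with $\mathrm{pr}_{W_2}(z)=\varpi^{-f}y$.
Then $\langle z,y\rangle=1$ and $c\defeq\langle z,z\rangle\in\O_{F^+}$, so $P\defeq\O_Fz+\O_Fy$ has Gram determinant $c\varpi^f-1\in\O_{F^+}^\times$. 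Hence $M=P\oplus(M\cap P^\perp)$.
Moreover $P\otimes F=Fx\oplus Fy$ with $x\defeq z-\varpi^{-f}y\in W_1$. Therefore $M\cap P^\perp$ is self-dual in $(W_1\cap x^\perp)\oplus(W_2\cap y^\perp)$ and meets $W_2$ exactly in $M_2^{(1)}$, so you may induct on $r$.
When all remaining entries of the type are $0$, the equality $\mathrm{pr}_{W_2}(M)=M\cap W_2$ forces $M=(M\cap W_1)\oplus(M\cap W_2)$.

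It remains to match two such decompositions. Two blocks $P,\tilde P$ with the same $f$ are matched by $x\mapsto\mu\tilde x$, $y\mapsto\tilde y$, where $\mu\in1+\varpi^f\O_F$ has the prescribed norm; such $\mu$ exists since $F/F^+$ is unramified. This map sends $P$ onto $\tilde P$, because $\mu\tilde x+\varpi^{-f}\tilde y=\mu\tilde z+(1-\mu)\varpi^{-f}\tilde y$. Witt's theorem in $W_1$ and in $W_2$ then assembles these block isometries into $h\in H$ with $hM=\tilde M$.

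\textbf{Parity.} The parity step cannot be completed by rescaling $\Lambda$. Replacing $\Lambda$ by $\varpi^k\Lambda$ replaces $i$ by $i-2k$, so $i\bmod 2$ is an invariant of $K$. What your argument actually yields is $\sum_ke_k\equiv\delta+ri\pmod 2$.

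When $ri$ is odd, this contradicts the statement as literally written. Take $r=i=1$ and $\Lambda=\O_Fx\oplus\O_Fy$ with $\langle x,x\rangle=\langle y,y\rangle=\varpi^{-1}$. One gets $e=(0)$, while $W_2=Fy$ has $\delta=1$ in the normalization of \Cref{Typ}. The claim is correct if split/nonsplit refers to $(W_2,\varpi^i\langle\cdot,\cdot\rangle)$. You should state that convention explicitly rather than try to verify $ri\equiv0$.
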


\begin{theorem}[{\cite[Theorem 4.16]{CZ}}]\label{Heckeflat}
    For $0\le i\le r,$ we consider the operators
    \begin{equation*}
        \Delta_{i,r}^\flat\defeq\sum_{\substack{\epsilon\in\{-1,0,1\}\\\lambda_0(\epsilon)=r-i}}q^{2\widetilde{\inv}(\epsilon)}q^{\lambda_1(\epsilon)^2+(r-\lambda_{-1}(\epsilon)^2)-(r-i)^2}t(2\epsilon).
    \end{equation*}
    We have that $\Delta_{i,r}^\flat$ preserves $\Rel_r^\flat.$ Under the injection $\typ_K\colon\Z[H\backslash G/K]\hookrightarrow\Typ_r^0,$ we denote $S_{i,r}^\flat$ the adjoint of $\Delta_{i,r}^\flat,$ that is, the operator such that
    \begin{equation*}
        \langle S_{i,r}^\flat f,g\rangle=\langle f,\mathrm{str}^\natural(\Delta^\flat_{i,r}g)\rangle
    \end{equation*}
    for all $f,g\in\Z[\Typ_r^0].$ Then $S_{i,r}^\flat\in\mathbb{T}_r^\flat,$ and $\mathrm{Sat}(S_{i,r}^\flat)=q^{r^2-(r-i)^2}s_i(\bm{\mu}_1,\ldots,\bm{\mu}_i).$
\end{theorem}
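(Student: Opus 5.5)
The plan is to reduce everything to explicit lattice counting, as in the proof of \Cref{Heckenatural}, by using the parametrization of \Cref{TypFJ}. A double coset $HgK$ is recorded by the type of the lattice $g\Lambda\cap W_2$, and, dually, by the projection of $g\Lambda$ to $W_1$.

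\textbf{Step 1: unstraightened counting formula.} I take the Hecke operator supported on $K\varpi^{\mu}K$ for the $i$-th minuscule-type coweight. Its action on $\mathbbm{1}_{HgK}$ is governed by the lattices $\Lambda'$ in relative position $\mu$ with $g\Lambda$. For each such $\Lambda'$ I determine $\typ(\Lambda'\cap W_2)$ in terms of $\typ(g\Lambda\cap W_2)$.
\begin{itemize}
\item The relative position forces each elementary divisor of $\Lambda'\cap W_2$ to move by $-2$, $0$ or $+2$; this is the source of the exponent $\epsilon\in\{-1,0,1\}^r$.
\item The number of $\Lambda'$ realizing a given $\epsilon$ is a power of $q$. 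It is the product of a Schubert-cell count in the partial flag variety of $\Lambda/\varpi\Lambda$ (this gives $q^{2\widetilde{\inv}(\epsilon)}$) and a count of isotropic and coisotropic choices on the $W_1$- and $W_2$-sides (this gives $q^{\lambda_1(\epsilon)^2+(r-\lambda_{-1}(\epsilon))^2-(r-i)^2}$).
\end{itemize}
The result is an identity of the form $\langle S f,g\rangle=\langle f,\Delta^\flat_{i,r}g\rangle$, valid before straightening whenever $g$ is an indicator $\delta_e$ with $e$ strictly decreasing and far from $0$. Here $S$ is an explicit combination of the $\mathbbm{1}[K\varpi^{\mu_j}K]$, triangular in $j$, so that $S_{i,r}^\flat\in\mathbb{T}_r^\flat$.

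\textbf{Step 2: straightening and stability of $\Rel_r^\flat$.} For general $e$, the types produced in Step 1 can fail to be dominant, or can become negative. I expect the generators of $\Rel_r^\flat$ to record exactly the identities between lattice counts in these cases: the generators $\Rel(a)$ and $\Rel(a,b)$ when two entries collide, and the generators $\Rel^\flat(m)$ when an entry crosses $0$, since $\Lambda\cap W_2$ and its dual get exchanged. The identity $\langle S_{i,r}^\flat f,g\rangle=\langle f,\mathrm{str}^\natural(\Delta^\flat_{i,r}g)\rangle$ then follows once $\Delta^\flat_{i,r}$ is known to preserve $\Rel_r^\flat$.

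To prove that preservation, I note that the generating function $\sum_i x^i\Delta^\flat_{i,r}$ factors as a $\star$-product of one-variable operators, as in the proof of \Cref{DeltaPhi}. It therefore suffices to check two cases.
\begin{itemize}
\item Rank one: each $\Rel^\flat(m)$ is sent into $\Rel^\flat$. This is a three-term recursion analogous to the one in the proof of \Cref{strphi}.
\item Rank two: each $\Rel(a,b)$ and $\Rel(a)$ is sent into $\Rel_2$.
\end{itemize}
Both are finite computations.

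\textbf{Step 3: Satake transform.} I evaluate both sides on the spherical functions attached to unramified characters, equivalently by a leading-term and triangularity comparison. Pairing $S^\flat_{i,r}$ against the $H$-invariant functional defined by an unramified character, the factorization from Step 2 should produce the elementary symmetric polynomial $s_i(\bm{\mu}_1,\ldots,\bm{\mu}_r)$ times the normalizing power $q^{r^2-(r-i)^2}$. The required symmetry under the little Weyl group comes from the relations $\Rel^\flat(m)$, which implement $\bm{\mu}_j\mapsto\bm{\mu}_j^{-1}$.

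\textbf{Main obstacle.} I expect the hardest part to be Step 1 near the boundary, where some $e_j$ is $0$ or $1$, together with the matching verification that $\Rel^\flat(m)$ is preserved. There the intersection $\Lambda'\cap W_2$ interacts with both summands, and the $q$-power counts change shape. My first approach is to handle the rank-one case $r=1$ by hand, reducing to counting isotropic lines in a $2$-dimensional hermitian space over the residue field, and then to bootstrap to general $r$ through the $\star$-factorization.
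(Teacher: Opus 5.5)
The paper gives no proof of this statement; it is imported from Corato-Zanarella's earlier work \cite{CZ,CZ2}. Your outline therefore has to stand on its own, and it has a genuine gap in Step~2, which is where the content of the theorem lies.

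Step~1 gives, at best, $\langle Sf,\delta_e\rangle=\langle f,\Delta^\flat_{i,r}\delta_e\rangle$ for $e$ deep in the cone. You then say the identity for all $e$ ``follows once $\Delta^\flat_{i,r}$ is known to preserve $\Rel_r^\flat$''. That is a non sequitur.

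\begin{itemize}
\item Stability of $\Rel_r^\flat$ is a formal property of $\Delta^\flat_{i,r}$. It makes $\mathrm{str}^\flat\circ\Delta^\flat_{i,r}$ factor through $\mathrm{str}^\flat$, so that straightened adjoints compose correctly.
\item It says nothing about how many neighbours $\Lambda'$ of a lattice whose type lies on a wall (two equal entries, or entries near $0$) have $\Lambda'\cap W_2$ of a given type. Near a wall the count produces types not of the form $e+2\epsilon$; for instance, from $e=(c,c)$ one gets $(c-1,c-1)$.
\item The theorem is precisely the claim that these multiplicities are the coefficients of $\mathrm{str}^\flat(\Delta^\flat_{i,r}\delta_e)$. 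Two operators agreeing at all generic $e$ need not agree on the walls.
\end{itemize}

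For the same reason, Step~1 does not prove $S^\flat_{i,r}\in\mathbb{T}^\flat_r$. The operator $S^\flat_{i,r}$ is defined by the straightened formula, and identifying it with a Hecke operator is the boundary identity itself.

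Your fallback, doing $r=1$ by hand and bootstrapping via the $\star$-factorisation, also fails. That factorisation belongs to the formal operator, not to the neighbour count in $U(W_1\oplus W_2)$, which does not split coordinate by coordinate. The relations $\Rel(a)$ and $\Rel(a,b)$ come from rank-two configurations in $W_2$ that rank one cannot see.

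To close the gap you need an independent mechanism that computes the wall values. Two possibilities:
\begin{itemize}
\item Give non-dominant $\delta_\epsilon$ a lattice-theoretic meaning, e.g.\ counts of lattices with an orthogonal or filtered decomposition of prescribed graded types. Prove the generators of $\Rel_r^\flat$ as identities between such counts, and show the Hecke neighbour count decomposes this way for every $e$.
\item Use $H$-invariant Hecke eigenfunctions on $H\backslash G/K$. Show that their exponential-sum formula, extended to $\Typ_r$, is annihilated by $\Rel_r^\flat$, and that these eigenfunctions separate points. The identity then follows by matching eigenvalues with $\mathrm{Sat}$.
\end{itemize}
The second route would also make your Step~3 rigorous. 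As written, Step~3 presupposes that generic behaviour determines the Satake transform, and a leading-term and triangularity comparison does not fix the lower-order terms.

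A smaller error: $\sum_i x^i\Delta^\flat_{i,r}$ is not a $\star$-product. Already for $r=2$ the coefficients $1,q^6,q^4,q^8$ of $t((0,0)),t((2,0)),t((0,2)),t((2,2))$ are not multiplicative. What does factor is the weighted series
\[
\sum_{k=0}^r q^{k^2}\Delta^\flat_{r-k,r}\,x^{r-k}y^k=\bigstar_{j=1}^r\bigl(q^{4(r-j)+2}x\,t(2)+q^{2(r-j)+1}y\,t(0)+x\,t(-2)\bigr).
\]
The reduction of $\Rel_r^\flat$-stability to rank-one and rank-two checks should use this version.
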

\begin{theorem}[{\cite[Theorem 5.2.6]{CZ}}]\label{Heckeflathalf}
    For $0\le i\le r,$ we consider the operators
    \begin{equation*}
        \Delta_{i,r}^{1/2,\flat}\defeq\sum_{\substack{\epsilon\in\{-1,0,1\}\\\lambda_0(\epsilon)=r-i}}(-q)^{\widetilde{\inv}(\epsilon)}(-q)^{\lambda_1(\epsilon)(\lambda_0(\epsilon)+\lambda_1(\epsilon))}t(\epsilon).
    \end{equation*}
    We have that $\Delta_{i,r}^{1/2,\flat}$ preserves $\Rel_r^\flat.$ Under the injection $\typ_K\colon\Z[H\backslash G/K]\hookrightarrow\Typ_r^0,$ we denote $S_{i,r}^{1/2,\flat}$ the adjoint of $\Delta_{i,r}^{1/2,\flat},$ that is, the operator such that
    \begin{equation*}
        \langle S_{i,r}^{1/2,\flat} f,g\rangle=\langle f,\mathrm{str}^\natural(\Delta^{1/2,\flat}_{i,r}g)\rangle
    \end{equation*}
    for all $f,g\in\Z[\Typ_r^0].$ We denote $\mathbb{T}^{1/2,\flat}_r=\Z[S_{1,r}^{1/2,\flat},\ldots,S_{r,r}^{1/2,\flat}].$ Then we have $\mathbb{T}^\flat_r\subseteq\mathbb{T}^{1/2,\flat}_r$ and we can extend $\mathrm{Sat}\colon\mathbb{T}_r\to\Z[\bm{\mu}_1,\ldots,\bm{\mu}_r]^{\mathrm{sym}}$ to $\mathrm{Sat}\colon\mathbb{T}_r^{1/2,\flat}\to\Z[\sqrt{-q}][\bm{\nu}_1,\ldots,\bm{\nu}_r]^{\mathrm{sym}}$ where $\bm{\mu}_i=-\bm{\nu}_i^2-2$ and $\mathrm{Sat}(S_{i,r}^{1/2,\flat})=(-q)^{\frac{r^2-(r-i)^2}{2}}s_i(\bm{\nu}).$
\end{theorem}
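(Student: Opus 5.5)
The plan is to follow the pattern of \Cref{Heckeflat} and \Cref{DeltaPhi}: first establish the combinatorial statements about $\Delta_{i,r}^{1/2,\flat}$ on $\Z[\Typ_r]$, then identify the adjoint $S_{i,r}^{1/2,\flat}$ with a geometric lattice correspondence, and finally read off the Satake transform. I will package the operators into a generating function
\begin{equation*}
    \Delta_r^{1/2,\flat}(x)\defeq\sum_{i=0}^r\Delta_{i,r}^{1/2,\flat}x^i .
\end{equation*}
The aim is to show that, after rewriting the exponent $\widetilde{\inv}(\epsilon)+\lambda_1(\epsilon)(\lambda_0(\epsilon)+\lambda_1(\epsilon))$ as a sum $\sum_i c_i(\epsilon_i)$ with position-dependent weights, this generating function factors as a $\star$-product of one-variable operators of the form $t(0)+a_i x\,t(1)+b_i x\,t(-1)$. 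This is the analogue of the factorization $\Delta_r(x)=\bigstar_i(t(0)+q^{2r-2i+1}x\,t(2))$ used in the proof of \Cref{DeltaPhi}.

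Preservation of $\Rel_r^\flat$ will be checked on generators. Since $\Rel_r^\flat$ is generated under $\star$ by $\Rel_r$, $\Rel^\flat(1)$ and the $\Rel^\flat(m)$, and since $\Delta^{1/2,\flat}_r(x)$ is a $\star$-product of local factors, it suffices to do two things.
\begin{itemize}
    \item In rank $1$, show that the local factor maps each $\Rel^\flat(m)$ into $\Rel^\flat$. I expect identities of the shape $t(\pm1)\Rel^\flat(m)=\Rel^\flat(m\pm1)+(\text{multiple of }\Rel^\flat(m\mp1))$, exactly as in the proof of \Cref{strphi}.
    \item In rank $2$, check that swapping adjacent factors is compatible with $\Rel(a)$ and $\Rel(a,b)$ modulo $\Rel_2^\flat$, i.e. that the two orderings agree after applying $\mathrm{str}^\flat$ to finitely many test vectors such as $\delta_{(-1,-2)},\delta_{(-2,1)}$.
\end{itemize}
This reduction to small rank is also the step \cite[Proposition 4.2.7]{CZ2} uses, so I would import it there.

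Next I would show that $S^{1/2,\flat}_{i,r}$ exists and generates an algebra containing $\mathbb{T}_r^\flat$. Geometrically, the half-operators should be the compositions through the non-conjugate hyperspecial vertex: lattices $\Lambda'$ with $\Lambda'^\vee=\varpi^{i\pm1}\Lambda'$ and $\Lambda\cap\Lambda'$ of prescribed colength, analogous to $T^{\circ\bullet}$ and $T^{\bullet\circ}$ in \Cref{RZ-Intersection}. I would count, for $HgK$ of type $e$, the neighbours of a given type $\typ_K$. Using \Cref{TypFJ}, this amounts to counting lattices in $W_2$ between $g\Lambda\cap W_2$ and its perturbation by one step in each elementary divisor. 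That count yields $\sum_\epsilon(-q)^{\dots}t(\epsilon)$, matching $\Delta^{1/2,\flat}_{i,r}$ after straightening. The inclusion $\mathbb{T}^\flat_r\subseteq\mathbb{T}^{1/2,\flat}_r$ then follows from the identity
\begin{equation*}
    \sum_i\Delta^\flat_{i,r}x^i\equiv\text{(polynomial in the }\Delta^{1/2,\flat}_{j,r}\text{)}\pmod{\Rel_r^\flat},
\end{equation*}
obtained by squaring the rank-one factors: $(t(1)+t(-1))^2=t(2)+2+t(-2)$. This matches the substitution $\bm{\mu}=-\bm{\nu}^2-2$ up to signs.

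For the Satake transform, I would compare with \Cref{Heckeflat}. Since $\mathrm{Sat}(S^\flat_{i,r})=q^{r^2-(r-i)^2}s_i(\bm{\mu})$, the polynomial relation above forces $\mathrm{Sat}(S^{1/2,\flat}_{i,r})$ to be $(-q)^{(r^2-(r-i)^2)/2}s_i(\bm{\nu})$ under the stated substitution. The remaining sign and branch ambiguity is fixed by the leading-term coefficient, computed from the top-weight term $\epsilon=(1,\dots,1,0,\dots,0)$.

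I expect the main obstacle to be the rank-$2$ compatibility of $\Delta^{1/2,\flat}$ with $\Rel_r^\flat$. The odd shifts $t(\pm1)$ interact with the parity constraint of \Cref{TypFJ}: they move between types of different parity, i.e. between $W_2$ split and nonsplit. So one must verify that the two-sided relations $\Rel^\flat(m)$ and $\Rel(a,b)$ are carried to each other across the two parities, with explicit straightening computations of the kind carried out in \Cref{strphi}.
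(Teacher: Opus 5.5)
The paper quotes this statement from \cite{CZ} without proof. The proof of \Cref{ConjProof2} shows the intended route: a $\star$-factorized generating function, slotwise checks of $\Rel_r^\flat$-stability, and a squaring identity against \Cref{Heckeflat}. Your outline follows that route, but its central step fails as you state it. The exponent $\widetilde{\inv}(\epsilon)+\lambda_1(\epsilon)(\lambda_0(\epsilon)+\lambda_1(\epsilon))$ is \emph{not} a sum of slot weights $\sum_jc_j(\epsilon_j)$. For $r=2$ its values at $\epsilon=(0,0),(0,-1),(-1,0),(-1,-1)$ are $0,1,0,0$, whose mixed second difference is $-1\neq0$. In general a direct count gives
\begin{equation*}
    \widetilde{\inv}(\epsilon)+\lambda_1(\epsilon)\bigl(\lambda_0(\epsilon)+\lambda_1(\epsilon)\bigr)+\tfrac{1}{2}\lambda_0(\epsilon)^2=\sum_{j\colon\epsilon_j=1}\bigl(2(r-j)+1\bigr)+\sum_{j\colon\epsilon_j=0}\bigl(r-j+\tfrac{1}{2}\bigr).
\end{equation*}
So $\sum_i\Delta^{1/2,\flat}_{i,r}x^i$ is not a $\star$-product of one-slot operators. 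You dropped the normalizing weight, exactly as if one removed the $q^{i^2}$ from $\Delta_r(x)$ in the proof of \Cref{DeltaPhi}. The missing idea is to weight by $(-q)^{k^2/2}$ with $k=\lambda_0(\epsilon)$, i.e.\ to use $\sum_k(-q)^{k^2/2}\Delta^{1/2,\flat}_{r-k,r}x^{r-k}y^k$. Its slot-$j$ factor has $t(0)$-coefficient $(-q)^{r-j+1/2}y$, which is where $\Z[\sqrt{-q}]$ comes from.

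Your rank-one check then involves only the last factor, and it fixes a sign. One has $(t(-1)+q\,t(1))\Rel^\flat(1)=\Rel^\flat(2)$, and similarly for higher $m$ as for $\Rel^\phi$ in the proof of \Cref{strphi}. By contrast, $t(-1)-q\,t(1)$, which is what the displayed formula literally gives for $r=1$, sends $\Rel^\flat(1)$ to $\Rel^\flat(2)+2q(\delta_{(2)}-\delta_{(0)})\notin\Rel^\flat_1$. The single shifts fail too, e.g.\ $t(1)\Rel^\flat(1)=\delta_{(0)}-\delta_{(2)}$. So the coefficients must carry an extra $(-1)^{\lambda_1(\epsilon)}$, giving the factors $q^{2(r-j)+1}x\,t(1)+(-q)^{r-j+1/2}y\,t(0)+x\,t(-1)$ used in \Cref{flat-phi-computation} and \Cref{ConjProof2}. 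This is also the only sign compatible with $\bm\mu=-\bm\nu^2-2$ already for $r=1$. (The vectors $\delta_{(-1,-2)},\delta_{(-2,1)}$ from \Cref{strphi} test well-definedness of a straightening map, not stability. The rank-two check you need is that each adjacent pair of factors maps $\Rel_2$ into itself.)

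Downstream, set $a_j=q^{2(r-j)+1}$ and $b_j=(-q)^{r-j+1/2}$, so $b_j^2=-a_j$. The correct ``squaring'' is the slotwise difference of squares
\begin{equation*}
    \bigl(b_jy+x(a_jt(1)+t(-1))\bigr)\bigl(b_jy-x(a_jt(1)+t(-1))\bigr)=-\bigl(a_j^2x^2t(2)+a_j(y^2+2x^2)t(0)+x^2t(-2)\bigr).
\end{equation*}
Write $S_r^{1/2}(x,y)$ for the adjoint generating function $\sum_k(-q)^{k^2/2}S^{1/2,\flat}_{r-k,r}x^{r-k}y^k$. Taking adjoints is legitimate because all these operators commute and preserve $\Rel_r^\flat$, and it gives
\begin{equation*}
    S_r^{1/2}(x,y)\,S_r^{1/2}(-x,y)=(-1)^r\sum_{k=0}^rq^{k^2}S^\flat_{r-k,r}\,x^{2(r-k)}(y^2+2x^2)^k,
\end{equation*}
whence $\mathbb{T}^\flat_r\subseteq\mathbb{T}^{1/2,\flat}_r$. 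The term $2x^2$ is exactly the $-2$ in $\bm\mu=-\bm\nu^2-2$.

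In the Satake step nothing is ``forced'': this identity is invariant under $\bm\nu\mapsto-\bm\nu$. Nothing needs to be forced either, since the statement only asks that the prescribed values define a ring homomorphism extending $\mathrm{Sat}$, and the identity gives that compatibility. What you still need is well-definedness, i.e.\ algebraic independence of $S^{1/2,\flat}_{1,r},\dots,S^{1/2,\flat}_{r,r}$. A triangularity argument on leading terms of monomials applied to $\delta_{(0^r)}$ should give this.

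Finally, the geometric identification through the non-conjugate hyperspecial vertex is unnecessary. $S^{1/2,\flat}_{i,r}$ is \emph{defined} as an adjoint, so it exists as soon as stability is known. The identification is also speculative as stated, so drop it.
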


\begin{definition}\label{phiflatDef}
    For $e\in\Typ_r^{0,\flat},$ we denote $\phi^\flat_e\in\Z[\Typ_r^{0,\flat}]$ the image of $\phi^\natural_e$ under the restriction map $\Z\llbracket\Typ_r^0]\to\Z[\Typ_r^{0,\flat}].$ For $N\in\Z_{\ge1}\cup\{\infty\}$ and $0\le k\le r,$ we denote $\Typ_{r,[1,N],0^k}\subseteq\Typ_r^{0,\flat}$ the subset of $e\in\Typ_r^{0,\flat}$ with $\lambda_0(e)=k$ and $e_1\le N.$ We also denote
    \begin{equation*}
        \Delta_{r,[1,N],0^k}\quad\text{resp.}\quad\Phi_{r,[1,N],0^k}
    \end{equation*}
    to be the $\Z$-span of
    \begin{equation*}
        \{\delta_e\colon e\in\Typ_{r,[1,N],0^k}\}\quad\text{resp.}\quad\{\phi^\flat_e\colon e\in\Typ_{r,[1,N],0^k}\}.
    \end{equation*}
    We similarly denote $\Delta_{r,[1,N],0^{\le k}}$ and $\Phi_{r,[1,N],0^{\le k}},$ as well as $\Delta_{r,[1,N],0^{\ge k}}$ and $\Phi_{r,[1,N],0^{\ge k}}.$
\end{definition}

\begin{proposition}\label{flat-phi-computation}
    For $0\le i\le r$ and $e\in\Typ_r^{0,\flat},$ we have that $\Delta_{i,r}^{1/2,\flat}$ preserves $\Rel_r^\phi,$ and that
    \begin{equation*}
        S_{i,r}^{1/2,\flat}\phi^\flat_e=\sum_{g\in\Typ_r^{0,\flat}}\langle\mathrm{str}^\phi(\Delta_{i,r}^{1/2,\flat}\delta_g),\delta_e\rangle\cdot\phi^\flat_g
    \end{equation*}
\end{proposition}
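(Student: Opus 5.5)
The plan is to reduce the statement to an operator identity on $\Z[\Typ_r\rrbracket$ by exploiting the description of $\phi^\flat_e$ through $\Delta_{r,\phi}$. By \Cref{DeltaPhi} we have $\phi^\natural_e(f)=\langle\mathrm{str}^\natural(\Delta_{r,\phi}\delta_f),\delta_e\rangle$ for $e,f\in\Typ_r^0$. After restricting to $\Typ_r^{0,\flat}$, this should upgrade to
\begin{equation*}
    \phi^\flat_e(f)=\langle\mathrm{str}^\flat(\Delta_{r,\phi}\delta_f),\delta_e\rangle\quad\text{for }e,f\in\Typ_r^{0,\flat}.
\end{equation*}
To justify this, one checks that $\Rel_r^\flat$ contributes nothing after pairing against $\delta_e$ for $e\succcurlyeq0$. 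The map $\mathrm{str}^\flat$ modifies only coordinates pushed below $0$, and $\Delta_{r,\phi}$ only increases coordinates since $\epsilon\in\Z_{\ge0}^r$. So for $f\in\Typ_r^{0,\flat}$ the straightening never involves the relations $\Rel^\flat(m)$, and $\mathrm{str}^\flat$ agrees with $\mathrm{str}^\natural$ followed by restriction.

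The key step is an intertwining identity of the form
\begin{equation*}
    \mathrm{str}^\flat\circ\Delta_{r,\phi}=\Psi\circ\mathrm{str}^\phi,
\end{equation*}
where $\Psi\colon\Z[\Typ_r^{0,\flat}]\to\Z\llbracket\Typ_r^{0,\flat}\rrbracket$ is the matrix $\Psi(\delta_g)=\sum_e\phi^\flat_e(g)\,\delta_e$. Equivalently, $\Delta_{r,\phi}$ carries $\Rel_r^\phi$ into $\Rel_r^\flat$. I would verify this on generators. The operator $\Delta_{r,\phi}$ is a $\star$-product of one-variable factors $\frac{t(0)-q^{2(r-i+1)}t(2)}{t(0)-q^{2(r-i)}t(2)}$, and it preserves $\Rel_r$ by \Cref{DeltaPhi}. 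It therefore suffices to treat the one-variable relations $\Rel^\phi(m)$ placed in the last coordinate, with the others then handled via $\Rel_r$. For $r=1$ the factor is $\frac{t(0)-q^2t(2)}{t(0)-t(2)}$, and one checks directly that
\begin{equation*}
    (t(0)-q^2t(2))\Rel^\phi(m)\in(t(0)-t(2))\cdot\langle\Rel^\flat\rangle.
\end{equation*}
Here $\Rel^\phi(1)=\delta_{(-1)}-q^2\delta_{(1)}$ maps to a multiple of $\Rel^\flat(1)$, and the recursion already recorded in the proof of \Cref{strphi}, namely that $t(-1)+q\,t(1)$ preserves both families, propagates this to all $m$.

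With the intertwining in hand, the proposition follows by adjunction. Since $\Delta_{i,r}^{1/2,\flat}$ is a $\star$-polynomial in the $t(\pm1)$, it commutes with $\Delta_{r,\phi}$. It preserves $\Rel_r^\flat$ by \Cref{Heckeflathalf}, and so it preserves $\Rel_r^\phi=\Delta_{r,\phi}^{-1}(\Rel_r^\flat)\cap\Z[\Typ_r]$, where we use that $\Delta_{r,\phi}$ is invertible with inverse again a product of the same shape. For any $f$ we then compute
\begin{equation*}
    \langle S^{1/2,\flat}_{i,r}\phi^\flat_e,\delta_f\rangle=\langle\phi^\flat_e,\mathrm{str}^\flat(\Delta^{1/2,\flat}_{i,r}\delta_f)\rangle.
\end{equation*}
Applying the matrix identity, expanding $\Delta_{i,r}^{1/2,\flat}\Delta_{r,\phi}\delta_f=\Delta_{r,\phi}\Delta_{i,r}^{1/2,\flat}\delta_f$, and passing through the intertwining, the right side becomes
\begin{equation*}
    \sum_g\langle\mathrm{str}^\phi(\Delta^{1/2,\flat}_{i,r}\delta_g),\delta_e\rangle\,\phi^\flat_g(f),
\end{equation*}
which is the claim.

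The main obstacle is the intertwining step, and in particular making the invertibility and completion issues rigorous. The operators involve infinite sums bounded below, so each identity has to be checked degree by degree in $\Typ_r^{\Sigma=k}$. For the $r=1$ verification I would first compute the cases $m=1,2$ explicitly, then use the $t(-1)+q\,t(1)$ recursion to get all $m$.
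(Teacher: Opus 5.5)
Your overall adjunction strategy is the right one, but the key intertwining step points the wrong way, and as you state it, it is false. Take $r=1$. There $\mathrm{str}^\natural$ is the identity, $\Delta_{1,\phi}=\frac{t(0)-q^2t(2)}{t(0)-t(2)}=t(0)+(1-q^2)\sum_{k\ge1}t(2k)$, and $\Psi(\delta_g)=\Delta_{1,\phi}\delta_g$. One computes $\mathrm{str}^\flat(\Delta_{1,\phi}\delta_{(-1)})=(2-q^2)\delta_{(1)}+(1-q^2)\sum_{k\ge1}\delta_{(2k+1)}$. On the other hand, $\Psi(\mathrm{str}^\phi\delta_{(-1)})=\Psi(q^2\delta_{(1)})=q^2\delta_{(1)}+q^2(1-q^2)\sum_{k\ge1}\delta_{(2k+1)}$. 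Equivalently, $\mathrm{str}^\flat(\Delta_{1,\phi}\Rel^\phi(1))=2(1-q^2)\delta_{(1)}+(1-q^2)^2\sum_{k\ge1}\delta_{(2k+1)}\neq0$. This has three consequences:
\begin{itemize}
\item $\Delta_{r,\phi}$ does not carry $\Rel^\phi_r$ into $\Rel^\flat_r$;
\item your generator check $(t(0)-q^2t(2))\Rel^\phi(m)\in(t(0)-t(2))\langle\Rel^\flat\rangle$ already fails at $m=1$;
\item $\Rel^\phi_r$ is not $\Delta_{r,\phi}^{-1}(\Rel^\flat_r)\cap\Z[\Typ_r]$.
\end{itemize}
What your remark about $\Rel^\phi(1)$ actually shows is $\Delta_{1,\phi}^{-1}\Rel^\phi(1)=\Rel^\flat(1)$. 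In general, put $u_1=\delta_{(-1)}$ and $u_m=\delta_{(-m)}+q^{m-1}\delta_{(m-2)}$ for $m>1$. Then $\Rel^\flat(m)=(t(0)-t(2))u_m$ and $\Rel^\phi(m)=(t(0)-q^2t(2))u_m$. Hence $\frac{t(0)-q^2t(2)}{t(0)-t(2)}\Rel^\flat(m)=\Rel^\phi(m)$ exactly for every $m$, with no recursion needed. The true statements are:
\begin{itemize}
\item $\Delta_{r,\phi}(\Rel^\flat_r)$ lies in the completion $\widehat{\Rel^\phi_r}$, because $\Delta_{r,\phi}(\delta_{f'}\star\Rel^\flat(m))=\Delta^{<r}_{r,\phi}\delta_{f'}\star\Rel^\phi(m)$, where $\Delta^{<r}_{r,\phi}$ is the product of the first $r-1$ factors;
\item $\mathrm{str}^\phi\circ\Delta_{r,\phi}=\Psi\circ\mathrm{str}^\flat$.
\end{itemize}

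Your closing adjunction computation needs this corrected identity, not the one you state. After adjunction you face $\langle\phi^\flat_e,\mathrm{str}^\flat(\Delta^{1/2,\flat}_{i,r}\delta_f)\rangle=\langle\Psi(\mathrm{str}^\flat(\Delta^{1/2,\flat}_{i,r}\delta_f)),\delta_e\rangle$. What brings $\mathrm{str}^\phi$ in is precisely $\Psi\circ\mathrm{str}^\flat=\mathrm{str}^\phi\circ\Delta_{r,\phi}$; your version, with $\mathrm{str}^\phi$ on the other side, cannot be applied to this expression.

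With the direction fixed, your argument for the displayed formula is exactly the paper's. The pairing becomes $\langle\mathrm{str}^\phi(\Delta^{1/2,\flat}_{i,r}\Delta_{r,\phi}\delta_f),\delta_e\rangle$. Replacing $\Delta_{r,\phi}\delta_f$ by $\mathrm{str}^\natural(\Delta_{r,\phi}\delta_f)=\sum_g\phi^\flat_g(f)\delta_g$ modulo $\Rel_r$, which $\Delta^{1/2,\flat}_{i,r}$ preserves, gives the claim.

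For the first assertion, the corrected relation makes $\Rel^\phi_r$ (up to completion) the image, not the preimage, of $\Rel^\flat_r$ under $\Delta_{r,\phi}$. Deducing that $\Delta^{1/2,\flat}_{i,r}$ preserves $\Rel^\phi_r$ from its commuting with $\Delta_{r,\phi}$ and \Cref{Heckeflathalf} is then a valid alternative to the paper's route. The paper instead follows \cite[Proposition 5.2.3]{CZ2} and reduces to $t(-1)+q\,t(1)$ preserving $\Rel^\phi_1$. Your route still requires two checks:
\begin{itemize}
\item write each generator $\delta_{f'}\star\Rel^\phi(m)$ as $\Delta_{r,\phi}$ applied to $(\Delta^{<r}_{r,\phi})^{-1}\delta_{f'}\star\Rel^\flat(m)$, which lies in the completed $\Rel^\flat_r$;
\item show that the completed $\Rel^\phi_r$ meets $\Z[\Typ_r]$ in $\Rel^\phi_r$.
\end{itemize}
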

\begin{proof}
    For the first claim, similarly to the proof of \cite[Proposition 5.2.3]{CZ2}, it suffices to check that $t(-1)+q\cdot t(1)$ preserves $\mathrm{Rel}_1^\phi,$ which we did in the proof of \Cref{strphi}.

    For the second claim, the left hand side is
    \begin{equation*}
        \sum_{f\in\Typ_r^{0,\flat}}\langle\delta_f,S_{i,r}^{1/2,\flat}\phi^\flat_e\rangle\cdot\delta_f=\sum_{f\in\Typ_r^{0,\flat}}\langle\mathrm{str}^\flat(\Delta_{i,r}^{1/2,\flat}\delta_f),\phi^\flat_e\rangle\cdot\delta_f=\sum_{f\in\Typ_r^{0,\flat}}\langle\mathrm{str}^\natural\left(\Delta_{r,\phi}\mathrm{str}^\flat(\Delta_{i,r}^{1/2,\flat}\delta_f)\right),\delta_e\rangle\cdot\delta_f
    \end{equation*}
    while the right hand side is
    \begin{equation*}
        \sum_{f\in\Typ_r^{0,\flat}}\sum_{g\in\Typ_r^{0,\flat}}\langle\mathrm{str}^\phi(\Delta_{i,r}^{1/2,\flat}\delta_g),\delta_e\rangle\cdot\langle\delta_f,\phi^\flat_g\rangle\cdot\delta_f=\sum_{f\in\Typ_r^{0,\flat}}\sum_{g\in\Typ_r^{0,\flat}}\langle\mathrm{str}^\phi(\Delta_{i,r}^{1/2,\flat}\delta_g),\delta_e\rangle\cdot\langle\mathrm{str}^\natural(\Delta_{r,\phi}\delta_f),\delta_g\rangle\cdot\delta_f.
    \end{equation*}
    We can write this right hand side as
    \begin{equation*}
        \sum_{f\in\Typ_r^{0,\flat}}\langle\mathrm{str}^\phi\left(\Delta_{i,r}^{1/2,\flat}\mathrm{str}^\natural(\Delta_{r,\phi}\delta_f)\right),\delta_e\rangle\cdot\delta_f=\sum_{f\in\Typ_r^{0,\flat}}\langle\mathrm{str}^\phi\left(\Delta_{r,\phi}\Delta_{i,r}^{1/2,\flat}\delta_f\right),\delta_e\rangle\cdot\delta_f
    \end{equation*}
    as $\Delta_{i,r}^\flat$ and $\Delta_{r,\phi}$ commute and preserve $\mathrm{Rel}^\natural.$ Hence, it remains to see that
    \begin{equation*}
        \mathrm{str}^\natural\left(\Delta_{r,\phi}\mathrm{str}^\flat(\Delta_{i,r}^{1/2,\flat}\delta_f)\right)=\mathrm{str}^\phi\left(\Delta_{r,\phi}\Delta_{i,r}^{1/2,\flat}\delta_f\right).
    \end{equation*}
    Since $\Delta_{r,\phi}\mathrm{str}^\flat(\Delta_{i,r}^{1/2,\flat}\delta_f)$ is supported on $\delta_e$ with $e_r\ge0,$ this is the same as proving
    \begin{equation*}
        \Delta_{r,\phi}\left(\mathrm{str}^\flat(\Delta_{i,r}^{1/2,\flat}\delta_f)-\Delta_{i,r}^{1/2,\flat}\delta_f\right)\in\Rel^\phi
    \end{equation*}
    This follows from the general fact that $\Delta_{r,\phi}$ preserves $\mathrm{Rel}^\natural,$ together with
    \begin{equation*}
        \Delta_{r,\phi}(\delta_{f'}\star\mathrm{Rel}^\flat(m))\in\Rel^\phi
    \end{equation*}
    for all $f'\in\Typ_{r-1}$ and $m\ge1.$ To see this last claim, we denote $\Delta_{r,\phi}=\Delta_{r,\phi}^{<r}\star\frac{t(0)-q^2t(2)}{t(0)-t(2)},$ then we have
    \begin{equation*}
        \Delta_{r,\phi}(\delta_{f'}\star\mathrm{Rel}^\flat(m))=\Delta_{r,\phi}^{<r}(\delta_{f'})\star\frac{t(0)-q^2t(2)}{t(0)-t(2)}\mathrm{Rel}^\flat(m),
    \end{equation*}
    and we can compute that
    \begin{equation*}
        \frac{t(0)-q^2t(2)}{t(0)-t(2)}\mathrm{Rel}^\flat(m)=\mathrm{Rel}^\phi(m).
    \end{equation*}
\end{proof}

\begin{corollary}\label{ConjProof1}
    $\Phi_{r,[1,\infty],0^{\le1}}$ is a $\mathbb{T}_r^{1/2,\flat}$-submodule of $\Z[\Typ_r^{0,\flat}].$ Moreover, its image in $\Z[\frac{1}{q(q+1)}][\Typ_r^{0,\flat}]$ is spanned by $\Phi_{r,[1,\infty]}$ as a $\mathbb{T}^\flat_r\otimes\Z[\frac{1}{q(q+1)}]$-module.
\end{corollary}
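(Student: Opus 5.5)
The plan is to prove both assertions at the level of the $\phi^\flat$-basis, using \Cref{flat-phi-computation}. By that result, the action of $S_{i,r}^{1/2,\flat}$ on the $\phi^\flat$-basis is given by the matrix with entries
\begin{equation*}
    a_{g,e}=\langle\mathrm{str}^\phi(\Delta_{i,r}^{1/2,\flat}\delta_g),\delta_e\rangle.
\end{equation*}
Both claims therefore reduce to statements about this matrix, that is, about the $\phi$-straightening of shifts $t(\epsilon)\delta_g$ with $\epsilon\in\{-1,0,1\}^r$. Since $\mathbb{T}^{1/2,\flat}_r$ is generated by the $S_{i,r}^{1/2,\flat}$, the first claim (submodule) amounts to the following. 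If $\lambda_0(e)\le1$, $e_1\ge1$ and $a_{g,e}\neq0$, then also $\lambda_0(g)\le1$ and $g_1\ge1$. Equivalently, for $g$ with $\lambda_0(g)\ge2$, the element $\mathrm{str}^\phi(\Delta_{i,r}^{1/2,\flat}\delta_g)$ is supported on types with at least two zeros. (The condition $g_1\ge1$ is automatic away from $g=0$, which I treat separately.)

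To prove this vanishing, I would use the product structure of $\Delta^{1/2,\flat}_{i,r}$. As in the proof of \Cref{DeltaPhi}, the generating series $\sum_i x^i\Delta_{i,r}^{1/2,\flat}$ should factor as a $\star$-product of one-variable operators in $t(-1),t(0),t(1)$, up to the $\mathrm{inv}$-twists, which are handled by $\Rel_r$. Write $g=g'\star(0,0)$, with the last two coordinates the trailing zeros. The relations $\Rel_r^\phi$ are generated under $\star$, and $t(-1)+q\,t(1)$ preserves $\Rel^\phi$ (\Cref{strphi}). Using these facts I would reduce to an explicit rank-two computation: the $\phi$-straightening of the tail
\begin{equation*}
    \sum_{\epsilon\in\{-1,0,1\}^2}c_\epsilon\, t(\epsilon)\delta_{(0,0)}
\end{equation*}
with the induced weights. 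The claim is that this tail lies in the span of $\delta_{(0,0)}$ modulo $\Rel^\phi$. The mechanism should be that the relation $\delta_{(-1)}=q^2\delta_{(1)}$ from $\Rel^\phi(1)$, combined with $\Rel(0)$ and $\Rel(-1,1)$, makes the terms $(1,0),(0,1),(-1,0),(0,-1),(1,1),(-1,-1),\ldots$ cancel in pairs against the $(-q)$-power weights. This sign bookkeeping, including the interaction with the $\widetilde{\inv}$ weights coupling the tail to $g'$, is the step I expect to be the main obstacle. I would first check it by hand for $r=2$ and then propagate it through the $\star$-factorization.

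For the second claim, let $M$ be the $\mathbb{T}^\flat_r\otimes\Z[\frac{1}{q(q+1)}]$-span of $\Phi_{r,[1,\infty]}$, i.e.\ of the $\phi^\flat_e$ with $e_r\ge1$. By the first claim $M$ lies in $\Phi_{r,[1,\infty],0^{\le1}}$, so only the reverse inclusion is needed: each $\phi^\flat_e$ with $e_r=0$ and $e_{r-1}\ge1$ must lie in $M$. I would argue by induction on $e$, using the partial order $\preccurlyeq$ together with $\sum e_i$. Given such an $e$, put $e^+\defeq e+(0,\ldots,0,2)$, which keeps the parity of \Cref{TypFJ}; if $e_{r-1}=1$, use instead $e^+=(e_1,\ldots,e_{r-2},2,2)$ or the analogous modification. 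Then apply a suitable element of $\mathbb{T}^\flat_r$, for instance $S_{r,r}^\flat$ or a polynomial in the $S^\flat_{i,r}$ chosen through $\mathrm{Sat}$ so as to isolate the shift $\epsilon=(0,\ldots,0,-1)$ squared. By \Cref{flat-phi-computation} and \Cref{Heckeflathalf}, writing $S^\flat$ in terms of the $S^{1/2,\flat}$, this yields
\begin{equation*}
    T\phi^\flat_{e^+}=u\,\phi^\flat_e+(\text{terms }\phi^\flat_g\text{ with }g_r\ge1\text{ or }g\text{ strictly smaller}).
\end{equation*}
The coefficient $u$ comes from straightening $\delta_{(\ldots,-1)}$ and $\delta_{(\ldots,0)}$ via $\Rel^\phi(1)$ and $\Rel^\phi(2)$. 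I expect $u$ to be $\pm q^a(q+1)^b$, which is a unit after inverting $q(q+1)$. The induction then places $\phi^\flat_e$ in $M$.
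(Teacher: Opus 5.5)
Your treatment of the submodule claim follows the paper: reduce to $r=2$, $g=(0,0)$ and straighten there. The obstacle you anticipate does not in fact arise. With the weights $(-q)^{k^2/2}$, the generating operator $S_r^{1/2}(x,y)$ from the proof of \Cref{ConjProof2} has as adjoint an exact $\star$-product of position-dependent one-variable factors. Its last two factors are literally the $r=2$ operator, so no $\widetilde{\mathrm{inv}}$-coupling survives. The rank-two check is then $\mathrm{str}^\phi(\Delta^{1/2,\flat}_{1,2}\delta_{(0,0)})=0$ and $\mathrm{str}^\phi(\Delta^{1/2,\flat}_{2,2}\delta_{(0,0)})\in\Z\,\delta_{(0,0)}$. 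Your sketch omits what happens when $g'$ itself has zeros: the head $g'+\epsilon'$ then acquires entries $-1$. Since the $\flat$-type relations act only in the last slot, you need $\mathrm{str}^\phi(\delta_{(-1,0^b)})=q^2\delta_{(1,0^b)}$ to see that these terms still straighten to types with at least two zeros.

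The genuine gap is in the second claim. There you never fix the Hecke operator, and the concrete choices you float do not work.
\begin{itemize}
\item Your fallback $e^+=(e_1,\ldots,e_{r-2},2,2)$ changes $\sum_i e_i$ by $3$. But $\mathbb{T}^\flat_r$ (built from shifts $t(2\epsilon)$) and all straightening relations preserve the parity of $\sum_i e_i$, so $\phi^\flat_e$ can never occur in $T\phi^\flat_{e^+}$. No special case is needed anyway: $\mathrm{Rel}(1)$ gives $\mathrm{str}(\delta_{(e_1,\ldots,e_{r-1},2)})=\delta_g$, with $g$ the decreasing rearrangement.
\item Your choice $S^\flat_{r,r}$ moves every coordinate, so one-zero types larger than $e$ also feed into $\delta_g$. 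For $r=2$ already $t(-2,2)\delta_{(3,0)}=\delta_{(1,2)}\equiv\delta_{(2,1)}$. So the triangularity your induction rests on is not available.
\end{itemize}

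The missing idea is to take $T=S^\flat_{1,r}$, whose adjoint $\Delta^\flat_{1,r}$ shifts exactly one coordinate by $\pm2$. Take $f=(f',0)$ with $f'_{r-1}\ge1$.
\begin{itemize}
\item Every term of $\Delta^\flat_{1,r}\delta_f$ not touching the last slot still straightens to types with a zero. When an entry $1$ becomes $-1$, this uses $\mathrm{str}^\phi(\delta_{(-1,0)})=q^2\delta_{(1,0)}$.
\item The two terms that do touch the last slot give $\delta_{f'}\star(q^{2r}\delta_{(2)}+q^{2r-2}\delta_{(-2)})$. By $\mathrm{Rel}^\phi(2)$ this becomes $\delta_{f'}\star q^{2r-2}(q(q-1)\delta_{(0)}+q^2(q+1)\delta_{(2)})$.
\end{itemize}
For a one-zero $h=(h',0)$, the sorted rearrangement of $(h',2)$ equals $g$ only if $h=f$. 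Combining this with \Cref{flat-phi-computation} and the first claim gives
\[
S^\flat_{1,r}\phi^\flat_g\equiv q^{2r}(q+1)\,\phi^\flat_f \bmod \Phi_{r,[1,\infty]}
\]
exactly. No other one-zero term appears, so no induction is needed, and the unit $q^{2r}(q+1)$ is precisely why $q(q+1)$ is inverted.
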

\begin{proof}
    To prove that $\Phi_{r,[1,\infty],0^{\le1}}$ is a $\mathbb{T}^{1/2,\flat}_r$-submodule, we need to check that if $g\in\Typ_{r,[1,\infty],0^{\ge2}},$ then $\mathrm{str}^\phi(\Delta_{i,r}^{1/2,\flat}\delta_g)\in\Delta_{[1,\infty],0^{\ge2}}$ for all $0\le i\le r.$ From the fact that $\mathrm{str}^\phi(\delta_{(-1,0^b)})=q^2\delta_{(1,0^b)},$ checking that $\mathrm{str}^\phi(\Delta_{i,r}^{1/2,\flat}\delta_g)\in\Delta_{[1,\infty],0^{\ge2}}$ for $g\in\Typ_{r,[1,\infty],0^{\ge2}}$ reduces to the case $r=2$ and $g=(0,0).$ In this case, we can check that
    \begin{equation*}
        \mathrm{str}^\phi(\Delta_{1,2}^{1/2,\flat}\delta_{(0,0)})=0,\quad \mathrm{str}^\phi(\Delta_{2,2}^{1/2,\flat}\delta_{(0,0)})=q(1+q^2)\delta_{(0,0)}.
    \end{equation*}
    
    Now note that if $f\in\Typ_{r,[1,\infty],0},$ say $f=(f',0)$ for $f'\in\Typ_{r-1,[1,\infty]},$ we have
    \begin{equation*}
        \mathrm{str}^\phi\left(\Delta_1^\flat\delta_f\right)\equiv\mathrm{str}^\phi\left(\delta_{f'}\star(q^{2r}\delta_{(2)}+q^{2r-2}\delta_{(-2)})\right)\mod\Delta_{[1,\infty],0^{\ge1}}
    \end{equation*}
    since $\mathrm{str}^\phi(\delta_{(-1,0)})=q^2\delta_{(1,0)}.$ As we have
    \begin{equation*}
        \mathrm{str}^\phi(q^{2r}\delta_{(2)}+q^{2r-2}\delta_{(-2)})=q^{2r-2}(q(q-1)\delta_{(0)}+q^2(q+1)\delta_{(2)}),
    \end{equation*}
    we conclude that
    \begin{equation*}
        \mathrm{str}^\phi(\Delta^\flat_1\delta_f)\equiv q^{2r}(q+1)\delta_g\mod\Delta_{[1,\infty],0^{\ge1}}
    \end{equation*}
    where $g\in\Typ_r^{0,\flat}$ is such that $\lambda_0(g)=0,$ $\lambda_2(g)=\lambda_2(f)+1$ and $\lambda_i(g)=\lambda_i(f)$ for $i\not\in\{0,2\}.$ Together with the first part of the theorem, this implies that
    \begin{equation*}
        S_{1,r}^\flat\phi^\flat_g\equiv q^{2r}(q+1)\phi^\flat_f\mod\Phi_{r,[1,\infty]},
    \end{equation*}
    which proves the second part of the theorem.
\end{proof}

\begin{definition}\label{Tr-Tr'-def}
    We consider the elements $T_r,T_r'\in \Z[K\backslash G/K]$ given by
    \begin{equation*}
        T_r\defeq\sum_{i=0}^r(-1)^{i}(q^2+1)^iq^{i^2-i}\cdot S_{r-i,r}^\flat,\quad T_r'\defeq\sum_{i=1}^ri(-1)^{i-1}(q^2+1)^{i-1}q^{i^2-i}\cdot S_{r-i,r}^\flat.
    \end{equation*}
    These are such that
    \begin{equation*}
        \mathrm{Sat}(T_r)=q^{r^2}\prod_{i=1}^r\left(\bm{\mu}_i-(q+q^{-1})\right),\quad\mathrm{Sat}(T_r')=q^{r^2-1}\sum_{j=1}^r\prod_{i\neq j}\left(\bm{\mu}_i-(q+q^{-1})\right).
    \end{equation*}
\end{definition}
\begin{theorem}\label{ConjProof2}
    Both $T_r\cdot\Z[\Typ_r^{0,\flat}]$ and $T_r'\cdot\Z[\Typ_r^{0,\flat}]$ are contained in $\Phi_{[1,\infty],0^{\le 1}}.$
\end{theorem}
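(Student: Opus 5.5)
We will reduce the claim to a statement about the $\mathbb{T}_r^{1/2,\flat}$-module structure of $\Phi_{r,[1,\infty],0^{\le1}}$ established in \Cref{ConjProof1}, together with a Satake-side computation. The key observation is that $T_r$ and $T_r'$ are built from the $S^\flat_{j,r}$, and hence lie in $\mathbb{T}_r^\flat\subseteq\mathbb{T}_r^{1/2,\flat}$. So it suffices to find a single element (or a small set of elements) of $\Phi_{r,[1,\infty],0^{\le1}}$ that generates, and then push everything into it. More precisely, we will show that there exist $\psi\in\Phi_{r,[1,\infty],0^{\le1}}$ and Hecke operators $A,A'\in\mathbb{T}_r^{1/2,\flat}$ with
\[
T_r\delta_e=A\psi_e,\qquad T'_r\delta_e=A'\psi'_e
\]
for each basis element $\delta_e$, $e\in\Typ_r^{0,\flat}$. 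By \Cref{ConjProof1}, closure under $\mathbb{T}_r^{1/2,\flat}$ then finishes the argument.

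Concretely, we pass to the dual side. Applying the adjoint description of \Cref{Heckeflathalf}, the function $T_r\delta_e$ is determined by $\mathrm{str}^\flat(\Delta_{T_r}\delta_g)$, where $\Delta_{T_r}=\sum_i(-1)^i(q^2+1)^iq^{i^2-i}\Delta^\flat_{r-i,r}$. On the other hand, by \Cref{flat-phi-computation}, expressing a function in the $\phi^\flat$-basis amounts to straightening with $\mathrm{str}^\phi$ instead of $\mathrm{str}^\flat$. So the goal becomes: for each $g$, the element $\mathrm{str}^\phi$ applied to the dual operator of $T_r$ (respectively $T_r'$) acting on $\delta_g$ should be supported on $\Typ_{r,[1,\infty],0^{\ge2}}$-free terms in the appropriate sense. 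Equivalently, the coefficients of $\phi^\flat_f$ with $\lambda_0(f)\ge2$ must vanish.

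To verify this, we use the Satake formulas in \Cref{Tr-Tr'-def}. Since $\mathrm{Sat}(T_r)=q^{r^2}\prod_i(\bm{\mu}_i-(q+q^{-1}))$ factors as a product over coordinates, the dual operator should factor as a $\star$-product of one-variable operators $t(-2)+c\,t(0)+c'\,t(2)$. We will check that each one-variable factor kills $\delta_{(0)}$ modulo $\Rel^\phi$. This is the rank-one computation: with $\bm{\mu}=-\bm{\nu}^2-2$, the factor $\bm{\mu}-(q+q^{-1})$ corresponds to $-(\bm{\nu}-\sqrt{-q}^{\,\pm})\cdots$, and $\mathrm{str}^\phi(\delta_{(-1)})=q^2\delta_{(1)}$ should make the zero-coordinate contribution cancel. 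For $T_r'$, the Satake image is a derivative-type sum $\sum_j\prod_{i\ne j}$, so at most one coordinate escapes being killed. This is exactly why the target is $0^{\le1}$ rather than $0^{0}$.

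The main obstacle is making this factorization rigorous. The operators $\Delta^\flat_{i,r}$ carry $q^{2\widetilde{\inv}}$ twists and do not literally factor coordinatewise. We would therefore work in the ``non-symmetric'' form analogous to \Cref{DeltaPhi}, writing the dual of $T_r$ as a $\star$-product up to elements of $\Rel_r$. We would verify that identity on Satake transforms and lift it using the injectivity furnished by \Cref{TypFJ} and the fact that the $\Delta$'s preserve $\Rel^\phi$. We would then reduce the vanishing of the $0^{\ge2}$ coefficients to the cases $r=1,2$ via the $\star$-structure, as in the proof of \Cref{ConjProof1}. Finally, we would check those small cases by hand, for instance $\mathrm{str}^\phi$ of the dual operator of $T_2$ applied to $\delta_{(0,0)}$.
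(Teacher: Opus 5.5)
Your overall route can be made to work: reformulate via \Cref{flat-phi-computation}, factor the dual operators as $\star$-products, and reduce to small rank. However, the mechanism you propose for the key vanishing is false.

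Grant the factorization. The dual of $T_r$ is then $B_1\star\cdots\star B_r$ with $B_i=q^{4(r-i)+2}t(2)-q^{2(r-i)}(q^2+1)t(0)+t(-2)$; this is $\Delta_r^+\Delta_r^-$ in the paper's notation. These factors do \emph{not} kill $\delta_{(0)}$ modulo $\mathrm{Rel}^\phi$: using $\mathrm{Rel}^\phi(2)$ one gets $\mathrm{str}^\phi(B_r\delta_{(0)})=(1+q)(q^2\delta_{(2)}-\delta_{(0)})\neq0$. If your claim held, one trailing zero would already suffice, giving $T_r\cdot\mathbb{Z}[\mathrm{Typ}_r^{0,\flat}]\subseteq\Phi_{r,[1,\infty]}$. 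That fails for $r=1$: the $\phi^\flat_{(0)}$-coefficient of $T_1\phi^\flat_{(0)}$ is $-(1+q)$. The cancellation is genuinely two-variable. It needs the permutation relations $\mathrm{Rel}(a,b)$ between the last two coordinates, and that is why the target is $0^{\le1}$ — not a count of ``escaping'' coordinates.

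Your account of $T_r'$ fails for the same reason. Its dual is $\sum_j q^{2(r-j)}\,B_1\star\cdots\star B_{j-1}\star t(0)\star B_{j+1}\star\cdots\star B_r$. On $\delta_{g'\star(0,0)}$, the terms with $j\le r-2$ die by the rank-two $T_2$-identity. The terms $j=r-1$ and $j=r$ do not vanish individually; only their sum does, which is a second rank-two identity you never formulate.

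Two smaller points.
\begin{enumerate}
\item By \Cref{flat-phi-computation}, the $\phi^\flat_g$-coefficient of $T\phi^\flat_e$ is the $e$-coordinate of $\mathrm{str}^\phi(\Delta_T\delta_g)$. So the correct criterion is that $\mathrm{str}^\phi(\Delta_T\delta_g)$ vanishes identically for each $g$ with $\lambda_0(g)\ge2$, not a condition on its support.
\item The factorization cannot be obtained by comparing Satake transforms, since a $\star$-product operator is not a priori adjoint to any Hecke operator. It comes from the generating function $S_r^{1/2}(x,y)$ of \cite{CZ}, together with $T_r=S_r^+S_r^-$.
\end{enumerate}

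With these corrections your approach does go through. The reflection relations $\mathrm{Rel}^\phi(m)$ live in the last coordinate, so $\mathbb{Z}[\mathrm{Typ}_{r-2}]\star\mathrm{Rel}^\phi_2\subseteq\mathrm{Rel}^\phi_r$ (whereas $\mathrm{Rel}^\phi_{r-2}\star\mathbb{Z}[\mathrm{Typ}_2]$ is not contained in $\mathrm{Rel}^\phi_r$). Every $g\in\mathrm{Typ}_r^{0,\flat}$ with $\lambda_0(g)\ge2$ has the form $g'\star(0,0)$. It therefore suffices to check, for the rank-two $B_1,B_2$, that $B_1\delta_{(0)}\star B_2\delta_{(0)}$ and $q^2\delta_{(0)}\star B_2\delta_{(0)}+B_1\delta_{(0)}\star\delta_{(0)}$ lie in $\mathrm{Rel}^\phi_2$. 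A direct straightening confirms both. Here one must use $\mathrm{Rel}(a,b)$ in its sum-preserving form $\delta_{(a,b)}-\delta_{(a+1,b-1)}-(-q)^{b-a-1}(\delta_{(b,a)}-\delta_{(b-1,a+1)})$, which is what the paper's own computations use. This gives a more hands-on proof that does not need \Cref{ConjProof1}.

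The paper argues differently.
\begin{itemize}
\item It uses that $\delta_{(0^r)}$ generates $\mathbb{Z}[\mathrm{Typ}_r^{0,\flat}]$ over $\mathbb{T}_r^{1/2,\flat}$ \cite[Corollary 5.3.4]{CZ2}.
\item It computes $S_r^\pm\delta_{(0^r)}=\phi_{(1^r)}\pm(q+1)\phi_{(1^{r-1},0)}$ by explicit lattice counting.
\item It expresses $T_r\delta_{(0^r)}$ and $T_r'\delta_{(0^r)}$ as $\mathbb{T}_r^{1/2,\flat}$-combinations of these two elements.
\item It concludes by the stability statement in \Cref{ConjProof1}.
\end{itemize}
Your opening paragraph gestures at this, but without the cyclicity of $\delta_{(0^r)}$ it reduces nothing.
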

\begin{proof}
    As in the proof of \cite[Theorem 5.2.6]{CZ}, if we denote
    \begin{equation*}
        S_r^{1/2}(x,y)\defeq\sum_k(-q)^{\frac{k^2}{2}}S_{r-k,r}^{1/2}x^{r-k}y^k,
    \end{equation*}
    we have that its adjoint is induced by
    \begin{equation*}
        \bigstar_{i=1}^r\left(q^{2(r-i)+1}x\cdot t(1)+(-q)^{r-i+\frac{1}{2}}y\cdot t(0)+x\cdot t(-1)\right)
    \end{equation*}
    and that $\mathrm{Sat}(S_r^{1/2}(x,y))=(-q)^{\frac{r^2}{2}}\prod_{i=1}^r(x\bm{\nu}_i+y).$

    We denote $\gamma\defeq\sqrt{-q}+\sqrt{-q}^{-1}.$ We consider the operators $S_r^\pm=S_r(1,\mp\gamma),$ whose adjoints are induced by
    \begin{equation*}
        \Delta_r^\pm\defeq\bigstar_{i=1}^r\left(q^{2(r-i)+1}t(1)\pm(-q)^{r-i}(q+1)\cdot t(0)+t(-1)\right).
    \end{equation*}
    These are such that
    \begin{equation*}
        \mathrm{Sat}(S_r^\pm)=(-q)^{\frac{r^2}{2}}\prod_{i=1}^r(\bm{\nu}_i\mp\gamma).
    \end{equation*}

    We can prove that
    \begin{equation*}
        S_r^\pm\delta_{(0^r)}=\sum_{i=0}^{r}(\pm1)^i(-q)_i\cdot\delta_{(1^{r-i},0^i)}
    \end{equation*}
    where $(x)_n\defeq\prod_{i=1}^n(1-x^i).$ Indeed, it is clear from the definition of $\Delta_r^\pm$ that the coefficient of $\delta_{(1^{r-i},0^i)}$ above only depends on $i$ and is independent of $r,$ and we can verify by induction that
    \begin{equation*}
        \mathrm{str}^\flat(\Delta_r^\pm\delta_{(0^r)})=(-q)_r\sum_{i=0}^r(-q)^{\binom{r-i}{2}}(\pm1)^i\qbinom{r}{i}{-q}\delta_{(1^{r-i},0^i)}.
    \end{equation*}
    To do so, we note that
    \begin{equation*}
        \mathrm{str}^\flat(\delta_{(-1,1^a,0^b)}=(-q)^a\delta_{(1^{a+1},0^b)}+(1-(-q)^a)\delta_{(1^{a-1},0^{b+2})},
    \end{equation*}
    and thus, under the induction hypothesis, we have that $\mathrm{str}^\flat(\Delta_{r+1}^\pm\delta_{(0^{r+1})})$ is
    \begin{equation*}
        (-q)_r\sum_i(-q)^{\binom{r-i}{2}}(\pm1)^i\qbinom{r}{i}{-q}\left(\begin{array}{rl}&(q^{2r+1}+(-q)^{r-i})\delta_{(1^{r-i+1},0^i)}\pm(-q)^r(q+1)\delta_{(1^{r-i},0^{i+1})}\\+&(1-(-q)^{r-i})\delta_{(1^{r-i-1},0^{i+2})}\end{array}\right)
    \end{equation*}
    and we can compute that
    \begin{equation*}
        (q^{2r+1}+(-q)^{r-i})(-q)^{\binom{r-i}{2}}\qbinom{r}{i}{-q}+(-q)^r(q+1)(-q)^{\binom{r-i+1}{2}}\qbinom{r}{i-1}{-q}+(1-(-q)^{r-i+2})(-q)^{\binom{r-i+2}{2}}\qbinom{r}{i-2}{-q}
    \end{equation*}
    is equal to $(1-(-q)^{r+1})(-q)^{\binom{r-i+1}{2}}\qbinom{r+1}{i}{-q}.$

    We can also prove that
    \begin{equation*}
        \phi_{(1^a,0^b)}=\sum_{i=0}^{\lfloor a/2\rfloor}\frac{(-q)_{b+2i}}{(-q)_{b}}\cdot\delta_{(1^{a-2i},0^{b+2i})}.
    \end{equation*}
    By the definition of $\Delta_{r,\phi}$ it is easy to see that if we have $e,f\in\Typ_r^0,$ say $e=(e_1,e')$ and $f=(f_1,f'),$ with $e_1=f_1,$ then $\phi_e(f)=\phi_{e'}(f').$ So it suffices to compute that
    \begin{equation*}
        \phi_{(1^a,0^b)}(0^{a+b})=\begin{cases}
            \frac{(-q)_{a+b}}{(-q)_b}&\text{if }2\mid a,\\
            0&\text{otherwise.}
        \end{cases}
    \end{equation*}
    Namely, given a self-dual lattice $\Lambda$ we are counting
    \begin{equation*}
        \phi_{(1^a,0^b)}(0^{a+b})=\sum_{\substack{L\subseteq\Lambda\\\typ(L)=(1^a,0^b)}}c([\Lambda\colon\varpi\Lambda+L]).
    \end{equation*}
    This is clearly $0$ if $a$ is odd, so assume $a$ is even. Note that all such $L$ also satisfy $\varpi\Lambda\subseteq\varpi L^\vee\subseteq L\subseteq \Lambda,$ with $[\Lambda\colon L]=a/2.$ So by \cite[Proposition 2.6]{CZ} this is
    \begin{equation*}
        \sum_{\substack{L\subseteq\Lambda\\\typ(L)=(1^a,0^b)}}c([\Lambda\colon\varpi\Lambda+L])=c(a/2)\sum_{\substack{\varpi\Lambda\subseteq\varpi L^\vee\subseteq L\subseteq\Lambda\\ [\varpi L^\vee\colon\varpi\Lambda]=a/2}}1=c(a/2)\qbinom{a+b}{a}{-q}\prod_{i=1}^{a/2}(1+q^{2i+1})=\frac{(-q)_{a+b}}{(-q)_b}.
    \end{equation*}

    Hence, by the two computations above we conclude that
    \begin{equation*}
        S_r^\pm\delta_{(0^r)}=\phi_{(1^r)}\pm(q+1)\phi_{(1^{r-1},0)}.
    \end{equation*}
    We note that $T_r=S_r^+S_r^-$ and
    \begin{equation*}
        T_r'=\frac{S_r^+\frac{\partial}{\partial y}S_r^{1/2}(1,\gamma)-S_r^-\frac{\partial}{\partial y}S_r^{1/2}(1,-\gamma)}{2q\gamma},
    \end{equation*}
    which implies that
    \begin{equation*}
        T_r'\delta_{(0^r)}=\frac{\frac{\partial}{\partial y}S_r^{1/2}(1,\gamma)-\frac{\partial}{\partial y}S_r^{1/2}(1,-\gamma)}{2q\gamma}\phi_{(1^r)}-\sqrt{-q}\frac{\frac{\partial}{\partial y}S_r^{1/2}(1,\gamma)+\frac{\partial}{\partial y}S_r^{1/2}(1,-\gamma)}{2q}\phi_{(1^{r-1},0)},
    \end{equation*}
    that is, that $T'_r\delta_{(0^r)}$ is
    \begin{equation*}
        \left(\sum_{i\ge1}2i(-1)^{i-1}q^{2i^2-i}(q+1)^{2(i-1)}S_{r-2i}^{1/2}\right)\phi_{(1^r)}+\left(\sum_{i\ge0}(2i+1)(-1)^iq^{2i^2+i}(q+1)^{2i}S_{r-2i-1}^{1/2}\right)\phi_{(1^{r-1},0)}.
    \end{equation*}
    Since $\delta_{(0^r)}$ generates $\Z[\Typ_r^{0,\flat}]$ under $\mathbb{T}_r^{1/2,\flat}$ by \cite[Corollary 5.3.4]{CZ2} and since $\Phi_{[1,\infty],0^{\le1}}$ is preserved by $\mathbb{T}_r^{1/2,\flat}$ by \Cref{ConjProof1}, this implies our claim.
\end{proof}

\subsection{Relation with local harmonic analysis}\label{invSat}
In this subsection, we recast the previous results of this section into the language of local harmonic analysis. Since we will not use this in the rest of the article, for simplicity and ease of notation we will do so with coefficients in $\C,$ although the results hold integrally (up to possibly inverting $q$ and $q+1$).

Let $\mathcal{G}=U(2r)$ be the algebraic group over $F^+$ which is the unitary group for a split $F/F^+$-Hermitian space $V$ of rank $2r.$ We denote $G=\mathcal{G}(F^+),$ with a hyperspecial subgroup $K.$ We consider $\mathcal{H}^+=U(r)\times U(r)\subseteq G$ corresponding to an orthogonal decomposition of $V$ into two split Hermitian spaces of rank $r.$ Similarly, $\mathcal{H}^-\subseteq G$ correspond to such a decomposition into non-split Hermitian spaces of rank $r.$ We denote $H^\pm=\mathcal{H}^\pm(F^+).$

We consider the $F^+$-variety $X=\mathcal{H}^+\backslash \mathcal{G},$ which is a spherical variety for $\mathcal{G}$ such that
\begin{equation*}
    X(F^+)=H^+\backslash G\sqcup H^-\backslash G.
\end{equation*}
This caries an action of the Hecke algebra $\mathcal{H}_G=C_c^\infty(K\backslash G/K)$ under convolution. Attached to this, we have a lattice of coweights $\Lambda_X$ with Weyl group $W_X$ and a cone of anti-dominant weights $\Lambda_X^+,$ such that we have a relative Cartan decomposition $X(F^+)=\bigsqcup_{\check{\lambda}\in\Lambda_X^+}x_{\check{\lambda}} K$ of its $K$-orbits. This gives an isomorphism
\begin{equation*}
    \text{Cartan}\colon\C[\Lambda_X^+]\rightiso C_c^\infty(X(F^+))^K.
\end{equation*}
Explicitly, we can identify $\Lambda_X=\Typ_r,$ with anti-dominant cone $\Lambda_X^+=\Typ_r^{0,\flat},$ and
\begin{equation*}
    H^+\backslash G/K\sqcup H^-\backslash G/K=X/K=\Lambda_X^+=\Typ_r^{0,\flat}
\end{equation*}
with $\typ_K$ as in \cref{TypFJ}. We consider the elements $\check{\lambda}_i\in\Lambda_X^+$ which correspond to $(0,\ldots,0,1,0,\ldots,0)\in\Typ_r,$ where the $1$ is in the $i$-th position.

We also have a relative Satake transform
\begin{equation*}
    \text{Satake}\colon C_c^\infty(X(F^+))\rightiso \mathcal{H}_X
\end{equation*}
where $\mathcal{H}_X=\C[\Lambda_X]^{W_X}.$ This is compatible with the usual Satake transform $\mathcal{H}_G\rightiso\C[\Lambda_G]^{W_G}$ via $\Lambda_G\subseteq\Lambda_X.$ Explicitly, $\Lambda_G\subseteq\Typ_r$ is the subset where every coordinate is even. For $\check{\lambda}\in\Lambda_X,$ we write $e^{\check{\lambda}}\in\C[\Lambda_X]$ the corresponding element.

\begin{theorem}[Inverse Satake transform]
    The composition $(\text{Satake}\circ\text{Cartan})^{-1}\colon\C[\Lambda_X]^{W_X}\rightiso\C[\Lambda_X^+]$ is given as follows: for $h\in\C[\Lambda_X]^{W_X},$ its image is
    \begin{equation*}
        \left(h\cdot L_X^{1/2}\right)\rvert_{\Lambda_X^+}\in\C[\Lambda_X^+]
    \end{equation*}
    where
    \begin{equation*}
        L_X^{1/2}=\prod_{1\le i<j\le r}\frac{1-e^{\check{\lambda}_j-\check{\lambda}_i}}{1+q^{-1}e^{\check{\lambda}_j-\check{\lambda}_i}}\cdot\prod_{1\le i\le r}\frac{1-e^{-2\check{\lambda}_i}}{1-q^{-1}e^{-2\check{\lambda}_i}}
    \end{equation*}
    is understood to be its power series expansion which is supported in a translate of the cone of dominant coweights, and where $e^{\check{\lambda}}\in\C[\Lambda_X^+]$ is understood to be the function supported on $x_{\check{\lambda}}K$ with value $q^{\sum_{i=1}^r\frac{2(r-i)+1}{2}e_i}(-1)^{\sum_{i=1}^r(r-i)e_i}$ if $\check{\lambda}$ corresponds to $(e_1,\ldots,e_r)\in\Typ_r^{0,\flat}.$
\end{theorem}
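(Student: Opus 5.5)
The plan is to translate the inverse Satake formula into the straightening language of the previous subsections and then check it on generators. The inverse map is characterized by two facts. First, it sends the Satake image of the basic function $\mathbbm{1}_{x_0K}$ (corresponding to $e=(0^r)$) back to $\delta_{(0^r)}$. Second, it is $\mathcal{H}_G$-linear, and more generally linear for the larger algebra $\mathbb{T}^{1/2,\flat}_r$ of \Cref{Heckeflathalf}.

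By \cite[Corollary 5.3.4]{CZ2}, $\delta_{(0^r)}$ generates $\C[\Typ_r^{0,\flat}]$ under $\mathbb{T}_r^{1/2,\flat}$. It therefore suffices to prove two things about the map $h\mapsto (h\cdot L_X^{1/2})\rvert_{\Lambda_X^+}$, after the renormalization of $e^{\check\lambda}$ in the statement:
\begin{enumerate}
\item it sends the Satake image of $\delta_{(0^r)}$ to $\delta_{(0^r)}$;
\item it intertwines multiplication by $\mathrm{Sat}(S^{1/2,\flat}_{i,r})=(-q)^{(r^2-(r-i)^2)/2}s_i(\bm\nu)$ with the action of $S^{1/2,\flat}_{i,r}$.
\end{enumerate}

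For (2), the idea is that the action of $S^{1/2,\flat}_{i,r}$ on $\delta$-bases is dual to $\mathrm{str}^\flat\circ\Delta^{1/2,\flat}_{i,r}$. Multiplying by a $W_X$-invariant $h$ and then restricting to the cone is an instance of a ``straightening'': restriction of a $W_X$-antiinvariant-type expansion to the dominant chamber. So the key step is the following identity. Take any $f=\sum c_e e^{e}\in\C[\Lambda_X]$ and restrict $f\cdot L_X^{1/2}$ to $\Lambda_X^+$. The result depends only on the class of $f$ modulo the relations $\Rel_r^\flat$ (suitably renormalized), and equals $\mathrm{str}^\flat(f)$ paired against the restriction of $L_X^{1/2}$ itself.

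I would verify this relation by relation. For $\Rel(a)$ and $\Rel(a,b)$, only the rank-two factor $\frac{1-e^{\check\lambda_2-\check\lambda_1}}{1+q^{-1}e^{\check\lambda_2-\check\lambda_1}}$ matters. Multiplying the generator by it yields a series whose support leaves the cone, which amounts to checking a single geometric series identity. For $\Rel^\flat(m)$, only the rank-one factor $\frac{1-e^{-2\check\lambda}}{1-q^{-1}e^{-2\check\lambda}}$ matters, and one checks that it kills $\delta_{(-m)}-\delta_{(-m+2)}-q^{m-1}(\delta_{(m)}-\delta_{(m-2)})$ after restriction to $e\ge 0$. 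The normalizing factor $q^{\sum\frac{2(r-i)+1}{2}e_i}(-1)^{\sum(r-i)e_i}$ is exactly what converts the $(-q)^{b-a-1}$ and $q^{m-1}$ weights into these geometric series.

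Given the identity, (2) follows. By \Cref{Heckeflathalf}, $\mathrm{Sat}(S^{1/2,\flat}_{i,r})$ is the symmetrization of the operator $\Delta^{1/2,\flat}_{i,r}$ written multiplicatively: $t(\epsilon)\leftrightarrow e^{\epsilon}$ with the stated weights. The $W_X$-invariance then lets us replace $h$ by its Weyl-noninvariant representative $\Delta^{1/2,\flat}_{i,r}$ modulo $\Rel^\flat_r$.

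For (1), I would expand $L_X^{1/2}$ and check that its restriction to $\Lambda_X^+$ is supported at $0$ with coefficient $1$, after multiplying by the Satake transform of the basic function. Equivalently, this is the Casselman–Shalika/Sakellaridis-type formula $\mathrm{Sat}(\mathbbm{1}_{x_0K})\cdot L_X^{1/2}\equiv 1$ on the cone. It should also follow formally from \Cref{ConjProof2}'s computation of $S_r^\pm\delta_{(0^r)}$, compared with the product $\prod(\bm\nu_i\mp\gamma)$.

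The main obstacle is the relation-by-relation compatibility in rank two, where the rank-one and rank-two factors of $L_X^{1/2}$ interact through $\star$. The cleanest route is to reduce to $r\le2$ using the product structure of $L_X^{1/2}$, mirroring the reduction in \Cref{ConjProof1}, and then verify the two-variable identity directly.
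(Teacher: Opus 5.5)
The paper gives no proof of this statement. It is recorded as the non-split analogue of Sakellaridis' inverse Satake formula, with \cite{CZ,CZ2} as the source. Your framework is the natural one: $\delta_{(0^r)}$ generates $\C[\Typ_r^{0,\flat}]$ over $\mathbb{T}^{1/2,\flat}_r$, and the adjoint of $S^{1/2,\flat}_{i,r}$ is $\mathrm{str}^\flat\circ\Delta^{1/2,\flat}_{i,r}$. But the key identity you build on is false. The map $f\mapsto (f\cdot L_X^{1/2})\rvert_{\Lambda_X^+}$ does not factor through $\C[\Typ_r]/\Rel_r^\flat$ under any monomial renormalization, in either orientation. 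Take $r=1$: $e^{-\check\lambda_1}L_X^{1/2}$ lives in negative degrees and truncates to $0$, while $e^{\check\lambda_1}L_X^{1/2}$ truncates to $e^{\check\lambda_1}\neq0$, even though $\delta_{(-1)}\equiv\delta_{(1)}$ modulo $\Rel^\flat(1)$. The underlying reason is that $\Rel_r^\flat$ is not translation-stable: $t(1)\Rel^\flat(1)=\delta_{(0)}-\delta_{(2)}\notin\Rel^\flat_1$. Only special operators such as $\Delta^{1/2,\flat}_{i,r}$ preserve it, so your relation-by-relation check already fails at $\Rel^\flat(1)$. There is also no need to replace $h$ by a non-invariant representative modulo $\Rel^\flat_r$. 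Write $\Delta^{1/2,\flat}_{i,r}=\sum_\epsilon c_\epsilon t(\epsilon)$. After rescaling $x_i\mapsto a_ix_i$ with $a_i=(-1)^{r-i}q^{(2(r-i)+1)/2}$, the Laurent polynomial $\sum_\epsilon c_\epsilon x^{-\epsilon}$ is already $W_X$-invariant and matches $\mathrm{Sat}(S^{1/2,\flat}_{i,r})$. This can be read off from the $\star$-product formula in the proof of \Cref{ConjProof2}.

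The relations actually enter through a single functional, $\ell(g)\defeq\langle\mathrm{str}^\flat(g),\delta_{(0^r)}\rangle$. From $\langle S\delta_{(0^r)},\delta_f\rangle=\langle\delta_{(0^r)},\mathrm{str}^\flat(\Delta_S\delta_f)\rangle$, the $\delta_f$-coefficient of $S\delta_{(0^r)}$ is $\sum_\epsilon c_\epsilon\,\ell(\delta_{f+\epsilon})$. That is the coefficient of $x^f$ in $\bigl(\sum_\epsilon c_\epsilon x^{-\epsilon}\bigr)\cdot\sum_g\ell(\delta_g)x^g$. Since adjoints compose, this holds for every $S\in\mathbb{T}^{1/2,\flat}_r$, and your step (1) becomes automatic.

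The real content of the theorem is then that $\sum_g\ell(\delta_g)x^g$ equals $L_X^{1/2}$ after the rescaling. Equivalently, the coefficient function of $L_X^{1/2}$ must restrict to $\delta_{(0^r)}$ on $\Typ_r^{0,\flat}$. It must also annihilate, via constant terms with $\delta_g\leftrightarrow x^{-g}$, every $\delta_{f'}\star\Rel(\cdot)\star\delta_{f''}$ and every $\delta_{f'}\star\Rel^\flat(m)$. This is not a rank $\le2$ check, because $L_X^{1/2}$ is not a $\star$-product. For relations in slots $k,k+1$ the remaining factors are symmetric in $(x_k,x_{k+1})$, which helps. For $\Rel^\flat(m)$, however, the factors $\frac{1-x_r/x_i}{1+q^{-1}x_r/x_i}$ are not invariant under $x_r\mapsto x_r^{-1}$, so that case needs a genuine argument.

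Finally, work out $r=1$ before asserting normalizations. With $t(-1)+q\,t(1)$ one gets $\ell(\delta_{(0)})=1$, $\ell(\delta_{(-2k)})=1-q$ for $k\ge1$, and $\ell(\delta_{(g)})=0$ otherwise. Hence $\sum_g\ell(\delta_g)x^g=\frac{1-qx^{-2}}{1-x^{-2}}$, which is $L_X^{1/2}$ in the variable $x'=q^{-1/2}x$. So $e^{e_1\check\lambda_1}$ must correspond to $q^{-e_1/2}\mathbbm{1}_{x_{e_1}K}$. A direct check agrees. The operator $\mathbbm{1}_{K\mu K}$ with $\mu=(1,-1)$ sends $\delta_{(0)}$ to $\delta_{(2)}$, whereas $\bigl(q(e^{2\check\lambda_1}+e^{-2\check\lambda_1})+q-1\bigr)L_X^{1/2}$ truncates to $q\,e^{2\check\lambda_1}$. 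So the sign of the exponent in the displayed normalization appears to be wrong, and your claim that this factor ``is exactly'' what makes the weights work cannot be taken on faith.
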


Now consider $(\Lambda^+,\Lambda,W)=(\Lambda_X^+,\Lambda_X,W_X),$ but with the nontrivial map
\begin{equation*}
    \phi\colon\C[\Lambda^+]\rightiso\C[\Lambda_X^+]
\end{equation*}
defined by $\phi(1_{x_eK})(x_fK)=\phi_e^\flat(f)$ for $e,f\in\Typ_r^{0,\flat}=\Lambda^+=\Lambda_X^+,$ as defined in \cref{phiflatDef}.
\begin{theorem}[{\cref{flat-phi-computation}}]\label{Lhalf}
    The composition
    \begin{equation*}
        \C[\Lambda]^W=\C[\Lambda_X]^{W_X}\xrightiso{\text{Satake}^{-1}}C_c^\infty(X(F^+))^K\xrightarrow[\sim]{\text{Cartan}^{-1}}\C[\Lambda_X^+]\xrightarrow[\sim]{\phi^{-1}}\C[\Lambda^+]
    \end{equation*}
    is given as follows: for $h\in\C[\Lambda]^W,$ its image is
    \begin{equation*}
        \left(h\cdot L^{1/2}\right)\rvert_{\Lambda^+}\in\C[\Lambda^+]
    \end{equation*}
    where
    \begin{equation*}
        L^{1/2}=\prod_{1\le i<j\le r}\frac{1-e^{\check{\lambda}_j-\check{\lambda}_i}}{1+q^{-1}e^{\check{\lambda}_j-\check{\lambda}_i}}\cdot\prod_{1\le i\le r}\frac{1-e^{-2\check{\lambda}_i}}{1-qe^{-2\check{\lambda}_i}}
    \end{equation*}
    is understood as in the previous theorem.
\end{theorem}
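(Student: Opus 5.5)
The plan is to derive \Cref{Lhalf} from the inverse Satake transform theorem, using \Cref{flat-phi-computation} to transport the Hecke action into the $\phi$-basis. Under $\text{Cartan}$ and the identification $\Lambda_X^+=\Typ_r^{0,\flat}$, the previous theorem says that $\mathrm{Satake}^{-1}(h)$ corresponds to $(h\cdot L_X^{1/2})\rvert_{\Lambda_X^+}$, with the twisted normalization of $e^{\check\lambda}$. Since $\mathrm{Satake}$ is $\mathcal{H}_G$-linear, $h\mapsto (h L_X^{1/2})\rvert_{\Lambda^+}$ intertwines multiplication by $\mathrm{Sat}(S)$ with the adjoint action $\mathrm{str}^\flat\circ\Delta_S$ for $S\in\mathbb{T}_r^{1/2,\flat}$; here we use \Cref{Heckeflathalf}, extended over $\C$ to allow the square-root variables. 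This gives a dictionary between multiplication by $e^{\check\lambda}$ followed by restriction to $\Lambda^+$ and the operator $t(\check\lambda)$ followed by $\mathrm{str}^\flat$. Concretely, the restriction $(\cdot\, L_X^{1/2})\rvert_{\Lambda^+}$ realizes the quotient $\C[\Lambda_X]\to\C[\Lambda_X]/\Rel^\flat\cong\C[\Lambda^+]$, since $\Rel^\flat$ is precisely the kernel of $h\mapsto (hL_X^{1/2})\rvert_{\Lambda^+}$ after the normalizing change of variables.

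The key step is to redo the same argument with $\mathrm{str}^\phi$ in place of $\mathrm{str}^\flat$. \Cref{flat-phi-computation} says that, in the basis $\{\phi^\flat_e\}$, the operator $S^{1/2,\flat}_{i,r}$ acts by $\mathrm{str}^\phi\circ\Delta^{1/2,\flat}_{i,r}$. So $\phi^{-1}\circ\mathrm{Cartan}^{-1}\circ\mathrm{Satake}^{-1}$ is the unique $\C[\Lambda]^W$-module map $\C[\Lambda]^W\to\C[\Lambda^+]$ sending $1$ to $\phi^{-1}(\mathrm{Cartan}^{-1}\mathrm{Satake}^{-1}(1))$ and intertwining multiplication with $\mathrm{str}^\phi\circ t(\cdot)$.

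It then remains to check two things:
\begin{enumerate}
\item The kernel of $h\mapsto(hL^{1/2})\rvert_{\Lambda^+}$ corresponds to $\Rel^\phi$. The factors $\prod_{i<j}$ are shared with $L_X^{1/2}$, which handles $\Rel_r$. The one-variable factor is handled by comparing $\frac{1-e^{-2\check\lambda}}{1-qe^{-2\check\lambda}}$ with $\frac{1-e^{-2\check\lambda}}{1-q^{-1}e^{-2\check\lambda}}$. Their ratio is exactly the image of $\frac{t(0)-q^2t(2)}{t(0)-t(2)}$ under the normalization, which is the identity $\frac{t(0)-q^2t(2)}{t(0)-t(2)}\Rel^\flat(m)=\Rel^\phi(m)$ from the proof of \Cref{flat-phi-computation}.
\item The two maps agree on a generator, e.g.\ the image of $1$. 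Since $\delta_{(0^r)}$ generates by \cite[Corollary 5.3.4]{CZ2}, it suffices to compare a single vector. One can instead exploit $\phi^\flat_e=\mathrm{str}^\natural(\Delta_{r,\phi}\,\cdot)$ from \Cref{DeltaPhi}: $\Delta_{r,\phi}$ is a product whose last factor is $\frac{t(0)-q^2t(2)}{t(0)-t(2)}$, which matches the change of factor from $L_X^{1/2}$ to $L^{1/2}$ for the last variable. The remaining factors, with $q^{2(r-i)}$, should be absorbed by the normalization $q^{\sum(2(r-i)+1)e_i/2}$ and the symmetric structure.
\end{enumerate}

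I expect the main obstacle to be this bookkeeping of normalizations: the twist $q^{\sum\frac{2(r-i)+1}{2}e_i}(-1)^{\sum(r-i)e_i}$, combined with the non-symmetric factors of $\Delta_{r,\phi}$ modulo $\Rel^\natural$, has to convert precisely into the single change $q^{-1}\mapsto q$ in the one-variable factors. I would first verify the case $r=1$ by hand, comparing $\mathrm{str}^\phi$ of $\Rel^\phi(m)$ with the power series expansion, and then use the $\star$-factorization of all the operators to reduce the general case to $r=1$ together with the $\Rel_r$ part already handled by the previous theorem.
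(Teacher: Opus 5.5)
Your architecture matches the paper's. You use \Cref{flat-phi-computation} to see that, in the basis $\{\phi^\flat_e\}$, the Hecke algebra acts by the $\delta$-basis formulas with $\mathrm{str}^\phi$ in place of $\mathrm{str}^\flat$. You then match $\mathrm{Rel}^\phi$ with $L^{1/2}$ the way $\mathrm{Rel}^\flat$ matches $L_X^{1/2}$, with $\Delta_{r,\phi}$ of \Cref{DeltaPhi} as the bridge.

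\textbf{Step (1) fails as stated.} The mechanism you propose for the matching is false. The straightening relations are not the kernel of $g\mapsto(gL_X^{1/2})\rvert_{\Lambda^+}$ on $\C[\Lambda]$. Take $r=1$: $L_X^{1/2}$ is supported on $-2\Z_{\ge0}\check\lambda_1$, so this map kills every $e^{-m\check\lambda_1}$ with $m\ge1$, and its kernel is spanned by these. On the other hand, $\mathrm{Rel}^\flat(1)=\delta_{(-1)}-\delta_{(1)}$ corresponds, under any diagonal normalization, to $e^{-\check\lambda_1}-c\,e^{\check\lambda_1}$ with $c\neq0$, and that element is not killed. For the same reason, ``multiply by $e^{\check\lambda}$ and restrict'' is not ``apply $t(\check\lambda)$ and straighten''. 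By \Cref{Heckeflathalf}, the Hecke operators act on functions on $\Lambda^+$ by the \emph{transpose} of $\mathrm{str}\circ\Delta$.

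\textbf{The relations enter dually.} What makes $h\mapsto(hL_X^{1/2})\rvert_{\Lambda^+}$ intertwine $\mathrm{Sat}(S)$ with the transpose of $\mathrm{str}^\flat\circ\Delta_S$ is that, under the normalized pairing, $\langle hL_X^{1/2},\rho\rangle=0$ for $h\in\C[\Lambda]^W$ and $\rho\in\mathrm{Rel}^\flat$. What you actually need is $\langle hL^{1/2},\rho\rangle=0$ for $\rho\in\mathrm{Rel}^\phi$. Write $L^{1/2}=L_X^{1/2}\cdot(L^{1/2}/L_X^{1/2})$. After normalization and transposition, the ratio corresponds to $\Delta_{r,\phi}$ (this is the remark following the theorem). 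The requirement then reduces to $\Delta_{r,\phi}$ carrying $\mathrm{Rel}^\flat$ onto $\mathrm{Rel}^\phi$ in the completion. The proof of \Cref{flat-phi-computation} supplies this: $\Delta_{r,\phi}$ preserves $\mathrm{Rel}^\natural$, and its last factor turns $\mathrm{Rel}^\flat(m)$ into $\mathrm{Rel}^\phi(m)$.

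\textbf{Step (2) misreads the factors.} You assume only the last one-variable factor changes and hope the other factors of $\Delta_{r,\phi}$ are ``absorbed''. In fact $L^{1/2}/L_X^{1/2}=\prod_{i=1}^r\frac{1-q^{-1}e^{-2\check\lambda_i}}{1-qe^{-2\check\lambda_i}}$ changes every one-variable factor, and each factor of $\Delta_{r,\phi}$ accounts for exactly one of them. A shift by $2$ in the $i$-th coordinate carries the normalization factor $q^{2(r-i)+1}$. Combined with the $q^{\pm1}$ of the $i$-th factor of the ratio, this gives exactly the exponents $2(r-i+1)$ and $2(r-i)$ in $\frac{t(0)-q^{2(r-i+1)}t(2)}{t(0)-q^{2(r-i)}t(2)}$. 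Only the last factor interacts with the one-variable relations, which is why the identity $\frac{t(0)-q^2t(2)}{t(0)-t(2)}\mathrm{Rel}^\flat(m)=\mathrm{Rel}^\phi(m)$ suffices there.

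\textbf{The generator check is immediate.} Every monomial in the expansions of $L^{1/2}$ and $L_X^{1/2}$ has coordinate sum $\le0$, with equality only on the cone spanned by the $\check\lambda_j-\check\lambda_i$ ($i<j$), which meets $\Lambda^+$ only at $0$. So both restrict to $e^0$ on $\Lambda^+$. Since also $\phi^\flat_{(0^r)}=\delta_{(0^r)}$, both composites send $1$ to the basic function.
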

\begin{remark}
    Ultimately, this is because of the description of $\phi^\flat$ in \cref{DeltaPhi}. Note that $\Delta_{r,\phi}$ there is precisely the adjoint of $\frac{L^{1/2}}{L_X^{1/2}}$ under $\langle\cdot,\cdot\rangle_\delta,$ in the notation of \cite[Proposition A.5]{CZ}.
\end{remark}

Finally, consider $\Lambda^{++}\subset\Lambda^+$ to correspond to $\{e\in\Typ_r^{0,\flat}\colon e_r\ge1\}\subset \Typ_r^{0,\flat},$ so that $\C[\Lambda^{++}]\subseteq\C[\Lambda^+]$ corresponds to $\Delta_{r,[1,\infty]}\subseteq\C[\Typ_r^{0,\flat}].$ With this, $\phi(\C[\Lambda^{++}])\subseteq\C[\Lambda_X^+]$ corresponds to $\Phi_{r,[1,\infty]}\subseteq\C[\Typ_r^{0,\flat}].$ Denoting by $T_r$ and $T_r'$ the operators defined in \Cref{Tr-Tr'-def}, we can rephrase the main result of this section.
\begin{theorem}[\cref{ConjProof1,ConjProof2}]
     The $\mathcal{H}_G$-module $\mathcal{H}_G\cdot\phi(\C[\Lambda^{++}])$ contains $T_r\cdot\C[\Lambda_X^+]+T_r'\cdot\C[\Lambda_X^+].$
\end{theorem}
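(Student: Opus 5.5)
The plan is to translate everything into the combinatorial language of \Cref{ConjProof1,ConjProof2} and check that the dictionary matches. Under $\typ_K$ (\Cref{TypFJ}) we identify $C_c^\infty(X(F^+))^K=\C[\Lambda_X^+]$ with $\C[\Typ_r^{0,\flat}]$; the $\mathcal{H}_G$-action corresponds to the action of $\mathbb{T}_r^\flat\otimes\C$; and $\phi(\C[\Lambda^{++}])$ is the span $\Phi_{r,[1,\infty]}$ of the $\phi^\flat_e$ with $e_r\ge1$, i.e.\ $\lambda_0(e)=0$. With these identifications, the statement becomes
\begin{equation*}
    T_r\cdot\C[\Typ_r^{0,\flat}]+T_r'\cdot\C[\Typ_r^{0,\flat}]\subseteq\mathbb{T}_r^\flat\cdot\Phi_{r,[1,\infty]}\otimes\C.
\end{equation*}

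We proceed in two steps. First, \Cref{ConjProof2} gives $T_r\cdot\Z[\Typ_r^{0,\flat}]$ and $T_r'\cdot\Z[\Typ_r^{0,\flat}]$ inside $\Phi_{r,[1,\infty],0^{\le1}}$. Second, the second part of \Cref{ConjProof1} says that, after inverting $q(q+1)$, and in particular over $\C$, $\Phi_{r,[1,\infty],0^{\le1}}$ is contained in $\mathbb{T}_r^\flat\cdot\Phi_{r,[1,\infty]}$. Recall how that part is proved: for $f$ with exactly one zero entry one has
\begin{equation*}
    S_{1,r}^\flat\phi^\flat_g\equiv q^{2r}(q+1)\phi^\flat_f\mod\Phi_{r,[1,\infty]},
\end{equation*}
with $g$ zero-free. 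Dividing by $q^{2r}(q+1)$ expresses $\phi^\flat_f$ as a Hecke translate of a zero-free $\phi^\flat_g$, up to an element of $\Phi_{r,[1,\infty]}$. Composing the two inclusions gives the claim.

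The main obstacle is not these two steps but checking that the dictionary is correct. Specifically:
\begin{enumerate}
    \item The adjoint conventions of \Cref{Heckeflat} must match genuine convolution by elements of $\mathcal{H}_G$ on functions on $X(F^+)/K$. Since $\Z[H\backslash G/K]$ is acted on by $\mathbb{T}_r^\flat$ through these adjoints and $\langle\cdot,\cdot\rangle$ is the natural pairing, we take this from \cite{CZ2}.
    \item The map $\phi$ must be defined so that the functions $1_{x_eK}$ with $e\in\Lambda^{++}$ go to $\phi^\flat_e$. This holds by construction.
    \item $X(F^+)$ includes both the $H^+$- and the $H^-$-orbits. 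These correspond to the two parity classes of $\sum e_i$ in \Cref{TypFJ}, so their union is all of $\Typ_r^{0,\flat}$. This is consistent with the earlier results being stated on the whole of $\Z[\Typ_r^{0,\flat}]$.
\end{enumerate}
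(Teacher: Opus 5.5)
Your proposal is correct and follows the paper's argument. The statement is a rephrasing of \Cref{ConjProof1,ConjProof2} under the paper's own dictionary $\Lambda_X^+=\Typ_r^{0,\flat}$, $\mathcal{H}_G=\mathbb{T}_r^\flat\otimes\C$, $\phi(\C[\Lambda^{++}])=\Phi_{r,[1,\infty]}\otimes\C$, and it follows by composing the inclusion $T_r\cdot\Z[\Typ_r^{0,\flat}]+T_r'\cdot\Z[\Typ_r^{0,\flat}]\subseteq\Phi_{r,[1,\infty],0^{\le1}}$ with the second part of \Cref{ConjProof1}, which holds once $q(q+1)$ is inverted and in particular over $\C$.
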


\section{First explicit reciprocity law}\label{ReciprocityChapter}
Let $F/F^+$ a CM extension of number fields, where $F\subseteq\C.$ Recall that for a rational place $w,$ we denote $\Sigma_w^+$ the set of places of $F^+$ above $w,$ and for a place $v$ of $F^+$ or $F,$ we write $\lVert v\rVert$ for the cardinality of the residue field of the completion at $v.$

\subsection{Preliminaries}\label{Reciprocity-Setup}
Let $\Pi$ be an irreducible cuspidal automorphic representation of $\mathrm{GL}_N(\A_F)$ for some $N\ge1.$
\begin{notation}\label{SigmaPi}
    We denote by $\Sigma_\Pi^+$ the smallest set of nonarchimedean places of $F^+$ such that if $v\not\in\Sigma_\Pi^+$ is a nonarchimedean place of $F^+,$ then its underlying rational prime if unramified in $F$ and $\Pi_w$ is unramified for all $w\mid v$ places of $F.$
\end{notation}
\subsubsection{Automorphic representations}
\begin{definition}[{\cite[Definition 1.1.3]{LTXZZ}}]\label{RelevantDefinition}
    We say that an automorphic representation $\Pi$ of $\mathrm{GL}_N(\A_F)$ with $N\ge1$ is \emph{relevant} if
    \begin{enumerate}
        \item $\Pi$ is an irreducible cuspidal automorphic representation;
        \item $\Pi\circ\cplx\iso\Pi^\vee,$ that is, $\Pi$ is conjugate self-dual;
        \item for every archimedean place $w$ of $F,$ $\Pi_w$ is isomorphic to the irreducible principal series representation induced by the characters $(\mathrm{arg}^{1-N},\mathrm{arg}^{3-N},\ldots,\mathrm{arg}^{N-3},\mathrm{arg}^{N-1}),$ where $\mathrm{arg}\colon\C^\times\to\C^\times$ is the argument character $\mathrm{arg}(z)\defeq z/\sqrt{z\overline{z}}.$
    \end{enumerate}
\end{definition}
\begin{remark}
    A relevant representation $\Pi$ is automatically regular algebraic in the sense of \cite[Definition 3.12]{Clozel}.
\end{remark}

We record the following results about attached Galois representations from the works of Chenevier--Harris \cite{Chenevier-Harris} and Caraiani \cite{Caraiani1,Caraiani2}.
\begin{proposition}[{\cite[Proposition 3.2.4]{LTXZZ}}]\label{GaloisExistence}
    Let $\Pi$ be a relevant representation of $\mathrm{GL}_N(\A_F).$
    \begin{enumerate}
        \item For every nonarchimedean place $w$ of $F,$ $\Pi_w$ is tempered.
        \item For every rational prime $\ell$ and every isomorphism $i_\ell\colon\C\rightiso\overline{\Q}_\ell,$ there is a semisimple continuous homomorphism
        \begin{equation*}
            \rho_{\Pi,i_\ell}\colon \Gamma_{F}\to\mathrm{GL}_N(\overline{\Q}_\ell)
        \end{equation*}
        defined up to conjugation, such that for every nonarchimedean place $w$ of $F,$ the Frobenius semisimplification of the associated Weil--Deligne representation of $\rho_{\Pi,\ell}\rvert_{\Gamma_{F_w}}$ corresponds to the irreducible admissible representation $i_\ell\Pi_w\lvert\det\rvert_w^{\frac{1-N}{2}}$ of $\mathrm{GL}_N(F_w)$ under the local Langlands correspondence. Moreover, $\rho_{\Pi,\ell}^\cplx$ and $\rho_{\Pi,\ell}^\vee(1-N)$ are conjugate.
    \end{enumerate}
\end{proposition}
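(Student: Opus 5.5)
This proposition is quoted from \cite[Proposition 3.2.4]{LTXZZ}, so I would assemble it from the known construction of Galois representations attached to regular algebraic conjugate self-dual cuspidal representations rather than prove it from scratch. The first step is to check that a relevant $\Pi$ is regular algebraic (as in the preceding remark), with cohomological weight the trivial one. Conjugate self-duality together with this archimedean condition puts us squarely in the setting of Chenevier--Harris \cite{Chenevier-Harris} and its precursors (Shin, Harris--Taylor, Clozel--Harris--Labesse). I would state the needed input from these works as a black box.

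For part (2), I would invoke Chenevier--Harris to get, for each $i_\ell$, a continuous semisimple $\rho_{\Pi,i_\ell}\colon\Gamma_F\to\mathrm{GL}_N(\overline{\Q}_\ell)$ satisfying local--global compatibility at unramified places. The Frobenius eigenvalues there match the Satake parameters of $i_\ell\Pi_w\lvert\det\rvert_w^{\frac{1-N}{2}}$. By Chebotarev and Brauer--Nesbitt, this determines $\rho$ up to conjugation. Comparing Frobenius traces with those of $\rho^\vee(1-N)$ and using $\Pi\circ\cplx\iso\Pi^\vee$ then gives the conjugacy of $\rho^\cplx$ and $\rho^\vee(1-N)$.

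The main obstacle is full local--global compatibility at \emph{all} nonarchimedean $w$, including $w\mid\ell$ and ramified places, at the level of Frobenius-semisimplified Weil--Deligne representations. Chenevier--Harris only gives this up to semisimplification, i.e.\ with the monodromy operator $N$ possibly wrong. For this I would cite Caraiani \cite{Caraiani1} for $w\nmid\ell$ and \cite{Caraiani2} for $w\mid\ell$. These papers prove compatibility including monodromy by analysing the cohomology of unitary Shimura varieties and the weight--monodromy property.

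Part (1), temperedness of $\Pi_w$, then follows from Caraiani's purity result. The Weil--Deligne representation of $\rho|_{\Gamma_{F_w}}$ is pure, and under local Langlands pure Weil--Deligne representations correspond to tempered representations, via the Zelevinsky classification of generic irreducible representations. Caraiani's statements are proved for a base change from a unitary group after reducing by solvable base change. So I would also note that the hypotheses of \cite{LTXZZ} — $\Pi$ cuspidal and conjugate self-dual with the relevant archimedean type — are exactly those the cited theorems require.
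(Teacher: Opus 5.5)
Your proposal is correct and takes the same approach as the paper. The paper gives no independent argument: it records the statement from \cite[Proposition 3.2.4]{LTXZZ}, which rests on Chenevier--Harris \cite{Chenevier-Harris} for existence and compatibility up to semisimplification, and on Caraiani \cite{Caraiani1,Caraiani2} for the full monodromy-level local--global compatibility and temperedness, exactly as you outline.

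One small refinement: since $\ell$ is arbitrary, temperedness at $w$ can always be deduced from a prime $\ell$ different from the residue characteristic of $w$. So part (1) needs only the $\ell\neq p$ input from \cite{Caraiani1}.
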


\begin{definition}
    The \emph{coefficient field} $\Q(\Pi)$ of $\Pi$ is defined to be the smallest subfield of $\C$ such that for every $\sigma\in\mathrm{Aut}(\C/\Q(\Pi))$ we have that $\Pi^\infty$ and $\Pi^\infty\otimes_{\C,\sigma}\C$ are isomorphic.
\end{definition}
\begin{definition}\label{StrongCoefDef}
    Let $\Pi$ be relevant. A \emph{strong coefficient field} of $\Pi$ is a number field $E\subseteq\C$ containing $\Q(\Pi)$ such that for every for every nonarchimedean place $\lambda$ of $E,$ there exists a continuous homomorphism
    \begin{equation*}
        \rho_{\Pi,\lambda}\colon\Gamma_{F}\to\mathrm{GL}_N(E_\lambda)
    \end{equation*}
    defined up to conjugation, satisfying that for all $i_\ell\colon\C\rightiso\overline{\Q}_\ell$ inducing $\lambda$ we have that $\rho_{\Pi,\lambda}\otimes_{E_\lambda}\overline{\Q}_\ell$ and $\rho_{\Pi,i_\ell}$ are conjugate.
\end{definition}
\begin{remark}
For $\Pi$ relevant, strong coefficient fields $E$ exist by \cite[Proposition 3.2.5]{Chenevier-Harris}.
\end{remark}

\subsubsection{Satake parameters}
\begin{definition}
    For a nonarchimedean place $w$ of $F$ such that $\Pi_w$ is unramified, let
    \begin{equation*}
        \underline{\alpha}(\Pi_w)=\{\alpha(\Pi_w)_1,\ldots,\alpha(\Pi_w)_N\}\subseteq\C
    \end{equation*}
    be the Satake parameter of $\Pi_w.$ We consider the Satake polynomial
    \begin{equation*}
        P_{\Pi,w}(T)=\prod_{i=1}^N(T-\alpha(\Pi_w)_i)\in\C[T].
    \end{equation*}
\end{definition}

\begin{definition}
    For $v$ a nonarchimedean place of $F^+$ which is unramified over $F,$ we consider the local spherical Hecke algebra
    \begin{equation*}
        \mathbb{T}_{N,v}\defeq\Z[U_{N,v}(\O_{F^+_v})\backslash U_{N,v}(F^+_v)\slash \O_{N,v}(F^+_v)]
    \end{equation*}
    attached to the unitary group $U_{N,v}$ over $\O_{F^+_v}$ associated to the hermitian form given by the antidiagonal matrix
    \begin{equation*}
        \begin{pmatrix}
            0&\cdots&0&1\\0&\cdots&1&0\\\vdots&\iddots&\vdots&\vdots\\1&\cdots&0&0
        \end{pmatrix}.
    \end{equation*}
    Let $A_{N,v}$ be the maximal split diagonal subtorus of $U_{N,v},$ and $W_{N,v}$ be the Weyl group.
\end{definition}
\begin{definition}[{\cite[Construction 3.1.8]{LTXZZ}}]
    Let $\Pi$ be conjugate self-dual. Consider $v\not\in\Sigma^+_\Pi.$ Let $\delta(v)=1$ if $v$ is inert in $F,$ and $\delta(v)=2$ if it is split. We define the Satake homomorphism
    \begin{equation*}
        \phi_{\Pi,v}\colon\mathbb{T}_{N,v}\to\C
    \end{equation*}
    given by the composition of the Satake transform $\mathbb{T}_{N,v}\to\Z\lVert v\rVert^{\pm\delta(v)/2}[X_*(A_{N,v})]^{W_{N,v}}$ with evaluating with the $\delta(v)\cdot N$ Satake parameters $(\underline{\alpha}(\Pi_w))_w$ as $w$ runs through the $\delta(v)$ places of $F$ above $v.$
\end{definition}
\begin{definition}
    For a finite set of nonarchimedean places $\Sigma^+$ of $F^+$ containing all the ramified places, we consider the abstract Hecke algebra
    \begin{equation*}
        \mathbb{T}_N^{\Sigma^+}\defeq\bigotimes_{v\not\in\Sigma^+\cup\Sigma^+_\infty}\mathbb{T}_{N,v}
    \end{equation*}
    with restricted tensor product with respect to the unit elements. If $\Pi$ is conjugate self-dual, then we also have the Satake homomorphism
    \begin{equation*}
        \phi_\Pi\colon\mathbb{T}_N^{\Sigma^+_\Pi}\to\C
    \end{equation*}
    given by the tensor product of the local Satake homomorphisms $\phi_{N,v}.$
\end{definition}
\begin{proposition}[{\cite[Proposition 4.1 and Remark 4.2]{Shin-Templier}}]\label{SatakeHomomorphism}
    When $\Pi$ is relevant, $\phi_\Pi$ takes values in $\O_{\Q(\Pi)}.$ In particular, if a place $v$ of $F^+$ is inert in $F,$ we have $P_{\Pi,v}(T)\in\O_{\Q(\Pi)}\otimes_\Z\Z[\lVert v\rVert^{\pm1}].$
\end{proposition}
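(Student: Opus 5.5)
The plan is to prove the two assertions of \Cref{SatakeHomomorphism} separately. The first is a rationality statement: $\phi_\Pi$ takes values in $\Q(\Pi)$. The second is an integrality statement: these values are algebraic integers. The claim about $P_{\Pi,v}(T)$ for inert $v$ will then follow by inverting a unitriangular relation between Hecke operators and elementary symmetric functions of Satake parameters.

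\emph{Rationality.} For $\sigma\in\mathrm{Aut}(\C/\Q(\Pi))$ we have $\Pi^\infty\otimes_{\C,\sigma}\C\iso\Pi^\infty$, so $\sigma$ permutes the multisets $\underline{\alpha}(\Pi_w)$, up to the twist by $\lVert v\rVert^{\pm\delta(v)/2}$ built into the normalisation. The construction of $\phi_{\Pi,v}$ in \cite[Construction 3.1.8]{LTXZZ} is chosen so that this twist is Galois-equivariant: the half-integral powers cancel because $\Pi$ is regular algebraic with the weight fixed in \Cref{RelevantDefinition}. The Satake transform of an element of $\mathbb{T}_{N,v}$ has coefficients in $\Z[\lVert v\rVert^{\pm\delta(v)/2}]$, and these coefficients combine with the normalised parameters. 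Hence $\sigma(\phi_\Pi(t))=\phi_\Pi(t)$ for every $t$, and $\phi_\Pi$ is valued in $\Q(\Pi)$.

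\emph{Integrality.} I would realise the character $\phi_\Pi$ as a Hecke eigensystem on a finitely generated $\Z$-module stable under $\mathbb{T}_N^{\Sigma_\Pi^+}$. One option is the interior cohomology $H^{N-1}(\Sh(V,K),\Z)$ of a unitary Shimura variety, or the space $\Z[\Sh(V^\circ,K)]$ on a definite unitary group, into which $\Pi$ descends via base change (available since $\Pi$ is relevant). Since $\mathbb{T}_N^{\Sigma^+_\Pi}$ is defined over $\Z$ and acts on this $\Z$-lattice, its eigenvalues are roots of monic integral polynomials, hence algebraic integers. The main obstacle is arranging such a descent in general, because the unitary group must exist with the right local conditions. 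An alternative that avoids descent is to use \Cref{GaloisExistence}. There $\rho_{\Pi,i_\ell}$ has an integral lattice, and local–global compatibility identifies the normalised $\underline{\alpha}(\Pi_w)$ with Frobenius eigenvalues up to an integral twist, which are $\ell$-adic integers for all $\ell\neq p$. For $\ell=p$, I would use the Hodge–Tate weights $0,\ldots,N-1$ and weak admissibility (Newton above Hodge) to bound valuations. This is exactly the point where the normalisation in $\phi_{\Pi,v}$ must be checked carefully.

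\emph{Consequence for $P_{\Pi,v}$.} For inert $v$, the Satake images of the standard double cosets $\mathbbm{1}[K\varpi^{\mu}K]$ are, up to lower-order terms, the symmetric functions in the parameters, with leading coefficients powers of $\lVert v\rVert$. Inverting this triangular system over $\Z[\lVert v\rVert^{\pm1}]$ expresses each coefficient of $P_{\Pi,v}(T)$ as an $\O_{\Q(\Pi)}\otimes\Z[\lVert v\rVert^{\pm1}]$-combination of values of $\phi_\Pi$. Conjugate self-duality pairs $\alpha$ with $\alpha^{-\cplx}$, which justifies using only the unitary Hecke algebra.
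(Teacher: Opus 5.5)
The paper does not prove this itself; it is quoted from Shin--Templier. So your argument has to stand on its own, and its integrality half, which is the real content of the statement, is not established.

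\emph{Your first route} needs a descent of $\Pi$ to a unitary group, which you do not have for a general relevant $\Pi$. (It is available in \Cref{ThmA}, where $\Pi=\mathrm{BC}(\pi)$ and $\phi_\Pi$ is the eigencharacter of $\Z$-valued Hecke operators on the finite free $\Z$-module $\Z[\Sh(V^\circ,K^\circ)]$.) You could avoid descent by using the integral cohomology of the $\mathrm{GL}_N$ locally symmetric space, since relevant means cohomological with trivial coefficients. But at inert $v$ you would then still have to relate $\mathbb{T}_{N,v}$ integrally to the Hecke algebra of $\mathrm{GL}_N(F_w)$. The base-change homomorphism between these Hecke algebras is a priori defined only after inverting $\lVert v\rVert$.

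\emph{Your second route.} The $\ell\neq p$ part is fine, and in fact settles integrality away from $p$, since there $\lVert v\rVert$ is an $\ell$-adic unit and the $\alpha$'s are units. At $p$, however, integrality of the Frobenius eigenvalues is not what is needed. Take a split $v$, where $\mathbb{T}_{N,v}$ is the Hecke algebra of $\mathrm{GL}_N(F_w)$, and put $K=\mathrm{GL}_N(\O_{F_w})$, $q=\lVert v\rVert$, $\underline{\alpha}=\underline{\alpha}(\Pi_w)$ and $\beta_j=q^{(N-1)/2}\alpha_j$. Then
\begin{equation*}
    \phi_{\Pi,v}\bigl(\mathbbm{1}[K\varpi^{(1^i,0^{N-i})}K]\bigr)=q^{i(N-i)/2}e_i(\underline{\alpha})=q^{-i(i-1)/2}e_i(\beta_1,\ldots,\beta_N).
\end{equation*}
So you need the exact divisibility $v(e_i(\underline{\beta}))\ge\binom{i}{2}$, with $v$ normalised by $v(q)=1$. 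This is Mazur's inequality (Newton polygon above Hodge polygon, from weak admissibility with Hodge--Tate weights $0,\ldots,N-1$), which is strictly stronger than all slopes lying in $[0,N-1]$.

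You would then still need an argument that this condition forces $p$-integrality of $\phi_{\Pi,v}(t)$ for every $t\in\mathbb{T}_{N,v}$. This includes the unitary Hecke algebra at inert $v$, where the lower-order terms of the Satake transforms must also be controlled. One way is to use that $\mathbb{T}_{N,v}$ is generated over $\Z$ by the $\mathbbm{1}[K\varpi^{(1^j,0^{r-j})}K]$ and compute their transforms. Your remark that the normalisation ``must be checked carefully'' defers exactly this step. As written, the proposal proves only rationality and $\ell$-integrality for $\ell\nmid\lVert v\rVert$.

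Two smaller points.
\begin{enumerate}
    \item Regular algebraicity plays no role in the rationality step. The value $\phi_{\Pi,v}(t)$ is the eigenvalue of the $\Z$-valued operator $t$ on a spherical vector. For $N=2r$ at inert $v$, the half-sum of positive roots of $U_{2r}$ is $\sum_i(2(r-i)+1)e_i$, so every normalising factor is an integral power of $\lVert v\rVert=\lVert w\rVert^{1/2}$, and $\sigma$-conjugation commutes with the unramified correspondence. Algebraicity is what makes $\Q(\Pi)$ a number field, and what integrality requires.
    \item At an inert place, conjugate self-duality pairs $\alpha$ with $\alpha^{-1}$, not with its complex-conjugate inverse. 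With that correction, your final deduction for $P_{\Pi,v}$ is fine, since the Satake isomorphism for $\mathbb{T}_{N,v}$ holds over $\Z[\lVert v\rVert^{\pm1}]$.
\end{enumerate}
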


\begin{definition}[{\cite[Definition 3.1.5]{LTXZZ}}]
    Assume that $N$ is even. Let $\Pi$ be relevant, $v\not\in\Sigma^+_\Pi$ a place of $F^+$ which is inert in $F.$ Consider a ring morphism $f\colon\O_{\Q(\Pi)}\otimes_\Z\Z[\lVert v\rVert^{\pm1}]\to L.$ We say that
    \begin{enumerate}
        \item $\Pi_v$ is \emph{level-raising special} on $L$ if $f(P_{\Pi,v}(\lVert v\rVert))=0$ and $f(P_{\Pi,v}'(\lVert v\rVert))\in L^\times.$
        \item $\Pi_v$ is \emph{intertwining generic} on $L$ if $f(P_{\Pi,v}(-1))\in L^\times.$
    \end{enumerate}
\end{definition}

\subsection{Arithmetic level raising}\label{Reciprocity-ALR}
\subsubsection{Set-up}\label{ALRSetup}
Let $N=2r$ be an integer with $r\ge1.$ We will assume that
\begin{enumerate}[label=(LR\arabic*)]
\setcounter{enumi}{-1}
    \item\label{LR0} \cite[Hypothesis 3.2.10]{LTXZZ} holds true for such $N.$
\end{enumerate}
We note that this is true if $F^+\neq\Q$ by \cite{KSZ} or if $N=2$ by \cite[Theorem D.6(2)]{LiuFJ} (see \cite[Proposition 3.2.11]{LTXZZ}).

We consider
\begin{itemize}
    \item $\Pi$ a relevant representation of $\mathrm{GL}_N(\mathbb{A}_F)$ (\Cref{RelevantDefinition}).
    \item $E\subseteq\C$ a strong coefficient field $E\subseteq\C$ for $\Pi$ (\Cref{StrongCoefDef}).
    \item A finite set $\Sigma_{\mathrm{min}}^+$ of nonarchimedean places of $F^+$ containing $\Sigma_\Pi^+$ (\Cref{SigmaPi}).
    \item A (possibly empty) finite set $\Sigma_{\mathrm{\lr}}^+$ of nonarchimedean places of $F^+$ that are inert in $F$ and strongly disjoint from $\Sigma_{\mathrm{min}}^+.$
    \item A finite set $\Sigma^+$ of places of $F^+$ containing $\Sigma_{\mathrm{min}}^+\cup\Sigma_{\mathrm{\lr}}^+.$
    \item A prime $\lambda$ of $E,$ with underlying rational prime $\ell.$
    \item A positive integer $m.$
    \item A standard definite hermitian space $V^\circ$ of rank $N$ over $F,$ which is split outside $\Sigma_\infty^+\cup\Sigma_{\mathrm{min}}^+\cup\Sigma_{\mathrm{\lr}}^+,$ and non-split for $\Sigma_{\mathrm{\lr}}^+.$ We consider an object $K^\circ\in\mathfrak{K}(V^\circ)$ which is factorizable $K^\circ=\prod_{v\not\in\Sigma_\infty^+}K^\circ_{v}$ such that i) $K^\circ_{v}$ is hyperspecial for $v\not\in\Sigma_{\mathrm{min}}^+\cup\Sigma_{\mathrm{\lr}}^+,$ given by the stabilizer of a self-dual lattice $\Lambda^\circ_v,$ ii) $K^\circ_v$ is special for $v\in\Sigma_{\mathrm{\lr}}^+,$ and iii) $K^\circ_v$ is transferable for $v\in\Sigma_{\mathrm{min}}^+$ (see \cite[Definition D.2.1]{LTXZZ})\footnote{By \cite[Lemma D.2.2(3)]{LTXZZ}, every sufficiently small open compact subgroup is transferable.}.
\end{itemize}

By \Cref{SatakeHomomorphism} we have the Satake homomorphism
\begin{equation*}
    \phi_{\Pi}\colon \mathbb{T}_N^{\Sigma^+_\Pi}\to\O_E,
\end{equation*}
and for every prime $\lambda$ of $E,$ by we have a global Galois representation
\begin{equation*}
    \rho_{\Pi,\lambda}\colon\Gamma_{F}\to\mathrm{GL}_N(E_\lambda),
\end{equation*}
such that $\rho_{\Pi,\lambda}^c$ and $\rho_{\Pi,\lambda}^\vee(1-N)$ are conjugate.

We require that this data satisfies
\begin{enumerate}[label=(LR\arabic*)]
    \item\label{LR1} $\ell\ge 2(N+1)$ and $\ell$ is unramified in $F.$
    \item\label{LR2} $\Sigma_{\mathrm{min}}^+\cap\Sigma_\ell^+=\emptyset$ and $\ell\nmid\lVert v\rVert(\lVert v\rVert^2-1)$ for all $v\in\Sigma_{\mathrm{\lr}}^+.$
    \item\label{LR3} $\rho_{\Pi,\lambda}\rvert_{\Gamma_{F(\zeta_\ell)}}$ is residually absolutely irreducible.
    \item\label{LR4} $\bar{\rho}_{\Pi,\lambda,+}$ is rigid for $(\Sigma^+_{\mathrm{min}},\Sigma^+_{\mathrm{\lr}}),$ (see \cite[Remark 6.1.7]{LTXZZ} and \cite[Definition 6.3.4]{LTXZZ}),
    \item\label{LR5} $\mathbb{T}_N^{\Sigma^+}\xrightarrow{\phi_\Pi}\O_E\to\O_E/\lambda$ is cohomologically generic (\cite[Definition D.1.1]{LTXZZ}).
\end{enumerate}
\begin{definition}
We consider the ideals of $\mathbb{T}^{\Sigma^+\cup\Sigma^+_p}$
\begin{equation*}
    \begin{split}
        \m&\defeq \mathbb{T}^{\Sigma^+\cup\Sigma^+_p}\cap \ker\left(\mathbb{T}^{\Sigma^+}\xrightarrow{\phi_\Pi}\O_E\to\O_E/\lambda\right)\\
        \n&\defeq \mathbb{T}^{\Sigma^+\cup\Sigma^+_p}\cap \ker\left(\mathbb{T}^{\Sigma^+}\xrightarrow{\phi_\Pi}\O_E\to\O_E/\lambda^m\right)
    \end{split}
\end{equation*}
\end{definition}
\begin{remark}\label{VanishingCohomology}
    Condition \Cref{LR5} implies that $H^i_{\et}(\Sh(V,K)_{\overline{\Q}},\O_\lambda)_\m$ vanishes for $i\neq N-1,$ and that it is a finite free $\O_\lambda$-module for $i=N-1$ (see \cite[Remark 6.1.5]{LTXZZ}).
\end{remark}

We further choose
\begin{itemize}
    \item a special inert prime $\p$ of $F^+,$ with underlying rational prime $p,$
\end{itemize}
and we assume that it satisfies the following.
\begin{enumerate}[label=(P\arabic*)]
    \item\label{P1} $\Sigma^+\cap\Sigma_p^+=\emptyset.$
    \item\label{P2} $\ell\nmid p(p^2-1).$
    \item\label{P3} There exists $i_p\colon\overline{\Q}_p\rightiso\C$ inducing $\p$ and a CM type $\Phi$ containing $\tau_\infty$ for which the compositum of $i_p(\Q_p),$ $F$ and $F_\Phi$ is $\Q_{p^2}.$
    \item\label{P4} $\Pi_\p$ is level raising special under the reduction map $\O_E\to\O_E/\lambda^m.$
    \item\label{P5} $\Pi_\p$ is intertwining generic under the reduction map $\O_E\to\O_E/\lambda.$
    \item\label{P6} The natural map
    \begin{equation*}
        \frac{(\O_E/\lambda^m)[\Sh(V^\circ,K^\circ)]}{\mathbb{T}^{\Sigma^+\cup\Sigma_p^+}\cap\ker\phi_\Pi}\to\frac{(\O_E/\lambda^m)[\Sh(V^\circ,K^\circ)]}{\mathbb{T}^{\Sigma^+}\cap\ker\phi_\Pi}
    \end{equation*}
    is an isomorphism of nontrivial $\O_E/\lambda^m$-modules.
\end{enumerate}
Note that under \Cref{P3}, $i_p$ induces an isomorphism $i_p\colon\Q_p^\Phi\rightiso F_\p=\Q_{p^2}.$ We then also choose
\begin{itemize}
    \item In addition to \Cref{P3}, the data of $W_0,$ $K_0^p$ as in \Cref{Shimura-RSZ}.
    \item $V$ a standard indefinite hermitian space of rank $N$ over $F/F^+$ which is a nearby space to $V^\circ$ at $\p.$ That is, equipped with an isometry $j\colon V\otimes_{F^+}\A_{F^+}^{\infty,\p}\rightiso V^\circ\otimes_{F^+}\A_{F^+}^{\infty,\p},$ as well as a lattice $\Lambda_\p\subseteq V\otimes_{F^+}F^+_\p$ which is almost self-dual. We denote $K_\p\defeq\mathrm{Stab}(\Lambda_\p),$ and $K=j^*(K^{\circ,\p})K_\p.$
\end{itemize}
We also choose the following data for convenience, although the main results of this section do not depend on this choice.
\begin{itemize}
    \item A lattice $\Lambda^\bullet_\p\subseteq V^\circ\otimes_{F^+}F^+_\p$ such that $\Lambda^{\bullet,\vee}=\varpi\Lambda^\bullet$ for $\varpi$ an uniformizer of $\O_{F^+_\p}.$ We denote $K^\bullet_\p\defeq\mathrm{Stab}(\Lambda^\bullet)\subseteq U(V^\circ)(F^+_\p),$ and $K^\bullet=K^{\circ,\p}K^\bullet_\p.$
\end{itemize}
Note also that this gives us a basic uniformization datum $z^b=(V^\circ,j^p,(\Lambda^\circ_\q)_{\q\mid p},\Lambda^\bullet_\p)$ for $V$ as well as a complex uniformization datum $z^\eta=(\Lambda_\p,j^*(\Lambda^\circ_{\q})_{\q\neq\p}).$

Given this data, we apply the discussion of \Cref{ShimuraChapter}. In particular we have the moduli space $\mathcal{M}_\p\defeq\mathcal{M}_\p(V,K^p)\to\mathcal{T}_\p$ (\Cref{moduliProblem}) and the correspondences
\begin{equation*}
    S_\p^?\xleftarrow{\pi^?}B_\p^?\xrightarrow{i^?}M_\p^?
\end{equation*}
for $?\in\{\circ,\bullet\}$ and uniformizations
\begin{equation*}
    \Theta^\eta=\Theta_{z^\eta}\colon \mathcal{M}_\p^\eta\rightiso\Sh(V,K)\times_{\Spec F}\mathcal{T}_\p^\eta
\end{equation*}
and
\begin{equation*}
    \Theta^b=\Theta_{z^b}\colon S_\p^?(\overline{\F}_p)\rightiso\Sh(V^\circ,K^?)\times T_\p(\overline{\F}_p).
\end{equation*}

In order to apply the discussion of \Cref{WeightChapter}, we also record the following.
\begin{proposition}\label{nonproperComparison}
    For every $i$ and $K'\in\mathfrak{K}(V^\circ),$ the specialization maps
    \begin{equation*}
        H^i_{\et,c}(\mathcal{M}^\eta_{\p,\overline{\Q}_p}(V,K'),\O_\lambda)\to H^i_{\et,c}(\overline{M}_\p(V,K'),R\Psi\O_\lambda)
    \end{equation*}
    and
    \begin{equation*}
        H^i_\et(\mathcal{M}^\eta_{\p,\overline{\Q}_p}(V,K'),\O_\lambda)\to H^i_\et(\overline{M}_\p(V,K'),R\Psi\O_\lambda)
    \end{equation*}
    are isomorphisms. Moreover, for each $?\in\{\circ,\bullet,\dagger\}$ and $i,$ the natural maps $H^i_{\et,c}(\overline{M}_\p^?(V,K'),\O_\lambda)_\m\to H^i_\et(\overline{M}_\p^?(V,K'),\O_\lambda)_\m$ are isomorphisms. In particular, the scheme $\mathcal{M}_\p$ is \pss{} (\Cref{pssDef}).
\end{proposition}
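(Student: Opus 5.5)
The plan is to reduce everything to known results about $\mathcal{M}_\p(V,K')$ and its special fiber. I will handle the two specialization statements and the compact-support statement for the strata separately, and then read off $\m$-semistability from \Cref{pssDef}.

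For the specialization maps, I would follow the approach used in \cite{LTXZZ} for the same integral models. The scheme $\mathcal{M}_\p(V,K')$ is quasi-projective and strictly semistable over $\mathcal{T}_\p$. It admits a toroidal (or minimal) compactification over $\Z_p^\Phi$ with good reduction properties, as in the works of Lan, or Madapusi-Pera in the RSZ setting. The key input is that the boundary divisor is a relative normal crossings divisor, flat over $\mathcal{T}_\p$, and meets the semistable special fiber transversally. On a compactification $\overline{\mathcal{M}}$ with boundary $D$ of this type, nearby cycles commute with $j_!$ and $Rj_*$ along $D$. Concretely:
\begin{itemize}
\item $R\Psi(j_!\O_\lambda)=j_!R\Psi\O_\lambda$, and
\item $R\Psi(Rj_*\O_\lambda)=Rj_*R\Psi\O_\lambda$.
\end{itemize}
This follows because locally the pair is a product of a semistable model with a smooth pair (divisor, ambient), using Künneth for nearby cycles. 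Proper base change on $\overline{\mathcal{M}}$ then identifies $H^i_{\et,c}(\mathcal{M}^\eta_{\overline{\Q}_p},\O_\lambda)$ with $H^i_{\et,c}(\overline{M}_\p,R\Psi\O_\lambda)$. The same argument with $Rj_*$ identifies $H^i_\et(\mathcal{M}^\eta_{\overline{\Q}_p},\O_\lambda)$ with $H^i_\et(\overline{M}_\p,R\Psi\O_\lambda)$. Note that $\ell\neq p$ by \Cref{P2}. When $F^+\neq\Q$, the space $V$ is anisotropic and $\mathcal{M}_\p$ is already proper, so this step is then trivial; I would state that case first.

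For the second assertion I would localize at $\m$. The strata $\overline{M}^\circ_\p$ and $\overline{M}^\bullet_\p$ are identified by \Cref{Geometrymodp} with fibrations over the finite étale schemes $S^?_\p$, through $i^\circ$ and $i^\bullet$, with projective fibers. Hence the cohomology of $\overline{M}^\circ_\p$, and of the image of $B^\bullet_\p$, is built from the cohomology of the Shimura sets $\Sh(V^\circ,\cdot)$ tensored with cohomology of the projective fibers. The cohomology of the Shimura sets is automatically compactly supported. The difficulty in the non-proper case is the boundary part. I would write the cone of $H_c\to H$ on each stratum as the cohomology of the boundary of its compactification, which the theory of toroidal compactifications expresses via parabolically induced, Eisenstein-type Hecke modules. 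These are supported at Hecke maximal ideals whose associated Galois representations are reducible. Then \Cref{LR3} (residual absolute irreducibility), together with \Cref{LR5}, kills the localization at $\m$. For $\dagger$, I would apply the same argument to the intersection $\overline{M}^\dagger_\p$, viewed as a $\mathbb{P}^{N-1}$-type subbundle inside $\overline{M}^\circ_\p$ over $S^\circ$-type data.

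The main obstacle is this boundary-vanishing step for the strata in the non-compact case ($F^+=\Q$). The strata are not themselves Shimura varieties, so one has to check that their compactified boundaries still have Eisenstein Hecke action. I would try first to deduce it from the known vanishing $H^*_{\partial}(\Sh(V,K'))_\m=0$ for the generic fiber, propagated through the weight spectral sequence.
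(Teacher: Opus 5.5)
Your treatment of the specialization isomorphisms and of the strata $\circ,\dagger$ is fine and matches the paper. When $F^+\neq\Q$ this is proper base change. In the non-proper case your compactification sketch is essentially the content of \cite[Corollary 5.20]{Stroh-Lan}, which the paper simply cites; its real input is a suitable compactification of this parahoric-level model. Note also that $\mathcal{M}_\p$ is actually smooth near the boundary, because its singular locus $\overline{M}^\dagger_\p$ lies in the proper $\overline{M}^\circ_\p$. Finally, $H_c=H$ for $\overline{M}^\circ_\p$ and $\overline{M}^\dagger_\p$ holds by properness.

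The gap is the case $?=\bullet$ for $F^+=\Q$. This is the only non-formal part of the second assertion, and you leave it as ``the main obstacle''. Your discussion via $B^\bullet_\p$ does not bear on it: $i^\bullet(B^\bullet_\p)$ is only the supersingular part of $\overline{M}^\bullet_\p$. The non-compactness comes from the non-basic locus, which lies entirely in $\overline{M}^\bullet_\p$. Compactifying this stratum and computing its boundary Hecke module directly is not needed. Your fallback is the right idea, but it must be run on complexes. A morphism of weight spectral sequences that is an isomorphism on the abutments and on the $\circ,\dagger$ entries of $E_1$ does not by itself force the $\bullet$ entries to be isomorphic.

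Here is the missing step. Put $C(X,\mathcal{F})\defeq\mathrm{Cone}\bigl(R\Gamma_c(X,\mathcal{F})\to R\Gamma(X,\mathcal{F})\bigr)$ and let $a^\bullet\colon\overline{M}^\bullet_\p\hookrightarrow\overline{M}_\p$.
\begin{itemize}
\item The restriction $\O_\lambda\to a^\bullet_*\O_\lambda$ is surjective with kernel supported on $\overline{M}^\circ_\p$, because $\overline{M}_\p\setminus\overline{M}^\bullet_\p\subseteq\overline{M}^\circ_\p$.
\item The cone of $\O_\lambda\to R\Psi\O_\lambda$ is supported on the singular locus $\overline{M}^\dagger_\p\subseteq\overline{M}^\circ_\p$.
\item $C(\overline{M}_\p,\mathcal{G})=0$ for every complex $\mathcal{G}$ supported on the proper subscheme $\overline{M}^\circ_\p$.
\item The specialization maps are compatible with forgetting supports.
\end{itemize}
Combining these with the first assertion gives Hecke-equivariant isomorphisms
\begin{equation*}
    C(\overline{M}^\bullet_\p,\O_\lambda)\simeq C(\overline{M}_\p,\O_\lambda)\simeq C(\overline{M}_\p,R\Psi\O_\lambda)\simeq C(\mathcal{M}^\eta_{\p,\overline{\Q}_p},\O_\lambda).
\end{equation*}
The right-hand side is the boundary cohomology of the generic fiber. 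It vanishes after localizing at the non-Eisenstein ideal $\m$ (\Cref{LR3}). Hence $H^i_c(\overline{M}^\bullet_\p,\O_\lambda)_\m\to H^i(\overline{M}^\bullet_\p,\O_\lambda)_\m$ is an isomorphism. This is the input the paper imports from (the proof of) \cite[Lemma 6.1.11]{LTXZZ}.
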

\begin{proof}
    When $F^+\neq\Q,$ the Shimura variety $\mathcal{M}_\p$ is proper (\cite[Remark 5.2.8]{LTXZZ}), and thus the claim follows from proper base change.
    
    For the non-proper case, the first claim is due to \cite[Corollary 5.20]{Stroh-Lan}. The second claim for $?\in\{\circ\dagger\}$ follows since $M_\p^\bullet,M_\p^\dagger$ are projective. For $?=\bullet,$ this follows from (the proof of) \cite[Lemma 6.1.11]{LTXZZ}.
\end{proof}

\subsubsection{Weight spectral sequence}
Recall that $\mathcal{M}_\p$ is semistable, and we have the decomposition
\begin{equation*}
    M_\p=M_\p^\circ\cup M_\p^\bullet.
\end{equation*}
In light of \Cref{nonproperComparison}, we thus have the localized weight spectral sequence
\begin{equation*}
    E_{1,\m}^{i,j}\Rightarrow H^{i+j}_\et((\mathcal{M}_\p^\eta)_{\overline{F}},\O_\lambda)\xrightiso{\Theta^\eta}H^{i+j}_\et(\Sh(V,K)_{\overline{F}},\O_\lambda)\times H^0_\et(\mathcal{T}_{\p,\overline{F}},\O_\lambda).
\end{equation*}
where the first page has the rows
\begin{equation*}
    \begin{tikzcd}
        H^{j-2}_\et(\overline{M}_\p^\dagger,\O_\lambda(-1))_\m\arrow{r}{(m^{\dagger\circ}_*,-m^{\dagger\bullet}_*)}\arrow[d,equal]&[+15pt]H^{j}_\et(\overline{M}_\p^\circ,\O_\lambda)_\m\oplus H^{j}_\et(\overline{M}_\p^\bullet,\O_\lambda)_\m\arrow[d,equal]\arrow{r}{m^{\dagger\circ,*}-m^{\dagger\bullet,*}}&[+15pt]H^j_\et(\overline{M}_\p^\dagger,\O_\lambda)_\m\arrow[d,equal]\\[-10pt]
        E_{1,\m}^{-1,j}\arrow{r}{d_{1,\m}^{-1,j}}&E_{1,\m}^{0,j}\arrow{r}{d_{1,\m}^{0,j}}&E_{1,\m}^{1,j}
    \end{tikzcd}
\end{equation*}
\begin{theorem}\label{Cohomologymodp}
We have
    \begin{enumerate}
        \item For $i\not\in\{2r-2,4r-2\},$ the restriction map
        \begin{equation*}
            m^{\dagger\circ,*}\colon H^i_\et(\overline{M}_\p^\circ,\O_\lambda)\to H^i_\et(\overline{M}_\p^\dagger,\O_\lambda)
        \end{equation*}
        is an isomorphism. Dually, for $i\not\in\{2,2r\},$ the map
        \begin{equation*}
            m^{\dagger\circ}_*\colon H^{i-2}_\et(\overline{M}_\p^\dagger,\O_\lambda(-1))\to H^i_\et(\overline{M}_\p^\circ,\O_\lambda)
        \end{equation*}
        is an isomorphism.
        \item $H^{2(r-1)}_\et(\overline{M}_\p^\dagger,\O_\lambda(r-1))$ is a free $\O_\lambda[\mathfrak{T}]$-module with trivial $\Gamma_{\F_{p^2}}$ action.
        \item $H^{2(r-1)}_\et(\overline{M}_\p^\bullet,\O_\lambda(r-1))_\m$ and $H^{2r}_\et(\overline{M}_\p^\bullet,\O_\lambda(r))_\m$ are free $\O_\lambda[\mathfrak{T}]$-module with trivial $\Gamma_{\F_{p^2}}$ action.
        \item $H^{2r-1}_\et(\overline{M}_\p^\bullet,\O_\lambda(r))_\m$ is a free $\O_\lambda[\mathfrak{T}]$-module. Moreover, if $\{\alpha_1^{\pm1},\ldots\alpha_r^{\pm1}\}$ denote the roots of the Satake polynomial $P_{\Pi,\p}\mod\lambda$ in a finite extension of $\O_\lambda/\lambda,$ then the generalized Frobenius eigenvalues of the $(\O_\lambda/\lambda)[\Gamma_{\F_{p^2}}]$-module $H^{2r-1}_\et(\overline{M}_\p^\bullet,\O_\lambda(r))_\m\otimes_{\O_\lambda}\O_\lambda/\lambda$ are contained in $\{p\alpha_1^{\pm1},\ldots,p\alpha_r^{\pm1}\}\setminus\{1,p^2\}.$
        \item $E_{2,\m}^{i,j}$ is a free $\O_\lambda[\mathfrak{T}]$-module, and vanishes if $(i,j)\not\in\{(-1,2r),(0,2r-1),(1,2r-2)\}.$
        \item There exists a positive integer $\mu$ such that
        \begin{equation*}
            H^{2r-1}_\et(\mathcal{M}_{\p,\overline{F}},R\Psi\O_\lambda(r))/\n\iso(\bar{\RR}^{(m)\cplx})^{\oplus\mu}\times\mathfrak{T}
        \end{equation*}
        of $\O_\lambda[\Gamma_F][\mathfrak{T}]$-modules, where $\RR^\cplx$ is the $\Gamma_F$-stable $\O_\lambda$ lattice in $\rho_{\Pi,\lambda}(r)^\cplx,$ which is unique up to homothety by \Cref{L3}, and $\bar{\RR}^{(m)\cplx}$ is its reduction modulo $\lambda^m.$
    \end{enumerate}
\end{theorem}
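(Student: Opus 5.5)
The plan is to deduce every item from the explicit geometry of $\overline{M}_\p^\circ$, $\overline{M}_\p^\bullet$ and $\overline{M}_\p^\dagger$ given by the basic correspondences of \Cref{Geometrymodp}. This is the case $N=2r$ of the computations in \cite[Sections 5.3--5.4, 6.1--6.3]{LTXZZ}, and I would follow that reference step by step. One adjustment is needed throughout: over $\O_\lambda$ the cohomology carries the action of $\mathfrak{T}$ coming from $\mathcal{T}_\p$. Since $T_\p(\overline{\F}_p)$ is a $\mathfrak{T}$-torsor and every stratum is a product with it, all the cohomology groups involved are induced $\O_\lambda[\mathfrak{T}]$-modules. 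Freeness over $\O_\lambda[\mathfrak{T}]$ therefore reduces to freeness over $\O_\lambda$ of the cohomology of a single fiber.

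For (1) and (2), I would use that $i^\circ$ is an isomorphism and $\pi^\circ$ is a $\mathbb{P}^{2r-1}$-fibration over the finite \'etale $S_\p^\circ$. The goal is to identify $\overline{M}_\p^\dagger$ with a smooth hypersurface fibration inside this projective bundle, namely the Fermat-type hypersurface of \cite[Lemma 5.3.?]{LTXZZ}. A Lefschetz-type computation then shows that $m^{\dagger\circ,*}$ is an isomorphism away from the middle degree $2r-2$ of $\overline{M}_\p^\dagger$ and away from the top degree. The Gysin statement follows by Poincar\'e duality. For the middle degree $H^{2(r-1)}(\overline{M}_\p^\dagger,\O_\lambda(r-1))$, I would use that the primitive part of a Fermat hypersurface over $\F_{p^2}$ is spanned by classes of linear subspaces. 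The Frobenius then acts trivially after the twist, and the group is torsion-free because the variety has no odd cohomology. Together these give (2).

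For (3) and (4), I would use that $\pi^\bullet$ has Deligne--Lusztig fibers of dimension $r$ over $S_\p^\bullet$, whose cohomology is known explicitly. The approach is to compute $H^*(\overline{M}_\p^\bullet)_\m$ through the Leray spectral sequence, together with the description of $H^*(\overline{M}_\p^\bullet)$ in terms of $(\O_\lambda/\lambda)[\Sh(V^\circ,K^\bullet)]$ and $(\O_\lambda/\lambda)[\Sh(V^\circ,K^\circ)]$ via the Tate cycles $\overline{B}^\circ_\p\cap\overline{B}^\bullet_\p$. Localizing at $\m$ and using \Cref{LR2} and \Cref{P2} to separate eigenvalues, the even-degree parts $H^{2r-2}$ and $H^{2r}$ become free and spanned by algebraic cycles, which gives the trivial Frobenius action in (3). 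For the odd-degree part in (4), the Frobenius eigenvalues are computed through Hecke operators at $\p$ acting on $\Sh(V^\circ,K^\bullet)$ and expressed in terms of the Satake parameters of $\Pi_\p$. This yields the set $\{p\alpha_i^{\pm1}\}$, and \Cref{P5} (intertwining genericity) is what excludes $1$ and $p^2$.

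For (5), I would feed (1)--(4) into the first page of the weight spectral sequence. Statement (1) kills all $E_2^{i,j}$ off the three listed positions, and \Cref{VanishingCohomology} handles the remaining degrees. Freeness of the three surviving terms follows from (2)--(4) together with the fact that cokernels and kernels of maps between free modules are free after localization, which is the argument of \cite[Proposition 6.2.?]{LTXZZ}. For (6), I would compare with $\Sh(V,K)$ via $\Theta^\eta$ and invoke the $R=T$ and Chebotarev argument of \cite[Theorem 6.3.?]{LTXZZ}, using \Cref{LR3}, \Cref{LR4} and \cite[Hypothesis 3.2.10]{LTXZZ} (i.e. \Cref{LR0}), to conclude that the cohomology modulo $\n$ is isotypic for $\bar{\RR}^{(m)\cplx}$. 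I expect the main obstacle to be (4), and in particular controlling the Frobenius eigenvalues integrally on $\overline{M}_\p^\bullet$ after localization at $\m$. I would first try to import the computation of \cite[Lemma 6.1.11]{LTXZZ} verbatim and only check that the $\mathfrak{T}$-equivariance is preserved.
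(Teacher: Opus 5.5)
Your treatment of (1) and (2) agrees with \cite[Lemma 5.6.2]{LTXZZ}, which the paper cites. Integrally you also need $\ell\nmid p+1$ from \Cref{P2} to get the isomorphisms above the middle degree. The genuine gap is in (3) and (4), where you compute $H^*(\overline{M}^\bullet_\p)_\m$ as if it were the cohomology of the Deligne--Lusztig fibration $\pi^\bullet$. But $\pi^\bullet$ lives on $\overline{B}^\bullet_\p$, not on $\overline{M}^\bullet_\p$. Since $\mathcal{M}_\p$ is strictly semistable of relative dimension $2r-1$, the component $\overline{M}^\bullet_\p$ is smooth of pure dimension $2r-1$. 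By contrast, $\overline{B}^\bullet_\p$ has dimension $r$ (fibers of dimension $r$ over the finite \'etale $\overline{S}^\bullet_\p$), and $i^\bullet$ maps it only onto the ground strata, a proper closed subset once $r\ge 2$. So a Leray spectral sequence for $\pi^\bullet$ computes $H^*(\overline{B}^\bullet_\p)$ and says nothing directly about $H^*(\overline{M}^\bullet_\p)_\m$. Your claim that $H^{2r-2}$ and $H^{2r}$ become ``spanned by algebraic cycles'' after localization is then a Tate-type statement with nothing supporting it. Hecke operators on the Shimura set $\Sh(V^\circ,K^\bullet)$ see only even-degree Tate classes, so they cannot produce the Frobenius eigenvalues on $H^{2r-1}(\overline{M}^\bullet_\p)_\m$. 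Controlling those needs global input: comparing the weight spectral sequence with $H^{2r-1}_\et(\Sh(V,K)_{\overline{F}},\O_\lambda)_\m$, using cohomological genericity \Cref{LR5} and the $R=T$/isotypicity results. That is what the proof of \cite[Theorem 6.3.5]{LTXZZ} supplies, and the paper imports (4), (5), (6) from it. Also, \Cref{P5} is not what removes $1$ and $p^2$. Intertwining genericity concerns $P_{\Pi,\p}(-1)$, i.e.\ the eigenvalue $-p$, whereas $1=p\cdot p^{-1}$ and $p^2=p\cdot p$ are the level-raising pair governed by \Cref{P4}.

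Your derivation of (5) has the same defect. Statement (1) compares only $\overline{M}^\circ_\p$ with $\overline{M}^\dagger_\p$, so it cannot kill $E^{0,j}_{2,\m}$, which involves $H^j(\overline{M}^\bullet_\p)_\m$. \Cref{VanishingCohomology} controls only the abutment, so invoking it presupposes degeneration at $E_2$, which the paper actually derives from (5) when checking \ref{N1}. The paper runs the logic in the opposite order: it deduces (3) from (1), (2) and (5). For $r>1$, $E^{0,2r}_{2,\m}=0$ together with the isomorphism $m^{\dagger\circ,*}$ in degree $2r$ gives a surjection $H^{2r-2}_\et(\overline{M}^\dagger_\p,\O_\lambda(r-1))_\m\twoheadrightarrow H^{2r}_\et(\overline{M}^\bullet_\p,\O_\lambda(r))_\m$. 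Dually, $E^{0,2r-2}_{2,\m}=0$ together with the Gysin half of (1) gives an injection $H^{2r-2}_\et(\overline{M}^\bullet_\p,\O_\lambda(r-1))_\m\hookrightarrow H^{2r-2}_\et(\overline{M}^\dagger_\p,\O_\lambda(r-1))_\m$. The trivial Frobenius action is then inherited from (2); the case $r=1$ concerns only $H^0$ and $H^2(1)$. You can adopt that argument for (3), but for (4) and (5) there is no way around the global comparison.
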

\begin{proof}
    (1) and (2) follow from (the proof of) \cite[Lemma 5.6.2]{LTXZZ}. (4), (5) and (6) follow from (the proof of) \cite[Theorem 6.3.5]{LTXZZ}.

    For (3), in the case $r=1$ we are looking at the modules $H^0_\et(\overline{M}_\p^\bullet,\O_\lambda)$ and $H^2_\et(\overline{M}_\p^\bullet,\O_\lambda(1)),$ which clearly have trivial Galois action.
    
    If $r>1,$ since $E_{2,\m}^{0,2r}=0$ by (4) and the restriction $m^{\dagger\circ,*}\colon H^{2r}_\et(\overline{M}_\p^\circ,\O_\lambda)_\m\to H^{2r}_\et(\overline{M}_\p^\dagger,\O_\lambda)_\m$ is an isomorphism by (1), we obtain a surjection
    \begin{equation*}
        H^{2(r-1)}_\et(\overline{M}_\p^\dagger,\O_\lambda(r-1))_\m\twoheadrightarrow H^{2r}_\et(\overline{M}_\p^\bullet,\O_\lambda(r))_\m,
    \end{equation*}
    and thus we conclude that $H^{2r}_\et(\overline{M}_\p^\bullet,\O_\lambda(r))_\m$ has trivial Galois action by (2). Dually, since $E_{2,\m}^{0,2(r-1)}=0$ and $m^{\dagger\circ,*}\colon H^{2(r-2)}_\et(\overline{M}_\p^\dagger,\O_\lambda(-1))_\m\to H^{2(r-1)}_\et(\overline{M}_\p^\dagger,\O_\lambda)_\m$ is an isomorphism, we obtain an injection
    \begin{equation*}
        H^{2(r-1)}_\et(\overline{M}_\p^\bullet,\O_\lambda(r-1))_\m\hookrightarrow H^{2(r-1)}_\et(\overline{M}_\p^\dagger,\O_\lambda(r-1))_\m.
    \end{equation*}
    Thus (3) follows.
\end{proof}

We now apply the discussion of \Cref{Weight-Potential}.
\begin{proposition}\label{sing-potential}
$\O_\lambda$ is a very nice coefficient (\Cref{nicecoef}) for the localized spectral sequence $E^{i,j}_{s,\m}(r).$
\end{proposition}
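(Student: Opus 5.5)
We verify the three conditions \ref{N1}, \ref{N2}, (N3) of \Cref{nicecoef} in turn, using \Cref{Cohomologymodp} throughout. Here the semistable scheme has only the two components $M_\p^\circ$ and $M_\p^\bullet$, so $X^{(0)}=M_\p^\circ\sqcup M_\p^\bullet$ and $X^{(1)}=M_\p^\dagger$. The relative dimension is $n=N-1=2r-1$, and the relevant twist is $r$.

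For \ref{N1}, \Cref{Cohomologymodp}(5) says $E_{2,\m}^{i,j}$ vanishes except at $(i,j)\in\{(-1,2r),(0,2r-1),(1,2r-2)\}$. All three lie on the single antidiagonal $i+j=2r-1$. Any differential $d_s\colon E_s^{i,j}\to E_s^{i+s,j-s+1}$ with $s\ge2$ changes $i+j$ by one, so it has zero source or zero target. Hence the spectral sequence degenerates at $E_2$.

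For \ref{N2}, the only nonzero terms of total degree $2r-1$ in the twisted sequence $E_{2,\m}^{p,2r-1-p}(r-1)$ occur at $p\in\{-1,0,1\}$, since the vanishing in (5) is insensitive to the twist. We must show that the $p=-1$ and $p=0$ terms have no nonzero subquotient with trivial $\Gamma_{\F_{p^2}}$-action. Since these modules are free over $\O_\lambda[\mathfrak{T}]$, it suffices to show, after reduction modulo $\lambda$, that $1$ is not a generalized Frobenius eigenvalue.
\begin{itemize}
\item The term $E_2^{0,2r-1}(r-1)$ is a subquotient of $H^{2r-1}_\et(\overline{M}^\circ_\p)\oplus H^{2r-1}_\et(\overline{M}^\bullet_\p)$ twisted by $r-1$. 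By \Cref{Cohomologymodp}(1), the $\circ$-part in odd degree $2r-1\notin\{2r-2,4r-2\}$ is identified with the corresponding cohomology of $\overline{M}_\p^\dagger$ and cancels in $E_2$. So the contribution is governed by $H^{2r-1}_\et(\overline{M}^\bullet_\p,\O_\lambda(r))_\m(-1)$.
\item By (4), the Frobenius eigenvalues of this piece lie in $p^{2}\cdot\{p\alpha_i^{\pm1}\}$ after the twist by $(-1)$. Equivalently, eigenvalue $1$ would require $p\alpha_i^{\pm1}=p^2$, and (4) explicitly excludes $p^2$.
\item The term $E_2^{-1,2r}(r-1)$ is a subquotient of $H^{2r-2}_\et(\overline{M}^\dagger_\p,\O_\lambda(r-2))$. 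By (2), Frobenius acts on it through the scalar $p^2$ coming from the extra Tate twist. This is $\neq1$ modulo $\lambda$ by \ref{P2}, since $\ell\nmid p^2-1$.
\end{itemize}
This bookkeeping of twists and the exact cancellation in $E_2^{0,2r-1}$ is the step I expect to be the most delicate.

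For (N3), it suffices to check that $H^{2r}_\et(X^{(0)},\O_\lambda(r))_\m\oplus H^{2r-2}_\et(X^{(0)},\O_\lambda(r-1))_\m$ is a direct sum of a piece with trivial $\Gamma_{\F_{p^2}}$-action and a piece on which $\mathrm{Frob}-1$ is invertible. For such a module, every subquotient $M$ satisfies $M^{\Gamma}\rightiso M_\Gamma$.
\begin{itemize}
\item The $\bullet$-components have trivial action by (3).
\item For the $\circ$-components in degrees $2r$ and $2r-2$, use (1). For $r>1$, degree $2r$ is not $2r-2$, so $m^{\dagger\circ,*}$ identifies $H^{2r}(\overline{M}^\circ_\p)$ with $H^{2r}(\overline{M}^\dagger_\p)$. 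In degree $2r-2$, use the dual Gysin isomorphism instead. This reduces to the cohomology of $\overline{M}^\dagger_\p$, which is trivial by (2).
\item Alternatively, use that $\overline{M}_\p^\circ$ is a $\mathbb{P}^{2r-1}$-bundle over the finite étale $S_\p^\circ$ (\Cref{Geometrymodp}). Then its even-degree cohomology is a sum of Tate twists of $H^0$ of a finite étale scheme over $\F_{p^2}$. After twisting, Frobenius acts by powers of $p^2$ times a permutation, with the relevant Tate twist appearing trivially. For any residual eigenvalue $\neq1$, we again invoke $\ell\nmid p(p^2-1)$.
\end{itemize}
The case $r=1$ is handled directly, since the modules involved are $H^0$ and $H^2$ of curves and of finite sets.
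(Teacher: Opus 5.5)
Your (N1) and (N2) are correct and follow the paper's argument. All surviving $E_2$-terms lie on one antidiagonal. The term $E_{2,\m}^{-1,2r}(r-1)$ sits inside $H^{2r-2}_\et(\overline{M}^\dagger_\p,\O_\lambda(r-2))_\m$, and \Cref{Cohomologymodp}(2) together with \ref{P2} handles it. The term $E_{2,\m}^{0,2r-1}(r-1)$ is $H^{2r-1}_\et(\overline{M}^\bullet_\p,\O_\lambda(r-1))_\m$, handled by \Cref{Cohomologymodp}(4). One convention slip: your intermediate set should be $p^{-2}\cdot\{p\alpha_i^{\pm1}\}$, not $p^{2}\cdot\{p\alpha_i^{\pm1}\}$. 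Only with $p^{-2}$ does ``eigenvalue $1$ forces $p\alpha_i^{\pm1}=p^2$'' follow, since the set is stable under $x\mapsto p^2/x$. Your conclusion is right.

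The gap is in (N3), for the $\circ$-component; the $\bullet$-component via \Cref{Cohomologymodp}(3) is fine.

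\emph{Your first route misreads \Cref{Cohomologymodp}.}
\begin{itemize}
\item Part (2) only gives trivial action on $H^{2(r-1)}_\et(\overline{M}^\dagger_\p,\O_\lambda(r-1))$.
\item The isomorphisms of part (1) that you invoke instead identify $H^{2r}_\et(\overline{M}^\circ_\p)$ with $H^{2r}_\et(\overline{M}^\dagger_\p)$. They identify $H^{2r-2}_\et(\overline{M}^\circ_\p)$ with $H^{2r-4}_\et(\overline{M}^\dagger_\p)(-1)$, and only when $r\neq 2$. Part (2) says nothing about either group.
\item The only maps in (1) that would link $\overline{M}^\circ_\p$ to $H^{2r-2}_\et(\overline{M}^\dagger_\p)$ are restriction in degree $2r-2$ and Gysin into degree $2r$. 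These are exactly the excluded degrees.
\end{itemize}

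\emph{Your second route has the right idea but the wrong final step.} The projective bundle over the finite \'etale $\overline{S}^\circ_\p$ is also the paper's route. Once the twist cancels, no powers of $p^2$ remain:
\begin{equation*}
H^{2k}_\et(\overline{M}^\circ_\p,\O_\lambda(k))\cong H^0_\et(\overline{S}^\circ_\p,\O_\lambda),
\end{equation*}
and this is a permutation module. The condition $\ell\nmid p(p^2-1)$ says nothing about a permutation module's eigenvalues, and such a module can violate (N3). For example, take $M=\O_\lambda[\Z/\ell\Z]$ with Frobenius acting by a generator. Then $M^{\Gamma}\to M_{\Gamma}$ is multiplication by $\ell$, which is not an isomorphism.

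\emph{What is missing.} You need that $\Gamma_{\F_{p^2}}$ acts trivially on $H^0_\et(\overline{S}^\circ_\p,\O_\lambda)$, i.e.\ on the components of $\overline{M}^\circ_\p$. Having $\ell$ prime to all Frobenius orbit lengths would also suffice. This is what the paper uses when it says $\overline{M}^\circ_\p$ is a union of projective spaces. It is the same input, coming from the basic uniformization of $S^\circ_\p$, that underlies the trivial-action statements in \Cref{Cohomologymodp}(2),(3). Your one-line treatment of $r=1$ has the same gap.
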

\begin{proof}
(N1) is satisfied by \Cref{Cohomologymodp}(5).

For (N2), we need to check that $E_{2,\m}^{-1,2r}(r-1)$ and $E_{2,\m}^{0,2r-1}(r-1)$ have no nontrivial subquotient on which $\Gamma_{\F_{p^2}}$ acts trivially. For $E_{2,\m}^{-1,2r}(r-1),$ we have
\begin{equation*}
    E_{2,\m}^{-1,2r}(r-1)\subseteq H^{2(r-1)}_\et(\overline{M}_\p^\dagger,\O_\lambda(r-2))_\m
\end{equation*}
which has Galois action given by $\O_\lambda(-1)$ because of \Cref{Cohomologymodp}(2), and thus the claim follow from our assumption \Cref{P2} that $\ell\nmid p^2-1.$ For $E_{2,\m}^{0,2r-1}(r-1),$ we have
\begin{equation*}
    E_{2,\m}^{0,2r-1}(r-1)=H_\et^{2r-1}(\overline{M}_\p^\bullet,\O_\lambda(r-1))_\m,
\end{equation*}
and the claim follows from \Cref{Cohomologymodp}(4) and assumption \Cref{P4}.

For (N3), we will show that $H^{2(r-1)}_\et(\overline{M}_\p^?,\O_\lambda(r-1))_\m$ and $H^{2r}_\et(\overline{M}_\p^?,\O_\lambda(r))_\m$ have trivial $\Gamma_{\F_{p^2}}$ action for each $?\in\{\circ,\bullet\}.$ For $?=\circ$ this is trivial since $\overline{M}_\p^\circ$ is a union of projective spaces by \Cref{Geometrymodp}. For $?=\bullet,$ this is \Cref{Cohomologymodp}(3).
\end{proof}

Now consider the spaces $B^r(\mathcal{M}_\p,\O_\lambda)_\m=\ker d_{1,\m}^{0,2r}(r)$ and $A^r(\mathcal{M}_\p,\O_\lambda)_\m=\coker d_{1,\m}^{-1,2(r-1)}(r-1)$ as in \Cref{ABDef}. Note that they have trivial $\Gamma_{\F_{p^2}}$-action by \Cref{Cohomologymodp}(3). In this case, the potential map (\Cref{LiuFactor}) $\Delta^r_\m\colon A^r(\mathcal{M}_\p,\O_\lambda)_\m\to B^r(\mathcal{M}_\p,\O_\lambda)_\m$ is induced by the composition $d_{1,\m}^{-1,2r}(r)\circ d_{1,\m}^{0,2(r-1)}(r-1).$

We will now identify the module $\coker\Delta^r_\m/\n.$ Recall that we have intertwining operators
\begin{equation*}
    T^{\circ\bullet}\colon \O_\lambda[\Sh(V^\circ,K^\bullet)]\to \O_\lambda[\Sh(V^\circ,K^\circ)],\quad T^{\bullet\circ}\colon \O_\lambda[\Sh(V^\circ,K^\circ)]\to \O_\lambda[\Sh(V^\circ,K^\bullet)]
\end{equation*}
and that we denote $I^\circ\defeq T^{\circ\bullet}\circ T^{\bullet\circ}.$

We consider the maps in cohomology
\newcommand{\inc}[1]{\mathrm{inc}_{#1}^*}
\begin{equation*}
    \inc{\bullet}\colon H_{\et}^{2r}(\overline{M}_\p^\bullet,\O_\lambda(r))\xrightarrow{i^{\bullet*}}H_\et^{2r}(\overline{B}_\p^\bullet,\O_\lambda(r))\xrightarrow{\pi_!^\bullet}H_\et^0(\overline{S}_\p^\bullet,\O_\lambda)\xrightiso{\Theta^b}\O_\lambda[\Sh(V^\circ,K^\bullet)]\times T_\p(\overline{\F}_p),
\end{equation*}
and
\begin{equation*}
\begin{split}
    \inc{\circ}\colon& H_\et^{2r}(\overline{M}_\p^\circ,\O_\lambda(r))\xrightarrow{i^{\circ*}}H_\et^{2r}(\overline{B}_\p^\circ,\O_\lambda(r))\xrightarrow{\cup \xi^{r-1}}H_\et^{2(2r-1)}(\overline{B}_\p^\circ,\O_\lambda(2r-1))\\
    &\xrightarrow{\pi_!^\circ}H_\et^0(\overline{S}_\p^\circ\O_\lambda)\xrightiso{\Theta^b}\O_\lambda[\Sh(V^\circ,K^\circ)]\times T_\p(\overline{\F}_p).
\end{split}
\end{equation*}
We define
\begin{equation*}
    \nabla\colon H^{2r}_\et(\overline{M}_\p^\circ,\O_\lambda(r))\oplus H^{2r}_\et(\overline{M}_\p^\circ,\O_\lambda(r))\to \O_\lambda[\Sh(V^\circ,K^\circ)]\times T_\p(\overline{\F}_p)
\end{equation*}
to be the sum of the two maps
\begin{equation*}
    (p+1)T^{\circ\bullet}\circ\inc{\bullet}\quad\text{and}\quad I^\circ\circ\inc{\circ}.
\end{equation*}

\begin{theorem}\label{first-rec}
We have
\begin{equation*}
    \im(\nabla_\m\circ\Delta^r_\m)\subseteq\n\cdot\O_\lambda[\Sh(V^\circ,K^\circ)]_\m\times T_\p(\overline{\F}_p),
\end{equation*}
and the induced map
\begin{equation*}
    \nabla_{/\n}\colon\mathrm{coker}\ \Delta^r_\m/\n\to\O_\lambda[\Sh(V^\circ,K^\circ)]/\n\times T_\p(\overline{\F}_p)
\end{equation*}
is an isomorphism.
\end{theorem}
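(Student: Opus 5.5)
The approach is to reduce the statement to the analogous computation in \cite{LTXZZ}. There, the cokernel of the potential map for $\mathcal{M}_\p$ localized at $\m$ and reduced modulo $\n$ is identified with $\O_\lambda[\Sh(V^\circ,K^\circ)]/\n$. Our $\nabla$ is only a different normalization of their map: it uses $(p+1)T^{\circ\bullet}\circ\inc{\bullet}+I^\circ\circ\inc{\circ}$ and is defined on $E^{0,2r}_{1,\m}(r)$. So the plan is to compute $\nabla\circ\Delta^r$ explicitly on the two summands of $A^r$ and compare the result with the LTXZZ formulas.

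\textbf{Step 1: compute $\nabla_\m\circ\Delta^r_\m$.} I would write $\Delta^r_\m=d^{-1,2r}_{1,\m}(r)\circ d^{0,2(r-1)}_{1,\m}(r-1)$ as a $2\times2$ matrix on $H^{2r-2}(\overline M^\circ_\p)\oplus H^{2r-2}(\overline M^\bullet_\p)$. Its entries are $m^{\dagger\circ}_*m^{\dagger\circ,*}$, $-m^{\dagger\circ}_*m^{\dagger\bullet,*}$, and so on. Since $\overline M^\circ_\p\to\overline S^\circ_\p$ is a $\mathbb{P}^{2r-1}$-bundle, $H^{2r-2}(\overline M^\circ_\p)_\m$ is generated by $\pi^{\circ*}(\O_\lambda[\Sh(V^\circ,K^\circ)])\cdot\xi^{r-1}$. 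The $\bullet$-part is controlled through $\inc{\bullet}$ and its dual Gysin map $\O_\lambda[\Sh(V^\circ,K^\bullet)]\to H^{2r-2}(\overline M^\bullet_\p)$. Using the geometry of $\overline M^\dagger_\p$ (the incidence of $\mathbb P^{2r-1}$-fibres and $\bullet$-fibres over pairs $\Lambda^\circ\subset\Lambda^\bullet$) and the self-intersection formulas from \cite[Section 5]{LTXZZ}, the composite $\nabla\circ\Delta^r$ on these generators becomes an explicit expression in $T^{\circ\bullet}$, $T^{\bullet\circ}$, $I^\circ$, $p$ and the Hecke operator at $\p$. The key claim is that every entry of this expression is a multiple of $\bigl((p+1)^2R-I^\circ\bigr)$ or a similar operator. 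On $\O_\lambda[\Sh(V^\circ,K^\circ)]_\m$, such an operator acts through $P_{\Pi,\p}(p)$, modulo units. By \ref{P4}, $P_{\Pi,\p}(p)\in\lambda^m$, so the image lies in $\n\cdot\O_\lambda[\Sh(V^\circ,K^\circ)]_\m$. This is the first assertion.

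\textbf{Step 2: surjectivity and injectivity mod $\n$.} For surjectivity I would use \ref{P6} and intertwining genericity \ref{P5}. By \ref{P5}, $I^\circ$ and $T^{\circ\bullet}$ are invertible, or at least surjective, after localization at $\m$, as in \cite[Lemma 6.3.x]{LTXZZ}. Since $\inc{\circ}$ is surjective onto $\O_\lambda[\Sh(V^\circ,K^\circ)]$ via $\xi^{r-1}$-classes, $I^\circ\circ\inc{\circ}$ is then already surjective. For injectivity I would compare lengths rather than argue directly. \Cref{LiuCohomology} identifies $\coker\Delta^r_\m$ with $H^1_\sing(F_\p,H^{2r-1}_\m)$. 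By \Cref{Cohomologymodp}(6), this module modulo $\n$ is $H^1_\sing(\F_{p^2},(\bar\RR^{(m)\cplx})^{\oplus\mu})$. By level-raising speciality \ref{P4} and \ref{P2}, that group is free of rank $\mu$ over $\O_E/\lambda^m$ (times $\mathfrak T$). The target has the same rank by the multiplicity identification in \cite[Theorem 6.3.5]{LTXZZ}, together with \ref{P6}. A surjection between finite modules of equal length is then an isomorphism.

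\textbf{Main obstacle.} The hard part is Step 1: getting the exact intersection-theoretic matrix of $\Delta^r$ on $\overline M^\circ_\p\cup\overline M^\bullet_\p$. This means computing $\inc{\bullet}\circ m^{\dagger\bullet}_*m^{\dagger\circ,*}$ and the $\xi$-degree terms on $\mathbb P^{2r-1}\cap \overline M^\bullet_\p$, including the normal bundle contributions, so that the particular weights $(p+1)$ and $I^\circ$ in $\nabla$ make the combination collapse to a Hecke multiple of $P_{\Pi,\p}(p)$. I would first try to transport this computation from \cite[Section 6.3]{LTXZZ}, where the same intersection matrix is written down, and only rederive the $\xi^{r-1}$-pairings locally via \Cref{KRConj}(v) (normal bundle $\O(-1)$) if the normalizations there do not match.
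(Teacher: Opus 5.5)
Your Step 1 and your length comparison follow the paper's route. The paper quotes the image of $\nabla_\m\circ\Delta^r_\m$ from \cite[Lemma 5.9.3(8), Proposition 6.3.1(1)]{LTXZZ}, and gets injectivity by comparing lengths via \Cref{LiuCohomology} and the proof of \cite[Proposition 6.4.1]{LTXZZ}. Your surjectivity argument, however, has a genuine gap. The map $\nabla_{/\n}$ lives on $\coker\Delta^r_\m/\n$, so $\nabla$ is only evaluated on $B^r(\mathcal{M}_\p,\O_\lambda)_\m=\ker d^{0,2r}_{1,\m}(r)$. That is, it is evaluated on pairs $(a,b)$ with $m^{\dagger\circ,*}a=m^{\dagger\bullet,*}b$, so you cannot isolate the summand $I^\circ\circ\mathrm{inc}^*_\circ$. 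For $r\ge2$, \Cref{Cohomologymodp}(1) says $m^{\dagger\circ,*}$ is an isomorphism in degree $2r$. Hence $(a,0)\in B^r$ forces $a=0$, and every element of $B^r$ is determined by its $\bullet$-component via $a=(m^{\dagger\circ,*})^{-1}m^{\dagger\bullet,*}b$. On $B^r$ the map $\nabla$ is therefore $b\mapsto I^\circ\,\mathrm{inc}^*_\circ\bigl((m^{\dagger\circ,*})^{-1}m^{\dagger\bullet,*}b\bigr)+(p+1)T^{\circ\bullet}\,\mathrm{inc}^*_\bullet(b)$. Surjectivity of $\mathrm{inc}^*_\circ$ on all of $H^{2r}_{\et}(\overline M^\circ_\p,\O_\lambda(r))$, together with invertibility of $I^\circ$, says nothing about this map, because the forced $\bullet$-contribution is uncontrolled. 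Your objection-free case is only $r=1$, where $\overline M^\dagger_\p$ is zero-dimensional and $B^1$ is all of $E^{0,2}_{1,\m}(1)$.

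The missing idea is to use $E^{0,2r}_{2,\m}=0$ (\Cref{Cohomologymodp}(5)), so that $B^r=\im d^{-1,2r}_{1,\m}(r)$. You then need to show that $\nabla\circ d^{-1,2r}_{1,\m}=I^\circ\circ\mathrm{inc}^*_\circ\circ m^{\dagger\circ}_*-(p+1)T^{\circ\bullet}\circ\mathrm{inc}^*_\bullet\circ m^{\dagger\bullet}_*$ is surjective from $H^{2r-2}_{\et}(\overline M^\dagger_\p,\O_\lambda(r-1))_\m$ onto $\O_\lambda[\Sh(V^\circ,K^\circ)]/\n$. This needs the same description of $H^{2r-2}_{\et}(\overline M^\dagger_\p)_\m$ and the same intersection matrix as your Step 1, so surjectivity is not formal. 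The paper takes it from \cite[Proposition 6.3.1(3)]{LTXZZ}.

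There are also some smaller corrections.
\begin{itemize}
\item The operator in Step 1 is $I^\circ\bigl((p+1)R^\circ-I^\circ\bigr)$, obtained from $T^{\circ\bullet}\bigl((p+1)R^\bullet-T^{\bullet\circ}T^{\circ\bullet}\bigr)$ via \cite[Lemma 6.1.9, Lemma B.3.6]{LTXZZ}. It is not $(p+1)^2R-I^\circ$.
\item Since $\n$ contains no Hecke operators at $\p$, letting this $\p$-operator act through $\phi_\Pi$ modulo $\n$ uses \ref{P6}.
\item In the length count you must justify $H^1_{\sing}(F_\p,H^{2r-1}_{\et}(\overline M_\p,R\Psi\O_\lambda(r))_\m)/\n\iso H^1_{\sing}(F_\p,H^{2r-1}_{\et}(\overline M_\p,R\Psi\O_\lambda(r))/\n)$. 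The paper deduces this from \Cref{Cohomologymodp}(4).
\end{itemize}
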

\begin{proof}
The image of the composition
\begin{equation*}
    B_r(\mathcal{M}_\p,\O_\lambda)_\m\xrightarrow{\Delta^r_\m}B^r(\mathcal{M}_\p,\O_\lambda)_\m\xrightarrow{\nabla_\m}\O_\lambda[\Sh(V^\circ,K^\circ)]\times T_\p(\overline{\F}_p)
\end{equation*}
is simply the image of the composition $d_{1,\m}^{-1,2r}(r)\circ d_{1,\m}^{0,2(r-1)}(r-1)\circ\nabla_\m.$ By \cite[Lemma 5.9.3(8), Proposition 6.3.1(1)]{LTXZZ}, such image is
\begin{equation*}
    T^{\circ\bullet}((p+1)R^\bullet-T^{\bullet\circ}T^{\circ\bullet})\O_\lambda[\Sh(V^\circ,K^\bullet)]_\m\times T_\p(\overline{\F}_p)
\end{equation*}
where $R^\bullet$ is defined in \cite[Proposition 5.8.8]{LTXZZ}. By \cite[Lemma 6.1.9, Lemma B.3.6]{LTXZZ}, this is
\begin{equation*}
    I^\circ((p+1)R^\circ-I^\circ)\O_\lambda[\Sh(V^\circ,K^\circ)]_\m\times T_\p(\overline{\F}_p).
\end{equation*}
where $R^\circ$ is as in \cite[Lemma B.3.5(2)]{LTXZZ}. We have that $(p+1)R^\circ-I^\circ\in\n$ by \cite[Proposition B.3.5(2)]{LTXZZ} and \Cref{P4}.

Hence $\im(\nabla_\m\circ\Delta^r_\m)\subseteq\n\cdot\O_\lambda[\Sh(V^\circ,K^\circ)]_\m\times T_\p(\overline{\F}_p),$ and we may consider
\begin{equation*}
    \nabla_{/\n}\colon\mathrm{coker}\ \Delta^r_\m/\n\to\O_\lambda[\Sh(V^\circ,K^\circ)]/\n\times T_\p(\overline{\F}_p).
\end{equation*}

Recalling that $B^r(\mathcal{X},\O_\lambda)_\m=\ker d_{1,\m}^{0,2r}=\im d_{1,\m}^{-1,2r},$ \cite[Proposition 6.3.1(3)]{LTXZZ} implies that $\nabla_{/\n}$ is surjective.

To conclude, it remains to prove that the source and target have the same length as $\O_\lambda/\lambda^m$-modules. This follows from \Cref{LiuCohomology} and (the proof of) \cite[Proposition 6.4.1]{LTXZZ}, together with the fact that
\begin{equation*}
    H^1_{\sing}(F_\p,H^{2r-1}_\et(\overline{M}_\p,R\Psi\O_\lambda(r))_{\m})/\n\iso H^1_{\sing}(F_\p,H^{2r-1}_\et(\overline{M}_\p,R\Psi\O_\lambda(r))/\n),
\end{equation*}
which follows by \Cref{Cohomologymodp}(4) in the same way as in the proof of \cite[Theorem 6.3.5]{LTXZZ}.
\end{proof}

Putting it all together, by \Cref{LiuCohomology} and \Cref{sing-potential}, we have an isomorphism
\begin{equation*}
    \Theta^\eta\circ\eta^r_\m\colon\coker\Delta^r_\m\rightiso H^1_{\sing}(F_\p,H^{2r-1}_\et(\Sh(V,K)_{\overline{F}},\O_\lambda(r))_\m\otimes_{\O_\lambda[\Gamma_{F_\p}]} H^0_\et(\mathcal{T}^\eta_{\p,\overline{\Q}_p},\O_\lambda))
\end{equation*}
and noting that $\Theta^\eta,\eta^r_\m$ and $\nabla_{/\n}$ are all $\mathfrak{T}$-invariant, we arrive at the following isomorphism.
\begin{definition}\label{ALRmap}
    We define the \emph{arithmetic level raising map}
    \begin{equation*}
        \varphi_{\lr}\colon H^1_{\sing}(F_\p,H^{2r-1}_\et(\Sh(V,K)_{\overline{F}},\O_\lambda(r))/\n)\rightiso \O_\lambda[\Sh(V^\circ,K^\circ)]/\n
    \end{equation*}
    to be induced by the composition $\nabla_{/\n}\circ(\Theta^\eta\circ\eta^r_\m)^{-1}.$
\end{definition}

\subsection{First reciprocity law}\label{Reciprocity-Reciprocity}
\subsubsection{Set-up}
We keep the data in \Cref{ALRSetup} in order to apply the arithmetic level raising. We also choose
\begin{itemize}
    \item An orthogonal decomposition $V^\circ=V_1^\circ\oplus V_2^\circ$ into rank $r$ hermitian spaces.
    \item An orthogonal decomposition $V=V_1\oplus V_2$ into rank $r$ hermitian spaces.
\end{itemize}
We follow the notation of \Cref{Shimura-Kudla}, where we denote
\begin{equation*}
    G=U(V),\quad H_1=U(V_1)\quad H_2=U(V_2),\quad H=H_1\times H_2
\end{equation*}
as well as analogous notation $G^\circ,H_1^\circ,H_2^\circ,H^\circ.$

We assume\footnote{This is not strictly necessary, but we impose this condition in order to simplify some notation.}
\begin{enumerate}[label=(FJ\arabic*)]
    \item\label{FJ1} The isometry $j\colon V\otimes_{F^+}\A_{F^+}^{\infty,\p}\rightiso V^\circ\otimes_{F^+}\A^{\infty,\p}$ respects the orthogonal decompositions above, and extends to a global isometry $j\colon V_2\rightiso V_2^\circ.$
\end{enumerate}

\subsubsection{Computation of the image of Friedberg--Jacquet cycles}
Recall that $H^{i}_\et(\mathrm{Sh}(V,K)_{\overline{F}},\O_\lambda)_\m=0$ for $i\neq 2r-1$ by \Cref{VanishingCohomology}. Thus, we have the Abel--Jacobi map
\begin{equation*}
    \mathrm{AJ}_\m\colon \mathrm{Z}^r(\mathrm{Sh}(V,K))_\m\to H^1(F,H^{2r-1}_\et(\mathrm{Sh}(V,K)_{\overline{F}},\O_\lambda(r))_\m).
\end{equation*}

The following auxiliary data is not needed for the statement of the main result of this section (\Cref{reciprocitycomputation}), but will be used throughout its proof. We choose
\begin{itemize}
    \item A $p$-adic uniformization datum $\mathbbm{z}.$
    \item An $F$-basis $\underline{x}\in(V_2)^r$ of $V_2.$
    \item A collection of vectors $\underline{\mathbbm{x}}\in\mathbb{V}^r$ satisfying $T(\underline{x})=T(\underline{\mathbbm{x}}).$ We denote $\mathbb{V}_2$ the span of $\underline{\mathbbm{x}},$ and $\mathbb{G},\mathbb{H}_1,\mathbb{H}_2,\mathbb{H}$ notation analogous to above.
\end{itemize}
As preparation for the explicit computation of the arithmetic level raising of Friedberg--Jacquet cycles, we collect many of the results of the previous sections in the following \Cref{Figure1,Figure2}. There, we also define maps $\nabla^{?,\mathrm{FJ}},$ $\bbnabla^?$ and $\bbnabla^{?,\mathrm{FJ}}$ for $?\in\{\circ,\bullet\}$ which we will refer to later. For ease of notation, we denote
\begin{equation*}
    H^1_{\sing}(X)\defeq H^1_{\sing}(F_\p,H^{2r-1}_\et(X_{\overline{F}},\O_\lambda(r))/\n)
\end{equation*}
and
\begin{equation*}
    \Sh^\circ\defeq\Sh(V^\circ,K^\circ),\quad\Sh^\bullet\defeq\Sh(V^\circ,K^\bullet)
\end{equation*}
in such diagrams.

\begin{landscape}
\begin{figure}
\caption{Uniformization on the generic fiber and arithmetic level raising.}\label{Figure1}
\centering
\begin{tikzcd}
    \CYCFJ{\underline{x}}{K}\arrow{d}\Symbol{rrd}{\text{\Cref{FJcomplex}}} &[-30pt]&[-25pt] \arrow[ll,"z^\eta"'] \CycFJ{\underline{x}}{K^p}\arrow{d}\Symbol{rd}{\text{\Cref{ZZFJDef}}}&[-50pt]|[zshift=-25pt]|(\CycFJ{\underline{x}}{K^p})'\arrow[l]\arrow[ddddddddr, dashed, in=20,out=-30, looseness=1, "{\nabla^{\circ,\mathrm{FJ}},\nabla^{\bullet,\mathrm{FJ}}}", pos = 0.3]\arrow[d]&[-30pt]\\
    \CYC{\underline{x}}{K}\arrow{d}{\O_{Z(\blank)}}\Symbol{rrd}{\text{\Cref{KRGenericFiber}}} && \arrow[ll,"z^\eta"'] \Cyc{\underline{x}}{K^p}&|[zshift=-25pt]|\Cyc{\underline{x}}{K^p}'\arrow[l]\arrow[dl,"{}^\L\ZZ"]&\\
    \mathrm{Gr}^rK_0(\Sh(V,K))_\m\arrow[r,hook,"\Theta^{\eta,-1}"]\arrow{d}{\delta_\p\mathrm{loc}_\p\mathrm{AJ}_\m}&\mathrm{Gr}^rK_0(\mathcal{M}_\p^\eta)_\m\Symbol{rdd}{\text{\Cref{KtheoryAJ}}}\arrow[d,"\delta_\p\mathrm{loc}_\p\mathrm{AJ}_\m"']&\arrow{l}\mathrm{Gr}^rK_0(\mathcal{M}_\p)\arrow[dddrr, bend left]\arrow{dd}{\widetilde{(\blank)}}&&\\
    H^1_{\sing}(\Sh(V,K))\arrow[dd,"\varphi_\lr","\sim"']\arrow[r,hook,"\Theta^{\eta,-1}"]\Symbol{rdd}{\text{\Cref{ALRmap}}}&H^1_{\sing}(\mathcal{M}_\p^\eta)&&&\\
    &\coker\Delta^r_\m/\n\arrow[d,"\nabla_{/\n}"',"\sim"]\arrow[u,"\eta^r_\m","\sim"']&\arrow{l}B^r(\mathcal{X},\O_\lambda)_\m\arrow[d,hook]&&\\
    \O_\lambda[\Sh^\circ]/\n\arrow[r,hook]&\O_\lambda[\Sh^\circ]/\n\times T_\p(\overline{\F}_p)&\arrow[l,"\nabla"]H^{2r}_\et(\overline{M}_\p^\circ)_\m\oplus H^{2r}_\et(\overline{M}_\p^\bullet)_\m\arrow[d,"i^{\circ,*}\oplus i^{\bullet,*}"]\arrow[dddl,bend right=20,"\inc{\circ}\oplus\inc{\bullet}"']&&\arrow[ll,"\mathrm{cl}"]\mathrm{Gr}^rK_0(\overline{M}_\p^\circ)\oplus \mathrm{Gr}^rK_0(\overline{M}_\p^\bullet)\arrow[d,"i^{\circ,*}\oplus i^{\bullet,*}"]\\
    &&H^{2r}_\et(\overline{B}_\p^\circ)_\m\oplus H^{2r}_\et(\overline{B}_\p^\bullet)_\m\arrow[d,"\cup\xi^{r-1}\oplus\mathrm{id}"]&&\arrow[ll,"\mathrm{cl}"]\mathrm{Gr}^rK_0(\overline{B}_\p^\circ)\oplus \mathrm{Gr}^rK_0(\overline{B}_\p^\bullet)\arrow[d,"\cup\xi^{r-1}\oplus\mathrm{id}"]\\
    &&H^{2(2r-1)}_\et(\overline{B}_\p^\circ)_\m\oplus H^{2r}_\et(\overline{B}_\p^\bullet)_\m\arrow[d,"\pi^\circ_*\oplus\pi^\bullet_*"]&&\arrow[ll,"\mathrm{cl}"]\mathrm{Gr}^{2r-1}K_0(\overline{B}_\p^\circ)\oplus \mathrm{Gr}^rK_0(\overline{B}_\p^\bullet)\arrow[d,"\pi^\circ_*\oplus\pi^\bullet_*"]\\
    &\arrow{uuu}{(I^\circ,(p+1)T^{\circ\bullet})}\left(\O_\lambda[\Sh^\circ]_\m\oplus \O_\lambda[\Sh^\bullet]_\m\right)\times T_\p(\overline{\F}_p)&\arrow[l,"\Theta^b","\sim"']H^{0}_\et(\overline{S}_\p^\circ)_\m\oplus H^{0}_\et(\overline{S}_\p^\bullet)_\m&&\arrow[ll,"\mathrm{cl}"]K_0(\overline{S}_\p^\circ)\oplus K_0(\overline{S}_\p^\bullet)
\end{tikzcd}
\end{figure}
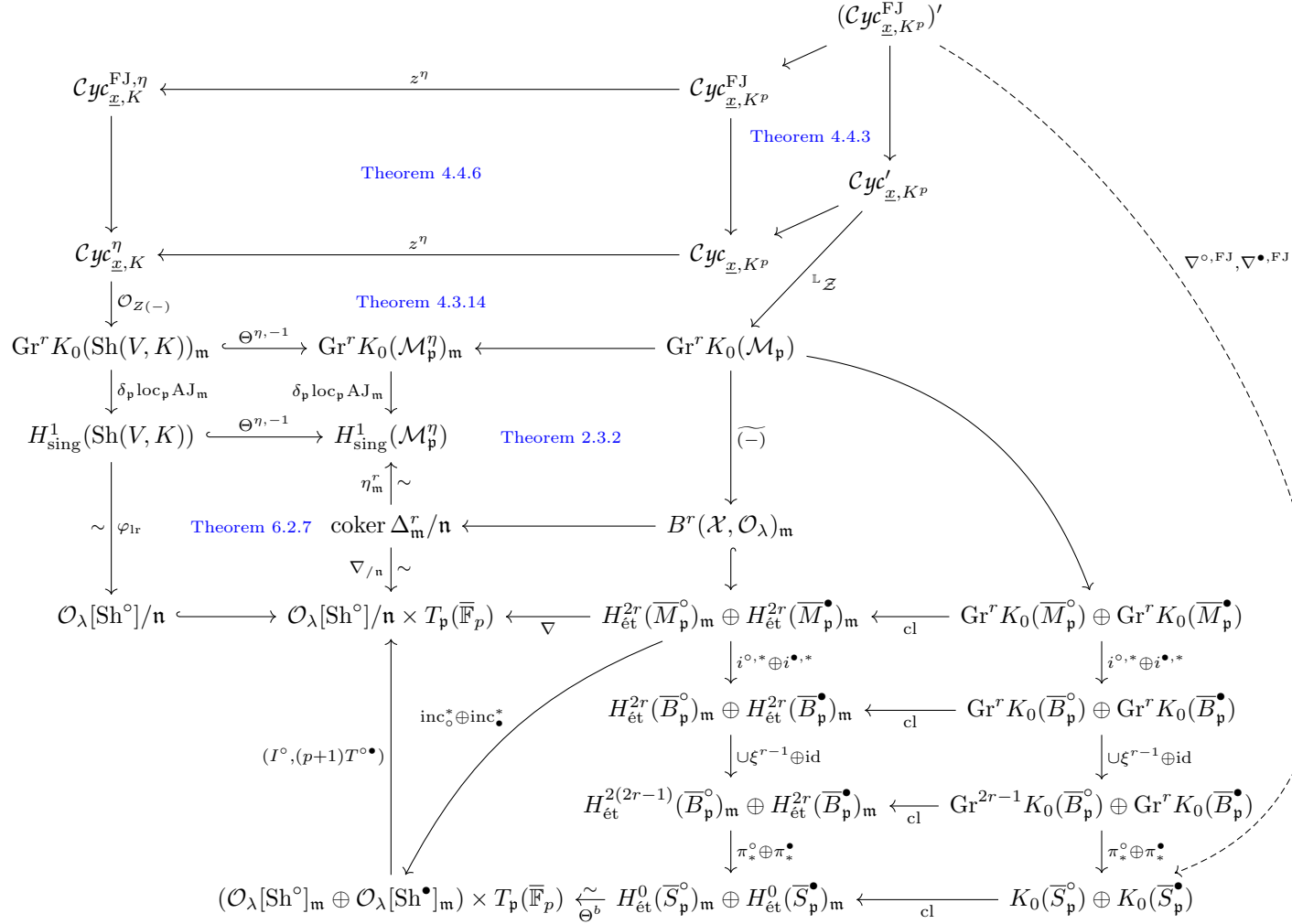
\end{landscape}
\begin{landscape}
\begin{figure}
\caption{$p$-adic uniformization.}\label{Figure2}
\centering
\begin{tikzcd}[ampersand replacement=\&]
    \CycFJ{\underline{x}}{K^p}\Symbol{rrrd}{\text{\Cref{FJpadic}}}\arrow{d}\&[-50pt]|[zshift=-25pt]|(\CycFJ{\underline{x}}{K^p})'\arrow[l]\arrow[d]\arrow[from = 1-1, to = 1-4, crossing over]\&[-40pt]\&[-40pt]\cycFJ{\underline{\mathbbm{x}}}{\bbeta^p_*(K^p)}\arrow[d]\arrow[dddddd, dashed, in=10, out=0, looseness=1.5, "{\bbnabla^{\circ,\mathrm{FJ}},\bbnabla^{\bullet,\mathrm{FJ}}}", pos = 0.4]\\
    \Cyc{\underline{x}}{K^p}\Symbol{rrrd}{\text{\Cref{KR-padic}}}\&|[zshift=-25pt]|\Cyc{\underline{x}}{K^p}'\arrow[l]\arrow[ld, "{}^{\L}\ZZ"]\arrow[from = 2-1, to = 2-4, crossing over]\&\&\cyc{\underline{\mathbbm{x}}}{\bbeta^p_*(K^p)}\arrow{d}{{}^{\L}\ZZ_-\times\mathcal{T}_\p}\arrow[ddddd, dashed, in=10, out=0, looseness=1.5, "{\substack{\bbnabla^\circ,\bbnabla^\bullet\\\text{\Cref{nablacirc}}\\\text{\Cref{nablabullet}}}}"', pos=0.4]\\
    \mathrm{Gr}^rK_0(\mathcal{M}_\p)\arrow{d}\arrow{rr}\Symbol{ddrrr}{\text{\Cref{padic-basic}}}\&\&\mathrm{Gr}^rK_0(\mathcal{M}_\p^{\wedge,ss})\arrow[r,"\Theta_{\mathbbm{z}}","\sim"']\&\mathrm{Gr}^rK_0\left(\mathbb{G}(F^+)\backslash\left(\mathcal{N}\times \mathbb{G}(\A_{F^+}^{p,\infty})/\bbeta^p_*(K^p)\right)\times\mathcal{T}_\p\right)\arrow{dd}\\
    \mathrm{Gr}^rK_0(\overline{M}_\p^\circ)\oplus \mathrm{Gr}^rK_0(\overline{M}_\p^\bullet)\arrow[d,"i^{\circ,*}\oplus i^{\bullet,*}"]\&\&\&\\
    \mathrm{Gr}^rK_0(\overline{B}_\p^\circ)\oplus \mathrm{Gr}^rK_0(\overline{B}_\p^\bullet)\arrow[d,"\cup\xi^{r-1}\oplus\mathrm{id}"]\arrow[rrr,"\Theta_{\mathbbm{z}}","\sim"']\&\&\&\begin{tabular}{r@{\hspace{3pt}}l}&$\mathrm{Gr}^rK_0\left(\mathbb{G}(F^+)\backslash\left(\bigsqcup_{\Lambda^\circ}\V(\Lambda^\circ)\times \mathbb{G}(\A_{F^+}^{p,\infty})/\bbeta^p_*(K^p)\right)\times\mathcal{T}_\p\right)$\\$\oplus$&$\mathrm{Gr}^rK_0\left(\mathbb{G}(F^+)\backslash\left(\bigsqcup_{\Lambda^\bullet}\V(\Lambda^\bullet)\times \mathbb{G}(\A_{F^+}^{p,\infty})/\bbeta^p_*(K^p)\right)\times\mathcal{T}_\p\right)$\end{tabular}\arrow[d,"\cup\xi^{r-1}\oplus\mathrm{id}"]\\
    \mathrm{Gr}^{2r-1}K_0(\overline{B}_\p^\circ)\oplus \mathrm{Gr}^rK_0(\overline{B}_\p^\bullet)\arrow[d,"\pi^\circ_*\oplus\pi^\bullet_*"]\arrow[rrr,"\Theta_{\mathbbm{z}}","\sim"']\Symbol{drrr}{\text{\Cref{padic-basic}}}\&\&\&\begin{tabular}{r@{\hspace{3pt}}l}&$\mathrm{Gr}^{2r-1}K_0\left(\mathbb{G}(F^+)\backslash\left(\bigsqcup_{\Lambda^\circ}\V(\Lambda^\circ)\times \mathbb{G}(\A_{F^+}^{p,\infty})/\bbeta^p_*(K^p)\right)\times\mathcal{T}_\p\right)$\\$\oplus$&$\mathrm{Gr}^rK_0\left(\mathbb{G}(F^+)\backslash\left(\bigsqcup_{\Lambda^\bullet}\V(\Lambda^\bullet)\times \mathbb{G}(\A_{F^+}^{p,\infty})/\bbeta^p_*(K^p)\right)\times\mathcal{T}_\p\right)$\end{tabular}\arrow[d,"\chi(\blank)\oplus\chi(\blank)"]\\
    K_0(\overline{S}_\p^\circ)\oplus K_0(\overline{S}_\p^\bullet)\arrow[rrr,"\Theta_{(\mathbb{V},\bbeta^p)}","\sim"']\&\&\&\begin{tabular}{r@{\hspace{3pt}}l}&$\Z\left[\mathbb{G}(F^+)\backslash\left(\mathrm{Vert}^\circ(\mathbb{V}\otimes_\Q\Q_p)\times \mathbb{G}(\A_{F^+}^{p,\infty})/\bbeta^p_*(K^p)\right)\times\mathcal{T}_\p\right]$\\$\oplus$&$\Z\left[\mathbb{G}(F^+)\backslash\left(\mathrm{Vert}^\bullet(\mathbb{V}\otimes_\Q\Q_p)\times \mathbb{G}(\A_{F^+}^{p,\infty})/\bbeta^p_*(K^p)\right)\times\mathcal{T}_\p\right]$\end{tabular}
\end{tikzcd}
\end{figure}
\end{landscape}

\begin{definition}
    Consider $\q\mid p$ a place of $F^+.$ For $W$ a nondegenerate $F_\q/F^+_\q$-hermitian space of rank $r,$ and $e,f\in\Typ^0_r,$ we consider the following quantities, where $\Lambda$ is a choice of lattice $\Lambda\subseteq W$ with $\mathrm{typ}(\Lambda)=f$ (\Cref{Typ}). If such a lattice $\Lambda$ does not exist, we take these to be identically zero.
    \begin{equation*}
    \begin{split}
        d_e^W(f)&\defeq\#\{L\subseteq \Lambda\colon \mathrm{typ}(L)=e\},\\
        c_e^W(f)&\defeq\sum_{\substack{L\subseteq\Lambda\\\mathrm{typ}(L)=e}}c([\Lambda\colon\varpi\Lambda+L])=\sum_{\substack{L\subseteq\Lambda\\\mathrm{typ}(L)=e}}\prod_{i=1}^{[\Lambda\colon\varpi\Lambda+L]}(1-\lVert\q\rVert^{-2i}).
    \end{split}
    \end{equation*}
\end{definition}
\begin{definition}\label{phiDef}
For $\q\mid p$ and $e\in\Typ_r^{0,\flat},$ we consider the distribution
\begin{equation*}
    \phi_e\in\Z[H(F^+_\q)\backslash G(F^+_\q)/K_\q]
\end{equation*}
given by
\begin{equation*}
    \phi_e(g_\q)\defeq d_e^{V_{2,\q}}(\mathrm{typ}(g_\q\Lambda_\q\cap V_{2,\q})).
\end{equation*}
For $\q=\p,$ we also consider the distributions
\begin{equation*}
    \phi_e^\bullet\in\Z[H^\circ(F^+_\p)\backslash G^\circ(F^+_\p)/K^\bullet_\p],\quad \phi_e^\circ\in\Z[H^\circ(F^+_\p)\backslash G^\circ(F^+_\p)/K_\p^\circ]
\end{equation*}
given by
\begin{equation*}
    \phi_e^\bullet(g^\circ_\p)\defeq c_e^{V_{2,\p}^\circ}(\mathrm{typ}(g_\p\Lambda^\bullet_\p\cap V_{2,\p}^\circ)),\quad\phi_e^\circ\defeq T^{\circ\bullet}(\phi_e^\bullet).
\end{equation*}
\end{definition}

For a set $S,$ we denote $\mathbbm{1}[S]$ the indicator function of $S.$
\begin{definition}
We consider the unitary Friedberg--Jacquet distributions
\begin{equation*}
    \mathcal{P}^{\mathrm{FJ}}\colon\Z[H^\circ(\A_{F^+}^{\infty})\backslash G^\circ(\A_{F^+}^{\infty})/K^\circ]\to\Z[\mathrm{Sh}(V^\circ,K^\circ)]
\end{equation*}
characterized by the following. For $f\in\Z[\Sh(V^\circ,K^\circ)]$ we have
\begin{equation*}
    \langle \mathcal{P}^{\mathrm{FJ}}(\mathbbm{1}[H^\circ(\A_{F^+}^{\infty})x K^\circ]),f\rangle=\frac{1}{\mathrm{vol}_{H^\circ}(H^\circ(\A_{F^+}^{\infty})\cap xK^\circ x^{-1})}\int_{H^\circ(F^+)\backslash H^\circ(\A_{F^+}^{\infty})}f(hx)\d{h}.
\end{equation*}
\end{definition}
\begin{remark}\label{PFJRemark}
Note that
\begin{equation*}
\begin{split}
    &\frac{1}{\mathrm{vol}_{G^\circ}(K^\circ)}\int_{H^\circ(F^+)\backslash G^\circ(\A_{F^+}^{\infty})}\mathbbm{1}[H^\circ(\A_{F^+}^{\infty})xK^\circ](g)\cdot f(g)\\
    &\qquad=\frac{1}{\mathrm{vol}_{H^\circ}(H^\circ(\A_{F^+}^{\infty})\cap xK^\circ x^{-1})}\int_{H^\circ(F^+)\backslash H^\circ(\A_{F^+}^{\infty})}\frac{1}{\mathrm{vol}_{G^\circ}(K^\circ)}\int_{K^\circ} f(hxk)\d{k}\d{h}\\
    &\qquad=\frac{1}{\mathrm{vol}_{H^\circ}(H^\circ(\A_{F^+}^{\infty})\cap xK^\circ x^{-1})}\int_{H^\circ(F^+)\backslash H^\circ(\A_{F^+}^{\infty})}f(hx)\d{h},
    \end{split}
\end{equation*}
and thus $\mathcal{P}^{\mathrm{FJ}}$ is also characterized by
\begin{equation*}
    \langle\mathcal{P}^{\mathrm{FJ}}(\phi),f\rangle=\frac{1}{\mathrm{vol}_{G^\circ}(K^\circ)}\int_{H^\circ(F^+)\backslash G^\circ(\A_{F^+}^{\infty})}\phi(g)f(g).
\end{equation*}
\end{remark}

\begin{theorem}\label{reciprocitycomputation}
For any $\phi^\p\in\Z[H(\A_{F^+}^{\infty,\p})\backslash G(\A_{F^+}^{\infty,\p})/K^\p]$ and $e=(e_1\ge\cdots\ge e_r)\in\Typ_r^{0,\flat}$ we have\footnote{If $e\in\Typ_r^0$ but does not belong to $\Typ_r^{0,\flat},$ then both sides are automatically zero. Similarly, if $\sum_{i=1}^r e_i\equiv1\mod 2,$ both sides are also identically zero.}
\begin{equation*}
    \varphi_\lr(\partial_\p\mathrm{loc}_\p\mathrm{AJ}_\m(Z^{\mathrm{FJ}}(\phi_e\otimes \phi^\p)))=\mathcal{P}^{\mathrm{FJ}}(\phi_e^\circ\otimes j_*(\phi^\p)).
\end{equation*}
\end{theorem}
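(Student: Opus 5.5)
The plan is to chase an element around \Cref{Figure1} and \Cref{Figure2}. The left column computes $\varphi_\lr\circ\partial_\p\mathrm{loc}_\p\mathrm{AJ}_\m$. The right column, via derived integral models and $p$-adic uniformization, computes the same thing as a sum of local intersection numbers on Rapoport--Zink spaces.

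\emph{Step 1: lifting to a parameter.} First I would write $\phi_e\otimes\phi^\p$ as the image under $z^\eta$ (\Cref{CycFJ-CYCFJDef}) of an explicit element of $\CycFJ{\underline{x}}{K^p}$. At $\p$ take $m_\p\in M(F^+_\p)$ such that $\mathrm{span}(\underline{x}\cdot m_\p)$ has type $e$. Then the count in \Cref{CycFJ-CYCFJDef} at $\p$ is precisely $d_e^{V_{2,\p}}(\typ(g_\p\Lambda_\p\cap V_{2,\p}))=\phi_e(g_\p)$. Away from $\p$, and at $\q\neq\p$ above $p$, I take $m_\q=1$ and let $\phi^\p$ supply the rest. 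By linearity in $\phi^\p$ it suffices to treat indicator functions.

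I would then lift this element to $(\CycFJ{\underline{x}}{K^p})'$ and apply ${}^\L\ZZ^{\FJ}$. By \Cref{FJcomplex} and \Cref{KRGenericFiber}, the resulting class ${}^\L\mathcal{Z}\in\mathrm{Gr}^rK_0(\mathcal{M}_\p)$ is an extension, in the sense of \Cref{extensiondef}, of $\Theta^{\eta,-1}(Z^{\FJ}(\phi_e\otimes\phi^\p))\times\mathcal{T}^\eta_\p$. Cohomological triviality after localization at $\m$ is automatic, since $H^{2r}_\et(\Sh(V,K)_{\overline F})_\m=0$ by \Cref{VanishingCohomology}.

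\emph{Step 2: reducing to the special fiber.} By \Cref{KtheoryAJ} together with \Cref{sing-potential}, we have $\eta^r_\m(\widetilde{{}^\L\mathcal{Z}})=\partial(\mathrm{AJ}_\m(Z))$. By \Cref{ALRmap}, the left-hand side of the theorem therefore equals $\nabla(\widetilde{{}^\L\mathcal{Z}})$ modulo $\n$, with the $T_\p(\overline{\F}_p)$ factor stripped off using $\mathfrak{T}$-invariance. Unwinding $\nabla$, it is $(p+1)T^{\circ\bullet}(\inc{\bullet}\widetilde{{}^\L\mathcal{Z}})+I^\circ(\inc{\circ}\widetilde{{}^\L\mathcal{Z}})$. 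Each $\inc{?}$ only sees the restriction of ${}^\L\mathcal{Z}$ to the supersingular locus, because $i^?$ lands there. So I would compute these through \Cref{KR-padic}, \Cref{FJpadic} and \Cref{padic-basic}: the value at a vertex lattice $\Lambda^\circ$ (resp.\ $\varpi\Lambda^\bullet$) and a prime-to-$p$ coset is a sum, over the $\mathbb{H}$-translates of $\underline{\mathbbm{x}}\cdot m_\p$, of the local numbers $\nabla^\circ_{\underline{\mathbbm x}m_\p}(\Lambda^\circ)$ (resp.\ $\nabla^\bullet_{\underline{\mathbbm x}m_\p}(\varpi\Lambda^\bullet)$) from \Cref{RZintballoon} and \Cref{RZintground}. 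Here $\cup\xi^{r-1}$ on $\overline{B}^\circ_\p$ matches the factor $\xi^{N-d-1}=\xi^{r-1}$ with $N=2r$, $d=r$.

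\emph{Step 3: evaluating the local combination.} Since $q=p$ at a special inert prime, the combination in $\nabla$ is exactly the total intersection $\nabla_L$ of \Cref{RZinttotal}, where $L=\mathrm{span}(\underline{\mathbbm x}m_\p)$. \Cref{RZnabla} gives $\nabla_L=T^{\circ\bullet}(c^\bullet_L)$, with $c^\bullet_L(\varpi\Lambda^\bullet)=c(r-[\varpi\Lambda^\bullet+L:\varpi\Lambda^\bullet])$ when $L\subseteq\Lambda^\bullet$ and $L\subseteq L^\vee$.

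Summing over the $U_{T}(F^+_\p)$-orbit of $L$, i.e.\ over all lattices $L'\subseteq \Lambda^\bullet\cap\mathbb{V}_{2,\p}$ of type $e$, I would need to identify the sum with $c_e^{\mathbb{V}_{2,\p}}(\typ(\Lambda^\bullet\cap\mathbb{V}_{2,\p}))=\phi_e^\bullet$. This requires checking $r-[\varpi\Lambda^\bullet+L:\varpi\Lambda^\bullet]=[\Lambda':\varpi\Lambda'+L]$ with $\Lambda'=\Lambda^\bullet\cap\mathbb{V}_{2,\p}$, and that the self-duality condition $L\subseteq L^\vee$ holds automatically for $e\in\Typ_r^{0,\flat}$. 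This yields $T^{\circ\bullet}\phi_e^\bullet=\phi_e^\circ$ at $\p$. Finally, summing over $\mathbb{G}(F^+)$-orbits of the prime-to-$p$ data, and using (FJ1) to pass from $\mathbb{V}$ to $V^\circ$, turns the global sum into the orbit integral of \Cref{PFJRemark}, i.e.\ $\mathcal{P}^{\FJ}(\phi_e^\circ\otimes j_*\phi^\p)$.

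I expect Step 3 to be the main obstacle. It needs the precise lattice-index identity matching $c_L^\bullet$ summed over the orbit with $c_e^{V_2}$, and correct volume normalizations, so that each $H^\circ$-orbit is counted with the weight $1/\mathrm{vol}(H^\circ\cap xK^\circ x^{-1})$ appearing in $\mathcal{P}^{\FJ}$. A secondary issue is that ${}^\L\ZZ^{\FJ}$ is defined only on the primed parameter space. This is harmless here, because both uniformizations factor through $\CycFJ{\underline{x}}{K^p}$, so the answer is independent of the chosen lift.
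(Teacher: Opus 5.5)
Your chase through \Cref{Figure1,Figure2} is the paper's route. The local computation at $\p$ matches what the paper does: \Cref{KtheoryAJ}, then $p$-adic uniformization, then \Cref{RZnabla}, then the index identity $r-[\varpi\Lambda^\bullet+L:\varpi\Lambda^\bullet]=[\Lambda^\bullet\cap\mathbb{V}_{2,\p}:L+\varpi\Lambda^\bullet\cap\mathbb{V}_{2,\p}]$.

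\textbf{The gap is in Step~1, at the places $\q\mid p$ with $\q\neq\p$.} These places always exist when $F^+\neq\Q$, because $F^+_\p=\Q_p$. The statement allows an arbitrary $\phi^\p$ on $G(\A_{F^+}^{\infty,\p})$, including arbitrary components at such $\q$. However, $\phi^\p$ cannot ``supply the rest'' there. The integral model $\mathcal{M}_\p(V,K^p)$ has fixed level at every $\q\mid p$, and the parameter space $\CycFJ{\underline{x}}{K^p}$ sees $\q$ only through $m_\q$. Its only free function-valued data is the prime-to-$p$ part $\phi^p$. By \Cref{CycFJ-CYCFJDef}, the $\q$-component of the image under $z^\eta$ is forced to be $\phi_{e_\q}$ with $e_\q=\typ(\mathrm{span}(\underline{x}_\q m_\q))$. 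So with $m_\q=1$ you only prove the identity for $\phi_e\otimes\bigotimes_{\q\neq\p}\phi_{e_\q}\otimes\phi^p$, for a single fixed tuple $(e_\q)_{\q\neq\p}$. For the same reason, your $p$-adic computation omits a factor coming from the zero-dimensional spaces $\mathcal{N}_\q$ (\Cref{n0KR}). That factor is the number of type-$e_\q$ lattices in $\Lambda^\circ_\q\cap\mathbb{V}_{2,\q}$, and it becomes $j_*(\phi_{e_\q})$ on the right-hand side.

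\textbf{How to close it.}
\begin{enumerate}
\item Let $m\in M(F^+)$ vary. By weak approximation, the types $(e_\q)_{\q\mid p}$ then cover all of $\prod_{\q\mid p}\Typ(V_{2,\q})$.
\item Carry the $\q\neq\p$ factors through your computation. This gives the identity with right-hand side $\mathcal{P}^{\FJ}(\phi^\circ_e\otimes\bigotimes_{\q\neq\p}j_*(\phi_{e_\q})\otimes j_*(\phi^p))$.
\item Invert at each $\q\neq\p$. Since $\Lambda_\q$ is self-dual, every $g_\q\Lambda_\q\cap V_{2,\q}$ is integral (\Cref{TypFJ}). If $L\subseteq\Lambda_f$ has type $e_\q$, then $\sum_i(e_\q)_i=\sum_if_i+2\,\mathrm{length}(\Lambda_f/L)$. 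Hence $\phi_{e_\q}$ equals the indicator of the type-$e_\q$ double coset plus a finite $\Z$-combination of indicators of types with strictly smaller total. This unitriangular system inverts over $\Z$; this is the paper's inclusion--exclusion step.
\item Linearity then gives the theorem for arbitrary $\phi^\p$.
\end{enumerate}
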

\begin{proof}
First, we see that
\begin{equation*}
    \mathcal{P}^{\mathrm{FJ}}(\mathbbm{1}[H^\circ(\A_{F^+}^{\infty})xK^\circ])=\left(g\mapsto\sum_{\gamma\in H^\circ(F^+)\backslash G^\circ(F^+)}\mathbbm{1}[H^\circ(\A_{F^+}^{\infty})xK^\circ](\gamma g)\right).
\end{equation*}
This is since
\begin{equation*}
\begin{split}
    &\sum_{g\in G^\circ(F^+)\backslash G^\circ(\A_{F^+}^{\infty})/K^\circ}\sum_{\gamma\in H^\circ(F^+)\backslash G^\circ(F^+)}\mathbbm{1}[H^\circ(\A_{F^+}^{\infty})xK^\circ](\gamma g)\cdot f(g)\\
    &\qquad=\frac{1}{\mathrm{vol}_{G^\circ}(K^\circ)}\int_{G^\circ(F^+)\backslash G^\circ(\A_{F^+}^{\infty})}\sum_{\gamma\in H^\circ(F^+)\backslash G^\circ(F^+)}\mathbbm{1}[H^\circ(\A_{F^+}^{\infty})xK^\circ](\gamma g)\cdot f(\gamma g)\d{g}.
\end{split}
\end{equation*}

Now, by the definitions \Cref{RZintballoon,RZintground}, the maps $\bbnabla^\circ,\bbnabla^\bullet$ in \Cref{Figure2} are given by
\begin{equation*}
\begin{split}
    &\bbnabla^?\left(\mathbbm{1}[m_pM(\O_{F^+}\otimes\Z_p),\mathbbm{g}_1^p\mathbb{K}^p]\right)(\Lambda^?,\mathbbm{g}_2^p\mathbb{K}^p,t)\\
    &\qquad=\sum_{\gamma\in\mathbb{G}(F^+)/\mathbb{H}_1(F^+)}\nabla^?_{\gamma_p\mathrm{span}(\underline{\mathbbm{x}}\cdot m_p)}(\Lambda^?)\cdot\mathbbm{1}[\gamma^p\mathbb{H}_1(\A_{F^+}^{p,\infty})\mathbbm{g}_1^p\mathbb{K}^p](\mathbbm{g}_2^p).
\end{split}
\end{equation*}
Here and throughout, we use the notation $\nabla$ as in \Cref{RZ-Intersection} but for the Rapoport--Zink space $\mathcal{N}=\prod_{\q\mid p}\mathcal{N}_\q,$ that is,
\begin{equation*}
    \nabla^?_{\underline{\mathbbm{x}}}(\Lambda^?)=\begin{cases}
    \nabla^?_{\underline{\mathbbm{x}}_\p}(\Lambda^?_\p)&\text{ if }\underline{\mathbbm{x}}\subseteq\Lambda^?,\\0&\text{ otherwise.}
\end{cases}
\end{equation*}
We are also using the notation as in \cref{nablaNotation}. Finally, for $\gamma\in \mathbb{G}(F^+)$ we will denote $\gamma_p$ resp. $\gamma^p$ for its components above resp. away from $p$ as an element of $\mathbb{G}(\A_{F^+}^{\infty}).$

Thus $\bbnabla^{?,\mathrm{FJ}}\left(\mathbbm{1}[U_{T(\underline{\mathbbm{x}})}(F^+\otimes\Q_p)m_pM(\O_{F^+}\otimes\Z_p),\mathbb{H}(\A_{F^+}^{p,\infty})\mathbbm{g}_1^p\mathbb{K}^p]\right)(\Lambda^?,\mathbbm{g}_2^p\mathbb{K}^p,t)$ is
\begin{equation*}
    \sum_{(n_p,\mathbbm{h}^p)\in \mathbb{H}(F^+)\mathbb{H}_1(\A_{F^+}^{p,\infty})\backslash U_{T(\underline{\mathbbm{x}})}(F^+\otimes\Q_p)\mathbb{H}(\A_{F^+}^{p,\infty})/\mathbb{K}_{\mathbb{H}}}\left(\sum_{\gamma\in\mathbb{G}(F^+)/\mathbb{H}_1(F^+)}\nabla^?_{\gamma_p\cdot\mathrm{span}(\underline{\mathbbm{x}}\cdot n_pm_p)}(\Lambda^?)\cdot\mathbbm{1}[\gamma^p\mathbb{H}_1(\A_{F^+}^{p,\infty})\mathbbm{h}^p\mathbbm{g}_1^p\mathbb{K}^p](\mathbbm{g}_2^p)\right)
\end{equation*}
where 
\begin{equation*}
    \mathbb{K}_{\mathbb{H}}\defeq\left(m_pM(\O_{F^+}\otimes\Z_p)m_p^{-1}\cap U_{T(\underline{\mathbbm{x}})}(F^+\otimes\Q_p)\right)\times\left(\mathbbm{g}_1^p\mathbb{K}^p\mathbbm{g}_1^{p,-1}\cap\mathbb{H}(\A_{F^+}^{p,\infty})\right).
\end{equation*}
This is
\begin{equation*}
    \sum_{(n_p,\mathbbm{h}^p)\in \mathbb{H}_2(F^+)\backslash U_{T(\underline{\mathbbm{x}})}(F^+\otimes\Q_p)\mathbb{H}_2(\A_{F^+}^{p,\infty})/\mathbb{K}_{\mathbb{H}_2}}\left(\sum_{\gamma\in\mathbb{G}(F^+)/\mathbb{H}_1(F^+)}\nabla^?_{\gamma_p\cdot\mathrm{span}(\underline{\mathbbm{x}}\cdot n_pm_p)}(\Lambda^?)\cdot\mathbbm{1}[\gamma^p\mathbb{H}_1(\A_{F^+}^{p,\infty})\mathbbm{h}^p\mathbbm{g}_1^p\mathbb{K}^p](\mathbbm{g}_2^p)\right)
\end{equation*}
where
\begin{equation*}
    \mathbb{K}_{\mathbb{H}_2}\defeq\left(m_pM(\O_{F^+}\otimes\Z_p)m_p^{-1}\cap U_{T(\underline{\mathbbm{x}})}(F^+\otimes\Q_p)\right)\times\mathrm{pr}_{\mathbb{H}_2}\left(\mathbbm{g}_1^p\mathbb{K}^p\mathbbm{g}_1^{p,-1}\cap\mathbb{H}(\A_{F^+}^{p,\infty})\right).
\end{equation*}
Expanding $\sum_{\gamma\in\mathbb{G}(F^+)/\mathbb{H}_1(F^+)}F(\gamma)=\sum_{\gamma\in\mathbb{H}(F^+)\backslash \mathbb{G}(F^+)}\left(\sum_{\delta\in\mathbb{H}_2(F^+)}F(\gamma^{-1}\delta)\right),$ this is
\begin{equation*}
    \sum_{\gamma\in\mathbb{H}(F^+)\backslash\mathbb{G}(F^+)}\left(\sum_{(n_p,\mathbbm{h}^p)\in U_{T(\underline{\mathbbm{x}})}(F^+\otimes\Q_p)\mathbb{H}_2(\A_{F^+}^{p,\infty})/\mathbb{K}_{\mathbb{H}_2}}\Delta^?_{\mathrm{span}(\underline{\mathbbm{x}}\cdot n_pm_p)}(\gamma_p\Lambda^?)\cdot\mathbbm{1}[\mathbb{H}_1(\A_{F^+}^{p,\infty})\mathbbm{h}^p\mathbbm{g}_1^p\mathbb{K}^p](\gamma^p\mathbbm{g}_2^p)\right).
\end{equation*}
Note that the inner sum factorizes, so this becomes
\begin{equation*}
    \sum_{\gamma\in\mathbb{H}(F^+)\backslash\mathbb{G}(F^+)}\left(S_p^?(m_p,\gamma_p\Lambda^?)\cdot S^p(\mathbbm{g}_1^p,\gamma^p\mathbbm{g}_2^p)\right)
\end{equation*}
where the term away from $p$ is
\begin{equation*}
\begin{split}
    &S^p(\mathbbm{g}_1^p,\mathbbm{g}_2^p)=\sum_{\mathbbm{h}^p\in \mathbb{H}_2(\A_{F^+}^{p,\infty})/\mathbb{K}_{\mathbb{H}_2}^p}\mathbbm{1}[\mathbb{H}_1(\A_{F^+}^{p,\infty})\mathbbm{h}^p\mathbbm{g}_1^p\mathbb{K}^p](\mathbbm{g}_2^p)\\
    &\quad= \frac{1}{\mathrm{vol}(\mathbb{K}_{\mathbb{H}_2}^p)}\int_{\mathbb{H}_2(\A_{F^+}^{p,\infty})}\frac{1}{\mathrm{vol}(\mathbb{H}_1(\A_{F^+}^{p,\infty})\cap h_2^p\mathbbm{g}_1^p\mathbb{K}^p(h_2^p\mathbbm{g}_1^p)^{-1})}\int_{\mathbb{H}_1(\A_{F^+}^{p,\infty})}\mathbbm{1}[h_1^ph_2^p\mathbbm{g}_1^p\mathbb{K}^p](\mathbbm{g}_2^p)\d{h_1^p}\d{h_2^p}\\
    &\quad= \frac{1}{\mathrm{vol}(\mathbb{K}_{\mathbb{H}}^p)}\int_{\mathbb{H}(\A_{F^+}^{p,\infty})}\mathbbm{1}[h^p\mathbbm{g}_1^p\mathbb{K}^p](\mathbbm{g}_2^p)\d{h^p}=\mathbbm{1}[\mathbb{H}(\A_{F^+}^{p,\infty})\mathbbm{g}_1^p\mathbb{K}^p](\mathbbm{g}_2^p)
\end{split}
\end{equation*}
and the term at $p$ is
\begin{equation*}
    S_p^?(m_p,\Lambda^?)=\sum_{n_p\in U_{T(\underline{\mathbbm{x}})}(F^+\otimes\Q_p)m_pM(\O_{F^+}\otimes\Z_p)/M(\O_{F^+}\otimes\Z_p)}\Delta_{\mathrm{span}(\underline{\mathbbm{x}}\cdot n_p)}^?(\Lambda^?)=\sum_{\substack{L\in\mathrm{Lat}^r(\mathrm{span}(\underline{\mathbbm{x}})\otimes\Q_p)\\L\sim\mathrm{span}(\underline{\mathbbm{x}}\cdot m_p)}}\nabla_{L}^?(\Lambda^?).
\end{equation*}

Denote $\mathbb{V}_{2,p}\defeq\mathrm{span}(\underline{\mathbbm{x}})\otimes\Q_p.$ If we let
\begin{equation*}
    \bbnabla^{\mathrm{FJ}}\defeq I^\circ\circ\bbnabla^{\circ,\mathrm{FJ}}+(p+1)T^{\circ\bullet}\circ\bbnabla^{\bullet,\mathrm{FJ}},
\end{equation*}
then the above together with \Cref{n0KR} and \Cref{RZnabla} further give us that
\begin{equation*}
\begin{split}
    &\bbnabla^{\mathrm{FJ}}\left(\mathbbm{1}[U_{T(\underline{\mathbbm{x}})}(F^+\otimes\Q_p)m_pM(\O_{F^+}\otimes\Z_p),\mathbb{H}(\A_{F^+}^{p,\infty})\mathbbm{g}_1^p\mathbb{K}^p]\right)(\Lambda^?,\mathbbm{g}_2^p\mathbb{K}^p,t)\\
    &\qquad=\sum_{\gamma\in\mathbb{H}(F^+)\backslash\mathbb{G}(F^+)}S_p(m_p,\gamma_p\Lambda^\circ)\cdot\mathbbm{1}[\mathbb{H}(\A_{F^+}^{p,\infty})\mathbbm{g}_1^p\mathbb{K}^p](\mathbbm{g}_2^p)
\end{split}
\end{equation*}
where $S_p(m_p,\blank)$ is zero unless the lattice $\mathrm{span}(\underline{\mathbbm{x}}\cdot m_p)\subseteq\mathbb{V}_{2,p}$ is integral, in which case it is given by
\begin{equation*}
    S_p(m_p,\blank)=T^{\circ\bullet}\left(\left(\varpi\Lambda_\p^\bullet,(\Lambda^\circ_\q)_{\q\neq\p}\right)\mapsto \sum_{\substack{L\in\mathrm{Lat}^r(\mathbb{V}_{2,p})\\L\sim\mathrm{span}(\underline{\mathbbm{x}}\cdot m_p)\\L_\p\subseteq\Lambda^\bullet_\p,\ L_\q\subseteq\Lambda^\circ_\q\text{ for }\q\neq\p}}c(r-[\varpi\Lambda^\bullet_\p+L_\p\colon\varpi\Lambda^\bullet_\p])\right)
\end{equation*}
Noting that
\begin{equation*}
    r-[\varpi\Lambda^\bullet_\p+L_\p\colon\varpi\Lambda^\bullet_\p]=[\Lambda^\bullet\cap\mathbb{V}_{2,\p}:L_\p+\varpi\Lambda^\bullet\cap\mathbb{V}_{2,\p}],
\end{equation*}
we may rewrite this as
\begin{equation*}
    S_p(m_p,\blank)=\bbphi_{e_\p}^\circ\times\prod_{\q\neq\p}\bbphi_{e_\q}
\end{equation*}
where $(e_\q)_{\q\mid p}=\mathrm{typ}(\mathrm{span}(\underline{\mathbbm{x}}\cdot m_p))$ and $\bbphi^\circ,\bbphi$ are defined similarly to \Cref{phiDef}, but for the space $\mathbb{V}$ instead.

Now let $\nabla^{\mathrm{FJ}}\defeq I^\circ\circ\nabla^{\circ,\mathrm{FJ}}+(p+1)T^{\circ\bullet}\circ\nabla^{\bullet,\mathrm{FJ}}.$ By \Cref{Figure2}, we have
\begin{equation*}
\begin{tikzcd}
(\CycFJ{\underline{x}}{K^p})'\arrow[r]\arrow[d,"\nabla^{\mathrm{FJ}}"]&\CycFJ{\underline{x}}{K^p}\arrow[r,"\sim"]&\cyc{\underline{\mathbbm{x}}}{\bbeta^p_*(K^p)}\arrow[d,"\bbnabla^{\mathrm{FJ}}"]\\
K_0(\overline{S}_\p^\circ)\arrow[rr,"\Theta_{(\mathbb{V},\bbeta^p)}","\sim"']&&\Z[\mathbb{G}(F^+)\backslash\mathrm{Vert}^\circ(\mathbb{V}\otimes_\Q\Q_p)\times \mathbb{G}(\A_{F^+}^{p,\infty})/\bbeta^p_*(K^p)\times\mathcal{T}_\p]
\end{tikzcd}
\end{equation*}
Applying $\Theta_{(V^\circ,j)}$ instead, and in light of \Cref{FJ1}, we get for $m\in M(F^+)$ that if $\mathrm{span}(\underline{x}\cdot m_p)\subseteq V_2\otimes_\Q\Q_p$ has type $(e_\q)_{\q\mid p}$ and is integral,\footnote{As remarked before, if it is not integral, then the term $S_p(m_p,\blank)$ above is automatically zero.} then
\begin{equation*}
    (\Theta_{(V^\circ,j)}\circ\nabla^{\mathrm{FJ}})(\mathbbm{1}[U_{T(\underline{x})}(F^+)m]\times\phi^p)=\mathcal{P}^{\mathrm{FJ}}\left(\phi^\circ_{e_\p}\times\prod_{\q\neq\p}j_*(\phi_{e_\q})\times j_*(\phi^p)\right).
\end{equation*}
Since $\CycFJ{\underline{x}}{K^p}\to\CYCFJ{\underline{x}}{K^p}$ is simply given by \Cref{CycFJ-CYCFJDef}, we have
\begin{equation*}
    (\mathbbm{1}[U_{T(\underline{x})}(F^+\otimes\Q_p)m_pM(\O_{F^+}\otimes\Z_p)]\otimes\phi^p)\mapsto((\phi_{e_\q})_{\q\mid p}\otimes\phi^p).
\end{equation*}
We conclude by \Cref{Figure1} that
\begin{equation*}
    \varphi_\lr(\partial_\p\mathrm{loc}_\p\mathrm{AJ}_\m(Z^{\mathrm{FJ}}(\bigotimes_{\q\mid p}\phi_{e_\q}\otimes \phi^p)))=\mathcal{P}^{\mathrm{FJ}}(\phi_{e_\p}^\circ\otimes\bigotimes_{\q\neq\p}j_*(\phi_{e_\q})\otimes j_*(\phi^p)).
\end{equation*}

As we vary $m\in M(F^+),$ the collection of $\typ(\mathrm{span}(\underline{x}\cdot m_p))$ covers all of $\prod_{\q\mid p}\mathrm{Typ}(V_{2,\q}).$ Now the theorem follows by an inclusion-exclusion argument for the components $\phi_\q$ with $\q\neq\p,$ by using \Cref{TypFJ}.
\end{proof}

\subsubsection{An application of the local computation}
We consider the local spherical Hecke algebra
\begin{equation*}
    \mathbb{T}^\circ\defeq\Z[K^\circ_\p\backslash G^\circ(F^+_\p)/K^\circ_\p].
\end{equation*}
We consider its action on both $\Z[\Sh(V^\circ,K^\circ)]$ and on $\Z[H^\circ(\A_{F^+}^{\infty})\backslash G^\circ(\A_{F^+}^{\infty})/K^\circ]$ via convolution on the right, which we denote by $T*(\blank).$
\begin{proposition}\label{PFJequivariant}
    For any $\phi\in\Z[H^\circ(\A_{F^+}^{\infty})\backslash G^\circ(\A_{F^+}^{\infty})/K^\circ]$ and $T\in\mathbb{T}^\circ,$ we have
    \begin{equation*}
        T*\mathcal{P}^{\mathrm{FJ}}(\phi)=\mathcal{P}^{\mathrm{FJ}}(T*\phi).
    \end{equation*}
\end{proposition}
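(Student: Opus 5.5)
The plan is to reduce to a Fubini-type interchange of summation, using the characterization of $\mathcal{P}^{\mathrm{FJ}}$ in \Cref{PFJRemark}. Both sides are $\Z$-linear in $\phi$, so it suffices to pair both sides against an arbitrary $f\in\Z[\Sh(V^\circ,K^\circ)]$ and compare. By \Cref{PFJRemark},
\begin{equation*}
    \langle\mathcal{P}^{\mathrm{FJ}}(\phi),f\rangle=\frac{1}{\mathrm{vol}_{G^\circ}(K^\circ)}\int_{H^\circ(F^+)\backslash G^\circ(\A_{F^+}^{\infty})}\phi(g)f(g)\d{g}.
\end{equation*}
The key observation is that right convolution by $T\in\mathbb{T}^\circ$ is self-adjoint up to the involution $T\mapsto T^\vee$, where $T^\vee(g)=T(g^{-1})$. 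This holds both for the pairing on $\Z[\Sh(V^\circ,K^\circ)]$ and for the integral pairing on $H^\circ(F^+)\backslash G^\circ(\A_{F^+}^\infty)$ (both use right-invariant Haar measures, and $G^\circ(F^+_\p)$ is unimodular). Concretely, I will write $(T*\phi)(g)=\int_{G^\circ(F^+_\p)}T(y)\phi(gy)\d{y}$, normalized by $\mathrm{vol}(K^\circ_\p)$, with the same formula on $\Sh$.

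The key steps are as follows. (1) Compute $\langle T*\mathcal{P}^{\mathrm{FJ}}(\phi),f\rangle=\langle\mathcal{P}^{\mathrm{FJ}}(\phi),T^\vee*f\rangle$ using adjointness on the finite set $\Sh(V^\circ,K^\circ)$. This is just the unfolding of the pairing into an integral over $G^\circ(F^+)\backslash G^\circ(\A^\infty)$ followed by the substitution $g\mapsto gy^{-1}$. (2) Apply \Cref{PFJRemark} to get the integral $\int_{H^\circ(F^+)\backslash G^\circ(\A^\infty)}\phi(g)(T^\vee*f)(g)\d{g}$, where $f$ is viewed as a left $G^\circ(F^+)$-invariant function. (3) Substitute the convolution, swap the integrals, and change variables $g\mapsto gy^{-1}$. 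The quotient by the left action of $H^\circ(F^+)$ is untouched by the right translation, so this is legitimate. The result is $\int\phi(gy^{-1})T(y^{-1})\cdots$, i.e. $\int (T*\phi)(g)f(g)\d{g}$ after matching the conventions for $T^\vee$. (4) Read this off via \Cref{PFJRemark} again as $\langle\mathcal{P}^{\mathrm{FJ}}(T*\phi),f\rangle$. Since the pairing on $\Z[\Sh(V^\circ,K^\circ)]$ is perfect, this gives the claim.

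The main obstacle is bookkeeping rather than substance. First, the exchange of integrals must be justified. This is fine: $H^\circ(F^+)\backslash H^\circ(\A^\infty)$ is compact, since $H^\circ$ is compact at infinity, $T$ is compactly supported, and $\phi$ is supported on finitely many $H^\circ(\A^\infty)$-cosets modulo $K^\circ$ locally. Second, the normalizations must be checked consistently: the $\mathrm{vol}(K^\circ)$ factors, and the convention that convolution on the left module $\Z[H^\circ\backslash G^\circ/K^\circ]$ and on $\Z[\Sh]$ use the same formula, so that the involution $T^\vee$ appears symmetrically and cancels. If the paper's convention instead makes $T*$ on $\Z[\Sh]$ the adjoint form, I would simply check the identity on a basis $T=\mathbbm{1}[K^\circ_\p yK^\circ_\p]$ by the coset-decomposition $K^\circ_\p yK^\circ_\p=\bigsqcup y_iK^\circ_\p$. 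In that form $(T*\phi)(g)=\sum_i\phi(gy_i)$, and the identity becomes the obvious reindexing of the sum over $\gamma\in H^\circ(F^+)\backslash G^\circ(F^+)$ in the formula $\mathcal{P}^{\mathrm{FJ}}(\mathbbm{1}[\cdot])(g)=\sum_\gamma\mathbbm{1}[\cdot](\gamma g)$ established at the start of the proof of \Cref{reciprocitycomputation}. I expect this second, coset-level verification to be the cleanest route.
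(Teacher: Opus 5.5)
Your proposal is correct and follows essentially the same route as the paper: pair both sides against $f\in\Z[\Sh(V^\circ,K^\circ)]$, move $T$ across the pairing on $\Sh(V^\circ,K^\circ)$ as its adjoint $T'(h)=T(h^{-1})$, rewrite via the integral characterization in \Cref{PFJRemark}, and change variables $g\mapsto gh^{-1}$ on $H^\circ(F^+)\backslash G^\circ(\A_{F^+}^{\infty})$. Your coset-level alternative also works; it reindexes $\sum_{\gamma\in H^\circ(F^+)\backslash G^\circ(F^+)}\phi(\gamma g y_i)$ using the unfolded formula for $\mathcal{P}^{\mathrm{FJ}}$, and is the same commutation of left $G^\circ(F^+)$-summation with right convolution, written as finite sums.
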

\begin{proof}
    For $T\in\mathbb{T}^\circ,$ we may consider its adjoint $T'=\left(h\mapsto T(h^{-1})\right).$ Then using \Cref{PFJRemark} we can compute
    \begin{equation*}
        \langle\mathcal{P}^{\mathrm{FJ}}(\phi),T'*f\rangle=\int_{H^\circ(F^+)\backslash G^\circ(\A_{F^+}^{\infty})}\phi(g)\int_{G^\circ(\A_{F^+}^{\infty})}f(gh)T'(h^{-1})\d{h}\d{g}
    \end{equation*}
    is simply
    \begin{equation*}
        \int_{H^\circ(F^+)\backslash G^\circ(\A_{F^+}^{\infty})}\phi(g)\int_{G^\circ(\A_{F^+}^{\infty})}f(gh)T(h)\d{h}\d{g}=\int_{H^\circ(F^+)\backslash G^\circ(\A_{F^+}^{\infty})}f(g)\int_{G^\circ(\A_{F^+}^{\infty})}\phi(gh^{-1})T(h)\d{h}\d{g}
    \end{equation*}
    which is
    \begin{equation*}
        \langle\mathcal{P}^{\mathrm{FJ}}(T*\phi),f\rangle.
    \end{equation*}
    Similarly, one can check that
    \begin{equation*}
        \langle T*\mathcal{P}^{\mathrm{FJ}}(\phi),f\rangle=\langle\mathcal{P}^{\mathrm{FJ}}(\phi),T'*f\rangle,
    \end{equation*}
    and thus the claim follows.
\end{proof}

\begin{theorem}\label{reciprocity}
There exists $\phi_\lr\in\O_\lambda\otimes\CYCFJ{\underline{x}}{K}=\O_\lambda[H(\A_{F^+}^{\infty})\backslash G(\A_{F^+}^{\infty})/K]$ such that
\begin{equation*}
\begin{split}
    &\exp_\lambda\left(\partial_\p\mathrm{loc}_\p\mathrm{AJ}_\m(Z^{\mathrm{FJ}}(\phi_\lr)),\ H^1_\sing(F_\p,H^{2r-1}_\et(\Sh(V,K)_{\overline{F}},\O_\lambda(r))/\n)\right)\\
    &\qquad=\exp_\lambda\left(\mathbbm{1}_{H^\circ},\ \O_\lambda[\Sh(V^\circ,K^\circ)]/\n\right)
\end{split}
\end{equation*}
where $\mathbbm{1}_{H^\circ}\in\O_\lambda[\Sh(V^\circ,K^\circ)]$ is characterized by 
\begin{equation*}
    \langle\mathbbm{1}_{H^\circ},f\rangle=\frac{1}{\mathrm{vol}_{H^\circ}(H^\circ(\A_{F^+}^{\infty})\cap K^\circ)}\int_{H^\circ(F^+)\backslash H^\circ(\A_{F^+}^{\infty})}f(h)\d{h}.
\end{equation*}
\end{theorem}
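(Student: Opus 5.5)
The plan is to combine \Cref{reciprocitycomputation} with the local harmonic analysis of \Cref{ComputationChapter}, and to use \Cref{PFJequivariant} to move Hecke operators past $\mathcal{P}^{\mathrm{FJ}}$. Since $\varphi_\lr$ is an isomorphism of $\O_\lambda/\lambda^m$-modules (\Cref{ALRmap}), it preserves $\exp_\lambda$. It therefore suffices to produce $\phi_\lr$, an $\O_\lambda$-linear combination of functions $\phi_e\otimes\phi^\p$, for which
\begin{equation*}
    \varphi_\lr(\partial_\p\mathrm{loc}_\p\mathrm{AJ}_\m(Z^{\mathrm{FJ}}(\phi_\lr)))=u\cdot\mathbbm{1}_{H^\circ}\quad\text{in }\O_\lambda[\Sh(V^\circ,K^\circ)]/\n
\end{equation*}
for some unit $u\in\O_\lambda^\times$.

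First take $\phi^\p=\mathbbm{1}[H(\A_{F^+}^{\infty,\p})K^\p]$, so that $j_*(\phi^\p)$ is the basic function away from $\p$. By \Cref{reciprocitycomputation} and linearity, the image of $\sum_e a_e\phi_e\otimes\phi^\p$ equals $\mathcal{P}^{\mathrm{FJ}}\bigl((\sum_e a_e\phi^\circ_e)\otimes j_*\phi^\p\bigr)$. The functions $\phi^\circ_e=T^{\circ\bullet}(\phi_e^\bullet)$, for $e_r\ge1$ (and also those with $\lambda_0(e)\le1$), should be identified with the $\phi^\flat_e$ of \Cref{phiflatDef} via \Cref{TypFJ}. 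This uses the computation in \Cref{RZnabla}: the $c(\cdot)$ factor counts $[\Lambda^\bullet\cap V_2:L+\varpi\Lambda^\bullet\cap V_2]$, which matches the definition of $\phi^\natural_e$ restricted to $\Typ^{0,\flat}_r$. Hence the achievable local functions at $\p$ span $\Phi_{r,[1,\infty],0^{\le1}}$. By \Cref{ConjProof2}, this span contains $T_r\delta_{(0^r)}$ and $T'_r\delta_{(0^r)}$, where $\delta_{(0^r)}$ is the basic function $\mathbbm{1}[H^\circ(F^+_\p)K^\circ_\p]$. Combining with \Cref{PFJequivariant}, we obtain elements whose images are $T_r*\mathbbm{1}_{H^\circ}$ and $T'_r*\mathbbm{1}_{H^\circ}$, possibly up to the normalizing volume constant, which is a $p$-power and hence a unit because $\ell\nmid p$ by \Cref{P2}.

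Next, compute how $T_r,T'_r\in\mathbb{T}^\circ$ act on $\O_\lambda[\Sh(V^\circ,K^\circ)]/\n$. By \Cref{P6}, the quotient by $\n$ is the same as the quotient by the full kernel of $\phi_\Pi$, including the Hecke operators at $\p$. So $T$ acts through the scalar obtained by evaluating $\mathrm{Sat}(T)$ at the Satake parameters of $\Pi_\p$ modulo $\lambda^m$. Using \Cref{Tr-Tr'-def}, with $\bm{\mu}_i$ corresponding to $\alpha_i+\alpha_i^{-1}$ up to normalization, $\mathrm{Sat}(T_r)$ is, up to a power of $q=p$, the value $P_{\Pi,\p}(p)$ up to a unit, and $\mathrm{Sat}(T'_r)$ is essentially the derivative $P'_{\Pi,\p}(p)$ times a unit. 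The level-raising special hypothesis \Cref{P4} then gives that $T_r$ acts by $0$ and $T'_r$ acts by a unit modulo $\lambda^m$. Taking $\phi_\lr$ to be the preimage of $T'_r\delta_{(0^r)}$, the image is $u\cdot\mathbbm{1}_{H^\circ}$ with $u$ a unit, which proves the claim.

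I expect the main obstacle to be this Satake bookkeeping. One must check precisely that $\prod_i(\bm{\mu}_i-(q+q^{-1}))$ and $\sum_j\prod_{i\ne j}(\cdots)$ specialize to $P_{\Pi,\p}(p)$ and to a unit multiple of $P'_{\Pi,\p}(p)$, accounting for the change of variables between the unitary Satake parameters and the roots $\alpha_i^{\pm1}$. The relation $P(p)=\prod_i(p-\alpha_i)(p-\alpha_i^{-1})$ suggests that $\prod_i(\alpha_i+\alpha_i^{-1}-p-p^{-1})$ equals $p^{-r}P(p)$ up to sign, and the derivative is handled similarly using the vanishing of one factor. The remaining factors, such as $p$ and $p+1$, also need to be units in $\O_\lambda$. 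A secondary issue is that \Cref{ConjProof2} holds integrally, whereas \Cref{ConjProof1} inverts $q(q+1)$. Since $\ell\nmid p(p^2-1)$, this inversion is harmless.
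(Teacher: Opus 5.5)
You have the right overall architecture, and it matches the paper's: reduce via \Cref{reciprocitycomputation} to local functions at $\p$, use \Cref{ConjProof1,ConjProof2}, turn Hecke operators into scalars via \Cref{PFJequivariant} and \Cref{P6}, and get that $\phi_\Pi(T'_r)$ is a unit from \Cref{P4}. The gap is in how you identify the achievable local functions at $\p$, and fixing it exposes a missing ingredient.

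The functions $\phi^\circ_e=T^{\circ\bullet}(\phi^\bullet_e)$ live at level $K^\circ_\p$. They are \emph{not} the functions $\phi^\flat_e$ under $\typ_{K^\circ_\p}$. The identification coming from \Cref{TypFJ} holds at level $K^\bullet_\p$, where $\Lambda^{\bullet,\vee}=\varpi\Lambda^\bullet$; it is also the level at which \Cref{RZnabla} gives $\nabla_L=T^{\circ\bullet}(c^\bullet_L)$. There one has $\phi^\bullet_e\mapsto\phi^\flat_{e+1}$. Two consequences follow:
\begin{itemize}
\item As $e$ ranges over $\Typ^{0,\flat}_r$, the achievable functions span exactly $\Phi_{r,[1,\infty]}$, not $\Phi_{r,[1,\infty],0^{\le1}}$. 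So \Cref{ConjProof1} is needed to show that the Hecke span of $\Phi_{r,[1,\infty]}$ contains $\Phi_{r,[1,\infty],0^{\le1}}$ after inverting $q(q+1)$. It is not there merely to justify that inversion.
\item The relevant $\delta_{(0^r)}$ is not $\mathbbm{1}[H^\circ(F^+_\p)K^\circ_\p]$, so you cannot reach $T'_r*\mathbbm{1}[H^\circ(F^+_\p)K^\circ_\p]$ directly.
\end{itemize}

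What the local analysis actually gives, working at level $K^\bullet_\p$ with $\ell\nmid p(p+1)$, is an expression $\sum_e T^\bullet_e*\phi^\bullet_e=T'^\bullet\bigl(T^{\bullet\circ}\mathbbm{1}[H^\circ(F^+_\p)K^\circ_\p]\bigr)$. Applying $T^{\circ\bullet}$ turns this into $\sum_e T^\circ_e*\phi^\circ_e=T'^\circ I^\circ\,\mathbbm{1}[H^\circ(F^+_\p)K^\circ_\p]$, where $I^\circ=T^{\circ\bullet}T^{\bullet\circ}$. Now take $(\phi_\lr)_\p=\sum_e\phi_\Pi(T^\circ_e)\,\phi_e$. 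Then \Cref{reciprocitycomputation}, \Cref{PFJequivariant} and \Cref{P6} show that the image is $\phi_\Pi(T'^\circ)\,\phi_\Pi(I^\circ)\,\mathbbm{1}_{H^\circ}$ modulo $\n$.

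So you also need $\phi_\Pi(I^\circ)$ to be a $\lambda$-adic unit. This is exactly where the intertwining-generic hypothesis \Cref{P5} enters, and your proposal never uses it. That omission is a symptom that the level-$K^\circ$ shortcut proves more than is true.

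Once this is fixed, the rest goes through. Your Satake bookkeeping for $T'$ via \Cref{P4} is fine. No volume constant appears, since $\mathbbm{1}_{H^\circ}=\mathcal{P}^{\mathrm{FJ}}(\mathbbm{1}[H^\circ(\A^\infty_{F^+})K^\circ])$ on the nose.
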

\begin{proof}
Note that in our previous notation we have $\mathbbm{1}_{H^\circ}=\mathcal{P}^{\mathrm{FJ}}(\mathbbm{1}[H^\circ(\A_{F^+}^{\infty})K^\circ]).$

Under the identification $\O_\lambda[H^\circ(F^+_\p)\backslash G^\circ(F^+_\p)/K^\bullet_\p]\hookrightarrow\O_\lambda[\Typ_r^{0,\flat}]$ of \Cref{Typ}, we have that $\phi_e^\bullet\mapsto\phi^\flat_{e+1}.$

By \Cref{P2}, we have $\ell\nmid p(p+1),$ and thus \Cref{ConjProof1,ConjProof2} give us an expression
\begin{equation*}
    \sum_{e\in I}T_e^\bullet\star\phi^\bullet_e=T'^\bullet(T^{\bullet\circ}\mathbbm{1}[H^\circ(F^+_\p)K^\circ_\p])
\end{equation*}
for some $T_e^\bullet\in\O_\lambda[H^\circ(F_\p^+)\backslash G^\circ(F_\p^+)/K^\bullet_\p]$ and a finite set $I\subseteq\Typ_r^{0,\flat}.$ Applying $T^{\circ\bullet},$ we obtain
\begin{equation}\label{assumptionExpr}
    \sum_{e\in I}T_e^\circ*\phi_e^\circ=(T'^{\circ}\circ I^\circ)(\mathbbm{1}[H^\circ(F^+_\p)K^\circ_\p]),
\end{equation}
for some $T_e^\circ\in\O_\lambda[H^\circ(F_\p^+)\backslash G^\circ(F_\p^+)/K^\circ_\p].$ We define
\begin{equation*}
    (\phi_\lr)_\p\defeq\sum_{e\in I}\phi_\Pi(T_e^\circ)\cdot\phi_e^\circ\in\O_\lambda[H(F^+_\p)\backslash G(F^+_\p)/K_\p]
\end{equation*}
and $\phi_\lr=(\phi_\lr)_\p\otimes \mathbbm{1}[H(\A_{F^+}^{\p,\infty})K^\p].$ By \Cref{reciprocitycomputation}, we have
\begin{equation*}
    \varphi_\lr(\partial_\p\mathrm{loc}_\p\mathrm{AJ}_\m(Z^{\mathrm{FJ}}(\phi_\lr)))=\sum_{e\in I}\phi_\Pi(T_e^\circ)\cdot\mathcal{P}^{\mathrm{FJ}}(\phi_e^\circ\otimes\mathbbm{1}[H^\circ(\A_{F^+}^{\p,\infty})K^{\circ,\p}].
\end{equation*}
By \Cref{P6}, \Cref{PFJequivariant} and \eqref{assumptionExpr}, we have
\begin{equation*}
    \sum_{e\in I}\phi_\Pi(T_e^\circ)\cdot\mathcal{P}^{\mathrm{FJ}}(\phi_e^\circ\otimes\mathbbm{1}[H^\circ(\A_{F^+,f}^\p)K^{\circ,\p}])\equiv \phi_\Pi(T'^\circ)\cdot \phi_\Pi(I^\circ)\cdot\mathcal{P}^{\mathrm{FJ}}(\mathbbm{1}[H^\circ(\A_{F^+}^{\infty})K^\circ])\mod\n.
\end{equation*}
Thus the claim follows since by \Cref{P4} resp. \Cref{P5} we have that $\lambda\nmid\phi_\Pi(T'^\circ)$ resp. $\lambda\nmid\phi_\Pi(I^\circ).$
\end{proof}
\section{Proof of the main theorem}\label{ProofChapter}
Let $F/F^+$ be a CM extension of number fields, where $F\subseteq\C,$ and $N=2r$ an even positive integer. We consider $V^\circ$ a standard definite hermitian $F/F^+$ space of rank $2r$ (\Cref{StandardDef}), with unitary group $G^\circ=U(V^\circ)$ over $F^+.$

We denote by $\pi$ a irreducible cuspidal automorphic representation appearing in
\begin{equation*}
    \varinjlim_{K\in\mathfrak{K}'(V^\circ)}\C[\Sh(V^\circ,K)]
\end{equation*}
where we denote $\mathfrak{K}'(V^\circ)$ the subcategory of $\mathfrak{K}(V^\circ)$ (\Cref{neatDef}) that allows only identity cosets as morphisms. We assume that its base change $\Pi\defeq\mathrm{BC}(\pi)$\footnote{This base change exists by \cite[Corollaire 5.3]{Labesse}.} to $\mathrm{GL}_N(\A_F)$ is irreducible cuspidal. In particular, $\Pi$ is relevant (\Cref{RelevantDefinition}). We take $E$ a strong coefficient field of $\Pi$ (\Cref{StrongCoefDef}), and $\lambda$ a nonarchimedean place of $E,$ with underlying rational prime $\ell.$ We denote $\O_\lambda$ the ring of integers of $E_\lambda,$ and by $\epsilon_\ell\colon\Gamma_{F^+}\to\Z_\ell^\times\to\O_\lambda^\times$ the $\ell$-adic cyclotomic character.

\subsection{Preliminaries on Galois representations}\label{Proof-Preliminary}
\begin{definition}
For an $\ell$-adic coefficient ring $L$ and a place $v$ of $F,$ we denote $\Mod(F_v,L)$ the category of finitely generated $L$-modules equipped with a continuous action of $\Gamma_{F_v}.$ Similarly, we denote $\Mod(F,L)$ the category of finitely generated $L$-modules equipped with a continuous action of $\Gamma_F$ which is unramified at all but finitely many places.
\end{definition}

For $m,m'\in\Z_{\ge1}\cup\{\infty\}$ with $m\le m',$ we denote $\bar{(\blank)}^{(m)}\colon\Mod(F,\O_\lambda/\lambda^{m'})\to\Mod(F,\O_\lambda/\lambda^m)$ the reduction mod $\lambda^m$ map. We also denote $\bar{(\blank)}=\bar{(\blank)}^{(1)}.$

If $w$ is a place of $F$ and $\RR\in\Mod(F_w,L)$ is a torsion module, we consider the local Tate pairing
\begin{equation*}
    \langle\cdot,\cdot\rangle_w\colon H^1(F_w,\RR)\times H^1(F_w,R^*(1))\xrightarrow{\cup} H^2(F_w,E_\lambda/\O_\lambda(1)) \rightiso E_\lambda/\O_\lambda.
\end{equation*}
\begin{proposition}\label{Tatepairing}
    If $w$ is a place of $F$ and $\RR\in\Mod(F_w,L)$ is a torsion module, the restriction of the local Tate pairing $\langle\cdot,\cdot\rangle_w$ to $H^1_\unr(F_w,\RR)\times H^1_\unr(F_w,\RR^*(1))$ vanishes.
\end{proposition}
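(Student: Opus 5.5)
The plan is to reduce to the standard fact that unramified cohomology classes cup to zero in $H^2$ over the inertia-fixed field. Write $I_w\subseteq\Gamma_{F_w}$ for the inertia subgroup and $\Gamma_{\kappa_w}=\Gamma_{F_w}/I_w\simeq\hat{\Z}$ for the Galois group of the residue field. Let $x\in H^1_\unr(F_w,\RR)$ and $y\in H^1_\unr(F_w,\RR^*(1))$. By the inflation–restriction exact sequence, $x$ is the inflation of a class in $H^1(\Gamma_{\kappa_w},\RR^{I_w})$. Likewise $y$ is the inflation of a class in $H^1(\Gamma_{\kappa_w},(\RR^*(1))^{I_w})$.

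The cup product commutes with inflation. Hence $x\cup y$ is the inflation of a class in $H^2(\Gamma_{\kappa_w},\RR^{I_w}\otimes(\RR^*(1))^{I_w})$. Pushing forward along the evaluation pairing, we may replace this by its image in $H^2(\Gamma_{\kappa_w},(E_\lambda/\O_\lambda(1))^{I_w})$ and then inflate to $H^2(F_w,E_\lambda/\O_\lambda(1))$. The coefficient module is torsion, since $\RR$ is torsion and $\RR^*$ is its Pontryagin dual. So the key input is that $\hat{\Z}$ has cohomological dimension $1$ for torsion modules, which gives $H^2(\Gamma_{\kappa_w},M)=0$ for every torsion discrete $\Gamma_{\kappa_w}$-module $M$. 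It follows that $x\cup y=0$, and therefore $\langle x,y\rangle_w=0$.

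The only step requiring care is the compatibility of inflation with cup products and with the coefficient pairing. This is the standard functoriality of group cohomology, so I expect no real obstacle. The one point to check is that the modules in question are discrete torsion modules, so that $\mathrm{cd}(\hat{\Z})=1$ applies. This is immediate here: $\RR$ is finitely generated over the $\ell$-adic coefficient ring and torsion, hence finite.

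For archimedean $w$ the statement is vacuous or trivial. In that case $\Gamma_{F_w}$ is trivial or of order $2$, and the unramified part is conventionally all of $H^1$. For $F$ a CM field all archimedean places are complex, so $\Gamma_{F_w}=1$ and the statement is immediate. One could alternatively cite the standard reference (e.g.\ Milne, \emph{Arithmetic Duality Theorems}, I.2.6) for the fact that $H^1_\unr$ is its own exact annihilator when $\ell$ differs from the residue characteristic, but the vanishing half needs only the argument above and holds for all $\ell$.
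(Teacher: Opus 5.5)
Your proposal is correct and follows essentially the same argument as the paper. The paper likewise writes unramified classes as inflations from $\Gamma_{\kappa_w}$, uses compatibility of cup product with inflation to factor the pairing through $H^2$ of the residue field (with coefficients in $\RR^{I_w}\otimes\RR^*(1)^{I_w}$), and concludes because $\kappa_w$ has cohomological dimension $1$.
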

\begin{proof}
Denote $\kappa_w$ the residue field of $\O_{F_w},$ and $I_w\subseteq\Gamma_{F_w}$ the inertia subgroup. Recall that $H^1_\unr(F_w,\RR)=H^1(\kappa_w,\RR^{I_w})$ and similarly for $\RR^*(1).$ From the compatibility of cup product under inflation, we conclude that the restriction of $\langle\cdot,\cdot\rangle_w$ to $H^1_\unr(F_w,\RR)\times H^1_\unr(F_w,\RR^*(1))$ factors through $H^2(\kappa_w,\RR^{I_{w}}\otimes \RR^*(1)^{I_{w}}),$ which is the zero group as $\kappa_w$ has cohomological dimension $1.$
\end{proof}

We also record the following consequence of global duality.
\begin{proposition}\label{GlobalDuality}
    Let $L$ be an $\ell$-adic coefficient ring and $\RR\in\Mod(F,L)$ be a torsion module. For any $\alpha\in H^1(F,\RR)$ and $\beta\in H^1(F,\RR^*(1)),$ we have
    \begin{equation*}
        \sum_{w}\langle\mathrm{loc}_w(\alpha),\mathrm{loc}_w(\beta)\rangle_w=0
    \end{equation*}
    where the sum is over all places of $F.$
\end{proposition}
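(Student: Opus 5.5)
This is the standard global reciprocity law (Poitou--Tate), and I would deduce it from local Tate duality together with Tate's global duality exact sequence. First I reduce to finite level. Since $\RR$ is a torsion finitely generated $L$-module with continuous action unramified almost everywhere, it is finite, and the action factors through $\mathrm{Gal}(F_S/F)$ for a finite set $S$ of places of $F$. I take $S$ to contain the archimedean places, the places above $\ell$, and the places where $\RR$ ramifies. Enlarging $S$ further, I may assume that $\alpha$ and $\beta$ are inflated from $H^1(\mathrm{Gal}(F_S/F),\blank)$: each is represented by a cocycle factoring through a finite quotient, which is unramified outside a finite set.

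Next I would show that the sum is finite and only involves places in $S$. For $w\notin S$, both $\mathrm{loc}_w(\alpha)$ and $\mathrm{loc}_w(\beta)$ lie in the unramified subgroups. Also $\RR^*(1)$ is unramified at $w$ because $w\nmid\ell$. So \Cref{Tatepairing} gives $\langle\mathrm{loc}_w(\alpha),\mathrm{loc}_w(\beta)\rangle_w=0$ for every $w\notin S$.

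The main step is the vanishing of $\sum_{w\in S}\mathrm{inv}_w(\mathrm{loc}_w(\alpha)\cup\mathrm{loc}_w(\beta))$. I would first use that cup product commutes with restriction, so this sum equals the image of the global class $\alpha\cup\beta$ under
\begin{equation*}
H^2(F_S/F,E_\lambda/\O_\lambda(1))\to\bigoplus_{w\in S}H^2(F_w,E_\lambda/\O_\lambda(1))\xrightarrow{\sum\mathrm{inv}_w}E_\lambda/\O_\lambda.
\end{equation*}
Here the local pairing uses the evaluation map $\RR\otimes\RR^*(1)\to E_\lambda/\O_\lambda(1)$. Every finite submodule $\mu_{\ell^k}^{\oplus}$ of $E_\lambda/\O_\lambda(1)$ is a finite direct sum of copies of $\mu_{\ell^k}$, and $H^2(F_S/F,\mu_{\ell^k})$ maps into $\mathrm{Br}(F)[\ell^k]$. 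The composite is then zero by the Albert--Brauer--Hasse--Noether exact sequence
\begin{equation*}
0\to\mathrm{Br}(F)\to\bigoplus_w\mathrm{Br}(F_w)\to\Q/\Z\to0.
\end{equation*}
Equivalently, this is the statement that the Poitou--Tate sequence is a complex at the $H^2$ term.

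The only point needing care is this last step: identifying the local Tate pairing with $\mathrm{inv}_w$ of the cup product, compatibly with the global cup product. I would handle the archimedean places by the standard convention. They contribute nothing when $\ell$ is odd, and for general $\ell$ one uses Tate-modified cohomology, where the reciprocity sequence still holds. Beyond that there is no real obstacle; the result is the classical consequence of global class field theory, and I would cite it (e.g. Milne, \emph{Arithmetic Duality Theorems}, I.4.10) after the reductions above.
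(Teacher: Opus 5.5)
Your proposal is correct and is essentially the paper's argument. The paper forms $\gamma=\alpha\cup\beta\in H^2(F,E_\lambda/\O_\lambda(1))$ and invokes the reciprocity law $\sum_w\mathrm{inv}_w(\mathrm{loc}_w\gamma)=0$; your reductions (passing to finite level, vanishing off $S$ via \Cref{Tatepairing}, and reducing to $\mu_{\ell^k}$-coefficients and Brauer--Hasse--Noether) just unpack that citation, and since $F$ is CM, hence totally imaginary, the archimedean places contribute nothing for every $\ell$, so no Tate-cohomology convention is needed.
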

\begin{proof}
    This follows from global duality: if we denote $\mathrm{inv}_w\colon H^2(F_w,E_\lambda/\O_\lambda(1))\rightiso E_\lambda/\O_\lambda,$ then for $\gamma=\alpha\cup\beta\in H^2(F,E_\lambda/\O_\lambda(1))$ we have $\sum_w\mathrm{inv}_w(\mathrm{loc}_w\gamma)=0.$
\end{proof}

\subsubsection{Bloch--Kato Selmer groups and local conditions}

\begin{definition}
We consider the following Bloch--Kato Selmer groups.
\begin{enumerate}
    \item For $\RR\in\Mod(F,E_\lambda),$ we define $H^1_f(F,\RR)\subseteq H^1(F,\RR)$ the $E_\lambda$-submodule consisting of elements $s$ such that for every place $w$ of $F,$ we have $\mathrm{loc}_w(s)\in H^1_f(F_w,\RR).$ Here the local Bloch--Kato conditions are
    \begin{equation*}
        H^1_f(F_w,\RR)\defeq\begin{cases}
            H^1_{\unr}(F_w,\RR)&\text{if }w\nmid\ell,\\
            \ker(H^1(F_w,\RR)\to H^1(F_w,\RR\otimes_{\Q_\ell}\mathbb{B}_{\mathrm{cris}})&\text{if }w\mid\ell.
        \end{cases}
    \end{equation*}
    Here $\mathbb{B}_{\mathrm{cris}}$ denotes Fontaine's crystalline period ring for $\Q_\ell.$
    \item For $\RR\in\Mod(F,\O_\lambda)$ a free module, we define $H^1_f(F,\RR)$ to be the inverse image of $H^1_f(F,\RR_\Q)$ under the natural map $H^1(F,\RR)\to H^1(F,\RR_\Q).$
\end{enumerate}
\end{definition}
\begin{proposition}\label{AJisNS}
    Let $X\in\Sch{F}$ a proper smooth scheme of dimension $2r-1.$ Suppose that a commutative monoid $\mathbb{T}$ acts on $X$ via \'etale correspondences as in \cite[Definition 2.11]{LiuTriple}. Let $\m\subseteq\O_\lambda[\mathbb{T}]$ be a maximal ideal, and denote $\RR\defeq H^{2r-1}_\et(X_{\overline{F}_v},\O_\lambda(r))_\m.$ Assume that $Z\in \mathrm{CH}^r(X)_\m$ is cohomologically trivial, so we may consider $\mathrm{AJ}_\m(Z)\in H^1(F,\RR).$ 
    
    Assume also that $\RR$ is a finite free $\O_\lambda$-module. Let $w$ be a place of $F$ such that $X$ has a smooth model $\mathcal{X}\in\Sch{\O_{F_w}}.$ Then
    \begin{equation*}
        \mathrm{AJ}_\m(Z)\in\begin{cases}
            H^1_{\unr}(F_w,\RR)&\text{if }w\nmid\ell,\\
            H^1_f(F_w,\RR)&\text{if }w\mid\ell.
        \end{cases}
    \end{equation*}
\end{proposition}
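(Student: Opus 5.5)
The plan is to reduce to a statement about the $\ell$-adic cohomology of the generic fiber, and then pass through the smooth model to cohomology of its special fiber, where Frobenius acts as an isomorphism. Since $\RR$ is finite free and $\mathrm{AJ}_\m$ is an edge map of the Hochschild--Serre spectral sequence, it suffices to treat $\mathrm{AJ}(Z)$ before localization. The reason is that localization at $\m$ is cut out by an idempotent of the completed Hecke algebra acting through correspondences, and it commutes with restriction to $\Gamma_{F_w}$ and with the local conditions. Both local conditions are determined by their images in $H^1(F_w,\RR_\Q)$, so we may freely pass to $\RR_\Q$. The case $w\mid\ell$ is the statement that Abel--Jacobi images of cycles on varieties with good reduction are crystalline, and the case $w\nmid\ell$ is the corresponding unramified statement.

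\emph{Case $w\nmid\ell$.} First, I would use smooth and proper base change for the model $\mathcal{X}/\O_{F_w}$. Then $I_w$ acts trivially on $H^i_\et(X_{\overline{F}_w},\O_\lambda)$, and $H^i_\et(\mathcal{X}_{\O_w^{\mathrm{ur}}},\O_\lambda(r))\cong H^i_\et(X_{\overline{F}_w},\O_\lambda(r))^{I_w}$ (in fact the whole group). Next, I would extend $Z$ by Zariski closure to a cycle $\mathcal{Z}$ on $\mathcal{X}$, as in \Cref{ZariskiClosure}. Its class lies in $H^{2r}_\et(\mathcal{X},\O_\lambda(r))$, and restricted to $\mathcal{X}_{\O_w^{\mathrm{ur}}}$ it becomes geometrically trivial after localization. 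The Hochschild--Serre spectral sequence for $\mathcal{X}_{\O_w^{\mathrm{ur}}}\to\mathcal{X}$ has Galois group $\Gamma_{\kappa_w}$. Functoriality of this sequence with the one over $F_w$ shows that $\mathrm{loc}_w\mathrm{AJ}(Z)$ is inflated from $H^1(\kappa_w,\RR)$, and hence is unramified. One subtlety is that the class of $\mathcal{Z}$ over $\mathcal{X}_{\O_w^{\mathrm{ur}}}$ must vanish in $H^{2r}$, not only after base change to $\overline{F}_w$. This follows because restriction $H^{2r}(\mathcal{X}_{\O^{\mathrm{ur}}_w})\to H^{2r}(X_{\overline{F}_w})$ is injective, being the specialization isomorphism.

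\emph{Case $w\mid\ell$.} Here I would invoke the standard result (Nekov\'a\v{r}, or Nizio\l{} via $p$-adic comparison for $\mathcal{X}$ proper smooth over $\O_{F_w}$) that the $p$-adic Abel--Jacobi image of a homologically trivial cycle on a variety with good reduction lies in $H^1_f=H^1_g$. Concretely, $\mathrm{AJ}(Z)$ is the extension class of $E_\lambda(0)$ by $\RR_\Q$ obtained by pulling back the extension $0\to H^{2r-1}(X_{\overline{F}_w})\to H^{2r-1}(X_{\overline{F}_w}\smallsetminus|Z|)\to \ker(H^{2r}_{|Z|}\to H^{2r})\to 0$ along the cycle class. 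This middle extension is the cohomology of an open variety, hence de Rham and, after passing to a semistable alteration, potentially semistable. Crystallinity of the extension then follows from \cite{Nizioł}-type results, namely that Abel--Jacobi images are crystalline when $X$ has good reduction. Localization at $\m$ preserves all of this because it is functorial.

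I expect the main obstacle to be the $w\mid\ell$ case. It rests on a nontrivial $p$-adic Hodge theory input, namely that Abel--Jacobi classes lie in $H^1_f$ rather than merely in $H^1_g$. Since $\RR_\Q$ is pure of weight $-1$ and crystalline, $D_{\mathrm{cris}}(\RR_\Q)^{\varphi=1}=0$, so $H^1_f=H^1_g$ for $\RR_\Q$ and it suffices to show the class is de Rham. This is the route I would actually take: prove de Rhamness via the extension construction and cite $H^1_f=H^1_g$ from weights.
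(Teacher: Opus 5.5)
\paragraph{Verdict.} Your outline is correct. For $w\nmid\ell$ it takes a genuinely different route from the paper; for $w\mid\ell$ your primary route is the paper's. One reduction step needs repair, described in the last paragraph.

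\paragraph{The case $w\nmid\ell$.} The paper never uses the cycle here. Since $\RR$ is unramified and free, $H^1(I_w,\RR)\hookrightarrow H^1(I_w,\RR_\Q)$. This is the same fact behind your remark that you may pass to $\RR_\Q$. The paper then uses that $\RR_\Q$ is pure of weight $-1$ and self-dual. Purity gives $H^0(F_w,\RR_\Q)=0$, local Tate duality gives $H^2(F_w,\RR_\Q)=0$, and the local Euler characteristic formula gives $H^1(F_w,\RR_\Q)=0$. Hence every class in $H^1(F_w,\RR)$ is unramified, with no reference to $Z$ at all.

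You instead take the closure of $Z$ on $\mathcal{X}$ and use Hochschild--Serre for $\mathcal{X}_{\O_w^{\mathrm{ur}}}\to\mathcal{X}$. This shows the class is inflated from $\Gamma_{\kappa_w}$. Your argument is more general: it uses no weights and would still work where $H^1(F_w,\RR_\Q)\neq0$. The cost is a cycle class map on the regular model, which needs purity, plus functoriality between two spectral sequences. You already had every ingredient for the paper's much shorter argument.

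\paragraph{The case $w\mid\ell$.} Your primary route, citing Nizio\l{} and Nekov\'a\v{r} for good reduction, is exactly what the paper does. Your alternative (show the class is de Rham via cohomology of $X\setminus|Z|$, then use $H^1_g=H^1_f$) also works, with two corrections.
\begin{itemize}
\item The Bloch--Kato criterion for $H^1_g=H^1_f$ is $D_{\mathrm{cris}}(\RR_\Q^*(1))^{\varphi=1}=0$, not $D_{\mathrm{cris}}(\RR_\Q)^{\varphi=1}=0$. This is harmless, since $V\mapsto V^*(1)$ preserves weight $-1$.
\item Weight $-1$ here is a statement about crystalline Frobenius. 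It needs Katz--Messing together with the crystalline comparison for $\mathcal{X}$, not just the $\ell$-adic Weil conjectures.
\end{itemize}

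\paragraph{The localization reduction.} Your claim that it suffices to treat $\mathrm{AJ}(Z)$ before localization does not work as stated. $Z$ is only cohomologically trivial after localizing, so the unlocalized $\mathrm{AJ}(Z)$ is not defined. Also, no idempotent $e_\m$ acts on Chow groups or on $H^{2r}_{\et}(X,\O_\lambda(r))$. Finally, the Hecke correspondences need not extend to $\mathcal{X}$ or preserve $X\setminus|Z|$, so neither of your constructions is visibly $\mathbb{T}$-equivariant.

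The fix is what the paper means by ``we may assume $Z$ is cohomologically trivial.''
\begin{itemize}
\item Choose $t\in\O_\lambda[\mathbb{T}]\setminus\m$ killing the image of $Z$ in the finitely generated module $H^{2r}_{\et}(X_{\overline F},\O_\lambda(r))$.
\item Run your arguments for the genuinely cohomologically trivial cycle $tZ$ with the full $H^{2r-1}$.
\item Project to the $\m$-summand, which is a Galois direct summand.
\item Use that $t$ acts on $\RR$ by an invertible Galois-equivariant map. So $t$ and $t^{-1}$ both preserve $H^1_{\unr}$ and $H^1_f$, and $\mathrm{AJ}_\m(Z)=t^{-1}\mathrm{AJ}_\m(tZ)$.
\end{itemize}
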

\begin{proof}
    This is well known, we include a proof for completeness.
    
    We start with the case $w\nmid \ell.$ We have that $\RR$ and $\RR_\Q=H^{2r-1}_\et(X_{\overline{F}_v},E_\lambda(r))_\m$ are unramified by proper and smooth base change. $\RR$ is also pure of weight $-1$ by the Weil conjectures, and we have $\RR_\Q^\vee(1)\iso R_\Q$ by Poincar\'e duality. Purity implies that $H^0(F_w,\RR_\Q)=0,$ and together with local Tate duality and $\RR_\Q^\vee(1)\iso\RR_\Q,$ this also implies that $H^2(F_w,\RR_\Q)=0.$ Since $w\nmid\ell,$ the local Euler characteristic formula implies that $H^1(F_w,\RR_\Q)=0.$ Now denote $I_w\subseteq\Gamma_{F_w}$ the inertia subgroup. As $\RR$ is unramified, we have an injection $H^1(I_w,\RR)\hookrightarrow H^1(I_w,\RR_\Q)$ and from the commutative diagram
    \begin{equation*}
        \begin{tikzcd}
            H^1(F_w,\RR)\arrow[d]\arrow[r]&H^1(F_w,\RR_\Q)=0\arrow[d]\\
            H^1(I_w,\RR)\arrow[r,hook]&H^1(I_w,\RR_\Q)
        \end{tikzcd}
    \end{equation*}
    we conclude that $H^1_{\unr}(F_w,\RR)=H^1(F_w,\RR).$

    For the case $w\mid\ell,$ we first note that without loss of generality we may assume that $Z\in \mathrm{CH}^r(X)\otimes\O_\lambda$ is cohomologically trivial. Then \cite[Theorem 3.1]{Niziol-AJ} (see \cite[Theorem 3.1(i)]{Nekovar}) tell us that $\mathrm{AJ}(Z)\in H^1_f(F_w,\RR_\Q).$ Since $H^1_f(F_w,\RR)$ is defined to be the pre-image of $H^1_f(F_w,\RR_\Q),$ this proves our claim.
\end{proof}

If $\RR\in\Mod(F,E_\lambda)$ is crystalline at $w\mid\ell,$ we have that $H^1_f(F_w,\RR)\subseteq H^1(F_w,\RR)=\mathrm{Ext}^1_{\Mod(F_w,\Q_\ell)}(\Q_\ell,R)$ corresponds to extensions
\begin{equation*}
    0\to R\to M\to \Q_\ell\to0
\end{equation*}
for which $M$ is also crystalline. For torsion modules, we can make a similar definition.
\begin{definition}[{\cite[Section 4]{Niziol}, \cite[Definition 2.2.5]{LTXZZ}}] Let $w$ be a place of $F$ above $\ell.$
    \begin{enumerate}
        \item A torsion module $M\in\Mod(F_w,\Z_\ell)$ is \emph{crystalline with Hodge--Tate weights in $[a,b]$} if there exist a crystalline representation $V\in\Mod(F_w,\Q_\ell)$ and $\Gamma_{F_w}$-stable lattices $\Lambda\subseteq\Lambda'\subseteq V$ with $M\iso\Lambda/\Lambda'.$
        \item A module $M\in\Mod(F_w,\Z_\ell)$ is \emph{crystalline with Hodge--Tate weights in $[a,b]$} if $M/\ell^mM$ is crystalline for all $m\in\Z_{\ge1}.$
        \item For $\RR\in\Mod(F,\O_\lambda)$ crystalline, we define the $\O_\lambda$-submodule
        \begin{equation*}
            H^1_{ns}(F_w,\RR)\subseteq H^1(F_w,\RR)=\mathrm{Ext}^1_{\Mod(F_w,\Z_\ell)}(\Z_\ell,\RR)
        \end{equation*}
        consisting of extensions $0\to R\to M\to\Z_\ell\to0$ such that $M$ is crystalline.
    \end{enumerate}
\end{definition}
\begin{proposition}\label{fnsComparison}
    Let $w$ be a place of $F$ above $\ell.$ Assume that $\RR\in\Mod(F_w,\O_\lambda)$ is crystalline with Hodge--Tate weights in $[a,b]$ with $b-a\le \ell-2.$ Then under the natural map $H^1(F_w,\RR)\to H^1(F_w,\bar{\RR}^{(m)}),$ the submodule $H^1_f(F_w,\RR)$ lands inside $H^1_{ns}(F_w,\bar{\RR}^{(m)}).$
\end{proposition}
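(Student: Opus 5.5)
The plan is to reduce to lattices via the definition of torsion crystalline modules and then use Fontaine--Laffaille theory. Let $s\in H^1_f(F_w,\RR)$. By definition, its image in $H^1(F_w,\RR_\Q)$ corresponds to an extension $0\to\RR_\Q\to M_\Q\to\Q_\ell\to0$ with $M_\Q$ crystalline. The class $s$ itself corresponds to an integral extension $0\to\RR\to M\to\Z_\ell\to0$, and $M\subseteq M_\Q$ is a $\Gamma_{F_w}$-stable $\O_\lambda$-lattice, since $\RR$ is free and $H^1(F_w,\RR)\to H^1(F_w,\RR_\Q)$ is compatible with the Baer description of extensions. The image of $s$ in $H^1(F_w,\bar{\RR}^{(m)})$ is represented by the pushout extension $0\to\bar{\RR}^{(m)}\to M'\to\Z_\ell\to0$, where $M'=M/\lambda^m\RR$. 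We must show $M'$ is crystalline in the torsion sense, with Hodge--Tate weights in a range of length at most $\ell-2$.

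First, $M_\Q$ is crystalline with Hodge--Tate weights in $[\min(a,0),\max(b,0)]$. Here I would need $0\in[a,b]$, or else adjust: the weights of $\RR$ sit in $[a,b]$ and the trivial quotient has weight $0$. In the application, $\RR$ is a twist $\rho_{\Pi,\lambda}(r)$ whose weights straddle $0$, and the paper's convention likely implicitly allows this. If not, I would enlarge the interval to include $0$ and use the hypothesis $b-a\le\ell-2$ only in the form needed below.

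Second, $M'$ is not literally a quotient of two lattices in $M_\Q$, because $\lambda^m\RR$ is not a lattice of full rank in $M_\Q$. I would remedy this by writing $M'$ as a submodule of a quotient of lattices: $M'\hookrightarrow M/\lambda^m M$, which is a quotient of lattices and hence crystalline. Indeed $M'=M/\lambda^m\RR$, and $M/\lambda^m M$ surjects from it with kernel $\lambda^m M/\lambda^m\RR\cong\Z_\ell/\ell^{m'}$. Alternatively, I would describe $M'$ as the pullback of $M/\lambda^m M\to\Z_\ell/\lambda^m$ along $\Z_\ell\to\Z_\ell/\lambda^m$.

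The key step, and the main obstacle, is closure of the torsion crystalline category under subobjects, quotients and the relevant fiber products when the weight range has length at most $\ell-2$. I would invoke Fontaine--Laffaille theory (and its extension by Breuil and Niziol to unramified $F_w$; here $\ell$ is unramified in $F$ by the standing hypotheses): in this range, the functor from Fontaine--Laffaille modules to torsion $\Gamma_{F_w}$-modules is exact and fully faithful with essential image closed under subquotients. With this, $M/\lambda^m M$ is crystalline, and the extension class of $M'$ arises from the extension $M/\lambda^m M$ of $\Z_\ell/\lambda^m$ by $\bar{\RR}^{(m)}$, via pullback to $\Z_\ell$. This matches how $H^1_{ns}$ is defined in \cite[Definition 2.2.5]{LTXZZ}, where the torsion target is really $\Z/\ell^m$. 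So the image of $s$ lies in $H^1_{ns}(F_w,\bar{\RR}^{(m)})$, as in \cite[Section 4]{Niziol}.
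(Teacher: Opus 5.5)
Your skeleton matches the paper's. The paper cites \cite[Proposition 6]{Breuil} (or \cite[Lemma 2.2.6]{LTXZZ}) for $H^1_f(F_w,\RR)\subseteq H^1_{ns}(F_w,\RR)$, then passes to $\bar{\RR}^{(m)}$ using that subquotients of crystalline modules are crystalline. Your first step proves the inclusion by hand, and does so correctly. The extension $M$ is a $\Gamma_{F_w}$-stable lattice in the crystalline representation $M_\Q$, so every $M/\ell^nM$ is a quotient of lattices. Your remark that the weight range must contain $0$ is also right, and harmless here since the weights in the application lie in $[-r,r-1]$.

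The second step does not work as written.
\begin{itemize}
\item Since $M'=M/\lambda^m\RR$ surjects onto $\Z_\ell$, it cannot embed into the torsion module $M/\lambda^mM$. The natural map goes the other way, $M'\twoheadrightarrow M/\lambda^mM$.
\item The kernel of that map is $\lambda^mM/\lambda^m\RR\cong M/\RR$, which is torsion-free of rank one, not $\Z_\ell/\ell^{m'}$.
\item The expressions $\lambda^mM$ and ``$\Z_\ell/\lambda^m$'' only make sense for $\O_\lambda$-linear extensions. The paper uses $\mathrm{Ext}^1_{\Mod(F_w,\Z_\ell)}(\Z_\ell,\blank)$.
\item Because $M'$ is not torsion, closure properties of the torsion Fontaine--Laffaille category do not apply to it. Appealing to a torsion-target version of $H^1_{ns}$ is not the definition used in this paper.
\end{itemize}

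What you actually need, by part (2) of the definition, is that $M'/\ell^nM'$ is crystalline for every $n$. This is immediate:
\[
M'/\ell^nM'\;\cong\;M/(\lambda^m\RR+\ell^nM),\qquad \ell^nM\subseteq\lambda^m\RR+\ell^nM\subseteq M .
\]
Both $\lambda^m\RR+\ell^nM$ and $M$ are $\Gamma_{F_w}$-stable lattices in $M_\Q$. So $M'/\ell^nM'$ is crystalline directly by part (1) of the definition. This is exactly the paper's ``subquotients'' remark, and it needs no Fontaine--Laffaille theory. The bound $b-a\le\ell-2$ matters for matching this lattice notion with the Fontaine--Laffaille one used in \cite{Breuil,LTXZZ} and later in \Cref{nsVanishing}, not for this step.
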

\begin{proof}
    This follows from \cite[Proposition 6]{Breuil} (see also \cite[Lemma 2.2.6]{LTXZZ}) upon noting that $H^1_{ns}(F_w,\RR)$ factors through $H^1_{ns}(F_w,\bar{\RR}^{(m)})$ under the natural map $H^1(F_w,\RR)\to H^1(F_w,\bar{\RR}^{(m)}),$ as subquotients of crystalline modules are crystalline.
\end{proof}
\begin{lemma}[{\cite[Lemma 2.2.7]{LTXZZ}}]\label{nsVanishing}
    Let $w$ be a place of $F$ above $\ell.$ Assume that $\RR\in\Mod(F_w,\O_\lambda)$ is a torsion module which is crystalline with Hodge--Tate weights in $[a,b]$ with $b-a\le \frac{\ell-2}{2}.$ Then we restriction of the local Tate pairing
    \begin{equation*}
        \langle\cdot,\cdot\rangle_w\colon H^1(F_w,\RR)\times H^1(F_w,\RR^*(1))\to E_\lambda/\O_\lambda
    \end{equation*}
    to $H^1_{ns}(F_w,\RR)\times H^1_{ns}(F_w,\RR^*(1))$ lands in $\mathfrak{d}_\lambda^{-1}/\O_\lambda$ where $\mathfrak{d}_\lambda\subseteq\O_\lambda$ is the different ideal of $E_\lambda/\Q_\ell.$
\end{lemma}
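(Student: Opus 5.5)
The plan is to reduce the statement to integral crystalline lattices, where the hypothesis $b-a\le\frac{\ell-2}{2}$ gives integrality of a Fontaine--Laffaille-type pairing. First write $\RR\iso\Lambda/\Lambda'$ for $\Gamma_{F_w}$-stable lattices $\Lambda'\subseteq\Lambda$ in a crystalline $E_\lambda$-representation $V$ with Hodge--Tate weights in $[a,b]$. Since $\RR$ is killed by some $\lambda^m$, we may replace $\Lambda'$ by $\lambda^m\Lambda$ up to a filtration argument. Concretely, I would reduce to the case $\RR=\Lambda/\lambda^m\Lambda$ by dévissage along subquotients; crystalline torsion modules with weights in a range of length $\le\ell-2$ form an abelian category by Fontaine--Laffaille. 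Dually, $\RR^*(1)$ is then crystalline with weights in $[1-b,1-a]$, which is again a range of the same length.

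Next I would interpret classes in $H^1_{ns}(F_w,\RR)$ and $H^1_{ns}(F_w,\RR^*(1))$ as crystalline extensions $0\to\RR\to M\to\O_\lambda/\lambda^m\to0$ and $0\to\RR^*(1)\to M'\to\O_\lambda/\lambda^m\to0$. Here the extension class is viewed over $\O_\lambda$, and restricting scalars to $\Z_\ell$ is what introduces the different. The cup product lands in $H^2(F_w,\O_\lambda/\lambda^m(1))$, and we identify this with $\O_\lambda/\lambda^m$ via the trace, which is the source of $\mathfrak{d}_\lambda^{-1}$. Over $\Z_\ell$, the pairing $\RR\otimes_{\Z_\ell}\mathrm{Hom}_{\Z_\ell}(\RR,\Q_\ell/\Z_\ell(1))$ is $\O_\lambda$-bilinear only after composing with the trace, since $\mathrm{Hom}_{\Z_\ell}(\O_\lambda,\Z_\ell)=\mathfrak{d}_\lambda^{-1}$.

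The key step is to show that the $\Z_\ell$-valued cup product of two crystalline ($ns$) classes vanishes. The natural tool is the Fontaine--Laffaille functor. The condition $b-a\le\frac{\ell-2}{2}$ ensures that the tensor product $M\otimes M'$ still has weights in a range of length $\le\ell-2$. Hence the cup product class in $\mathrm{Ext}^2$, realized via $M\otimes M'$ as a Yoneda product, lies in the $\mathrm{Ext}^2$ of the Fontaine--Laffaille category. That $\mathrm{Ext}^2$ vanishes, because the category has cohomological dimension $\le 1$ (Ext groups are computed by a two-term complex $\mathrm{Fil}^0\to$ underlying module). Thus the $\Z_\ell$-pairing is zero. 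After accounting for the passage from $\Z_\ell$-duals to $\O_\lambda$-duals, the $\O_\lambda$-valued pairing lands in $\mathfrak{d}_\lambda^{-1}/\O_\lambda$.

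I expect the main obstacle to be making this Yoneda-product argument rigorous. One has to check that the Fontaine--Laffaille $\mathrm{Ext}^1$ maps onto $H^1_{ns}$, and that cup product is compatible with tensor products in the Fontaine--Laffaille category. Failing a clean argument, I would simply invoke the statement as proven in \cite[Lemma 2.2.7]{LTXZZ}, which relies on \cite{Niziol} and \cite{Breuil} for exactly these compatibilities.
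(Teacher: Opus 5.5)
The paper gives no argument for this lemma; it is quoted from \cite[Lemma 2.2.7]{LTXZZ}, in the Fontaine--Laffaille framework of \cite{Niziol}. Your outline is a reasonable self-contained route, and its core is sound: realize the cup product of two crystalline classes as a Yoneda $2$-extension whose terms all lie in one Fontaine--Laffaille window, and kill it because $\mathrm{Ext}^2(\mathbf{1},-)$ vanishes there.

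Compared with the bare citation, this route isolates exactly where $b-a\le\frac{\ell-2}{2}$ is used. It also proves more than is stated. If you push out directly along the $\O_\lambda$-linear evaluation $\RR\otimes_{\Z_\ell}\RR^*(1)\to(\O_\lambda/\lambda^m)(1)$, the same argument gives $\langle\alpha,\beta\rangle_w=0$. The factor $\mathfrak{d}_\lambda^{-1}$ appears only if you first prove vanishing of the $\Z_\ell$-valued pairing and then transport it along $g\mapsto\mathrm{Tr}_{E_\lambda/\Q_\ell}\circ g$, from $\RR^*(1)$ to $\mathrm{Hom}_{\Z_\ell}(\RR,\Q_\ell/\Z_\ell(1))$. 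That map is not an isomorphism when $E_\lambda/\Q_\ell$ is ramified. One then gets $\mathrm{Tr}(c\langle\alpha,\beta\rangle_w)\in\Z_\ell$ for all $c\in\O_\lambda$, i.e.\ $\langle\alpha,\beta\rangle_w\in\mathfrak{d}_\lambda^{-1}/\O_\lambda$. So your sentence placing the trace in the identification $H^2(F_w,(\O_\lambda/\lambda^m)(1))\iso\O_\lambda/\lambda^m$ is wrong: no trace enters there.

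Several steps need repair.

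\emph{Drop the d\'evissage.} It is unnecessary, and as stated it fails. $\RR=\Lambda/\Lambda'$ is only a quotient of $\Lambda/\lambda^m\Lambda$, and neither $H^1_{ns}$ nor the orthogonality statement is compatible with filtrations of $\RR$. In particular, a class in $H^1_{ns}(F_w,\RR^*(1))$ does not decompose along such a filtration.

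\emph{The obstacle you anticipate can be sidestepped.} You do not need compatibility of Fontaine--Laffaille with tensor or cup products. The class $\pm\langle\alpha,\beta\rangle_w$ is represented by the splice $0\to(\O_\lambda/\lambda^m)(1)\to Q\to M'\to\Z/\ell^n\to0$. Here $M,M'$ are the reductions mod $\ell^n$ of the crystalline extensions representing $\alpha,\beta$, and $Q$ is the pushout of $M\otimes_{\Z_\ell}\RR^*(1)$ along the evaluation map. Tensor products, direct sums and subquotients of lattice-quotients are again lattice-quotients. Hence every term is crystalline with weights in one interval of length at most $\max(2(b-a),1)\le\ell-2$; this is where the hypothesis enters, together with $0\in[a,b]$. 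Exactness and full faithfulness of the Fontaine--Laffaille functor then pull the whole splice back into the Fontaine--Laffaille category.

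\emph{Work in the subcategory of objects killed by $\ell^n$.} There $\mathrm{Ext}^1(\mathbf{1},D)=\coker(1-\varphi_0\colon\mathrm{Fil}^0D\to D)$ is right exact in $D$, which forces Yoneda $\mathrm{Ext}^2(\mathbf{1},-)=0$. On the Galois side, Yoneda $\mathrm{Ext}^2$ out of $\Z/\ell^n$ among $\Z/\ell^n[\Gamma_{F_w}]$-modules is $H^2$. In the category of all torsion objects neither statement holds: the Galois $\mathrm{Ext}^2(\Z/\ell^n,-)$ there also picks up $H^1$. So ``the category has cohomological dimension $1$'' must be read in this restricted sense.

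\emph{Unramifiedness.} All of this needs $F_w/\Q_\ell$ unramified. The statement leaves this implicit, and it holds in the application by \ref{L1}.
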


\subsubsection{Extensions of conjugate self-dual Galois representations}
We now collect some definitions about extensions of conjugate self-dual Galois representations. This will only be used to state the ``big image'' assumption \Cref{L4-2} and in the proof of \Cref{ChebotarevArg}, where the ``big image'' assumption is used in a Chebotarev argument.
\begin{definition}[{\cite[Section 1]{CHT}}]\label{ExtensionDef}
    We consider the group scheme over $\Z$
    \begin{equation*}
        \mathscr{G}_N\defeq(\mathrm{GL}_N\times\mathrm{GL}_1)\rtimes\{1,\cplx\}
    \end{equation*}
    with $\cplx^2=1$ and $\cplx(g,\mu)\cplx=(\mu g^{\intercal,-1},\mu)$ for $g,\mu\in(\mathrm{GL}_N\times\mathrm{GL}_1).$
\end{definition}
\begin{definition}[{\cite[Section 1]{CHT}}]
    Consider $L$ an $\ell$-adic coefficient ring and $\RR\in\Mod(F,L)$ a free module. We denote the $\Gamma_F$ action by $\rho\colon\Gamma_{F}\to\mathrm{GL}(\RR).$ Assume that $\RR^\cplx\iso \RR^{\vee}(1).$ An \emph{extension} of $\rho$ is a continuous homomorphism
    \begin{equation*}
        \rho_+\colon\Gamma_{F^+}\to\mathscr{G}_N(L)
    \end{equation*}
    as follows. For a choice of identification $\RR\iso L^{\oplus N}$ and a choice $\Xi\colon \RR^\cplx\rightiso \RR^\vee(1)$ with $\Xi^{\cplx,\vee}(1)=(-1)^{\mu_\Xi}\Xi$ for some $\mu_\Xi\in\Z/2\Z,$ we obtain a matrix $B\in\mathrm{GL}_N(L)$ satisfying $\rho^\cplx=B\circ\epsilon_\ell\rho^{\vee}\circ B^{-1}$ and $BB^{\intercal,-1}=(-1)^{\mu_\Xi}.$ We then define $\rho_+$ to be given by
    \begin{equation*}
        \rho_+\rvert_{\Gamma_{F}}=(\rho,\epsilon_\ell\rvert_{\Gamma_{F}})1,\qquad \rho_+(\cplx)=(B,(-1)^{\mu_\Xi})\cplx.
    \end{equation*}
\end{definition}
For an extension $\rho_+\colon\Gamma_{F^+}\to\mathscr{G}_N(\O_\lambda)$ as above, we denote $\bar{\rho}_+^{(m)}\colon\Gamma_{F^+}\to\mathscr{G}_N(\O_\lambda/\lambda^m)$ the reductions of $\rho_+.$

\subsection{Admissible primes}\label{Proof-Admissible}
Recall that by \Cref{GaloisExistence} and \Cref{StrongCoefDef}, we have the Galois representation $\rho_{\Pi,\lambda}(r)\in\Mod(F,E_\lambda)$ of rank $2r,$ which is conjugate self-dual:
\begin{equation*}
    \rho_{\Pi,\lambda}(r)^\cplx\iso \rho_{\Pi,\lambda}^\vee(1-2r)(r)=(\rho_{\Pi,\lambda}(r))^\vee(1).
\end{equation*}

\begin{definition}\label{admissibledef}
    Consider a finite set of nonarchimedean places $\Sigma_{\mathrm{min}}^+$ of $F^+$ containing $\Sigma_\Pi^+$ (\Cref{SigmaPi}).
    We say that the prime $\lambda$ of $E$ is \emph{admissible} with respect to $\Pi$ and $\Sigma_{\mathrm{min}}^+$ if the following conditions hold.
    \begin{enumerate}[leftmargin=*, label=(L\arabic*)]
        \item\label{L1} $\ell\ge 4r+2$ and $\ell$ is unramified in $F.$
        \item\label{L2} $\Sigma_{\mathrm{min}}^+$ contains no $\ell$-adic places.
        \item\label{L3} $\rho_{\Pi,\lambda}$ is residually absolutely irreducible.
        \item\label{L4} Under \Cref{L3}, we have a $\Gamma_{F}$-stable $\O_\lambda$-lattice $\RR$ in $\rho_{\Pi,\lambda}(r),$ unique up to homothety, which satisfies $\RR^
        \cplx\iso \RR^\vee(1).$ We assume the following.
        \begin{enumerate}[label=(L4-\arabic*)]
            \item\label{L4-1} At least one of the following holds:
            \begin{enumerate}[label=(L4-1\alph*)]
                \item The image of $\Gamma_{F}$ in $\mathrm{GL}(\overline{\RR})$ contains a nontrivial scalar element, or
                \item $\dim_{\O_\lambda/\lambda}\bar{\RR}\le\min\{\frac{\ell-1}{2},\ell-3\},$\footnote{This first condition follows from \Cref{L1} since $\dim_{\O_\lambda/\lambda}\bar{\RR}=2r.$} $\bar{\RR}$ is a semisimple $(\O_\lambda/\lambda)[\Gamma_F]$-module and we have that $\Hom_{(\O_\lambda/\lambda)[\Gamma_F]}(\mathrm{End}(\overline{\RR}),\RR)=0.$
            \end{enumerate}
            \item\label{L4-2} Choose an extension $\rho_{\Pi,\lambda,+}$ of $\RR$ (see \Cref{ExtensionDef}). Then the  image of the restriction $(\bar{\rho}_{\Pi,\lambda,+},\bar{\epsilon}_\ell)\colon\Gamma_{F_{\rflx}^+}\to\mathscr{G}_{2r}(\O_\lambda/\lambda)\times(\O_\lambda/\lambda)^\times$ contains an element $(\gamma,\xi)$ satisfying
                \begin{enumerate}[label=(L4-2\alph*)]
                    \item $\xi^2-1\neq0$ in $\O_\lambda/\lambda$;
                    \item $\gamma$ belongs to $(\mathrm{GL}_{2r}(\O_\lambda/\lambda)\times(\O_\lambda/\lambda)^\times)\mathbf{c}$ with order coprime to $\ell$;
                    \item letting $h_\gamma\in\mathrm{GL}_N(\O_\lambda/\lambda)$ be the first component of $\gamma^2\in\mathrm{GL}_N(\O_\lambda/\lambda)\times(\O_\lambda/\lambda)^\times,$ then $1$ appears in the eigenvalues of $h_\gamma$ with multiplicity one, and $h_\gamma$ does not have an eigenvalue that is equal to $-1$ in $\O_\lambda/\lambda.$
                \end{enumerate}
        \end{enumerate}
        \item\label{L5} Under \Cref{L3} and the notation in \Cref{L4-2}, the homomorphism $\bar{\rho}_{\Pi,\lambda,+}$ is rigid for $(\Sigma_{\mathrm{min}}^+,\emptyset),$ and $\bar{\RR}\rvert_{\Gamma_{F(\zeta_l)}}$ is absolutely irreducible;
        \item\label{L6} the composite morphism $\mathbb{T}^{\Sigma_{\mathrm{min}}^+}\xrightarrow{\phi_\Pi}\O_\lambda\to\O_\lambda/\lambda^m$ is cohomologically generic (\cite[Definition D.1.1]{LTXZZ}).
    \end{enumerate}
\end{definition}

\begin{remark}\label{admissibleDiscussion}
As mentioned in \Cref{IntroductionRemark}, we expect that all but finitely many primes are admissible as long as $\Pi$ is not a transfer from a smaller group. We have the following results about the above assumptions.
\begin{enumerate}[label=(\alph*)]
    \item If $E=\Q$ and that there is an elliptic curve $A$ over $F^+$ such that for every rational prime $\ell$ of $E$ we have $\rho_{\Pi,\ell}\iso\mathrm{Sym}^{2r-1}H^1_\et(A_{\overline{F}},\Q_\ell)\rvert_{\Gamma_F},$ then as long as $\mathrm{End}(A_{\overline{F}})=\Z,$ we have that \Cref{L3}, \Cref{L4} and \Cref{L5} holds for all but finitely many primes by (the proof of) \cite[Lemmas 8.1.3]{LTXZZ}\footnote{Whose proof carries through (and become simpler) by taking $n_1=1.$}.
    \item If there exists a nonarchimedean place $w$ of $F$ where $\Pi_w$ is supercuspidal, then \Cref{L3} and \Cref{L5} hold for all but finitely many primes by \cite[Theorem 4.2.6]{LTXZZ2}.
    \item If there exists a very special inert prime $\p$ (\Cref{DefSpecialInert}) where $\Pi_\p$ is Steinberg, then \Cref{L4} holds for all but finitely many primes by (the proof of) \cite[Lemma 8.1.4]{LTXZZ}\footnote{Whose proof carries through (and become simpler) by treating $\rho_{\Pi_1,\ell}$ to be the trivial representation.}.
    \item If $F^+\neq\Q,$ \Cref{L6} holds for all but finitely many primes by \cite[Corollary D.1.4]{LTXZZ}, which is an extension of the results of Caraiani--Scholze \cite{Caraiani-Scholze}.
\end{enumerate}
\end{remark}

\subsection{Euler system argument}\label{Proof-Euler}
We first recall and collect some of the considerations in \cite[Section 2]{LTXZZ} as applied to our setting.
\begin{lemma}\label{galoislemmas}\label{ChebotarevArg}
Assume \Cref{L3}, under which we denote $\RR$ to be a $\Gamma_F$-stable lattice of $\rho_{\Pi,\lambda}(r).$ Let $S$ be a free $\O_\lambda$-submodule of $H^1_f(F,\RR)$ of rank $r_S$ whose image in $H^1_f(F,\RR)/H^1_f(F,\RR)_{tor}$ is saturated. Let $m\ge1$ be a positive integer. Then:
\begin{enumerate}
    \item The image $S^{(m)}$ of $S$ in $H^1_{f,\RR}(F,\bar{\RR}^{(m)})$ is a free $\O_\lambda/\lambda^m$ submodule of rank $r_S.$
    \item For a given finite set of places $\Sigma$ of $F,$ there is an integer $m_\Sigma$ depending only on $\Sigma$ and $\RR$ such that $\mathrm{loc}_w(\lambda^{m_\Sigma}S^{(m)})=0$ for every nonarchimedean place $w\in\Sigma$ not above $\ell.$
    \item If \Cref{L4} also holds, then there is an integer $\mathbf{r}_{\RR}$ that depends only on $\RR$ such that the following holds: one can choose a basis $s_1,\ldots,s_{r_S}$ of $S^{(m)}$ and very special inert places $\p_1,\ldots,\p_{r_S}$ of $F^+$ such that:
    \begin{enumerate}
        \item $\p_1,\ldots,\p_{r_S}$ satisfy \Cref{P1,P2,P3,P4,P5,P6}.
        \item $\mathrm{loc}_{\p_i}(s_j)=0$ if $i\neq j,$ and
        \begin{equation*}
            \exp_\lambda(\mathrm{loc}_{\p_i}(s_i),H^1_{\unr}(F,\bar{\RR}^{(m)}))\ge m-(3\cdot 2^{r_S-1}-2)\mathbf{r}_{\RR}.
        \end{equation*}
    \end{enumerate}
\end{enumerate}
\end{lemma}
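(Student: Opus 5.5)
The three parts are the standard Galois-cohomological inputs for a bipartite Euler system argument; I will follow the corresponding lemmas of \cite[Section 2]{LTXZZ} (in particular their Lemmas 2.3.x and the Chebotarev argument used in \cite[Section 8]{LTXZZ}), adapted to our conjugate self-dual representation $\RR$ of rank $2r$ and to our admissibility conditions.

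For (1), saturation of the image of $S$ in $H^1_f(F,\RR)/H^1_f(F,\RR)_{tor}$ means $S$ splits off this quotient. So the only obstruction to $S\to H^1(F,\bar{\RR}^{(m)})$ being injective modulo $\lambda^m$ is the kernel of $H^1(F,\RR)\xrightarrow{\lambda^m}H^1(F,\RR)$, which is the image of $H^0(F,\bar{\RR}^{(m)})$. This vanishes because $\bar{\RR}$ is irreducible of dimension $2r\ge2$ by \Cref{L3}, so $H^0(F,\bar\RR)=0$ and hence $H^0(F,\bar{\RR}^{(m)})=0$ by dévissage. Freeness of rank $r_S$ then follows by comparing with $S/\lambda^mS$. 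The image also lands in the Selmer structure propagated from $H^1_f$ (locally, the images of the $H^1_f(F_w,\RR)$).

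For (2), take $w\in\Sigma$ with $w\nmid\ell$. The group $H^1(F_w,\RR)$ is a finitely generated $\O_\lambda$-module. Its torsion is killed by some $\lambda^{a_w}$, and $H^1_f(F_w,\RR)=H^1_\unr$ lands in the torsion, since $H^1_\unr(F_w,\RR_\Q)=H^1_f(F_w,\RR_\Q)$ equals $H^1(F_w,\RR_\Q)$ modulo classes that vanish because of weights/purity; rationally the unramified cohomology of a pure-weight representation is the finite part. So $\mathrm{loc}_w(S)$ is killed by $\lambda^{a_w}$, and reducing modulo $\lambda^m$ preserves this. Taking $m_\Sigma=\max_w a_w$, which is independent of $m$, gives the claim.

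For (3), which is the main obstacle, I will run an inductive Chebotarev argument.
\begin{enumerate}
\item Using \Cref{L4-2}, produce elements $(\gamma,\xi)$ in the image of $\Gamma_{F_\rflx^+}$ whose Frobenius-type conjugacy classes give inert primes $\p$ with the following properties. The condition on $h_\gamma$ (eigenvalue $1$ with multiplicity one, no eigenvalue $-1$) translates into $\Pi_\p$ being level-raising special modulo $\lambda^m$ (\Cref{P4}) and intertwining generic (\Cref{P5}). The condition $\xi^2\neq1$ gives \Cref{P2}. Very special inertness and \Cref{P3} come from working over $F_\rflx^+$ with a degree-one prime. This also makes $H^1_\unr(F_\p,\bar{\RR}^{(m)})$ free of rank one over $\O_\lambda/\lambda^m$.
\item To lift from $\lambda$ to $\lambda^m$, work in the field cut out by $\bar{\rho}^{(m)}_+$ together with the splitting fields of the classes $s_j$. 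Then use \Cref{L4-1} to kill $H^1(\mathrm{Gal}(F(\bar\RR^{(m)})/F),\bar\RR^{(m)})$ up to a bounded power $\lambda^{\mathbf r_\RR}$. Alternative (a) does this via the scalar element (Sah's lemma); alternative (b) does it via the semisimplicity and $\Hom$ hypotheses.
\item This gives linear disjointness of the Kummer extensions attached to $S^{(m)}$ from $F(\bar\RR^{(m)})$, up to $\lambda^{\mathbf r_\RR}$. Chebotarev then lets us choose $\p_i$ so that $\mathrm{loc}_{\p_i}$ kills a prescribed corank-one submodule of $S^{(m)}$ and is nearly injective on a complement.
\item Choose $\p_1,\ldots,\p_{r_S}$ inductively, changing the basis at each step so that $\mathrm{loc}_{\p_i}(s_j)=0$ for $i\neq j$. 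Each step loses a multiple of $\mathbf r_\RR$, and the losses compound. I expect the recursion to yield the constant $(3\cdot2^{r_S-1}-2)\mathbf r_\RR$; this bookkeeping, together with arranging \Cref{P6} and \Cref{P1} by avoiding finitely many primes and using density arguments, is where the real care lies.
\end{enumerate}
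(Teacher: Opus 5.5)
Your route is the paper's. Parts (1) and (2) are the content of \cite[Proposition 2.4.6]{LTXZZ}, and part (3) is the Chebotarev argument of \cite[Propositions 2.6.6, 2.6.7]{LTXZZ} (with $m_0=0$), driven by \ref{L4-1} and \ref{L4-2}. Your arguments for (1) and (2) are right in substance, with two precisions. In (1) you also need that $H^1_f(F,\RR)$ is saturated in $H^1(F,\RR)$; this holds because it is the preimage of an $E_\lambda$-subspace, and it gives $S\cap\lambda^mH^1(F,\RR)=\lambda^mS$. In (2) the clean statement is that purity of weight $-1$ at $w\nmid\ell$ (from \Cref{GaloisExistence}(1) and Taylor--Yoshida) gives $H^1(F_w,\RR_\Q)=0$. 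So $H^1(F_w,\RR)$ is finite, and its exponent is the required $m_\Sigma$, independent of $m$ and $S$.

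The genuine gap is in (3), where you pass from \ref{L4-2}, a statement modulo $\lambda$, to conclusions modulo $\lambda^m$. You assert that the primes you produce are level-raising special modulo $\lambda^m$ (\ref{P4}), and that $H^1_{\unr}(F_\p,\bar\RR^{(m)})=\bar\RR^{(m)}/(\mathrm{Frob}_\p-1)$ is free of rank one over $\O_\lambda/\lambda^m$. Both need the Frobenius to land in a conjugacy class of the image of $(\bar\rho^{(m)}_{\Pi,\lambda,+},\bar\epsilon^{(m)}_\ell)\rvert_{\Gamma_{F^+_\rflx}}$ whose $h$ (the first component of the square) has $\ker(h-1)$ free of rank one over $\O_\lambda/\lambda^m$.

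An arbitrary lift of $\gamma$ does not give this. By Hensel its characteristic polynomial has a unique root $\tilde\beta\equiv1\bmod\lambda$, but $\tilde\beta-1$ may be nonzero modulo $\lambda^k$ for small $k$. In that case you only get $P_{\Pi,\p}(p)\equiv0\bmod\lambda^k$ and $\bar\RR^{(m)}/(h-1)\iso\O_\lambda/\lambda^k$. Saying that you ``work in the field cut out by $\bar\rho^{(m)}_+$'' does not pick out the right class.

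The missing step is the one the paper records as $(\mathrm{GI}^1_{F',\mathscr{P}})\Rightarrow(\mathrm{GI}^m_{F',\mathscr{P}})$, as in \cite[Lemma 2.7.1]{LTXZZ}. Take any lift $\tilde\gamma$ of $(\gamma,\xi)$ in the mod $\lambda^m$ image, and replace it by $\tilde\gamma^{\ell^k}$ with $k$ large and $\ell^k\equiv1$ modulo the order of $(\gamma,\xi)$. This is possible exactly because \ref{L4-2} requires $\gamma$ to have order coprime to $\ell$, a hypothesis your sketch never uses; the kernel of reduction is an $\ell$-group, so this power has prime-to-$\ell$ order.

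The new element still reduces to $(\gamma,\xi)$ and stays in the $\cplx$-coset, since $\ell$ is odd. Its $h$ is semisimple, with eigenvalues the Teichm\"uller lifts of those of $h_\gamma$. Hence the simple eigenvalue $1$ lifts to an exact eigenvalue $1$, whose eigenspace is a free rank-one direct summand. Only after this upgrade does the Chebotarev argument of \cite[Propositions 2.6.6, 2.6.7]{LTXZZ} apply to give (3). The constant $3\cdot2^{r_S-1}-2$ is the one produced there, so you do not need to rederive the recursion.
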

\begin{proof}
(1) and (2) follow from \cite[Proposition 2.4.6]{LTXZZ}. Note that $\RR_\Q$ is pure of weight $-1$ at every nonarchimedean place above $\ell$ by \cite[Lemma 1.4(3)]{Taylor-Yoshida} and \Cref{GaloisExistence}.

For a totally real finite Galois extension $F'/F^+$ contained in $\C,$ a polynomial $\mathscr{P}\in\Z[T]$ and a positive integer $m,$ we consider the following condition, which we denote $(\mathrm{GI}^m_{F',\mathscr{P}})$: the image of the restriction of the homomorphism $(\bar{\rho}^{(m)}_{\Pi,\lambda,+},\bar{\epsilon}_\ell^{(m)})\colon\Gamma_F\to\mathscr{G}_{2r}(\O_\lambda/\lambda^m)\times(\O_\lambda/\lambda)^\times$ to $\Gamma_{F'}$ contains an element $(\gamma,\xi)$ satisfying
\begin{enumerate}[label=(\alph*)]
\item $\mathscr{P}(\xi)$ is invertible in $\O_\lambda/\lambda^m$;
\item $\gamma$ belongs to $(\mathrm{GL}_{2r}(\O_\lambda/\lambda^m)\times(\O_\lambda/\lambda^m))^\times)\mathbf{c}$ with order prime to $\ell$;
\item letting $h_\gamma\in\mathrm{GL}_{2r}(\O_\lambda/\lambda)$ be the first component of $\gamma^2\in\mathrm{GL}_{2r}(\O_\lambda/\lambda)\times(\O_\lambda/\lambda)^\times,$ then the kernel of $h_\gamma-1$ is free of rank $1$ over $\O_\lambda/\lambda^m,$ and $h_\gamma$ does not have an eigenvalue that is equal to $-1$ in $\O_\lambda/\lambda.$
\end{enumerate}
Similarly to \cite[Lemma 2.7.1]{LTXZZ}, we have that $(\mathrm{GI}^1_{F',\mathscr{P}})$ implies $(\mathrm{GI}^m_{F',\mathscr{P}})$ for all $m\in\Z_{\ge1}.$ Thus, by \ref{L4-2} we have that $(\mathrm{GI}^m_{F^+_\rflx,(T^2-1)})$ holds for all $m.$ 

Now (3), follows from the Chebotarev density theorem together with \ref{L4-1} and \cite[Propositions 2.6.6, 2.6.7]{LTXZZ}\footnote{Where we take $m_0=0$ in the cited propositions.}.
\end{proof}

We are now ready to prove our main theorem \Cref{ThmAIntro}, which we state in a slightly more general form than in the introduction.
\begin{theorem}\label{ThmA}
    Assume \Cref{LR0} for $N=2r.$ Take an orthogonal decomposition $V^\circ=V_1^\circ\obot V_2^\circ.$ We denote $H^\circ\defeq U(V_1^\circ)\times U(V_2^\circ).$ We assume that there is $f\in\pi$ such that
    \begin{equation*}
        \int_{H^\circ(F^+)\backslash H^\circ(\A_{F^+})}f(h)\d{h}\neq0.
    \end{equation*}
    
    Let $K^\circ\in\mathfrak{K}(V^\circ)$ be such that $f\in\pi^{K^\circ}$ and such that it is of the form
    \begin{equation*}
        K^\circ=\prod_{v\in\Sigma_{\mathrm{min}}^+}K^\circ_v\times\prod_{v\not\in\Sigma_{\mathrm{min}}^+\cup\Sigma_\infty^+}U(\Lambda^\circ)(\O_{F^+_v})
    \end{equation*}
    for some finite set of nonarchimedean places $\Sigma_{\mathrm{min}}^+\supseteq\Sigma_{\Pi}^+$ and a self-dual lattice $\Lambda^\circ\subseteq V^\circ\otimes_{F^+}\A_{F^+}^{\Sigma_\infty^+\cup\Sigma_{\mathrm{min}}^+}.$
    
    Then if $\lambda$ if admissible with respect to $\Pi$ and $\Sigma_{\mathrm{min}}^+,$ we have $H^1_f(F,\rho_{\Pi,\lambda}(r))=0.$
\end{theorem}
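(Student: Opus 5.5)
We argue by contradiction, following the bipartite Euler system argument of \cite{LTXZZ} and \cite{LiuTriple}. Suppose $H^1_f(F,\rho_{\Pi,\lambda}(r))\neq0$. Since $H^1_f(F,\RR)\otimes\Q=H^1_f(F,\rho_{\Pi,\lambda}(r))$, we may pick a free rank-one $\O_\lambda$-submodule $S\subseteq H^1_f(F,\RR)$ with saturated image modulo torsion. First we fix the integer $m_\Sigma$ of \Cref{galoislemmas}(2) for $\Sigma$ the places above $\Sigma_{\mathrm{min}}^+$. Then we take $m$ large compared to $m_\Sigma$, $\mathbf{r}_\RR$, the different exponent from \Cref{nsVanishing} and the $\lambda$-adic valuation $\nu$ of the period. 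The period is normalized as follows: the nonvanishing of $\int_{[H^\circ]}f$ with $f\in\pi^{K^\circ}$ implies that $\mathbbm{1}_{H^\circ}$ has nonzero image in the $\pi$-isotypic part of $E_\lambda[\Sh(V^\circ,K^\circ)]$. Hence, via \Cref{P6}, $\exp_\lambda(\mathbbm{1}_{H^\circ},\O_\lambda[\Sh(V^\circ,K^\circ)]/\n)\ge m-\nu$ for a constant $\nu$ independent of $m$ and of the auxiliary prime. Apply \Cref{galoislemmas}(3) with $r_S=1$. This produces a very special inert prime $\p$ satisfying \Cref{P1,P2,P3,P4,P5,P6}, and a generator $s$ of $S^{(m)}$ with $\exp_\lambda(\mathrm{loc}_\p(s))\ge m-\mathbf{r}_\RR$ in $H^1_\unr(F_\p,\bar\RR^{(m)})$.

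Next we set up the arithmetic level raising of \Cref{Reciprocity-ALR} at $\p$, with $\Sigma^+_\lr=\emptyset$ and $\Sigma^+=\Sigma^+_{\mathrm{min}}$. Admissibility \Cref{L1,L2,L3,L4,L5,L6} gives \Cref{LR1,LR2,LR3,LR4,LR5}, and \Cref{LR0} is assumed. Let $V$ be the $\p$-nearby indefinite space, with the orthogonal decomposition chosen to satisfy \Cref{FJ1}. \Cref{reciprocity} then supplies $\phi_\lr$ and the class $c\defeq\mathrm{AJ}_\m(Z^{\mathrm{FJ}}(\phi_\lr))\in H^1(F,\RR^{\cplx,(m)})$, obtained by projecting along \Cref{Cohomologymodp}(6) onto one copy of $\bar\RR^{(m)\cplx}$ chosen to preserve the exponent. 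This class satisfies $\exp_\lambda(\partial_\p\mathrm{loc}_\p c)\ge m-\nu$. Away from $\p$, we use \Cref{AJisNS} and \Cref{fnsComparison}: $c$ is unramified at all $w\notin\Sigma\cup\{\p\}$ with $w\nmid\ell$, and lies in $H^1_{ns}$ at $w\mid\ell$. The latter requires that $\Sh(V,K)$ have good reduction at $\ell$ and that the Hodge--Tate range condition hold, which \Cref{L1} guarantees.

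Finally, pair $s$ with $c$ using $\RR^\cplx\iso\RR^\vee(1)$ and apply \Cref{GlobalDuality}. The terms at $w\nmid\ell$ with $w\notin\Sigma\cup\{\p\}$ vanish by \Cref{Tatepairing}. The terms at $w\mid\ell$ land in $\mathfrak{d}_\lambda^{-1}/\O_\lambda$ by \Cref{nsVanishing}. The terms at $w\in\Sigma$ are killed by $\lambda^{m_\Sigma}$, after replacing $s$ by $\lambda^{m_\Sigma}s$. At $\p$, $\mathrm{loc}_\p(s)$ is unramified, so its pairing with $\mathrm{loc}_\p c$ only sees $\partial_\p\mathrm{loc}_\p c$. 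Local Tate duality identifies $H^1_\unr$ and $H^1_\sing$ as perfect duals; since $\bar\RR^{(m)}$ is free over $\O_\lambda/\lambda^m$ at $\p$, the pairing has exponent at least $m-\mathbf{r}_\RR-\nu-m_\Sigma$ minus a bounded constant. Global reciprocity forces this to be bounded by $m_\Sigma$ plus the different exponent, which is a contradiction for $m\gg0$.

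The main obstacle is the control at $\p$: we must show that the nonvanishing exponent of $\partial_\p\mathrm{loc}_\p c$ from \Cref{reciprocity} yields a nondegenerate local pairing against $\mathrm{loc}_\p(s)$. This requires identifying $H^1_\sing(F_\p,\bar\RR^{(m)\cplx})$ and $H^1_\unr(F_\p,\bar\RR^{(m)})$ as free rank-one $\O_\lambda/\lambda^m$-modules in perfect duality, using \Cref{P4} and the $\pm1$-eigenvalue structure at level-raising primes as in \cite{LTXZZ}. It also requires the multiplicity $\mu$ in \Cref{Cohomologymodp}(6) to be handled by choosing a suitable projection, and the torsion $\mathfrak{T}$-factor to be harmless.
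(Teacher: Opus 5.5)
Your proposal is correct and is essentially the paper's proof. Your $\nu$ is the paper's $m_{per}$, defined by pairing $\mathbbm{1}_{H^\circ}$ against $\O_E[\Sh(V^\circ,K^\circ)][\ker\phi_\Pi]$; before that, the paper shrinks $K^\circ_v$ for $v\in\Sigma^+_{\mathrm{min}}$ to be transferable and rescales $f$ into that $\O_E$-module, two small steps you leave implicit. The choice of $m$, of $\p$ via \Cref{galoislemmas}(3), and of $c$ via \Cref{reciprocity} and \Cref{Cohomologymodp}(6) are exactly as in the paper.

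The step you single out as the main obstacle is routine there. By \Cref{P4}, the Frobenius eigenvalues $p\alpha_i^{\pm1}$ at $\p$ contain $1$ and $p^2$, each with multiplicity one; this comes from $\alpha=p$, not from a $\pm1$ phenomenon, and \Cref{P5} enters only through $\phi_\Pi(I^\circ)$ in \Cref{reciprocity}. Hence $H^1_{\unr}(F_\p,\bar{\RR}^{(m)})$ and $H^1_{\sing}(F_\p,\bar{\RR}^{(m)\cplx})$ are free of rank one over $\O_\lambda/\lambda^m$ and perfectly paired, which directly gives exponent at least $m-m_{per}-\mathbf{r}_{\RR}$ for the local term at $\p$.
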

\begin{remark}
    If such an $f$ exists, one expects to be able to choose $f,K^\circ$ as above with $\Sigma_{\mathrm{min}}^+=\Sigma_\Pi^+.$ If this is the case, then the admissibility condition of $\lambda$ would not depend on the particular choice of $f.$
\end{remark}
\begin{proof}[Proof of \Cref{ThmA}]
    First we note that we may assume that $K^\circ_v$ for $v\in\Sigma_{\mathrm{min}}^+$ are transferable by \cite[Lemma D.2.2(3)]{LTXZZ}, since we may shrink them as necessary.
    
    We may also assume without loss of generality that $f$ is an element
    \begin{equation*}
        f\in\O_E[\Sh(V^\circ,K^\circ)][\ker\phi_\Pi]
    \end{equation*}
    without changing $K^\circ.$ To see this, we have $f\in\C[\Sh(V^\circ,K^\circ)[\ker\phi_\Pi],$ and so we may write $f$ as a finite sum $f=\sum a_if_i$ for some $f_i\in E[\Sh(V^\circ,K^\circ)]$ with $a_i\in\C^\times$ linearly independent over $E.$ Note that we must have $f_i\in E[\Sh(V^\circ,K^\circ)[\ker\phi_\Pi]$ for all $i,$ since $\phi_\Pi$ takes values in $E.$ Then $\int_{[H]}f_i(h)\d{h}$ must be nonzero for some $i,$ and we may scale such $f_i$ so that $f_i\in\O_E[\Sh(V^\circ,K^\circ)][\ker\phi_\Pi].$

    Assume that $\lambda$ is admissible for $\Pi$ and $\Sigma_{\mathrm{min}}^+.$ Under \Cref{L3}, we denote $\RR$ to a Galois invariant lattice of $\rho_{\Pi,\lambda}(r),$ which is unique up to homothety.
    
    Denote $\Sigma$ to be the set of places of $F$ above $\Sigma_{min}^+$ or above $\ell.$ We consider the following quantities:
    \begin{enumerate}
        \item $m_{per}$ is the largest non-negative integer such that
        \begin{equation*}
            \frac{1}{\mathrm{vol}_{H^\circ}(H^\circ(\A_{F^+}^\infty)\cap K^\circ)}\int_{H^\circ(F^+)\backslash H^\circ(\A_{F^+}^\infty)}f'(h)\d{h}\in\lambda^{m_{per}}\O_E
        \end{equation*}
        for all $f'\in\O_E[\mathrm{Sh}(V^\circ,K^\circ)][\ker\phi_\Pi].$
        \item $m_\Sigma$ and $\mathbf{r}_{\RR}$ are the integers in \Cref{galoislemmas} for our choice of $\Sigma.$
        \item $m_{dif}$ is such that the different ideal $\mathfrak{d}_\lambda$ of $E_\lambda$ over $\Q_\ell$ is $\mathfrak{d}_\lambda=\lambda^{m_{dif}}.$
    \end{enumerate}

    Assume by contradiction that we have
    \begin{equation*}
        \dim_{E_\lambda}H^1_f(F,\rho_{\Pi,\lambda}(r))\ge1,
    \end{equation*}
    so that we may choose $S\subseteq H^1_f(F,\RR)$ free of rank $1$ over $\O_\lambda$ such that its image in $H^1_f(F,\RR)/H^1_f(F,\RR)_{tor}$ is saturated. We denote by $s$ a generator of $S.$
    
    We let $m$ be a positive integer such that
    \begin{equation*}
        m>m_{per}+\mathbf{r}_{\RR}+\max(m_\Sigma,m_{dif}).
    \end{equation*}
    By \Cref{galoislemmas}(3), we may choose a very special inert prime $\p$ of $F^+$ satisfying \Cref{P1,P2,P3,P4,P5,P6} such that
    \begin{enumerate}[label=(S)]
        \item\label{S} $\exp_\lambda(\mathrm{loc}_\p(s),H^1_{\unr}(F_\p,\bar{\RR}^{(m)}))\ge m-\mathbf{r}_{\RR}.$
    \end{enumerate}

    Now we will obtain a contradiction by constructing a class $c\in H^1(F,\bar{\RR}^{(m)\cplx})$ satisfying
    \begin{enumerate}[label=(C\arabic*)]
        \item\label{C1} $\mathrm{loc}_wc\in H^1_{\unr}(F_w,\bar{\RR}^{(m)\mathbf{c}})$ for all $w\not\in\Sigma\cup\{\p\}.$
        \item\label{C2} $\exp_\lambda(\partial_\p\mathrm{loc}_\p c,H^1_{\sing}(F_\p,\bar{\RR}^{(m)\mathbf{c}}))\ge m-m_{per}.$
        \item\label{C3} $\mathrm{loc}_wc\in H^1_{ns}(F_w,\bar{\RR}^{(m)\cplx})$ for all $w\mid\ell.$
    \end{enumerate}
    We see that this yields a contradiction with $\sum_w\langle \mathrm{loc}_ws,\mathrm{loc}_wc\rangle_w=0$ (\Cref{GlobalDuality}). This is because of the following.
    \begin{itemize}[leftmargin=*]
        \item For $w\not\in\Sigma,$ both $\mathrm{loc}_ws$ and $\mathrm{loc}_wc$ are unramified by \ref{C1}, and thus $\langle \mathrm{loc}_ws,\mathrm{loc}_wc\rangle_w=0$ by \Cref{Tatepairing}.
        \item For $w$ above $\Sigma^+_{min},$ we have $\lambda^{m_\Sigma}\mathrm{loc}_w(s)=0$ by \ref{L2} and \Cref{galoislemmas}(2). Thus
        \begin{equation*}
            \exp_\lambda(\langle \mathrm{loc}_ws,\mathrm{loc}_wc\rangle_w,\ \O_\lambda/\lambda^m)\le m_\Sigma.
        \end{equation*}
        \item For $w\mid\ell,$ by \ref{L2} we have that $\RR_\Q$ is crystalline with Hodge--Tate weights in $[-r,r-1],$ and thus $\mathrm{loc}_w(s)\in H^1_{ns}(F_w,\bar{\RR}^{(m)})$ by \ref{L1} and \Cref{fnsComparison}. By \ref{L1}, \ref{C3} and \Cref{nsVanishing}, we have 
        \begin{equation*}
            \exp_\lambda(\langle s,c\rangle_w,\ \O_\lambda/\lambda^m)\le m_{dif}.
        \end{equation*}
        \item For $w$ the unique place above $\p,$ we first note that $H^1_{\sing}(F_\p,\bar{\RR}^{(m)\cplx})$ is free of rank $1$ over $\O_\lambda/\lambda^m.$ The module $\bar{\RR}^{(m)\cplx}$ is unramified for $\Gamma_{F_\p},$ and so
        \begin{equation*}
            H^1_{\sing}(F_\p,\bar{\RR}^{(m)\cplx})=(\bar{\RR}^{(m)\cplx})/(\mathrm{Frob}_\p-1).
        \end{equation*}
        but by \Cref{GaloisExistence}, the generalized eigenvalues of $\mathrm{Frob}_\p$ in $\bar{\RR}^\cplx$ are $\{p\alpha_1^{\pm1},\ldots,p\alpha_r^{\pm1}\}$ where $\alpha_1^{\pm1},\ldots,\alpha_r^{\pm1}$ are the Satake parameters of $\Pi_\p.$ Thus $(\bar{\RR}^{(m)\cplx})/(\mathrm{Frob}_\p-1)$ is free of rank $1$ over $\O_\lambda/\lambda^m$ by \Cref{P4}. Now \Cref{C2} and \Cref{S} imply
        \begin{equation*}
            \exp_\lambda(\langle \mathrm{loc}_ws,\mathrm{loc}_wc\rangle_w,\ \O_\lambda/\lambda^m)\ge m-m_{per}-\mathbf{r}_{\RR}.
        \end{equation*}
    \end{itemize}
    By our choice of $m,$ this contradicts that $\sum_w\langle \mathrm{loc}_ws,\mathrm{loc}_wc\rangle_w=0.$
    
    Now we construct the class $c\in H^1(F,\bar{\RR}^{(m)\cplx})$ satisfying \Cref{C1,C2,C3}. We choose the additional data to apply the discussion of \Cref{Reciprocity-ALR}. Namely, we consider the following.
    \begin{itemize}[leftmargin=*]
        \item We take $\Sigma_{\mathrm{\lr}}^+=\emptyset$ and $\Sigma^+=\Sigma_{\mathrm{min}}^+.$
        \item We choose data of $W_0,K_0^p$ as in \Cref{Shimura-RSZ}.
        \item We choose a standard indefinite hermitian space $V_1$ over $F/F^+$ of rank $r$ equipped with an isometry $j\colon V_1\otimes_{F^+}\A_{F^+}^{\infty,\p}\rightiso V_1^\circ\otimes_{F^+}\A_{F^+}^{\infty,\p}.$ We then let $V=V_1\obot V_2^\circ,$ and choose an almost self-dual lattice $\Lambda_\p\subseteq V\otimes_{F^+}F^+_\p.$ We denote $K\defeq j^*(K^{\circ,\p})\mathrm{Stab}(\Lambda_\p).$
    \end{itemize}
    We note that \Cref{LR1,LR2,LR3,LR4,LR5} are satisfied by \Cref{L1,L2,L3,L5,L6}, and \Cref{FJ1} is satisfied by the construction of $V.$ Now \Cref{Cohomologymodp}(6) and \Cref{nonproperComparison} give us an isomorphism
    \begin{equation*}
        \Upsilon\colon H^{2r-1}_\et(\mathrm{Sh}(V,K)_{\overline{F}},\O_\lambda(r))/\mathfrak{n}\rightiso(\bar{\RR}^{(m)\mathbf{c}})^{\oplus\mu}
    \end{equation*}
    of $\O_\lambda[\Gamma_F]$-modules, for some $\mu>0.$

    \Cref{reciprocity} affords us a class $Z\in \mathrm{CH}^r(\Sh(V,K))_\m$ such that
    \begin{equation*}
    \begin{split}
        &\exp_\lambda\left(\partial_\p\mathrm{loc}_\p \mathrm{AJ}_\m Z,\ H^1_{\sing}(F_\p,H^{2r-1}_\et(\mathrm{Sh}(V,K)_{\overline{F}},\O_\lambda(r))/\n)\right)\\
        &\qquad=\exp_\lambda(\mathbbm{1}_{H^\circ},\ \O_\lambda[\mathrm{Sh}(V^\circ,K^\circ)]/\mathfrak{n}).
    \end{split}
    \end{equation*}
    In particular, by the definition of $m_{per}$ we have
    \begin{equation*}
        \exp_\lambda(\partial_\p\mathrm{loc}_\p\Upsilon(\mathrm{AJ}_\m Z),\ H^1_{\sing}(F_\p,(\bar{\RR}^{(m)\mathbf{c}})^{\oplus\mu}))\ge m-m_{per}.
    \end{equation*}
    Finally, we take $c$ to be a projection of $\Upsilon(\mathrm{AJ}_\m Z)$ such that $\exp_\lambda(\partial_\p\mathrm{loc}_\p c,\ H^1_{\sing}(F_\p,\bar{\RR}^{(m)\mathbf{c}}))\ge m-m_{per}.$ This satisfies \Cref{C2} by the above, and satisfies \Cref{C1,C3} by \Cref{AJisNS}\footnote{In the non-proper case (e.g. $F^+=\Q$), we need apply this proposition for a toroidal compactification of $\Sh(V,K)$ instead. See \cite[Remark 5.41]{Stroh-Lan} for how the Hecke action extends to such compactifications.} and \Cref{fnsComparison}.
\end{proof}

\begin{proof}[Proof of {\Cref{ThmB}}]
    Under the assumptions of the theorem, \cite[Theorem 1.1, Remarks 1.1 and 1.2]{LXZ} imply that there are nondegenerate Hermitian spaces $V_1,V_2$ of rank $r$ and an irreducible cuspidal automorphic representation $\pi'$ of $U(V_1\oplus V_2)(\mathbb{A}_{F^+})$ which is $H=U(V_1)\times U(V_2)$-distinguished such that $\Pi=\mathrm{BC}(\pi').$ Note that from our assumptions on the archimedean components of $\pi,$ $V_1,V_2$ can be taken to be totally positive definite, and $\pi'_v$ is trivial for archimedean places $v.$ Thus the claim follows from \Cref{ThmA} applied to $\pi'.$
\end{proof}

\begin{appendix}
\section{\texorpdfstring{$K$}{K}-theory}\label{KAppendix}
We briefly collect some definitions about $K_0$ and $G_0$ of (formal) schemes, following mostly \cite{Gillet-Soule}\footnote{Although we use the notation $G_0$ in place of $K_0'.$}. We then prove a result used in \Cref{WeightChapter}.
\subsection{\texorpdfstring{$K_0$}{K0} and \texorpdfstring{$G_0$}{G0}}
We first consider $X,X'$ noetherian schemes. We will let $Y,Z\subseteq X$ resp. $Y',Z'\subseteq X'$ denote closed subschemes.
\begin{definition}\label{KDef}
    We let $G_0(X)$ be the $G$-theory of $X,$ where $G_0(X)$ is the Grothendieck group of finite complexes of coherent sheaves of $\O_X$-modules. We let $K_0^Y(X)$ be the $K$-theory of $X$ with supports, where $K_0^Y(X)$ is the Grothendieck group of finite complexes of coherent locally free $\O_X$-modules which are acyclic outside $Y.$ They become rings under the products
    \begin{equation*}
    \begin{split}
        G_0(X)\times G_0(X)&\to G_0(X)\\
        (\mathcal{F}_1,\mathcal{F}_2)&\mapsto \mathcal{F}_1\otimes^{\L}\mathcal{F}_2
    \end{split}
    \end{equation*}
    and
    \begin{equation*}
    \begin{split}
        K_0^Y(X)\times K_0^Z(X)&\to K_0^{Y\cap Z}(X)\\
        (\mathcal{F}_1,\mathcal{F}_2)&\mapsto \mathcal{F}_1\otimes\mathcal{F}_2
    \end{split}
    \end{equation*}
\end{definition}
\begin{definition}[{\cite[Section 1]{Gillet-Soule}}]
    If $f\colon X'\to X$ is a morphism and if $Y'=f^{-1}(Y)$, we consider the pullback $f^*\colon K_0^Y(X)\to K_0^{Y'}(X').$ If $f$ is proper, we consider the pushforward $f_*\colon G_0(X')\to G_0(X)$ given by
    \begin{equation*}
        f_*([\mathcal{F}])=\sum(-1)^i[R^if_*\mathcal{F}].
    \end{equation*}
    If $f$ is flat, we also consider the pullback map $f^*\colon G_0(X)\to G_0(X')$ given by $f^*([\mathcal{F}])=\mathcal{F}\otimes_{\O_X}\O_{X'}.$
\end{definition}
\begin{proposition}[{\cite[Lemma 1.9]{Gillet-Soule}}]\label{K=G}
    We have a natural map $K_0^Y(X)\to G_0(Y)$ given by
    \begin{equation*}
        [\mathcal{F}]\mapsto\sum_i(-1)^i[H^i(\mathcal{F})],
    \end{equation*}
    which is compatible with the product structures and with flat pullback. If $X$ is regular, then this is an isomorphism.
\end{proposition}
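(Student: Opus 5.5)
The plan is to build the map explicitly, check that it is well defined and has the stated compatibilities, and then, when $X$ is regular, construct an inverse using finite locally free resolutions.

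\textbf{Construction and well-definedness.} Let $\mathcal{F}^\bullet$ be a finite complex of coherent locally free $\O_X$-modules that is acyclic outside $Y$.
\begin{enumerate}
\item Each $H^i(\mathcal{F}^\bullet)$ is coherent and set-theoretically supported on $Y$. Since $X$ is noetherian, $H^i(\mathcal{F}^\bullet)$ is then killed by $\mathcal{I}_Y^n$ for some $n$, so it is a coherent sheaf on the thickening $Y_n$.
\item By dévissage, pushforward along $Y\hookrightarrow Y_n$ identifies $G_0(Y)\cong G_0(Y_n)$. Concretely, filter a sheaf on $Y_n$ by $\mathcal{I}_Y^k\mathcal{G}$; each graded piece is an $\O_Y$-module, and the resulting class is independent of $n$.
\item This gives the class $\sum_i(-1)^i[H^i(\mathcal{F}^\bullet)]\in G_0(Y)$. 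It is additive on short exact sequences of complexes, by the long exact cohomology sequence.
\item It is invariant under quasi-isomorphism, since the cohomology sheaves are unchanged. Hence the map descends to the Grothendieck group $K_0^Y(X)$.
\end{enumerate}

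\textbf{Compatibility with products and flat pullback.} For products, take $\mathcal{F}_1$ acyclic off $Y$ and $\mathcal{F}_2$ acyclic off $Z$.
\begin{enumerate}
\item Since the terms are locally free, $\mathcal{F}_1\otimes\mathcal{F}_2$ computes the derived tensor product.
\item The Künneth (Tor) spectral sequence has $E_2$-page $\mathcal{T}or_p\bigl(H^a(\mathcal{F}_1),H^b(\mathcal{F}_2)\bigr)$ and abuts to $H^\bullet(\mathcal{F}_1\otimes\mathcal{F}_2)$.
\item Euler characteristics in $G_0$ are preserved through the pages of a bounded spectral sequence. This identifies the image of the product with the class $\sum(-1)^{a+b+p}[\mathcal{T}or_p(\cdot,\cdot)]$, which is the $G_0$-product of the images.
\end{enumerate}
For flat pullback: if $f$ is flat then $f^*$ is exact, so it commutes with taking cohomology sheaves. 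Compatibility is then immediate.

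\textbf{Isomorphism for regular $X$.} The inverse is defined as follows.
\begin{enumerate}
\item For a coherent $\O_Y$-module $\mathcal{G}$, push it forward to $X$.
\item Choose a finite resolution $\mathcal{P}^\bullet\to\mathcal{G}$ by coherent locally free sheaves. Such a resolution exists because $X$ is regular (so every coherent sheaf has finite Tor-dimension) and has enough locally free sheaves, for example quasi-projective, which is the standing hypothesis in \cite{Gillet-Soule}.
\item $\mathcal{P}^\bullet$ is acyclic off $Y$, so $[\mathcal{P}^\bullet]\in K_0^Y(X)$. Independence of the resolution follows from the usual comparison of two resolutions through a common refinement.
\end{enumerate}
With the inverse in hand, one composite is the identity by construction. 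For the other composite, reduce a general complex acyclic off $Y$ to its cohomology sheaves, filtering each through the thickenings $Y_n$. Equivalently, identify $K_0^Y(X)$ with $K_0$ of perfect complexes supported on $Y$, and use that every bounded coherent complex on regular $X$ is perfect, via the resolution theorem together with dévissage.

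\textbf{Main obstacle.} I expect the hardest step to be showing the inverse is well defined and that $K_0^Y(X)\to G_0(Y)$ is injective. This requires the existence of global locally free resolutions and the comparison between $K_0$ of complexes exact off $Y$ and $K_0$ of perfect complexes with support. I would first try to quote \cite{Gillet-Soule} directly, and otherwise fall back on Thomason–Trobaugh's localization theorem for $K$-theory with supports.
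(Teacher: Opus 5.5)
The paper gives no proof of this statement; it simply cites Gillet--Soul\'e, Lemma~1.9. Your outline is the standard argument behind that lemma and is correct: d\'evissage to define the class in $G_0(Y)$, a bounded K\"unneth spectral sequence for products, finite locally free resolutions for the inverse, and truncation among perfect complexes supported on $Y$ to get surjectivity of $G_0(Y)\to K_0^Y(X)$. Note only that both the product compatibility (boundedness of your spectral sequence, and indeed the existence of the $\otimes^{\mathbb{L}}$-product on $G_0$ at all) and the inverse (finite Tor-dimension plus the resolution property) genuinely require $X$ to be regular with enough locally free sheaves, as you partly flag.
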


In particular, if $X,X'$ are regular and $f\colon X'\to X$ is proper, we also have a pushforward map $f_*\colon K_0^{Y'}(X')\to K_0^{f(Y')}(X).$

\begin{definition}\label{GrDef}
    We consider the coniveau (or codimension) descending filtration
    \begin{equation*}
        F^iK_0^Y(X)\defeq\bigcup_{\substack{Z\subseteq Y\\\mathrm{codim}_XZ\ge i}}\im(K_0^Z(X)\to K_0^Y(X))
    \end{equation*}
    and the dimension ascending filtration
    \begin{equation*}
        F_iG_0(X)\defeq\bigcup_{\substack{Z\subseteq Y\\\mathrm{dim}(Z)\le i}}\im(G_0(Z)\to G_0(X)).
    \end{equation*}
    We denote by $\mathrm{Gr}^iK_0^Y(X)$ and $\mathrm{Gr}_iG_0(X)$ their corresponding associated graded groups.
\end{definition}

We note that if $f$ is flat, then $f^*$ respects the filtration on $K_0.$ If $f$ is proper, then $f_*$ respects the filtration on $G_0.$ If $X$ is regular of pure dimension $n,$ then \Cref{K=G} is also compatible with the filtrations in the sense that $F^{n-i}K_0^Y(X)\rightiso F_iG_0(Y).$

\begin{remark}
    Following \cite[Appendix B]{Zhang-AFL} one can extend these definitions to locally noetherian formal schemes $X.$
\end{remark}

It is not immediate that the coniveau filtration is compatible with the product structures. As pointed out in \cite[Equation (B.3)]{Zhang-AFL}, the following theorem should also hold for formal schemes $X.$
\begin{theorem}[{\cite[Proposition 5.5]{Gillet-Soule}}]
If $X$ is a regular scheme, we have $F^iK_0^Y(X)_\Q\cdot F^jK_0^Z(X)_\Q\subseteq F^{i+j}K_0^{Y\cap Z}(X)_\Q.$
\end{theorem}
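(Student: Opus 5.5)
The plan is to follow Gillet--Soul\'e and prove the statement by comparing the coniveau filtration with the weight filtration coming from Adams operations. On $K_0^Y(X)$ (complexes of locally free sheaves acyclic off $Y$) there are Adams operations $\psi^k$, $k\ge1$. They are constructed via the Dold--Puppe/Koszul approach for complexes with supports. They are ring homomorphisms compatible with the products $K_0^Y(X)\times K_0^Z(X)\to K_0^{Y\cap Z}(X)$ of \Cref{KDef}, with pullbacks, and with the maps $K_0^Z(X)\to K_0^Y(X)$ for $Z\subseteq Y$. After tensoring with $\Q$ they are simultaneously diagonalizable. Define $K_0^Y(X)^{(p)}_\Q$ as the subspace on which $\psi^k$ acts by $k^p$ for all $k$. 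I would first establish the decomposition $K_0^Y(X)_\Q=\bigoplus_p K_0^Y(X)^{(p)}_\Q$ (finite since $X$ is noetherian of finite dimension). This uses the $\lambda$-ring identities together with nilpotence of the $\gamma$-filtration on elements of rank zero.

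The key step is the identification
\begin{equation*}
    F^iK_0^Y(X)_\Q=\bigoplus_{p\ge i}K_0^Y(X)^{(p)}_\Q\qquad\text{for }X\text{ regular.}
\end{equation*}
For ``$\subseteq$'', I would reduce, by the definition in \Cref{GrDef} and by localizing at generic points, to a class supported on an irreducible closed $Z$ of codimension $c\ge i$. Using $K_0^Z(X)\cong G_0(Z)$ (\Cref{K=G}) and d\'evissage, it suffices to treat $[\O_W]$ for $W\subseteq Z$ integral. At a regular point of $W$ (generically on $W$), $\O_W$ is resolved by a Koszul complex. The Adams operation on a Koszul class satisfies $\psi^k[\mathrm{Kos}(E)]=k^{c}\,[\mathrm{Kos}(E)]\cdot\theta^k(E)^{-1}$-type formulas, with $\theta^k$ the Bott cannibalistic class (a unit with rank $k^{c}$ leading term). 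Hence $\psi^k[\O_W]-k^{c}[\O_W]$ is supported in strictly higher codimension, and induction on codimension gives the inclusion. To handle the non-lci case globally, I would use deformation to the normal cone, as Gillet--Soul\'e do. This reduces to the case of the zero section of a vector bundle, where the Koszul computation is exact. For ``$\supseteq$'', the same computation shows that $\psi^k-k^{i}$ maps $F^i$ into $F^{i+1}$ after $\otimes\Q$. Hence the projector onto weights $\ge i$, which is a polynomial in the $\psi^k$, preserves $F^i$ and is the identity on it, and the eigen-decomposition then forces equality.

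Given this, the theorem is formal. For $x\in F^iK_0^Y(X)_\Q$ and $y\in F^jK_0^Z(X)_\Q$, write $x=\sum_{p\ge i}x_p$ and $y=\sum_{q\ge j}y_q$. Multiplicativity of $\psi^k$ gives $\psi^k(x_py_q)=k^{p+q}x_py_q$, so $xy\in\bigoplus_{s\ge i+j}K_0^{Y\cap Z}(X)^{(s)}_\Q=F^{i+j}K_0^{Y\cap Z}(X)_\Q$.

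I expect the main obstacle to be the key step above, specifically the deformation-to-the-normal-cone argument. It requires checking that the specialization map $K_0^Z(X)\to K_0^Z(N_ZX)$ commutes with $\psi^k$ and preserves the coniveau filtration. For that I would use regularity of $X$ and of the deformation space $M^\circ_ZX$, together with the homotopy property of $K$-theory with supports.
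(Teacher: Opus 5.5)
The paper gives no proof of this statement; it cites Gillet--Soul\'e. Your outline is that argument: Adams operations on $K_0^Y(X)$, then the identification $F^iK_0^Y(X)_\Q=\bigoplus_{p\ge i}K_0^Y(X)^{(p)}_\Q$ for regular $X$, then multiplicativity of $\psi^k$. The final formal step is correct.

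However, the step you call the main obstacle is unnecessary, and as you describe it, it would fail. For a non-lci $W\subseteq X$ the normal cone $C_WX$ is not a vector bundle, so deformation to the normal cone cannot reduce you to the zero section of a bundle.

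The key step only needs a generic computation, and that computation works for every integral $W$ of codimension $c$:
\begin{itemize}
\item Let $\eta$ be the generic point of $W$. Since $X$ is regular, $\mathcal{O}_{X,\eta}$ is regular local of dimension $c$; singularities of $W$ play no role.
\item There, $[\mathcal{O}_W]$ becomes the Koszul complex of a regular system of parameters on a trivial bundle. On it $\psi^k$ is exactly multiplication by $k^c$ in $K_0^{\{\mathfrak{m}\}}(\mathcal{O}_{X,\eta})\cong\Z$.
\item $\psi^k$ commutes with flat pullback, and $G_0(\kappa(\eta))=\varinjlim_{U\ni\eta}G_0(W\cap U)$. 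Hence $(\psi^k-k^c)[\mathcal{O}_W]\in K_0^W(X)\cong G_0(W)$ vanishes on some open $U\ni\eta$.
\item By the localization sequence it then comes from $G_0(W\setminus U)$. So it lies in $F^{c+1}K_0^W(X)$ directly by the definition of the coniveau filtration.
\end{itemize}
No global argument is needed beyond this. Do not phrase this step as ``$\theta^k(E)-k^c\in F^1$, hence $(\theta^k(E)-k^c)\cdot[\mathrm{Kos}(E)]\in F^{c+1}$.'' That is an instance of the multiplicativity you are trying to prove. Use instead that $E$ is trivial near $\eta$.

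Similarly, you do not need nilpotence of a $\gamma$-filtration on $K_0^Y(X)$ to get the weight decomposition, and that nilpotence is not automatic for general noetherian $X$. Dévissage plus the step above gives $(\psi^k-k^i)F^i\subseteq F^{i+1}$. Together with $F^{d+1}=0$ for $d=\dim X$, the operator $\prod_{j=0}^{d}(\psi^k-k^j)$ kills $F^0=K_0^Y(X)$. This yields the eigenspace decomposition after $\otimes\Q$ for any fixed $k\ge2$.

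For the inclusion $\supseteq$: $\prod_{j<p}(\psi^k-k^j)$ maps $F^0$ into $F^p$ and acts on the $k^p$-eigenspace by a nonzero scalar, so that eigenspace lies in $F^p_\Q$. Your projector argument as phrased (``identity on $F^i$'') only recovers the inclusion $\subseteq$.
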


Lastly, we record the following relation between $G_0(X)$ and $\mathrm{CH}^*(X)$ in the case that $X$ is a scheme.
\begin{proposition}[{\cite[Lemma 33, Theorem 34]{Gillet}}]\label{CHtoGrDef}
    Assume $X$ is of finite type over a field $k,$ and let $\ell\neq\mathrm{char}(k)$ be a rational prime. Then there is a surjective map $\O_{(\blank)}\colon\mathrm{CH}^i(X)\to\mathrm{Gr}^iG_0(X)$ and a cycle class map $\mathrm{cl}_{X,\ell}\colon\mathrm{Gr}^iG_0(X)\to H^{2i}_\et(X,\Z_\ell(i))$ such that their composition is the usual cycle class map for $\mathrm{CH}^i(X).$
\end{proposition}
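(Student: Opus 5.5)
The plan is to work entirely with the coniveau (Brown--Gersten--Quillen) description of $G$-theory. Here $\mathrm{Gr}^iG_0(X)$ means the codimension-$i$ graded piece: since $X$ is of finite type over $k$, we have $F^iG_0(X)=F_{\dim X-i}G_0(X)$ componentwise.

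\emph{Step 1: the map $\O_{(\blank)}$.} I would define it on cycles by sending an integral closed subscheme $W\subseteq X$ of codimension $i$ to $[\O_W]\in F^iG_0(X)$, and then pass to $\mathrm{Gr}^i$. Surjectivity follows by d\'evissage. Any class in $F^iG_0(X)$ is a combination of coherent sheaves $\mathcal{F}$ supported in codimension $\ge i$. Such an $\mathcal{F}$ has a filtration whose graded pieces are $\O_{W}$-modules for integral $W$ of codimension $\ge i$. A coherent $\O_W$-module $\mathcal{G}$ agrees with $\O_W^{\oplus\mathrm{rk}\,\mathcal{G}}$ on a dense open of $W$, so $[\mathcal{G}]-\mathrm{rk}(\mathcal{G})[\O_W]\in F^{i+1}$. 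Hence the class of $\mathcal{F}$ modulo $F^{i+1}$ equals $\sum_W \mathrm{length}_{\O_{X,W}}(\mathcal{F}_W)[\O_W]$, summed over codimension-$i$ points $W$.

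Next, rational equivalence must die. Take $W'$ integral of codimension $i-1$ and $f\in k(W')^\times$, and write $f=a/b$ with $a,b$ regular on an open covering (or pass to a local description via the Weil divisor). The exact sequences $0\to\O_{W'}\xrightarrow{a}\O_{W'}\to\O_{W'}/a\to0$ give $[\O_{W'}/a]=0=[\O_{W'}/b]$ in $G_0$. By the length formula just proved, $[\O_{W'}/a]-[\O_{W'}/b]$ agrees modulo $F^{i+1}$ with $\O_{\mathrm{div}(f)}$. This is the standard computation as in Fulton's treatment of $\mathrm{Gr}\,K_0$.

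\emph{Step 2: the cycle class on $\mathrm{Gr}^i$.} The natural candidate is to send $\alpha\in F^iG_0(X)$, represented by $\beta\in G_0(Z)$ with $\mathrm{codim}\,Z\ge i$, to $\mathrm{cl}_{X,\ell}$ of the codimension-$i$ cycle $\sum_W\mathrm{length}(\beta_W)[W]$. Length is additive, so this is well defined on $G_0(Z)$, and it kills $F^{i+1}$. The real issue is independence of the lift $\beta$. An element of $\ker(G_0(Z)\to G_0(X))$, modulo things already in $F^{i+1}$, need not produce the zero cycle, and one must show that its cycle is rationally equivalent to zero. Once that holds, the cycle class kills it.

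To handle this I would use Quillen's coniveau spectral sequence $E_1^{p,q}=\bigoplus_{x\in X^{(p)}}K_{-p-q}(k(x))\Rightarrow G_{-p-q}(X)$. In it, $E_2^{i,-i}=\mathrm{CH}^i(X)$ and $E_\infty^{i,-i}=\mathrm{Gr}^iG_0(X)$, the latter being the quotient of $E_2^{i,-i}$ by the images of the higher differentials $d_r$, $r\ge2$. Since $\mathrm{cl}_{X,\ell}$ already factors through $\mathrm{CH}^i$, it remains to show that these images lie in the kernel of $\mathrm{cl}_{X,\ell}$.

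For this I would map the $K$-theoretic coniveau spectral sequence to the Bloch--Ogus coniveau spectral sequence for $H^*_\et(\blank,\Z_\ell(*))$, using Gillet's Chern classes with supports (\'etale Chern classes of $K$-theory with supports, for the relevant Adams/weight piece). Compatibility with the filtrations by support gives a morphism of spectral sequences. On the $(i,-i)$ spot this morphism is, up to the sign and the factorial normalizing the $i$-th Chern class on codimension-$i$ classes, the cycle class by Riemann--Roch without denominators. The images of $d_r$ on the $K$-side therefore land in images of differentials on the cohomology side. Those images die in the abutment $N^iH^{2i}_\et(X,\Z_\ell(i))\subseteq H^{2i}_\et$, which is exactly where the cycle class lives.

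The main obstacle is this last step: making the normalization of Gillet's Chern class agree with the cycle class integrally. The factor $(-1)^{i-1}(i-1)!$ on $[\O_W]$ must be removed without inverting $(i-1)!$, and this is where the ``Riemann--Roch without denominators'' input from \cite{Gillet} is needed. If that proves delicate, the fallback is to prove the statement after tensoring with $\Q_\ell$, which suffices for every application in \Cref{WeightChapter}, where everything is $\otimes\Lambda$ with $\ell>2r$.
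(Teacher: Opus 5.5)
\textbf{There is a genuine gap in your Step 2, and it cannot be closed: the statement is false integrally when $\ell\mid(i-1)!$.} Your Step 1 is fine. One small repair: to make ``$f=a/b$'' global, use the denominator ideal $\mathcal{J}=\{g\colon gf\in\O_{W'}\}$. Multiplication by $f$ gives $\mathcal{J}\iso f\mathcal{J}\subseteq\O_{W'}$, hence $[\O_{W'}/\mathcal{J}]=[\O_{W'}/f\mathcal{J}]$ in $G_0(W')$, and by your length formula their difference is $\mathrm{div}(f)$ modulo $F^{i+1}$.

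The problem in Step 2 is not a normalization that Riemann--Roch without denominators can repair. That theorem \emph{is} the identity $c_i\circ\O_{(\blank)}=(-1)^{i-1}(i-1)!\,\mathrm{cl}$. So your morphism from the Brown--Gersten--Quillen to the Bloch--Ogus spectral sequence only shows that $(i-1)!\cdot\mathrm{cl}$ kills $\ker(\mathrm{CH}^i(X)\to\mathrm{Gr}^iG_0(X))$. This is vacuous, because that kernel is already killed by $(i-1)!$ (apply $c_i\colon\mathrm{Gr}^iK_0(X)\to\mathrm{CH}^i(X)$).

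Here is a counterexample with $k=\C$, $i=3$, $\ell=2$. Let $U\subseteq(\C^3)^{\oplus n}$, $n\gg0$, be the open set of spanning $n$-tuples. Set $X=U/\mathrm{SO}_3$, a smooth quasi-projective approximation of $B\mathrm{SO}_3$, and $E=U\times^{\mathrm{SO}_3}\C^3$. Let $s$ be the section given by the first vector. Then $s$ is regular, with smooth irreducible zero scheme $Z$ of codimension $3$. The Koszul complex, together with $\det E\iso\O_X$, $\Lambda^2E^\vee\iso E\otimes\det E^\vee$ and $E\iso E^\vee$ (via the quadratic form), gives
\[
[\O_Z]=[\O_X]-[E^\vee]+[\Lambda^2E^\vee]-[\det E^\vee]=[E]-[E^\vee]=0\quad\text{in }K_0(X).
\]
So $[Z]\mapsto0$ in $\mathrm{Gr}^3G_0(X)$. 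On the other hand, $\mathrm{cl}_{X,2}([Z])=c_3(E)$ reduces modulo $2$ to $w_3^2\neq0$ in $H^6(B\mathrm{SO}_3;\F_2)=\F_2[w_2,w_3]$. Hence no map on $\mathrm{Gr}^3G_0(X)$ can factor $\mathrm{cl}_{X,2}$.

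What is true, and what the paper needs, is the statement when $(i-1)!\in\Z_\ell^\times$. In that case the spectral-sequence comparison is unnecessary. For smooth $X$, the map $\mathrm{CH}^i(X)\otimes\Z_\ell\to\mathrm{Gr}^iG_0(X)\otimes\Z_\ell$ is an isomorphism, since its kernel is $(i-1)!$-torsion. Equivalently, set $\mathrm{cl}\defeq\frac{(-1)^{i-1}}{(i-1)!}c_i$ on $\mathrm{Gr}^i$; it kills $F^{i+1}$ and is additive on $F^i$.

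This covers every use in the paper. The map is applied only to smooth schemes ($X^{(0)}_\kappa$, $\overline{M}^?_\p$, $\overline{B}^?_\p$) in degrees $i\le 2r-1$, and condition (LR1) gives $\ell\ge4r+2$.

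Your fallback to $\Q_\ell$ coefficients is too weak for this purpose. \Cref{KtheoryAJ} is used with $\Lambda=\O_\lambda$ and then reduced modulo $\n$, so torsion information must be retained. The right hypothesis is $\ell\nmid(i-1)!$, not passage to rational coefficients.
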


\subsection{A few lemmas}
\begin{lemma}\label{K-incexc}
    Let $X$ be a noetherian scheme. Take a decomposition $X=Y_1\cup\cdots\cup Y_n$ into closed subschemes. Then the map
    \begin{equation*}
        \bigoplus_{j=1}^n\mathrm{Gr}_iG_0(Y_j)\to \mathrm{Gr}_iG_0(X)
    \end{equation*}
    is surjective for every $i.$
\end{lemma}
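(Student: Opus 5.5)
The claim is that for $X=Y_1\cup\cdots\cup Y_n$ a decomposition into closed subschemes, every class in $\mathrm{Gr}_iG_0(X)$ lifts to $\bigoplus_j\mathrm{Gr}_iG_0(Y_j)$. I would first prove the stronger statement at the level of filtered pieces, namely that $\bigoplus_j F_iG_0(Y_j)\to F_iG_0(X)$ is surjective, and then pass to the associated graded. The passage is formal: pushforward along the closed immersions $Y_j\hookrightarrow X$ is proper, so it respects the dimension filtration, and surjectivity on $F_i$ gives surjectivity on $F_i/F_{i-1}$.

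By \Cref{GrDef}, $F_iG_0(X)$ is generated by images of $G_0(Z)$ for closed $Z\subseteq X$ with $\dim Z\le i$. Moreover $G_0(Z)$ is generated by classes of coherent sheaves. By dévissage, since every coherent sheaf has a filtration with subquotients of the form $\O_W$-modules supported on integral closed subschemes $W\subseteq Z$, it is in fact generated by classes $[\iota_*\mathcal{G}]$ with $\mathcal{G}$ coherent on an integral closed subscheme $W\subseteq Z$. Such a $W$ has $\dim W\le i$.

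The key step is then the observation that an integral (irreducible) $W\subseteq X=\bigcup_j Y_j$ is contained set-theoretically in some $Y_j$, by irreducibility. Scheme-theoretically this inclusion may fail, since $Y_j$ might not contain the reduced structure in the right way if the $Y_j$ are non-reduced. However, $W$ is reduced and $W_{\red}\subseteq Y_j$ set-theoretically, so $W\subseteq (Y_j)_{\red}\subseteq Y_j$ as closed subschemes. Hence $[\iota_*\mathcal{G}]$ is the pushforward of a class in $F_iG_0(Y_j)$, and this proves the surjectivity on $F_i$.

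The only mildly delicate point is the dévissage and the non-reduced structure. I would handle it as above, by reducing to sheaves on integral closed subschemes, which are automatically reduced and so sit inside some $Y_j$. Alternatively, one can use a filtration by powers of the nilradical ideal to reduce directly to $\O_{X_\red}$-modules.
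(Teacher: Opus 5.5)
Your argument is correct, but it takes a different route from the paper.

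\textbf{The paper's route.} The paper inducts on $n$. For $n=2$ it tensors a coherent sheaf $\mathcal{F}$ with the Mayer--Vietoris-type sequence $0\to\O_X\to\O_X/I_1\oplus\O_X/I_2\to\O_X/(I_1+I_2)\to0$, which is exact because $I_1\cap I_2=0$. The resulting $\mathrm{Tor}$ sequence gives an explicit expression for $[\mathcal{F}]$ in terms of classes supported on $Y_1$, $Y_2$ and $Y_1\cap Y_2$. It then gets the filtered statement by applying this unfiltered surjectivity to each closed $Z$ of dimension $\le i$, written as $(Y_1\cap Z)\cup(Y_2\cap Z)$.

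\textbf{Your route.} You prove the filtered surjectivity directly. Dévissage reduces to sheaves on integral closed subschemes $W$. Irreducibility puts each $W$ set-theoretically inside a single $Y_j$, and reducedness of $W$ upgrades this to $W\subseteq (Y_j)_{\mathrm{red}}\subseteq Y_j$. Your remark that the scheme-theoretic inclusion ``may fail'' is unnecessary: a reduced closed subscheme whose underlying set lies in $Y_j$ always factors through $Y_j$, which is exactly what you then use.

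\textbf{Comparison.} Your argument is shorter and needs no induction. It also only uses that the $Y_j$ cover $X$ set-theoretically, which matters when the $Y_j$ are the reduced irreducible components of a possibly non-reduced scheme, as in \Cref{KLemma}. The paper's argument uses the scheme-theoretic hypothesis $I_1\cap I_2=0$. In its filtered step, the ideals $J_1,J_2\subseteq\O_Z$ of $Y_1\cap Z$ and $Y_2\cap Z$ only satisfy $J_1J_2=0$. So $J_1\cap J_2$ is square-zero but need not vanish. For example, take $X=\Spec k[x,y]/(xy)$ and $Z=V(x-y)$: then $\O_Z\cong k[x]/(x^2)$ and $J_1=J_2=(x)$. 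Hence the surjectivity $G_0(Y_1\cap Z)\oplus G_0(Y_2\cap Z)\twoheadrightarrow G_0(Z)$ asserted there implicitly needs a passage to $Z_{\mathrm{red}}$, which is exactly the dévissage your proof makes explicit.
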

\begin{proof}
By induction on $n,$ it suffices to prove the case $n=2.$ Let $I_1,I_2$ be the ideal sheaves of $Y_1,Y_2,$ and $i_1\colon Y_1\to X,$ $i_2\colon Y_2\to X$ and $i\colon Y_1\cap Y_2\to X$ the closed immersions. Since $I_1\cap I_2=0,$ we have an exact sequence
    \begin{equation*}
        0\to\O_X\to\O_X/I_1\oplus\O_X/I_2\to\O_X/(I_1+I_2)\to0
    \end{equation*}
    of sheaves on $X.$ Let $\mathcal{F}$ be a coherent $\O_X$-module. Then we get an exact sequence
    \begin{equation*}
    \begin{split}
        &0\to\mathrm{Tor}_1^{\O_X}(\mathcal{F},\O_{Y_1})\oplus \mathrm{Tor}_1^{\O_X}(\mathcal{F},\O_{Y_2})\to\mathrm{Tor}_1^{\O_X}(\mathcal{F},\O_{Y_1\cap Y_2})\to\mathcal{F}\to\\
        &\qquad\to i_{1,*}(\mathcal{F}\rvert_{Y_1})\oplus i_{2,*}(\mathcal{F}\rvert_{Y_2})\to i_*(\mathcal{F}\rvert_{Y_1\cap Y_2})\to0,
    \end{split}
    \end{equation*}
    which expresses $[\mathcal{F}]\in G_0(X)$ as a combination of elements in $G_0(Y_1),G_0(Y_2),G_0(Y_1\cap Y_2).$ Thus $G_0(Y_1)\oplus G_0(Y_2)\to G_0(X)$ is surjective. The claim with the filtration is a consequence of this, as for any $Z\subseteq X$ closed subscheme of dimension $\le i,$ we have the commutative diagram
    \begin{equation*}
    \begin{tikzcd}
        G_0(Y_1\cap Z)\oplus G_0(Y_2\cap Z)\arrow[r,twoheadrightarrow]\arrow[d]&G_0(Z)\arrow[d]\\
        G_0(Y_1)\oplus G_0(Y_2)\arrow[r]&G_0(X)
    \end{tikzcd}
    \end{equation*}
    and thus $F_iG_0(Y_1)\oplus F_iG_0(Y_2)\to F_iG_0(X)$ is also surjective for all $i.$ This implies the claim.
\end{proof}

\begin{lemma}
    Assume that $\mathcal{X}$ is a regular, flat, noetherian scheme over a discrete valuation ring $R.$ Denote by $X^\eta$ the generic fiber of $\mathcal{X}$ and by $X$ its special fiber. Then for every $i$ we have an exact sequence
    \begin{equation*}
        \mathrm{Gr}_{i+1}G_0(X)\to\mathrm{Gr}_{i+1}G_0(\mathcal{X})\to\mathrm{Gr}_iG_0(X^\eta)\to0.
    \end{equation*}
\end{lemma}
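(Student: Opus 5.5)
The statement to prove is the localization sequence
\begin{equation*}
    \mathrm{Gr}_{i+1}G_0(X)\to\mathrm{Gr}_{i+1}G_0(\mathcal{X})\to\mathrm{Gr}_iG_0(X^\eta)\to0.
\end{equation*}
I would derive it from the classical localization sequence for $G$-theory, $G_0(X)\xrightarrow{\iota_*}G_0(\mathcal{X})\xrightarrow{j^*}G_0(X^\eta)\to0$, where $\iota\colon X\hookrightarrow\mathcal{X}$ is the closed immersion and $j\colon X^\eta\hookrightarrow\mathcal{X}$ the open complement. Here exactness comes from Quillen's localization theorem, or, at the level of $G_0$, from the fact that coherent sheaves on the open part extend and that the kernel of restriction is generated by classes supported on $X$. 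The real work is checking that this sequence is compatible with the dimension filtrations, with the index shift $i+1\mapsto i$.

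\textbf{Step 1 (dimension bookkeeping).} $\mathcal{X}$ is flat over the DVR $R$, so every integral closed subscheme $Z\subseteq\mathcal{X}$ meeting the generic fiber satisfies $\dim Z=\dim Z^\eta+1$, taking Krull dimension for $\mathcal{X}$ and $\dim$ of the $K$-scheme $X^\eta$. Closed subschemes of $X^\eta$ of dimension $\le i$ are exactly the generic fibers of their closures in $\mathcal{X}$, and these closures have dimension $\le i+1$. Consequently $j^*$ sends $F_{i+1}G_0(\mathcal{X})$ into $F_iG_0(X^\eta)$ and surjects onto it: a class supported on $Z^\eta$ lifts to a class supported on the closure $\overline{Z^\eta}$ by extending a coherent sheaf (or a complex) from $Z^\eta$ to $\overline{Z^\eta}$. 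Likewise $j^*(F_iG_0(\mathcal{X}))\subseteq F_{i-1}G_0(X^\eta)$, and $\iota_*$ preserves $F_{i+1}$. This gives a well-defined complex on graded pieces which is surjective at the right end.

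\textbf{Step 2 (exactness in the middle).} Take $a\in F_{i+1}G_0(\mathcal{X})$ with $j^*a\in F_{i-1}G_0(X^\eta)$. By Step 1 choose $b\in F_iG_0(\mathcal{X})$ with $j^*b=j^*a$. Then $a-b\in\ker j^*$, and I must show this kernel, intersected with $F_{i+1}G_0(\mathcal{X})$, lies in $\iota_*F_{i+1}G_0(X)+F_iG_0(\mathcal{X})$. This is the main obstacle, because the plain localization sequence only yields $a-b=\iota_*c$ with no control on the dimension of the support of $c$.

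To get that control I would apply localization inside the support. Write $a-b$ as a class coming from $G_0(W)$ for some closed $W\subseteq\mathcal{X}$ with $\dim W\le i+1$, and apply the localization sequence to $W\supseteq W\cap X$ with complement $W^\eta$. The class $\alpha\in G_0(W)$ then restricts to $j^*(a-b)=0$ in $G_0(X^\eta)$, but only to some $\alpha^\eta\in G_0(W^\eta)$ whose pushforward to $G_0(X^\eta)$ vanishes. To conclude that $\alpha^\eta$ itself is harmless, I would instead use the coniveau spectral sequence / Gersten-type filtration argument (Quillen's $E_1^{p,q}=\bigoplus_{x\in\mathcal{X}_{(p)}}K_{p+q}(k(x))$). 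In that language, $\mathrm{Gr}_{i+1}G_0$ is a quotient of $\bigoplus_{x\in\mathcal{X}_{(i+1)}}\Z=Z_{i+1}(\mathcal{X})$ by boundaries of $K_1$ of points of dimension $i+2$. The points of dimension $i+1$ split into those in $X$ and those in $X^\eta$, the latter being exactly the points of dimension $i$ of $X^\eta$. This reduces the question to the corresponding exact sequence for cycles modulo rational equivalence, $\mathrm{CH}_{i+1}(X)\to\mathrm{CH}_{i+1}(\mathcal{X})\to\mathrm{CH}_i(X^\eta)\to0$, which is standard (Fulton, Prop.~1.8) once the dimension identifications of Step 1 are in place. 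I would then check that the surjection $\mathrm{CH}\to\mathrm{Gr}G_0$ (as in \Cref{CHtoGrDef}) is compatible with $\iota_*$ and $j^*$, and chase the diagram. Surjectivity of $\mathrm{CH}_{i+1}(\mathcal{X})\to\mathrm{Gr}_{i+1}G_0(\mathcal{X})$ together with exactness of the Chow sequence then gives: any element of the kernel of $\mathrm{Gr}_{i+1}G_0(\mathcal{X})\to\mathrm{Gr}_iG_0(X^\eta)$ lifts to a cycle whose generic part dies in $\mathrm{Gr}_iG_0(X^\eta)$.

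\textbf{Remaining point.} The subtle step is that the kernel of $\mathrm{CH}_i(X^\eta)\to\mathrm{Gr}_iG_0(X^\eta)$ need not vanish integrally. I would handle this by lifting that kernel using the same coniveau description applied to $X^\eta$ and to $\mathcal{X}$ simultaneously, since their $E_1$-terms correspond via $x\mapsto$ its closure. If that becomes too delicate, the fallback is to prove the statement after $\otimes\Q$, which suffices for the application in \Cref{KtheoryAJ}, where everything is tensored with $\Lambda$ anyway.
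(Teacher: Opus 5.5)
\textbf{The decisive step is not carried out.} The first part of your Step~2 correctly reduces middle exactness to
\[
\ker(j^*)\cap F_{i+1}G_0(\mathcal{X})\subseteq\iota_*F_{i+1}G_0(X)+F_iG_0(\mathcal{X}).
\]
Given the exact Chow sequence and the surjections $\mathrm{CH}\to\mathrm{Gr}\,G_0$, this is \emph{equivalent} to surjectivity of
\[
\ker\bigl(\mathrm{CH}_{i+1}(\mathcal{X})\to\mathrm{Gr}_{i+1}G_0(\mathcal{X})\bigr)\longrightarrow\ker\bigl(\mathrm{CH}_{i}(X^\eta)\to\mathrm{Gr}_{i}G_0(X^\eta)\bigr),
\]
and that is exactly what you leave open. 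Matching $E_1$-terms does not supply it. These kernels are the images of the higher Brown--Gersten--Quillen differentials $d_r$, $r\ge2$. Although $E_1(\mathcal{X})\to E_1(X^\eta)$ is surjective, the induced maps on $E_r$ for $r\ge2$ need not be, since their cokernels are detected by the boundary into the terms for $X$. So a $d_r$-boundary on $X^\eta$ has no reason to lift to one on $\mathcal{X}$.

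The $\otimes\Q$ fallback proves a strictly weaker statement. It also does not suffice for \Cref{KtheoryAJ}: in its application $\Lambda=\O_\lambda$ is finite over $\Z_\ell$, not a $\Q$-algebra, so you would at least have to show the denominators are prime to $\ell$.

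For comparison, the paper never passes to cycles. It runs Borel--Serre's proof of the localization sequence inside the closure $\mathcal{Z}$ of a support $Z^\eta$ with $\dim Z^\eta\le i$. This keeps every special-fiber correction in $G_0(\mathcal{Z}\cap X)\subseteq F_{i+1}G_0(X)$, and it writes down an inverse $F_iG_0(X^\eta)\to F_{i+1}G_0(\mathcal{X})/\iota_*F_{i+1}G_0(X)$. The graded statement then follows by your own first reduction. Be aware, though, that the paper checks additivity only on exact sequences of sheaves with small support. Descending to the subgroup $F_iG_0(X^\eta)\subseteq G_0(X^\eta)$ requires killing relations coming from sheaves of larger support, which is the same lifting problem you isolated. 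That route is cleaner, but it does not make the issue disappear.

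\textbf{Step~1 is false in the stated generality, and so is the statement.} Flatness does not give $\dim Z=\dim Z^\eta+1$. Let $\varpi$ be a uniformizer, $K=\mathrm{Frac}(R)$, and $\mathcal{X}=\mathbb{P}^1_R\setminus\{Q\}$ with $Q=V(\varpi,t)$; this $\mathcal{X}$ is regular, flat and noetherian over $R$.
\begin{enumerate}
\item $Z=V(t-\varpi)\cap\mathcal{X}\cong\Spec K$ is a closed point of $\mathcal{X}$ lying in the generic fiber, so $[\O_Z]\in F_0G_0(\mathcal{X})$.
\item Its restriction $j^*[\O_Z]$ is the class of a rational point, which is nonzero in $G_0(\mathbb{P}^1_K)$. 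Hence $j^*F_0\not\subseteq F_{-1}=0$.
\item Since $[\O_Q]=0$ in $G_0(\mathbb{P}^1_R)$ by the Koszul complex, $G_0(\mathcal{X})=\Z[\O]\oplus\Z[\O(-1)]$.
\item Every class supported in dimension $\le1$ has generic rank $0$, so $F_1G_0(\mathcal{X})=F_0G_0(\mathcal{X})=\Z([\O]-[\O(-1)])$.
\end{enumerate}
Thus $\mathrm{Gr}_1G_0(\mathcal{X})=0$, whereas $\mathrm{Gr}_0G_0(\mathbb{P}^1_K)\cong\Z$, so the sequence fails for $i=0$. The paper's ``by flatness'' step has the same defect.

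You need an extra hypothesis. One option is $\mathcal{X}$ proper over $R$, which holds in the application when $F^+\neq\Q$. The other is to work with the codimension filtration, as \Cref{KLemma} ultimately does: for regular $\mathcal{X}$, restriction to $X^\eta$ preserves codimension and $X$ is a Cartier divisor, so the bookkeeping is clean.
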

\begin{proof}
    It suffices to prove that we have an exact sequences
    \begin{equation*}
        F_{i+1}G_0(X)\to F_{i+1}G_0(\mathcal{X})\to F_iG_0(X^\eta)\to0
    \end{equation*}
    for each $i.$ Note that the image of $F_{i+1}G_0(\mathcal{X})\to G_0(X^\eta)$ lands is $F_iG_0(X^\eta)$: if $\mathcal{Z}\subseteq\mathcal{X}$ has dimension $\le i+1,$ then $\mathcal{Z}\cap X^\eta$ has dimension $\le i$ by flatness, and we have the following commutative diagram.
    \begin{equation*}
        \begin{tikzcd}
            G_0(\mathcal{X})\arrow[r]&G_0(X^\eta)\\
            G_0(\mathcal{Z})\arrow[u]\arrow[r]&G_0(\mathcal{Z}\cap X^\eta)\arrow[u]
        \end{tikzcd}
    \end{equation*}

    We follow the proof of the localization sequence from Borel--Serre \cite[Proposition 7]{Borel-Serre}\footnote{In there, they work with varieties over algebraically closed fields, but the proposition we are interested in does not require this assumption: note that the analogue of \cite[Proposition 1]{Borel-Serre} holds in our setting by the assumption that $\mathcal{X}$ is noetherian.}. Denote $A=F_{i+1}G_0(\mathcal{X})/\mathrm{im}(F_{i+1}G_0(X))$ The above implies we have a map $A\to F_i G_0(X^\eta),$ and we will construct an inverse. If $\mathcal{F}$ is a coherent sheaf on $X^\eta$ supported on a closed subscheme $Z^\eta\subseteq X^\eta$ of dimension $\le i,$ let $\mathcal{Z}$ be any closed subscheme of $\mathcal{X}$ of dimension $\le i+1$ with generic fiber $Z^\eta.$ Note such a $\mathcal{Z}$ exists since we can take the scheme theoretic closure of $Z^\eta.$ Then $\mathcal{F}$ extends to a sheaf $\mathcal{G}$ on $\mathcal{Z}$ with $\mathcal{G}\rvert_{Z^\eta}=\mathcal{F},$ and if $Z$ denotes the special fiber of $\mathcal{Z},$ the proof of \cite[Proposition 7]{Borel-Serre} says the image of $[\mathcal{G}]$ on $G_0(\mathcal{Z})/\mathrm{im}(G_0(Z))$ is independent of the choice of this extension. Thus its image in $A$ is also independent of this choice. Note that this construction is also independent of the choice of $Z^\eta$ and $\mathcal{Z},$ since if $\mathcal{Z}_1,\mathcal{Z}_2$ are two choices of support with dimension $\le i+1,$ we can always take another $\mathcal{Z}$ of dimension $\le i+1$ that contains both $\mathcal{Z}_1$ and $\mathcal{Z}_2.$ Now if $0\to\mathcal{F}'\to\mathcal{F}\to\mathcal{F}''\to0$ where each sheaf is supported on a closed subscheme of $X^\eta$ of dimension $\le i,$ we can choose a single $Z^\eta$ of dimension $\le i$ as their support, and again the proof of \cite[Proposition 7]{Borel-Serre} (taking the ambient scheme to be the scheme theoretic closure $\mathcal{Z}$ of $Z^\eta$) proves that the above construction induces a group homomorphism $F_iG_0(X^\eta)\to A.$ This is evidently an inverse to $A\to F_iG_0(X^\eta).$
\end{proof}

\begin{corollary}\label{KLemma}
    Assume that $\mathcal{X}$ is a regular, flat, noetherian scheme over a discrete valuation ring $R.$ Assume $\mathcal{X}$ has pure dimension $n.$ Denote $X$ the special fiber of $\mathcal{X},$ and say $Y_1,\ldots,Y_n$ are its irreducible components. We assume $Y_1,\ldots,Y_n$ are all of regular of dimension $n-1.$ Denote $X^\eta$ the generic fiber of $\mathcal{X}.$ Then we have an exact sequence
    \begin{equation*}
        \bigoplus_{j=1}^n\mathrm{Gr}^{i-1}K_0(Y_j)\to\mathrm{Gr}^iK_0(\mathcal{X})\to\mathrm{Gr}^iK_0(X^\eta)\to0
    \end{equation*}
\end{corollary}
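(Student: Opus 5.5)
The corollary will follow by combining the preceding lemma (the localization sequence in $\mathrm{Gr}_\bullet G_0$) with \Cref{K-incexc} and the identification $K=G$ of \Cref{K=G} on regular schemes. Since $\mathcal{X}$ is regular of pure dimension $n$, \Cref{K=G} together with the compatibility of filtrations noted after \Cref{GrDef} gives $\mathrm{Gr}^iK_0(\mathcal{X})\iso\mathrm{Gr}_{n-i}G_0(\mathcal{X})$. The generic fiber $X^\eta$ is a localization of $\mathcal{X}$, hence regular of pure dimension $n-1$, so $\mathrm{Gr}^iK_0(X^\eta)\iso\mathrm{Gr}_{n-1-i}G_0(X^\eta)$. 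Each $Y_j$ is regular of dimension $n-1$, so $\mathrm{Gr}^{i-1}K_0(Y_j)\iso\mathrm{Gr}_{n-i}G_0(Y_j)$. I will use the dimension conventions of the paper, in which $\dim\mathcal{X}=n$ and $\dim X^\eta=\dim Y_j=n-1$.

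Applying the lemma with index $n-1-i$ then gives the exact sequence
\[
\mathrm{Gr}_{n-i}G_0(X)\to\mathrm{Gr}_{n-i}G_0(\mathcal{X})\to\mathrm{Gr}_{n-1-i}G_0(X^\eta)\to0 .
\]
Next, \Cref{K-incexc} applied to the decomposition $X=Y_1\cup\cdots\cup Y_n$ shows that $\bigoplus_j\mathrm{Gr}_{n-i}G_0(Y_j)\to\mathrm{Gr}_{n-i}G_0(X)$ is surjective. Composing with the previous sequence therefore preserves exactness at the middle term.

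The remaining step is to check that, under the isomorphisms above, the maps are the expected ones. The map $\mathrm{Gr}^{i-1}K_0(Y_j)\to\mathrm{Gr}^iK_0(\mathcal{X})$ is the pushforward $\iota_{j,*}$ along the closed immersion of regular schemes; on $G_0$ this is pushforward, which is compatible with the filtration and shifts codimension by one. The map $\mathrm{Gr}^iK_0(\mathcal{X})\to\mathrm{Gr}^iK_0(X^\eta)$ is restriction, which is flat pullback under an open immersion, and \Cref{K=G} states that this is compatible with flat pullback. I expect the only real subtlety to be this bookkeeping: matching the dimension indexing of $\mathrm{Gr}_\bullet G_0$ for $X^\eta$ and $Y_j$ with the codimension indexing of $\mathrm{Gr}^\bullet K_0$.
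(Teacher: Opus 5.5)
Your proposal is correct and is essentially the paper's own argument: pass to $\mathrm{Gr}_\bullet G_0$ via \Cref{K=G}, apply the localization lemma with index $n-1-i$, and precompose with the surjection from \Cref{K-incexc}; your index bookkeeping matches, as does your identification of the maps as pushforward and flat restriction. You also correctly use regularity of $X^\eta$ (the paper's one-line proof says ``$X$'', evidently meaning $X^\eta$), and the only point left implicit in both arguments is that, if the special fiber is not reduced, \Cref{K-incexc} should be applied to $X_{\mathrm{red}}=Y_1\cup\cdots\cup Y_n$, using that $G_0(X_{\mathrm{red}})\to G_0(X)$ is surjective compatibly with the dimension filtration.
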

\begin{proof}
    This follows from combining the previous two lemmas together with \cref{K=G}, since all of $Y_1,\ldots,Y_n,\mathcal{X},X$ are regular.
\end{proof}

\end{appendix}

\begingroup
\bibliographystyle{alpha}
\bibliography{structural/references}
\endgroup
\end{document}